\documentclass{amsart}


\setlength{\hoffset}{-1in}
\setlength{\voffset}{-1in}

\setlength{\topmargin}{1.5cm}
\setlength{\headheight}{1cm}
\setlength{\headsep}{0.5cm}
\setlength{\topskip}{0cm}

\setlength{\oddsidemargin}{3cm}
\setlength{\evensidemargin}{2.5cm}

\setlength{\footskip}{1cm}

\setlength{\textwidth}{13cm}
\setlength{\textheight}{21cm}

   \renewcommand{\footnote}[1]{
\textsuperscript{
\addtocounter{footnote}{1}
(\thefootnote)
}
\footnotetext{#1}
}

\usepackage{url}
\usepackage{bm}
\usepackage{amsmath}
\usepackage{amsthm}
\usepackage{trfsigns}
\usepackage{wasysym}
\usepackage{amssymb}
\usepackage{amsfonts}
\usepackage{graphicx}  
\usepackage[frenchb,english]{babel}
\usepackage[utf8]{inputenc}
\usepackage[T1]{fontenc}
\usepackage{mathrsfs}
\usepackage{pdfsync}
\usepackage{enumerate}
\usepackage{version}
\usepackage{calc}
\usepackage{subfigure}

 \usepackage{pstricks,pstricks-add,pst-math,pst-xkey}
 \usepackage{float,graphics,yhmath}
 \usepackage{indentfirst}
 \usepackage[normalem]{ulem}
 \usepackage[colorlinks=true,linkcolor=blue,urlcolor=blue,citecolor=blue]{hyperref}
 


\newtheorem{prop}{Proposition}

\newtheorem{thm}{Theorem}

\newtheorem{lem}[prop]{Lemma}
\newtheorem{cor}[prop]{Corollary}

\theoremstyle{remark}
\newtheorem{remark}[prop]{Remark}
\newtheorem{notation}[prop]{Notation}

\newtheorem{defi}[prop]{Definition}


\def\mint{\mathop{\,\,\rlap{--}\!\!\!\int}\nolimits}



\def\v{\varepsilon}

\def\W{\mathcal{W}}
\def\O{\Omega}

\def\p{\partial}

\def\o{\omega}
\def\R{\mathbb{R}}
\def\C{\mathbb{C}}
\def\S{\mathbb{S}}
\def\Div{{\rm  div}}
\def\deg{ {\rm deg}}
\def\n{\nabla}
\def\dist{{\rm dist}}
\def\tr{{\rm tr}}

\def\E{\mathcal{E}}

\def\N{\mathbb{N}}
\def\D{\mathbb{D}}
\def\Z{\mathbb{Z}}
\def\J{\mathcal{J}}
\def\K{ \wideparen{\mathcal{K}}}
\def\Krit{\mathscr{K}}
\def\I{\mathcal{I}}
\def\M{\mathcal{M}}

\def\1{\textrm{1\kern-0.25emI}}
\def\num{P}
\def\Ring{\mathscr{R}}
\def\H{\mathscr{H}}

\def\z{\zeta}
\def\dom{\mathcal{D}}

\def\di{\displaystyle}

\def\B{\mathcal{B}}
\def\A{\mathcal{A}}
\def\h{h_{\rm ex}}
\def\rot{{\rm curl}}
\def\F{\mathcal{F}}

\def\Card{{\rm Card}}

\def\eqj{\stackrel{{\rm gauge}}{\sim}}

\def\Haus{\mathcal{H}}

\def\rad{r}
\def\Rad{{\tilde r}}
\def\zd{{\bf (z,d)}}

\def\wstar{w^\zd_\star}
\def\wr{w^\zd_\Rad}
\def\k{{\bf k}}
\def\Pstar{\Phi^\zd_\star}
\def\Cr{\mathscr{C}}
\def\SO{\mathcal{S}_\O}
\def\mod{{\chi}}
\def\tae{\hbar}
\def\Ostar{(\O^N)^*} 

\def\LamN{\Lambda_{d}}
\def\pD{{({\bf p},{\bf D})}}

\def\ab{s_b}

\def\ad{{({\bf a},{\bf d})}}
\def\borneh{C_0}

\def\Zz{\mathcal{Z}}
\def\Wmin{\overline{\W}}
\def\Pic{M_\O}
\def\L{\mathscr{L}}
\def\Jo{{\bf J_0}}
\def\Critical{{\tt K}^{\tt(I)}}
\def\Criticalbis{{\tt K}^{\tt(II)}}
\def\vtest{v^{0}}
\def\Atest{A^{0}}
\def\DLU{\Delta^{(1)}}
\def\DLD{\Delta^{(2)}}
\def\dtest{d_{0}}

\def\HoC{H^0_{c_1} }
\def\U{\mathscr{U}}
\def\V{\mathscr{V}}
\setcounter{tocdepth}{3}     
\setcounter{secnumdepth}{3}  
\date{}

\title[Pinned magnetic Ginzburg-Landau energy]{Magnetic Ginzburg-Landau energy with a periodic rapidly oscillating and diluted pinning term}
\author{Mickaël Dos Santos}
\thanks{The author would like to thank Lia Bronsard, Vincent Millot, Petru Mironescu and Etienne Sandier for fruitful discussions.
}


\address{M. Dos Santos, 
              Universit\'e Paris Est-Cr\'eteil, 61 avenue du G\'en\'eral de Gaulle, 94010 Cr\'eteil Cedex}
 \email{mickael.dos-santos@u-pec.fr} 
 \subjclass[2000]{35Q56,35J20,35B27}
 \keywords{Superconductivity, Ginzburg-Landau, pinning}
\begin{document}
\maketitle
\begin{abstract}
We study the $2D$ full Ginzburg-Landau energy with a periodic rapidly oscillating, discontinuous and [strongly] diluted pinning term using  a perturbative argument. This energy models the state of an heterogeneous type II superconductor submitted to a magnetic field. We calculate the value of the first critical field which links the presence of vorticity defects with the intensity of  the applied magnetic field. Then we prove a standard dependance of the quantized vorticity defects with the intensity of the applied field. Our study includes the case of a London solution having several {\it minima}. The macroscopic location of the vorticity defects is understood with the famous Bethuel-Brezis-Hélein renormalized energy. The mesoscopic location, {\it i.e.}, the arrangement of the vorticity defects around the {\it minima} of the London solution, is the same than in the homogenous case. The microscopic  location is exactly the same than in the heterogeneous case without magnetic field. We also compute the value of secondary critical fields that increment the quantized vorticity.
\end{abstract}


\section{Introdution}
This article studies the pinning phenomenon in type-II superconducting composites.

Superconductivity is a property that appears in certain materials cooled below a critical temperature. These materials are called superconductors. Superconductivity is characterized by a total absence of electrical resistance and a perfect diamagnetism. Unfortunately, when the imposed conditions are too intense, superconductivity is destroyed in  certain areas of the material called {\it vorticity defects}.

We are interested in type II superconductors which are characterized by the fact that the vorticity defects first appear in small areas.  Their number increases with the intensity of the conditions imposed until filling the material. For example, when the intensity $\h$ of an applied magnetic field exceeds a first threshold, the first vorticity defects appear: the magnetic field begins to penetrate the superconductor. The penetration is done along thin wires and may move resulting an energy dissipation. These motions may be limited by trapping the vorticity defects in small areas. 

The behavior of a superconductor is modeled by minimizers of a Ginzburg-Landau type energy. In order to study the presence of traps for the vorticity defects we consider an energy   including a pinning term that models impurities in the superconductor. These impurities would play the role of  traps for the vorticity defects. We are thus lead to the subject of this article: the type-II superconducting composites with impurities.\\

The case of an infinite long homogenous type II superconducting cylinder was intensively studied in mathematics by various authors since the 90's [see \cite{SS1} for a guide to the litterature]. Namely, the present work deals with a cylindrical superconductor $\mathcal{S}=\O\times\R$ [whose section is $\O\subset\R^2$] submitted to a vertical magnetic field $(0,0,\h)$. Under these considerations, the vorticity defects are thin vertical cylinder. Thus their study may be done {\it via}  a 2D problem formulated on $\O\subset\R^2$. Following the works of various authors [see \cite{Rub}, \cite{ASS1}, \cite{K1}], for a small parameter $\v>0$ [$\v\to0$ in this article] and $\h=\h(\v)\geq0$, we are interested in the description of the [global] minimizers of the  functional 
\[
\begin{array}{cccc}
 \E_{\v,\h}:&\H&\to&\R^+\\&(u,A)&\mapsto&\di\dfrac{1}{2}\int_\O|\n u-\imath Au|^2+\dfrac{1}{2\v^2}(a_\v^2-|u|^2)^2+|\rot(A)-\h|^2
\end{array},
\]
where [see Section \ref{SecNotation} for more detailed notation]
\begin{itemize}
\item $\O\subset\R^2$ is a smooth bounded simply connected open set,
\item $\H:=H^1(\O,\C)\times H^1(\O,\R^2)$,
\item $a_\v:\O\to\{1,b\}$ [$b\in(0,1)$ is independent of $\v$] is a periodic diluted pinning term [see Figure \ref{Intro.FigureTermeChevillage} and Section \ref{SecConstructionPinningTerm} for  a  construction of $a_\v$]. The impurities are the connected components of $\o_\v:=a_\v^{-1}(\{b\})$. In the definition of $a_\v$,  $\delta=\delta(\v)\underset{\v\to0}{\to}0$ is the parameter of period, $\lambda=\lambda(\v)\underset{\v\to0}{\to}0$ is the parameter of dilution and $0\in\o\subset\R^2$ is a smooth bounded simply connected open set which gives the form of the impurities. 
\end{itemize}
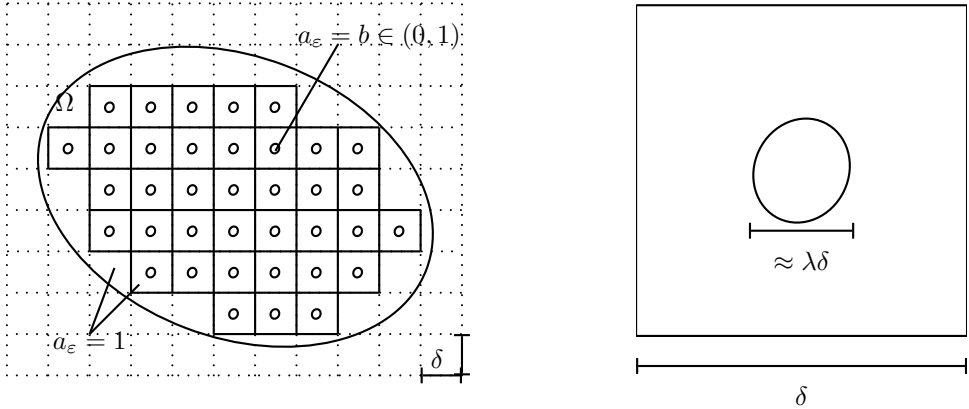
\begin{figure}[h]
\subfigure[The pining term is periodic on a $\delta\times\delta$-grid]{
\psset{xunit=0.11cm,yunit=0.11cm,algebraic=true,dotstyle=*,dotsize=3pt 0,linewidth=0.8pt,arrowsize=3pt 2,arrowinset=0.25}
\begin{pspicture*}(-20,-14.3)(36,40)
\rput{64.54}(2.39,2.43){\psellipse(0,0)(0.72,0.64)}
\psline(12.45,17.15)(20,30)
\rput(25,31){$a_\v=b\in(0,1)$}
\psline(-7,3)(-10,-5)
\rput(-10,-6){$a_\v=1$}
\psline(-4,1)(-10,-5)

\rput{64.54}(7.39,2.43){\psellipse(0,0)(0.72,0.64)}
\rput{64.54}(12.39,2.43){\psellipse(0,0)(0.72,0.64)}
\rput{64.54}(2.39,7.43){\psellipse(0,0)(0.72,0.64)}
\rput{64.54}(2.39,12.43){\psellipse(0,0)(0.72,0.64)}
\rput{64.54}(2.39,17.43){\psellipse(0,0)(0.72,0.64)}
\rput{64.54}(7.39,17.43){\psellipse(0,0)(0.72,0.64)}
\rput{64.54}(12.39,17.43){\psellipse(0,0)(0.72,0.64)}
\rput{64.54}(17.39,17.43){\psellipse(0,0)(0.72,0.64)}
\rput{64.54}(17.39,12.43){\psellipse(0,0)(0.72,0.64)}
\rput{64.54}(7.39,12.43){\psellipse(0,0)(0.72,0.64)}
\rput{64.54}(7.39,7.43){\psellipse(0,0)(0.72,0.64)}
\rput{64.54}(12.39,12.43){\psellipse(0,0)(0.72,0.64)}
\rput{64.54}(12.39,7.43){\psellipse(0,0)(0.72,0.64)}
\rput{64.54}(17.39,7.43){\psellipse(0,0)(0.72,0.64)}
\rput{64.54}(17.39,2.43){\psellipse(0,0)(0.72,0.64)}
\rput{-20.94}(7.62,11.61){\psellipse(0,0)(24.83,17.1)}

\rput{64.54}(-2.61,7.43){\psellipse(0,0)(0.72,0.64)}
\rput{64.54}(-7.61,7.43){\psellipse(0,0)(0.72,0.64)}
\rput{64.54}(-2.61,2.43){\psellipse(0,0)(0.72,0.64)}
\rput{64.54}(-2.61,12.43){\psellipse(0,0)(0.72,0.64)}
\rput{64.54}(-2.61,17.43){\psellipse(0,0)(0.72,0.64)}
\rput{64.54}(-2.61,22.43){\psellipse(0,0)(0.72,0.64)}
\rput{64.54}(-7.61,22.43){\psellipse(0,0)(0.72,0.64)}
\rput{64.54}(-7.61,17.43){\psellipse(0,0)(0.72,0.64)}
\rput{64.54}(-7.61,12.43){\psellipse(0,0)(0.72,0.64)}
\rput{64.54}(-12.61,17.43){\psellipse(0,0)(0.72,0.64)}
\rput{64.54}(2.39,22.43){\psellipse(0,0)(0.72,0.64)}
\rput{64.54}(7.39,22.43){\psellipse(0,0)(0.72,0.64)}
\rput{64.54}(12.39,22.43){\psellipse(0,0)(0.72,0.64)}
\rput{64.54}(22.39,12.43){\psellipse(0,0)(0.72,0.64)}
\rput{64.54}(22.39,7.43){\psellipse(0,0)(0.72,0.64)}
\rput{64.54}(27.39,7.43){\psellipse(0,0)(0.72,0.64)}
\rput{64.54}(22.39,2.43){\psellipse(0,0)(0.72,0.64)}
\rput{64.54}(7.39,-2.57){\psellipse(0,0)(0.72,0.64)}
\rput{64.54}(12.39,-2.57){\psellipse(0,0)(0.72,0.64)}
\rput{64.54}(17.39,-2.57){\psellipse(0,0)(0.72,0.64)}
\psline[linestyle=dotted,dash=18pt 18pt](-20,30)(35,30)
\psline[linestyle=dotted,dash=18pt 18pt](35,25)(-20,25)
\psline[linestyle=dotted,dash=18pt 18pt](-20,20)(35,20)
\psline[linestyle=dotted,dash=18pt 18pt](35,15)(-20,15)
\psline[linestyle=dotted,dash=18pt 18pt](35,10)(-20,10)
\psline[linestyle=dotted,dash=18pt 18pt](35,0)(-20,0)
\psline[linestyle=dotted,dash=18pt 18pt](35,-5)(-20,-5)
\psline[linestyle=dotted,dash=18pt 18pt](35,-10)(-20,-10)
\psline[linestyle=dotted,dash=18pt 18pt](35,35)(35,-10)
\psline[linestyle=dotted,dash=18pt 18pt](30,35)(30,-10)
\psline[linestyle=dotted,dash=18pt 18pt](25,35)(25,-10)
\psline[linestyle=dotted,dash=18pt 18pt](20,35)(20,-10)
\psline[linestyle=dotted,dash=18pt 18pt](15,35)(15,-10)
\psline[linestyle=dotted,dash=18pt 18pt](10,35)(10,-10)
\psline[linestyle=dotted,dash=18pt 18pt](5,35)(5,-10)
\psline[linestyle=dotted,dash=18pt 18pt](0,35)(0,-10)
\psline[linestyle=dotted,dash=18pt 18pt](-5,35)(-5,-10)
\psline[linestyle=dotted,dash=18pt 18pt](-10,35)(-10,-10)
\psline[linestyle=dotted,dash=18pt 18pt](-15,35)(-15,-10)
\psline[linestyle=dotted,dash=18pt 18pt](-20,35)(-20,-10)
\psline[linestyle=dotted,dash=18pt 18pt](-20,35)(35,35)
\psline[linestyle=dotted,dash=18pt 18pt](-20,5)(35,5)
\psline(5,-5)(20,-5)
\psline(25,0)(-5,0)
\psline(-10,5)(30,5)
\psline(-10,10)(30,10)
\psline(25,15)(-15,15)
\psline(-15,20)(25,20)
\psline(-10,25)(15,25)

\psline(-15,20)(-15,15)
\psline(-10,5)(-10,25)
\psline(-5,0)(-5,25)
\psline(0,0)(0,25)
\psline(5,-5)(5,25)
\psline(10,25)(10,-5)
\psline(15,25)(15,-5)
\psline(20,-5)(20,20)
\psline(25,0)(25,20)
\psline(30,5)(30,10)

\rput{64.54}(22.39,17.43){\psellipse(0,0)(0.72,0.64)}
\rput(32,-8){$\delta$}
\psline{|-|}(30,-10)(35,-10)
\psline{|-|}(35,-10)(35,-5)
\rput(-13,23){$\O$}
\end{pspicture*}
}\hfill
 \subfigure[The parameter $\lambda$ controls the size of an inclusion in the cell]
 {
\psset{xunit=0.20cm,yunit=0.20cm,algebraic=true,dotstyle=*,dotsize=3pt 0,linewidth=0.8pt,arrowsize=3pt 2,arrowinset=0.25}
\begin{pspicture*}(38,4)(60,35)
\psline(38,31)(60,31)
\psline(38,31)(38,9)
\psline(38,9)(60,9)
\psline(60,9)(60,31)
\rput{64.54}(49,20){\psellipse(0,0)(3.6,3.2)}
\psline{|-|}(38,7)(60,7)
\rput(49,5){$\delta$}
\psline{|-|}(45.5,16)(52.5,16)
\rput(49,14){$\approx\lambda\delta$}
\end{pspicture*}
}
\caption{The periodic pinning term}\label{Intro.FigureTermeChevillage}
\end{figure} 

 We focus on a strongly diluted case [$\lambda^{1/4}|\ln\v|\to0$] with  not too small connected components of $\o_\v$ [$|\ln(\lambda\delta)|=\mathcal{O}(\ln|\ln\v|)$] but with a sufficiently small parameter of the period [see \eqref{PutaindHypTech}]. \\

Under these considerations, if $(u_\v,A_\v)$ minimizes $\E_{\v,\h}$, then  the vorticity defects may be interpreted as the set $\{|u_\v|< b/2\}$. 

As said above, our study takes place in the extrem type II case $\v\to0$ and we also assume a divergent upper bound for  $\h$. 
Vorticity defects appear for minimizers above a critical valued $H_{c_1}=[{b^2|\ln\v|+(1-b^2)|\ln(\lambda\delta)|}]/({2\|\xi_0\|_{L^\infty(\O)}})+\mathcal{O}(1)$ [see Corollary \ref{Cor.ExactEnergyExpPreCritField}  and \eqref{DefH0c1}]. Here $\xi_0\in H^1_0\cap H^2$ is called the {\it London solution} and is the unique solution of the {\it London equation}
\begin{equation}\label{LondonEq1}
\begin{cases}
-\Delta^2\xi_0+\Delta\xi_0=0&\text{ in }\O
\\
\Delta\xi_0=1&\text{ on }\p\O
\\
\xi_0=0&\text{ on }\p\O
\end{cases}.
\end{equation}

 The value $H_{c_1}$ is calculated by a standard balance of the energetic costs of a configuration without vorticity defects [$|u|\geq b/2$] with well prepared competitors having an arbitrary number of quantized vorticity defects. Here quantization as to be interpreted by the degree  of $u$ around a vorticity defect. It is an observable quantity related with the circulation of the superconducting current.
 
In order to lead the study, the set $\Lambda:=\{z\in\O\,|\,\xi_0(z)=\min\xi_0\}\subset\O$ is of major interest [it is standard to prove that, in $\O$, $-1<\xi_0<0$]. From Lemma 4.4 in \cite{S1} and Lemma 4 in \cite{SS2} we have the following :
\begin{lem}\label{Lem.DescriptionLambda}
The set $\Lambda$ is finite. Moreover there exist $\eta>0$ and $M\geq1$ s.t. for $a\in\O$ we have $\xi_0(a)\geq \min\xi_0+\eta\dist(a,\Lambda)^M$ \footnote{In Lemma 4 in \cite{SS2}, $M$ is just a positive number, but  $\xi\in C^0(\overline\O)$, and then, up to consider $\eta>0$ sufficiently small, we may assume $M\geq 1$.}.
\end{lem}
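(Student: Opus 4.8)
The plan is to rewrite \eqref{LondonEq1} as a second order problem and then combine analyticity of the London solution with the simple connectivity of $\O$. First I would note that $w:=\xi_0-\Delta\xi_0$ is harmonic in $\O$ ($\Delta w=\Delta\xi_0-\Delta^2\xi_0=0$ by the first line of \eqref{LondonEq1}) with trace $0-1=-1$ on $\p\O$, so $w\equiv-1$; equivalently $h_0:=\xi_0+1$ solves the classical London equation $-\Delta h_0+h_0=0$ in $\O$, $h_0=1$ on $\p\O$. By interior elliptic regularity $h_0$, hence $\xi_0$, is real-analytic in $\O$, and the maximum principle yields $0<h_0<1$, i.e. $-1<\xi_0<0$. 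In particular $m:=\min_{\overline\O}\xi_0\in(-1,0)$ is attained only inside $\O$, so $\Lambda$ is a compact subset of $\O$ and equals the zero set of the nonnegative real-analytic function $g:=\xi_0-m=h_0-\mu$, where $\mu:=m+1\in(0,1)$.

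For the finiteness I would argue by contradiction. If $\Lambda$ were infinite it would, being a compact real-analytic subvariety of $\O$ with empty interior, have a connected component $C$ of dimension one (components of an analytic set are locally finite, so near an accumulation point of $\Lambda$ some component is not a single point). Such a $C$ has no endpoint inside $\O$ --- a half-branch of zeros of the analytic function $g$ extends past its tip by one-dimensional analytic continuation --- so, viewing $C$ as a finite graph whose vertices are its finitely many singular points, all of degree $\ge2$, it must contain a simple closed curve $\gamma\Subset\O$. Since $\O$ is simply connected, the Jordan domain $U$ enclosed by $\gamma$ satisfies $\overline U\subset\O$; on $U$, $h_0$ solves $-\Delta h_0+h_0=0$, whereas the constant $\mu>0$ is a strict supersolution ($-\Delta\mu+\mu=\mu>0$) equal to $h_0$ on $\p U=\gamma$, so the maximum principle forces $h_0\le\mu$ in $U$, hence $h_0\equiv\mu$ there since $h_0\ge\mu$ in $\O$ --- which contradicts $-\Delta h_0+h_0=0$. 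Once $\Lambda$ is finite, the bound $\xi_0(a)\ge m+\eta\,\dist(a,\Lambda)^M$ follows from the \L{}ojasiewicz inequality applied to $g\ge0$ at each (now isolated) $p\in\Lambda$, which gives $g(a)\ge c_p|a-p|^{M_p}$ near $p$; taking $M:=\max_pM_p$, patching with the elementary bound $g\ge c'>0$ on $\{\dist(\cdot,\Lambda)\ge\rho\}$, using $\dist(a,\Lambda)^{M_p}\ge\dist(a,\Lambda)^M$ when $\dist(a,\Lambda)<1$, and shrinking $\eta$, one may moreover take $M\ge1$, as the footnote indicates.

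I expect the finiteness step to be the delicate one: one has to genuinely rule out a curve of minima, and this is precisely where both the equation and the topology of $\O$ enter --- the comparison with the constant $\mu$ yields a contradiction only because the region enclosed by $\gamma$ lies inside $\O$, which uses simple connectivity (over a non-simply-connected domain the London field can attain its minimum along an entire circle). The structural fact that an infinite compact one-dimensional analytic set contains a closed curve, and the \L{}ojasiewicz inequality itself, are standard but not elementary; alternatively one may simply quote the statement from \cite{S1,SS2}, as the text does.
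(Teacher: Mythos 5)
The paper does not prove this lemma; it is quoted directly from Lemma~4.4 in \cite{S1} and Lemma~4 in \cite{SS2}, with the footnote recording only the harmless adjustment to $M\ge1$. Your proposal is therefore a self-contained alternative, and in outline it is correct. The reduction of \eqref{LondonEq1} to $-\Delta h_0+h_0=0$ in $\O$, $h_0=1$ on $\p\O$, with $h_0=\xi_0+1$, the real-analyticity of $\xi_0$ in $\O$, and the bound $-1<\xi_0<0$ are all exactly right. The finiteness step is where you diverge from the cited lemmas: rather than working only with the local analytic structure, you observe that a one-dimensional component of the minimum set would, as a compact analytic curve without endpoints in $\O$, contain a simple closed curve $\gamma$; simple connectivity of $\O$ places the Jordan domain $U$ bounded by $\gamma$ inside $\O$; and the maximum principle for $-\Delta+1$, applied to $h_0-\mu$ with $\mu=1+\min\xi_0$ a strict supersolution on $U$, forces $h_0\equiv\mu$ there, which contradicts the equation. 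This is a clean PDE-flavoured route that makes the role of simple connectivity explicit, a hypothesis the lemma genuinely uses (your annulus remark is the right sanity check). The price is that the step ``an infinite compact one-dimensional real-analytic set contains a simple closed curve'' quietly bundles several nontrivial facts about analytic zero sets -- locally finite decomposition into analytic arcs; the no-endpoint property via one-variable analytic continuation of $g\circ\gamma$ along a slightly extended arc; and the graph-theoretic existence of a cycle in a finite graph all of whose vertices have degree at least $2$ -- each standard but deserving an explicit reference if the argument is to be presented rigorously. The concluding Lojasiewicz patching, including the remark that one may raise the exponent to any $M\ge\max_p M_p$ at the cost of shrinking $\eta$ on $\{\dist(\cdot,\Lambda)<1\}$, is exactly the content of the paper's footnote.
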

We write $N_0:={\rm Card}(\Lambda)$ and $\Lambda=\{p_1,...,p_{N_0}\}$.\\

We may give a simple picture of the emergence of the vorticity defects. The first vorticity defects appear close to  $H_{c_1}$. If $N_0=1$ then there is first a unique vorticity defect and it is close to $\Lambda$. If $N_0\geq2$ the situation is less clear: we first have $d_1^\star\in\{1,...,N_0\}$ vorticity defect and each of them is located close to $d_1^\star$ elements of $\Lambda$. By increasing the intensity of the applied field $\h$ by a bounded quantity we increment the number of vorticity defects until filling $\Lambda$. 

Once each elements of $\Lambda$ is close to a vorticity defect, then by increasing $\h$ of a  $\mathcal{O}(\ln|\ln\v|)$, additional defects appear one by one.\\


We may now state the main theorems of the present work. For simplicity of the presentation the theorems are not stated on their most general form [see Theorem \ref{THM}].

These main results are obtained assuming that $\lambda,\delta$ and $\h$ satisfy
\begin{equation}\label{CondOnLambdaDelta}
\lambda^{1/4}|\ln\v|\to0\text{ and }|\ln(\lambda\delta)|=\mathcal{O}(\ln|\ln\v|),
\end{equation}
\begin{equation}\label{BorneKMagn}
\text{There is $K\geq 1$ s.t. $\h\leq\dfrac{b^2|\ln\v|}{2\|\xi_0\|_{L^\infty(\O)}}+K\ln|\ln\v|$}
\end{equation}
and when $\h\to\infty$ we need
\begin{equation}\label{PutaindHypTech}
\dfrac{\ln(\delta\sqrt{\h})}{\ln(\ln\h)}\to-\infty.
\end{equation}
Namely, in order to meet Hypothesis \eqref{CondOnLambdaDelta}, \eqref{BorneKMagn} and \eqref{PutaindHypTech}, we may think $\lambda\simeq|\ln\v|^{-s},\delta\simeq|\ln\v|^{-t}$ with $s>4$ and $t>1/2$.
 
We need also assume that 
\begin{equation}\label{NonDegHyp}\text{the minimal points of  $\xi_0$, $\Lambda=\{p_1,...,p_{N_0}\}$, are non degenerate critical points}
\end{equation}
in the sense that for $p\in\Lambda$, letting ${\rm Hess}_{\xi_0}(p)$ be the Hessian matrix of $\xi_0$ at $p$, the quadratic form $Q_p(z)=z\cdot {\rm Hess}_{\xi_0}(p)z$ is a definite positive quadratic form. Note that if \eqref{NonDegHyp} holds then we may take $M=2$ in Lemma \ref{Lem.DescriptionLambda}.\\

The strategy of this work is based on a perturbative argument. This argument applies for families of {\it quasi-minimizers} of the energy with some regularity assumptions [see Theorem \ref{THM}]. In particular, we cannot have a sharp profil near a zero of a quasi-minimizer since such profil does not make any sense for quasi-minimizer. Therefore we cannot speak about an {\it ad-hoc} notion of {\it vortices} s.t. "isolated zeros". However with a natural $L^\infty$-bound on the gradient of quasi-minimizers, the notion of vorticity defects is sufficiently robust to give them a nice description.\\

For simplicity of the presentation we first state the main results for a family $\{(u_\v,A_\v)\,|\,0<\v<1\}\subset\H$ s.t.
\begin{equation}\label{MinimalityAssumption(u,A)}
\text{$(u_\v,A_\v)$ minimizes $\E_{\v,\h}$ in $\H$.}
\end{equation}
\begin{thm}\label{THM-A}
Assume that \eqref{NonDegHyp} holds and $\lambda,\delta,\h,K$ satisfy \eqref{CondOnLambdaDelta}, \eqref{BorneKMagn} and \eqref{PutaindHypTech}. There exists $\mathcal{D}_{K,b}>1$  s.t. for  $\{(u_\v,A_\v)\,|\,0<\v<1\}\subset\H$ satisfying \eqref{MinimalityAssumption(u,A)}, for sufficiently small $\v$, there exits $d_\v\in\N$ s.t. if $d_\v=0$ then $|u_\v|\geq b/2$ in $\O$, and if $d_\v\in\N^*$ then there exists a set of $d_\v$ points, $\Zz_\v=\{z^\v_1,...,z^\v_{d_\v}\}\subset\O$, s.t. for $\mu>0$ sufficiently small and independent of $\v$ we have:
\begin{enumerate}
\item $d_\v\leq\mathcal{D}_{K,b}$
\item $\{|u_\v|< b/2\}\subset\cup B(z^\v_i,\v^{\mu})\subset\O$,
\item $|z^\v_i-z^\v_j|\geq \h^{-1}\ln\h$ for $i\neq j$,
\item  $\dist( z^\v_i,\Lambda)\leq\h^{-1/2}{\ln\h}{}$ for all $i$,
\item $\deg_{\p B(z^\v_i,\v^{\mu})}(u_\v)=1$ for all $i$.
\end{enumerate}
Moreover:
\begin{enumerate}
\item There is $\eta_{\o,b}>0$ depending only on $\o$ and $b$ s.t. for all $i$ we have $B(z^\v_i,\eta_{\o,b}\lambda\delta)\subset\o_\v$.
\item If for a sequence $\v=\v_n\downarrow0$ we have $\h=\mathcal{O}(1)$ then $d_{\v}=0$ for small $\v$.
\end{enumerate}
\end{thm}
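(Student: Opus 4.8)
\emph{Strategy.} The plan is to bracket the infimum $m_\v:=\inf_\H\E_{\v,\h}$ by matching upper and lower bounds, and to read off the structure of a minimizer $(u_\v,A_\v)$ from a vortex-ball analysis carried out alongside the lower bound. The starting point is the classical magnetic splitting, after fixing the Coulomb gauge ($\Div A_\v=0$ in $\O$, $A_\v\cdot\nu=0$ on $\p\O$): one writes $A_\v=\h\,\n^\perp\xi+\cdots$ and rewrites $\E_{\v,\h}(u_\v,A_\v)$ as the field-only cost $m_\v^0:=\inf\{\E_{\v,\h}(u,A)\,:\,|u|\geq b/2\}$, plus a \emph{reduced pinned Ginzburg--Landau functional} in which the vorticity of $u_\v$ interacts with the \emph{effective potential} $\h\xi_0$, $\xi_0$ being the London solution of \eqref{LondonEq1}, plus a field/gauge error that the perturbative framework of Theorem~\ref{THM} shows to be negligible. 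The rapid oscillation, the discontinuity and the strong dilution of $a_\v$ survive only through a \emph{pinned} self-energy: a core of size $\lambda\delta$ trapped in an impurity costs $\pi\big(b^2|\ln\v|+(1-b^2)|\ln(\lambda\delta)|\big)+o(|\ln\v|)$, and it is exactly to render the oscillation part of the error $o(1)$ per core that \eqref{CondOnLambdaDelta} and \eqref{PutaindHypTech} are imposed.

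\emph{Upper bound.} For each $d\in\N$ I would build a competitor with $d$ degree-one vortices: $\min(d,N_0)$ of them placed one near each point of $\Lambda=\{p_1,\dots,p_{N_0}\}$, the remaining ones clustered near $\Lambda$ at mutual distance of order $\h^{-1}\ln\h$, each core a standard radial profile of size $\eta_{\o,b}\lambda\delta$ centered \emph{inside} an impurity and glued to $\xi_0$ away from $\Lambda$. Evaluating $\E_{\v,\h}$ on it gives
\[
m_\v\;\leq\;m_\v^0+d\Big(\pi b^2|\ln\v|+\pi(1-b^2)|\ln(\lambda\delta)|+2\pi\h\min\xi_0\Big)+W_d+o(1),
\]
where $W_d$ gathers the Bethuel--Brezis--H\'el\'ein renormalized energy (macroscopic placement inside $\Lambda$), the homogeneous mesoscopic interaction between vortices sharing a point of $\Lambda$, and the microscopic cell energy. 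The value $H_{c_1}$ of Corollary~\ref{Cor.ExactEnergyExpPreCritField} is precisely the $\h$ at which the coefficient of $d$ above vanishes, i.e.\ $2\|\xi_0\|_{L^\infty}H_{c_1}=b^2|\ln\v|+(1-b^2)|\ln(\lambda\delta)|+\mathcal O(1)$.

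\emph{Lower bound and structure.} For a minimizer, the $L^\infty$-gradient bound permits a pinned vortex-ball construction (Jerrard/Sandier lower bounds localized on $\{|u_\v|<b/2\}$): one gets finitely many disjoint balls $B(z_i^\v,r_i)$ covering $\{|u_\v|<b/2\}$ with $\sum r_i\leq\v^\mu$, integer degrees $d_i=\deg_{\p B(z_i^\v,r_i)}(u_\v)$, and
\[
\E_{\v,\h}(u_\v,A_\v)\;\geq\;m_\v^0+\sum_i\Big(\pi|d_i|\big(b^2|\ln\v|+(1-b^2)|\ln(\lambda\delta)|\big)+2\pi\h\,d_i\,\xi_0(z_i^\v)\Big)+(\text{repulsion})-C,
\]
with the repulsion term $\geq0$. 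Comparing with the upper bound for $d=\sum_i|d_i|$ forces all three conclusions simultaneously: each $|d_i|=1$ and each $d_i=1$ (a multiplicity $\geq2$, or a negative degree, is strictly beaten by its ``split'' competitor, the self-cost being linear in $|d_i|$ while the magnetic gain $2\pi\h\,d_i\xi_0$ is most favorable when $d_i>0$ and $z_i^\v$ is as close as allowed to $\Lambda$); $d_\v:=\#\{z_i^\v\}\leq\mathcal D_{K,b}$, because by \eqref{BorneKMagn}, once $K$ is fixed, each extra core costs at least a fixed positive multiple of $\ln|\ln\v|$; and $\xi_0(z_i^\v)\leq\min\xi_0+C\h^{-1}\ln\h$, whence Lemma~\ref{Lem.DescriptionLambda} with $M=2$ (legitimate under \eqref{NonDegHyp}) yields $\dist(z_i^\v,\Lambda)\leq\h^{-1/2}\ln\h$. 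Finally $|z_i^\v-z_j^\v|\geq\h^{-1}\ln\h$ follows because a closer pair, through the logarithmic vortex--vortex repulsion, would create an excess exceeding the $o(1)$ slack between the two bounds.

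\emph{Refinements and main obstacle.} For $B(z_i^\v,\eta_{\o,b}\lambda\delta)\subset\o_\v$: blow $u_\v$ up at $z_i^\v$ at scale $\lambda\delta$; if a definite fraction of the core met $\{a_\v=1\}$, the local energy would exceed $\pi|\ln(\lambda\delta)|$ by a fixed positive amount, contradicting the matching bounds, so the core is trapped in a single impurity ($\eta_{\o,b}$ being fixed by the geometry of $\o$). For the last item: if $\h=\mathcal O(1)$ along a subsequence then, since $H_{c_1}\to+\infty$, we have $\h<H_{c_1}$ for small $\v$, the coefficient of $d$ in the upper bound is $\geq c|\ln\v|>0$, and the lower bound then shows any $d_\v\geq1$ forces $\E_{\v,\h}(u_\v,A_\v)>m_\v^0\geq m_\v$, impossible; hence $d_\v=0$ and $|u_\v|\geq b/2$ in $\O$ by definition of $m_\v^0$. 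The delicate step is the lower bound in this combined regime: one must run the ball growth and merging while simultaneously controlling (i) the residual magnetic energy after gauge fixing, (ii) the contribution of the discontinuous, rapidly oscillating $a_\v$ both inside the cores — where a core of size $\lambda\delta$ must be made to ``feel'' the pinned cost $(1-b^2)|\ln(\lambda\delta)|$ — and in the bulk between cores, and (iii) the uniformity in $\v$ of every constant; this is precisely what \eqref{CondOnLambdaDelta}, \eqref{BorneKMagn} and \eqref{PutaindHypTech} are engineered for, and where the perturbative machinery of Theorem~\ref{THM} does the real work, the present theorem following by specialization to genuine minimizers.
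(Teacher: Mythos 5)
Your proposal captures the paper's overall strategy --- fix the Coulomb gauge, match a sharp upper bound against a lower bound built from a vortex-ball construction, then read off the structure of the defects from the matching --- but it glosses over or misstates several ingredients that are not technicalities but the actual mathematical content.

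\textbf{Missing: the Lassoued--Mironescu decoupling.} You write the pinned energy directly for $u_\v$ and appeal to a ``pinned self-energy,'' but the functional $\E_{\v,\h}$ has the term $(a_\v^2-|u|^2)^2/(2\v^2)$, which does not vanish on ``vacuum.'' The paper first minimizes $E_\v$, obtaining the profile $U_\v$, then substitutes $u = U_\v v$ to get $E_\v(U_\v v) = E_\v(U_\v) + F_\v(v)$ with $F_\v(v) = \frac12\int U_\v^2|\n v|^2 + \frac{U_\v^4}{2\v^2}(1-|v|^2)^2$. Only after this does the penalization vanish on $|v|=1$, so a vortex-ball argument (Proposition~\ref{Prop.BorneInfLocaliseeSandSerf}) can run; this decoupling is what makes the ``pinned vortex-ball construction'' you invoke actually exist. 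Your proof simply asserts it.

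\textbf{Incorrect: the argument for $d_i=1$.} You justify multiplicity one by ``the self-cost being linear in $|d_i|$.'' But a linear self-cost gives exactly the \emph{same} total for one degree-$d$ vortex as for $d$ degree-one vortices, so by itself it cannot exclude $d_{i_0}\geq 2$. What does exclude it is a \emph{quadratic} bound at the mesoscopic scale; in the paper, Corollary~\ref{Cor.BorneInfProcheIncl} and Lemma~\ref{LemSommeDegCarréDec} give, on the annulus from $\rad$ to $\lambda\delta$,
\[
\frac12\int_{\O_\rad}\alpha|\n v|^2 \;\geq\; \pi b^2 \sum_i d_i^2 \ln\!\frac{\lambda\delta}{\rad} - O(1),
\]
and $\sum d_i^2 \geq d+1$ the moment some $d_{i_0}\geq 2$ (once all $d_i>0$ is known). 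Since $\ln(\lambda\delta/\rad)\sim(1-\mu)|\ln\v|$, this beats the upper bound by a term of order $|\ln\v|$. Your ``repulsion term $\geq 0$'' that you drop at the outset is precisely the mechanism you need. The linear lower bound \emph{is} enough to exclude negative degrees (as in the paper's Proposition~\ref{PropDegPinning1}), so the two statements you lump together have genuinely different proofs.

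\textbf{Circular: the bound $d_\v\leq\mathcal{D}_{K,b}$.} Your argument (each extra core costs $c\ln|\ln\v|$, slack is $K\ln|\ln\v|$, hence $d=O(1)$) presupposes that the renormalized-energy expansion with the $\L_1(d)\ln\h$ term holds, which in turn requires knowing already that the vortices are simple, well-separated and finite in number. The paper avoids this circularity with the Sandier--Serfaty argument of Theorem~\ref{ThmBorneDegréMinGlob} (Appendix B), which bounds $\sum|d_i|$ directly via the estimate $\frac12\int|\n f|^2 + f^2 \gtrsim D^2\ln|\ln\v|$ for $f=h-\h h_0$; this exploits the equation for $h$, not a comparison between two candidate configurations, and is what licenses everything that follows.

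Two smaller points: the balls of the Sandier--Serfaty covering have total radius $|\ln\v|^{-10}$, not $\v^\mu$; the $\v^\mu$-bad discs of Proposition~\ref{Prop.ConstrEpsMauvDisk} are a separate finer construction (needed because the coarser balls are ``too large'' for the local energy estimates), and you conflate the two. And your intermediate bound $\xi_0(z_i^\v)\leq\min\xi_0+C\h^{-1}\ln\h$ should read $C\h^{-1}(\ln\h)^2$; only then does $M=2$ in Lemma~\ref{Lem.DescriptionLambda} deliver $\dist(z_i^\v,\Lambda)\leq\h^{-1/2}\ln\h$.
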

From Theorem \ref{THM-A} we know that, for small $\v$, if $\{|u_\v|< b/2\}\neq\emptyset$, then  the vorticity defects are contained in small  disks which are well separated, trapped by the impurities and located near $\Lambda$. The second theorem gives sharper informations related with the location of these disks. We divide the second theorem in three parts: 
\begin{itemize}
\item Macroscopic location: We know that the disks are near $\Lambda$, for some $p\in\Lambda$, how many disks are near $p$ ? 
\item Mesoscopic location: For $p\in\Lambda$, how the disks near $p$ are they organized ? What is their inter-distance ?
\item Microscopic location: We know that the disks are trapped by the inclusion $\o_\v$, what is their location inside $\o_\v$.
\end{itemize}
These questions are related with the crucial notion of {\it renormalized energy} [see Section \ref{Sec.RenEn}]. 
\begin{thm}\label{THM-B}
{\bf [Direct part]}\\
Assume that \eqref{NonDegHyp} holds and $\lambda,\delta,\h,K$ satisfy \eqref{CondOnLambdaDelta}, \eqref{BorneKMagn} and \eqref{PutaindHypTech}. Assume also $\h\to\infty$.

Let $\{(u_\v,A_\v)\,|\,0<\v<1\}\subset\H$ satisfying \eqref{MinimalityAssumption(u,A)} and let $\v=\v_n\downarrow0$ be a sequence. Since $d=d_\v\leq\mathcal{D}_{K,b}$, up to pass to a subsequence, we may assume that $d$ is independent of $\v$. Assume $d>0$.

\noindent{\bf Macroscopic location.} Recall that $\Lambda=\{p_1,...,p_{N_0}\}$ and for $k\in\{1,...,N_0\}$ we let $D_k:=\deg_{\p B(p_k,2\ln(\h)/\sqrt{\h})}(u_\v)$. Write ${\bf D}=(D_1,...,D_{N_0})$. Up to pass to a subsequence we may assume that ${\bf D}$ is independent of $\v$. We then have:
\begin{itemize}
\item The distribution of the disks $B(z^\v_i,\v^{\mu})$ around the elements of $\Lambda$ is the most homogenous possible : 
\[
{\bf D}\in \LamN:=\left.\left\{{\bf D}'\in\left\{\left\lceil\dfrac{d}{{N_0}}\right\rceil;\left\lfloor\dfrac{d}{{N_0}}\right\rfloor\right\}^{N_0}\,\right|\,\sum_{k=1}^{N_0} D_k'=d\right\}.
\]
Here, for $x\in\R$,  we wrote $\lceil x\rceil$ for the ceiling of $x$ and $\lfloor x\rfloor$ for the floor of $x$.
\item  There exists a renormalized energy $\W_d:\LamN\to\R$ [see \eqref{DefWdOpD}] s.t. ${\bf D}$ minimizes $\W_d$. 

\end{itemize}
\noindent{\bf Mesoscopic location.} The mesoscopic location is the same than in the homogenous case. Namely, for $p\in\Lambda$ s.t. $\deg_{\p B(p,2\ln(\h)/\sqrt{\h})}(u_\v)=D>0$, there exists a renormalized energy [see Section \ref{SecMesoRenEn}] 
\[
W^{\rm meso}_{p,D}:\{(a_1,...,a_D)\in (\R^2)^D\,|\,a_i\neq a_j\text{ for }i\neq j\}\to\R
\]
s.t., denoting $\ell:=\sqrt{\dfrac{D}{\h}}$ and for $z_i^\v\in B(p,2\ln(\h)/\sqrt{\h})$ letting $\breve z_i^\v:=\dfrac{z_i^\v-p}{\ell}$, we have $\breve{\bf z}^\v=(\breve z_1^\v,...,\breve z_D^\v)$ [assuming $z_i^\v\in B(p,2\ln(\h)/\sqrt{\h})\Leftrightarrow i\in\{1,...,D\}$] which converges to a minimizer of $W^{\rm meso}_{p,D}$. In particular $\ell$ is the typical interdistance between two close $z^\v_i,z^\v_j$. 

\noindent{\bf Microscopic location.} We know that, for $i\in\{1,...,d\}$, $B(z^\v_i,\eta_{\o,b}\lambda\delta)\subset\o_\v$. Moreover for $i\neq j$ we have $|z^\v_i-z^\v_j|\geq \ln(\h)\h^{-1}\gg\lambda\delta$. Then each connected component of $\o_\v$ contains at most one disk $B(z^\v_i,\v^{\mu})$. 

There exists a renormalized energy $W^{\rm micro}:\o\to\R$ [see Section \ref{SecMicrRenEn}] s.t. for $i\in\{1,...,d\}$, letting $y^\v_i\in\delta\cdot\Z^2$ be s.t. $B(z^\v_i,\eta_{\o,b}\lambda\delta)\subset y^\v_i+\lambda\delta\o$ and $\hat z^\v_i:=\dfrac{z^\v_i-y^\v_i}{\lambda\delta}\in\o$ we have 
\begin{itemize}
\item $W^{\rm micro}(\hat z^\v_i)\to\di\min_\o W^{\rm micro}$,
\item Up to pass to a subsequence, there is $a_i\in\o$ s.t. $\hat z^\v_i\to a_i$ and $a_i$ minimizes $W^{\rm micro}$.\footnote{For example if $\o$ is a disk then $a_i$ is the center of the disk \cite{dos2015microscopic} .}
\end{itemize}

{\bf [Optimality of the renormalized energies]}\\
Consider a sequence $\v=\v_n\downarrow0$ previously fixed [in order to have ${\bf D}$  independent of $\v$] and assume $d\neq0$. We let
\begin{itemize}
\item  ${\bf D}'\in \LamN$ be  a minimizer of $\W_d$, 
\item for  $k\in\{1,...,N_0\}$ s.t. $D_k'\geq1$,  ${\bf a}_k'$ be a minimizer of $W^{\rm meso}_{p_k,D_k'}$,
\item $a_0$ be a minimizer of $W^{\rm micro}$.
\end{itemize}
Then, for $\v=\v_n$, there exist $(u_\v',A_\v')\in\H$ and $d$ distinct points of $\O$, $\{z_1',...,z_d'\}=\{{z^\v_1}',...,{z^\v_d}'\}\subset\o_{\v}$, s.t. 
\begin{itemize}
\item $\E_{\v,\h}(u_\v',A_\v')\leq\inf_\H\E_{\v,\h}+o(1)$,
\item $\{|u_\v'|\leq b/2\}\subset \cup B(z_i',\sqrt\v)\subset\cup_{p\in\Lambda}B(p,{\ln(\h)}/\sqrt{\h})$, 
\item for $k\in\{1,...,N_0\}$, $D_k'=\deg_{\p B(p_k,2\ln(\h)/\sqrt{\h})}(u_\v')$, 
\item $\deg_{\p B(z_i',\sqrt\v)}(u_\v')=1$ for all $i$,
\item  writing for $p_k\in\Lambda$ [s.t. $D'_k\geq1$]  and $z'_i\in B(p_k,\ln(\h)/\sqrt{\h})$, $\breve z_i':=(z_i-p_k)/\sqrt{D_k/\h}$ and $\breve{\bf z}_{p_k}':=\{\breve z'_i\,|\,z_i'\to p_k\}$\footnote{We used a little abuse of notation for the simplicity of the presentation.},  we have $\breve{\bf z}_{p_k}'\to{\bf a}_k'$,
\item For $i\in\{1,...,d\}$, letting $y^\v_i\in\delta\cdot\Z^2$ be s.t. $z'_i\in y^\v_i+\lambda\delta\cdot\o$ and $\hat z'_i:=\dfrac{z'_i-y^\v_i}{\lambda\delta}\in\o$ we have $\hat z_i'\to a_0$.
\end{itemize}
\end{thm}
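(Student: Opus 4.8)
The plan is to treat Theorem~\ref{THM-B} as a two-sided matching between a lower bound (the ``direct part'', constraining minimizers) and an upper bound (the ``optimality'' part, exhibiting competitors), both organized around a single master energy expansion at three nested scales. The backbone is the perturbative principle (Theorem~\ref{THM}): since $(u_\v,A_\v)$ is a quasi-minimizer with an $L^\infty$ gradient bound, one first decouples the magnetic part by writing $A_\v = \h A^0 + (\text{lower order})$ where $A^0$ generates the London field $\xi_0$, so that the leading magnetic contribution is governed by $\h^2\int|\rot A^0-1|^2$ and the interaction between the vorticity defects $\Zz_\v$ and the applied field is, to leading order, $2\pi\h\sum_i \xi_0(z_i^\v)$ plus a Ginzburg--Landau self-energy per defect. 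This is the standard Sandier--Serfaty reduction; the pinning term modifies only the self-energy constant, replacing $b^2|\ln\v|$ by $b^2|\ln\v|+(1-b^2)|\ln(\lambda\delta)|$ thanks to the dilution hypothesis \eqref{CondOnLambdaDelta}, exactly as already used to compute $H_{c_1}$ in Corollary~\ref{Cor.ExactEnergyExpPreCritField}.

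First I would establish the \textbf{macroscopic} statement. Knowing from Theorem~\ref{THM-A} that $\dist(z_i^\v,\Lambda)\le \h^{-1/2}\ln\h$ and $|z_i^\v-z_j^\v|\ge\h^{-1}\ln\h$, one expands $\xi_0$ near each $p_k$ using Lemma~\ref{Lem.DescriptionLambda} with $M=2$ (non-degeneracy \eqref{NonDegHyp}); the quadratic behaviour $\xi_0(a)\simeq\min\xi_0+\tfrac12 Q_{p_k}(a-p_k)$ produces, after rescaling by $\ell_k=\sqrt{D_k/\h}$, a term of order $D_k^2/\h\cdot\h = D_k^2$ times a constant, plus a $\log$-interaction term of order $D_k\ln\h$ coming from the mutual repulsion of the $D_k$ unit-degree defects clustered near $p_k$. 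Summing over $k$ and comparing configurations with the same total $d$, the dominant $\h$-independent discrete functional is precisely $\W_d({\bf D})$ on $\LamN$ defined in \eqref{DefWdOpD}; minimality of the energy forces ${\bf D}\in\LamN$ (most homogeneous distribution, because convexity of $D\mapsto D^2$ penalizes imbalance at the leading order $\ln\h$) and then forces ${\bf D}$ to minimize $\W_d$ at the next order. The \textbf{mesoscopic} statement then follows by freezing ${\bf D}$ and zooming at $p_k$ at scale $\ell_k$: the rescaled configuration $\breve{\bf z}^\v$ solves a limiting problem whose energy, after subtracting the $D_k|\ln\ell_k|$ divergence, is the Bethuel--Brezis--Hélein-type renormalized energy $W^{\rm meso}_{p_k,D_k}$ with confining potential $\tfrac12 Q_{p_k}$ — identical to the homogeneous case because at scale $\ell_k\gg\lambda\delta$ the oscillating pinning term has averaged out to its effective constant and no longer sees individual inclusions. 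Compactness of $\breve{\bf z}^\v$ (points stay separated and bounded after rescaling, by Theorem~\ref{THM-A}(3)--(4)) plus $\Gamma$-convergence of the rescaled energies gives convergence to a minimizer.

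Next, the \textbf{microscopic} statement: since $|z_i^\v-z_j^\v|\gg\lambda\delta$, each inclusion traps at most one defect, and zooming at scale $\lambda\delta$ around $y_i^\v$ one recovers exactly the pinned-without-field problem studied in \cite{dos2015microscopic}: the relevant renormalized energy $W^{\rm micro}$ on $\o$ is the same object as there, because at this scale the magnetic field and the confining potential $Q_{p_k}$ are both constant to leading order and drop out of the renormalized energy. Hence $\hat z_i^\v$ is a quasi-minimizer of $W^{\rm micro}$ and, up to subsequence, converges to a minimizer $a_i\in\o$. Finally, for the \textbf{optimality} part one runs the construction in reverse: given minimizers ${\bf D}'$, ${\bf a}_k'$, $a_0$ of the three renormalized energies, one builds a test pair $(u_\v',A_\v')$ by the standard ``dipole insertion'' ansatz — take the defect-free minimizer, multiply by $\prod_i \tfrac{x-z_i'}{|x-z_i'|}$ with the $z_i'$ placed at $p_k+\ell_k (a_k')_j$ and then shifted inside the inclusion to $y_i^\v+\lambda\delta a_0$, with a local optimal-profile correction on $B(z_i',\sqrt\v)$ and a gauge adjustment of $A_\v'$ — and checks that every term of the three-scale expansion is matched, giving $\E_{\v,\h}(u_\v',A_\v')\le\inf_\H\E_{\v,\h}+o(1)$.

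The main obstacle I expect is the \emph{separation of scales in the energy expansion}: one must show that the contributions at the macro, meso and micro scales genuinely decouple at the order $o(1)$, i.e. that cross-terms (interaction between the confining potential $Q_{p_k}$ and the fine inclusion structure, between defects in different clusters, between the magnetic correction and the vortex cores) are all negligible. Controlling these requires the technical smallness hypotheses \eqref{PutaindHypTech} and \eqref{CondOnLambdaDelta}: \eqref{PutaindHypTech} guarantees $\delta\sqrt\h$ is small enough that the period is far below the magnetic length scale $\h^{-1/2}$ so that homogenization of the pinning term at the mesoscopic scale is valid with error $o(1)$, while $\lambda^{1/4}|\ln\v|\to0$ keeps the total ``dilution cost'' subcritical. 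The delicate point is propagating a sharp enough lower bound through the ball-construction / vortex-balls machinery so that the error terms are truly $o(1)$ and not merely $o(\ln\h)$ — this is where the $L^\infty$ gradient bound on quasi-minimizers and the clearing-out estimates of Theorem~\ref{THM-A} are indispensable, as they let one replace the quasi-minimizer near each defect by an explicit comparison profile with quantitatively controlled error.
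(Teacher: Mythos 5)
Your proposal is correct in outline and follows essentially the same route as the paper: a three-scale matching argument organized around the Sandier--Serfaty decomposition of $\tilde\F$ in terms of $\h^2\Jo$, $F(v)$, $2\pi\h\sum d_i\xi_0(a_i)$ and $\tilde V[\zeta]$, with the sharp lower bound split into the macroscopic (Lemma~\ref{BorneTresFineSansIncl} via Proposition~\ref{VeryNiceCor}), mesoscopic (Proposition~\ref{EnergieRenMeso}) and microscopic (Lemma~\ref{Prop.EstFineAtraversIncl}) contributions and the sharp upper bound realized by the glued test configuration of Proposition~\ref{Prop.BorneSupSimple}.

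Two small points worth tightening. First, the log--interaction among the $D_k$ clustered defects contributes $\tfrac{\pi}{2}D_k(D_k-1)\ln\h$ (not order $D_k\ln\h$): the quadratic dependence on $D_k$ is precisely what makes $\L_1(d)\ln\h$ the selector of the most homogeneous ${\bf D}\in\LamN$ via the convexity argument (Lemma~\ref{LemSommeDegCarréDec}), and the $\mathcal{O}(1)$ remainder after balancing $\L_1(d)\ln\h$ is the sum of \emph{three} distinct contributions assembled into $\W_d$: the macroscopic $W^{\rm macro}_{N_0}\pD$, the mesoscopic constants $C_{p_k,D_k}$, and $\tilde V[\zeta_\pD]$ from the magnetic correction; lumping them informally as ``the dominant discrete functional'' glosses over the fact that each arises at a different scale and must be extracted by a separate lemma. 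Second, the perturbative principle as you invoke it hides the two preparatory reductions that carry most of the technical weight: the Lassoued--Mironescu decoupling $u_\v=U_\v v_\v$ (so one works with the weighted functional $\F$ and the scalar regularization $U_\v$ of $a_\v$, not directly with $\E_{\v,\h}$) and the energetic cleaning replacing $\F$ by $\tilde\F$ (Proposition~\ref{Prop.Nettoyage}); without these, the ``pinning term modifies only the self-energy constant'' claim does not come for free. You correctly flag the separation of scales as the main obstacle, but the concrete resolution in the paper rests on the $\eta$--ellipticity (Proposition~\ref{Prop.EtaEllpProp}), the $\v^s$--bad disc construction (Proposition~\ref{Prop.ConstrEpsMauvDisk}), and the perforated--domain lower bounds with dilution control (Proposition~\ref{Prop.ComparaisonAnneau}, Corollary~\ref{Cor.BorneInfProcheIncl}), and making the errors $o(1)$ rather than $o(\ln\h)$ is exactly where the weighted ball construction has to be run with the $U_\v$--decay estimates of Proposition~\ref{Prop.RegularizationLMSol}.
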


The third  theorem underline the link between the number $d$ and  $\h$. In this theorem we write, for $x\in\R$,  $[x]^+=\max(x,0)$ and $[x]^-=\min(x,0)$.
\begin{thm}\label{THM-C}
Assume that $\O$ satisfies \eqref{NonDegHyp}, $\lambda,\delta,\h,K$ satisfy \eqref{CondOnLambdaDelta}, \eqref{BorneKMagn} and \eqref{PutaindHypTech}.

There are integers $L\in\{1,...,N_0\}$, $0=d_0^\star<d_1^\star<\cdots<d_L^\star=N_0$ [$d_k^\star\in\N$ is independent of $\v$] and critical fields [depending on $\v$] $\Critical_1<\cdots<\Critical_L<\Criticalbis_1<\Criticalbis_2<\cdots$ [see  \eqref{TheExprionnI} and \eqref{TheExprionnII} for the expressions of $\Critical_k$ and $\Criticalbis_k$] s.t. for  $\{(u_\v,A_\v)\,|\,0<\v<1\}\subset\H$ a family satisfying \eqref{MinimalityAssumption(u,A)} and  for a sequence $\v=\v_n\downarrow0$:
 \begin{itemize}
\item If $d_\v=0$ for  small $\v$, then $[\h-\Critical_1]^+\to0$.
\item If $d_\v>0$ for  small $\v$, then $[\h-\Critical_1]^-\to0$.
\item Assume $L\geq2$. For $k\in\{1,...,L-1\}$, if  for small $\v$ we have $d_{k-1}^\star<d_\v\leq d_k^\star$, then
\[
\left[\h-\Critical_k\right]^-\to0\text{ and }\left[\h-\Critical_{k+1}\right]^+\to0.
\]
\item  For $L\geq1$, if for small $\v$ we have $d_{L-1}^\star< d_\v\leq d_L^\star=N_0$, then
\[
\left[\h-\Critical_L\right]^-\to0\text{ and }\left[\h-\Criticalbis_1\right]^+\to0.
\]
\item Let $l\in\N^*$. If for small $\v$  we have $d_\v=N_0+l$, then
\[
\left[\h-\Criticalbis_{l}\right]^-\to0\text{ and }\left[\h-\Criticalbis_{l+1}\right]^+\to0.
\]
\end{itemize}
\begin{remark}
A more complete statement for $d_\v\in\{0,...,N_0\}$ may be found in Proposition \ref{Prop.SHarperdescriptionNonSatured}.
\end{remark}
\end{thm}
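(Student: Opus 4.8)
\textbf{Proof proposal for Theorem \ref{THM-C}.}

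The plan is to reduce everything to a single real-valued function of the integer $d$, namely the minimal ``defect-free-corrected'' energy
\[
f_\v(d):=\inf\left\{\E_{\v,\h}(u,A)-\E_{\v,\h}(\vtest,\Atest)\,\Big|\,(u,A)\in\H\text{ carries exactly $d$ quantized vorticity defects}\right\},
\]
where $(\vtest,\Atest)$ is the Meissner-type reference configuration with $|\vtest|\geq b/2$ on $\O$. By Theorem \ref{THM-A} the minimizer $(u_\v,A_\v)$ has a well-defined $d_\v\leq\mathcal{D}_{K,b}$, so it suffices to understand the finitely many values $f_\v(0),f_\v(1),\dots,f_\v(\mathcal{D}_{K,b})$ and to show that $d_\v$ is the argmin. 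The first step is therefore to obtain a sharp asymptotic expansion of $f_\v(d)$. Using the upper bound from the direct part of Theorem \ref{THM-B} (well-prepared competitors: $d$ degree-one defects distributed as homogeneously as possible over $\Lambda$, arranged mesoscopically by $W^{\rm meso}$ and microscopically at $a_0$) together with the matching lower bound from the vortex-ball / $\Gamma$-convergence machinery already invoked in the excerpt, one gets
\[
f_\v(d)=d\bigl(H_{c_1}^0-\h\bigr)\cdot 2\|\xi_0\|_{L^\infty}+g(d)+o(1),
\]
where $H_{c_1}^0=[b^2|\ln\v|+(1-b^2)|\ln(\lambda\delta)|]/(2\|\xi_0\|_{L^\infty})$ and $g(d)$ collects the lower-order renormalized-energy contributions: the $\ln|\ln\v|$-scale term coming from putting more than one defect per point of $\Lambda$ (this is the $\W_d$ level, active once $d>N_0$), the $O(\ln|\ln\v|)$ mesoscopic term, and the $O(1)$ macroscopic/BBH and microscopic terms. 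Extracting the precise coefficient of each scale is the technical heart; it is exactly the bookkeeping that produces the two families of critical fields $\Critical_k$ (spacing $O(1)$ in $\h$, governing $d\le N_0$) and $\Criticalbis_k$ (spacing $O(\ln|\ln\v|)$, governing $d>N_0$), whose explicit forms are \eqref{TheExprionnI} and \eqref{TheExprionnII}.

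The second step is a discrete optimization. Define the increments $\Delta f_\v(d):=f_\v(d)-f_\v(d-1)$. From the expansion, $\Delta f_\v(d)=2\|\xi_0\|_{L^\infty}(H_{c_1}^0-\h)+c_\v(d)+o(1)$ where $c_\v(d)$ is increasing in $d$ (strictly so when passing the saturation thresholds $d_k^\star$, because an extra defect must then be squeezed onto an already-occupied $p_k$, paying a positive $\ln|\ln\v|$ or $\ln\h$ price). Consequently $d\mapsto f_\v(d)$ is, up to $o(1)$, convex in $d$, so $d_\v=\mathrm{argmin}$ is characterized by $\Delta f_\v(d_\v)\leq o(1)<\Delta f_\v(d_\v+1)$, i.e. by a string of inequalities on $\h$. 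Solving $\Delta f_\v(d)\leq 0$ for $\h$ gives precisely $\h\ge\Critical_k$ (resp. $\h\ge\Criticalbis_l$) when $d$ crosses into the range $(d_{k-1}^\star,d_k^\star]$ (resp. $d=N_0+l$); the numbers $L$ and $d_k^\star$ are read off as the values of $d$ at which $c_\v(d)$ jumps by more than $o(1)$, i.e. the saturation pattern of $\Lambda$ dictated by $\W_d$. This yields all the stated dichotomies: e.g. if $d_\v=0$ for small $\v$ then $f_\v(1)\geq f_\v(0)+o(1)$ forces $\h\leq\Critical_1+o(1)$, hence $[\h-\Critical_1]^+\to0$; if $d_\v>0$ then $f_\v(d_\v)\leq f_\v(0)+o(1)$ in particular $f_\v(1)\le f_\v(0)+o(1)$ forces $\h\ge\Critical_1+o(1)$, hence $[\h-\Critical_1]^-\to0$; the intermediate cases $d_{k-1}^\star<d_\v\le d_k^\star$ follow identically from the two neighbouring increments, and likewise for the saturated regime with the $\Criticalbis$'s. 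The monotonicity $\Critical_1<\cdots<\Critical_L<\Criticalbis_1<\Criticalbis_2<\cdots$ is then automatic from $c_\v$ being increasing.

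The main obstacle is the first step: proving the energy expansion $f_\v(d)=d(H_{c_1}^0-\h)\cdot 2\|\xi_0\|_{L^\infty}+g(d)+o(1)$ with $g$ pinned down to $o(1)$ on \emph{every} relevant scale simultaneously ($|\ln\v|$, $|\ln(\lambda\delta)|$, $\ln|\ln\v|$, $\ln\h$, $O(1)$). The subtlety is that the three renormalized energies live on three different scales and must be decoupled: one needs the lower bound to see the macroscopic $\W_d$/BBH term without losing the mesoscopic $W^{\rm meso}$ term, and to see $W^{\rm micro}$ inside each inclusion $\o_\v$ without the diluteness/periodicity error $\eqref{PutaindHypTech}$ swamping it — this is where the perturbative argument of Theorem \ref{THM} and the hypotheses \eqref{CondOnLambdaDelta}, \eqref{PutaindHypTech} are essential. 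A secondary annoyance is that $d_\v$ is only defined for small $\v$ and may oscillate; one handles this by passing to subsequences along which $d_\v$ (and ${\bf D}$) is constant, which is legitimate since $d_\v$ is bounded, and then noting the conclusions are stated subsequentially (``for a sequence $\v=\v_n\downarrow0$''). Once the expansion and the convexity/monotonicity of increments are in hand, the rest is the elementary discrete-optimization bookkeeping sketched above, plus matching the resulting thresholds to the closed forms \eqref{TheExprionnI}–\eqref{TheExprionnII}.
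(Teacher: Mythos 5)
Your overall strategy — compare energies across different vortex numbers $d$, locate $d_\v$ by an argmin condition on the increments, and read off critical fields from where increments change sign — is the same strategy the paper pursues through Corollary \ref{Cor.ExactEnergyExp}, Corollary \ref{Cor.ExactEnergyExpPreCritField}, and Propositions \ref{Prop.SHarperdescriptionNonSatured}--\ref{Prop.SHarperdescriptionNonSaturedII}. But there is a genuine gap in the discrete-optimization step, and it is precisely the point where the paper has to work hardest.

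You claim that the correction $c_\v(d)$ in $\Delta f_\v(d)=2\|\xi_0\|_{L^\infty}(\HoC-\h)+c_\v(d)+o(1)$ is increasing in $d$, hence that $d\mapsto f_\v(d)$ is convex up to $o(1)$. For $d>N_0$ this is sound: the increments acquire the $\L_1(d)\ln\h$ term, $\L_1(d+1)-\L_1(d)=\pi\lfloor d/N_0\rfloor$ is nondecreasing and strictly positive, and $\ln\h\to\infty$, which dominates the bounded $\L_2$ contribution. But for $1\leq d\leq N_0$, the increments are $\Wmin_d-\Wmin_{d-1}$ plus $o(1)$, and there is no a priori reason for $d\mapsto\Wmin_d$ to be convex: $\Wmin_d$ mixes the macroscopic BBH renormalized energy of the optimal $\pD$, the mesoscopic constants $C_{p_k,D_k}$, and $\tilde V[\zeta_\pD]$, and none of these is convex in $d$ individually or jointly. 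If convexity held one would have $L=N_0$ and every $d\in\{1,\ldots,N_0\}$ would appear as $d_\v$ for some range of $\h$, but the theorem explicitly allows $L<N_0$, i.e.\ the vortex number can jump by more than one across a single critical field $\Critical_k$. Your sketch silently assumes this away.

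The paper's way around this is the iterative construction in Section \ref{DefFirstCriticalFields}: it builds the lower convex envelope of the finite sequence $(d,\Wmin_d)_{d=0}^{N_0}$. The breakpoints of that envelope are the $d_k^\star$, the slopes between consecutive breakpoints are the $\Krit_k^\star$ (normalized into $\Critical_k$), and the values of $d$ strictly between breakpoints simply never arise as $d_\v$ because the corresponding test configuration is never optimal. Making this rigorous requires exactly the elementary but nontrivial inequalities of Lemma \ref{Lem.PremEtapChamSec}.\ref{lemQqePropQuot1}--\ref{lemQqePropQuot3} (the three-slopes lemma in disguise) and Lemma \ref{Lem.ComparaisonEntreD_kd_k+1}, which is what produces the strict ordering $\Krit_1^\star<\cdots<\Krit_L^\star$ and pins down which ${\bf D}$'s are attained. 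This convexification is not a bookkeeping detail that can be folded into ``$c_\v(d)$ increasing''; it is the combinatorial content of Theorem \ref{THM-C}. Your first step (the multi-scale energy expansion) matches Corollary \ref{Cor.ExactEnergyExp}, and the comparison inequalities you invoke match Corollary \ref{Cor.ExactEnergyExpPreCritField}; the missing piece is the convex-envelope argument that replaces the false convexity claim.
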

 \section{Notation}\label{SecNotation}
\subsection{Sets, vectors and numbers}
\begin{itemize}
\item We identify the real plan $\R^2$ with $\C$ and we denote by $\S^1$ the unit circle in $\C$.
\item For $\U \subset\R^2$, $N\in\N\setminus\{0;1\}$, $(\U ^N)^*:=\{(z_1,...,z_N)\in \U ^N\,|\,z_i\neq z_j\text{ for }i\neq j \}$.
\item For $k\in\{1;2\}$, $\Haus^k$ is the $k$-dimensional Hausdorff measure.
\item If $(a_1,a_2),(b_1,b_2)\in\R^2$, then $|(a_{1},a_{2})|=\sqrt{a_{1}^2+a_{2}^2}$, $(a_1,a_2)^\bot=(-a_2,a_1)$, $(a_1,a_2)\cdot(b_1,b_2)=a_1b_1+a_2b_2$ and $(a_1,a_2)\wedge(b_1,b_2)=a_1b_2-a_2b_1$.
\item For $\U \subset\R^2$, $\overline{\U}$ is the closure of $\U$ w.r.t. $|\cdot|$
\item For $\emptyset\neq\U,\V\subset\R^2$ and $x_0\in\R^2$ we write $\dist(\U,\V):=\inf\{|x-y|\,|\,x\in\U,\,y\in\V\}$ and $\dist(x_0,\V):=\dist(\{x_0\},\V)$.
\item For $\Gamma\subset\R^2$ a Jordan curve we let: 
\begin{itemize}
\item ${\rm int}(\Gamma)$, the interior of $\Gamma$, be  the bounded open set $\U \subset\R^2$ s.t. $\Gamma=\p \U $ where $\p \U $ is the boundary of $\U $.
\item $\nu$ be the outward normal unit vector of ${\rm int}(\Gamma)$
\item $\tau$ be the direct unit tangent vector of $\Gamma$ ($\tau=\nu^\bot$)
\end{itemize}
\item If $S$ is a finite set then ${\rm Card}(S)$ is the cardinal of $S$.
\end{itemize}
\begin{itemize}
\item If $x\in\R$, then we write $\lceil x\rceil:=\min\{m\in\Z\,|\,m\geq x\}$, the ceiling of $x$, and $\lfloor x\rfloor:=\max\{m\in\Z\,|\,m\leq x\}$, the floor of $x$.
\item If $x\in\R$, then we write $[x]^+=\max(x,0)$ and $[x]^-=\min(x,0)$.
\end{itemize}
\subsection{Functions}
\begin{itemize}
\item For $\U\subset\R^2$  a smooth open set and $K\subset\C$, $H^1(\U,K)=\{u\in H^1(\U,\C)\,|\,u(x)\in K\text{ for a.e. }x\in \U\}$ where $H^1(\U,\C)$ is the Classical Sobolev space of the first order modeled on the Lebesgue space $L^2$. 

For $k\in\N^*$ and $p\in [1;\infty]$ we use the standard notation for the higher order Sobolev space $H^k(\U,K)$ modeled on $L^2$ and $W^{k,p}(\U,K)$ for the Sobolev space of order $k$ modeled on $L^p$.
\item We use the standard notation for the differential operators: ''$\n$'' for the gradient, ''$\rot$'' for the curl, ''$\Div$'' for the divergence, "$\p_\tau=\tau\cdot\n$" for the tangential derivative, "$\p_\nu=\nu\cdot\n$" for the normal derivative...
\item  For $\U\subset\R^2$  a smooth bounded open set we let $\tr_{\p\U}:H^1(\U,\C)\to H^{1/2}(\p \U,\C)$ be the [surjective] trace operator. For $\Gamma$ a connected component of $\p \U$ and $u\in H^1(\U,\C)$, we let $\tr_\Gamma(u)$ be the restriction of $\tr_{\p \U}(u)$ to $\Gamma$.

We write $H_0^1(\U,\C):=\{u\in H^1(\U,\C)\,|\,\tr_{\p\U}(u)=0\}$.
\item For $u:\O\to\C$ a function we let $\underline u:=\begin{cases}u&\text{if }|u|\leq1\\u/|u|&\text{if }|u|>1\end{cases}$.
\item For $\Gamma\subset\R^2$ a Jordan curve and $g\in H^{1/2}(\Gamma,\S^1)$, the degree of $g$ is defined as
\begin{equation}\nonumber
\deg_{\Gamma}(g):=\frac{1}{2\pi}\int_{\Gamma}g\wedge\p_\tau g\in\Z.
\end{equation}
For  a  smooth and bounded open set $\U\subset\R^2$, $\Gamma$ a connected component of $\p \U$  and $u\in H^1(\U,\C)$, if  there exists $\eta>0$ s.t. $g:=\tr_\Gamma(u)$ satisfies $|g|\geq\eta$ , then $g/|g|\in H^{1/2}(\Gamma,\S^1)$ and we write $\deg_\Gamma(u):=\deg_\Gamma(g/|g|)$.
 
When $\U,\V\subset\R^2$ are smooth bounded simply connected open sets s.t. $\overline{\V}\subset \U$ and $u\in H^1(\U\setminus\overline{\V},\S^1)$, then we write [without ambiguity] $\deg(u)$ instead of $\deg_\Gamma(u)$ for any Jordan curve $\Gamma\subset \overline\U\setminus{\V}$ s.t. $\V\subset{\rm int}(\Gamma)$.

\end{itemize}
\subsection{Construction of the pinning term}\label{SecConstructionPinningTerm}
Let
\begin{enumerate}[$\bullet$]\item $\delta=\delta(\v)\in(0,1),\,\lambda=\lambda(\v)\in(0,1)$;
 \item $\o\subset\R^2$ be a smooth bounded and simply connected open set s.t. $(0,0)\in\o$ and $\overline{\o}\subset Y:=(-1/2,1/2)^2$.
\end{enumerate} For $m\in\Z^2$ we denote $Y_{m}^\delta:=\delta m+\delta\cdot Y$ and 
$\displaystyle\o_\v=\bigcup_{\substack{m\in\Z^2\text{ s.t.}\\Y_{m}^\delta\subset\O}}[\delta m+\lambda\delta\cdot\o]$.
For $b\in(0,1)$ we define 
\[
\begin{array}{cccc}
a_\v:&\R^2&\to&\{b,1\}\\&x&\mapsto&\begin{cases}b&\text{if }x\in\o_\v\\1&\text{otherwise}\end{cases}
\end{array}.
\]
\subsection{Asymptotic}
 \begin{itemize}
\item[$\bullet$] In this article $\v\in(0;1)$ is a  small number. We are essentially interested in the asymptotic  $\v\to0$.
\item[$\bullet$] The notation $o(1)$ means a quantity depending on $\v$ which tends to $0$ when $\v\to0$.
\item[$\bullet$] The notation $o[f(\v)]$ means a quantity $g(\v)$ s.t. $\dfrac{g(\v)}{f(\v)}=o(1)$.
\item[$\bullet$] The notation $\mathcal{O}[f(\v)]$ means a quantity $g(\v)$ s.t. $\dfrac{g(\v)}{f(\v)}$ is bounded  for small $\v$.
\end{itemize}

\section{Classical facts and the strongest theorem}
{\bf Gauge invariance and Coulomb Gauge}

It is standard to quote the {\it gauge invariance} of the energy $\E_{\v,\h}$. Namely, two configurations $(u,A),(u',A')\in\H$ are gauge equivalent, denoted by $(u,A)\eqj(u',A')$, if there exists a gauge transformation from $(u,A)$ to $(u',A')$:
\[
(u,A)\eqj(u',A')\Longleftrightarrow\begin{cases} \exists\,\varphi\in H^2(\O,\R)\text{ s.t.}\\u'=u\e^{\imath\varphi}\text{ and }A'=A+\n\varphi\end{cases}.
\]

Two gauge equivalent configurations describe the same physical state. Then, physical quantities are those which are gauge invariant. For example, if $(u,A)\in\H$, then $|u|$, $|\n u-\imath Au|$, $\rot(A)$ and then $ \E_{\v,\h}(u,A)$, $\{|u|<b/2\}$ also are gauge invariants.

In the context the Ginzburg-Landau energy, a classical choice of gauge is the {\it Coulomb gauge}. We say that  $(u,A)$ is in the  Coulomb gauge if
\begin{equation}\label{JaugeCoulomb}
\begin{cases}\Div(A)=0&\text{in }\O\\A\cdot\nu=0&\text{on }\p\O \end{cases}.
\end{equation}
One may prove [see Proposition 3.2 in \cite{SS1}] that, for $(u,A)\in\H$, there exists  $\varphi\in H^2(\O,\R)$ s.t. $A':=A+\n\varphi$ satisfies \eqref{JaugeCoulomb}. Then, letting $u'=u\e^{\imath\varphi}$, we have $(u',A')$ which is in the Coulomb gauge and $(u,A)\eqj(u',A')$.

One of the main motivations in using the Coulomb gauge comes from the fact that  $\|\rot(A)\|_{L^2}$ controls $\|A\|_{H^1}$. Namely there exists $C\geq 1$ [which depends only on $\O$] s.t. if $A$ satisfies \eqref{JaugeCoulomb}  then [see Proposition 3.3 in \cite{SS1}]
\begin{equation}\label{CoulombH1}
\|A\|_{H^1(\O,\R^2)}\leq C\|\rot(A)\|_{L^2(\O)}
\end{equation}
and
\begin{equation}\label{CoulombH2}
\|A\|_{H^2(\O,\R^2)}\leq C\|\rot(A)\|_{H^1(\O)}.
\end{equation}
Moreover we have an easy representation of  $A\in H^1(\O,\R^2)$ satisfying \eqref{JaugeCoulomb}
\begin{equation}\label{RepresentCoulomGauge}
\text{$A\in H^1(\O,\R^2)$ is a solution of \eqref{JaugeCoulomb}}\Longleftrightarrow\,\exists\, \xi\in H^1_0\cap H^2(\O,\R)\text{ s.t. }A=\n^\bot\xi.
\end{equation}

{\bf Basic description of a minimizer}

We first note that, by direct minimization, for  all $a_\v\in L^\infty(\O,[b;1])$, $\v,\h>0$, the minimization problem of  $ \E_{\v,\h}$ in $\H$ admits [at least] a solution $(u_\v,A_\v)\in\H$. 

Writing $h_\v:=\rot(A_\v)$, it is standard to check that a such minimizer solves:
\begin{equation}\label{FullGLuAEq}
\begin{cases}
-(\n-\imath A_\v)^2u_\v=\dfrac{u_\v}{\v^2}(a_\v^2-|u_\v|^2)^2&\text{in }\O
\\(\n-\imath A)u_\v\cdot\nu=0&\text{on }\O
\\
-\n^\bot h_\v=u_\v\wedge(\n-\imath A_\v)u_\v&\text{in }\O
\\h_\v=\h&\text{on }\p\O
\end{cases}.
\end{equation}
Using a maximum principle, we may get the following proposition:
\begin{prop}\label{Prop.ModuLeq1}
Let $\v,\h>0$ and $a\in L^\infty(\O,[b,1])$. If $(u_\v,A_\v)$ is a minimizer of  $ \E(u,A)=\di\dfrac{1}{2}\int_\O|\n u-\imath Au|^2+\dfrac{1}{2\v^2}(a^2-|u|^2)^2+|\rot(A)-\h|^2$ in $\H$ then $|u_\v|\leq1$ in $\O$.
\end{prop}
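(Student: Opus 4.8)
The plan is to argue by comparison rather than by a direct differential inequality, using the truncation $\underline u$ defined in Section~\ref{SecNotation}. The map $z\mapsto\underline z$ is the nearest-point retraction of $\C$ onto the closed unit disc $\overline{\D}$, hence $1$-Lipschitz; therefore $\underline{u_\v}\in H^1(\O,\overline{\D})$ and $(\underline{u_\v},A_\v)\in\H$ is an admissible competitor. It suffices to show
\[
\E(\underline{u_\v},A_\v)\leq\E(u_\v,A_\v),
\]
with \emph{strict} inequality whenever $\{|u_\v|>1\}$ has positive Lebesgue measure: the minimality of $(u_\v,A_\v)$ then forces $|\{|u_\v|>1\}|=0$, that is $|u_\v|\leq1$ a.e.\ in $\O$.

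I would compare the three densities separately. On $\{|u_\v|\leq1\}$ the two configurations coincide, so all densities agree there. On $\{|u_\v|>1\}$ one has $|\underline{u_\v}|=1$, and since $a_\v\leq1<|u_\v|$ the inequalities $0\geq a_\v^2-1>a_\v^2-|u_\v|^2$ give $(a_\v^2-|\underline{u_\v}|^2)^2=(a_\v^2-1)^2<(a_\v^2-|u_\v|^2)^2$; this is the source of the strict gain, and it is genuinely strict on a set of positive measure. For the covariant kinetic term, write $\rho:=|u_\v|$ and, on $\{\rho>0\}$, decompose locally $u_\v=\rho\,g$ with $|g|\equiv1$; using $\langle g,\n g-\imath A_\v g\rangle=0$ one gets the pointwise identity
\[
|\n u_\v-\imath A_\v u_\v|^2=|\n\rho|^2+\rho^2\,|\n g-\imath A_\v g|^2 .
\]
On $\{\rho>1\}$ one has $\underline{u_\v}=g$, hence $|\n\underline{u_\v}-\imath A_\v\underline{u_\v}|^2=|\n g-\imath A_\v g|^2\leq\rho^2|\n g-\imath A_\v g|^2\leq|\n u_\v-\imath A_\v u_\v|^2$. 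The magnetic density $|\rot(A_\v)-\h|^2$ is unchanged since $A_\v$ is not modified. Adding the three contributions gives $\E(\underline{u_\v},A_\v)\leq\E(u_\v,A_\v)$, strictly if $\{|u_\v|>1\}$ has positive measure.

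Combined with minimality this yields $|u_\v|\leq1$ a.e.\ in $\O$. To get the bound at every point, recall that a minimizer solves \eqref{FullGLuAEq}; with $|u_\v|\leq1$ a.e.\ and $a_\v,A_\v$ bounded, the right-hand side of the $u_\v$-equation lies in $L^\infty(\O)$, so elliptic regularity gives $u_\v\in W^{2,p}(\O,\C)$ for every $p<\infty$, hence $u_\v\in C^0(\overline\O)$ and $|u_\v|\leq1$ everywhere in $\O$. (Equivalently---this is the ``maximum principle'' alluded to in the statement---one may work directly with $\rho=|u_\v|$: from \eqref{FullGLuAEq} and the Bochner-type identity $\tfrac12\Delta\rho^2=|\n u_\v-\imath A_\v u_\v|^2+\langle u_\v,(\n-\imath A_\v)^2u_\v\rangle$ one checks that $\rho^2$ is subharmonic on $\{\rho>1\}$, while the boundary condition in \eqref{FullGLuAEq} gives $\p_\nu\rho=0$ on $\p\O$; the strong maximum principle together with the Hopf lemma then exclude $\max_{\overline\O}\rho>1$.) The only step that requires a genuine computation is the pointwise covariant-gradient inequality on $\{|u_\v|>1\}$; everything else is standard bookkeeping.
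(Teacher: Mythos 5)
Your proof is correct, but it takes a genuinely different route from the paper's. The paper invokes a maximum principle on the modulus: starting from the Euler--Lagrange system \eqref{FullGLuAEq}, one shows (via the Kato-type identity you quote at the end) that $\rho^2=|u_\v|^2$ satisfies $-\Delta\rho^2\leq 2\rho^2(a_\v^2-\rho^2)/\v^2\leq 0$ wherever $\rho>1\geq a_\v$, so together with the Neumann condition the strong maximum principle and the Hopf lemma exclude $\max\rho>1$. You instead run a pure comparison argument with the truncation $\underline{u_\v}$, using the Lipschitz chain rule to make $(\underline{u_\v},A_\v)$ admissible, the covariant polar identity $|\n u-\imath Au|^2=|\n\rho|^2+\rho^2|\n g-\imath Ag|^2$ on $\{\rho>0\}$ to control the kinetic term, and the strict drop of $(a_\v^2-|u|^2)^2$ on $\{|u|>1\}$ (valid precisely because $a_\v\leq1$) to produce the contradiction; that is all correct. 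What each buys: your truncation route needs no regularity of the minimizer nor validity of the PDE --- it works verbatim for quasi-minimizers and gives the a.e.\ bound, which is the statement actually used in the energy comparisons later in the paper; the maximum-principle route gives the pointwise bound directly but presupposes the elliptic-regularity bootstrap. Your final paragraph upgrading a.e.\ to pointwise via $W^{2,p}$ regularity (or, equivalently, the maximum-principle argument you sketch) essentially reproduces the paper's proof and is not strictly needed if one reads ``$|u_\v|\leq1$ in $\O$'' in the a.e.\ sense natural to $H^1$.
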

On the other hand, if $(u_\v,A_\v)$ is a minimizer of $\E_{\v,\h}$ in the Coulomb gauge, then it solves
\begin{equation}\label{FullGLuEq}
\begin{cases}
-\Delta u_\v=\dfrac{u_\v}{\v^2}(a_\v^2-|u_\v|^2)^2-2\imath(A_\v u_\v\cdot\n u_\v)-|A_\v|^2u_\v&\text{in }\O
\\\p_\nu u_\v=0&\text{on }\O
\end{cases}.
\end{equation}
A fundamental bound in the study concerns $\|\n u_\v\|_{L^\infty(\O)}$. We have the following lemma which is a Gagliardo-Nirenberg type inequality with homogenous Neumann boundary condition. 
\begin{lem}\label{LemGNEst}\footnote{The proof of Lemma \ref{LemGNEst} is done by first using $\Phi:\D\to\O$, a conformal representation of $\O$ on the unit disk $\D$. Then we extend $\tilde u:=u\circ\Phi$ in the disk $B(0,2)$ by letting $u'(x)=\tilde u(x/|x|)$ for $x\in B(0,2)\setminus\D$. By using the boundary condition we have $u'\in H^2(B(0,2),\C)$. And finally one may conclude by using an interior version of Lemma \ref{LemGNEst} [Lemma A.1 in \cite{BBH1}].}\label{NumFootNoteConformal}
Let $\O\subset\R^2$ be a smooth bounded simply connected open set. There exists $C_\O\geq1$ s.t. if $u\in H^2(\O)$ is s.t. $\p_\nu u=0$ on $\p\O$ then
\[
\|\n u\|_{L^\infty(\O)}^2\leq C_\O\left(\|\Delta u\|_{L^\infty(\O)}+\|u\|_{L^\infty(\O)}\right)\|u\|_{L^\infty(\O)}.
\]
\end{lem}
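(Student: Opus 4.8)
The plan is to conformally transport the estimate to the unit disk and then, after an even reflection that kills the Neumann condition, to an interior Gagliardo--Nirenberg inequality on a ball, which is precisely Lemma A.1 of \cite{BBH1}. \emph{Reduction to $\D$.} Let $\Phi:\D\to\O$ be a Riemann map; since $\O$ is smooth, $\Phi$ extends to a diffeomorphism $\overline\D\to\overline\O$ with $\Phi'$ bounded and bounded away from $0$. Set $\tilde u:=u\circ\Phi\in H^2(\D)$. Conformality yields the pointwise identities $\Delta\tilde u=|\Phi'|^2(\Delta u)\circ\Phi$ and $|\n\tilde u|=|\Phi'|\,|\n u|\circ\Phi$, and it carries the outward normal of $\p\O$ to that of $\p\D$ up to a positive factor, so $\p_\nu u=0$ on $\p\O$ becomes $\p_\nu\tilde u=0$ on $\p\D$. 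Hence $\|\Delta\tilde u\|_{L^\infty(\D)}\le\|\Phi'\|_{L^\infty}^2\|\Delta u\|_{L^\infty(\O)}$, $\|\tilde u\|_{L^\infty(\D)}=\|u\|_{L^\infty(\O)}$ and $\|\n u\|_{L^\infty(\O)}\le\|1/\Phi'\|_{L^\infty}\|\n\tilde u\|_{L^\infty(\D)}$, so it is enough to prove the analogous inequality for $\tilde u$ on $\D$, the conformal factors being absorbed into $C_\O$.

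\emph{Reflection.} Extend $\tilde u$ to $u'\in H^2(B(0,2))$ by keeping $u'=\tilde u$ on $\D$ and putting $u'(x)=\tilde u(x/|x|^2)$ on $B(0,2)\setminus\overline\D$, i.e. by even reflection through $\p\D$. The inversion $x\mapsto x/|x|^2$ maps $B(0,2)\setminus\overline\D$ into $\D$, is the identity on $\p\D$, and its differential on $\p\D$ is the reflection across the tangent line; since $\p_\nu\tilde u=0$ on $\p\D$, the traces of $\n u'$ from $\D$ and from the annulus coincide, whence $u'\in H^2(B(0,2))$. Because that inversion is conformal, $\Delta\bigl(v(\cdot/|\cdot|^2)\bigr)=|\cdot|^{-4}(\Delta v)(\cdot/|\cdot|^2)$, and $1\le|x|\le2$ on the annulus, so $\|\Delta u'\|_{L^\infty(B(0,2))}\le\|\Delta\tilde u\|_{L^\infty(\D)}$ while $\|u'\|_{L^\infty(B(0,2))}=\|\tilde u\|_{L^\infty(\D)}$.

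\emph{Conclusion.} Applying the interior inequality of \cite[Lemma A.1]{BBH1} to $u'$ on $B(0,2)$ gives
\[
\|\n u'\|_{L^\infty(B(0,1))}^2\le C\bigl(\|\Delta u'\|_{L^\infty(B(0,2))}+\|u'\|_{L^\infty(B(0,2))}\bigr)\|u'\|_{L^\infty(B(0,2))},
\]
and since $\D\subset B(0,1)$ this controls $\|\n\tilde u\|_{L^\infty(\D)}$; combined with the reduction step it gives the stated bound on $\O$. The only genuinely delicate point is the $H^2$-character of the reflected extension together with the control of its Laplacian: this is exactly where the Neumann condition enters, the even reflection of an $H^2$ function across a curve being again $H^2$ precisely when the normal derivative vanishes on that curve, and one must use that reflection through a circle is conformal so that the Laplacian transforms with a bounded factor on $B(0,2)\setminus\overline\D$. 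The rest is bookkeeping of conformal factors and the cited interior estimate.
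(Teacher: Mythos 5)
Your proof follows exactly the strategy sketched in the paper's footnote: conformalize to the unit disk, reflect across $\p\D$ using the Neumann condition, then invoke the interior Gagliardo--Nirenberg estimate of Lemma A.1 in \cite{BBH1}. The one place you depart from the footnote as written is the reflection map: the footnote has $u'(x)=\tilde u(x/|x|)$, the radial projection onto $\p\D$, whereas you use the Kelvin inversion $u'(x)=\tilde u(x/|x|^2)$. Your choice is the one that actually works. Since the inversion is conformal and an involution of the annulus, $\Delta u'=|x|^{-4}(\Delta\tilde u)(x/|x|^2)$, hence $\|\Delta u'\|_{L^\infty}$ is controlled by $\|\Delta\tilde u\|_{L^\infty}$, and $D\iota$ restricted to $\p\D$ is the reflection across the tangent line, so the Neumann condition makes the first-order traces match and $u'\in H^2(B(0,2))$. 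With the radially constant extension, by contrast, $\Delta u'=r^{-2}\p_\theta^2\tilde u(1,\theta)$ on the annulus, which is neither bounded by $\|\Delta\tilde u\|_\infty$ nor even in $L^2$ in general (the trace of an $H^2$ function is only $H^{3/2}(\p\D)$), so $u'$ would not be $H^2$ and Lemma A.1 would not apply. In short, the footnote's formula is almost certainly a typo for the inversion you used, and your write-up is the correct implementation of the intended argument.
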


Consequently, with Lemma \ref{LemGNEst} [up to change the value of  $C_\O$], for $\v,\h>0$ and $a_\v\in L^\infty(\O,[b^2,1])$, if $(u_{\v},A_{\v})\in\H$ minimizes $\E_{\v,\h}$ is in the Coulomb gauge and is s.t. $\|A_\v\|_{L^\infty(\O)}\leq 1/\v$ [which is the case in the present work] then
\begin{equation}\label{CrucialLipEst}
\|\n u_\v\|_{L^\infty(\O)}\leq \dfrac{C_\O}{\v}.
\end{equation}
In the homogenous case as well as in the case without magnetic field, Estimate \eqref{CrucialLipEst} is crucial to describe vorticity defects. It is the same in the present work. More precisely, the main result [Theorem \ref{THM}] states that  the three above theorems are true  replacing $(u_\v,A_\v)$ that minimizes $\E_{\v,\h}$ in $\H$ by any configuration $(\tilde u_\v,\tilde A_\v)$ s.t. $E_\v(\tilde u_\v,\tilde A_\v)=\inf_\H\E_{\v,\h}+o(1)$ with two extra hypotheses on $|\tilde u_\v|$ :  $\|\n  |\tilde u_\v|\|_{L^\infty(\O)}=\mathcal{O}(\v^{-1})$ and $|\tilde u_\v|\in W^{2,1}(\O)$ [see \eqref{HypGlobalSurQuasiMin}]\\

{\bf Lassoued-Mironescu decoupling}

In order to study  pinned Ginzburg-Landau type energies, a nice trick was initiated by Lassoued and Mironescu in \cite{LM1}. Before explaining this trick we have to do a direct calculation for $(u,A)\in \H$:
\begin{equation}\label{DevCarre}
 \E_{\v,\h}(u,A)
= E_\v(u)+\dfrac{1}{2}\int_\O-2(u\wedge\n u)\cdot A+|u|^2|A|^2+|\rot(A)-\h|^2
\end{equation}
with 
\[
 E_\v(u)=\dfrac{1}{2}\int_\O|\n u|^2+\dfrac{1}{2\v^2}(a_\v^2-|u|^2)^2.
\]

The Lassoued-Mironescu decoupling is obtained by first minimizing  $ E_\v$ in $H^1(\O,\C)$. It is clear that $E_\v$ admits minimizers and if $U$ minimizes $E_\v$ then it satisfies
\begin{equation}\label{EqForU}
\begin{cases}
-\Delta U=\dfrac{U}{\v^2}(a_\v^2-|U|^2)&\text{in }\O\\\p_\nu U=0&\text{on }\p\O
\end{cases}.
\end{equation}

By an energetic argument it is easy to prove that, if $U$ minimizes $ E_\v$ in $H^1(\O,\C)$, then $b\leq|U|\leq1$. Moreover from \eqref{EqForU}, $U\wedge\n U=0$, {\it i.e.} $U=|U|\e^{\imath\theta}$ with $\theta\in\R$.

Then one may consider a scalar minimizer $U_\v:\O\to[b,1]$. This scalar minimizer may be seen as a regularization of  $a_\v$ [see Proposition \ref{Prop.RegularizationLMSol}].

Using this scalar minimizer one may get the well known Lassoued-Mironescu decoupling: for $v\in H^1(\O,\R)$ we have
\begin{equation}\label{DecouplageLM}
 E_\v(U_\v v)= E_\v(U_\v)+F_\v(v)
\end{equation}
with
\[
F_\v(v):=\dfrac{1}{2}\int_\O U_\v^2|\n v|^2+\dfrac{U_\v^4}{2\v^2}(1-|v|^2)^2.
\]
Using this decoupling, one may prove that, for $\v>0$, there exists a unique positive minimizer $U_\v:\O\to[b,1]$ of $E_\v$ in $H^1(\O,\R)$.

On the other hand, from \eqref{DevCarre} and \eqref{DecouplageLM}, for $(u,A)\in \H$ and $v=u/U_\v$ we have:
\begin{eqnarray*}
\F_{\v,\h}(v,A)&:=&\E_{\v,\h}(U_\v v,A)- E_\v(U_\v)
\\&=&\dfrac{1}{2}\int_\O U_\v^2|\n v-\imath A v|^2+\dfrac{U_\v^4}{2\v^2}(1-|v|^2)^2+|\rot(A)-\h|^2.
\end{eqnarray*}

It is easy to check that  $\F_{\v,\h}(v,A)$ is gauge invariant. This functional is of major interest in the study since $(v,A)$ minimizes $\F_{\v,\h}$ in $\H$ if and only if $(U_\v v,A)$ minimizes $\E_{\v,\h}$ in $\H$.

An easy comparaison argument implies that if $(v_\v,A_\v)$ minimizes $\F_{\v,\h}$ then $\|v_\v\|_{L^\infty(\O)}\leq1$.

 From now on we focus on the study of the minimizer of $\F_{\v,\h}$. Namely we have the following theorem.

\begin{thm}\label{THM} 
Assume that \eqref{NonDegHyp} holds and $\lambda,\delta,\h,K$ satisfy \eqref{CondOnLambdaDelta}, \eqref{BorneKMagn} and \eqref{PutaindHypTech}.

Let $\{(v_\v,A_\v)\,|\,0<\v<1\}\subset\H$ be s.t.  $\F(v_\v,A_\v)\leq\inf_\H\F+o(1)$. Assume also that 
\begin{equation}\label{HypGlobalSurQuasiMin}
\begin{cases}|v_\v|\in W^{2,1}(\O,\C)
\\\|\n| v_\v|\|_{L^\infty(\O)}=\mathcal{O}(\v^{-1})
\end{cases}.
\end{equation}

Then Theorems \ref{THM-A}, \ref{THM-B} and \ref{THM-C} hold for $u_\v=U_\v v_\v$.
\end{thm}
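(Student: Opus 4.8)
\textbf{Proof plan for Theorem \ref{THM}.}

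The strategy is a reduction to the non-magnetic pinned problem combined with the magnetic machinery of Sandier--Serfaty, exploiting the Lassoued--Mironescu splitting and the elliptic regularity built into \eqref{HypGlobalSurQuasiMin} and \eqref{CrucialLipEst}. First I would record the basic a priori bounds. Since $\F(v_\v,A_\v)\le\inf_\H\F+o(1)$ and the energy of a trivial competitor $(1,\n^\bot(\h\xi_0))$ is $O(\h^2)=O(|\ln\v|^2)$ by \eqref{BorneKMagn}, we get $\F(v_\v,A_\v)=O(|\ln\v|^2)$; after passing to the Coulomb gauge via \eqref{JaugeCoulomb}--\eqref{RepresentCoulomGauge}, estimates \eqref{CoulombH1}--\eqref{CoulombH2} give $\|A_\v\|_{H^1}=O(|\ln\v|)$, hence by Sobolev embedding $\|A_\v\|_{L^\infty}=O(\v^{-1})$ for small $\v$, so Lemma \ref{LemGNEst} applies to the (regularized) modulus and yields the Lipschitz-type bound $\|\n|v_\v|\|_{L^\infty}=O(\v^{-1})$ --- consistent with the hypothesis, and more importantly $|v_\v|$ then has the robustness needed so that the ``vorticity defect'' set $\{|u_\v|<b/2\}$ is well controlled (a small number of small disks) rather than genuine vortices.

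Next I would localize: because $U_\v$ is a regularization of $a_\v$ (Proposition \ref{Prop.RegularizationLMSol}), $U_\v\simeq b$ inside the inclusions and $U_\v\simeq1$ outside, and the weight $U_\v^2$ in $\F_{\v,\h}$ produces the effective logarithmic cost $b^2|\ln\v|+(1-b^2)|\ln(\lambda\delta)|$ per unit degree. The plan is to perform a vortex-ball construction (Jerrard/Sandier) for $(v_\v,A_\v)$ with the weight $U_\v^2$, obtaining a finite collection of balls carrying the full degree; combine this with the lower bound coming from the renormalized energy to pin down $d_\v$, the macroscopic positions near $\Lambda$ (items (4)--(5) of Theorem \ref{THM-A}), and the separation $|z_i^\v-z_j^\v|\gtrsim\h^{-1}\ln\h$. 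The three-scale structure is then extracted exactly as in the homogeneous magnetic case for the macroscopic and mesoscopic scales --- here \eqref{PutaindHypTech} guarantees $\delta\sqrt\h\to0$ fast enough that a single vortex sees essentially constant $U_\v$ on the mesoscopic scale $\ell=\sqrt{D/\h}$ --- while the microscopic scale is governed by $W^{\rm micro}$ and is identical to the pinned problem \emph{without} magnetic field, because on the scale $\lambda\delta$ the field contributes only lower-order terms (here one uses \eqref{CondOnLambdaDelta}: $\lambda^{1/4}|\ln\v|\to0$ makes the inclusions large enough to absorb a vortex but the magnetic energy on that scale is negligible). The optimality part of Theorem \ref{THM-B} is a direct construction: glue a radial profile of $U_\v$ inside a chosen inclusion at the $W^{\rm micro}$-minimizer, arrange the inclusions-with-vortices at the $W^{\rm meso}_{p_k,D_k'}$ and $\W_d$ minimizers, and add the near-London vector potential $\n^\bot(\h\xi_0)$ away from the vortices; then check $\E_{\v,\h}\le\inf\E_{\v,\h}+o(1)$ by summing the three renormalized energies and the leading self-energy terms.

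For Theorem \ref{THM-C} the point is that each $\Critical_k$ and $\Criticalbis_l$ is defined (in \eqref{TheExprionnI}, \eqref{TheExprionnII}) precisely as the field value at which adding one more quantized vortex lowers the total energy, given the combinatorics $\LamN$ of how degrees distribute over $\Lambda$; so the dichotomies follow by comparing the expansion of $\inf_\H\F_{\v,\h}$ for $d_\v$ vortices against that for $d_\v\pm1$, which is a bookkeeping consequence of the lower bound (vortex balls $+$ renormalized energy) and the matching upper bound (the construction above). The main obstacle --- and the place where the pinning genuinely interacts with the magnetic field --- is the mesoscopic analysis: one must show that near a minimum $p_k$ of $\xi_0$ the weight $U_\v^2$ and the induced field are sufficiently close to their homogeneous-case counterparts on the scale $\ell=\sqrt{D/\h}$, uniformly, so that $W^{\rm meso}_{p_k,D}$ is literally the homogeneous renormalized energy; this is exactly where \eqref{PutaindHypTech} is needed (to keep $\delta$ below the mesoscopic scale while $\h\to\infty$) and it requires a careful two-sided matching of the energy on an annulus $\ell\ll r\ll \h^{-1/2}\ln\h$, where the quadratic behavior $\xi_0(a)\ge\min\xi_0+\eta\,\dist(a,\Lambda)^2$ from \eqref{NonDegHyp} and Lemma \ref{Lem.DescriptionLambda} must beat the entropy of possible vortex locations. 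Everything else is a matter of transporting the known homogeneous and non-magnetic pinned estimates through the Lassoued--Mironescu identity and the Coulomb-gauge bounds \eqref{CoulombH1}--\eqref{CrucialLipEst}.
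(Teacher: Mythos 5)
Your plan reproduces the broad architecture (Lassoued--Mironescu splitting, Coulomb gauge, vortex-ball lower bound, three-scale renormalized energies, matched upper bound) and identifies the correct role of \eqref{PutaindHypTech} and \eqref{NonDegHyp}. But there are two substantive gaps. First, you write that after passing to the Coulomb gauge "Lemma \ref{LemGNEst} applies ... and yields the Lipschitz-type bound $\|\n|v_\v|\|_{L^\infty}=\mathcal{O}(\v^{-1})$". For genuine minimizers this follows from the Euler--Lagrange system \eqref{FullGLuEq}; a \emph{quasi-minimizer} satisfying only $\F(v_\v,A_\v)\le\inf_\H\F+o(1)$ solves no PDE and no boundary condition, so there is no $\Delta u$ to feed into the Gagliardo--Nirenberg inequality. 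The two conditions in \eqref{HypGlobalSurQuasiMin} are precisely what cannot be derived in the quasi-minimizing setting; they are genuine hypotheses. The $W^{2,1}$ regularity in particular is used (via Sard-type level-set arguments) in the bad-disc construction of Proposition \ref{Prop.ConstrEpsMauvDisk}, which your plan skips; without it, the "pseudo-vortex" description of $\{|v_\v|\le 1/2\}$ is not available.

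Second, your plan misses the paper's central perturbative device: the energetic cleaning step (Section \ref{Sec.Clean}), which replaces $\F_{\v,\h}$ by the non-gauge-invariant $\tilde\F_{\v,\h}$ of \eqref{EGALITEdenettoyga} up to $o(1)$ errors, using the $L^p$-proximity \eqref{LpEstmU} of $U_\v^2$ to $1$. That replacement is what lets one carry Serfaty's decomposition (Proposition \ref{Docmpen}) essentially unchanged, splitting the energy into $\h^2\Jo$, a term $2\pi\h\sum d_i\xi_0(a_i)$ measuring proximity to $\Lambda$, the weighted Dirichlet energy $F(v_\v)$, and $\tilde V$; the macro/meso/micro analysis you describe then acts only on $F(v_\v)$. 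Framing the proof as "combine vortex balls with the renormalized-energy lower bound" skips exactly the step that makes the pinned weight and the magnetic coupling decouple, and it also glosses over Theorem \ref{ThmBorneDegréMinGlob} (the uniform vorticity bound), the $\eta$-ellipticity Proposition \ref{Prop.EtaEllpProp}, and the dilution estimates of Section \ref{Sec.StrongEffectDilution} (Lemma \ref{Lem.BorneLongueEnradian}, Corollary \ref{Cor.BorneInfProcheIncl}), all of which are needed before any of the localization statements in Theorem \ref{THM-A} can be proved.
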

\begin{remark}\label{THMRmark} 
Theorem \ref{THM} may be rephrased in term of $U_\v$.  Let $(\h)_{0<\v<1}\subset(0,\infty)$, $\{(u_\v,A_\v)\,|\,0<\v<1\}\subset\H$ and let $v_\v:=u_\v/U_\v\in H^1(\O,\C)$. On the one hand,  from the decoupling \eqref{DecouplageLM}, we have $\{(u_\v,A_\v)\,|\,0<\v<1\}\subset\H$ is s.t. $\E_{\v,\h}(u_\v,A_\v)\leq\inf_\H\E_{\v,\h}+o(1)$ if and only $\{(v_\v,A_\v)\,|\,0<\v<1\}$ is s.t. $\F_{\v,\h}(v_\v,A_\v)\leq\inf_\H\F_{\v,\h}+o(1)$. On the other hand $v_\v$ satisfies \eqref{HypGlobalSurQuasiMin} if and only if we have $|u_\v|\in W^{2,1}(\O,\C)$ and $\|\n| u_\v|\|_{L^\infty(\O)}=\mathcal{O}(\v^{-1})$.

\end{remark}

\section{Plan of the article and proof of Theorem \ref{THM}}
The proof of Theorem \ref{THM} is done in several steps. It is based on a perturbative argument by replacing the energy $\F_{\v,\h}$ with an energy $\tilde\F_{\v,\h}$. This step is called the energetic cleaning [Section \ref{Sec.Clean}]. The functional $\tilde\F_{\v,\h}$ is a perturbation of $\F_{\v,\h}$: for $(v_\v,A_\v)\in\H$ which is in the Coulomb gauge and s.t. $\F_{\v,\h}(v_\v,A_\v)=\mathcal{O}(\h^2)$ we have $\tilde\F_{\v,\h}(v_\v,A_\v)-\F_{\v,\h}(v_\v,A_\v)=o(1)$ [see Proposition \ref{Prop.Nettoyage}].  In particular we have $\F_{\v,\h}(v_\v,A_\v)\leq\inf_\H\F_{\v,\h}+o(1)$ if and only if $\tilde\F_{\v,\h}(v_\v,A_\v)\leq\inf_\H\tilde\F_{\v,\h}+o(1)$. \\

In section \ref{SectionBoundVorticity} we apply a vortex ball construction of Sandier-Serfaty [Proposition \ref{Prop.BorneInfLocaliseeSandSerf}] and  we follow the strategy of Sandier-Serfaty developed in \cite{SS2} to prove that the vorticity of a reasonable configuration is bounded [see Theorem \ref{ThmBorneDegréMinGlob}]. 

Once the bound on the vorticity yields, we adapt a result of  Serfaty \cite{S1} which gives a decomposition of  $\tilde{\F}_{\v,\h}(v_\v,A_\v)$ in term of $F_\v(v_\v)$ and the location of the vorticity defects [Proposition \ref{Docmpen}]. \\

The decomposition obtained in Proposition \ref{Docmpen} allows to focus the study on the energy $F_\v$ which ignores the magnetic field. From this point, the study of a configuration $(v_\v,A_\v)$ is done for a major part {\it via} classical results based on the case without magnetic field [as in \cite{BBH}]. To this end we adapt to our case some standard estimates ignoring the magnetic field, in particular the crucial notion of Renormalized energies is presented Section \ref{Sec.RenEn}.\\

With these preliminary results, in Section \ref{SecUpperBound}, for $d\in\N^*$, we  construct competitors $(v_\v,A_\v)\in\H$ with $d$  quantized vorticity defects and then we get a sharp upper bound [see Proposition \ref{Prop.BorneSupSimple}]:
\[
\inf_\H\F_{\v,\h}\leq \h^2 \Jo+d\Pic\left[-\h+\HoC\right]+\L_1(d)\ln\h+\L_2(d)+o(1).
\]
Here $\Jo\&\Pic$ are independent of $\v$ and $d$, $\L_1(d)\&\L_2(d)$ are independent of $\v$ and $\HoC$ is the leading term in the expression of the first critical field.\\

With the above upper bound for the minimal energy, the heart of the work consists in getting lower bounds for quasi-minimizers. Before getting such lowers bounds we adapt to our case some tools in Section \ref{Sec.ToolBox}: an $\eta$-ellipticity result is proved [Proposition \ref{Prop.EtaEllpProp}], a construction of {\it ad-hoc} bad-discs is done [Proposition \ref{Prop.ConstrEpsMauvDisk}] and the strong effect of the dilution is expressed by various result in Section \ref{Sec.StrongEffectDilution}. \\

In Section \ref{Sect.ShapInfo} we begin the proof of the theorems.  The part of Theorem \ref{THM} related with Theorem \ref{THM-A} is a direct consequence of Propositions  \ref{PropToutLesDegEg1}, \ref{PropVortexProcheLambda}, \ref{Prop.BonEcartement} and \ref{Prop.PinningComplet} [and also Corollary \ref{CorDefPremierChampsCrit}].

The part of Theorem \ref{THM} related with Theorem \ref{THM-B} is given by Corollary \ref{Cor.ExactEnergyExp}  and Proposition \ref{Prop.BorneSupSimple}.

The part of Theorem \ref{THM} related with Theorem \ref{THM-C} is a direct consequence of Corollary \ref{CorDefPremierChampsCrit} and Propositions \ref{Prop.SHarperdescriptionNonSatured}$\&$\ref{Prop.SHarperdescriptionNonSaturedII}.

\section{Some preliminaries}

\subsection{Energetic cleaning}\label{Sec.Clean}

In order to do the cleaning step, we have to get some estimates. Our goal is to study {\it quasi-minimizer} of $\F_{\v,\h}$. To keep a simple presentation, we write $\F$ instead of $\F_{\v,\h}$ and $F$ instead of $F_\v$ when there is no ambiguity.\\

From \eqref{CoulombH1}, \eqref{CoulombH2} and classical elliptic regularity arguments we have the following proposition.
\begin{prop}\label{Prop.BornesSups1}Let $\{(v_\v,A_\v)\,|\,0<\v<1\}\subset\H$ be  a family of configuration in the Coulomb gauge. Then there is $\xi_\v\in H^1_0\cap H^2(\O,\R)$ s.t. $A_\v=\n^\bot\xi_\v$. Moreover, if for some $\h=\h(\v)$ we have 
\begin{equation}\label{BorneFh^2}
\text{$\F(v_\v,A_\v)=\mathcal{O}(\h^2)$,}
\end{equation}
then there exists ${C}$ [independent of $\v$] s.t.
\begin{eqnarray}\label{FirstUpBoundXi}
\|\xi_\v\|_{H^2(\O)}&\leq& {C}\h.
\end{eqnarray}
Consequently, for $p\in[1,\infty)$, there exists $C_p>1$ [independent of $\v$] s.t.
\begin{equation}\label{EstLpA}
\|\n \xi_\v\|_{L^p(\O)}=\|A_\v\|_{L^p(\O)}\leq C_p\h.
\end{equation}
Moreover, up to increase the value of $C>1$ [independently of $\v$], we have
\begin{equation}\label{EstGradMinL2}
\|\n v_\v\|_{L^2(\O)}\leq C\h.
\end{equation}
And if $\rot(A_\v)\in H^1(\O)$ then
\begin{equation}\label{EstH3}
\|\xi_\v\|_{H^3(\O)}\leq C\|\rot(A_\v)\|_{H^1(\O)}.
\end{equation}
In particular, for further use,  note that if $\rot(A_\v)\in H^1(\O)$ then $\xi_\v\in H^1_0\cap H^2\cap W^{1,\infty}(\O)$ and 
\begin{equation}\label{EstH4}
\|\n\xi_\v\|_{L^{\infty}(\O)}\leq C\|\rot(A_\v)\|_{H^1(\O)}.
\end{equation}
\end{prop}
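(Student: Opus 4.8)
The plan is to exploit the Coulomb gauge representation, the coercivity estimates \eqref{CoulombH1}–\eqref{CoulombH2}, and then bootstrap using the Euler–Lagrange system \eqref{FullGLuAEq} together with elliptic regularity. First I would invoke \eqref{RepresentCoulomGauge}: since each $A_\v$ satisfies \eqref{JaugeCoulomb}, there is a unique $\xi_\v \in H^1_0 \cap H^2(\O,\R)$ with $A_\v = \n^\bot \xi_\v$, and then $\rot(A_\v) = -\Delta \xi_\v$. Next, observe that $\F(v_\v,A_\v) = \mathcal{O}(\h^2)$ controls, in particular, $\tfrac12\int_\O |\rot(A_\v)-\h|^2 = \mathcal{O}(\h^2)$, hence $\|\rot(A_\v)-\h\|_{L^2(\O)} = \mathcal{O}(\h)$; since $\|\h\|_{L^2(\O)} = \h\,|\O|^{1/2} = \mathcal{O}(\h)$, the triangle inequality gives $\|\rot(A_\v)\|_{L^2(\O)} = \mathcal{O}(\h)$. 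Applying \eqref{CoulombH1} yields $\|A_\v\|_{H^1(\O)} = \mathcal{O}(\h)$, and by \eqref{CoulombH2} with the $H^1$-bound on $\rot(A_\v)$—wait, at this stage we only have the $L^2$-bound on $\rot(A_\v)$, so the clean conclusion is $\|A_\v\|_{H^1} \le C\h$; to get \eqref{FirstUpBoundXi}, i.e. the $H^2$-bound on $\xi_\v$, note $-\Delta\xi_\v = \rot(A_\v) \in L^2$ with $\xi_\v|_{\p\O}=0$, so standard elliptic regularity for the Dirichlet Laplacian gives $\|\xi_\v\|_{H^2(\O)} \le C\|\rot(A_\v)\|_{L^2(\O)} \le C\h$. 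Estimate \eqref{EstLpA} is then immediate from the Sobolev embedding $H^1(\O) \hookrightarrow L^p(\O)$ for all $p \in [1,\infty)$ in dimension two, applied to $\n\xi_\v = A_\v^\bot$.

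For \eqref{EstGradMinL2}, I would go back to the energy: $\F(v_\v,A_\v) \ge \tfrac12\int_\O U_\v^2 |\n v_\v - \imath A_\v v_\v|^2$, and since $U_\v \ge b$ and $\|v_\v\|_{L^\infty} \le 1$ (the comparison bound noted just before Theorem \ref{THM}, or Proposition \ref{Prop.ModuLeq1} translated via the decoupling), we get $\|\n v_\v - \imath A_\v v_\v\|_{L^2(\O)} = \mathcal{O}(\h)$. Then $\|\n v_\v\|_{L^2} \le \|\n v_\v - \imath A_\v v_\v\|_{L^2} + \|A_\v v_\v\|_{L^2} \le \mathcal{O}(\h) + \|A_\v\|_{L^2}\|v_\v\|_{L^\infty} = \mathcal{O}(\h)$ using \eqref{EstLpA} with $p=2$. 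This gives \eqref{EstGradMinL2}.

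The remaining estimates \eqref{EstH3}, \eqref{EstH4} are a one-step bootstrap under the extra hypothesis $\rot(A_\v) \in H^1(\O)$: from $-\Delta\xi_\v = \rot(A_\v) \in H^1(\O)$ with zero Dirichlet data, elliptic regularity on the smooth domain $\O$ upgrades this to $\xi_\v \in H^3(\O)$ with $\|\xi_\v\|_{H^3(\O)} \le C\|\rot(A_\v)\|_{H^1(\O)}$, which is \eqref{EstH3}. Finally \eqref{EstH4} follows from $\|\n\xi_\v\|_{L^\infty(\O)} \le C\|\n\xi_\v\|_{H^2(\O)} \le C\|\xi_\v\|_{H^3(\O)}$ via the Sobolev embedding $H^2(\O) \hookrightarrow L^\infty(\O)$ (again dimension two), giving $\xi_\v \in H^1_0 \cap H^2 \cap W^{1,\infty}(\O)$. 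The only mild subtlety—hardly an obstacle—is making sure the elliptic-regularity constants depend only on $\O$ and not on $\v$, which is automatic since the operator $-\Delta$ with Dirichlet conditions on the fixed smooth domain $\O$ is $\v$-independent; everything else is a routine chain of triangle inequalities and Sobolev embeddings. I expect no genuine difficulty here; the proposition is essentially a bookkeeping consolidation of \eqref{CoulombH1}–\eqref{CoulombH2} plus the energy bound \eqref{BorneFh^2}.
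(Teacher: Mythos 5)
Your argument matches the paper's intended route -- the paper offers no proof beyond citing \eqref{CoulombH1}, \eqref{CoulombH2} and ``classical elliptic regularity,'' and everything you write for \eqref{FirstUpBoundXi}, \eqref{EstLpA}, \eqref{EstH3}, \eqref{EstH4} is exactly the standard chain (Coulomb-gauge potential $\xi_\v$, $L^2$ control of $\rot(A_\v)$ from the energy, Dirichlet elliptic regularity, Sobolev embeddings in $2$D). One small sign slip: with the paper's convention $(a_1,a_2)^\bot=(-a_2,a_1)$ one has $\rot(\n^\bot\xi_\v)=+\Delta\xi_\v$, not $-\Delta\xi_\v$; this is harmless for the estimates.

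There is, however, a genuine (if minor and easily repaired) gap in your derivation of \eqref{EstGradMinL2}. You invoke $\|v_\v\|_{L^\infty(\O)}\le 1$, citing the remark made just before Theorem \ref{THM}. That remark is for \emph{minimizers} of $\F_{\v,\h}$, whereas Proposition \ref{Prop.BornesSups1} is stated for an arbitrary family in the Coulomb gauge satisfying only the energy bound \eqref{BorneFh^2}; such configurations need not be bounded in $L^\infty$. The step $\|A_\v v_\v\|_{L^2}\le\|A_\v\|_{L^2}\|v_\v\|_{L^\infty}$ therefore isn't licensed by the hypotheses. The fix is to use what the energy actually gives: from $\int_\O \tfrac{U_\v^4}{2\v^2}(1-|v_\v|^2)^2=\mathcal{O}(\h^2)$ and $U_\v\ge b$ one gets $\|1-|v_\v|^2\|_{L^2}=\mathcal{O}(\v\h)$, hence $\|v_\v\|_{L^4}^2=\||v_\v|^2\|_{L^2}\le |\O|^{1/2}+\mathcal{O}(\v\h)=\mathcal{O}(1)$ (using the standing regime $\h=\mathcal{O}(|\ln\v|)$, so $\v\h\to0$), and then $\|A_\v v_\v\|_{L^2}\le\|A_\v\|_{L^4}\|v_\v\|_{L^4}=\mathcal{O}(\h)$ by \eqref{EstLpA}. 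With that substitution the triangle-inequality step closes and \eqref{EstGradMinL2} follows as you intend. Aside from this one unjustified appeal to $\|v_\v\|_{L^\infty}\le1$, your proof is correct and coincides with the paper's approach.
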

In order to do the cleaning step we need to underline the fact that $U_\v$ may be seen as a regularization of $a_\v$ in $W^{1,\infty}$ with estimates that become bad when approaching $\p\o_\v$.
\begin{prop}\label{Prop.RegularizationLMSol}
There exist $C_b,\ab>0$ depending only on $b$ and $\O$ s.t. for $\v,r>0$ we have:
\begin{equation}\label{EstGlobGradU}
\|\n U_\v\|_{L^\infty(\O)}\leq\dfrac{C_b}{\v},
\end{equation}
\begin{equation}\label{EstLoinInterfaceU}
|U_\v-a_\v|\leq{C_b}\e^{-\frac{\ab r}{\v}}\text{ in }\{x\in\O\,|\,\dist(x,\p\o_\v)\geq r\},
\end{equation}
\begin{equation}\label{EstLoinInterfaceGradU}
|\n U_\v|\leq\dfrac{C_b\e^{-\frac{\ab r}{\v}}}{\v}\text{ in }\{x\in\O\,|\,\dist(x,\p\o_\v)\geq r\}.
\end{equation}
\end{prop}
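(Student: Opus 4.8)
The plan is to prove the three estimates in turn, entirely by local elliptic arguments that use only the pointwise bounds $b\le U_\v\le 1$ (already available for the scalar minimizer) and the equation \eqref{EqForU}; no global information on $U_\v$ is needed, so the fact that $a_\v$ is discontinuous and that $\omega_\v$ has many components is irrelevant.

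\textbf{The bound \eqref{EstGlobGradU}.} Since $a_\v^2-U_\v^2\in[b^2-1,1-b^2]$, the right-hand side of \eqref{EqForU} belongs to $L^\infty(\O)$, so elliptic regularity gives $U_\v\in H^2(\O)$ with $\p_\nu U_\v=0$ on $\p\O$, and moreover $\|\Delta U_\v\|_{L^\infty(\O)}\le(1-b^2)\v^{-2}$ directly from \eqref{EqForU}. I would then feed this into Lemma \ref{LemGNEst}: with $\|U_\v\|_{L^\infty(\O)}\le1$ it gives $\|\n U_\v\|_{L^\infty(\O)}^2\le C_\O(\|\Delta U_\v\|_{L^\infty(\O)}+1)\le C_\O((1-b^2)\v^{-2}+1)$, hence \eqref{EstGlobGradU}.

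\textbf{The decay \eqref{EstLoinInterfaceU}.} Fix $x_0\in\O$ with $\dist(x_0,\p\o_\v)\ge r$, and first suppose $B(x_0,r)\subset\O$. The open ball $B(x_0,r)$ is connected and disjoint from $\p\o_\v$, hence $a_\v$ equals a constant $c\in\{b,1\}$ on it, and there $-\Delta U_\v=\v^{-2}U_\v(c^2-U_\v^2)$. Setting $\psi:=|U_\v-c|$ (which equals $U_\v-b$ when $c=b$ and $1-U_\v$ when $c=1$, using $b\le U_\v\le1$), a direct computation from the equation gives, in both cases,
\[
\Delta\psi=\frac{U_\v(U_\v+c)}{\v^2}\,\psi\ \ge\ \frac{2b^2}{\v^2}\,\psi\quad\text{in }B(x_0,r),\qquad 0\le\psi\le1-b .
\]
A standard clearing-out/comparison argument — comparing $\psi$ with an explicit radial supersolution of $\Delta f=\kappa^2 f$ with $\kappa:=\sqrt2\,b/\v$ on $B(x_0,r)$, using the maximum principle for the operator $\Delta-\kappa^2$ — then yields $\psi(x_0)\le C_b\,e^{-\ab r/\v}$ for a suitable $\ab\in(0,\sqrt2\,b)$ depending only on $b$. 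When $B(x_0,r)$ meets $\p\O$, one proceeds identically after flattening $\p\O$ and performing an even reflection (legitimate because $\p_\nu U_\v=0$ and $a_\v\equiv1$ in a fixed neighbourhood of $\p\O$ by construction), the bounded first-order terms coming from the curvature of $\p\O$ being harmless for small $\v$. This gives \eqref{EstLoinInterfaceU} on all of $\{\dist(\cdot,\p\o_\v)\ge r\}$.

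\textbf{The decay \eqref{EstLoinInterfaceGradU}.} For $x_0$ with $\dist(x_0,\p\o_\v)\ge r$, apply \eqref{EstLoinInterfaceU} with radius $r/2$ on $B(x_0,r/2)$, on which $a_\v\equiv c$: there $|U_\v-c|\le C_b\,e^{-\ab r/(2\v)}$, hence $|\Delta(U_\v-c)|=\v^{-2}U_\v(U_\v+c)|U_\v-c|\le 2C_b\v^{-2}e^{-\ab r/(2\v)}$. A rescaled interior gradient estimate (Lemma A.1 in \cite{BBH1}, or a scaling of Lemma \ref{LemGNEst}) applied to $U_\v-c$ on $B(x_0,r/2)$ gives $|\n U_\v(x_0)|\le C\big(r\,\|\Delta(U_\v-c)\|_{L^\infty}+r^{-1}\|U_\v-c\|_{L^\infty}\big)\le CC_b(r\v^{-2}+r^{-1})e^{-\ab r/(2\v)}$; for $r\le\v$ one simply invokes \eqref{EstGlobGradU}, and for $r>\v$ the prefactors $r\v^{-2}$ and $r^{-1}$ are absorbed into a fraction of the exponent (using $te^{-\alpha t/2}\le C_\alpha e^{-\alpha t/4}$ and $\v/r\le1$), ending with $|\n U_\v(x_0)|\le C_b\v^{-1}e^{-\ab r/(4\v)}$. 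Shrinking $\ab$ once and for all — it is used uniformly in \eqref{EstLoinInterfaceU} and \eqref{EstLoinInterfaceGradU} — and enlarging $C_b$ finishes the proof. I expect the only mildly delicate steps to be the comparison-function/maximum-principle step in \eqref{EstLoinInterfaceU} together with its reflection variant near $\p\O$, and the constant bookkeeping in the passage from \eqref{EstLoinInterfaceU} to \eqref{EstLoinInterfaceGradU}; I do not foresee a genuine obstacle.
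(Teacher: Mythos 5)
Your three-part argument is correct and the estimates line up with what the paper needs; the one place you differ from the paper is in how the boundary $\p\O$ is handled. For \eqref{EstGlobGradU} you invoke Lemma \ref{LemGNEst} directly, as the paper does. For \eqref{EstLoinInterfaceU} the paper only cites Proposition 2 of \cite{Publi3}, and your clearing-out argument (rewriting the equation as $\Delta\psi=\v^{-2}U_\v(U_\v+c)\psi\ge 2b^2\v^{-2}\psi$ and comparing against a radial supersolution of $\Delta f=\kappa^2 f$) is exactly the standard route; both branches $c=b$ and $c=1$ check out, and the lower bound $U_\v(U_\v+c)\ge 2b^2$ is tight. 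For \eqref{EstLoinInterfaceGradU}, the paper works in Appendix \ref{ProofVotation} by transporting everything to the unit disk via a conformal map $\Phi:\D\to\O$ (with smooth extension to $\overline\O$ by Painlevé) and then extending $\tilde U_\v$ to $B(0,2)$ by the inversion $x\mapsto x/|x|^2$, after which every estimate becomes purely interior and Lemma A.1 of \cite{BBH1} can be applied on $B(x_0,r/2)$ without any case distinction; this re-uses the very mechanism that underlies Lemma \ref{LemGNEst}. You instead flatten $\p\O$ locally and reflect evenly. Both are sound; the paper's conformal-plus-inversion route dispatches the curvature bookkeeping once and for all, while your local flattening is more elementary but forces you to argue that the first-order error terms are harmless, and you have to repeat the reflection (implicitly) when proving \eqref{EstLoinInterfaceGradU}. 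Two small inaccuracies, neither fatal: (i) $a_\v\equiv1$ in a $\mathcal{O}(\delta)$-neighbourhood of $\p\O$, not a fixed one — this does not actually matter, since for the reflection you only need $a_\v$ to be constant on $B(x_0,r)\cap\O$, which is automatic from $\dist(x_0,\p\o_\v)\ge r$; (ii) your prefactor is $(r\v^{-2}+r^{-1})$ from the scaled first-order estimate $|\n u(x_0)|\le C(R\|\Delta u\|_\infty+R^{-1}\|u\|_\infty)$, whereas the sharper Gagliardo–Nirenberg form in Lemma A.1 of \cite{BBH1} gives the prefactor $\sqrt{\v^{-2}+r^{-2}}\le\sqrt2\,\v^{-1}$ directly (for $r\ge\v$), avoiding the absorption-into-the-exponent step; your version still closes after shrinking $\ab$, so this is purely cosmetic.
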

\begin{proof}
Estimate \eqref{EstGlobGradU} is a consequence of Lemma \ref{LemGNEst}. The proof of \eqref{EstLoinInterfaceU} is the same than Proposition 2 in \cite{Publi3}. Estimate \eqref{EstLoinInterfaceGradU} is proved in Appendix \ref{ProofVotation}. 
\end{proof}
Since the 2-dimensional Hausdorff measure of $\o_\v$ satisfies $\Haus^2(\o_\v)= \mathcal{O}(\lambda^2)$, from \eqref{EstLoinInterfaceU}, for $p\in[1,\infty[$, we have the following crucial estimate
\begin{equation}\label{LpEstmU}
\|U_\v^2-1\|_{L^p(\O)}=\mathcal{O}(\lambda^{2/p}).
\end{equation}

We are now in position to do the cleaning step. We assume that $\{(v_\v,A_\v)\,|\,0<\v<1\}\subset\H$ is  a family of configuration in the Coulomb gauge which satisfies \eqref{BorneFh^2}. 
We denote $\alpha_\v=U_\v^2$ and $\rho_\v=|v_\v|$. From direct computations, by splitting the integrals with the identity $\alpha_\v=(\alpha_\v-1)+1$ and using $(1-\rho_\v)^4\leq(1-\rho_\v^2)^2$, we have the existence of $C\geq1$ [independent of $\v$] s.t.
\begin{equation}\label{Lem.Nettoyage1}
\left|\int_\O\alpha_\v(v_\v\wedge\n v_\v)\cdot A_\v-\int_\O(v_\v\wedge\n v_\v)\cdot A_\v\right|\leq\dfrac{C}{2}\left[\sqrt\lambda\h^2+\lambda^{1/4}\h^3\v\right]\leq  C\sqrt\lambda\h^2
\end{equation}
and
\begin{equation}\label{Lem.Nettoyage2}
\left|\int_\O \alpha_\v \rho_\v^2|A_\v|^2-\int_\O  |A_\v|^2\right|\leq C\h^2(\v\h+\lambda).
\end{equation}

By combining \eqref{Lem.Nettoyage1} and \eqref{Lem.Nettoyage2} we immediately get the following proposition.
\begin{prop}\label{Prop.Nettoyage}
If $(v_\v,A_\v)$ is in the Coulomb gauge and satisfies \eqref{BorneFh^2} then
\[
|\tilde\F(v_\v,A_\v)-\F(v_\v,A_\v)|\leq C\h^2(\v\h+\sqrt\lambda)
\] 
with $C$ which is independent of $\v$ and 
\begin{equation}\label{EGALITEdenettoyga}
\tilde\F( v, A)=\tilde\F_{\v,\h}( v, A):=F( v)+\dfrac{1}{2}\int_\O-2( v\wedge\n  v)\cdot  A+| A|^2+|\rot( A)-\h|^2.
\end{equation}

\end{prop}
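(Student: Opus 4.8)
The plan is to prove Proposition \ref{Prop.Nettoyage} by combining the two estimates \eqref{Lem.Nettoyage1} and \eqref{Lem.Nettoyage2} in the decomposition \eqref{DevCarre}. Writing out the difference $\tilde\F(v_\v,A_\v)-\F(v_\v,A_\v)$ using the definition of $\F_{\v,\h}$ from the Lassoued--Mironescu decoupling and \eqref{EGALITEdenettoyga}, the terms $F(v_\v)$ and $|\rot(A_\v)-\h|^2$ cancel, and only two discrepancies survive: one from replacing $\alpha_\v(v_\v\wedge\nabla v_\v)\cdot A_\v$ by $(v_\v\wedge\nabla v_\v)\cdot A_\v$ in the current--field coupling term, and one from replacing $\alpha_\v\rho_\v^2|A_\v|^2$ by $|A_\v|^2$ in the magnetic term. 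Thus $|\tilde\F-\F|$ is bounded by the sum of the left-hand sides of \eqref{Lem.Nettoyage1} and \eqref{Lem.Nettoyage2} (up to the factor $1/2$), which by those estimates is $\mathcal{O}(\sqrt\lambda\,\h^2)+\mathcal{O}(\h^2(\v\h+\lambda))$, and since $\lambda<1$ this is absorbed into $C\h^2(\v\h+\sqrt\lambda)$.

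Since \eqref{Lem.Nettoyage1} and \eqref{Lem.Nettoyage2} are stated just above, the proof of the proposition is essentially the bookkeeping just described; but for completeness I would also indicate how those two estimates themselves are obtained, as that is where the actual work lies. For \eqref{Lem.Nettoyage1}, the plan is to split $\alpha_\v=(\alpha_\v-1)+1$ so that the difference equals $\int_\O(\alpha_\v-1)(v_\v\wedge\nabla v_\v)\cdot A_\v$; then bound it by H\"older's inequality, distributing the exponents as $|\alpha_\v-1|$ in $L^2$ via \eqref{LpEstmU} with $p=2$ (giving $\mathcal{O}(\sqrt\lambda)$, using $U_\v\in[b,1]$ so $\alpha_\v-1=U_\v^2-1$), $|v_\v\wedge\nabla v_\v|\leq|\nabla v_\v|$ (recall $|v_\v|\leq1$) in $L^2$ controlled by \eqref{EstGradMinL2}, and $|A_\v|$ in $L^\infty$ — but since we only have $L^p$ control on $A_\v$ from \eqref{EstLpA}, one instead uses a three-term H\"older split, e.g. $L^{4}\times L^{2}\times L^{4}$ on $(\alpha_\v-1,\nabla v_\v,A_\v)$, with $\|\alpha_\v-1\|_{L^4}=\mathcal{O}(\lambda^{1/2})$, $\|\nabla v_\v\|_{L^2}=\mathcal O(\h)$ and $\|A_\v\|_{L^4}=\mathcal O(\h)$, which would give $\mathcal O(\sqrt\lambda\,\h^2)$; the extra term $\lambda^{1/4}\h^3\v$ in \eqref{Lem.Nettoyage1} comes from using instead the pointwise gradient bound $\|\nabla v_\v\|_{L^\infty}=\mathcal O(\v^{-1})$ together with $\|\alpha_\v-1\|_{L^1}=\mathcal O(\lambda)$ on the part of $\O$ near $\partial\o_\v$ where \eqref{EstLoinInterfaceU} is ineffective, and one checks $\lambda^{1/4}\h^3\v\lesssim\sqrt\lambda\,\h^2$ under \eqref{CondOnLambdaDelta} and \eqref{BorneKMagn}. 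For \eqref{Lem.Nettoyage2}, the same $\alpha_\v=(\alpha_\v-1)+1$ split plus the further split $\alpha_\v\rho_\v^2-1=(\alpha_\v-1)\rho_\v^2+(\rho_\v^2-1)$, using $(1-\rho_\v)^4\leq(1-\rho_\v^2)^2$ to control $\int_\O(1-\rho_\v^2)|A_\v|^2$ by $\F(v_\v,A_\v)^{1/2}\|A_\v\|_{L^4}^2\cdot\v$ (absorbing the $U_\v^4/(2\v^2)(1-\rho_\v^2)^2$ term via Cauchy--Schwarz) and $\|\alpha_\v-1\|_{L^\infty}$-free H\"older with $\|A_\v\|_{L^4}^2=\mathcal O(\h^2)$ gives the $\h^2(\v\h+\lambda)$ bound.

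The main obstacle is not the final combination (which is trivial) but the estimate \eqref{Lem.Nettoyage1}: one must handle the coupling term $\int_\O(\alpha_\v-1)(v_\v\wedge\nabla v_\v)\cdot A_\v$ without any $L^\infty$ bound on $A_\v$ at this stage (only $\|A_\v\|_{L^p}=\mathcal{O}(\h)$ for finite $p$ from \eqref{EstLpA}, since we do not yet know $\rot(A_\v)\in H^1$), and without a pointwise control on $\nabla v_\v$ that does not cost a factor $\v^{-1}$. The balance is delicate: the good $L^p$ smallness of $\alpha_\v-1$ is $\mathcal O(\lambda^{2/p})$, which degrades as $p\to\infty$, so one cannot simply put $A_\v$ in a very high $L^p$; the exponents must be chosen (as $4,2,4$, or more generally interpolated) so that the product of the three norms is $o(1)$ relative to the target accuracy, and one must separately treat a neighbourhood of $\partial\o_\v$ of controlled area where \eqref{EstLoinInterfaceU} fails, paying the price $\v\|\nabla v_\v\|_{L^\infty}=\mathcal O(1)$ there. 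Verifying that all resulting error terms are $o(1)$ — equivalently $\mathcal{O}(\h^2(\v\h+\sqrt\lambda))$ with $\v\h+\sqrt\lambda\to0$ — uses precisely the hypotheses \eqref{CondOnLambdaDelta} ($\lambda^{1/4}|\ln\v|\to0$, in particular $\sqrt\lambda\,\h^2\to0$ since $\h=\mathcal O(|\ln\v|)$) and \eqref{BorneKMagn}, and this is the quantitative check I would carry out carefully.
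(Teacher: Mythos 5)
Your proof is correct and takes essentially the same approach as the paper: the paper likewise obtains Proposition \ref{Prop.Nettoyage} by simply adding \eqref{Lem.Nettoyage1} and \eqref{Lem.Nettoyage2} and using $\lambda\leq\sqrt{\lambda}$, which is precisely your bookkeeping step. One minor caveat on your supplementary account of \eqref{Lem.Nettoyage1}: the pointwise bound $\|\nabla v_\v\|_{L^\infty(\O)}=\mathcal{O}(\v^{-1})$ is not available under the hypotheses of this proposition (it requires \eqref{HypGlobalSurQuasiMin} or genuine minimality, not merely \eqref{BorneFh^2} and the Coulomb gauge), so your explanation of the $\lambda^{1/4}\h^3\v$ term is speculative; however, since the $L^4\times L^2\times L^4$ H\"older argument alone already produces the $\sqrt{\lambda}\,\h^2$ contribution and \eqref{Lem.Nettoyage1}--\eqref{Lem.Nettoyage2} are in any case taken as given, this does not affect the validity of your proof of the proposition itself.
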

\begin{remark}
\begin{enumerate}
\item One may claim that  $\tilde\F$ is not gauge invariant if $\alpha_\v\not\equiv1$.
\item Note that if $\lambda^{1/4}|\ln\v|\to0$ and if $\h=\mathcal{O}(|\ln\v|)$ then for $(v_\v,A_\v)\in\H$ which  is in the Coulomb gauge and satisfies \eqref{BorneFh^2} we have $\tilde\F(v_\v,A_\v)-\F(v_\v,A_\v)=o(1)$ without hypothesis on $\delta\in(0;1)$.
\end{enumerate}
\end{remark}

\subsection{Bound on the vorticity and energetic decomposition}\label{SectionBoundVorticity}
By applying Proposition 1 in \cite{SS2} with $U_\v\geq b$ we immediately get the following proposition which does not need any assumption for $\lambda,\delta\in(0;1)$.
\begin{prop}\label{Prop.BorneInfLocaliseeSandSerf}
Assume $\h\leq \borneh |\ln\v|$ with $\borneh \geq1$ which is independent of $\v$. Let $\{(v_\v,A_\v)\,|\,0<\v<1\}$ be a family s.t. $\F(v_\v,A_\v)\leq \borneh |\ln\v|^2$. 

Then there exist $C,\v_0>0$ {[depending only on $\O$, $b$ and $\borneh$]} s.t. for $\v<\v_0$ we have either $|v_\v|\geq1-|\ln\v|^{-2}$ in $\O$ or there exists  a finite family of disjoint disks $\{B_i\,|\,i\in \J\}$ with $\J\subset\N^*$ [$\J$ depends on $\v$] and $B_i:=B(a_i,r_i)$ satisfying :
\begin{enumerate}
\item $\{|v_\v|<1-|\ln\v|^{-2}\}\subset\cup B_i$
\item $\sum r_i<|\ln\v|^{-10}$,
\item writing $h_\v=\rot(A_\v)$, $\rho_\v=|v_\v|$ and $v_\v=\rho_\v\e^{\imath\varphi_\v}$ [$\varphi_\v$ is locally defined] we have
\begin{equation}\label{EstimateSS3ball}
\dfrac{1}{2}\int_{B_i}\rho^2|\n \varphi_\v-A_\v|^2+| h_\v-\h|^2\geq\pi|d_i|(|\ln\v|-C\ln|\ln\v|),
\end{equation}
with $d_i=\deg_{\p B_i}(v)$ if $B_i\subset\O$ and $0$ otherwise.
\end{enumerate}
\end{prop}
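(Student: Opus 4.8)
The statement to be proved, Proposition~\ref{Prop.BorneInfLocaliseeSandSerf}, is a localized lower bound for the Ginzburg--Landau energy in terms of the degrees of small "bad discs" enclosing the sublevel set $\{|v_\v|<1-|\ln\v|^{-2}\}$. As indicated in the excerpt, the plan is to \emph{invoke Proposition~1 of \cite{SS2} essentially verbatim}, the only additional input being the uniform lower bound $U_\v\ge b$ from the Lassoued--Mironescu analysis. So the proof is a short reduction rather than a from-scratch argument, and the writing should make the dictionary between our weighted functional $\F=\F_{\v,\h}$ and the setting of \cite{SS2} explicit.

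\textbf{Step 1: Pass to the weighted functional $\F_{\v,\h}$ and read off the hypotheses.} Recall $\F_{\v,\h}(v,A)=\frac12\int_\O U_\v^2|\n v-\imath Av|^2+\frac{U_\v^4}{2\v^2}(1-|v|^2)^2+|\rot(A)-\h|^2$. Since $b\le U_\v\le1$, the density $U_\v^2$ is bounded above and below by constants depending only on $b$, and $U_\v^4\le U_\v^2$. Hence a configuration with $\F(v_\v,A_\v)\le\borneh|\ln\v|^2$ satisfies exactly the energy bound required to run the vortex-ball machinery of Sandier--Serfaty (their Proposition~1 is stated for precisely this kind of pinned/weighted functional with a pinning density bounded between two positive constants). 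The hypothesis $\h\le\borneh|\ln\v|$ is the one feeding into the magnetic part, and it is carried over unchanged.

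\textbf{Step 2: Apply the vortex-ball construction and transcribe the conclusions.} Applying Proposition~1 of \cite{SS2} gives the dichotomy: either $|v_\v|\ge1-|\ln\v|^{-2}$ everywhere, or there is a finite disjoint collection $\{B_i=B(a_i,r_i)\,|\,i\in\J\}$ covering $\{|v_\v|<1-|\ln\v|^{-2}\}$ with $\sum r_i<|\ln\v|^{-10}$ and the local lower bound \eqref{EstimateSS3ball} with degrees $d_i$. Writing $v_\v=\rho_\v\e^{\imath\varphi_\v}$ locally, the clean energy $\frac12\int_{B_i}\rho_\v^2|\n\varphi_\v-A_\v|^2+|h_\v-\h|^2$ is bounded below by $\pi|d_i|(|\ln\v|-C\ln|\ln\v|)$; the factor $\rho_\v^2$ rather than $U_\v^2\rho_\v^2$ appears because the $U_\v^2$ is absorbed into the constant $C$ (using $U_\v\ge b$) at the cost of worsening only the $\ln|\ln\v|$ correction, which is exactly where the constant $C$ sits. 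One then takes $\v_0$ and $C$ depending only on $\O$, $b$ and $\borneh$, as in \cite{SS2}.

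\textbf{Main obstacle.} There is no serious analytic obstacle: the content is entirely in \cite{SS2}. The only point requiring care is \emph{bookkeeping of the constants}: one must check that every constant produced by the lower bound in \cite{SS2} depends only on $\O$, $b$, $\borneh$ and not on $\v$ (in particular not on $\lambda$ or $\delta$), which is why the weighted density must be controlled purely through $b\le U_\v\le1$ and no finer property of $U_\v$ is used. This is also the reason the proposition, as stated, needs no hypothesis whatsoever on the period $\delta$ or the dilution $\lambda$. Consequently the proof reduces to the single sentence "apply Proposition~1 of \cite{SS2} with the density $U_\v^2$, using $U_\v\ge b$", followed by the transcription above.
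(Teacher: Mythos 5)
Your proof is correct and follows the same approach as the paper, which itself gives no more than the one-line remark ``By applying Proposition~1 in \cite{SS2} with $U_\v\geq b$ we immediately get the following proposition which does not need any assumption for $\lambda,\delta\in(0;1)$.'' Your elaboration of the dictionary (uniform bounds $b\le U_\v\le1$, constants depending only on $\O,b,\borneh$, no dependence on $\lambda$ or $\delta$) is exactly the bookkeeping the paper implicitly relies on.
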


By following the argument of Sandier and Serfaty \cite{SS2}, we get the main result of this section.

\begin{thm}\label{ThmBorneDegréMinGlob}
Assume that $\lambda,\delta$ satisfy \eqref{CondOnLambdaDelta} and $\delta^2|\ln\v|\leq1$. Assume also Hypothesis \eqref{BorneKMagn} holds for $\h$ with some $K\geq1$.   

Then there exist $\v_K>0$ and $\M_K\geq1$ [independent of $\v$] s.t. if  $\{(v_\v,A_\v)\,|\,0<\v<1\}\subset\H$ is a family in the Coulomb gauge satisfying $\F(v_\v,A_\v)\leq\inf_{\H}\F+K\ln|\ln\v|$ then for $0<\v<\v_K$ we have
\begin{equation}\label{CrucialBoundedkjqbsdfbn}
\dfrac{1}{2}\int_\O|\n v_\v|^2+\dfrac{1}{2\v^2}(1-|v_\v|^2)^2\leq\M_K|\ln\v|.
\end{equation}
Moreover, if $|v_\v|\not>1-|\ln\v|^{-2}$ in $\O$, then letting $\{B_i\,|\,i\in\J\}$ be a family of disks given by Proposition \ref{Prop.BorneInfLocaliseeSandSerf}, for $0<\v<\v_K$, we have $d_i\geq0$ for all $i\in\J$ and there is $s_0>0$ [depending only on $\O$] s.t. if $i\in\J$ is s.t. $d_i\neq0$ then $\dist(B_i,\Lambda)\leq \M_K|\ln\v|^{-s_0}$.

\end{thm}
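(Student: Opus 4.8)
\textbf{Proof plan for Theorem \ref{ThmBorneDegréMinGlob}.}

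The plan is to follow the Sandier--Serfaty argument of \cite{SS2}, adapted to the pinned setting by means of the replacements established earlier in the paper. First I would pass to the cleaned energy: since $\F(v_\v,A_\v)\leq\inf_\H\F+K\ln|\ln\v|$ and $\inf_\H\F=\mathcal{O}(\h^2)=\mathcal{O}(|\ln\v|^2)$ by the upper bound construction (or a direct trivial competitor), hypothesis \eqref{BorneFh^2} holds; Proposition \ref{Prop.Nettoyage} then gives $|\tilde\F(v_\v,A_\v)-\F(v_\v,A_\v)|\leq C\h^2(\v\h+\sqrt\lambda)=o(1)$ under \eqref{CondOnLambdaDelta} and \eqref{BorneKMagn}, so it suffices to argue with $\tilde\F$, for which the nasty $U_\v^2$-weights have been removed. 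Second, I would invoke Proposition \ref{Prop.BorneInfLocaliseeSandSerf}: either $|v_\v|\geq 1-|\ln\v|^{-2}$ everywhere (in which case $(1-|v_\v|^2)^2=o(\v^2)$ pointwise up to the right power and the estimate \eqref{CrucialBoundedkjqbsdfbn} is immediate from \eqref{EstGradMinL2}, noting $\|\n v_\v\|_{L^2}^2\leq C\h^2=\mathcal{O}(|\ln\v|^2)$ — actually one wants $\mathcal{O}(|\ln\v|)$ here, which instead follows from the vortex-ball lower bound applied with empty vortex set), or we get the vortex balls $\{B_i\}$ with $\sum r_i<|\ln\v|^{-10}$ and the local lower bounds \eqref{EstimateSS3ball}.

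Third comes the core dichotomy argument. Summing \eqref{EstimateSS3ball} over $i$ and adding the contribution of $\O\setminus\cup B_i$ (where $|v_\v|\geq 1-|\ln\v|^{-2}$, so the potential term is negligible), one obtains a lower bound of the form $\tilde\F(v_\v,A_\v)\geq \h^2\Jo - \h\,\Pic\,\big(\sum_i d_i\big) + \big(\sum_i|d_i|\big)\big(|\ln\v|-C\ln|\ln\v|\big) - C'$, using that the magnetic terms reorganize into the standard expansion of $\h^2\Jo$ plus the linear interaction $-\h\Pic\sum d_i$ coming from testing against $\xi_0$ (the London solution) — this is exactly the Sandier--Serfaty balance, and here the pinning only enters through $o(1)$ corrections already absorbed. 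Comparing with the upper bound $\inf_\H\F\leq \h^2\Jo + d\,\Pic[-\h+\HoC]+\L_1(d)\ln\h+\L_2(d)+o(1)$ from Proposition \ref{Prop.BorneSupSimple} (taken with a suitable test value of $d$, e.g. $d$ equal to the optimal integer or $d=0$), and using the quasi-minimality hypothesis, one gets $\big(\sum|d_i|\big)|\ln\v| \leq (\text{something})\cdot\h + C\ln|\ln\v| + K\ln|\ln\v|+o(1)$. Because $\h\leq \tfrac{b^2|\ln\v|}{2\|\xi_0\|_{L^\infty}}+K\ln|\ln\v|$ and, crucially, $\Pic=\|\xi_0\|_{L^\infty(\O)}$ so that the coefficient $b^2\Pic/(2\|\xi_0\|_{L^\infty})$ versus $\tfrac12$... one needs $b^2<1$ (true, $b\in(0,1)$) to make the $|\ln\v|$-coefficient on the left strictly dominate, forcing $\sum|d_i|\leq \M_K$ for some constant depending only on $K$ (and $\O,b$). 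Feeding $\sum|d_i|\leq\M_K$ back into the local lower bounds and the ellipticity/clearing-out estimates yields \eqref{CrucialBoundedkjqbsdfbn}.

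Fourth, the sign property $d_i\geq 0$: if some $d_i<0$, removing that vortex ball (or flipping it) strictly decreases $-\h\Pic\sum d_j$ by a definite amount $\gtrsim\h\gg\ln|\ln\v|$ while the potential/gradient contribution can only decrease, contradicting quasi-minimality against the upper bound; this is the standard ``all degrees are nonnegative'' argument. Finally, for the localization near $\Lambda$: if $d_i\neq 0$ and $\dist(B_i,\Lambda)$ is not $\leq\M_K|\ln\v|^{-s_0}$, then by Lemma \ref{Lem.DescriptionLambda} one has $\xi_0(a_i)\geq\min\xi_0+\eta\,\dist(a_i,\Lambda)^M$, so the interaction term $-\h\,\xi_0(a_i)$ for that vortex is larger than the optimal $-\h\min\xi_0$ by at least $\eta\h\,\dist(a_i,\Lambda)^M$; choosing $s_0$ so that $\eta\h\,(\M_K|\ln\v|^{-s_0})^M\gg\ln|\ln\v|$ (possible since $\h\gtrsim|\ln\v|$ and $M$ is fixed) again contradicts the comparison of lower and upper bounds. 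The main obstacle is bookkeeping the magnetic expansion carefully enough to get the $\h^2\Jo - \h\Pic\sum d_i$ structure with $o(1)$ errors in the pinned setting — i.e.\ verifying that every place where $U_\v^2$ would have appeared is controlled by \eqref{LpEstmU} and the Coulomb bounds of Proposition \ref{Prop.BornesSups1} — but this is precisely what the cleaning step was designed to handle, so it reduces to quoting \cite{SS2} essentially verbatim.
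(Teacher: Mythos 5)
There is a genuine gap in the core balance step. You write that summing the local lower bounds yields
\[
\tilde\F(v_\v,A_\v)\geq \h^2\Jo - \h\,\Pic\,\Big(\sum_i d_i\Big) + \Big(\sum_i|d_i|\Big)\big(|\ln\v|-C\ln|\ln\v|\big) - C',
\]
and then argue that the $|\ln\v|$-coefficient on the left strictly dominates because $b^2<1$. This is not correct. The pinning weight $U_\v^2\geq b^2$ cannot be improved inside a bad disc (which may sit inside an inclusion), so the genuine lower bound per vortex is the one in \eqref{Est.EnConcentrationlocal}, namely $\pi b^2|d_i|(|\ln\v|-C\ln|\ln\v|)$, with a factor $b^2$. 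On the other side, the magnetic credit is $2\pi\h\sum d_i\,\|\xi_0\|_{L^\infty}\leq \pi b^2\,D\,|\ln\v| + \mathcal{O}(D\ln|\ln\v|)$ under \eqref{BorneKMagn} (note also that $\Pic=2\pi\|\xi_0\|_{L^\infty(\O)}$, not $\|\xi_0\|_{L^\infty(\O)}$ as you wrote). The $|\ln\v|$-leading terms therefore cancel \emph{exactly} — this is precisely what makes $H_{c_1}$ a critical field — and the resulting inequality is linear in $D$ with only $\ln|\ln\v|$-size coefficients. This tells you nothing about $D$; your argument would in fact force $D=0$ for all $\h$ up to $H_{c_1}$, which is false.

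What actually closes the argument (and what the paper does, following \cite{SS2}) is the extra ``free energy'' term in the decomposition of Proposition \ref{PropBorneInfSS3}: writing $h=\rot A$, $h_0=\Delta\xi_0$, $f=h-\h h_0$, one keeps the nonnegative quantity $\frac12\int_{\O\setminus\cup B_i}|\n f|^2+\frac12\int_\O f^2$ on the lower-bound side, and a circle-averaging argument (the ``$\int_{\Cr_t}\p_\nu f$ vs.\ $d_t$'' estimate leading to \eqref{MajEstBorneVorticite}) shows this term is at least $C'D^2\ln|\ln\v|-o(1)$. It is the \emph{quadratic}-in-$D$ growth of this term, against the linear-in-$D$ magnetic credit, that yields $(C_1D^2-C_2D)\ln|\ln\v|\leq o(1)$ and hence $D\leq C_2/C_1$. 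Your proposal discards this term entirely. In the pinned setting the estimate requires controlling $\int_{\Cr_t}(1-\alpha^{-1})$ on circles via the dilution estimate \eqref{EstmSur"grand"cercle}, which is where $\lambda\to0$ and $\delta^2|\ln\v|\leq1$ enter; this is also absent from your sketch. The rest of your plan — the use of Proposition \ref{Prop.BorneInfLocaliseeSandSerf}, the treatment of negative degrees, and the localization near $\Lambda$ via Lemma \ref{Lem.DescriptionLambda} — is on the right track and close to the paper's argument, but those steps are ordered differently there: both $D_-\leq D_+\cdot\mathcal{O}(\ln|\ln\v|/|\ln\v|)$ and $D-D_0\leq C D\ln|\ln\v|/\sqrt{|\ln\v|}$ are obtained as relative estimates \emph{before} the $D^2$ bound, so they do not presuppose $D=\mathcal{O}(1)$. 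Also, your first step passes to the cleaned energy $\tilde\F$; the paper's proof does not use that, instead preparing the configuration via Lemma \ref{LemAuxConstructMagnPot} and the substitution Lemma \ref{Lem.SubsSandSerf} — a minor difference of route, but the substitution is what justifies the subsequent identities for $h$ and $f$.
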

The proof of this theorem is postponed in Appendix \ref{SectionProofAppSSBound}.

 We let 
\begin{equation}\label{DefJ0}
\Jo:=\tilde{\F}_{1,1}(1,\n^\bot\xi_0)=\dfrac{\tilde{\F}_{\v,\h}(1,\h\n^\bot\xi_0)}{\h^2}.
\end{equation}
Note that if $\{(v_\v,A_\v)\,|\,0<\v<1\}$ is a family of quasi-minimizers then 
\[
\F_{\v,\h}(v_\v,A_\v)\leq\F_{\v,\h}(1,\n^\bot\xi_0)+o(1)=\h^2\Jo+o(1)=\mathcal{O}(\h^2).
\]
The discs given by Proposition \ref{Prop.BorneInfLocaliseeSandSerf} are "too large" for our strategy. Indeed one of the main argument is a construction of {\it bad discs} in the spirit of \cite{BBH} which  links $x_\v\in\{|v_\v|\leq1/2\}$ with the energetic cost in a ball $B(x_\v,\v^\mu)$  with small $\mu>0$. Namely if $x_\v\in\{|v_\v|<1-|\ln\v|^{-2}\}\subset\cup B_i$ then the  energetic cost in a ball $B(x_\v,\v^\mu)$ is not sufficiently large comparing to our error term.

In the next proposition we present the good framework of vortex balls required in the study. The first step in the study is an   energetic decomposition valid under some assumptions [no assumption on $\delta\in(0;1)$ is required].
\begin{prop}\label{Docmpen} Let $\borneh>1$, $(v_\v)_{0<\v<1}\subset H^1(\O,\C)$ and $\h>0$ be s.t.
 \begin{equation}\label{AbsNatBorneuh}
F(v_\v)\leq \borneh|\ln\v|^2,\,\h\leq\borneh|\ln\v|.
\end{equation}

Assume furthermore that $\lambda^{1/4}|\ln\v|\to0$ and, for $\v\in(0;1)$, either $|v_\v|>1/2$ in $\O$ or $v_\v$ admits a family of  valued disks  $\{(B(a_i,r_i),d_i)\,|\,i\in \J\}$ [$ \J$ is finite] s.t. :
\begin{itemize}
\item[$\bullet$] the disks $B_i=B(a_i,r_i)$ are pairwise disjoint 
\item[$\bullet$] $\{|v_\v|\leq1/2\}\subset\cup_{i\in \J} B_i$
\item[$\bullet$] $\sum_{i\in \J} r_i<|\ln\v|^{-10}$
\item[$\bullet$] For $i\in \J$, letting $d_i=\begin{cases}\deg_{\p B_i}(v)&\text{if }B_i\subset\O\\0&\text{otherwise}\end{cases}$, we assume  $\sum_{i\in\J}|d_i|\leq\borneh$.
\end{itemize}
Then, if $(\xi_\v)_\v\subset H^1_0\cap H^2\cap W^{1,\infty}(\O,\R)$ is s.t.
\begin{equation}\label{BorneXiPourLaDec}
\|\n\xi_\v\|_{L^{\infty}(\O)}\leq\borneh|\ln\v|,
\end{equation}
writing $\zeta_\v:=\xi_\v-\h\xi_0$  we have in the case $|v_\v|\not>1/2$ in $\O$:
 \begin{equation}\label{FullDecDiscqVal0}
\F(v_\v,\n^\bot\xi_\v)-\h^2\Jo=F(v_\v)+2\pi\h\sum_{i\in \J} d_i\xi_0(a_i)+\tilde{V}_\ad(\zeta_\v)+o(1)
\end{equation}
where for $\zeta\in H^1_0\cap H^2(\O)$ we denoted
 \begin{equation}\label{FullDecDiscqValAlt}
\tilde{V}_\ad(\zeta):=2\pi\sum_{i\in \J}d_i\zeta(a_i)+\dfrac{1}{2}\int_\O(\Delta\zeta)^2+|\n\zeta|^2.
\end{equation}
And if $|v|>1/2$ in $\O$ then
 \begin{equation}\label{FullDecDiscqVal0Bisso}
\F(v_\v,\n^\bot\xi_\v)-\h^2\Jo=F(v_\v)+\dfrac{1}{2}\int_\O(\Delta\zeta_\v)^2+|\n\zeta_\v|^2+o(1)
\end{equation}

\end{prop}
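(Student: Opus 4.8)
\textbf{Plan of proof of Proposition \ref{Docmpen}.}
The starting point is the identity \eqref{DevCarre}, written for the pair $(v_\v,\n^\bot\xi_\v)$ and rearranged as
\[
\F(v_\v,\n^\bot\xi_\v)=F(v_\v)+\frac12\int_\O-2(v_\v\wedge\n v_\v)\cdot\n^\bot\xi_\v+|v_\v|^2|\n^\bot\xi_\v|^2+|\rot(\n^\bot\xi_\v)-\h|^2,
\]
using $\rot(\n^\bot\xi_\v)=\Delta\xi_\v$. The first reduction is to replace $|v_\v|^2$ by $1$ in the quadratic terms: since $\|v_\v\|_{L^\infty}\le1$, $F(v_\v)\le\borneh|\ln\v|^2$ gives $\|1-|v_\v|^2\|_{L^2}=\mathcal{O}(\v|\ln\v|)$, and \eqref{FirstUpBoundXi}--\eqref{EstH4}-type bounds (here packaged in \eqref{BorneXiPourLaDec}) control $\|\n^\bot\xi_\v\|$, so the error introduced is $o(1)$. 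Similarly, in the linear term, $v_\v\wedge\n v_\v=\rho_\v^2\n\varphi_\v$ differs from $\n\varphi_\v$ by a term supported where $\rho_\v<1$, i.e. essentially inside the vortex balls, and one checks (using $\sum r_i<|\ln\v|^{-10}$ and the $H^1$-bound on $v_\v$) that this too is negligible. After these substitutions and using the London equation \eqref{LondonEq1} satisfied by $\xi_0$, I would substitute $\xi_\v=\h\xi_0+\zeta_\v$ and expand: the pure $\h^2\xi_0$ part reassembles into $\h^2\Jo$ by definition \eqref{DefJ0}, the pure $\zeta_\v$ part gives $\frac12\int_\O(\Delta\zeta_\v)^2+|\n\zeta_\v|^2$ plus a linear-in-$\zeta_\v$ current term, and the cross terms between $\h\xi_0$ and $\zeta_\v$ must be shown to vanish up to $o(1)$ by an integration by parts exploiting that $\xi_0$ solves \eqref{LondonEq1} and $\zeta_\v\in H^1_0$.

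The second, genuinely vortex-theoretic, ingredient is identifying the remaining linear functional of $\zeta_\v$ (and, at the leading order, of $\h\xi_0$) with $2\pi\sum_i d_i\zeta_\v(a_i)$, resp. $2\pi\h\sum_i d_i\xi_0(a_i)$. This is where one needs the Jacobian/vorticity estimate: for a map $v_\v$ with vortex balls $\{(B(a_i,r_i),d_i)\}$ of total radius $<|\ln\v|^{-10}$ and bounded total degree, the current $j_\v:=v_\v\wedge\n v_\v=\rho_\v^2\n\varphi_\v$ satisfies $\rot(j_\v)\to2\pi\sum_i d_i\,\delta_{a_i}$ in the dual of $C^{0,1}$ (or $W^{1,p}$ for $p>2$), with an explicit $o(1)$ rate governed by $F(v_\v)$ and the ball radii. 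Then $\int_\O j_\v\cdot\n^\bot\psi=\int_\O\rot(j_\v)\,\psi\approx2\pi\sum_i d_i\psi(a_i)$ for $\psi\in H^2\subset C^{0}$, applied with $\psi=\zeta_\v$ and $\psi=\h\xi_0$; the $W^{1,\infty}$-bound \eqref{BorneXiPourLaDec} on $\xi_\v$ (hence on $\zeta_\v$, together with \eqref{EstH4} on $\xi_0$) is exactly what makes the pairing against the $o(1)$-error legitimate. In the case $|v_\v|>1/2$ on all of $\O$ there is no vorticity: $v_\v/|v_\v|$ has a global lifting, $\rot(j_\v)=0$, and the linear term integrates to $0$ by parts, yielding \eqref{FullDecDiscqVal0Bisso}.

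The main obstacle I expect is bookkeeping the error terms uniformly: several competing small quantities appear ($\v\h$, $\sqrt\lambda$ and $\lambda^{1/4}$ from the Lassoued--Mironescu regularization gap, $|\ln\v|^{-10}$ from the ball radii, $\v|\ln\v|$ from $\|1-|v_\v|^2\|_{L^2}$, and $\ln|\ln\v|$-type losses in the vorticity estimate), and one must verify that \emph{each} of them, multiplied by the relevant power of $\h\le\borneh|\ln\v|$ coming from \eqref{BorneXiPourLaDec}, is $o(1)$ under the standing hypotheses \eqref{CondOnLambdaDelta} (in particular $\lambda^{1/4}|\ln\v|\to0$) and $\h\le\borneh|\ln\v|$. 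The structurally delicate point within this is the cross-term $\int_\O\n(\h\xi_0)\cdot\n\zeta_\v+\Delta(\h\xi_0)\Delta\zeta_\v$ together with the current term $\h\int_\O j_\v\cdot\n^\bot\xi_0$: one shows the first vanishes identically after integration by parts using \eqref{LondonEq1} and $\zeta_\v\in H^1_0$, while the second is precisely what produces the term $2\pi\h\sum_i d_i\xi_0(a_i)$ in \eqref{FullDecDiscqVal0} — so the "leading" vortex interaction with the external field is not an error term but the main new contribution, and it must be extracted with the sharp $o(1)$ rate rather than merely bounded. Everything else (the substitution $|v_\v|^2\to1$, the reassembly of $\h^2\Jo$, and the quadratic $\zeta_\v$-terms) is routine given Propositions \ref{Prop.BornesSups1} and \ref{Prop.RegularizationLMSol}.
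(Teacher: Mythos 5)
Your plan is structurally correct and, modulo some slips, follows the same overall strategy as the paper: pass from $\F$ to the cleaned functional $\tilde\F$ of \eqref{EGALITEdenettoyga}, identify the cross-current integral $-\int_\O(v\wedge\n v)\cdot\n^\bot\xi$ with $2\pi\sum_i d_i\xi(a_i)+o(1)$, then substitute $\xi=\h\xi_0+\zeta$, observe that the $\h\xi_0$--$\zeta$ cross terms vanish \emph{exactly} by integration by parts using the London equation \eqref{LondonEq1} and $\zeta\in H^1_0$ together with $\Delta\xi_0=1$ on $\p\O$ (you are right that this is identical vanishing, not merely $o(1)$), reassemble the pure $\h^2\xi_0$ part into $\h^2\Jo$, and read off $\tilde V_\ad(\zeta_\v)$ from what remains; the paper indeed defers this last expansion to \cite{S1}. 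Where you genuinely diverge is the vorticity identification. You invoke a Jerrard--Soner-type Jacobian estimate — $\rot(j_\v)$ close to $2\pi\sum_i d_i\delta_{a_i}$ in $(C^{0,1}_0)^*$ with a rate driven by $\sum r_i$ and the energy — and pair it against the $\v$-dependent functions $\h\xi_0,\zeta_\v$ whose $W^{1,\infty}$ norms are $\mathcal{O}(|\ln\v|)$ by \eqref{BorneXiPourLaDec}. The paper's Appendix~\ref{Sec.PreuveDocmpen} instead makes this self-contained: it shows $\int_{\cup B_i}(v\wedge\n v)\cdot A=o(1)$, replaces $v\wedge\n v$ by $w\wedge\n w$ ($w=v/|v|$) on $\O\setminus\cup B_i$, integrates by parts to push the integral onto the circles $\p B_i$, reads off the leading term $\int_{\p B_i}(w\wedge\n^\bot w)\cdot\nu=-2\pi d_i$, and bounds the corrections $\int_{\p B_i}(\xi-\xi(a_i))(w\wedge\n^\bot w)\cdot\nu$ (plus the balls touching $\p\O$) using \eqref{BorneXiPourLaDec} and a harmonic-extension/level-set device inside each $B_i$. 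The two routes are equivalent in content — the paper's computation is essentially a proof of the Jacobian estimate in this exact geometry — but yours is quicker to state while shifting the burden onto checking that the quantitative estimate applies to \emph{given} valued disks (as hypothesized here, not produced by a ball-growth construction) and to $\v$-dependent test data, which is precisely the bookkeeping you flag.

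Two points to correct. First, your opening display is wrong as written: \eqref{DevCarre} concerns $\E$, not $\F$, and plugging $(v_\v,\n^\bot\xi_\v)$ into it produces $\E(v_\v,\cdot)=E_\v(v_\v)+\cdots$, not $\F(v_\v,\cdot)=F(v_\v)+\cdots$. The correct identity for $\F$ (i.e. for $\E(U_\v v_\v,\cdot)-E_\v(U_\v)$) carries the pinning weight $\alpha_\v=U_\v^2$ on both cross terms; stripping $\alpha_\v$ is the content of \eqref{Lem.Nettoyage1}--\eqref{Lem.Nettoyage2}, uses $\lambda^{1/4}|\ln\v|\to0$, and is a distinct step from replacing $|v_\v|^2$ by $1$. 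You do list the Lassoued--Mironescu gap among your error sources, so you are aware of it, but the display should reflect it. Second, in the vortex-free case you assert $\rot(j_\v)=0$, which is false pointwise since $\rot(j_\v)=\n(\rho_\v^2)\wedge\n\varphi_\v$. The correct argument is $\int_\O j_\v\cdot\n^\bot\xi=\int_\O(\rho_\v^2-1)\n\varphi_\v\cdot\n^\bot\xi+\int_\O\n\varphi_\v\cdot\n^\bot\xi$: the first integral is $o(1)$ from the $F$-bound and \eqref{BorneXiPourLaDec}, and the second equals $-\int_{\p\O}\varphi_\v\,\p_\tau\xi=0$ because $\xi\in H^1_0$. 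Neither slip affects the viability of your plan, but both would have to be repaired in a full write-up.
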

The proof of Proposition \ref{Docmpen} is an adaptation of an argument of Serfaty \cite{S1} [section 4]. The proof is presented Appendix \ref{Sec.PreuveDocmpen}

Before going further, we state a result which will be useful in this article and whose proof is left to the reader.
\begin{lem}\label{LemAuxConstructMagnPot}
For $v\in H^1(\O,\C)$, $0<\v<1$ and $\h>0$, there exists a unique potential $A_{v,\v,\h}=A_v\in H^1(\O,\R^2)$ s.t. $(v,A_v)$ is in the Coulomb gauge and satisfies
\begin{equation}\label{MagnetiqueEq}
\begin{cases}-{\n^\bot \rot(A_v)}{}=\alpha(\imath v)\cdot(\n v-\imath A_v v)&\text{in }\O\\\rot(A_v)=\h&\text{on }\p\O\end{cases}.
\end{equation}
Moreover $A_v$ is the unique solution of the minimization problem
 \begin{equation}\label{Eq.MinPb.Pot}
 \inf_{A\text{ satisfies \eqref{JaugeCoulomb}}}\F_{\v,\h}(v,A)
\end{equation}
and from \eqref{CoulombH2} and \eqref{RepresentCoulomGauge} we have $A_v=\n^\bot\xi_v$ with $\xi_v\in H^1_0\cap H^2\cap W^{1,\infty}(\O,\R)$.
\end{lem}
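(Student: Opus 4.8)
\textbf{Proof plan for Lemma \ref{LemAuxConstructMagnPot}.}

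The plan is to obtain the potential $A_v$ as the unique minimizer of the strictly convex variational problem \eqref{Eq.MinPb.Pot}, and then to recover the Euler--Lagrange system \eqref{MagnetiqueEq} together with the regularity statement from the Coulomb-gauge machinery already recorded in the excerpt. First I would fix $v\in H^1(\O,\C)$, $\v\in(0,1)$ and $\h>0$, and observe via the representation \eqref{RepresentCoulomGauge} that the admissible class $\{A\in H^1(\O,\R^2)\text{ satisfying }\eqref{JaugeCoulomb}\}$ is parametrized by $\xi\in H^1_0\cap H^2(\O,\R)$ through $A=\n^\bot\xi$. Substituting this into $\F_{\v,\h}(v,\cdot)$, the terms not involving $A$ drop out of the minimization, and using $\rot(\n^\bot\xi)=\Delta\xi$ one is left minimizing
\[
J(\xi):=\dfrac{1}{2}\int_\O |v|^2|\n^\bot\xi|^2-2(v\wedge\n v)\cdot\n^\bot\xi+(\Delta\xi-\h)^2
\]
over $\xi\in H^1_0\cap H^2(\O,\R)$ (the $|\n v|^2$-type terms being constants). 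This functional is quadratic with leading part $\int_\O(\Delta\xi)^2+|v|^2|\n\xi|^2$, and since $\|\Delta\xi\|_{L^2}$ controls $\|\xi\|_{H^2}$ for $\xi\in H^1_0\cap H^2$ (elliptic regularity / the estimate used for \eqref{CoulombH2}), $J$ is coercive and strictly convex on this Hilbert space; the linear term $\int_\O (v\wedge\n v)\cdot\n^\bot\xi$ is continuous because $v\wedge\n v\in L^1$ — here one should note it is in fact in $L^{p}$ for some $p>1$ by Sobolev embedding of $v\in H^1$, or simply bound $\int|v\wedge\n v||\n\xi|\le \|v\|_{L^4}\|\n v\|_{L^2}\|\n\xi\|_{L^4}$ and use $H^2\hookrightarrow W^{1,4}$ in 2D. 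Hence $J$ admits a unique minimizer $\xi_v$, and $A_v:=\n^\bot\xi_v$ is the unique solution of \eqref{Eq.MinPb.Pot}.

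Next I would compute the Euler--Lagrange equation. Perturbing $\xi_v$ by $t\psi$ with $\psi\in C_c^\infty(\O)$ and differentiating $J(\xi_v+t\psi)$ at $t=0$ gives, after integrating by parts,
\[
\int_\O \big[|v|^2\n\xi_v\cdot\n\psi + (v\wedge\n v)\cdot\n\psi + (\Delta\xi_v-\h)\Delta\psi\big]=0,
\]
where I used $\n^\bot\xi\cdot\n^\bot\psi=\n\xi\cdot\n\psi$ and $\int (v\wedge\n v)\cdot\n^\bot\psi=-\int(v\wedge\n v)^\bot\cdot\n\psi$ together with a sign bookkeeping that matches $\alpha(\imath v)\cdot(\n v-\imath A_v v)$. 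Recognizing $\alpha(\imath v)\cdot(\n v-\imath A_v v)=v\wedge(\n v - \imath A_v v)=v\wedge\n v - |v|^2 A_v$ and that $\rot(A_v)=\Delta\xi_v$, this distributional identity is exactly the first line of \eqref{MagnetiqueEq}, while $\Delta\xi_v-\h=\rot(A_v)-\h$ vanishing on $\p\O$ in the weak sense (it lies in $H^2$ and the natural boundary condition from the $(\Delta\xi-\h)\Delta\psi$ term with $\psi$ not compactly supported forces $\Delta\xi_v=\h$ on $\p\O$) gives the boundary condition; uniqueness of a Coulomb-gauge solution of \eqref{MagnetiqueEq} follows by subtracting two solutions and testing against the difference, which kills it by strict convexity. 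Finally, for the regularity claim: once $v$ and $A_v$ are known to lie in $H^1$, the right-hand side $\alpha(\imath v)\cdot(\n v-\imath A_vv)$ of \eqref{MagnetiqueEq} is in $L^p$ for some $p>1$ (product of $H^1$ functions in 2D), so $\rot(A_v)\in W^{1,p}\subset H^{?}$; I would iterate / invoke \eqref{CoulombH2} in the form that $\|A\|_{H^2}\lesssim\|\rot A\|_{H^1}$ once $\rot(A_v)\in H^1$ is established, and then $H^2\hookrightarrow W^{1,\infty}$ fails borderline in 2D, so one bootstraps one more step using that $\rot(A_v)\in H^1\hookrightarrow L^q$ for all $q$, giving $A_v\in W^{2,q}\hookrightarrow W^{1,\infty}$, hence $\xi_v\in H^1_0\cap H^2\cap W^{1,\infty}(\O,\R)$ via \eqref{RepresentCoulomGauge} and \eqref{EstH4}.

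The main obstacle I anticipate is the bootstrap to $W^{1,\infty}$: a single application of $H^2\hookrightarrow W^{1,\infty}$ is false in dimension two, so one genuinely needs the extra integrability of the nonlinear right-hand side — the fact that $\n v\in L^2$ and $v\in L^\infty$ (or $L^q$ for all $q<\infty$) combine to put $\alpha(\imath v)\cdot(\n v-\imath A_v v)$ in $L^p$ for some $p>1$, hence $\xi_v\in W^{3,p}$ and then $A_v=\n^\bot\xi_v\in W^{2,p}\hookrightarrow C^{0}\cap W^{1,\infty}$ for suitable $p$; alternatively one first upgrades $\rot(A_v)$ to $H^1$ via \eqref{EstH3}-type regularity and then uses \eqref{EstH4}. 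Everything else — existence, uniqueness, the weak form of \eqref{MagnetiqueEq} — is a routine direct-method and convex-duality argument, and indeed the statement says the proof is left to the reader.
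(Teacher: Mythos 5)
Your variational plan — parametrize Coulomb-gauge fields by $A=\n^\bot\xi$ with $\xi\in H^1_0\cap H^2$, minimize the resulting quadratic functional, and read off the Euler--Lagrange system and regularity — is the natural and essentially correct route, and since the paper leaves this lemma to the reader it is presumably what the author had in mind. Existence, uniqueness and the identification with the minimizer of \eqref{Eq.MinPb.Pot} all go through as you describe (note the weight $\alpha=U_\v^2$ should appear in your reduced functional $J$; it does not affect coercivity or strict convexity since $b^2\le\alpha\le 1$).

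There is, however, one point you should treat more carefully rather than gloss over as ``sign bookkeeping''. Minimizing over the affine constraint set $\{\Div A=0,\ A\cdot\nu=0\}$ produces the first-order condition only against test fields of the form $B=\n^\bot\psi$; by the Helmholtz decomposition this gives
\[
-\n^\bot\rot(A_v)=\alpha(\imath v)\cdot(\n v-\imath A_v v)+\n p
\]
for some scalar Lagrange multiplier $p$ (the pressure associated with the divergence constraint), together with the natural boundary condition $\rot(A_v)=\h$. For a \emph{fixed} $v\in H^1(\O,\C)$ the current $j=\alpha v\wedge\n v-\alpha|v|^2 A_v$ is in general not divergence-free (that identity is a consequence of the Euler--Lagrange equation in $v$, which here is not assumed), so $\n p$ does not vanish and the vector identity \eqref{MagnetiqueEq} is obtained only modulo a gradient. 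What the constrained minimizer genuinely satisfies is the scalar equation $\Delta\rot(A_v)=-\rot(j)$ with $\rot(A_v)=\h$ on $\p\O$ — i.e.\ the curl of \eqref{MagnetiqueEq} — and this is in fact all that the paper ever uses (the $H^1$ bound on $\rot A_v$, hence the $H^2$ bound on $A_v$ via \eqref{CoulombH2}). Your Euler--Lagrange computation is the right one; just be explicit that it yields the curl of \eqref{MagnetiqueEq} and not the pointwise vector equation, and that uniqueness in the Coulomb gauge follows from the energy argument you sketch combined with $\rot(A-A')=0$ and $\xi-\xi'\in H^1_0$ forcing $\xi=\xi'$.

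Two further small remarks on the regularity step. First, to conclude $\xi_v\in W^{1,\infty}$ via $\rot(A_v)\in H^1$ and \eqref{CoulombH2}--\eqref{EstH4} you need $j\in L^2$; since $v\in H^1$ only gives $v\wedge\n v\in L^p$ for $p<2$, this requires the additional information $v\in L^\infty$ (or at least $v\in L^\infty_{\mathrm{loc}}$ enough to put $v\wedge\n v$ in $L^2$). This hypothesis is not written in the lemma but holds in every application in the paper where $|v|\le 1$, so it is worth stating that the $W^{1,\infty}$ conclusion relies on it. Second, you do not need the full bootstrap you outline: once $j\in L^2$, the curl equation directly gives $\rot A_v\in H^1$, then \eqref{CoulombH2} gives $A_v\in H^2\hookrightarrow L^\infty$ in two dimensions, i.e.\ $\n\xi_v\in L^\infty$, which is exactly \eqref{EstH4}; no further iteration is required.
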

\begin{remark}\label{RemCOntrolTrainRER}
Assume $\lambda,\delta$ satisfy \eqref{CondOnLambdaDelta}, $\delta^2|\ln\v|\leq1$ and
Hypothesis \eqref{BorneKMagn} holds. Consider  $\{(v_\v,A_\v)\,|\,0<\v<1\}\subset\H$  a family  in the Coulomb gauge satisfying $\F(v_\v,A_\v)\leq\inf_{\H}\F+\mathcal{O}(\ln|\ln\v|)$. 
\begin{itemize}
\item From Theorem \ref{ThmBorneDegréMinGlob}, either $|v_\v|>1-|\ln\v|^{-2}$ in $\O$ or the family of disjoint disks given by Proposition \ref{Prop.BorneInfLocaliseeSandSerf} satisfies the properties of the family of discs used in Proposition \ref{Docmpen}.
\item Let $A_{v_\v}=\n^\bot\xi_{v_\v}\in H^1(\O,\R^2)$ be given by Lemma \ref{LemAuxConstructMagnPot}. Then with \eqref{CoulombH2}$\&$\eqref{MagnetiqueEq} we have $A_{v_\v}\in L^\infty(\O)$ and $\|A_{v_\v}\|_{L^\infty(\O)}\leq C |\ln\v|$ where $C$ depends only on $\O$.
\end{itemize}
\end{remark}

As noted by  Serfaty \cite{S1}, with the help of the decomposition given by Proposition \ref{Docmpen}, we may prove that $\h^2 \Jo$ is almost the minimal energy of a vortex less configuration.
\begin{cor}\label{CorEtudeSansVortex}
Let $\H^0:=\left\{(\rho\e^{\imath\varphi},A)\,|\,\rho\in H^1(\O,[0,\infty)),\,\varphi\in H^1(\O,\R)\text{ and }A\in H^1(\O,\R^2)\right\}$. Note that $\H^0$ is gauge invariant. Assume $\lambda^{1/4}|\ln\v|\to0$.
\begin{enumerate}
\item\label{CorEtudeSansVortex1} Let $\v=\v_n\downarrow0$. Assume $\h=\mathcal{O}(|\ln\v|)$ and  for each $\v$ let $(v_\v,\n^\bot\xi_\v)\in\H^0$ be s.t. $\xi_\v\in H^1_0\cap H^2\cap W^{1,\infty}(\O,\R)$ with $\|\n\xi_\v\|_{L^\infty(\O)}=\mathcal{O}(|\ln\v|)$. Writing  $\zeta_\v:=\xi_\v-\h\xi_0$ we have:
 \begin{equation}\label{FullDecDiscqValH0}
\F(v_\v,\n^\bot\xi_\v)=\h^2\Jo+F(v_\v)+\dfrac{1}{2}\int_\O(\Delta\zeta_\v)^2+|\n\zeta_\v|^2+o(1).
\end{equation}
Thus, if $\F(v_\v,\n^\bot\xi_\v)\leq\h^2\Jo+o(1)$ then $\zeta_\v\to0$ in $H^2(\O)$, $|v_\v|\to1$ in $H^1(\O)$ and, up to pass to a subsequence, there exists ${\tt v}\in\S^1$ s.t. $v_\v\to{\tt v}$ in $H^1(\O)$.
\item\label{CorEtudeSansVortex2} We have $\inf_{\H^0}\F=\h^2\Jo+o(1)$.
\end{enumerate}
\end{cor}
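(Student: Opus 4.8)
The plan is to derive \eqref{FullDecDiscqValH0} as a special case of the decomposition already established in Proposition \ref{Docmpen}, then read off the qualitative consequences. First I observe that a configuration $(v_\v,\n^\bot\xi_\v)\in\H^0$ with $\|\n\xi_\v\|_{L^\infty(\O)}=\mathcal{O}(|\ln\v|)$ falls, after a harmless gauge fixing, under the hypotheses of Proposition \ref{Docmpen}: indeed, if $\F(v_\v,\n^\bot\xi_\v)\leq\h^2\Jo+o(1)$ then in particular $\F(v_\v,\n^\bot\xi_\v)=\mathcal{O}(\h^2)=\mathcal{O}(|\ln\v|^2)$, hence $F(v_\v)=\mathcal{O}(|\ln\v|^2)$; then Proposition \ref{Prop.BorneInfLocaliseeSandSerf} applies and either $|v_\v|>1/2$ in $\O$, or $v_\v$ admits a family of vortex balls satisfying the bullet list of Proposition \ref{Docmpen} (the missing ingredient, the bound $\sum|d_i|\leq\borneh$, being exactly what Theorem \ref{ThmBorneDegréMinGlob} supplies under \eqref{CondOnLambdaDelta} and $\delta^2|\ln\v|\leq1$; for the corollary as stated with only $\h=\mathcal{O}(|\ln\v|)$ one invokes this under the standing running hypotheses, or more simply one works directly from \eqref{FullDecDiscqVal0Bisso} in the vortexless case and handles the bad case as a degenerate instance). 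In either case Proposition \ref{Docmpen} gives \eqref{FullDecDiscqVal0} or \eqref{FullDecDiscqVal0Bisso}; in the vortexless situation the sum over $i$ is empty and this is literally \eqref{FullDecDiscqValH0}, while in the other case one must additionally argue the vortex terms are negligible.

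For the main claim of part \eqref{CorEtudeSansVortex1}, suppose $\F(v_\v,\n^\bot\xi_\v)\leq\h^2\Jo+o(1)$. Plugging into \eqref{FullDecDiscqValH0} yields
\[
F(v_\v)+\dfrac{1}{2}\int_\O(\Delta\zeta_\v)^2+|\n\zeta_\v|^2\leq o(1),
\]
and since both summands are nonnegative, each is $o(1)$. From $\tfrac12\int_\O(\Delta\zeta_\v)^2+|\n\zeta_\v|^2\to0$, the equivalence of $\|\cdot\|_{H^2}$ with $(\|\Delta\cdot\|_{L^2}^2+\|\n\cdot\|_{L^2}^2)^{1/2}$ on $H^1_0\cap H^2(\O)$ (elliptic regularity for the Dirichlet Laplacian, as already used to get \eqref{CoulombH2}) gives $\zeta_\v\to0$ in $H^2(\O)$. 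From $F(v_\v)\to0$ and the definition of $F_\v$ we get both $\n v_\v\to0$ in $L^2(\O)$ and $\tfrac1{\v^2}\int_\O U_\v^4(1-|v_\v|^2)^2\to0$; since $U_\v\geq b$, the latter forces $|v_\v|\to1$ in $L^2$, hence (using $|v_\v|\leq1$ and the gradient bound) $|v_\v|\to1$ in $H^1(\O)$. Then $v_\v$ is bounded in $H^1(\O,\C)$ with $\n v_\v\to0$ in $L^2$, so $v_\v$ converges in $H^1(\O)$ along a subsequence to a constant, which has modulus $1$: this is the asserted ${\tt v}\in\S^1$.

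Part \eqref{CorEtudeSansVortex2} is then a two-sided estimate. For the upper bound, test with $v\equiv1$ and $\xi\equiv\h\xi_0$, i.e. the pair $(1,\h\n^\bot\xi_0)$: then $\zeta_\v=0$, $F(v_\v)=F_\v(1)=\tfrac1{2\v^2}\int_\O(a_\v^2-1)^2(U_\v/U_\v)^{\cdots}$ — more precisely $F_\v(1)=\tfrac1{2\v^2}\int_\O U_\v^4(1-1)^2=0$, so \eqref{FullDecDiscqValH0} (or directly \eqref{FullDecDiscqVal0Bisso}) gives $\F(1,\h\n^\bot\xi_0)=\h^2\Jo+o(1)$, whence $\inf_{\H^0}\F\leq\h^2\Jo+o(1)$. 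For the lower bound, note that for any $(v,A)\in\H^0$ written in Coulomb gauge (which is legitimate since $\H^0$ and $\F$ are gauge invariant) with $A=\n^\bot\xi$, one has $\xi\in H^1_0\cap H^2\cap W^{1,\infty}$ by Lemma \ref{LemAuxConstructMagnPot}-type reasoning after replacing $A$ by the minimizing potential $A_v$; restricting attention to configurations with $\F\leq\h^2\Jo+C$ one obtains a priori bounds $\|\n\xi\|_{L^\infty}=\mathcal{O}(|\ln\v|)$ from Proposition \ref{Prop.BornesSups1} and \eqref{EstH4}, so \eqref{FullDecDiscqValH0} applies and the nonnegativity of $F(v)$ and of $\tfrac12\int_\O(\Delta\zeta)^2+|\n\zeta|^2$ gives $\F(v,A)\geq\h^2\Jo+o(1)$ uniformly, hence $\inf_{\H^0}\F\geq\h^2\Jo+o(1)$.

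The only real subtlety—the step I expect to be the genuine obstacle—is justifying, in the \emph{bad} case $|v_\v|\not>1/2$ in $\O$, that the vortex contribution $2\pi\h\sum_i d_i\xi_0(a_i)+\tilde V_\ad(\zeta_\v)$ in \eqref{FullDecDiscqVal0} cannot make the energy cheaper than $\h^2\Jo+o(1)$, i.e. that this case simply does not occur at energy $\h^2\Jo+o(1)$. Heuristically each nonzero $d_i$ costs $\pi|d_i|(|\ln\v|-C\ln|\ln\v|)$ in $F(v_\v)$ by \eqref{EstimateSS3ball}, whereas the magnetic gain is at most of order $\h|\xi_0|_\infty\,|d_i|\lesssim \tfrac{b^2}{2}|\ln\v|\,|d_i|<|\ln\v|\,|d_i|$ (using \eqref{BorneKMagn} and $0<-\xi_0<1$, so in fact $\h\|\xi_0\|_\infty \le \tfrac{b^2}{2}|\ln\v| + o(|\ln\v|)$), and minimizing the quadratic-in-$\zeta$ correction only helps by a bounded amount; so the net cost of any vortex is a positive multiple of $|\ln\v|\to\infty$, contradicting the $o(1)$ budget. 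Making this clean requires combining \eqref{EstimateSS3ball}, the sign information $d_i\geq0$ and $\xi_0<0$ from Theorem \ref{ThmBorneDegréMinGlob}, and a lower bound for $\tilde V_\ad$ obtained by completing the square (the minimizer of $\zeta\mapsto 2\pi\sum d_i\zeta(a_i)+\tfrac12\int(\Delta\zeta)^2+|\n\zeta|^2$ is controlled via the Green function of $\Delta^2-\Delta$, giving a correction bounded independently of $\v$). This is exactly the computation underlying the first critical field, so one may also simply forward-reference it; for the corollary as stated I would isolate it as the single lemma that does the work.
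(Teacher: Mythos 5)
Your overall architecture is right (reduce to Proposition~\ref{Docmpen}, then read off the qualitative claims from nonnegativity of $F$ and of the $\zeta$-quadratic form, and for part~\ref{CorEtudeSansVortex2} match an upper bound from $(1,\h\n^\bot\xi_0)$ against the lower bound from part~\ref{CorEtudeSansVortex1} applied to a good representative in the Coulomb gauge with the minimizing potential). This is indeed how the paper proceeds.

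However, what you call ``the only real subtlety'' is a false alarm, and the detour you propose to close it is both unnecessary and, as framed, not admissible under the hypotheses of the corollary. The defining feature of $\H^0$ is that $v=\rho\e^{\imath\varphi}$ with $\varphi\in H^1(\O,\R)$ \emph{globally defined and single valued}; consequently $v/|v|=\e^{\imath\varphi}$ has degree zero around \emph{every} Jordan curve on which $|v|$ is bounded below. So even in the ``bad case'' $|v_\v|\not>1/2$, every $d_i$ in the ball family of Proposition~\ref{Prop.BorneInfLocaliseeSandSerf} vanishes, the magnetic term $2\pi\h\sum_i d_i\xi_0(a_i)$ in \eqref{FullDecDiscqVal0} is identically zero, and $\tilde V_\ad(\zeta)=\frac12\int_\O(\Delta\zeta)^2+|\n\zeta|^2$. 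That is why \eqref{FullDecDiscqValH0} is a \emph{direct} consequence of Proposition~\ref{Docmpen}: there is nothing to estimate, and the bound $\sum|d_i|\leq\borneh$ is satisfied trivially with $0\leq\borneh$. You do not need --- and should not invoke --- Theorem~\ref{ThmBorneDegréMinGlob}: it is only stated under the extra hypotheses \eqref{CondOnLambdaDelta}, \eqref{BorneKMagn} and $\delta^2|\ln\v|\leq1$, none of which the corollary assumes, so an argument leaning on the sign $d_i\geq0$ or on $\sum|d_i|$ being bounded via that theorem would not go through at this level of generality. Likewise, the heuristic ``each vortex costs $\pi|\ln\v|$'' that you want to turn into a lemma is aimed at proving the bad case cannot occur, but the bad case \emph{can} occur in $\H^0$ (with $|v|$ dipping below $1/2$); it is simply degree-free. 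Dropping that whole paragraph and substituting the single observation ``$\deg_{\p B_i}(v)=0$ for every $i$ because $\varphi$ is globally defined'' gives the paper's proof.

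On part~\ref{CorEtudeSansVortex2}, your plan is essentially the paper's: one must first pass to the Coulomb gauge (which is legitimate since $\H^0$ and $\F$ are gauge invariant), then replace $v$ by $\underline v$ and replace the potential by the minimizing $A_{\tilde v}$ from Lemma~\ref{LemAuxConstructMagnPot}, and only then use \eqref{EstH3}--\eqref{EstH4} to get the $W^{1,\infty}$ control on $\xi$ needed for part~\ref{CorEtudeSansVortex1}. Your sketch compresses the truncation $v\mapsto\underline v$ and the change of potential into one phrase, but these two modifications are both needed (truncation gives $|\tilde v|\leq1$ which is used in the $H^1$-regularity of $\rot(\tilde A)$; the new potential yields the elliptic equation to which \eqref{EstH3} applies). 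Spelling them out in the order truncation $\to$ minimizing potential $\to$ \eqref{EstH3} makes the lower bound watertight.

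A final small remark on the derivation of the conclusion of part~\ref{CorEtudeSansVortex1}: from $F(v_\v)\to0$ one indeed gets $\|\n v_\v\|_{L^2}\to0$ and $\frac{1}{\v^2}\int_\O U_\v^4(1-|v_\v|^2)^2\to0$, hence $\||v_\v|-1\|_{L^2}\to0$ since $U_\v\geq b$; but to conclude $|v_\v|\to1$ in $H^1$ you also need $\n|v_\v|\to0$ in $L^2$, which follows from $|\n|v_\v||\leq|\n v_\v|$ a.e.\ --- you invoked ``the gradient bound'' but this pointwise inequality is the clean way to say it, and requires no $L^\infty$ control on $\n|v_\v|$.
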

\begin{proof}
We prove the first assertion. Estimate \eqref{FullDecDiscqValH0} is a direct consequence of  Proposition \ref{Docmpen}.

For sake of simplicity of the presentation we drop the subscript $\v$. If $\F(v,\n^\bot\xi)\leq\h^2\Jo+o(1)$, then $F(v)+\|\zeta\|_{H^2(\O)}=o(1)$ and then $\zeta\to0$ in $H^2(\O)$, $|v|\to1$ in $H^1(\O)$. Moreover $\|\n v\|_{L^2(\O)}=o(1)$ and $\|v\|_{L^2(\O)}=\mathcal{O}(1)$. This clearly implies the remaining part of the assertion.\\

We prove the second assertion. We first claim, by the definition of $\Jo$, that using the configuration $(1,\h\n^\bot\xi_0)\in\H^0$ we have $ \inf_{\H^0}\F\leq\h^2\Jo+o(1)$. 

By the gauge invariance of $\H^0$ we may consider a family of quasi-minimizer $\{(v_\v,A_\v)\,|\,0<\v<1\}\subset\H^0$ which is in the Coulomb gauge. We write $(v_\v,A_\v)=(v,A)$. Let $(\tilde v,\tilde A)\in\H^0$ be defined by  $\tilde{v}=\underline{v}$ and $\tilde{A}$ is the unique solution of \eqref{Eq.MinPb.Pot} associated to $\tilde v$.

By direct calculations we have: $\F(\tilde v,\tilde A)\leq\F(\tilde v, A)\leq \F( v, A)\leq\h^2 \Jo+o(1)$.

Moreover, by denoting $h:=\rot(\tilde A)$, we have $\n h=\alpha\tilde v\wedge(\n^\bot\tilde v-\tilde A^\bot\tilde v)$ in $\O$ and $h=\h$ on $\p\O$. Then $\|h\|_{H^1(\O)}=\mathcal{O}(|\ln\v|)$ and using \eqref{EstH3} we get $\|\tilde A\|_{H^2(\O)}=\mathcal{O}(|\ln\v|)$.

We are then able to apply the first assertion to get $\F(\tilde v,\tilde A)\geq \h^2\Jo+o(1)$.

\end{proof}
\subsection{Pseudo vortex structure}
We assume $\lambda^{1/4}|\ln\v|\to0$. Let $\{(v_\v,A_\v)\,|\,0<\v<1\}\subset\H$ be a family of configurations in the Coulomb gauge satisfying \eqref{AbsNatBorneuh}. We  assume that $|v_\v|\not>1/2$ in $\O$ and that there exists $\{(B(a_i,r_i),d_i)\,|\,i\in \J\}$ as in  Proposition \ref{Docmpen}. Then Proposition \ref{Docmpen} gives a decomposition of $\F(v,A)$. Except in the crucial hypothesis $\sum r_i<|\ln\v|^{-10}$, the radii  $r_i$ do not play any role as well as the disks "$B(a_i,r_i)$" associated to a zero degree. We thus introduce an {\it ad-hoc} notion of  {\it pseudo vortex}.

\begin{defi}\label{def.PseudoVortex}
We assume that we have either $\v=\v_n\downarrow0$ or $0<\v<1$. We consider $(v_\v)_\v\subset H^1(\O,\C)$, $(\h)_\v\subset(1,\infty)$ satisfying \eqref{AbsNatBorneuh}.
 
 Let $\{B_i=B(a_i,r_i)\,|\,i\in \J\}$ be a family of disks as in Proposition \ref{Docmpen} and let $d_i=d^{(\v)}_i\in\Z$ be the associated "degrees" defined in Proposition \ref{Docmpen}. We denote $\J'=\J'_\v:=\{i\in\J\,|\,d_i\neq0\}$ [note that we have ${\rm Card}(\J'_\v)\leq\sum|d_i|=\mathcal{O}(1)$]. 

 If $\J'\neq\emptyset$, then we say that $\{\ad\}=\{(a_i,d_i)\,|\,i\in\J'\}$ is a set of {\it pseudo vortices} of $v_\v$.

\end{defi}

For a fixed configuration $\ad$ of pseudo vortices, Serfaty studied in \cite{S1} the minimization problem of $\tilde V_\ad$ [defined in \eqref{FullDecDiscqValAlt}]. We have the following result [Proposition 4.2  in \cite{S1}].
\begin{prop}\label{PropPartieMinimalSandH0}
Let $\ad=\{(a_i,d_i)\,|\,i\in\J'\}\subset\O\times\Z^*$ be a configuration s.t. $1\leq\Card(\J')<\infty$ and $a_i\neq a_j$ for $i\neq j$. Then $\tilde V_\ad(\zeta)$ is minimal for $\zeta=\zeta_\ad$ which satisfies
\begin{equation}\label{LondonEqModifie}
\begin{cases}
-\Delta^2\zeta_\ad+\Delta\zeta_\ad=2\pi\sum_{i\in\J'}d_i\delta_{a_i}&\text{in }\O
\\
\zeta_\ad=\Delta\zeta_\ad=0&\text{on }\p\O
\end{cases}.
\end{equation}
[Here $\delta_a$ is the Dirac mass at $a\in\R^2$]

And we have 
$\tilde{V}[\zeta_\ad]=\pi\sum_{i\in\J'}d_i\zeta_\ad(a_i)$.
\end{prop}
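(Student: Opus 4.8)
The statement is a standard variational characterization of the minimizer of the quadratic functional $\tilde V_\ad$, so the plan is to solve the associated Euler--Lagrange equation and then exploit the quadratic structure. First I would observe that $\tilde V_\ad$ is a strictly convex, coercive functional on the Hilbert space $\Chi:=H^1_0\cap H^2(\O)$ (equipped with the norm $\|\zeta\|^2 = \int_\O (\Delta\zeta)^2 + |\n\zeta|^2$), because the quadratic part is exactly this squared norm and the linear part $\zeta\mapsto 2\pi\sum_{i\in\J'}d_i\zeta(a_i)$ is a bounded linear functional on $\Chi$ (here one uses the $2$D Sobolev embedding $H^2(\O)\hookrightarrow C^0(\overline\O)$, which makes point evaluation $\zeta\mapsto\zeta(a_i)$ continuous). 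By the Lax--Milgram / direct method, $\tilde V_\ad$ therefore admits a unique minimizer $\zeta_\ad\in\Chi$, characterized by vanishing of the first variation: for all test functions $\psi\in\Chi$,
\[
\int_\O \Delta\zeta_\ad\,\Delta\psi + \n\zeta_\ad\cdot\n\psi = -2\pi\sum_{i\in\J'}d_i\psi(a_i).
\]

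Next I would identify this weak formulation with the boundary value problem \eqref{LondonEqModifie}. Integrating by parts twice (formally) in the left-hand side and recalling that $\psi$ ranges over functions vanishing on $\p\O$, the bilinear form becomes $\int_\O(\Delta^2\zeta_\ad - \Delta\zeta_\ad)\psi$ plus boundary terms; the boundary terms involving $\p_\nu\psi$ force the natural boundary condition $\Delta\zeta_\ad = 0$ on $\p\O$, while $\zeta_\ad\in H^1_0$ already encodes $\zeta_\ad=0$ on $\p\O$. The right-hand side $-2\pi\sum d_i\psi(a_i)$ identifies the distributional source as $2\pi\sum_{i\in\J'}d_i\delta_{a_i}$, giving precisely $-\Delta^2\zeta_\ad + \Delta\zeta_\ad = 2\pi\sum d_i\delta_{a_i}$ in $\O$. (One should note that $\zeta_\ad$ itself is not in $H^2$ near the points $a_i$ in general — the Dirac masses produce a logarithmic-type singularity — so the two-sided ``$\zeta_\ad=\Delta\zeta_\ad=0$ on $\p\O$'' and the PDE are to be read in the distributional sense, with $\zeta_\ad$ smooth away from $\{a_i\}$ by elliptic regularity.) This gives the first displayed assertion.

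Finally, for the value $\tilde V[\zeta_\ad] = \pi\sum_{i\in\J'}d_i\zeta_\ad(a_i)$, I would use the homogeneity of the quadratic functional: testing the Euler--Lagrange identity with $\psi = \zeta_\ad$ itself yields
\[
\int_\O (\Delta\zeta_\ad)^2 + |\n\zeta_\ad|^2 = -2\pi\sum_{i\in\J'}d_i\zeta_\ad(a_i),
\]
and substituting back into the definition $\tilde V_\ad(\zeta_\ad) = 2\pi\sum d_i\zeta_\ad(a_i) + \tfrac12\int_\O(\Delta\zeta_\ad)^2 + |\n\zeta_\ad|^2$ gives $\tilde V_\ad(\zeta_\ad) = 2\pi\sum d_i\zeta_\ad(a_i) - \pi\sum d_i\zeta_\ad(a_i) = \pi\sum_{i\in\J'}d_i\zeta_\ad(a_i)$, as claimed.

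\textbf{Main obstacle.} The only genuinely delicate point is the regularity/meaning of $\zeta_\ad$ near the singular points $a_i$: since the source contains Dirac masses, $\zeta_\ad\notin H^2$ globally, so one cannot naively justify the integration by parts on all of $\O$ and must work on $\O\setminus\cup_i B(a_i,r)$ and let $r\to 0$, controlling the boundary contributions on the small circles $\p B(a_i,r)$ (they vanish because the singularity of the biharmonic-type Green's function is only logarithmic, so $r\cdot|\n\zeta_\ad|\cdot|\zeta_\ad|\to0$). Once this limiting argument is handled, everything else is the routine Lax--Milgram plus quadratic-homogeneity computation above; alternatively one can bypass the issue entirely by working with the Green's function representation $\zeta_\ad = 2\pi\sum_i d_i G(\cdot,a_i)$ where $G$ solves \eqref{LondonEqModifie} with a single unit Dirac source, and reading the energy identity off the symmetry $G(x,y)=G(y,x)$.
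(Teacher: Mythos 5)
Your overall strategy (direct method / Lax--Milgram on the Hilbert space $H^1_0\cap H^2(\O)$, followed by the quadratic-homogeneity identity obtained by testing the Euler--Lagrange equation against $\zeta_\ad$ itself) is sound, and the algebra giving $\tilde V[\zeta_\ad]=\pi\sum d_i\zeta_\ad(a_i)$ is correct. Note however that the paper does not actually prove this proposition: it cites it as Proposition~4.2 in \cite{S1}, and the surrounding text indicates that Serfaty's proof proceeds through the Green's function representation $\zeta_\ad=\sum_{i\in\J'}d_i\zeta^{a_i}$, which you offer only as an alternative. So your route is a legitimate and in some ways more self-contained variant.

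The weak point is your ``main obstacle'' paragraph, which is founded on a misconception. You assert that ``$\zeta_\ad\notin H^2$ globally'' because of the Dirac sources. This is false. The equation is fourth order, and in $2$D the fundamental solution of $\Delta^2-\Delta$ behaves near the source like $\frac{1}{8\pi}|x|^2\ln|x|$, which lies in $W^{2,p}$ for every $p<\infty$ and in particular in $H^2$; only $\Delta\zeta_\ad$ has a log singularity (and so fails to be bounded), $\zeta_\ad$ itself is even $C^{1,\alpha}$. The paper makes this explicit: $\zeta^a\in H^1_0\cap H^2(\O)$, and Proposition~\ref{Prop.Information.Zeta-a} gives $\|\zeta^a-\zeta^b\|_{H^2(\O)}\leq C_s|a-b|^s$. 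Consequently the Lax--Milgram minimizer $\zeta_\ad$ lies a priori in $H^1_0\cap H^2$ (the very space on which $\tilde V_\ad$ is defined), the bilinear form $\int_\O\Delta\zeta_\ad\,\Delta\psi+\n\zeta_\ad\cdot\n\psi$ makes sense without any excision, and the limiting argument over punctured domains $\O\setminus\cup_iB(a_i,r)$ that you flag as ``genuinely delicate'' is simply not needed. You are implicitly transferring intuition from the second-order case $-\Delta u=\delta_a$, where the Green's function is not in $H^1$; here the extra two orders of the biharmonic operator put you comfortably inside the energy space, and no obstacle remains.
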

In order to prove the above  proposition, Serfaty introduced for $a\in\O$ the function $\zeta^a\in H^1_0\cap H^2(\O)$ which is the unique solution of 
\begin{equation}\nonumber
\begin{cases}
-\Delta^2\zeta^a+\Delta\zeta^a=2\pi\delta_{a}&\text{in }\O
\\
\zeta^a=\Delta\zeta^a=0&\text{on }\p\O
\end{cases}.
\end{equation}
In particular we have $\zeta^a\leq0$ in $\O$. It is easy to see that
$\zeta_\ad=\sum_{i\in\J'}d_i\zeta^{a_i}$  is the unique solution of \eqref{LondonEqModifie}. 

Lemma 4.6 in \cite{S1} gives important properties related with $\zeta^a$ and $\zeta_\ad$:
\begin{prop}\label{Prop.Information.Zeta-a}
For  $s\in(0,1)$, there exists $C_s>0$ s.t. for $a,b\in\O$
\[
\|\zeta^a\|_{L^\infty(\O)}\leq C_s\dist(a,\p\O)^s
\]
and
\[
\|\zeta^a-\zeta^b\|_{H^2(\O)}\leq C_s|a-b|^s.
\]

Consequently there exists $C>0$ depending only on $\O$ s.t., if $\zeta_\ad$ is the unique solution of \eqref{LondonEqModifie}, then
\[
\tilde{V}[\zeta_\ad]=\pi\sum_{i,j\in\J'}d_id_j\zeta^{a_i}(a_j)\leq C\left(\sum_{i\in\J'}|d_i|\right)^2.
\]
\end{prop}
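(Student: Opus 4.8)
Since $-\Delta^2+\Delta=\Delta(\Delta-1)$, the function $\phi^a:=\Delta\zeta^a-\zeta^a$ solves $-\Delta\phi^a=2\pi\delta_a$ in $\O$ with $\phi^a=0$ on $\p\O$, so $\phi^a=2\pi G_\O(\cdot,a)$, where $G_\O\ge0$ is the Dirichlet Green's function of $-\Delta$ on $\O$. Hence $\zeta^a$ is the unique solution of $(-\Delta+1)(-\zeta^a)=2\pi G_\O(\cdot,a)$ in $\O$, $-\zeta^a=0$ on $\p\O$; the maximum principle for $-\Delta+1$ gives $\zeta^a\le0$, and since $G_\O(\cdot,a)\in L^p(\O)$ for every $p<\infty$, $L^p$ elliptic regularity gives $\zeta^a\in W^{2,p}(\O)$ for every $p\in(1,\infty)$, in particular $\zeta^a\in C^0(\overline\O)$ so that $\zeta^a(b)$ and $\zeta_\ad(a_i)$ are meaningful. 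Moreover $L^2$ elliptic regularity for $-\Delta+1$ with homogeneous Dirichlet data on the smooth domain $\O$ yields a constant $C=C(\O)$ with
\[
\|\zeta^a-\zeta^b\|_{H^2(\O)}\le 2\pi C\,\|G_\O(\cdot,a)-G_\O(\cdot,b)\|_{L^2(\O)},\qquad \|\zeta^a\|_{H^2(\O)}\le 2\pi C\,\|G_\O(\cdot,a)\|_{L^2(\O)}.
\]
It therefore remains to control these two $L^2$ norms, for which I use the classical estimates valid on a smooth bounded planar domain: $G_\O(y,z)\le C_\O\log\!\big(1+\tfrac{\dist(y,\p\O)\dist(z,\p\O)}{|y-z|^2}\big)$ (whence in particular $G_\O(y,z)\le C_\O'(1+|\log|y-z||)$) and $|\n_zG_\O(y,z)|\le C_\O|y-z|^{-1}$.

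\textbf{The two Hölder estimates.} Fix $s\in(0,1)$ and $a,b\in\O$, put $r:=|a-b|$ and $B:=B(a,2r)$. On $B$, the log bound gives $\|G_\O(\cdot,a)\|_{L^2(B)}+\|G_\O(\cdot,b)\|_{L^2(B)}\le C\,r(1+|\log r|)\le C_s\,r^s$. Outside $B$, writing $G_\O(y,a)-G_\O(y,b)=\int_0^1\n_zG_\O(y,b+t(a-b))\cdot(a-b)\,dt$ and using $|y-z|\ge\tfrac12|y-a|$ for $z\in[a,b]$, $y\notin B$, the gradient bound gives $|G_\O(y,a)-G_\O(y,b)|\le C_\O\,r\,|y-a|^{-1}$, so $\|G_\O(\cdot,a)-G_\O(\cdot,b)\|_{L^2(\O\setminus B)}^2\le C\,r^2\int_{\{|y-a|\ge2r\}}|y-a|^{-2}\,dy\le C\,r^2(1+|\log r|)\le C_s\,r^{2s}$. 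Adding the two pieces, $\|G_\O(\cdot,a)-G_\O(\cdot,b)\|_{L^2(\O)}\le C_s|a-b|^s$, hence $\|\zeta^a-\zeta^b\|_{H^2(\O)}\le C_s|a-b|^s$. For the first estimate set $d:=\dist(a,\p\O)$ and split $\O$ into $\{|y-a|<2d\}$ and $\{|y-a|\ge2d\}$. On the first set $\dist(y,\p\O)\le3d$, so $G_\O(y,a)\le C_\O\log(1+3d^2|y-a|^{-2})$ and a change of variables gives $\int_{\{|y-a|<2d\}}\log^2(1+3d^2|y-a|^{-2})\,dy\le C\,d^2$. On the second set $\dist(y,\p\O)\le\tfrac32|y-a|$ and $d\le\tfrac12|y-a|$, so $\tfrac{\dist(y,\p\O)\,d}{|y-a|^2}\le\tfrac{3d}{2|y-a|}<1$, and $\log(1+t)\le t$ gives $G_\O(y,a)\le C_\O\tfrac{3d}{2|y-a|}$, whence $\int_{\{|y-a|\ge2d\}}G_\O(y,a)^2\,dy\le C\,d^2(1+|\log d|)$. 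Altogether $\|G_\O(\cdot,a)\|_{L^2(\O)}^2\le C\,d^2(1+|\log d|)\le C_s\,d^{2s}$, so $\|\zeta^a\|_{H^2(\O)}\le C_s\,\dist(a,\p\O)^s$, and the $L^\infty$ bound follows from the embedding $H^2(\O)\hookrightarrow C^0(\overline\O)$ (valid in dimension two).

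\textbf{The bound on $\tilde V$.} Since $\zeta_\ad=\sum_{j\in\J'}d_j\zeta^{a_j}$ is the solution of \eqref{LondonEqModifie}, Proposition \ref{PropPartieMinimalSandH0} gives $\tilde V[\zeta_\ad]=\pi\sum_{i\in\J'}d_i\zeta_\ad(a_i)=\pi\sum_{i,j\in\J'}d_id_j\zeta^{a_j}(a_i)$, which, after relabelling the double sum (or invoking the symmetry $\zeta^{a_j}(a_i)=\zeta^{a_i}(a_j)$ coming from self-adjointness of $-\Delta^2+\Delta$ under the boundary conditions of \eqref{LondonEqModifie}), is the stated identity. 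Taking $s=\tfrac12$ in the first estimate, $|\zeta^{a_i}(a_j)|\le\|\zeta^{a_i}\|_{L^\infty(\O)}\le C_{1/2}\,\dist(a_i,\p\O)^{1/2}\le C(\O)$, hence $\tilde V[\zeta_\ad]\le\pi\,C(\O)\big(\sum_{i\in\J'}|d_i|\big)^2$, as claimed.

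\textbf{Main obstacle and an alternative.} The only non-elementary inputs are the two classical Green's function estimates on the smooth bounded domain $\O$ (the logarithmic two-sided bound with boundary-distance weights, and the gradient bound); the rest is bookkeeping, together with the observation that $\zeta^a$ is continuous so that pointwise values make sense. If one prefers to avoid the Green's function for the $H^2$ Hölder estimate, one may instead note that $a\mapsto\zeta^a$ is Lipschitz from $\O$ into $H^{2-\ve}(\O)$ and bounded into $H^{3-\ve}(\O)$ for every $\ve>0$ (solving the fourth-order equation with right-hand sides $\delta_a$ and $\n\delta_a$), and interpolate: this gives $\|\zeta^a-\zeta^b\|_{H^2(\O)}\le C_\ve|a-b|^{1-\ve}$, i.e. the same conclusion with $s=1-\ve$.
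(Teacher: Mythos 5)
Your argument is correct and fills in what the paper leaves implicit: the paper does not prove the two Hölder-type estimates at all, it simply cites Lemma 4.6 of Serfaty's paper \cite{S1} and only uses Proposition \ref{PropPartieMinimalSandH0} together with $\zeta_\ad=\sum_j d_j\zeta^{a_j}$ to obtain the $\tilde V$ identity and the final $\mathcal{O}\big((\sum|d_i|)^2\big)$ bound. Your reduction is the natural one and almost certainly matches what is done in the cited source: factor $-\Delta^2+\Delta=-\Delta(\Delta-1)$, observe that $\phi^a:=\Delta\zeta^a-\zeta^a$ solves the Dirichlet problem for $-\Delta$ with a point source so $\phi^a=2\pi G_\O(\cdot,a)$, then read $\zeta^a$ as the solution of a second-order Dirichlet problem with right-hand side $-2\pi G_\O(\cdot,a)$. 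Everything then rests on two classical pointwise bounds for the planar Green's function (the two-sided logarithmic bound with boundary-distance weights, and $|\n_z G_\O|\le C/|y-z|$); your $L^2$ computations from these are correct (the splitting at radius $2|a-b|$, resp.\ $2\dist(a,\p\O)$, gives $r\sqrt{1+|\log r|}$, which is $\le C_s r^s$ for any $s<1$), and the passage from $\|\zeta^a\|_{H^2}$ to $\|\zeta^a\|_{L^\infty}$ via the 2D embedding $H^2\hookrightarrow C^0(\overline\O)$ is legitimate. The identity $\tilde V[\zeta_\ad]=\pi\sum_{i,j}d_id_j\zeta^{a_i}(a_j)$ via $\tilde V[\zeta_\ad]=\pi\sum_i d_i\zeta_\ad(a_i)$ and relabelling (or self-adjointness of $-\Delta^2+\Delta$ under the Navier boundary conditions, which you correctly note gives $\zeta^a(b)=\zeta^b(a)$) is exactly the paper's derivation, and the final bound follows from the uniform $L^\infty$ bound on $\zeta^a$. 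The only cosmetic remark is that your second estimate actually produces $\|\zeta^a-\zeta^b\|_{H^2}\le C|a-b|\sqrt{1+|\log|a-b||}$, which is strictly stronger than the stated $C_s|a-b|^s$; this costs nothing but is worth noticing.
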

For a further use we need the following lemma.
\begin{lem}\label{Rk.RegularityLondonModified}
Let $\ad$ as in Proposition \ref{PropPartieMinimalSandH0} then $\zeta_\ad\in H^1_0\cap H^2\cap W^{1,\infty}(\O,\R)$ and  there is $C\geq1$ depending only on $\O$ s.t.
\[
\|\n\zeta_\ad\|_{L^\infty(\O)}\leq\dfrac{C\sum|d_i|}{\min \dist(a_i,\p\O)}.
\]
\end{lem}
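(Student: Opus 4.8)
The plan is to reduce the $W^{1,\infty}$-bound on $\zeta_\ad$ to a $W^{1,\infty}$-bound on each building block $\zeta^{a}$, since $\zeta_\ad=\sum_{i\in\J'}d_i\zeta^{a_i}$ and $\Card(\J')<\infty$, so that by the triangle inequality
\[
\|\n\zeta_\ad\|_{L^\infty(\O)}\le\sum_{i\in\J'}|d_i|\,\|\n\zeta^{a_i}\|_{L^\infty(\O)}.
\]
Thus it suffices to show that $\zeta^{a}\in W^{1,\infty}(\O)$ with $\|\n\zeta^a\|_{L^\infty(\O)}\le C/\dist(a,\p\O)$ for a constant $C$ depending only on $\O$. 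Once this pointwise bound on the blocks is in hand, replacing $\dist(a_i,\p\O)$ by $\min_i\dist(a_i,\p\O)$ in each summand and factoring it out yields exactly the claimed inequality.

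To get the estimate on $\zeta^a$, the key observation is to split the fourth-order problem into two second-order ones. Writing $\psi^a:=-\Delta\zeta^a+\zeta^a$ (or equivalently $\psi^a = \Delta\zeta^a$ up to sign bookkeeping — I would introduce $w^a := \Delta\zeta^a$), the equation $-\Delta^2\zeta^a+\Delta\zeta^a=2\pi\delta_a$ with $\zeta^a=\Delta\zeta^a=0$ on $\p\O$ decouples as: first, $w^a := \Delta\zeta^a$ solves $-\Delta w^a + w^a = 2\pi\delta_a$ in $\O$ with $w^a = 0$ on $\p\O$ (a screened Poisson / Yukawa-type problem with a Dirac source), and then $\zeta^a$ solves $\Delta\zeta^a = w^a$ in $\O$ with $\zeta^a=0$ on $\p\O$. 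The function $w^a$ is, up to a smooth correction, the Green's function of $-\Delta+1$ on $\O$ with pole at $a$; its singularity at $a$ is logarithmic, $w^a = -\ln|x-a| + O(1)$ near $a$, so in particular $w^a\in L^p(\O)$ for all $p<\infty$, with $\|w^a\|_{L^p(\O)}$ controlled in terms of $\dist(a,\p\O)$ and $p$ (the dependence on $\dist(a,\p\O)$ comes from the boundary correction, exactly as in Proposition \ref{Prop.Information.Zeta-a}). Then $\zeta^a$, solving a Poisson equation with $L^p$ right-hand side and zero Dirichlet data, lies in $W^{2,p}(\O)$; choosing $p>2$ and using the Sobolev embedding $W^{2,p}(\O)\hookrightarrow C^{1,1-2/p}(\overline\O)$ gives $\zeta^a\in W^{1,\infty}(\O)$ with the desired quantitative bound. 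One must track the constants through these elliptic estimates to see that the only $a$-dependence is through $\dist(a,\p\O)$ (and the fixed geometry of $\O$); the uniform comparison of $w^a$ with the fundamental solution, together with standard Calderón–Zygmund estimates with the constant depending only on $\O$ and $p$, delivers this. Alternatively, one may quote directly that $\zeta^a = -G_\O * (\text{something})$ via the resolvent kernel and read off $W^{1,\infty}$ regularity from explicit kernel bounds; either route is routine.

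The main obstacle — really the only non-bookkeeping point — is to make the dependence on $a$ completely explicit and show it enters only through $\dist(a,\p\O)$ (equivalently, that the constant is uniform for $a$ in compact subsets of $\O$ and blows up in a controlled way as $a\to\p\O$). This is where one uses the decomposition $w^a = -\frac{1}{2\pi}\ln|x-a| + r^a$ with $r^a$ the smooth (harmonic-correction) remainder: the $L^p$ norm of the singular part is a universal constant times $\Haus^2(\O)^{1/p}$-type quantity, independent of $a$, while $r^a$ is estimated by the maximum principle in terms of the boundary values of $-\frac{1}{2\pi}\ln|\cdot-a|$ on $\p\O$, hence by $|\ln\dist(a,\p\O)|+C$; this last quantity is absorbed into the factor $1/\dist(a,\p\O)$ (using $|\ln t|\le C/t$ for $t\in(0,\operatorname{diam}\O]$). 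Feeding this into the $W^{2,p}$-estimate and then the Sobolev embedding closes the argument; I would likely just invoke Proposition \ref{Prop.Information.Zeta-a} for the analogous $L^\infty$ and $H^2$ statements to keep the constant-tracking short, noting the proof there already controls the $a$-dependence through $\dist(a,\p\O)$.
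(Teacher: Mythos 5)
Your proposal is correct and takes essentially the same approach as the paper: reduce to a second-order problem with a Dirac source, isolate the logarithmic singularity, bound $\|\Delta\zeta\|_{L^p}$ for some $p>2$ with the $\dist(a,\p\O)$-dependence entering only through the maximum-principle estimate on the harmonic boundary correction (absorbing $|\ln t|$ into $C/t$), and conclude by elliptic regularity plus the embedding $W^{2,p}\hookrightarrow W^{1,\infty}$. The only cosmetic difference is that you factor through $-\Delta+1$ applied to $\Delta\zeta^a$ pole by pole, whereas the paper applies $-\Delta$ to $\Delta\zeta_\ad-\zeta_\ad$ and bounds $\|\Delta\zeta_\ad\|_{L^3}$ for the whole configuration at once.
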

\begin{proof}
Let $\ad$ be as in Proposition \ref{PropPartieMinimalSandH0}, with Proposition \ref{Prop.Information.Zeta-a} we have $\zeta_\ad=\sum d_i\zeta^{a_i}\in H^1_0\cap H^2$ and $\|\zeta_\ad\|_{H^2(\O)}\leq C\sum_i|d_i|$ where $C$ depends only on $\O$.

Moreover, for $a\in\O$, from \eqref{LondonEqModifie}, we have $\Delta\zeta_\ad=\zeta_\ad-\sum d_i\ln|x-a_i|-R_\ad$ where $R_\ad$ is the harmonic extension of $\tr_{\p\O}(-\sum d_i\ln|x-a_i|)$ in $\O$. 

Consequently there exists $C\geq1$ depending only on $\O$ s.t. 
\[
\|\Delta\zeta_\ad\|_{L^3(\O)}\leq\dfrac{C\sum|d_i|}{\min \dist(a_i,\p\O)}
\]
 and therefore by elliptic regularity and a Sobolev embedding we get the result.
\end{proof}
Until now, the only way to get a nice magnetic potential associated to a function $v$ was to consider  $A_v=A_{v,\v,\alpha}\in H^2(\O,\R^2)$, the unique solution of \eqref{Eq.MinPb.Pot}. The previous results give that, after the cleaning step, we can do asymptotically as well by using a magnetic potential depending on a pseudo vortices structure of $v$ instead of $v$ itself [see Remark \ref{Prop.BornéPourPotenteifjfjf}]. 

\begin{defi}\label{DefA_ad}
Let $N\geq1$ and $\ad\in\Ostar\times(\Z^*)^N$, $\h>0$. Then we define $A_\ad:=\h\n^\bot\xi_0+\n^\bot\zeta_\ad$ where $\zeta_\ad$  is the unique solution of $\eqref{LondonEqModifie}$, {\it the potential associated to $\ad$}.
\end{defi}
\begin{remark}\label{Prop.BornéPourPotenteifjfjf}
Let $\borneh>1$ and $(v_\v)_{0<\v<1}\subset H^1(\O,\C)$, $\h>0$  satisfying \eqref{AbsNatBorneuh} be s.t. $(v_\v)_{0<\v<1}$ admits a set of pseudo vortices $(\ad_\v)_{0<\v<1}$ with $\sum |d_i|\leq \borneh$. We write $v\&\ad$ instead of $v_\v\&\ad_\v$. 

Assume $\min\dist(a_i,\p\O)>|\ln\v|^{-1}$ in order to have $\|\n\zeta_{\ad}\|_{L^\infty(\O)}=\mathcal{O}(|\ln\v|)$ [with Lemma \ref{Rk.RegularityLondonModified}] and  $\lambda^{1/4}|\ln\v|\to0$. 

For $0<\v<1$, let $A_{v}\in H^1(\O,\R^2)$ be the unique solution of \eqref{Eq.MinPb.Pot} and $A_{\ad}$ be defined in Definition \ref{DefA_ad}. Then we have $A_{\ad}=\n^\bot\xi_{\ad}$ and  $A_{v}=\n^\bot\xi_{v}$ where $\xi_{\ad},\xi_{v}\in H^1_0\cap H^2\cap W^{1,\infty}(\O,\R)$  satisfy the hypotheses of Proposition \ref{Docmpen} [here we used \eqref{CoulombH2}$\&$\eqref{MagnetiqueEq}]. Therefore we have the following inequalities
\[
\F(v,0)\geq\F(v,A_{v})=\tilde{\F}(v,A_{v})+o(1)\geq\tilde{\F}(v,A_{\ad})+o(1),
\]
\[
\F(v,A_{v})\leq{\F}(v,A_\ad)=\tilde{\F}(v,A_\ad)+o(1).
\]
In particular we have $\F(v,A_{v})=\mathcal{O}(|\ln\v|^2)$ and $\F(v,A_{\ad})=\mathcal{O}(|\ln\v|^2)$.
\end{remark}
\subsection{Cluster of pseudo vortices}
From a standard result for the homogenous case, it is expected that, for a reasonable magnetic field, the asymptotic location of pseudo vortices of a studied configuration is a subset of $\Lambda$. This problem is related to the {\it macroscopic location} of the pseudo vortices. To treat this problem we use an {\it ad-hoc} notion of {\it cluster of pseudo vortices}.
\begin{defi}
Let $N,\tilde N_0\in\N^*$, $\tilde N_0\leq N$, $\pD\in(\overline{\O}^{\tilde N_0})^*\times\Z^{\tilde N_0}$, $\v=\v_n\downarrow0$ and $\ad_\v\in\Ostar\times\Z^N$ s.t. ${\bf d}$ is independent of $\v$. We say that $(\ad_\v)_\v$ admits a {\it cluster structure} on $\pD$ if
\begin{itemize}
\item for $i\in\{1,...,N\}$, $\lim a_i$ exists, $\lim a_i\in\{p_1,...,p_{\tilde N_0}\}$ and we write for  $k\in\{1,...,\tilde N_0\}$, $S_k:=\{i\in\{1,...,N\}\,|\,a_i\to p_k\}$
\item for $k\in\{1,...,\tilde N_0\}$ $S_k\neq\emptyset$,
\item for  $k\in\{1,...,\tilde N_0\}$, $D_k=\sum_{i\in S_k}d_i$.
\end{itemize}
\end{defi}
\begin{remark}
In this article we will use the notion of cluster structure with $\ad$ as in Proposition \ref{Docmpen} and ${\bf p}\subset\Lambda$.
\end{remark}
\begin{prop}\label{PropClusterI}Let $ N\geq1$, $\v=\v_n\downarrow0$, $\ad_\v\in\Ostar\times\Z^N$ s.t. $\sum|d_i|$ is bounded independently of $\v$.
\begin{enumerate}
\item\label{PropClusterI1} If $(\ad_\v)_\v$ admits a cluster structure on $\pD$ [and then ${\bf d}$ is independent of $\v$] then $\pD$ is unique [up to change the order]. We say that $\pD$ is the cluster of $(\ad_\v)_\v$.
\item\label{PropClusterI2} Up to pass to a subsequence, there exist $1\leq \tilde N_0\leq N$ and $\pD\in(\overline{\O}^{\tilde N_0})^*\times\Z^{\tilde N_0}$ s.t. $\pD$ is the cluster of $(\ad_\v)_\v$.
\item\label{PropClusterI3} If $\pD$ is the cluster of $(\ad_\v)_\v$ then, denoting $\chi:=\max_k\max_{i\in S_k}|a^\v_i-p_k|$, we have
\begin{equation}\label{SplitClustXi0}
\left|\sum_{k=1}^{\tilde N_0}\sum_{i\in S_k}|d_i||\xi_0(a^\v_i)-\xi_0(p_k)|\right|\leq C\chi
\end{equation}
and
\begin{equation}\label{SplitClustTildeV}
\left|\tilde V[\zeta_{\ad_\v}]-\tilde V[\zeta_\pD]\right|\leq C\sqrt\chi
\end{equation}
where $C$ depends only on $N$, $\sum|d_i|$ and $\O$.
\end{enumerate}
\end{prop}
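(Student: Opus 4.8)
The plan is to prove the three assertions of Proposition \ref{PropClusterI} in order, each following from elementary compactness and from the regularity estimates already established for $\zeta^a$ in Proposition \ref{Prop.Information.Zeta-a}.

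For \eqref{PropClusterI2} I would first observe that $\overline\O$ is compact, so the sequence $(a_i^\v)_\v$ has, up to a subsequence, a limit $a_i^\infty\in\overline\O$ for each $i\in\{1,\dots,N\}$ simultaneously (a finite diagonal extraction, legitimate since $N$ is fixed). Setting $\{p_1,\dots,p_{\tilde N_0}\}:=\{a_1^\infty,\dots,a_N^\infty\}$ as a set (discarding repetitions), relabelling so these points are distinct, and defining $S_k:=\{i\,:\,a_i^\v\to p_k\}$ and $D_k:=\sum_{i\in S_k}d_i$, one checks directly that all the bullets in the definition of a cluster structure hold: $\lim a_i^\v$ exists and lies in $\{p_1,\dots,p_{\tilde N_0}\}$, each $S_k$ is nonempty by construction, and $1\le\tilde N_0\le N$. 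Since ${\bf d}\in\Z^N$ takes values in a bounded set and the index set is finite and fixed, up to a further extraction ${\bf d}$ is constant in $\v$, so $\pD\in(\overline\O^{\tilde N_0})^*\times\Z^{\tilde N_0}$ is well defined.

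For \eqref{PropClusterI1}, uniqueness: if $\pD$ and $\pD'$ were both clusters of $(\ad_\v)_\v$ then for each $i$ the limit $\lim_\v a_i^\v$ is a well-defined point, and it equals the $p_k$ with $i\in S_k$ for the first structure and the $p'_{k'}$ with $i\in S'_{k'}$ for the second; hence $\{p_k\}$ and $\{p'_{k'}\}$ are the same finite subset of $\overline\O$ (each is $\{\lim_\v a_i^\v\,:\,i\}$), and the partitions $(S_k)$, $(S'_{k'})$ coincide up to reindexing (two indices $i,j$ lie in the same block iff $\lim a_i^\v=\lim a_j^\v$), whence the $D_k$ agree as well. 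So $\pD$ is unique up to reordering.

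For \eqref{PropClusterI3}, the bound \eqref{SplitClustXi0} follows from $\xi_0\in W^{1,\infty}(\O)$ (indeed $\xi_0\in H^2\hookrightarrow C^{0,\alpha}$, so it is at least Hölder and in fact Lipschitz on $\overline\O$): $|\xi_0(a_i^\v)-\xi_0(p_k)|\le\|\n\xi_0\|_{L^\infty}|a_i^\v-p_k|\le\|\n\xi_0\|_{L^\infty}\chi$, and summing over $i$ with weights $|d_i|$ gives the constant $C=\|\n\xi_0\|_{L^\infty}\sum|d_i|$, which depends only on $\O$, $N$ and $\sum|d_i|$. For \eqref{SplitClustTildeV}, I would use $\tilde V[\zeta_\ad]=\pi\sum_{i,j}d_id_j\zeta^{a_i}(a_j)$ from Proposition \ref{Prop.Information.Zeta-a} together with its second estimate $\|\zeta^a-\zeta^b\|_{H^2(\O)}\le C_s|a-b|^s$ applied with, say, $s=1/2$, and the sup bound $\|\zeta^a\|_{L^\infty}\le C_s\dist(a,\p\O)^s\le C_s(\diam\O)^s$. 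Writing $\zeta^{a_i^\v}(a_j^\v)-\zeta^{p_{k(i)}}(p_{k(j)})$ as a telescoping sum of $[\zeta^{a_i^\v}-\zeta^{p_{k(i)}}](a_j^\v)$ and $\zeta^{p_{k(i)}}(a_j^\v)-\zeta^{p_{k(i)}}(p_{k(j)})$, the first is controlled in $L^\infty$ (via $H^2\hookrightarrow C^0$) by $C\chi^{1/2}$ and the second is controlled by the modulus of continuity of the fixed function $\zeta^{p_{k(i)}}\in H^2\hookrightarrow C^{0,1/2}$, again $\le C\chi^{1/2}$; summing the $N^2$ terms with weights $|d_id_j|$ produces $|\tilde V[\zeta_{\ad_\v}]-\tilde V[\zeta_\pD]|\le C\sqrt\chi$ with $C$ depending only on $N$, $\sum|d_i|$ and $\O$.

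The only mildly delicate point is \eqref{SplitClustTildeV}: one must be careful that in $\zeta_\pD$ several of the original $a_i^\v$ collapse onto the same $p_k$, so the correct comparison is between the $N\times N$ array $\zeta^{a_i^\v}(a_j^\v)$ and the array $\zeta^{p_{k(i)}}(p_{k(j)})$ (with repeated rows/columns), not between sums over different index sets — but once phrased this way it is just the telescoping estimate above, and no cancellation is needed since the Hölder exponent $s=1/2$ is what forces the $\sqrt\chi$ rather than $\chi$. I expect \eqref{PropClusterI1} and \eqref{PropClusterI2} to be essentially immediate from compactness and a finite extraction.
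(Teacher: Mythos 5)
Your proposal is correct and fills in exactly the steps the paper leaves implicit: the paper's proof says the first two assertions are "obvious" (compactness of $\overline\O$ plus finite diagonal extraction, as you note), that \eqref{SplitClustXi0} follows from $\xi_0$ being Lipschitz, and that \eqref{SplitClustTildeV} is a direct consequence of Proposition \ref{Prop.Information.Zeta-a} — which is precisely the formula $\tilde V[\zeta_\ad]=\pi\sum_{i,j}d_id_j\zeta^{a_i}(a_j)$ plus the $H^2$-Hölder estimate you invoke with $s=1/2$. Your telescoping treatment of the array $\zeta^{a_i^\v}(a_j^\v)$ versus $\zeta^{p_{k(i)}}(p_{k(j)})$ is the right way to handle the collapse of several $a_i^\v$ onto the same $p_k$, and the uniform bound on the Hölder seminorm of $\zeta^{p_k}$ needed in your second term is available from Proposition \ref{Prop.Information.Zeta-a} by comparing with any fixed $\zeta^{b_0}$.
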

\begin{proof}
The two first assertions are obvious. Estimate \eqref{SplitClustXi0} is direct by noting that $\xi_0$ a Lipschitzian function in $\O$. Estimate \eqref{SplitClustTildeV} is a direct consequence of Proposition \ref{Prop.Information.Zeta-a}.
\end{proof}

We then have:
\begin{cor}\label{Cor.DecompPourCluster}Assume that $\lambda,\delta,\h$ satisfy \eqref{CondOnLambdaDelta} and \eqref{BorneKMagn} for some $K\geq0$  independent of $\v$. Assume also  $\delta^2|\ln\v|\leq1$.

Let $\{(v_\v,A_\v)\,|\,0<\v<1\}\subset\H$ be a family s.t. $\F(v_\v,A_\v)\leq\inf_\H\F+K\ln|\ln\v|$ which is in the  Coulomb gauge and let  $\{({\bf a}_\v,{\bf d}_\v)=\ad\,|\,0<\v<1\}$ be a family of pseudo vortices associated to $\{(v_\v,A_\v)\,|\,0<\v<1\}$ [indexed on $\J=\J_\v$ possibly empty].

\begin{enumerate}
\item Letting $A_{v_\v}\in H^1(\O,\R^2)$ be defined by Lemma \ref{LemAuxConstructMagnPot} we have
\begin{equation}\label{NiceDecSharp}
\F(v_\v,A_\v)\geq\F(v_\v,A_{v_\v})\geq\h^2\Jo+2\pi\h\sum_{i\in\J}d_i\xi_0(a_i)+F(v_\v)+\tilde{V}[\zeta_\ad]+o(1).
\end{equation}
And then
\begin{equation}\label{NiceDec}
\F(v_\v,A_\v)\geq \h^2\Jo+2\pi\h\sum_{i\in\J}d_i\xi_0(a_i)+F(v_\v)+\mathcal{O}(1).
\end{equation}
\item Assume furthermore that  $\ad$ admits a cluster structure on $\pD$. Then we have
\begin{equation}\label{NiceDecSharpSplitTildeV}
\F(v_\v,A_\v)\geq\h^2\Jo+2\pi\h\sum_{i\in\J}d_i\xi_0(a_i)+F(v_\v)+\tilde{V}[\zeta_\pD]+o(1).
\end{equation}
\end{enumerate}

\end{cor}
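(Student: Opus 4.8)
The plan is to derive everything from the energetic decomposition already established. For part (1), I would begin from Lemma \ref{LemAuxConstructMagnPot}: since $A_{v_\v}$ is the minimizer of $\F_{\v,\h}(v_\v,\cdot)$ over potentials in the Coulomb gauge, we automatically get $\F(v_\v,A_\v)\geq\F(v_\v,A_{v_\v})$. Next I would check that the hypotheses of Proposition \ref{Docmpen} are met: by Remark \ref{RemCOntrolTrainRER}, either $|v_\v|>1-|\ln\v|^{-2}$ in $\O$ (then use \eqref{FullDecDiscqVal0Bisso}) or the Sandier--Serfaty disks from Proposition \ref{Prop.BorneInfLocaliseeSandSerf} satisfy the requirements of Proposition \ref{Docmpen}, and $\xi_{v_\v}\in H^1_0\cap H^2\cap W^{1,\infty}$ with $\|\n\xi_{v_\v}\|_{L^\infty}=\mathcal{O}(|\ln\v|)$ by \eqref{CoulombH2}, \eqref{MagnetiqueEq}. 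The vorticity bound $\sum|d_i|\leq\M_K$ comes from Theorem \ref{ThmBorneDegréMinGlob}. Applying \eqref{FullDecDiscqVal0}, writing $\zeta_{v_\v}=\xi_{v_\v}-\h\xi_0$, gives
\[
\F(v_\v,A_{v_\v})-\h^2\Jo=F(v_\v)+2\pi\h\sum_{i\in\J}d_i\xi_0(a_i)+\tilde V_\ad(\zeta_{v_\v})+o(1),
\]
and since $\tilde V_\ad(\zeta_{v_\v})\geq\min\tilde V_\ad=\tilde V[\zeta_\ad]$ by Proposition \ref{PropPartieMinimalSandH0}, inequality \eqref{NiceDecSharp} follows. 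Then \eqref{NiceDec} is immediate from the uniform bound $\tilde V[\zeta_\ad]=\mathcal{O}(1)$ in Proposition \ref{Prop.Information.Zeta-a}. (If $\J=\emptyset$, i.e. $|v_\v|>1/2$ everywhere, the terms with $d_i$ drop and the argument is the same using \eqref{FullDecDiscqVal0Bisso} and the nonnegativity of $\frac12\int(\Delta\zeta)^2+|\n\zeta|^2$.)

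For part (2), under the extra assumption that $\ad$ admits a cluster structure on $\pD$, I would simply replace the crude bound $\tilde V[\zeta_\ad]=\mathcal{O}(1)$ by the sharper estimate \eqref{SplitClustTildeV} of Proposition \ref{PropClusterI}, namely $\tilde V[\zeta_\ad]\geq\tilde V[\zeta_\pD]-C\sqrt\chi$ where $\chi=\max_k\max_{i\in S_k}|a_i^\v-p_k|\to0$. Plugging this into \eqref{NiceDecSharp} yields \eqref{NiceDecSharpSplitTildeV}, the $C\sqrt\chi$ being absorbed into the $o(1)$.

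The only genuinely nontrivial point is the verification that the chain of hypotheses really applies — in particular that the disks furnished by Proposition \ref{Prop.BorneInfLocaliseeSandSerf} meet \emph{all} the bullet conditions of Proposition \ref{Docmpen} (pairwise disjointness, $\{|v_\v|\leq 1/2\}\subset\cup B_i$, $\sum r_i<|\ln\v|^{-10}$, and the degree bound), and that the energetic-cleaning error $\tilde\F-\F=o(1)$ from Proposition \ref{Prop.Nettoyage} is legitimately negligible given $\lambda^{1/4}|\ln\v|\to0$ and $\h=\mathcal{O}(|\ln\v|)$. Once these bookkeeping checks are in place, no new analysis is needed — the corollary is a straightforward consequence of Proposition \ref{Docmpen}, Proposition \ref{PropPartieMinimalSandH0}, Proposition \ref{Prop.Information.Zeta-a}, and (for part 2) Proposition \ref{PropClusterI}.
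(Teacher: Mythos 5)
Your proposal is correct and follows essentially the same route the paper takes: reduce to $A_{v_\v}$ by minimality (Lemma~\ref{LemAuxConstructMagnPot}), verify the hypotheses of Proposition~\ref{Docmpen} via Theorem~\ref{ThmBorneDegréMinGlob} and Remark~\ref{RemCOntrolTrainRER}, apply \eqref{FullDecDiscqVal0} (or \eqref{FullDecDiscqVal0Bisso} in the vortex-free case), bound $\tilde V_\ad$ from below by its minimum $\tilde V[\zeta_\ad]$ via Proposition~\ref{PropPartieMinimalSandH0} and Proposition~\ref{Prop.Information.Zeta-a}, and pass to the cluster via \eqref{SplitClustTildeV}. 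The paper simply lists these same ingredients without spelling out the chain; your write-up fills in the identical bookkeeping.
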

\begin{proof}
The lower bounds \eqref{NiceDecSharp} and \eqref{NiceDec} are direct consequences of Theorem \ref{ThmBorneDegréMinGlob}, Lemma \ref{LemAuxConstructMagnPot}, Remark \ref{RemCOntrolTrainRER} and Propositions \ref{Prop.BornesSups1}$\&$\ref{Docmpen}$\&$\ref{PropPartieMinimalSandH0}.

Estimate \eqref{NiceDecSharpSplitTildeV} is a direct consequence of Proposition \ref{PropClusterI} and \eqref{NiceDecSharp}.
\end{proof}
We then have the following corollary.
\begin{cor}\label{CorGonzo}Assume that $\lambda,\delta,\h$ satisfy \eqref{CondOnLambdaDelta} and \eqref{BorneKMagn}. Assume also  $\delta^2|\ln\v|\leq1$.

Let $(v_\v)_{0<\v<1}\subset H^1(\O,\C)$ be s.t. $|v_\v|\not>1/2$ in $\O$ and assume the existence of  $(B_\v)_{0<\v<1}\subset H^1(\O,\R^2)$ s.t. $(v_\v,B_\v)$ is in the Coulomb gauge and $\F(v_\v,B_\v)\leq\inf_\H\F+\mathcal{O}(\ln|\ln\v|)$. Assume also that $({\bf a}_\v,{\bf d}_\v)=({\bf a},{\bf d})$ are pseudo-vortices as in Definition \ref{def.PseudoVortex} for $v_\v$ [note that we thus have $\sum |d_i|=\mathcal{O}(1)$], then 
\begin{equation}\label{NiceDecSharpBorneSup}
\F(v_\v,A_{({\bf a},{\bf d})})=\h^2\Jo+2\pi\h\sum_{}d_i\xi_0(a_i)+F(v_\v)+\tilde{V}[\zeta_{({\bf a},{\bf d})}]+o(1).
\end{equation}
where $A_{({\bf a},{\bf d})}:=\h\n^\bot\xi_0+\n^\bot\zeta_{({\bf a},{\bf d})}$.

Consequently we get
\begin{equation}\label{NiceDecSharpBorneSupBisso}
F(v_\v)\leq2\pi\h\sum_{}d_i|\xi_0(a_i)|+\mathcal{O}(\ln|\ln\v|)\leq \pi b^2\sum_{}|d_i||\ln\v|+\mathcal{O}(\ln|\ln\v|).
\end{equation}
\end{cor}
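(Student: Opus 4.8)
The plan is to combine the decomposition of Proposition \ref{Docmpen} (equivalently \eqref{FullDecDiscqVal0}) with the upper bound coming from the definition of the pseudo-vortex potential $A_{({\bf a},{\bf d})}$. First I would check that the hypotheses of the relevant earlier results are met: since $\F(v_\v,B_\v)\leq\inf_\H\F+\mathcal{O}(\ln|\ln\v|)$ and $\inf_\H\F=\mathcal{O}(\h^2)=\mathcal{O}(|\ln\v|^2)$ by Corollary \ref{CorEtudeSansVortex} together with \eqref{BorneKMagn}, we have $\F(v_\v,B_\v)=\mathcal{O}(|\ln\v|^2)$, hence \eqref{AbsNatBorneuh} holds for $(v_\v)$ after invoking Theorem \ref{ThmBorneDegréMinGlob} to get $F(v_\v)=\mathcal{O}(|\ln\v|)$; and $\sum|d_i|=\mathcal{O}(1)$ by Definition \ref{def.PseudoVortex}. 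One also needs $\min\dist(a_i,\p\O)>|\ln\v|^{-1}$ so that $\|\n\zeta_{({\bf a},{\bf d})}\|_{L^\infty}=\mathcal{O}(|\ln\v|)$ via Lemma \ref{Rk.RegularityLondonModified}; this follows because a pseudo-vortex sitting within $|\ln\v|^{-1}$ of $\p\O$ would, by \eqref{EstimateSS3ball} and the sign information $d_i\geq0$ of Theorem \ref{ThmBorneDegréMinGlob}, force $\dist(B_i,\Lambda)\leq\M_K|\ln\v|^{-s_0}$ while $\Lambda$ is at positive distance from $\p\O$ — a contradiction for small $\v$ (if $\J'=\emptyset$ the statement is the vortexless case and \eqref{NiceDecSharpBorneSup} reduces to \eqref{FullDecDiscqVal0Bisso} with $\zeta_{({\bf a},{\bf d})}=0$).

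Granting this, I would apply Proposition \ref{Docmpen} to the configuration $(v_\v,A_{({\bf a},{\bf d})})$ itself: here the associated potential is $A_{({\bf a},{\bf d})}=\h\n^\bot\xi_0+\n^\bot\zeta_{({\bf a},{\bf d})}$, so in the notation of Proposition \ref{Docmpen} we have $\xi_\v=\h\xi_0+\zeta_{({\bf a},{\bf d})}$ and thus $\zeta_\v=\zeta_{({\bf a},{\bf d})}=\zeta_{\ad}$, which satisfies the regularity hypothesis \eqref{BorneXiPourLaDec} by the previous paragraph. Formula \eqref{FullDecDiscqVal0} then gives
\[
\F(v_\v,A_{({\bf a},{\bf d})})-\h^2\Jo=F(v_\v)+2\pi\h\sum_{i}d_i\xi_0(a_i)+\tilde V_{\ad}(\zeta_{\ad})+o(1),
\]
and since $\zeta_{\ad}$ is precisely the minimizer of $\tilde V_{\ad}$ by Proposition \ref{PropPartieMinimalSandH0}, the term $\tilde V_{\ad}(\zeta_{\ad})=\tilde V[\zeta_{\ad}]$ is exactly the quantity appearing in \eqref{NiceDecSharpBorneSup}. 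This establishes \eqref{NiceDecSharpBorneSup}.

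For \eqref{NiceDecSharpBorneSupBisso}, I would feed \eqref{NiceDecSharpBorneSup} into the lower bound \eqref{NiceDec} of Corollary \ref{Cor.DecompPourCluster} (or directly compare with $\inf_\H\F$): since $A_{({\bf a},{\bf d})}$ is admissible, $\F(v_\v,A_{({\bf a},{\bf d})})\geq\inf_\H\F$, while on the other hand $\inf_\H\F\leq\h^2\Jo+o(1)$ is false in the presence of vortices — rather, one uses that $(v_\v,B_\v)$ is a quasi-minimizer so $\F(v_\v,B_\v)\leq\inf_\H\F+\mathcal{O}(\ln|\ln\v|)$ and, by \eqref{NiceDecSharp} applied to $(v_\v,B_\v)$, $\F(v_\v,B_\v)\geq\h^2\Jo+2\pi\h\sum d_i\xi_0(a_i)+F(v_\v)+\tilde V[\zeta_{\ad}]+o(1)$; comparing this with the value $\h^2\Jo+o(1)$ achieved by the vortexless competitor $(1,\h\n^\bot\xi_0)$ up to the slack $\mathcal{O}(\ln|\ln\v|)$, and using $\tilde V[\zeta_{\ad}]=\mathcal{O}(1)$ from Proposition \ref{Prop.Information.Zeta-a}, yields $F(v_\v)+2\pi\h\sum d_i\xi_0(a_i)\leq\mathcal{O}(\ln|\ln\v|)$. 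Since $\xi_0<0$, $-\xi_0=|\xi_0|$, this is $F(v_\v)\leq 2\pi\h\sum d_i|\xi_0(a_i)|+\mathcal{O}(\ln|\ln\v|)$; finally bounding $|\xi_0(a_i)|\leq\|\xi_0\|_{L^\infty}$ and $\h\leq b^2|\ln\v|/(2\|\xi_0\|_{L^\infty})+K\ln|\ln\v|$ from \eqref{BorneKMagn} gives the $\pi b^2\sum|d_i||\ln\v|+\mathcal{O}(\ln|\ln\v|)$ bound. The only genuinely delicate point is the boundary-separation argument ensuring $\min\dist(a_i,\p\O)>|\ln\v|^{-1}$, so that Lemma \ref{Rk.RegularityLondonModified} and hence Proposition \ref{Docmpen} apply; everything else is bookkeeping with the already-established decompositions.
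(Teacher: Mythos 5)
Your proof is correct and follows the same route as the paper's own (which is the single sentence citing $\inf_\H\F\leq\h^2\Jo$, Corollary \ref{Cor.DecompPourCluster}, and Propositions \ref{Docmpen}\,\&\,\ref{Prop.Information.Zeta-a}); you merely spell out the routine verifications the paper leaves implicit, in particular that the pseudo-vortices stay at distance $>|\ln\v|^{-1}$ from $\p\O$ via Theorem \ref{ThmBorneDegréMinGlob} and $\Lambda\subset\subset\O$, so that Lemma \ref{Rk.RegularityLondonModified} and hence Proposition \ref{Docmpen} apply to $(v_\v,A_{({\bf a},{\bf d})})$. One small phrasing slip worth fixing: the inequality $\inf_\H\F\leq\h^2\Jo+o(1)$ is \emph{always} true (take $(1,\h\n^\bot\xi_0)$ as competitor); what may fail in the presence of vortices is the reverse inequality, and it is precisely this one-sided upper bound that you then use to deduce \eqref{NiceDecSharpBorneSupBisso}, so the argument stands.
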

\begin{proof}
Corollary \ref{CorGonzo} is a direct consequence of $\inf_\H\F\leq \h^2\Jo$, Corollary \ref{Cor.DecompPourCluster} and Propositions \ref{Docmpen}$\&$\ref{Prop.Information.Zeta-a}.
\end{proof}
\begin{remark}
We may state an analog of Corollary \ref{CorGonzo} if $\ad$ admits a structure of cluster.
\end{remark}

\section{Renormalized energies}\label{Sec.RenEn}

\subsection{Macroscopic renormalized energy [at scale $1$]}\label{SecMacroRenEn}
We consider in this section:
\begin{itemize}
\item[$\bullet$] $N\in\N^*$, ${\bf z}={\bf z}^{(n)}\in\Ostar:=\{(z_1,...,z_N)\subset\O\,|\,z_i\neq z_j\text{ pour }i\neq j \}$,
\item[$\bullet$] ${\bf d}=(d_1,...,d_N)\in\Z^N$.
\item[$\bullet$] $\tae=\tae({\bf z}):=\min_i\dist(z_i,\p\O)$
\end{itemize}
We are going to deal with functions defined in the  set $\O$ perforated by disks with radius $\Rad=\Rad_n\downarrow0$:
\[
\O_\Rad=\O_\Rad({\bf z}):=\O\setminus\cup_i\overline{B(z_i,\Rad)}.
\]

We assume
\begin{equation}\label{HypRayClass}
\Rad<\dfrac{1}{8}\min\left\{\min_{i\neq j} |z_i-z_j|\,;\,\tae\right\}.
\end{equation}
For a radius $\Rad>0$ s.t. \eqref{HypRayClass} is satisfied, we consider the set of functions
\[
\I^{\rm deg}_\Rad:=\left\{w\in H^1(\O_\Rad,\S^1)\,|\,\deg_{\p B(z_i,\Rad)}(w)=d_i\text{ for }i\in\{1,...,N\}\right\}
\]
and
\[
\I^{\rm Dir}_\Rad:=\left\{w\in H^1(\O_\Rad,\S^1)\,\left|\,\begin{array}{c}w(z_i+\Rad\e^{\imath\theta})={ C}_i\e^{\imath d_i\theta}\text{ for }i\in\{1,...,N\},\\\,({C}_1,...,{ C}_N)\in(\S^1)^N\end{array}\right.\right\}.
\]
In this section we are interested in the minimization of the Dirichlet functional in $\I^{\rm deg}_\Rad$ and $\I^{\rm Dir}_\Rad$.

Before beginning we state an easy result proved by direct minimization [the proof is left to the reader, see \cite{BBH}].
\begin{prop}
For $N\geq 1$, $\zd\in\Ostar\times\Z^N$ and $\Rad>0$ s.t. \eqref{HypRayClass} is satisfied, the following minimization problems admit solutions:
\begin{equation}\label{MinPropDeg}
I_\Rad^{\rm deg}=I_\Rad^{\rm deg}\zd:=\inf_{w\in\I_\Rad^{\rm deg}}\dfrac{1}{2}\int_{\O_\Rad}|\n w|^2
\end{equation}
and
\begin{equation}\label{MinPropDir}
I_\Rad^{\rm Dir}=I_\Rad^{\rm Dir}\zd:=\inf_{w\in\I_\Rad^{\rm Dir}}\dfrac{1}{2}\int_{\O_\Rad}|\n w|^2.
\end{equation}
Moreover, these solutions are unique up to the multiplication by an $\S^1$ constant.
\end{prop}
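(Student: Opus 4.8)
The plan is to establish existence and uniqueness (up to an $\S^1$ constant) for the two minimization problems \eqref{MinPropDeg} and \eqref{MinPropDir} by the direct method of the calculus of variations, the only subtlety being the treatment of the $\S^1$-valued constraint and of the boundary conditions on the circles $\p B(z_i,\Rad)$.

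\emph{Existence.} First I would take a minimizing sequence $(w_n)\subset \I_\Rad^{\rm deg}$ (respectively $\I_\Rad^{\rm Dir}$) for the Dirichlet integral. Since $|w_n|=1$ a.e., we have $\|w_n\|_{L^2(\O_\Rad)}^2=\Haus^2(\O_\Rad)$ bounded, and $\|\n w_n\|_{L^2(\O_\Rad)}^2$ is bounded because it is (twice) a minimizing value; hence $(w_n)$ is bounded in $H^1(\O_\Rad,\C)$. Passing to a subsequence, $w_n\weak w$ weakly in $H^1$ and, by Rellich, $w_n\to w$ strongly in $L^2(\O_\Rad)$ and a.e. The a.e. convergence gives $|w|=1$ a.e., so $w\in H^1(\O_\Rad,\S^1)$. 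Weak lower semicontinuity of the Dirichlet integral then yields $\tfrac12\int_{\O_\Rad}|\n w|^2\le \liminf \tfrac12\int_{\O_\Rad}|\n w_n|^2 = I_\Rad^{\rm deg}\zd$ (resp.\ $I_\Rad^{\rm Dir}\zd$), so $w$ is a minimizer provided it lies in the admissible class.

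\emph{Stability of the constraints under weak convergence.} This is the step I expect to be the main (though mild) obstacle. For $\I_\Rad^{\rm deg}$ one must check that the degrees are preserved: since $\Rad$ satisfies \eqref{HypRayClass}, each circle $\p B(z_i,\Rad)$ lies in the interior of $\O_\Rad$, the trace operator $H^1(\O_\Rad,\C)\to H^{1/2}(\p B(z_i,\Rad),\C)$ is continuous, and the $H^{1/2}$-degree is continuous on $\{g\in H^{1/2}:|g|\ge \eta\}$; combining weak $H^1$-convergence with compactness of the trace into (say) $L^2$ of the circle, plus the uniform lower bound $|w_n|=1$, one gets $\deg_{\p B(z_i,\Rad)}(w)=d_i$, so $w\in\I_\Rad^{\rm deg}$. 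For $\I_\Rad^{\rm Dir}$ one argues similarly on each circle: $w_n(z_i+\Rad\e^{\imath\theta})=C_i^{(n)}\e^{\imath d_i\theta}$ with $C_i^{(n)}\in\S^1$; passing to a further subsequence $C_i^{(n)}\to C_i\in\S^1$, and the traces converge (strongly in $L^2$ of the circle), so $\tr(w)$ on $\p B(z_i,\Rad)$ equals $C_i\e^{\imath d_i\theta}$, i.e.\ $w\in\I_\Rad^{\rm Dir}$. In both cases the limit is admissible and hence a minimizer.

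\emph{Uniqueness up to an $\S^1$ constant.} Here I would use the standard convexity/harmonic-map argument. Write a minimizer as $w=\e^{\imath\psi}$ with $\psi$ locally defined (possible since $\O_\Rad$ is, in each homotopically relevant sense, handled via a single-valued lift after fixing the degrees); more robustly, any $\S^1$-valued minimizer of the Dirichlet energy is a (weakly) harmonic map into $\S^1$, hence satisfies $-\Delta w = |\n w|^2 w$, equivalently $\Div(w\wedge\n w)=0$, and $w\wedge\n w$ is a closed $1$-form whose periods around the circles are the prescribed $2\pi d_i$. If $w_1,w_2$ are two minimizers, then $w_1\wedge\n w_1-w_2\wedge\n w_2$ is exact on $\O_\Rad$ (same periods), so locally $w_1=w_2\e^{\imath c}$ with $\n c=0$ on a connected set; since $\O_\Rad$ is connected, $c$ is a global constant, giving $w_1=w_2\e^{\imath c}$. (In the Dirichlet case one additionally notes that the boundary data $C_i\e^{\imath d_i\theta}$ force the same constant on every circle, which is consistent with a single global $c$.) This proves uniqueness up to multiplication by a fixed element of $\S^1$, completing the proposition. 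The computations involved — the weak lower semicontinuity, the trace continuity, and the harmonic-map characterization — are all classical (see \cite{BBH}), so I would only sketch them.
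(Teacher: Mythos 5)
Your overall plan (direct method, then uniqueness via the harmonic-map/conjugate structure) is the right one, but two of the steps as you have written them contain genuine gaps, and both are exactly the points where the classical route through \cite{BBH} — the one the paper is pointing to — avoids trouble by \emph{deconstraining} the problem first.

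The standard argument fixes once and for all a smooth reference map $w_0\in C^\infty(\overline{\O_\Rad},\S^1)$ with $\deg_{\p B(z_i,\Rad)}(w_0)=d_i$ (e.g.\ $w_0=\prod_i((z-z_i)/|z-z_i|)^{d_i}$), and observes that any $w\in\I_\Rad^{\rm deg}$ can be written $w=w_0\e^{\imath\varphi}$ for some single-valued $\varphi\in H^1(\O_\Rad,\R)$ (this uses precisely the matching degrees to kill the periods of $w\wedge\n w-w_0\wedge\n w_0$). Then $\tfrac12\int|\n w|^2 = \tfrac12\int|w_0\wedge\n w_0+\n\varphi|^2$, a \emph{strictly convex} quadratic functional of $\n\varphi$ over the affine space $H^1(\O_\Rad,\R)$ (or, for $\I_\Rad^{\rm Dir}$, the affine subspace with $\varphi={\rm Cte}_i$ on $\p B(z_i,\Rad)$). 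Existence and uniqueness of $\n\varphi$, hence of $w$ up to a multiplicative $\S^1$ constant, follow at once from the projection theorem. Both of your problematic steps are handled automatically: there is no constraint to check under weak convergence, and uniqueness is built into strict convexity.

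Now the gaps in your version. First, for the stability of the degree constraint you invoke compactness of the trace into $L^2$ of the circle together with continuity of the $H^{1/2}$-degree. This is not sufficient: the degree on $H^{1/2}(\S^1,\S^1)$ is continuous under \emph{strong} $H^{1/2}$ convergence, but it is \emph{not} continuous under weak $H^{1/2}$ convergence combined with strong $L^p$ convergence — the Möbius-map bubbling example $g_n(\e^{\imath\theta})=\dfrac{\e^{\imath\theta}-a_n}{1-\bar a_n\e^{\imath\theta}}$ with $a_n\to a_\infty\in\S^1$ has constant $H^{1/2}$ norm, degree $1$, and converges weakly to the constant $-a_\infty$ of degree $0$. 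The trace of a weakly $H^1$-convergent sequence only converges weakly in $H^{1/2}$, so your argument as written could, in principle, lose degree. What saves the day is that the degree is determined in the 2D interior: choosing $\Rad<\Rad'$ with the annulus $A_i:=B(z_i,\Rad')\setminus\overline{B(z_i,\Rad)}\subset\O_\Rad$, one writes $2\pi(\Rad'-\Rad)\deg(w_n)=\int_{A_i}w_n\wedge\p_\tau w_n\,{\rm d}A$, and then strong $L^2$ convergence of $w_n$ (Rellich) paired with weak $L^2$ convergence of $\p_\tau w_n$ passes to the limit. The degree is an integer, so it stabilises. This averaged formula is the argument you should give instead of the trace/degree-continuity one. (For $\I_\Rad^{\rm Dir}$ your argument is fine: you only need $L^2$ convergence of the traces and compactness of $\S^1$ for the constants $C_i^{(n)}$.)

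Second, your uniqueness step asserts that because $w_1\wedge\n w_1-w_2\wedge\n w_2$ has zero periods (hence equals $\n\psi$ for a single-valued $\psi\in H^1(\O_\Rad,\R)$), one gets $\n\psi=0$ and $w_1=w_2\e^{\imath c}$. This does not follow from exactness alone — exactness only gives $w_1=w_2\e^{\imath\psi}$ with $\psi$ an arbitrary $H^1$ function. You must use the minimality of both $w_1$ and $w_2$: either via the strict convexity of $\varphi\mapsto\tfrac12\int|w_0\wedge\n w_0+\n\varphi|^2$ described above, or by noting that each minimizer satisfies the Euler–Lagrange equation $\Div(w_j\wedge\n w_j)=0$ in $\O_\Rad$ together with the natural boundary condition $(w_j\wedge\n w_j)\cdot\nu=0$ on $\p\O_\Rad$ (respectively, constant tangential data and zero flux on $\p B(z_i,\Rad)$ for $\I_\Rad^{\rm Dir}$), so that $\psi$ is a harmonic function with vanishing Neumann data and must be constant. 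Once you insert either of these justifications, the uniqueness conclusion is correct.
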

\subsubsection{Study of $I^{\rm deg}_\Rad$ and $I^{\rm Dir}_\Rad$}\label{UnimSection}
Following \cite{BBH}, it is standard to define the {\it canonical harmonic map associated to $\zd$}.
\begin{defi}\label{DefApplican}Let $N\in\N^*$ and $\zd\in\Ostar\times\Z^N$. A function $\wstar\in \cap_{0<p<2}W^{1,p}(\O,\S^1)\cap C^\infty(\O\setminus\{z_1,...,z_N\},\S^1)$ is the {\it canonical harmonic map associated to the singularities $\zd$} if
\begin{equation}\label{ApplicanAssocieSingDeg}
\wstar(z)={\rm e}^{\imath\varphi_\star(z)}\prod_{i=1}^N\left(\dfrac{z-z_i}{|z-z_i|}\right)^{d_i}\text{ with }\begin{cases}\text{$\varphi_\star$ is harmonic in $\O$}\\\p_\nu\wstar=0\text{ on }\p\O,\,\di\int_{\p\O}\varphi_\star=0\end{cases}.
\end{equation}
\end{defi}
\begin{remark}\label{Remark.DefConjuHarmPhase}
In this framework, it is classic to define $\Pstar$ [with the notation of Definition \ref{DefApplican}], the unique solution of 
\[
\begin{cases}
\Delta\Pstar=2\pi\sum_{i=1}^Nd_i\delta_{z_i}&\text{in }\O
\\
\Pstar=0&\text{on }\p\O
\end{cases}.
\]
This function satisfies $\n^\bot\Pstar=\wstar\wedge\n\wstar$. Moreover, by denoting $R_\zd$ the unique solution of
\[
\begin{cases}
\Delta R_\zd=0&\text{in }\O
\\
R_\zd(z)=-\sum_i d_i\ln|z-z_i|&\text{on }\p\O
\end{cases},
\]
we have $\Pstar(z)=\sum_i d_i\ln|z-z_i|+R_\zd(z)$.
\end{remark}
We first study the asymptotic behavior of minimizers of $I^{\rm deg}_\Rad\zd$ when $\Rad\to0$.
\begin{prop}\label{MinimalMapHomo}
Let $N\in\N^*$, $\zd=\zd^{(n)}\subset\Ostar\times\Z^N$ and $\tae:=\min_i\dist(z_i,\p\O)$. We assume that $\sum_i|d_i|=\mathcal{O}(1)$. 

For $\Rad>0$ s.t. \eqref{HypRayClass} is satisfied, we may consider $\wr$, the unique solution of the problem
\begin{equation}\label{ShriHoleBIDeg1}
I^{\rm deg}_\Rad\zd:=\inf_{w\in\I_\Rad^{\rm deg}}\dfrac{1}{2}\int_{\O_\Rad}|\n w|^2,
\end{equation}
of the form
\begin{equation}\label{ExprMinShrSol}
\wr(z)={\rm e}^{\imath\varphi_\Rad(z)}\prod_{i=1}^N\left(\dfrac{z-z_i}{|z-z_i|}\right)^{d_i}\text{ with }\begin{cases}\varphi_\Rad\in H^1\cap C^\infty(\O_\Rad,\R)\\\di\int_{\p\O}\varphi_\Rad=0\end{cases}.
\end{equation}
We thus have the existence of $C>0$ [depending only on $\O,N$ and the bound of $\sum_i|d_i|$] s.t.
\begin{equation}\label{BorneGradWstar}
\|\n \wstar\|_{L^\infty(\O_\Rad)}\leq\dfrac{C(1+|\ln\Rad|)}{\Rad}.
\end{equation}
We denote
\begin{equation}\label{DefX}
X:=\begin{cases}
\dfrac{\Rad(1+|\ln(\tae)|)}{\tae}\left(1+ \dfrac{\Rad(1+|\ln(\tae)|)}{\tae}\right)&\text{if }N=1
\\
\left(\dfrac{\Rad}{\min_{i\neq{j}}|z_{i}-z_j|}+\dfrac{\Rad(1+|\ln(\tae)|)}{\tae}\right)\left(1+ \dfrac{\Rad(1+|\ln(\tae)|)}{\tae}\right)&\text{if }N\geq2
\end{cases}
\end{equation}
and we have
\begin{equation}\label{ConvergenceH1ShrHolklgBorne}
\|\varphi_\Rad-\varphi_\star\|^2_{H^1(\O_\Rad)}\leq C X,
\end{equation}
\begin{equation}\label{ConvergenceShrHolklgBorne}
0\leq \dfrac{1}{2}\int_{\O_\Rad}|\n\wstar|^2-\inf_{w\in\I_\Rad^{\rm deg}}\dfrac{1}{2}\int_{\O_\Rad}|\n w|^2\leq CX.
\end{equation}
Moreover, if there exists $\eta>0$ [independent of $n$] s.t. $\tae>\eta$ then \eqref{BorneGradWstar} may be refined into
\begin{equation}\label{BorneGradWstarSpeciale}
\|\n \wstar\|_{L^\infty(\O_\Rad)}\leq\dfrac{C}{\Rad}.
\end{equation}
\end{prop}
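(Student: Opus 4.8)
The plan is to follow the Bethuel–Brezis–Hélein analysis of the Dirichlet energy on a perforated domain, adapted to keep track of the geometric parameters $\Rad$, $\tae$ and $\min_{i\neq j}|z_i-z_j|$. First I would address existence and the product representation \eqref{ExprMinShrSol}: since $\O_\Rad$ is a multiply connected smooth domain and the class $\I_\Rad^{\rm deg}$ is stable under weak $H^1$ convergence (the degree on each $\p B(z_i,\Rad)$ is continuous for the $H^{1/2}$-trace topology), the direct method gives a minimizer $\wr$; writing $\wr=\e^{\imath\varphi_\Rad}\prod_i\left(\frac{z-z_i}{|z-z_i|}\right)^{d_i}$ is legitimate because $\wr/\prod_i(\cdots)^{d_i}$ is an $\S^1$-valued $H^1$ map on $\O_\Rad$ with zero degree on every boundary component, hence has a single-valued $H^1$ phase; normalising $\int_{\p\O}\varphi_\Rad=0$ fixes the $\S^1$-constant, and elliptic regularity for the harmonic equation satisfied by $\varphi_\Rad$ (Euler–Lagrange of the Dirichlet integral with free boundary data) gives $C^\infty$ smoothness in the interior.

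Next I would establish the gradient bound \eqref{BorneGradWstar} for $\wstar$. The point is that $\n\wstar = \imath\wstar(\n\varphi_\star + \sum_i d_i\frac{(z-z_i)^\bot}{|z-z_i|^2})$, so I must bound $\n\varphi_\star$ and the sum of the singular terms on $\O_\Rad$. The singular terms are $\mathcal{O}(1/\Rad)$ on $\p B(z_i,\Rad)$ and $\mathcal{O}(1/\tae)$ near $\p\O$ from the far singularities, while $\varphi_\star$ is harmonic with Neumann data controlled by $\sum_i d_i\frac{(z-z_i)^\bot\cdot\nu}{|z-z_i|^2}$ on $\p\O$, whose $L^\infty(\p\O)$ norm is $\mathcal{O}(1/\tae)$; a standard single-layer/Poisson-kernel estimate then yields $\|\n\varphi_\star\|_{L^\infty(\O)}\lesssim \frac{1+|\ln\tae|}{\tae}$ (the logarithm coming from the borderline $L^\infty$–gradient estimate for the harmonic extension). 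Collecting the three contributions and using \eqref{HypRayClass} to absorb $1/\min_{i\neq j}|z_i-z_j|$ and $1/\tae$ into $(1+|\ln\Rad|)/\Rad$ gives \eqref{BorneGradWstar}; when $\tae>\eta$ the $\tae$-terms are $\mathcal{O}(1)$ and drop out, giving the refinement \eqref{BorneGradWstarSpeciale}.

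The heart of the proof is the quantitative comparison \eqref{ConvergenceH1ShrHolklgBorne}–\eqref{ConvergenceShrHolklgBorne}. The key identity is that, writing any competitor as $w=\e^{\imath\psi}\prod_i(\cdots)^{d_i}$, the Dirichlet energy splits as $\frac12\int_{\O_\Rad}|\n w|^2 = \frac12\int_{\O_\Rad}|\n\wstar|^2 + \frac12\int_{\O_\Rad}|\n(\psi-\varphi_\star)|^2 + \int_{\O_\Rad}\n\varphi_\star\cdot\n(\psi-\varphi_\star)$, and the cross term is a boundary term after integration by parts: since $\varphi_\star$ is harmonic with $\p_\nu\wstar=0$ on $\p\O$, it reduces to an integral over $\cup_i\p B(z_i,\Rad)$ of size $\mathcal{O}(\Rad\cdot\|\n\varphi_\star\|_{L^\infty}\cdot\|\psi-\varphi_\star\|_{L^\infty(\p B)})$. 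Testing against $\wr$ (so $\psi=\varphi_\Rad$) and using a trace/Poincaré inequality on the thin annuli to control $\|\varphi_\Rad-\varphi_\star\|_{L^\infty(\p B(z_i,\Rad))}$ by $\|\n(\varphi_\Rad-\varphi_\star)\|_{L^2}$ plus its average, I get a coercive inequality $\frac12\|\n(\varphi_\Rad-\varphi_\star)\|_{L^2}^2 \lesssim \sqrt{X}\,\|\n(\varphi_\Rad-\varphi_\star)\|_{L^2} + (\text{l.o.t.})$, whence \eqref{ConvergenceH1ShrHolklgBorne} by Young; then \eqref{ConvergenceShrHolklgBorne} follows since the minimal energy lies between $\frac12\int|\n\wr|^2$ and $\frac12\int|\n\wstar|^2$, and the gap is exactly $\frac12\|\n(\varphi_\Rad-\varphi_\star)\|_{L^2}^2 + (\text{cross term})$. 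The main obstacle I anticipate is bookkeeping the precise shape of $X$ in \eqref{DefX}: one must verify that the boundary terms genuinely produce the stated product structure — in particular the factor $\left(1+\frac{\Rad(1+|\ln\tae|)}{\tae}\right)$ arises from controlling $\|\varphi_\Rad-\varphi_\star\|_{L^\infty}$ on the inner circles in terms of its $L^2$-norm and a logarithmically-weighted average of $\varphi_\star$, and the $N=1$ versus $N\geq2$ dichotomy reflects whether the inter-vortex distance $\min_{i\neq j}|z_i-z_j|$ is present at all. Getting this dependence sharp, rather than a cruder bound, is the delicate part, but it is forced by the degenerating geometry we will later apply this to.
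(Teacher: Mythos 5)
Your route is genuinely different from the paper's, although it is the "dual" one: you work on the phase side, comparing $\varphi_\Rad$ and $\varphi_\star$ (two Neumann-type harmonic problems), whereas the paper works on the conjugate side, introducing $\Pstar$ with $\n^\bot\Pstar=\wstar\wedge\n\wstar$ (Dirichlet data on $\p\O$, $\Pstar=0$) and the auxiliary function $\Phi_\Rad$ solving a Dirichlet problem on $\O_\Rad$ with unknown constants on the inner circles and a prescribed flux. Because both $\Pstar$ and $\Phi_\Rad$ live in the world of Dirichlet problems, the paper can invoke Lemma I.4 of \cite{BBH} — essentially a maximum-principle statement — to bound $\|\Phi_\Rad-\Pstar\|_{L^\infty(\O_\Rad)}$ directly by the oscillation of $\Pstar$ on $\cup_i\p B(z_i,\Rad)$, which is the quantity $Y$; the cross term is then $-2\sum_i\int_{\p B_i}(\Pstar-\Phi_\Rad)\p_\nu\Pstar$, estimated by $\|\Pstar-\Phi_\Rad\|_{L^\infty}\cdot\Rad\cdot\|\p_\nu\Pstar\|_{L^\infty(\p B_i)}$, giving the product structure of $X$ at once. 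On the phase side there is no maximum principle, so your approach must get the coercivity from the energy splitting itself — that part of your plan is fine. Note also that in your formulation the average of $\varphi_\Rad-\varphi_\star$ on $\p B_i$ drops out for free, since $\int_{\p B_i}\p_\tau\Pstar=0$, whereas the paper needs the $L^\infty$ control of Lemma I.4 precisely to pin down the undetermined constants ${\rm Cte}_i$; this is a mild structural simplification in your favour.

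The genuine gap is the trace step. You propose to bound $\|\varphi_\Rad-\varphi_\star\|_{L^\infty(\p B(z_i,\Rad))}$ by $\|\n(\varphi_\Rad-\varphi_\star)\|_{L^2}$ plus its average, but in two dimensions $H^1$ does not embed into $L^\infty$, and the $H^{1/2}$ trace on a circle does not embed into $L^\infty$ either — so that bound, as stated, is false. There are two standard repairs. (i) Integrate by parts once more on the circle: since $\int_{\p B_i}\p_\tau\Pstar=0$ and $\varphi_\Rad-\varphi_\star$ is single-valued, $\int_{\p B_i}(\varphi_\Rad-\varphi_\star-\bar f_i)\p_\tau\Pstar=-\int_{\p B_i}(\Pstar-\bar\Pstar_i)\p_\tau(\varphi_\Rad-\varphi_\star)$; now the $L^\infty$ lands on $\Pstar-\bar\Pstar_i$ (controlled by $\Rad\|\p_\tau\Pstar\|_{L^\infty(\p B_i)}\lesssim Y$) and the $L^2$ lands on $\p_\tau(\varphi_\Rad-\varphi_\star)$, which a mean-value argument over an annulus $B(z_i,2\Rad)\setminus B(z_i,\Rad)$ bounds by $\|\n(\varphi_\Rad-\varphi_\star)\|_{L^2}/\sqrt\Rad$. (ii) Alternatively, use the mean-value argument to pick a good circle of radius $\sim\Rad$ on which the oscillation of $\varphi_\Rad-\varphi_\star$ is bounded by $\|\n(\varphi_\Rad-\varphi_\star)\|_{L^2(\text{annulus})}$, accepting the bookkeeping cost that the minimizer only lives on $\O_\Rad$. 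Either fix recovers the coercive inequality you want and then Young gives \eqref{ConvergenceH1ShrHolklgBorne}. As you yourself flag, pinning down the exact shape of $X$ (the factor $1+\Rad(1+|\ln\tae|)/\tae$ and the $N=1$ versus $N\geq2$ split) is the delicate bookkeeping; once the trace step is repaired along the lines above, that bookkeeping goes through, and the gradient bound \eqref{BorneGradWstar} also follows from your decomposition (your $1/\Rad+(1+|\ln\tae|)/\tae$ is in fact a slightly sharper bound that is dominated by $C(1+|\ln\Rad|)/\Rad$ under \eqref{HypRayClass}).
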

The proof of Proposition \ref{MinimalMapHomo} is in Appendix \ref{PreuvePropUniModComp}.

By adapting the proof of Proposition 5.1 in \cite{S1} we have
\begin{prop}\label{Prop.EnergieRenDef}
For $N\geq 1$, there exists an application $W^{\tt macro}_N=W^{\tt macro}:\Ostar\times\Z^N\to\R$ s.t. for sequences $\zd=\zd^{(n)}\in\Ostar\times\Z^N$ and $\Rad=\Rad_n\to0$ satisfying \eqref{HypRayClass} and s.t. ${\bf d}$ is independent of  $n$, there exists $C\geq1$ [depending only on $N$, $\sum|d_i|$ and $\O$] s.t.
\[
\left|\dfrac{1}{2}\int_{\O_\Rad}|\n \wstar|^2-\pi\sum_id_i^2|\ln\Rad|-W^{\tt macro}\zd\right|\leq CX
\]
with
\[
W^{\tt macro}\zd=-\pi\sum_{i\neq j}d_id_j\ln|z_i-z_j|-\pi\sum_id_iR_\zd(z_i),
\]
\[
R_\zd\in C^\infty(\O,\R)\text{ satisfies }\|R_\zd\|_{L^\infty(\O)}\leq C(1+|\ln\tae|).
\]
\end{prop}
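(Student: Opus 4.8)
The plan is to follow the classical strategy of Bethuel--Brezis--Hélein (\cite{BBH}) for the renormalized energy, adapted to the Neumann boundary condition and made uniform in the position of the singularities via the quantitative estimates of Proposition \ref{MinimalMapHomo}. First I would use the explicit representation $\wstar(z)=\e^{\imath\varphi_\star(z)}\prod_i\left(\frac{z-z_i}{|z-z_i|}\right)^{d_i}$ together with $\n^\bot\Pstar=\wstar\wedge\n\wstar$ and $\Pstar(z)=\sum_i d_i\ln|z-z_i|+R_\zd(z)$ from Remark \ref{Remark.DefConjuHarmPhase}. Since $|\n\wstar|=|\n\Pstar|$ away from the singularities, the core computation is
\[
\frac{1}{2}\int_{\O_\Rad}|\n\wstar|^2=\frac{1}{2}\int_{\O_\Rad}|\n\Pstar|^2.
\]
I would split $\Pstar=\sum_i d_i\ln|z-z_i|+R_\zd$ and expand the square. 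The singular part contributes, on each annulus $B(z_i,\rho_0)\setminus B(z_i,\Rad)$ for a fixed small $\rho_0$, exactly $\pi d_i^2\ln(\rho_0/\Rad)$ plus a uniformly controlled remainder; the cross terms $\ln|z-z_i|\ln|z-z_j|$ for $i\neq j$ and the interaction with $R_\zd$ are handled by integration by parts, using that $R_\zd$ is harmonic and that $-\Delta\ln|z-z_j|=-2\pi\delta_{z_j}$, producing the boundary/point terms $-\pi\sum_{i\neq j}d_id_j\ln|z_i-z_j|-\pi\sum_i d_iR_\zd(z_i)$. The estimate $\|R_\zd\|_{L^\infty(\O)}\leq C(1+|\ln\tae|)$ follows from the maximum principle applied to the harmonic function $R_\zd$ with boundary data $-\sum_i d_i\ln|z-z_i|$, bounding that data in terms of $\dist(z_i,\p\O)\geq\tae$ and $\sum|d_i|=\mathcal{O}(1)$.

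The second ingredient is to replace $\wstar$ by the actual minimizer $\wr$ of $I^{\rm deg}_\Rad$: by Proposition \ref{MinimalMapHomo}, $\frac{1}{2}\int_{\O_\Rad}|\n\wstar|^2$ differs from $\inf_{\I^{\rm deg}_\Rad}\frac12\int|\n w|^2$ by at most $CX$, and $\|\varphi_\Rad-\varphi_\star\|_{H^1}^2\leq CX$, so whatever is proved for $\wstar$ transfers to the minimizer with the same error term $CX$. Thus it suffices to establish the displayed inequality for $\wstar$ itself, and I would define $W^{\tt macro}\zd$ precisely by the formula $-\pi\sum_{i\neq j}d_id_j\ln|z_i-z_j|-\pi\sum_i d_iR_\zd(z_i)$, then verify that the BBH-type expansion gives the bound $\left|\frac12\int_{\O_\Rad}|\n\wstar|^2-\pi\sum_id_i^2|\ln\Rad|-W^{\tt macro}\zd\right|\leq CX$ with $C$ depending only on $N$, $\sum|d_i|$ and $\O$. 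The role of $X$ as defined in \eqref{DefX} is exactly to absorb the contributions coming from the cutoff radius $\Rad$ near the singularities, from the mutual separation $\min_{i\neq j}|z_i-z_j|$, and from the proximity $\tae$ of a singularity to $\p\O$, which is why both the $N=1$ and $N\geq2$ cases appear.

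The main obstacle is making every constant uniform: since $\zd=\zd^{(n)}$ is a moving sequence of configurations, the singularities may collide or approach the boundary, so one cannot work on a fixed domain and must track how each term degenerates. The key is that all degeneration is already packaged into $X$ and into the $(1+|\ln\tae|)$ factor in $\|R_\zd\|_{L^\infty}$; the remaining estimates — the expansion of $\int|\n\ln|\cdot-z_i||^2$ on annuli, the integration by parts for cross terms, and the harmonic-estimate for $R_\zd$ — are scale-robust and only use $\O$, $N$ and the bound on $\sum|d_i|$. I would also invoke the gradient bound \eqref{BorneGradWstar} (respectively \eqref{BorneGradWstarSpeciale} when $\tae>\eta$) to control the error on the small annuli near each $z_i$ when passing from $\wstar$ to $\wr$ via Proposition \ref{MinimalMapHomo}. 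Once these pieces are assembled the proposition follows; the smoothness $R_\zd\in C^\infty(\O,\R)$ is immediate from elliptic regularity since $R_\zd$ is harmonic in $\O$.
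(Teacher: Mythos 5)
Your proposal takes a genuinely different route from the paper's. The paper performs a single global integration by parts, using $\Delta\Pstar=0$ in $\O_\Rad$ and $\Pstar=0$ on $\p\O$, to reduce $\frac12\int_{\O_\Rad}|\n\Pstar|^2$ to $-\frac12\sum_i\int_{\p B(z_i,\Rad)}\Pstar\,\p_\nu\Pstar$; it then writes each boundary integral as an oscillation term (bounded by $CX$ via \eqref{lkjbblkjn0} and \eqref{BorneDerNormPstarCercle}) plus $2\pi d_{i_0}\Pstar(x_{i_0})$, and evaluates $\Pstar(x_{i_0})$ explicitly from $\Pstar=\sum_j d_j\ln|\cdot-z_j|+R_\zd$. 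No intermediate radius ever has to be chosen, which is precisely why the error is cleanly $\mathcal{O}(X)$ even as the configuration degenerates. You instead propose the classical BBH route of expanding $|\n\Pstar|^2$ into singular squares, cross terms, and $R_\zd$-terms, with annular computations and integration by parts for the interactions; this is a legitimate alternative.

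However, as sketched it has a genuine gap. You write that the singular part contributes, ``on each annulus $B(z_i,\rho_0)\setminus B(z_i,\Rad)$ for a fixed small $\rho_0$, exactly $\pi d_i^2\ln(\rho_0/\Rad)$ plus a uniformly controlled remainder.'' But here ${\bf z}={\bf z}^{(n)}$ is a moving sequence in which $\min_{i\neq j}|z_i-z_j|$ and $\tae$ may both tend to $0$, so no fixed $\rho_0$ works: the annuli must have configuration-dependent radii $\rho_0^{(n)}\lesssim\min\{\min_{i\neq j}|z_i-z_j|,\tae\}$, and then the ``uniformly controlled remainder'' is no longer uniform unless you redo the computation and show explicitly that what is left over is $\mathcal{O}(X)$. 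You acknowledge that ``all degeneration is already packaged into $X$'' but this is asserted, not derived; the entire content of the proposition is that every error term really does collapse to a multiple of $X$ as defined in \eqref{DefX}, and that must be verified term by term once the annular radii are allowed to shrink. Also, your reduction paragraph (passing from $\wstar$ to the minimizer $\wr$ and back) is unnecessary: the statement is already about $\wstar$, so Proposition \ref{MinimalMapHomo} plays no role here. The rest — the formula for $W^{\tt macro}$, the maximum-principle bound on $R_\zd$, the harmonicity and smoothness of $R_\zd$ — matches the paper.
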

Proposition \ref{Prop.EnergieRenDef} is proved in \ref{PreuvelammeShrinkSerfaty}. We immediately obtain from Proposition \ref{Prop.EnergieRenDef} the following corollary.
\begin{cor}\label{CorBorneGrossEneStar}
Under the hypotheses of Proposition \ref{Prop.EnergieRenDef} and assuming that there exists $C_1>0$ [independent of $\rad$] s.t. $\dfrac{\Rad(1+|\ln\tae|)}{\tae}\leq C_1$, there is $C>1$ [depending only on $\O$, $N$, $\sum_i |d_i|$ and $C_1$] s.t. $\displaystyle\int_{\O_\Rad}|\n \wstar|^2\leq C|\ln\Rad|$.

\end{cor}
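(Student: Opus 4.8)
The plan is to read the bound straight off Proposition \ref{Prop.EnergieRenDef}, after checking that, under the extra hypothesis, both the error term $X$ from \eqref{DefX} and the macroscopic renormalized energy $W^{\tt macro}\zd$ are controlled by $|\ln\Rad|$ up to constants of the admissible form. So the whole proof is a bookkeeping exercise: nothing new is constructed, one just feeds estimates into the inequality of Proposition \ref{Prop.EnergieRenDef}.

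First I would bound $X$. The standing assumption $\frac{\Rad(1+|\ln\tae|)}{\tae}\leq C_1$ bounds that quantity, and hence the factor $1+\frac{\Rad(1+|\ln\tae|)}{\tae}\leq 1+C_1$; moreover \eqref{HypRayClass} gives $\frac{\Rad}{\min_{i\neq j}|z_i-z_j|}<\frac18$ and $\frac{\Rad}{\tae}<\frac18$. Substituting these into \eqref{DefX}, in either case $N=1$ or $N\geq2$, yields $X\leq C$ with $C$ depending only on $C_1$; combined with the constant of Proposition \ref{Prop.EnergieRenDef} this gives $CX\leq C(\O,N,\sum_i|d_i|,C_1)$.

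Next I would estimate $W^{\tt macro}\zd=-\pi\sum_{i\neq j}d_id_j\ln|z_i-z_j|-\pi\sum_id_iR_\zd(z_i)$, assuming $\Rad$ small enough that $\Rad<1$ (legitimate since $\Rad=\Rad_n\to0$). From \eqref{HypRayClass} one has $|z_i-z_j|>8\Rad$ for $i\neq j$ and $\tae>8\Rad$; together with the trivial upper bounds $|z_i-z_j|\leq{\rm diam}(\O)$ and $\tae\leq{\rm diam}(\O)$, this gives, distinguishing whether the quantity inside the logarithm is $<1$ or $\geq1$, that $\big|\ln|z_i-z_j|\big|\leq|\ln\Rad|+C_\O$ and $|\ln\tae|\leq|\ln\Rad|+C_\O$. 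Since $\sum_i|d_i|=\mathcal{O}(1)$, the interaction sum is then $\leq C(1+|\ln\Rad|)$, while the bound $\|R_\zd\|_{L^\infty(\O)}\leq C(1+|\ln\tae|)$ furnished by Proposition \ref{Prop.EnergieRenDef} gives $\big|\sum_id_iR_\zd(z_i)\big|\leq C(1+|\ln\Rad|)$. Hence $|W^{\tt macro}\zd|\leq C(1+|\ln\Rad|)$.

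Finally, Proposition \ref{Prop.EnergieRenDef} gives
\[
\tfrac12\int_{\O_\Rad}|\n\wstar|^2\leq \pi\Big(\sum_id_i^2\Big)|\ln\Rad|+|W^{\tt macro}\zd|+CX\leq C\big(1+|\ln\Rad|\big),
\]
and since $\Rad=\Rad_n\to0$ we have $|\ln\Rad|\geq1$ for $n$ large, so the right-hand side is $\leq C|\ln\Rad|$ after enlarging $C$ (absorbing the remaining finitely many indices, for which the left-hand side is a fixed finite number). There is no real obstacle; the only point that needs care is the observation that the $\tae$-dependent quantities $|\ln\tae|$ and $\|R_\zd\|_{L^\infty(\O)}$ are absorbed into $|\ln\Rad|$, which rests precisely on the separation condition \eqref{HypRayClass} relating $\Rad$ and $\tae$.
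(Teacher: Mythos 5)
Your proof is correct and is essentially the derivation the paper has in mind: the paper simply declares Corollary \ref{CorBorneGrossEneStar} an immediate consequence of Proposition \ref{Prop.EnergieRenDef}, and your bookkeeping — bounding $X$ by a constant via $\frac{\Rad(1+|\ln\tae|)}{\tae}\leq C_1$ and \eqref{HypRayClass}, then converting the $|\ln\tae|$- and $|\ln|z_i-z_j||$-dependence in $W^{\tt macro}\zd$ into $|\ln\Rad|$-dependence using $8\Rad<\min\{\min_{i\neq j}|z_i-z_j|,\tae\}\leq{\rm diam}(\O)$ — is exactly the intended "immediate" argument.
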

We end this section by linking $I^{\rm deg}_\Rad$ and $I^{\rm Dir}_\Rad$.
\begin{prop}\label{Prop.ConditionDirEnergieRen}
Let $N\geq1$, ${\bf z}\in\Ostar$ and $\Rad=\Rad_n\downarrow0$ satisfying \eqref{HypRayClass}. Assume $\dfrac{\Rad}{\tae}\to0$  and   if $N\geq2$, we also assume $\dfrac{\Rad}{\min_{i\neq j}|z_i-z_j|}\to0$.

Let
\[
\eta:=\begin{cases}10^{-1}\tae&\text{if }N=1\\10^{-1}\min\{\tae\,;\,\min_{i\neq j}|z_i-z_j|\}&\text{if }N\geq2\end{cases}.
\]
Assume furthermore
\[
Z:=\dfrac{1}{\ln(\eta/\Rad)}\left[\dfrac{\eta(1+|\ln(\tae)|)}{\tae}+1\right]^2\to0.
\]
Then for ${\bf d}\in\Z^N$ [independent of $n$], there exists $C>1$ [depending only on $\O$, $N$ and $\sum|d_i|$] s.t. 
\[
0\leq\inf_{w\in\I_\Rad^{\rm Dir}}\dfrac{1}{2}\int_{\O_\Rad}|\n w|^2-\inf_{w\in\I_\Rad^{\rm deg}}\dfrac{1}{2}\int_{\O_\Rad}|\n w|^2\leq C(X+Z).
\]
\end{prop}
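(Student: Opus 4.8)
The plan is as follows. The lower bound $I^{\rm deg}_\Rad\le I^{\rm Dir}_\Rad$ is immediate, since $\I^{\rm Dir}_\Rad\subset\I^{\rm deg}_\Rad$. For the upper bound I would construct an explicit competitor in $\I^{\rm Dir}_\Rad$ by locally modifying, in thin collar annuli around the $z_i$, the canonical harmonic map $\wstar$ associated to $\zd$. By \eqref{ConvergenceShrHolklgBorne} one has $\frac12\int_{\O_\Rad}|\n\wstar|^2\le I^{\rm deg}_\Rad+CX$, so everything reduces to carrying out this modification at an extra Dirichlet cost $\le C(X+Z)$.

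For the construction, fix $i$ and set $r_i:=|z-z_i|$, $\theta_i:=\arg(z-z_i)$. By \eqref{HypRayClass} the $z_j$ with $j\ne i$ stay outside $B(z_i,8\Rad)$; since moreover $\varphi_\star$ is harmonic in $\O$, the phase $\Psi_i:=\varphi_\star+\sum_{j\ne i}d_j\arg(z-z_j)$ is single valued and harmonic on $B(z_i,8\Rad)$, and $\wstar=e^{\imath\Psi_i}e^{\imath d_i\theta_i}$ there. I would set $w':=\wstar$ outside $\bigcup_i\{\Rad<r_i<e\Rad\}$ and, on each collar $\{\Rad<r_i<e\Rad\}$, $w':=e^{\imath\Theta_i}e^{\imath d_i\theta_i}$ where
\[
\Theta_i(z):=\ln\tfrac{r_i}{\Rad}\bigl(\Psi_i(z)-\Psi_i(z_i)\bigr)+\Psi_i(z_i).
\]
Then $\Theta_i=\Psi_i$ on $\{r_i=e\Rad\}$, so $w'$ is continuous there and $w'\in H^1(\O_\Rad,\S^1)$, while on $\{r_i=\Rad\}$ one has $w'=e^{\imath\Psi_i(z_i)}e^{\imath d_i\theta_i}$; hence $w'\in\I^{\rm Dir}_\Rad$ with $C_i=e^{\imath\Psi_i(z_i)}$, and \eqref{HypRayClass} guarantees the collars are pairwise disjoint and disjoint from $\p\O$.

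It then remains to bound $\frac12\int_{\O_\Rad}(|\n w'|^2-|\n\wstar|^2)$, a finite sum of collar contributions. The key input — and the point I expect to be the main obstacle — is a gradient bound for $\Psi_i$ near $z_i$. Since $\varphi_\star$ is the harmonic conjugate of $R_\zd$ (indeed $\n\varphi_\star=\n^\bot R_\zd$, by Remark \ref{Remark.DefConjuHarmPhase} and the explicit form of $\wstar$), and $\dist(B(z_i,8\Rad),\p\O)\ge\tfrac12\tae$ for small $\Rad$, an interior gradient estimate for harmonic functions together with $\|R_\zd\|_{L^\infty(\O)}\le C(1+|\ln\tae|)$ from Proposition \ref{Prop.EnergieRenDef} gives $\|\n\varphi_\star\|_{L^\infty(B(z_i,8\Rad))}\le C(1+|\ln\tae|)/\tae$, whence
\[
L_i:=\|\n\Psi_i\|_{L^\infty(B(z_i,8\Rad))}\le C\Bigl(\tfrac{1+|\ln\tae|}{\tae}+\tfrac1{\min_{j\ne i}|z_i-z_j|}\Bigr)
\]
(the second summand absent if $N=1$), so that in particular $\Rad L_i\le CX$ by the definition of $X$. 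From $|\Psi_i(z)-\Psi_i(z_i)|\le r_iL_i$ one checks $|\n\Theta_i|\le CL_i$ and $|\n(\Theta_i-\Psi_i)|\le CL_i$ on the collar, while $|\n\theta_i|=r_i^{-1}$; writing $|\n w'|^2-|\n\wstar|^2=(|\n\Theta_i|^2-|\n\Psi_i|^2)+2d_i\,\n\theta_i\cdot(\n\Theta_i-\n\Psi_i)$ and integrating over $\{\Rad<r_i<e\Rad\}$ gives an $i$-th contribution $\le C(L_i^2\Rad^2+|d_i|L_i\Rad)\le CX$, using $\Rad L_i\le CX\le1$ and $|d_i|=\mathcal O(1)$. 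Summing over the finitely many $i$ yields $I^{\rm Dir}_\Rad\le\frac12\int_{\O_\Rad}|\n\wstar|^2+CX\le I^{\rm deg}_\Rad+C(X+Z)$. Thus the genuinely delicate ingredient is the interior elliptic estimate on $\varphi_\star$ near the vortices — it is what injects the factor $(1+|\ln\tae|)/\tae$, squared, into the error — while the use of a collar of fixed logarithmic width $\ln(e\Rad/\Rad)=1$, rather than one reaching out to radius $\eta$, is what keeps the gluing cost of order $X$ (a wider collar would instead contribute the angular‑oscillation term $Z$, explaining the stated $X+Z$).
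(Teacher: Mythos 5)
Your argument is correct, and it follows a genuinely different --- and in fact sharper --- route than the paper's proof in Appendix~\ref{Sec.PreuvelammeShrinkSerfatyDir}. The paper works in the full macroscopic annulus $B(z_{i_0},\eta)\setminus\overline{B(z_{i_0},\Rad)}$: it proves the gradient bound \eqref{BorneDephWstarBouleEta} for the local phase $\varphi_{i_0}$ over the whole ball $B(z_{i_0},\eta)$, then uses a mean-value/pigeonhole argument over $\rho\in(\sqrt{\eta\Rad},\eta)$ to select a circle on which the angular Dirichlet energy of the phase is $O(Z)$, and interpolates linearly from there down to a constant at radius $\Rad$. That pigeonhole step is precisely what injects the factor $1/\ln(\eta/\Rad)$, i.e.\ the quantity $Z$, into the final error, and it is the only reason the statement is formulated with $X+Z$ and the hypothesis $Z\to0$. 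You avoid the pigeonhole entirely: you modify $\wstar$ only in a collar $\{\Rad<r_i<e\Rad\}$ of bounded logarithmic width and control the angular oscillation there directly by the Lipschitz estimate on $\Psi_i$. Your bookkeeping is right: with $Y$ as in \eqref{DefY} one has $L_i\Rad\le CY$, and since the hypotheses force $\Rad/\min_{i\ne j}|z_i-z_j|\to0$ one also gets $Y^2\le X$ for $n$ large, so the collar cost $C(L_i^2\Rad^2+|d_i|L_i\Rad)$ is indeed $\le CX$. Combined with \eqref{ConvergenceShrHolklgBorne} this yields $I^{\rm Dir}_\Rad-I^{\rm deg}_\Rad\le CX$, a bound in which $Z$ does not appear at all; in particular the hypothesis $Z\to0$ is never used, so you prove a marginally stronger statement. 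The only input beyond Proposition~\ref{MinimalMapHomo} that you invoke is the interior gradient estimate \eqref{lkjbblkjn0} for $R_\zd$, which is already established in the paper; the observation $\n\varphi_\star=\n^\bot R_\zd$ that you use to transfer this to $\varphi_\star$ is exactly the identity underlying Remark~\ref{Remark.DefConjuHarmPhase}. In short: same reduction to estimating the gluing cost of a competitor built from $\wstar$, but a thin collar plus a pointwise Lipschitz bound replaces the paper's wide annulus plus circle-averaging, with a cleaner and slightly stronger conclusion.
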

Proposition \ref{Prop.ConditionDirEnergieRen} is proved Appendix \ref{Sec.PreuvelammeShrinkSerfatyDir}.
\subsubsection{Macroscopic renormalized energy and cluster of vortices}
We first state an easy lemma.
\begin{lem}\label{LemPseudoCondRzd}
\begin{enumerate}
\item Let $N\in\N^*$ and ${\bf d}\in\Z^N$. Let $\chi>0$ and ${\bf z},{\bf z}'\in\Ostar$ be s.t. for $i\in\{1,...,N\}$ we have $|z_i-z_i'|\leq\chi$. Then we have
\[
\|R_\zd-R_{({\bf z}',{\bf d})}\|_{L^\infty(\O)}\leq \sum_i|d_i|\dfrac{\chi}{\max\{\tae({\bf z}),\tae({\bf z'})\}}.
\]
\item Let $1\leq \tilde N_0\leq N$, ${\bf p}\in(\O^{\tilde N_0})^*$, $\zd=\zd^{(n)}\in\Ostar\times\Z^N$ be s.t. ${\bf d}$ is independent of $n$ and for  $i\in\{1,...,N\}$ there exists $k\in\{1,...,\tilde N_0\}$ s.t. $z_i\to p_k$. We let $\chi:=\max_i\dist(z_i,\{p_1,...,p_{\tilde N_0}\})$.

For $k\in\{1,...,\tilde N_0\}$ we let $D_k:=\di\sum_{z_i\to p_k}d_i$ and ${\bf D}=(D_1,...,D_{\tilde N_0})$. Then we have
\[
\|R_\zd-R_{\pD}\|_{L^\infty(\O)}\leq \sum_i|d_i|\dfrac{\chi}{\tae({\bf p})}.
\]
\end{enumerate}
\end{lem}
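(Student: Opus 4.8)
The plan is to use that, by the definition recalled in Remark~\ref{Remark.DefConjuHarmPhase}, each of the functions $R_{(\cdot,\cdot)}$ is the harmonic extension to $\O$ of an explicit finite sum of logarithms read on $\p\O$; consequently the difference of two such functions is harmonic, and by the maximum principle its $L^\infty(\O)$ norm equals its $L^\infty(\p\O)$ norm, which can be estimated termwise.

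For the first assertion, I would set $\Phi:=R_\zd-R_{({\bf z}',{\bf d})}$. Since the points $z_i,z_i'$ lie in the open set $\O$, the traces $-\sum_i d_i\ln|\cdot-z_i|$ and $-\sum_i d_i\ln|\cdot-z_i'|$ are smooth on $\p\O$, so $\Phi$ is harmonic in $\O$ and continuous on $\overline\O$; the maximum principle then gives
\[
\|\Phi\|_{L^\infty(\O)}=\sup_{z\in\p\O}\Bigl|\,\sum_{i=1}^N d_i\ln\frac{|z-z_i'|}{|z-z_i|}\,\Bigr|.
\]
Next, for a fixed $z\in\p\O$ and each $i$, I would combine the elementary bound $|\ln a-\ln b|\le|a-b|/\min\{a,b\}$ (valid for $a,b>0$) with $\bigl||z-z_i'|-|z-z_i|\bigr|\le|z_i-z_i'|\le\chi$ to get $\bigl|\ln(|z-z_i'|/|z-z_i|)\bigr|\le\chi/\min\{|z-z_i|,|z-z_i'|\}$. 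Finally, since $z\in\p\O$ and $z_i,z_i'\in\O$, the denominators satisfy $|z-z_i|\ge\dist(z_i,\p\O)\ge\tae({\bf z})$ and $|z-z_i'|\ge\tae({\bf z}')$; inserting this, summing over $i$ with the weights $|d_i|$, and taking the supremum over $z\in\p\O$ gives the stated estimate.

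For the second assertion the argument is identical, with the tuple ${\bf z}'$ replaced by the nearest-point assignment: for $n$ large, the $p_k$ being pairwise distinct, each $z_i$ has a unique closest point $p_{k(i)}$ in $\{p_1,\dots,p_{\tilde N_0}\}$, and $|z_i-p_{k(i)}|=\dist(z_i,\{p_1,\dots,p_{\tilde N_0}\})\le\chi$. Rewriting the trace of $R_\pD$ on $\p\O$ as $-\sum_k D_k\ln|\cdot-p_k|=-\sum_i d_i\ln|\cdot-p_{k(i)}|$ puts $R_\zd-R_\pD$ into exactly the form treated above; the same harmonicity plus maximum-principle argument, using $|z-p_{k(i)}|\ge\dist(p_{k(i)},\p\O)\ge\tae({\bf p})$, yields $\|R_\zd-R_\pD\|_{L^\infty(\O)}\le\bigl(\sum_i|d_i|\bigr)\chi/\tae({\bf p})$.

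There is no serious difficulty here: the whole content is the harmonic-extension structure together with one line of calculus on logarithms. The only points deserving a moment's care are (i) using the right lower bound for the denominators --- namely $\dist(z_i,\p\O)$ for $z$ ranging over $\p\O$, so that $\tae$ (and not the inter-vortex distances $|z_i-z_j|$) appears --- and (ii) in the second part, observing that the assignment $i\mapsto p_{k(i)}$ is well defined for $n$ large and realises the distance $\chi$.
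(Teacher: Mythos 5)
Your approach — harmonic difference, maximum principle, and a termwise mean-value bound on the boundary logarithms — is exactly the method the paper uses, and it is the right one. But there is a gap in the final line of your argument for the first assertion, which you have not noticed. From
\[
\Bigl|\ln\dfrac{|z-z_i'|}{|z-z_i|}\Bigr|\le\dfrac{\chi}{\min\{|z-z_i|,|z-z_i'|\}}
\]
together with $|z-z_i|\ge\tae({\bf z})$ and $|z-z_i'|\ge\tae({\bf z}')$, what you actually obtain is $\min\{|z-z_i|,|z-z_i'|\}\ge\min\{\tae({\bf z}),\tae({\bf z}')\}$, hence a bound with $\min\{\tae({\bf z}),\tae({\bf z}')\}$ in the denominator. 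You then say this "gives the stated estimate," but the lemma's statement has $\max\{\tae({\bf z}),\tae({\bf z}')\}$, and nothing in your argument upgrades $\min$ to $\max$. In fact, with $\max$ the estimate is not true in general: take $\O=B(0,1)$, $N=1$, $d_1=1$, $z_1=0$, $z_1'=(0.9,0)$, so $\chi=0.9$, $\tae({\bf z})=1$, $\tae({\bf z}')=0.1$. Then $R_{\zd}\equiv 0$ and the trace of $R_{({\bf z}',{\bf d})}$ at $(1,0)$ is $-\ln(0.1)\approx 2.3$, which exceeds $\chi/\max\{\tae({\bf z}),\tae({\bf z}')\}=0.9$. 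So the bound you actually prove, with $\min$, is the correct one (and is what is used later: the lemma is applied only when $\chi\to 0$ with the centers converging to ${\bf p}$, where $\tae({\bf z})\to\tae({\bf p})$ and the $\min$/$\max$ distinction vanishes), but you should have flagged that your derivation yields $\min$, not the $\max$ written in the statement. The same caveat applies to the second assertion, where your argument gives $\min\{\tae({\bf z}),\tae({\bf p})\}$ rather than $\tae({\bf p})$.
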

\begin{proof}The first assertion is obtained with the help of the maximum principle and the bound  $|R_\zd-R_{({\bf z}',{\bf d})}|\leq\sum_i|d_i|\dfrac{\chi}{\max\{\tae({\bf z}),\tae({\bf z}')\}}$ on $\p\O$. The second assertion assertion follows by the same way.
\end{proof}
With Lemma \ref{LemPseudoCondRzd} we may exploit a structure of cluster for $W^{\rm macro}$.
\begin{prop}\label{Prop.RenEnergieCluster}
Let $1\leq \tilde N_0\leq N$, ${\bf p}\in(\O^{\tilde N_0})^*$ [independent of $n$] and write
\[
\gamma_{\bf p}:=\begin{cases}1&\text{if }\tilde N_0=1\\{\min_{k\neq l}|p_k-p_l|}&\text{otherwise}\end{cases}.
\]
Let $\zd=\zd^{(n)}\in\Ostar\times\Z^N$ be s.t. ${\bf d}$ is independent of $n$ and for $i\in\{1,...,N\}$ there exists $k\in\{1,...,\tilde N_0\}$ s.t. $z_i\to p_k$. We denote $\chi:=\max_i\dist(z_i,\{p_1,...,p_{\tilde N_0}\})$.


For $k\in\{1,...,\tilde N_0\}$ we denote $D_k:=\di\sum_{z_i\to p_k}d_i$ and ${\bf D}=(D_1,...,D_{\tilde N_0})$. Then there exists $C\geq1$ [depending only on $\O,N$ and $\sum|d_i|$] s.t.
\begin{eqnarray*}
&&\left|W_N^{\rm macro}\zd-\left(W_{\tilde N_0}^{\rm macro}\pD-\pi\sum_{k=1}^{\tilde N_0}\sum_{\substack{z_i,z_j\to p_k\\i\neq j}}d_id_j\ln|z_i-z_j|\right)\right|
\\&\leq& C\chi\left(\dfrac{1+|\ln[\tae({\bf p})]|}{\tae({\bf p})}+\dfrac{1}{\gamma_{\bf p}}\right).
\end{eqnarray*}
\end{prop}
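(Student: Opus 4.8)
The plan is to use the explicit formula for $W_N^{\rm macro}$ from Proposition \ref{Prop.EnergieRenDef}, namely
\[
W_N^{\rm macro}\zd=-\pi\sum_{i\neq j}d_id_j\ln|z_i-z_j|-\pi\sum_id_iR_\zd(z_i),
\]
and decompose the double sum according to the cluster partition. Writing $S_k:=\{i\,|\,z_i\to p_k\}$, I would split $\sum_{i\neq j}d_id_j\ln|z_i-z_j|$ into the \emph{intra-cluster} part $\sum_{k}\sum_{i\neq j,\,i,j\in S_k}d_id_j\ln|z_i-z_j|$ and the \emph{inter-cluster} part $\sum_{k\neq l}\sum_{i\in S_k,\,j\in S_l}d_id_j\ln|z_i-z_j|$. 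The intra-cluster part is precisely the term being subtracted in the statement, so it is left as is. For the inter-cluster part, when $i\in S_k$ and $j\in S_l$ with $k\neq l$, we have $z_i\to p_k$, $z_j\to p_l$, so $|z_i-z_j|=|p_k-p_l|+\mathcal{O}(\chi)$; since $|p_k-p_l|\geq\gamma_{\bf p}$, a first-order expansion of $\ln$ gives $\bigl|\ln|z_i-z_j|-\ln|p_k-p_l|\bigr|\leq C\chi/\gamma_{\bf p}$. Summing over the (boundedly many) pairs and collecting the $d_id_j$ across $S_k\times S_l$ turns $\sum_{k\neq l}\sum_{i\in S_k,j\in S_l}d_id_j\ln|p_k-p_l|$ into $\sum_{k\neq l}D_kD_l\ln|p_k-p_l|$, which is the corresponding double-sum term in $W_{\tilde N_0}^{\rm macro}\pD$, up to an error $\leq C\chi/\gamma_{\bf p}$.

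Next I would handle the harmonic-extension terms. One wants to compare $\sum_i d_i R_\zd(z_i)$ with $\sum_k D_k R_{\pD}(p_k)$. Insert and remove $R_{\pD}$: by the second assertion of Lemma \ref{LemPseudoCondRzd}, $\|R_\zd-R_{\pD}\|_{L^\infty(\O)}\leq (\sum_i|d_i|)\,\chi/\tae({\bf p})$, which controls $\bigl|\sum_i d_i R_\zd(z_i)-\sum_i d_i R_{\pD}(z_i)\bigr|$ by $C\chi/\tae({\bf p})$. Then, since $R_{\pD}\in C^\infty(\O)$ with a Lipschitz bound depending on $\|R_{\pD}\|_{L^\infty}\leq C(1+|\ln\tae({\bf p})|)$ (interior gradient estimate for harmonic functions on a fixed neighborhood of the cluster points, using that $\tae({\bf p})>0$ is fixed) and $z_i\to p_k$, one gets $\bigl|\sum_i d_iR_{\pD}(z_i)-\sum_k D_k R_{\pD}(p_k)\bigr|\leq C\chi\,(1+|\ln\tae({\bf p})|)/\tae({\bf p})$, after regrouping $\sum_{i\in S_k}d_i=D_k$. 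Assembling the three pieces — the untouched intra-cluster sum, the inter-cluster log comparison, and the two $R$-comparisons — yields exactly the claimed inequality with the stated error bound $C\chi\bigl((1+|\ln\tae({\bf p})|)/\tae({\bf p})+1/\gamma_{\bf p}\bigr)$.

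The main point requiring a little care — not a deep obstacle, but the place where one must be attentive — is the Lipschitz control on $R_{\pD}$ near the points $p_k$: since $R_{\pD}$ is only known a priori through an $L^\infty$ bound, one uses that it is harmonic in $\O$ and that $\dist(p_k,\p\O)=\tae({\bf p})>0$, so standard interior gradient estimates give $\|\n R_{\pD}\|_{L^\infty(B(p_k,\tae({\bf p})/2))}\leq C\|R_{\pD}\|_{L^\infty(\O)}/\tae({\bf p})\leq C(1+|\ln\tae({\bf p})|)/\tae({\bf p})$, which is the source of the first factor in the error term. Everything else is a bookkeeping of finitely many bounded-degree terms, and the constants depend only on $\O$, $N$ and $\sum|d_i|$ as asserted; note also that $\tae(\zd)\to\tae({\bf p})$ and $\min_{i\neq j}|z_i-z_j|$ stays comparable to $\gamma_{\bf p}$ for large $n$ among points in distinct clusters, so all quantities entering the estimates are uniformly controlled.
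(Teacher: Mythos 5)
Your proof is correct and follows essentially the same route as the paper: decompose the double sum in $W_N^{\rm macro}\zd$ into intra- and inter-cluster pieces, estimate the inter-cluster logarithms by Lipschitz continuity of $\ln$ on scales $\geq\gamma_{\bf p}$, and handle the $R$-terms via Lemma \ref{LemPseudoCondRzd}(2) together with interior gradient estimates for harmonic functions (the estimate \eqref{lkjbblkjn0}). Your explanation of the source of the factor $(1+|\ln\tae({\bf p})|)/\tae({\bf p})$ is in fact slightly more explicit than the paper's.
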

\begin{proof}
We have
\begin{equation}\nonumber
W^{\tt macro}\zd
=-\pi\sum_{k=1}^{\tilde N_0}\sum_{\substack{z_i,z_j\to p_k\\i\neq j}}d_id_j\ln|z_i-z_j|-\pi\sum_{\substack{z_i\to p_k\\z_j\to p_l\\k\neq l}}d_id_j\ln|z_i-z_j|-\pi\sum_id_iR_\zd(z_i).
\end{equation}
It is easy to check that
\begin{eqnarray}\label{ClusterEst1}
\sum_{\substack{z_i\to p_k\\z_j\to p_l\\k\neq l}}d_id_j\ln|z_i-z_j|
&=&\sum_{k\neq l}D_kD_l\ln|p_k-p_l|+H
\end{eqnarray}
with $H\leq 4\left(\sum_i|d_i|\right)^2\dfrac{\chi}{\gamma_{\bf p}}$ for sufficiently large $n$.

On the other hand, from Lemma \ref{LemPseudoCondRzd} [second assertion], we have $\|R_\zd-R_{\pD}\|_{L^\infty(\O)}\leq \sum_i|d_i|\dfrac{\chi}{\max\{\tae({\bf z}),\tae({\bf p})\}}$. From standard pointwise estimates for the gradient of harmonic functions [see \eqref{lkjbblkjn0}] there exists $C\geq1$ depending only on $\O$, $\sum|D_k|$ and $N$ [here we used $1\leq \tilde N_0\leq N$] s.t. for $z_i\to p_k$ we have $\left|R_{\pD}(z_i)-R_{\pD}(p_k)\right|\leq C\chi\dfrac{1+|\ln[\tae({\bf p})]|}{\tae({\bf p})}$.

Then, up to change the value of $C$, we have
\begin{equation}\label{ClusterEst2}
\left|\sum_id_iR_\zd(z_i)-\sum_kD_kR_{\pD}(p_k)\right|\leq C\chi\dfrac{1+|\ln[\tae({\bf p})]|}{\tae({\bf p})}.
\end{equation}
By combining \eqref{ClusterEst1} and \eqref{ClusterEst2} we get the result.
\end{proof}

\subsection{Mesoscopic renormalized energy [at scale $\h^{-1/2}$]}\label{SecMesoRenEn}
From the work of Sandier and Serfaty we may obtain mesoscopic informations. To this end we need to assume a non degeneracy assumption for minimal points of  $\xi_0$. So we assume in this section that Hypothesis \eqref{NonDegHyp} holds.

Let 
\begin{equation}\label{DefEtaO}
\eta_\O:=\begin{cases}10^{-3}\min\{1;\dist(\Lambda,\p\O)\}&\text{if }N_0=1\\10^{-3}\min\{1;\dist(\Lambda,\p\O);\min_{k\neq l}|p_k-p_l|\}&\text{if }N_0\geq 2\end{cases}.
\end{equation}
For $p\in\Lambda$, by applying Lemma 11.1 in \cite{SS1}  in the disk $B(p,\eta_\O)$, we get the following proposition.
\begin{prop}\label{EnergieRenMeso}
Assume that Hypothesis \eqref{NonDegHyp} holds. Let $D\in\N^*$ and $\h\uparrow\infty$ when $\v\to0$. Then for $p\in\Lambda$ and $R=R(\v)\to0$ s.t. $R\sqrt{\h}\to\infty$ we have
\begin{eqnarray}\nonumber
&&\inf_{{\bf z}\in[B(p,R)^D]^*}\left\{-\pi\sum_{i\neq j}\ln|z_i-z_j|+2\pi\h\sum_i[\xi_0(z_i)-\xi_0(p)]\right\}
\\\label{DevMesoscopicDef}&=&\dfrac{\pi}{2}(D^2-D)\ln\left(\dfrac{\h}{D}\right)+C_{p,D}+o(1)
\end{eqnarray}
with
\begin{equation}\label{DefCpD}
C_{p,D}:=\min_{[(\R^2)^D]^*}W^{\rm meso}_{p,D}
\end{equation}
and
\begin{equation}\label{DefEnergyRenMeso}
\begin{array}{cccc}
W^{\rm meso}_{p,D}:&[(\R^2)^D]^*&\to&\R\\&{\bf x}=(x_1,...,x_D)&\mapsto&\di-\pi\sum_{i\neq j}\ln|x_i-x_j|+\pi D\sum_{i=1}^D Q_p(x_i).
\end{array}
\end{equation}
where $Q_p(x):=x\cdot {\rm Hess}_{\xi_0}(p)x$,  ${\rm Hess}_{\xi_0}(p)$ is the Hessian matrix of $\xi_0$ at $p$.

Moreover the infimum in \eqref{DevMesoscopicDef} is reached and if ${\bf z}^\v\in [B(p,R)^D]^*$ is s.t.
\[
-\pi\sum_{i\neq j}\ln|z^\v_i-z^\v_j|+2\pi\h\sum_i[\xi_0(z^\v_i)-\xi_0(p)]=\dfrac{\pi}{2}(D^2-D)\ln\left(\dfrac{\h}{D}\right)+C_{p,D}+o(1)
\]
then for all sequence $\v=\v_n\downarrow0$, up to pass to a subsequence, denoting $\ell=\sqrt{\dfrac{D}{\h}}$ and $\breve z_i^\v=\dfrac{z_i^\v-p}{\ell}$, we have $\breve{\bf z}^\v=(\breve z_1^\v,...,\breve z_D^\v)$ which converges to a minimizer of $W^{\rm meso}_{p,D}$. In particular $|\breve z_i^\v|\leq C_{\O,D}$ with $C_{\O,D}>0$ which depends only on $\O$ and $D$.
\end{prop}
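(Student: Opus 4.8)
## Proof strategy for Proposition \ref{EnergieRenMeso}

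\textbf{The plan is to} recognize the minimization problem on the left-hand side of \eqref{DevMesoscopicDef} as a perturbation, after rescaling, of a well-studied point-vortex interaction problem, and to invoke the corresponding result from the theory of Sandier--Serfaty (Lemma 11.1 in \cite{SS1}) applied locally in the disk $B(p,\eta_\O)$. First I would perform the change of variables $z_i = p + \ell x_i$ with $\ell = \sqrt{D/\h}$, so that a configuration ${\bf z}\in[B(p,R)^D]^*$ corresponds to ${\bf x}\in[B(0,R/\ell)^D]^*$ and $R/\ell = R\sqrt{\h}/\sqrt D\to\infty$ by hypothesis. Under this substitution, $-\pi\sum_{i\neq j}\ln|z_i-z_j| = -\pi\sum_{i\neq j}\ln|x_i-x_j| - \pi(D^2-D)\ln\ell$, which produces the $\frac{\pi}{2}(D^2-D)\ln(\h/D)$ term once we write $\ln\ell = \frac12\ln(D/\h)$.

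\textbf{Next I would handle the potential term} $2\pi\h\sum_i[\xi_0(z_i)-\xi_0(p)]$. Since $\xi_0\in C^\infty$ near $p$ (by elliptic regularity for \eqref{LondonEq1}) and $p$ is a critical point, Taylor expansion gives $\xi_0(p+\ell x_i)-\xi_0(p) = \frac{\ell^2}{2}Q_p(x_i) + O(\ell^3|x_i|^3)$, where $Q_p(x)=x\cdot{\rm Hess}_{\xi_0}(p)x$; note $Q_p$ is positive definite by \eqref{NonDegHyp}. Therefore $2\pi\h\sum_i[\xi_0(z_i)-\xi_0(p)] = \pi\h\ell^2\sum_i Q_p(x_i) + O(\h\ell^3\sum_i|x_i|^3) = \pi D\sum_i Q_p(x_i) + \text{error}$, using $\h\ell^2 = D$. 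So the rescaled functional is exactly $W^{\rm meso}_{p,D}({\bf x})$ plus an error term, and the infimum becomes $\frac{\pi}{2}(D^2-D)\ln(\h/D) + \inf_{[(\R^2)^D]^*}W^{\rm meso}_{p,D} + o(1)$, which is the claimed identity with $C_{p,D}$ as in \eqref{DefCpD}. The fact that the infimum of $W^{\rm meso}_{p,D}$ is attained follows from coercivity: the $\pi D\sum Q_p(x_i)$ term (positive definite) confines the configuration, while the $-\pi\sum_{i\neq j}\ln|x_i-x_j|$ term prevents collisions, so minimizing sequences are precompact in $[(\R^2)^D]^*$.

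\textbf{The main obstacle} is making the two competing error estimates compatible: one must show that near-optimal configurations ${\bf z}^\v$ automatically satisfy $|z_i^\v - p| = O(\ell)$, i.e. $|x_i| = O(1)$, so that the Taylor remainder $O(\h\ell^3|x_i|^3) = O(\ell|x_i|^3)$ is genuinely $o(1)$ and the cutoff at scale $R$ is harmless. This is where the quantitative lower bound from Lemma \ref{Lem.DescriptionLambda} (with $M=2$ under \eqref{NonDegHyp}), equivalently the positive-definiteness of $Q_p$, is essential: it forces the potential term to grow quadratically away from $p$, so a configuration with some $|x_i|$ large pays an energy $\gtrsim \h\ell^2|x_i|^2 = D|x_i|^2$ that cannot be compensated by the logarithmic interaction (which grows only like $\ln|x_i|$); hence $|x_i|\le C_{\O,D}$. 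Once this a priori bound is in hand, one extracts a convergent subsequence $\breve{\bf z}^\v = {\bf x}^\v \to {\bf x}^\star$; lower semicontinuity of $W^{\rm meso}_{p,D}$ (and the matching upper bound from the near-optimality of ${\bf z}^\v$) force ${\bf x}^\star$ to be a minimizer, and in particular the limit lies in $[(\R^2)^D]^*$ because collisions are excluded by the logarithmic barrier. The remaining verification — that the localization to $B(p,\eta_\O)$ in applying Lemma 11.1 of \cite{SS1} does not affect the leading terms — is routine given $R\sqrt\h\to\infty$ and $R\to0$.
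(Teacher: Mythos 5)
Your proposal is correct and takes essentially the same approach as the paper: the paper's proof consists only of the remark that Proposition \ref{EnergieRenMeso} follows from Lemma 11.1 in \cite{SS1} applied in the disk $B(p,\eta_\O)$, and you invoke that same lemma while additionally spelling out the blow-up $z_i=p+\ell x_i$ with $\ell=\sqrt{D/\h}$, the Taylor expansion of $\xi_0$ at the nondegenerate critical point $p$, and the compactness of near-minimizing configurations. These details are exactly what justify the paper's citation, so the content is the same, just made explicit.
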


\subsection{Microscopic renormalized energy [at scale $\lambda\delta$]}\label{SecMicrRenEn}
The location of the vorticity defects at scale $\lambda\delta$ [inside a connected component of $\o_\v$] is given by the microscopic renormalized energy exactly as in the case without magnetic field. In order to define the microscopic renormalized energy we need some notation. Recall that the pinning term $a_\v:\O\to\{b,1\}$ is obtained [see Section \ref{SecConstructionPinningTerm}] from  a  smooth bounded simply connected set $\o$ s.t. $0\in\o\subset\overline{\o}\subset Y:=(-1/2,1/2)^2$. The construction of the pinning term uses two parameters $\delta=\delta(\v)$ [the parameter of period] and $\lambda=\lambda(\v)$ [the parameter of dilution]. For $x_0\in\o$ and a sequence $\v=\v_n\downarrow0$, we consider $\hat{x}_\v\in\o$ s.t. $\hat x_\v\to x_0\in\o$. 

Let $m_\v\in\Z^2$ be s.t. the cell $Y_\v=\delta(m_\v+Y)$ satisfies $\overline{Y}_\v\subset\O$. We then denote $z_\v=\delta[m_\v+\lambda\hat{x}_\v]$. It is proved in \cite{dos2015microscopic} [see Estimates (9) and (10)] that for  $ R= R_\v\gg\lambda\delta$ and $r=r_\v\ll\lambda\delta$, denoting $\hat R=R/(\lambda\delta)$, $\hat r=r/(\lambda\delta)$, $\dom_\v=B(\delta m_\v,R)\setminus\overline{B(z_\v,r)}$, $\hat\dom_\v=B(0,\hat R)\setminus\overline{B(\hat x_\v,\hat r)}$ and $\hat\dom=B(0,\hat R)\setminus\overline{B(x_0,\hat r)}$:
\begin{eqnarray}\label{MicroRenoExpressionNonRescalDir}
\inf_{\substack{w\in H^1(\dom_\v,\S^1)\\\deg(w)=1}}\frac{1}{2}\int_{\dom_\v} U_\v^2|\n w|^2&=&\inf_{\substack{w\in H^1(\dom_\v,\S^1)\\w(z_\v+R{\rm e}^{\imath\theta})={\rm e}^{\imath\theta}\\w(x_\v+r{\rm e}^{\imath\theta})={\rm Cst}\,{\rm e}^{\imath\theta}}}\frac{1}{2}\int_{\dom_\v} U_\v^2|\n w|^2+o_\v(1)
\\\label{DefRenMicroEn1}&=&\inf_{\substack{ \hat w\in H^1(\hat\dom_\v,\S^1)\\\deg(w)=1}}\frac{1}{2}\int_{\hat\dom_\v} a^2|\n\hat  w|^2+o_\v(1).
\end{eqnarray}

Moreover from the main result in \cite{Dos-MicroRenoEN}, we have the existence of an application $\tilde W^{\rm micro}:\o\to\R$ [depending only on $\o$ and $b$] s.t. 
\begin{equation}\label{DefRenMicroEn2}
\inf_{\substack{ \hat w\in H^1(\hat\dom_\v,\S^1)\\\deg(w)=1}}\frac{1}{2}\int_{\hat\dom_\v} a^2|\n\hat  w|^2=f_\o(\hat R)+b^2\pi|\ln(\hat r)|+\tilde W^{\rm micro}(x_0)+o(1).
\end{equation}
where $\di f_\o(\hat R):=\inf_{\substack{w\in H^1[B(0,{\hat R})\setminus \overline{\o},\S^1]\\\deg(w)=1}}\frac{1}{2}\int_{B(0,\hat R)\setminus \overline{\o}}|\n w|^2$.
  
It is clear that there exists  $C_\o\in\R$ [depending only on $\o$] s.t. when $\hat R\to\infty$ we have $f_\o(\hat R)=\pi\ln(\hat R)+C_\o$.

Then, by denoting $ W^{\rm micro}(x_0):=\tilde W^{\rm micro}(x_0)+C_\o$, we get from \eqref{DefRenMicroEn2} :
\begin{equation}\label{DefRenMicroEn3}
\inf_{\substack{ \hat w\in H^1(\hat\dom,\S^1)\\\deg(w)=1}}\frac{1}{2}\int_{\hat\dom} a^2|\n\hat  w|^2=\pi\ln(\hat R)+b^2\pi|\ln(\hat r)|+ W^{\rm micro}(x_0)+o(1).
\end{equation}
Moreover, from \cite{Publi3} we know that $W^{\rm micro}$ admits minimizers in $\o$.
\section{Sharp upper bound: construction of a test function}\label{SecUpperBound}
From now on we assume that Hypothesis \eqref{NonDegHyp} holds. We thus may use  for $p\in\Lambda$ and $D\in\N^*$ the constant $C_{p,D}$ defined in \eqref{DefCpD}. We denote also $C_{p,0}:=0$.\\

We let for $d\in\N^*$ :
\begin{equation}\label{DefEnsCouplageEnergieRen}
\LamN:=\left.\left\{{\bf D}\in\left\{\left\lceil\dfrac{d}{{N_0}}\right\rceil;\left\lfloor\dfrac{d}{{N_0}}\right\rfloor\right\}^{N_0}\,\right|\,\sum_{k=1}^{N_0} D_k=d\right\},
\end{equation}
\begin{equation}\label{CouplageEnergieRen}
\Wmin_{d,\O}=\Wmin_{d}:=\min_{{\bf D}\in\LamN}\left\{W^{\rm macro}\pD+\sum_{k=1}^{N_0}C_{p_k,D_k}+\tilde{V}[\zeta_\pD]\right\}
\end{equation}
where,  for $x\in\R$, $\lceil x\rceil$ is the ceiling of $x$, $\lfloor x\rfloor$ is the floor of $x$, $W^{\rm macro}(\cdot)$ is defined in Proposition \ref{Prop.EnergieRenDef} and $\tilde{V}[\zeta_\pD]$ is defined in Proposition \ref{Prop.Information.Zeta-a}.

 We now state an easy lemma whose proof is left to the reader.

\begin{lem}\label{LemLaisseLectTrucSimple}

Let  $d\in\N^*$ and ${\bf D}\in\LamN$. Then the following quantities are independent of ${\bf D}$:
\[
\L_1(d):=\dfrac{\pi}{2}\left[\left(\sum_{k=1}^{N_0} D_k^2\right)-d\right],
\]

\[
\L_2(d):=\Wmin_{d}+\dfrac{\pi}{2}\sum_{\substack{k=1\\\text{s.t. }D_k\geq1}}^{N_0}(D_k-D_k^2)\ln\left({D_k}\right).
\]

Moreover: $d\leq N_0\Longleftrightarrow\L_1(d)=0\Longleftrightarrow\L_2(d)=\Wmin_{d}$.
\end{lem}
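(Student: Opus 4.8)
The plan is to exploit the rigid combinatorial structure of $\LamN$. Set $q:=\lfloor d/N_0\rfloor$ and $r:=d-qN_0\in\{0,\dots,N_0-1\}$, so that $\lceil d/N_0\rceil=q+1$ if $r\geq1$ and $\lceil d/N_0\rceil=q$ if $r=0$. First I would observe that ${\bf D}\in\LamN$ if and only if exactly $r$ coordinates of ${\bf D}$ equal $q+1$ and the remaining $N_0-r$ equal $q$ (so when $r=0$ all coordinates equal $q$): indeed each coordinate lies in $\{q,q+1\}$, and if $m$ of them equal $q+1$ then $\sum_k D_k=N_0q+m$, which equals $d=N_0q+r$ precisely when $m=r$. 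The immediate consequence is that for any $g\colon\N\to\R$ the quantity $\sum_{k=1}^{N_0}g(D_k)=r\,g(q+1)+(N_0-r)\,g(q)$ depends only on $d$ and $N_0$, not on the choice of ${\bf D}\in\LamN$.

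Taking $g(t)=t^2$ shows that $\sum_k D_k^2$ is independent of ${\bf D}\in\LamN$, hence so is $\L_1(d)$. For $\L_2(d)$, I would set $g(0):=0$ and $g(t):=(t-t^2)\ln t$ for $t\geq1$ (note $g(1)=0$); since every coordinate of ${\bf D}$ is a nonnegative integer and the indices with $D_k=0$ are exactly those dropped from $\sum_{D_k\geq1}$, one has $\sum_{k\,:\,D_k\geq1}(D_k-D_k^2)\ln(D_k)=\sum_{k=1}^{N_0}g(D_k)$, which is again independent of ${\bf D}\in\LamN$ by the previous observation. Finally, $\Wmin_d$ is, by its very definition \eqref{CouplageEnergieRen}, a minimum taken over all of $\LamN$, hence a number carrying no dependence on a chosen ${\bf D}$; therefore $\L_2(d)=\Wmin_d+\frac{\pi}{2}\sum_{k=1}^{N_0}g(D_k)$ is independent of ${\bf D}$ as well.

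For the threefold equivalence, note that $d\leq N_0\Leftrightarrow\lceil d/N_0\rceil\leq1\Leftrightarrow$ every coordinate of every ${\bf D}\in\LamN$ lies in $\{0,1\}$. If $d\leq N_0$, then $D_k^2=D_k$ for all $k$, so $\sum_k D_k^2=\sum_k D_k=d$ and $\L_1(d)=0$; moreover the only nonzero coordinates equal $1$, on which $(D_k-D_k^2)\ln(D_k)=0$, so the correction term vanishes and $\L_2(d)=\Wmin_d$. If $d>N_0$, then $q\geq1$; when $q=1$ we have $r=d-N_0\geq1$ so some coordinate equals $2$, while when $q\geq2$ every coordinate is $\geq2$ — in all cases there is an index $k_0$ with $D_{k_0}\geq2$. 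Then $D_{k_0}^2>D_{k_0}$ while $D_k^2\geq D_k$ for every $k$, so $\sum_k D_k^2>d$ and $\L_1(d)>0$; likewise $(D_{k_0}-D_{k_0}^2)\ln(D_{k_0})<0$ while all remaining terms of $\sum_{D_k\geq1}(D_k-D_k^2)\ln(D_k)$ are $\leq0$, so this sum is strictly negative and $\L_2(d)<\Wmin_d$. Since $\{d\leq N_0\}$ and $\{d>N_0\}$ cover all cases, this yields $\L_1(d)=0\Leftrightarrow d\leq N_0\Leftrightarrow\L_2(d)=\Wmin_d$.

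There is no substantial obstacle in this argument: it reduces to the one-line combinatorial description of $\LamN$. The only points deserving a little care are the uniform bookkeeping of the vanishing coordinates $D_k=0$ (which occur precisely when $d<N_0$) inside the sum defining $\L_2$, and the remark that $\Wmin_d$ is manifestly ${\bf D}$-free because \eqref{CouplageEnergieRen} defines it as a minimum over the entire set $\LamN$.
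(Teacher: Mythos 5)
The paper leaves this lemma's proof to the reader, so there is no reference argument to compare against; your proof is correct and is precisely the intended elementary argument. The combinatorial description of $\LamN$ (exactly $r=d-N_0\lfloor d/N_0\rfloor$ coordinates equal $\lceil d/N_0\rceil$, the rest equal $\lfloor d/N_0\rfloor$) immediately gives that $\sum_k g(D_k)$ is ${\bf D}$-free for any $g$, and your case analysis for the equivalences — noting that $d>N_0$ forces some $D_{k_0}\geq2$, which makes $\sum_k D_k^2>d$ and makes the logarithmic correction strictly negative — is clean and complete.
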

\begin{notation}\label{NotL0}
We let $\L_1(0)=\L_2(0)=0$.
\end{notation}
The main result of this section is the following proposition.
\begin{prop}\label{Prop.BorneSupSimple}Assume that $\h=\mathcal{O}(|\ln\v|)$, $\h\to+\infty$,
\begin{equation}\label{HypLambdaDeltaConstrFoncTest}
\lambda^{1/4}|\ln\v|\to0\text{ and }{\delta\sqrt \h\to0}
\end{equation}
and  assume that Hypothesis \eqref{NonDegHyp} holds.

Let $d\in\N^*$ and  let ${\bf D}\in\LamN$ be a minimizer of the  minimizing problem \eqref{CouplageEnergieRen}. 

For $0<\v<1$, there exists $(v_\v,A_\v)\in\H$ which is in the Coulomb gauge with $d$ vortices of degree $1$ s.t.
\begin{eqnarray}\label{ExactExpEnerg}
\F(v_\v,A_\v)=\h^2 \Jo+d\Pic\left[-\h+\HoC\right]+\L_1(d)\ln\h+\L_2(d)+o(1)
\end{eqnarray}
with $\Pic:=2\pi\|\xi_0\|_{L^\infty(\O)}$ and 
\begin{equation}\label{DefH0c1}
\HoC:=\dfrac{b^2|\ln\v|+(1-b^2)|\ln(\lambda\delta)|}{2\|\xi_0\|_{L^\infty(\O)}}+\tilde\gamma_{b,\o}
\end{equation}
where 
\begin{equation}\label{DefGammaBo}
\tilde\gamma_{b,\o}:=\dfrac{\di\min_\o W^{\rm micro}+b^2[\gamma+\pi \ln b ]}{2\pi\|\xi_0\|_{L^\infty(\O)}},
\end{equation}
{ $\gamma$ is a universal constant defined in Lemma IX.1 \cite{BBH}} and $W^{\rm micro}$ is defined in Section \ref{SecMicrRenEn}.
\end{prop}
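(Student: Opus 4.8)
The strategy is to build an explicit competitor $(v_\v,A_\v)$ realizing the claimed energy and to compute its energy exactly up to $o(1)$. The construction proceeds by gluing together three scales. First I fix a minimizer ${\bf D}=(D_1,\dots,D_{N_0})\in\LamN$ of \eqref{CouplageEnergieRen}; for each $k$ with $D_k\geq1$ I pick a minimizer ${\bf a}_k=(a_{k,1},\dots,a_{k,D_k})$ of $W^{\rm meso}_{p_k,D_k}$ and set the mesoscopic positions $z_{k,j}^\v:=p_k+\ell_k\,a_{k,j}$ with $\ell_k:=\sqrt{D_k/\h}$; then, inside the cell $Y_{m}^\delta$ nearest to each $z_{k,j}^\v$, I translate the target point so that it sits at a minimizer $a_0$ of $W^{\rm micro}$ in the rescaled inclusion $\o$, i.e. the final vortex center is $z_i^\v:=\delta m_i+\lambda\delta a_0$ with $\delta m_i$ the center of the appropriate cell. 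Enumerate these $d$ points as $z_1^\v,\dots,z_d^\v$. On the macroscopic level, away from the disks $B(z_i^\v,R)$ with an intermediate radius $R=R(\v)$ satisfying $\lambda\delta\ll \h^{-1}\ll R\ll \h^{-1/2}$, I take $v_\v$ to be (the scalar $U_\v$ times) the canonical harmonic map $\wstar^{\zd}$ associated to the singularities $\zd=(z_1^\v,\dots,z_d^\v)$ with all degrees $1$, modulated so that $|v_\v|$ drops to a standard vortex profile inside each $B(z_i^\v,\v^{\mu})$; in the annuli $B(z_i^\v,R)\setminus B(z_i^\v,\v^{\mu})$ I interpolate, and inside the inclusion I use the optimal microscopic map from \eqref{DefRenMicroEn3}. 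For the magnetic potential I take $A_\v:=A_{\zd}=\h\n^\bot\xi_0+\n^\bot\zeta_{\zd}$ as in Definition \ref{DefA_ad}.

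The energy is then computed scale by scale. The Lassoued--Mironescu decoupling reduces the problem to estimating $F_\v(v_\v)$ plus the magnetic terms. Using Proposition \ref{Docmpen} (or rather Corollary \ref{CorGonzo}, which is the ``$\leq$'' statement with equality for this competitor) the magnetic contribution is $\h^2\Jo+2\pi\h\sum_i\xi_0(z_i^\v)+\tilde V[\zeta_{\zd}]+o(1)$, and since all $z_i^\v$ cluster at $\Lambda$ we have $\xi_0(z_i^\v)=\min\xi_0+O(\h^{-1})=-\|\xi_0\|_{L^\infty}+o(\h^{-1})$ by Lemma \ref{Lem.DescriptionLambda} and \eqref{NonDegHyp}, so $2\pi\h\sum_i\xi_0(z_i^\v)=-d\Pic\,\h+o(1)$ after the mesoscopic correction is extracted; meanwhile $\tilde V[\zeta_{\zd}]=\tilde V[\zeta_{\pD}]+o(1)$ by Proposition \ref{PropClusterI}. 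For $F_\v(v_\v)$ I split $\O$ into: the region outside all $B(z_i^\v,R)$, where $U_\v^2\approx1$ on most of the domain and $\tfrac12\int|\n\wstar|^2$ is governed by the macroscopic renormalized energy of Proposition \ref{Prop.EnergieRenDef}, contributing $\pi d|\ln R|+W^{\rm macro}(\zd)+o(1)$; here one invokes Proposition \ref{Prop.RenEnergieCluster} to turn $W^{\rm macro}(\zd)$ into $W^{\rm macro}(\pD)-\pi\sum_k\sum_{i\neq j\text{ in }S_k}\ln|z_i-z_j|+o(1)$, and the inner-sum together with the $\xi_0$ correction recombines, via Proposition \ref{EnergieRenMeso}, into $\sum_k\big[\tfrac{\pi}{2}(D_k^2-D_k)\ln(\h/D_k)+C_{p_k,D_k}\big]+o(1)$. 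The intermediate annuli $B(z_i^\v,R)\setminus B(z_i^\v,\lambda\delta)$ on which $U_\v\equiv1$ give $\pi d\ln(R/(\lambda\delta))$ plus lower-order terms, using \eqref{MicroRenoExpressionNonRescalDir}. Finally the cell-sized region gives, by \eqref{DefRenMicroEn3} and \eqref{LpEstmU} (so that $U_\v^2$ equals $b^2$ on the inclusion up to exponentially small error), $d\big[b^2\pi|\ln(\lambda\delta)| + \tfrac12\,(\text{profile cost }b^2(\gamma+\pi\ln b))\big] + d\min_\o W^{\rm micro}+o(1)$, where $|\ln\v|$ enters through the $\v^\mu$-to-$\lambda\delta$ core. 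Collecting the $\ln\h$ coefficients gives exactly $\L_1(d)$ by Lemma \ref{LemLaisseLectTrucSimple}, the $\h$-linear term is $-d\Pic\h$, the $|\ln\v|$ and $|\ln(\lambda\delta)|$ terms assemble into $d\Pic\,\HoC$ via \eqref{DefH0c1} and \eqref{DefGammaBo}, and the remaining constants assemble into $\L_2(d)$ by the second identity of Lemma \ref{LemLaisseLectTrucSimple} and the definitions \eqref{CouplageEnergieRen}, \eqref{DefCpD}.

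The main obstacle is bookkeeping the error terms across the three scale-separations simultaneously: one must verify that the cross-terms between the modulus profile and the phase, the contribution of $U_\v^2-1$ on the annuli (controlled by \eqref{EstLoinInterfaceU} and \eqref{LpEstmU}), and the mismatch between $\wstar^{\zd}$ and the genuinely optimal maps on each annulus are all $o(1)$ under the precise hypotheses $\lambda^{1/4}|\ln\v|\to0$ and $\delta\sqrt\h\to0$ (this last is exactly what makes the period small enough that each vortex's cell is disjoint from the others and the microscopic problem decouples). Concretely one needs $\ell_k\gg\delta$ so that distinct mesoscopic vortices land in distinct cells — which follows from $\delta\sqrt\h\to0$ — and $R$ chosen in the window above; then the standard ``matching upper bound'' gluing lemmas of \cite{BBH}-type, adapted with the weight $U_\v^2$, close the estimate. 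A secondary point requiring care is that Proposition \ref{EnergieRenMeso} is stated for the \emph{infimum} over ${\bf z}\in[B(p,R)^D]^*$, so one must check the chosen ${\bf a}_k$, rescaled, lies in the admissible range and realizes the infimum up to $o(1)$; this is immediate from the last sentence of Proposition \ref{EnergieRenMeso}. Once all errors are shown to be $o(1)$, summing the scale contributions and simplifying with Lemma \ref{LemLaisseLectTrucSimple} yields \eqref{ExactExpEnerg}.
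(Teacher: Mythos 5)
Your proposal is essentially the same construction as the paper's proof (Appendix~\ref{AppProofUpBound}): choose ${\bf D}\in\LamN$ minimizing~\eqref{CouplageEnergieRen}, place the vortices at mesoscopic positions given by Proposition~\ref{EnergieRenMeso} and shifted microscopically to a minimizer of $W^{\rm micro}$, take $A_\v$ of the form $A_{\zd}$ from Definition~\ref{DefA_ad}, compute the energy scale by scale, and recombine via Lemma~\ref{LemLaisseLectTrucSimple}. One small technical point where the paper deviates from your description: rather than modulating the canonical harmonic map $\wstar$ (whose boundary data on $\p B(z_i,\cdot)$ is not constant, so your ``interpolate'' step needs to be spelled out), the paper uses the minimizer of $I^{\rm Dir}_{\h^{-1}}\zd$ from~\eqref{MinPropDir}, which has constant trace on each hole and therefore glues directly to the microscopic pieces; Proposition~\ref{Prop.ConditionDirEnergieRen} guarantees this costs the same as using $\wstar$ up to $o(1)$. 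Also note a small notational slip: since $\F$ is already the decoupled energy, the test map $v_\v$ should be the glued map itself, not ``$U_\v$ times'' it.
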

Proposition \ref{Prop.BorneSupSimple} is proved in Appendix \ref{AppProofUpBound}.

\section{Tool box}\label{Sec.ToolBox}
The proof of the main theorems of this article is done in a classic way: by matching upper and lower bounds. A [sharp] upper bound is obtained by Proposition \ref{Prop.BorneSupSimple}. Getting a sharp lower bound is the most challenging part of the proof. It needs the proof of several facts related with the vorticity defects of a family of quasi-minimizers [quantization, localization, size ...]. 

In this section we present some technical and quite classical results adapted to our situation.  
\subsection{An $\eta$-ellipticity property}\label{Sec.EtaElli}
In this section  we focus on quasi-minimizers. 
We let $\h=\mathcal{O}(|\ln\v|)$ and we consider $\{(v_\v,A_\v)\,|\,0<\v<1\}$ be a  family of quasi-minimizers for $\F$, {\it i.e.}, 
\begin{equation}\label{QuasiMinDef}
\F(v_\v,A_\v)\leq\inf_\H\F+o(1).
\end{equation}
 We assume that for all $\v\in(0;1)$, $(v_\v,A_\v)$ is in the Coulomb gauge and that $v_\v\in H^1(\O,\C)$ is s.t. 
\begin{equation}\label{BoundGrpaLin}
 \|\n |v_\v|\|_{L^\infty(\O)}=\mathcal{O}(\v^{-1}).
\end{equation}
The major result of this section is a key tool in this article: an $\eta$ ellipticity property.
\begin{prop}\label{Prop.EtaEllpProp}
Let $\h=\mathcal{O}(|\ln\v|)$ and let $\{(v_\v,A_\v)\,|\,0<\v<1\}\subset\H$ be a family in the Coulomb gauge  satisfying  \eqref{QuasiMinDef} and \eqref{BoundGrpaLin}.

For $\eta\in(0,1)$ there exist $\v_\eta>0$  and $C_\eta>0$ [depending on the bound of $\v\|\n |v_\v|\|_{L^\infty(\O)}$] s.t. for $0<\v<\v_\eta$, if $z\in\O$ is s.t.
\[
b^2\int_{B(z,\sqrt{\v})\cap\O}|\n v_\v|^2+\dfrac{b^2}{\v^2}(1-|v_\v|^2)^2\leq {C_\eta}{}|\ln\v|,
\]
then $|v_\v(z)|>\eta$.
\end{prop}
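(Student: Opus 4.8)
The plan is to argue by contradiction and a rescaling/blow-up argument in the spirit of the classical $\eta$-ellipticity lemmas (Brezis--Merle, Rivière, and the versions in \cite{BBH}), adapted to the pinned Neumann setting. Suppose the conclusion fails: there are $\v_n\downarrow0$, quasi-minimizers $(v_n,A_n)$ in the Coulomb gauge satisfying \eqref{QuasiMinDef} and \eqref{BoundGrpaLin}, and points $z_n\in\O$ with $b^2\int_{B(z_n,\sqrt{\v_n})\cap\O}|\n v_n|^2+\v_n^{-2}(1-|v_n|^2)^2\leq C_n|\ln\v_n|$ for a sequence $C_n\to0$, yet $|v_n(z_n)|\leq\eta$. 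First I would record the basic a priori bounds available from the earlier part of the paper: from Proposition \ref{Prop.BornesSups1} and Remark \ref{RemCOntrolTrainRER}, $\|A_n\|_{L^\infty(\O)}\le C|\ln\v_n|$, and from \eqref{BoundGrpaLin} we have $\|\n|v_n|\|_{L^\infty(\O)}=\mathcal{O}(\v_n^{-1})$; also $|v_n|\le1$. The point $z_n$ may be on or near $\p\O$, so as in footnote \ref{NumFootNoteConformal} I would first straighten the boundary via a conformal map of $\O$ onto $\D$ and reflect across $\p\D$ using the Neumann condition $\p_\nu|v_n|=0$, so that we may work on a full disk in $\R^2$ with the extended $|v_n|$ controlled in $W^{1,\infty}$ by $C\v_n^{-1}$; the conformal factor is bounded above and below, so all the scaling-invariant quantities transfer.

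Next I would rescale at scale $\v_n$ around $z_n$: set $\tilde v_n(y)=v_n(z_n+\v_n y)$, $\tilde A_n(y)=\v_n A_n(z_n+\v_n y)$, and similarly rescale $U_{\v_n}$ and $a_{\v_n}$. On the unit ball, $\|\n|\tilde v_n|\|_{L^\infty}=\mathcal O(1)$ by \eqref{BoundGrpaLin}, $\|\tilde A_n\|_{L^\infty}\le C\v_n|\ln\v_n|\to0$, so the magnetic terms become negligible, and the hypothesis gives $b^2\int_{B(0,1/\sqrt{\v_n})}|\n\tilde v_n|^2+(1-|\tilde v_n|^2)^2\le C_n|\ln\v_n|$. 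The Lipschitz bound on $|\tilde v_n|$ together with $|\tilde v_n(0)|\le\eta$ forces $|\tilde v_n|\le\eta+C|y|$, hence $|\tilde v_n|\le(1+\eta)/2$ on a fixed ball $B(0,r_0)$ with $r_0=r_0(\eta)$; there the potential term obeys $(1-|\tilde v_n|^2)^2\ge c(\eta)>0$, so $\int_{B(0,r_0)}(1-|\tilde v_n|^2)^2\ge c(\eta)\pi r_0^2$, a fixed positive constant. The aim is to contradict this by showing that the clearing/quasi-minimality of $(v_n,A_n)$ forces the potential energy on $B(z_n,\sqrt{\v_n})$ — in unrescaled terms $\frac{1}{2\v_n^2}\int_{B(z_n,\sqrt{\v_n})}U_{\v_n}^4(1-|v_n|^2)^2$ — to be $o(|\ln\v_n|)$ only if it is in fact uniformly bounded, and then a local elliptic comparison (replacing $v_n$ on $B(z_n,\sqrt{\v_n})$ by a competitor with the same boundary data but smaller energy, using the quasi-minimality up to $o(1)$) shows $|v_n|$ must be close to $U_{\v_n}\ge b$ at the center; here one uses that $U_{\v_n}\ge b$ and $a_{\v_n}\ge b$ so the reaction term has the right sign, together with the $W^{2,1}$-regularity of $|v_n|$ from \eqref{HypGlobalSurQuasiMin}. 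I would run the standard iteration (Brezis--Merle / \cite[Thm.~X.2]{BBH} type): from the smallness of the localized energy $\le C_n|\ln\v_n|$ one upgrades, via the PDE \eqref{FullGLuEq} (with the magnetic terms absorbed as lower-order since $\|A_n\|_{L^\infty}=\mathcal O(|\ln\v_n|)$ and $\v_n|\ln\v_n|\to0$ after rescaling) and a Harnack/sub-mean-value inequality for $1-|v_n|^2$, to a pointwise bound $1-|v_n(z_n)|^2\le C_n^{\alpha}$ for some $\alpha>0$, which for $n$ large is incompatible with $|v_n(z_n)|\le\eta<1$.

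The main obstacle I anticipate is twofold. First, the threshold constant $C_\eta$ must be chosen \emph{before} the contradiction argument and made to depend only on $\eta$, $b$, $\O$ and the constant in \eqref{BoundGrpaLin} — this means the iteration/clearing-out step must be quantitative, i.e.\ one needs an explicit smallness-of-energy $\Rightarrow$ modulus-close-to-$1$ statement with computable constants, rather than a pure compactness contradiction; the cleanest route is probably to prove directly (no contradiction) a clearing-out lemma of the form ``if $\frac{1}{2}\int_{B(z,2\sqrt\v)}b^2|\n v_\v|^2+\frac{b^2}{\v^2}(1-|v_\v|^2)^2\le C_\eta|\ln\v|$ then $|v_\v|>\eta$ on $B(z,\sqrt\v)$'' by energy comparison plus the Gagliardo--Nirenberg bound \eqref{BoundGrpaLin}, which self-improves the modulus. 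Second, keeping the magnetic field genuinely harmless near $\p\O$: after the conformal change one must check the reflected $A_n$ (or rather its effect) is still $O(|\ln\v_n|)$ in $L^\infty$ and that the Neumann condition $(\n-\imath A_n)v_n\cdot\nu=0$ is respected by the reflection up to the conformal factor — this is exactly the content of footnote \ref{NumFootNoteConformal} applied to $|v_n|$, and the magnetic part drops out because it only enters through terms multiplied by $\|A_n\|_{L^\infty}^2=\mathcal O(|\ln\v_n|^2)$ times the volume $\v_n^2$ of $B(z_n,\sqrt\v_n)$ after rescaling, which is $o(1)$. Everything else is a routine transcription of the homogeneous-case argument with $U_{\v_n}^2\in[b^2,1]$ inserted as a harmless bounded weight.
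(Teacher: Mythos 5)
Your overall plan — contradiction, replace $A_\v$ by $A_{v_\v}$ so that $\|A\|_{L^\infty}=\mathcal O(|\ln\v|)$, reflect across $\p\O$, use the Lipschitz bound $\|\n|v_\v|\|_{L^\infty}=\mathcal O(\v^{-1})$ to get a fixed positive lower bound for the potential on a scale-$\v$ ball, and use quasi-minimality through a local comparison competitor — is the same family of ideas as the paper's, and your paragraph on why the magnetic terms become harmless after localization/rescaling matches the paper's computation. However there is a genuine gap in the step where you need to convert the hypothesis ``energy on $B(z,\sqrt\v)$ is $\le C_n|\ln\v|$ with $C_n\to0$'' into a contradiction with the $O(1)$ lower bound on the potential.

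The paper does this by a mean-value argument: since $\int_{\v^{3/4}}^{\sqrt\v/2}\frac{d\rho}{\rho}\asymp|\ln\v|$, there is a good radius $\rho_n\in(\v^{3/4},\sqrt\v/2)$ on which $\rho_n\int_{\p B(z,\rho_n)}|\n u|^2+\v^{-2}(1-|u|^2)^2\le 4/n$. Because $\rho_n^2/\v^2\to\infty$, the boundary potential bound forces $|u|\to1$ uniformly on $\p B(z,\rho_n)$; one then computes the degree and finds it is $\mathcal O(1/n)$, hence zero for $n$ large. \emph{Only} at that point can one build a competitor $V$ on $B(z,\rho_n)$ with $V=v$ on $\p B(z,\rho_n)\cap\O$ and energy $\mathcal O(1/n)$, and quasi-minimality then forces $\tilde E_\v(v,B(z,\rho_n)\cap\O)=o(1)$, so in particular $\v^{-2}\int_{B(z,\v^{3/4})}(1-|v|^2)^2=o(1)$; together with the Lipschitz bound this is BBH Theorem~III.3 and gives $|v(z)|\to1$, the contradiction. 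Your proposal never isolates the good circle, never establishes that the degree vanishes there, and without the degree-zero observation a cheap competitor does not exist (any map with nonzero degree on $\p B(z,\rho)$ has weighted GL energy at least of order $b^2\pi\ln(\rho/\v)$ in $B(z,\rho)$). The ``Brezis--Merle / Theorem X.2'' iteration you gesture at would ultimately have to contain these ingredients too, and as stated the upgrade from $o(|\ln\v|)$ energy to $o(1)$ potential is not justified.

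A secondary remark: your worry that the proof must be made ``quantitative'' with ``explicit computable constants'' to produce $C_\eta$ is not an obstacle. The paper's proof is a pure contradiction argument — one assumes $C_n=1/n$ and $\v_n\downarrow0$ provide counterexamples, and derives a contradiction for $n$ large; the existence of $C_\eta$ and $\v_\eta$ then follows logically without ever computing them. So the direct quantitative route you suggest is not required, and spending effort on it would be misdirected.
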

Proposition \ref{Prop.EtaEllpProp} is proved in Appendix \ref{AppendixPruveEtaEllipt}.

By combining Proposition \ref{Prop.EtaEllpProp} with Theorem  \ref{ThmBorneDegréMinGlob} we get immediately a first step in the [macroscopic] localization of the vorticity defects. In order to apply Theorem  \ref{ThmBorneDegréMinGlob} we need assume 
\begin{equation}\label{MagneticIntenHyp}
\begin{cases}\text{$\lambda,\delta$ satisfy \eqref{CondOnLambdaDelta}, $\delta^2|\ln\v|\to0$, $\h\to\infty$}
\\\text{\eqref{BorneKMagn} holds for $\h$ with some $K\geq0$  independent of $\v$}
\end{cases}.
\end{equation}
\begin{cor}\label{Cor.DegNonNul}
Assume that $\lambda,\delta$ and $\h$ satisfy \eqref{MagneticIntenHyp} and let $\{(v_\v,A_\v)\,|\,0<\v<1\}\subset\H$ be s.t.  \eqref{QuasiMinDef} and \eqref{BoundGrpaLin} hold. There exist $0<\v_0\leq\v_K$ and $M\geq1$ s.t. for $0<\v<\v_0$, letting $\tilde\Lambda_\v:=\Lambda\cap\cup_{d_i\neq0}B(a_i,2\M_K|\ln\v|^{-s_0})$ where the $(a_i,d_i)$'s [depend on $\v$] are given by Proposition \ref{Prop.BorneInfLocaliseeSandSerf} and $\v_K\&\M_K\&s_0$ are given by Theorem \ref{ThmBorneDegréMinGlob}, we have
\[
\di\{|v_\v|\leq1/2\}\subset\bigcup_{p\in\tilde\Lambda_\v}B(p,M|\ln\v|^{-\tilde s_0})\text{where }\tilde s_0:=\min\{s_0,10\}.
\]
\end{cor}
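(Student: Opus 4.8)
The plan is to combine the two quoted ingredients directly. First I would invoke Theorem \ref{ThmBorneDegréMinGlob}, which applies because \eqref{MagneticIntenHyp} guarantees $\lambda,\delta$ satisfy \eqref{CondOnLambdaDelta}, that $\delta^2|\ln\v|\to0$ (hence $\delta^2|\ln\v|\le1$ for small $\v$), and that \eqref{BorneKMagn} holds; also a family of quasi-minimizers satisfying \eqref{QuasiMinDef} certainly satisfies $\F(v_\v,A_\v)\le\inf_\H\F+K\ln|\ln\v|$. There are two cases. If $|v_\v|>1-|\ln\v|^{-2}$ in $\O$, then in particular $|v_\v|>1/2$ everywhere, $\{|v_\v|\le1/2\}=\emptyset$, and the claimed inclusion holds trivially (the right-hand side is allowed to be empty or not, the inclusion of the empty set is automatic). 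Otherwise, Theorem \ref{ThmBorneDegréMinGlob} provides, for $0<\v<\v_K$, a family of disks $\{B_i=B(a_i,r_i)\,|\,i\in\J\}$ coming from Proposition \ref{Prop.BorneInfLocaliseeSandSerf} with $d_i\ge0$ for all $i$, with $\sum r_i<|\ln\v|^{-10}$, with $\{|v_\v|<1-|\ln\v|^{-2}\}\subset\cup B_i$, and with the property that if $d_i\neq0$ then $\dist(B_i,\Lambda)\le\M_K|\ln\v|^{-s_0}$.

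Next I would control the bad set. Since $\{|v_\v|\le1/2\}\subset\{|v_\v|<1-|\ln\v|^{-2}\}\subset\cup_{i\in\J}B_i$, it suffices to understand which $B_i$ can meet $\{|v_\v|\le1/2\}$. The key point is that a disk $B_i$ with $d_i=0$ cannot carry a point where $|v_\v|\le1/2$ once $\v$ is small: indeed, on such a disk the degree is zero and the Dirichlet + potential energy in $B_i$ is, by \eqref{EstimateSS3ball}, only bounded below by $0$, so this does not by itself exclude a zero. To get the exclusion I would instead argue via the $\eta$-ellipticity Proposition \ref{Prop.EtaEllpProp} (with $\eta=1/2$): if $z\in\{|v_\v|\le1/2\}$, then necessarily $b^2\int_{B(z,\sqrt\v)\cap\O}|\n v_\v|^2+\v^{-2}(1-|v_\v|^2)^2> C_{1/2}|\ln\v|$. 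Summing these local lower bounds over a maximal $\sqrt\v$-separated family of points of $\{|v_\v|\le1/2\}$ and using the global bound \eqref{CrucialBoundedkjqbsdfbn}, $\tfrac12\int_\O|\n v_\v|^2+\tfrac1{2\v^2}(1-|v_\v|^2)^2\le\M_K|\ln\v|$, shows that $\{|v_\v|\le1/2\}$ is covered by a bounded number (at most $\lceil 2\M_K/C_{1/2}\rceil$, say) of balls of radius $\sqrt\v$; in particular each connected component of $\{|v_\v|\le1/2\}$ has diameter $\mathcal{O}(\sqrt\v)$, hence meets at most $\mathcal{O}(1)$ of the $B_i$'s, and each such $B_i$ satisfies $r_i<|\ln\v|^{-10}$ and has nonzero degree since a component of $\{|v_\v|\le1/2\}$ inside it forces $d_i\neq0$ (a zero-degree, small disk with $|v_\v|\le1/2$ somewhere would again violate $\eta$-ellipticity on a slightly larger disk, because the modulus must recover to $\ge1-|\ln\v|^{-2}$ on $\p B_i$, costing $\gtrsim|\ln\v|$ energy concentrated in $B_i$, contradiction with the bounded total). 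Thus every $B_i$ meeting $\{|v_\v|\le1/2\}$ has $d_i\neq0$, hence $\dist(a_i,\Lambda)\le \dist(B_i,\Lambda)+r_i\le\M_K|\ln\v|^{-s_0}+|\ln\v|^{-10}\le 2\M_K|\ln\v|^{-\tilde s_0}$ with $\tilde s_0=\min\{s_0,10\}$, so $a_i$ lies within $2\M_K|\ln\v|^{-s_0}$ of some $p\in\Lambda$, i.e. $p\in\tilde\Lambda_\v$.

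Finally I would assemble the inclusion. Let $z\in\{|v_\v|\le1/2\}$. Then $z\in B_i$ for some $i$ with $d_i\neq0$, and by the previous paragraph there is $p\in\tilde\Lambda_\v$ with $|a_i-p|\le2\M_K|\ln\v|^{-s_0}$. Hence $|z-p|\le|z-a_i|+|a_i-p|\le r_i+2\M_K|\ln\v|^{-s_0}\le|\ln\v|^{-10}+2\M_K|\ln\v|^{-s_0}\le M|\ln\v|^{-\tilde s_0}$ for a suitable $M\ge1$ depending only on $\M_K$ (take e.g. $M=3\M_K$), once $\v$ is small enough, say $0<\v<\v_0$ with $\v_0\le\v_K$ chosen so that all the finitely many smallness requirements above hold. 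This gives $z\in\cup_{p\in\tilde\Lambda_\v}B(p,M|\ln\v|^{-\tilde s_0})$, which is the claim.

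The main obstacle is the step showing that a small disk $B_i$ with $d_i=0$ cannot contain a point of $\{|v_\v|\le1/2\}$: Proposition \ref{Prop.BorneInfLocaliseeSandSerf} only gives the trivial lower bound $0$ in that case, so one genuinely needs the $\eta$-ellipticity Proposition \ref{Prop.EtaEllpProp} together with the fact that the modulus must return from $\le1/2$ to $\ge1-|\ln\v|^{-2}$ across a region of diameter $\le2r_i\le2|\ln\v|^{-10}$, forcing an energy of order $|\ln\v|$ localized there — which, compared against the $\mathcal{O}(|\ln\v|)$ global budget with a constant that can be beaten by taking $\v$ small, is the delicate quantitative point; the rest is bookkeeping with the radii and the distance estimates.
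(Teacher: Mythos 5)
Your bookkeeping at the end (from ``each $B_i$ meeting $\{|v_\v|\le 1/2\}$ has $d_i\neq 0$'' to the final inclusion) is correct, and so is the preliminary reduction via Theorem \ref{ThmBorneDegréMinGlob}. But the pivotal step — showing that a ball $B_i$ with $d_i=0$ cannot meet $\{|v_\v|\le 1/2\}$ — is not justified, and the justification you sketch does not close.

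Concretely, you invoke $\eta$-ellipticity (Proposition \ref{Prop.EtaEllpProp}) to produce an extra energy cost of order $C_{1/2}|\ln\v|$ localized near a zero inside a zero-degree ball, and then say this gives ``a contradiction with the bounded total.'' The bound you are using is the gross one, \eqref{CrucialBoundedkjqbsdfbn}, namely $\tfrac12\int_\O|\n v_\v|^2+\tfrac1{2\v^2}(1-|v_\v|^2)^2\le\M_K|\ln\v|$. One additional $C_{1/2}|\ln\v|$ chunk is perfectly compatible with a total budget of $\M_K|\ln\v|$; taking $\v$ smaller does not change the relation between the constants $C_{1/2}$ and $\M_K$, so there is no contradiction at this level. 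The argument genuinely needs the sharp upper bound \eqref{NiceDecSharpBorneSupBisso}, $F(v_\v)\le \pi b^2\sum|d_i|\,|\ln\v|+\mathcal{O}(\ln|\ln\v|)$, together with a matching sharp lower bound showing that the vortex balls of nonzero degree already absorb all $\pi b^2 d|\ln\v|$ of this budget, so that the additional $C_{1/2}|\ln\v|$ from $\eta$-ellipticity at a rogue zero really does overshoot. Moreover, to obtain that sharp lower bound in terms of $F(v_\v)$, you cannot use the balls produced by Theorem \ref{ThmBorneDegréMinGlob}/Proposition \ref{Prop.BorneInfLocaliseeSandSerf} for the configuration $(v_\v,A_\v)$ directly, because \eqref{EstimateSS3ball} bounds the covariant energy $\rho^2|\n\varphi-A|^2+|h-\h|^2$, not the magnetic-free energy $F(v_\v)$. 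This is why the paper reapplies Proposition \ref{Prop.BorneInfLocaliseeSandSerf} to $(v_\v,0)$ to produce a second family $\tilde B_i$ of vortex balls (and then uses a ``good radius'' $\rho$ to identify $\sum_{\tilde B_i\subset B(p,\rho)}|\tilde d_i|$ with $\sum_{B_i\subset B(p,\rho)}d_i$). Without that step, the ingredients you have don't combine into a contradiction. A secondary inaccuracy: a zero in a zero-degree ball does not ``violate'' $\eta$-ellipticity — $\eta$-ellipticity is an energy lower bound near a point of low modulus, not an obstruction; you must still compare the resulting total to a sufficiently tight upper bound.
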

\begin{proof}
We argue by contradiction and we assume that there exist $\v=\v_n\downarrow0$ and a sequence $((v_\v,A_\v))_\v\subset\H$ s.t. \eqref{QuasiMinDef} and \eqref{BoundGrpaLin} hold and s.t. for all $n$ there exists 
\[
z_0=z_0^{n}\in\{|v|\leq1/2\}\setminus\bigcup_{p\in\tilde\Lambda_\v}B(p,n|\ln\v|^{-\tilde s_0}).
\]
Since  \eqref{QuasiMinDef} and \eqref{BoundGrpaLin} are gauge invariant we may assume that, for all $\v$, $(v_\v,A_\v)$ is in the Coulomb gauge. 

Let $\mathcal{B}:=\{(B(a_i,r_i),d_i)\,|\,i\in\J\}$ be  given by Proposition \ref{Prop.BorneInfLocaliseeSandSerf}. 
Write $B_i:=B(a_i,r_i)$ for $i\in\J$. Note that by Theorem \ref{ThmBorneDegréMinGlob}, from the  quasi-minimality of $(v_\v,A_\v)$, for $\v$ sufficiently small, we have $d_i\geq0$ for all $i$ and  $d:=\sum |d_i|=\sum d_i=\mathcal{O}(1)$. Up to pass to a subsequence, we may thus assume that $d$ is independent of $\v$.

 From the definition of $\tilde\Lambda_\v$, we have 
 \[
 \bigcup_{d_i>0}B_i\subset\bigcup_{p\in\tilde\Lambda}B(p,2\M_K|\ln\v|^{-s_0}).
 \]
 
 Note that from Theorem \ref{ThmBorneDegréMinGlob} we have $\F(v_\v,0)=\mathcal{O}(|\ln\v|^2)$. Then we may use Proposition \ref{Prop.BorneInfLocaliseeSandSerf} for the configuration $(v_\v,0)\in\H$ to get a covering $\cup_{i\in\tilde\J} \tilde B_i$ of $\{|v_\v|<1-|\ln\v|^{-2}\}$ with disjoint disks $\tilde B_i=B(\tilde a_i,\tilde r_i)$, $\sum\tilde r_i<|\ln\v|^{-10}$. 
 
 Therefore there is $\rho\in[2\M_K|\ln\v|^{-\tilde s_0};(2\M_K+6)|\ln\v|^{-\tilde s_0}]$ s.t. 
 \[
\left[\bigcup_{p\in\tilde\Lambda_\v}\p B(p,\rho)\right]\cap\left[\bigcup_{i\in\J}  B_i\cup\bigcup_{i\in\tilde\J} \tilde B_i\right]=\emptyset.
 \]
 In particular $|v_\v|\geq 1-|\ln\v|^{-2}$ on $\bigcup_{p\in\tilde\Lambda_\v}\p B(p,\rho)$. Thus, writing $\tilde d_i:=\deg_{\p \tilde B_i}(v_\v)$ when $\tilde B_i\subset\O$, we get for $p\in\tilde\Lambda_\v$ 
 \[
 \sum_{\tilde B_i\subset B(p,\rho)}|\tilde d_i|\geq\left|\sum_{\tilde B_i\subset B(p,\rho)}\tilde d_i\right|=\deg_{\p B(p,\rho)}(v_\v)=\sum_{ B_i\subset B(p,\rho)}d_i.
 \]

 Note that for  sufficiently large $n$ we have $B(z_0,\sqrt\v)\cap\bigcup_{p\in\tilde\Lambda_\v} B(p,\rho)=\emptyset$. 
 
On the other hand, since $\sum \tilde r_i<|\ln\v|^{-10}$, we have for $\tilde B_i\subset\O$
\begin{equation}\nonumber
F(v,\tilde B_i)\geq \pi b^2|\tilde d_i|(|\ln\v|-C\ln|\ln\v|).
\end{equation}
Using Proposition \ref{Prop.EtaEllpProp} we obtain 
\begin{equation}\label{ContraRER}
F(v)\geq (\pi b^2 d+C_{1/2})|\ln\v|-\mathcal{O}(\ln|\ln\v|)
\end{equation}
where $C_{1/2}>0$ is given by Proposition \ref{Prop.EtaEllpProp} with $\eta=1/2$.  Estimate \eqref{ContraRER} is in contradiction with \eqref{NiceDecSharpBorneSupBisso}.

\end{proof}

\subsection{Construction of the $\v^s$-bad discs}\label{ConstructionEpsBadDis}

As in the previous section we assume that $\lambda,\delta$ and $\h$ satisfy \eqref{MagneticIntenHyp}. In this section we establish the existence of {\it $\v^s$-bad discs associated to a quasi-minimizing sequence}. The construction of the bad discs requires the hypotheses: $|v_\v|\in W^{2,1}(\O)$. \\

An $\v^s$-bad discs family associated to a familly $\{(v_\v,A_\v)\,|\,0<\v<1\}\subset\H$ consists in sets of discs that have small diameters [a roots of $\v$] s.t. for fix $\v$ the discs are "well separated", the union of the discs is a covering of  $\{|v|\leq1/2\}$ and each "heart" of a disc intersects $\{|v|\leq1/2\}$. Such sets of discs give thus a nice visualization of $\{|v|\leq1/2\}$.

In the next section [Section \ref{Sect.ShapInfo}], adding an extra hypothesis on $\lambda,\delta$ and $\h$ we get some informations in terms of location and quantification of the $\v^s$-bad discs.

\begin{prop}\label{Prop.ConstrEpsMauvDisk}
Assume that $\lambda,\delta$ and $\h$ satisfy \eqref{MagneticIntenHyp}. There exists $M_0\in\N^*$ s.t. for $\mu\in(0,1/2)$, if $\{(v_\v,A_\v)\,|\,0<\v<1\}$ is in the Coulomb gauge and agrees \eqref{HypGlobalSurQuasiMin}$\&$\eqref{QuasiMinDef}, then there exist $\v_\mu>0$  and $C_\mu\geq1$ [independent of $\v$] s.t. for $0<\v<\v_\mu$, there is $J_\mu=J_{\mu,\v}\subset\{1,...,M_0\}$ [possibly empty] s.t. if $J_\mu=\emptyset$ then $|v|>1/2$ in $\O$ and if $J_\mu\neq\emptyset$ then there are  $\{z_i\,|\,i\in J_\mu\}\subset\O$, a set of mutually distinct points, and $\rad\in[\v^\mu,\v^{\mu_*}]$ with $\mu_*:=2^{-L_0^2}\mu$ verifying: 
\begin{enumerate}
\item\label{Prop.ConstrEpsMauvDisk1} $|z_i-z_j|\geq \rad^{3/4}$ for $i,j\in J_\mu$, $i\neq j$,
\item\label{Prop.ConstrEpsMauvDisk2} $\{|v_\v|\leq1/2\}\subset\cup_{ J_\mu}B(z_i,\rad)\subset\O$ and, for  $i\in J_\mu$, $B(z_i,\rad/4)\cap\{|v_\v|\leq1/2\}\neq\emptyset$,
\item\label{Prop.ConstrEpsMauvDisk3} For $i\in J_\mu$ we have $\di\rad\int_{\p B(z_i,\rad)}|\n v_\v|^2+\dfrac{1}{2\v^2}(1-|v_\v|^2)^2\leq C_\mu$ and $|v|\geq1-|\ln\v|^{-2}$ on $\p B(z_i,\rad)$.
\end{enumerate}
\end{prop}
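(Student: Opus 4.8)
The plan is to run the classical Bethuel--Brezis--H\'elindex-type "bad discs" construction (as in \cite{BBH}), but carefully organized as an iterated covering argument so that the number of discs stays bounded independently of $\v$ and so that the radii can be taken among a fixed finite geometric scale of powers of $\v$. First I would record the basic a priori bounds: since $\{(v_\v,A_\v)\}$ is a family of quasi-minimizers in the Coulomb gauge satisfying \eqref{QuasiMinDef}, the energetic decomposition (Corollary \ref{CorGonzo}, together with Theorem \ref{ThmBorneDegréMinGlob}) gives $F(v_\v)=\mathcal{O}(|\ln\v|)$, and \eqref{HypGlobalSurQuasiMin} gives the Lipschitz bound $\|\n|v_\v|\|_{L^\infty(\O)}=\mathcal{O}(\v^{-1})$ needed for Proposition \ref{Prop.EtaEllpProp}. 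The $W^{2,1}$-hypothesis on $|v_\v|$ is what allows one to control $\Haus^1\bigl(\p\{|v_\v|\le 1/2\}\bigr)$, which is the ingredient that makes the covering of $\{|v_\v|\le1/2\}$ by controllably many small discs possible.

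The core is a finite covering-and-separation iteration. Start at scale $\rad_0=\v^{\mu}$. If $|v_\v|>1/2$ in $\O$ we are done with $J_\mu=\emptyset$; otherwise cover $\{|v_\v|\le1/2\}$ by discs of radius $\rad_0$ centred at points of $\{|v_\v|\le1/2\}$ and extract a subcover whose centres are $\rad_0/4$-separated. Using the $\eta$-ellipticity of Proposition \ref{Prop.EtaEllpProp} with $\eta=1/2$: a disc $B(z,\sqrt\v)$ centred at a point where $|v_\v|\le1/2$ must carry energy $\ge C_{1/2}|\ln\v|$, hence by the $\mathcal{O}(|\ln\v|)$-bound on $F(v_\v)$ the number of disjoint such $\sqrt\v$-discs is $\le M_0$ for a fixed $M_0$; consequently the number of $\rad_0/4$-separated centres in the subcover is bounded by a fixed multiple of $M_0$. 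Now perform the standard "if two discs are closer than $\rad^{3/4}$, merge them into one disc at the next scale $\rad^{3/4}$" step of \cite{BBH}: each merging strictly reduces the number of discs, so after at most $L_0:=$ (the fixed bound on the number of discs) mergings the process stabilizes, and the final radius lies in $[\v^{\mu},\v^{\mu_*}]$ with $\mu_*=2^{-L_0^2}\mu$ since each merging replaces $\rad$ by $\rad^{3/4}$ and $(3/4)^{L_0}\ge 2^{-L_0^2}$. This yields items \eqref{Prop.ConstrEpsMauvDisk1} and \eqref{Prop.ConstrEpsMauvDisk2}; the inclusion $\cup B(z_i,\rad)\subset\O$ for small $\v$ follows from $\rad\le\v^{\mu_*}\to0$ together with $\dist(\{|v_\v|\le1/2\},\p\O)\gtrsim |\ln\v|^{-\tilde s_0}$ from Corollary \ref{Cor.DegNonNul}.

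For item \eqref{Prop.ConstrEpsMauvDisk3} I would use a mean-value / Chebyshev argument on annuli, which is the other classical BBH ingredient: on each scale in the geometric range between $\rad$ and $\rad^{3/4}$ the total energy $F(v_\v)$ being $\mathcal{O}(|\ln\v|)$ forces the existence of a radius $\rad$ in that range at which the circular energy $\rad\int_{\p B(z_i,\rad)}|\n v_\v|^2+\tfrac{1}{2\v^2}(1-|v_\v|^2)^2$ is $\le C_\mu$ for all $i\in J_\mu$ simultaneously (there are only $\mathcal{O}(1)$ of them, and $\mathcal{O}(\ln(1/\v))$ dyadic-type scales, so the average circular energy is $\mathcal{O}(1)$). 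Finally, on such a good circle, integrating the bound $|\,\n|v_\v|\,|\le C\v^{-1}$ and the smallness of $\tfrac{1}{\v^2}(1-|v_\v|^2)^2$ along $\p B(z_i,\rad)$ gives $|v_\v|\ge 1-|\ln\v|^{-2}$ there, after possibly shrinking the admissible scale range slightly; this is a standard one-dimensional ODE estimate.

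The main obstacle I expect is the bookkeeping that makes $M_0$, $L_0$ and hence $\mu_*$ genuinely \emph{independent} of $\v$ and of $\mu$: one must be careful that the $\eta$-ellipticity threshold $C_{1/2}$ and the upper bound on $F(v_\v)$ are uniform, that the merging process terminates in a number of steps depending only on the (uniform) initial disc count, and that the good-circle selection in \eqref{Prop.ConstrEpsMauvDisk3} is compatible with the radius already fixed by the merging — i.e.\ one should select the stabilized scale and the good circle together within the same geometric window $[\v^\mu,\v^{\mu_*}]$. Everything else is a routine adaptation of \cite{BBH}, Chapters~IV--V, with $U_\v^2$-weights replaced by the harmless constants $b^2\le U_\v^2\le 1$.
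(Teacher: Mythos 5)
Your covering-and-separation skeleton is the same as the paper's (small discs at scale $\sqrt\v$ counted via the $\eta$-ellipticity of Proposition~\ref{Prop.EtaEllpProp} and the $\mathcal{O}(|\ln\v|)$ energy bound, then an iterated drop-and-separate loop that terminates in $\le M_0$ steps). The interesting divergence is in how you obtain the good circle and the bound $|v|\ge1-|\ln\v|^{-2}$ on it. You propose a Chebyshev average over a geometric range of radii to control $\rad\int_{\p B(z_i,\rad)}(\cdots)$, and then a one-dimensional Lipschitz/ODE argument: if $|v(x_0)|\le1-|\ln\v|^{-2}$ at a point of the circle, the bound $\|\n|v|\|_{L^\infty}\le C\v^{-1}$ forces $(1-|v|^2)^2\gtrsim|\ln\v|^{-4}$ on an arc of length $\gtrsim\v|\ln\v|^{-2}$, giving $\int_{\p B(z_i,\rad)}(1-|v|^2)^2\gtrsim\v|\ln\v|^{-6}$, which contradicts the circular bound $\le 2C_\mu\v^2/\rad\le 2C_\mu\v^{2-\mu}$ since $\mu<1$. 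That argument is correct and is genuinely more elementary than the paper's. The paper instead invokes a Morse--Sard-type theorem for $W^{2,1}$ functions (Corollary~5.2 of \cite{bourgain2010morse}) to produce a regular level $t_\v\in[1-2|\ln\v|^{-2},1-|\ln\v|^{-2}]$ whose level set $V(t_\v)$ is a finite union of Jordan curves with $\Haus^1[V(t_\v)]\le C\v|\ln\v|^5$, and then selects $\rad$ by a measure estimate so that every circle $\p B(x_i,\rad)$ lies entirely in $\{|v|\ge t_\v\}$.

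Two points worth flagging. First, your sentence attributing the $W^{2,1}(\O)$ hypothesis on $|v_\v|$ to the control of $\Haus^1(\p\{|v_\v|\le1/2\})$ and hence to the disc count is a mischaracterization: the disc count $M_0$ comes solely from the $\eta$-ellipticity plus the $\mathcal{O}(|\ln\v|)$ bound, with no regularity input beyond the Lipschitz bound. In the paper, $W^{2,1}$ is used exclusively to invoke the Sard-type theorem in Step~3; in fact your own argument never uses $W^{2,1}$ at all, which shows the hypothesis is stronger than what your route requires. Second, and more importantly for the global architecture of the paper: the level $t_\v$ and the set $V(t_\v)$ with the measure estimate \eqref{BorneValeurMesure} are not incidental — they are the objects referenced in \eqref{DefTeps}--\eqref{PropIntDefTeps} and used in Lemma~\ref{Lem.BorneLongueEnradian} and Proposition~\ref{Prop.ComparaisonAnneau} (the annulus lower bounds that exploit dilution). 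So the paper's more elaborate Step~3 is doing double duty: it proves the circular estimate \emph{and} manufactures the level-set structure needed downstream. Your simpler argument proves the Proposition as stated but would leave those later arguments without the input they rely on, so in the context of the whole paper it would have to be supplemented by the Sard-type construction anyway.
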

Proposition \ref{Prop.ConstrEpsMauvDisk} is proved in Appendix \ref{SectAppenPreuveConstructionPetitDisque}. We have the following standard  estimate.

\begin{prop}\label{Prop.ProprieteEpsMauvDisk}
Assume \eqref{MagneticIntenHyp} and let  $\{(v_\v,A_\v)\,|\,0<\v<1\}$ be as in Proposition \ref{Prop.ConstrEpsMauvDisk}. Fix  $\mu\in(0,1/2)$ and let  $\v_\mu$, $C_\mu$ be given by Proposition \ref{Prop.ConstrEpsMauvDisk}. For $0<\v<\v_\mu$ we consider $J_\mu$, $\{z_i\,|\,i\in J_\mu\}\subset\O$ and $\rad$ obtained in Proposition \ref{Prop.ConstrEpsMauvDisk}. We denote $ d_i:=\deg_{\p B(z_i,\rad)}(v_\v)$. 

There exists $c_{\mu,b}\geq1$ independent of $\v$ s.t. for $\v<\v_\mu$ we have
\begin{equation}\label{BorneDegré}
| d_i|\leq 4\sqrt {C_\mu},
\end{equation}
\begin{equation}\label{BorneInfEn}
\dfrac{1}{2}\int_{B(z_i,\rad)}|\n v_\v|^2+\dfrac{b^2}{2\v^2}(1-|v_\v|^2)^2\geq\pi| d_i|\ln\left(\dfrac{r}{\v}\right)-c_{\mu,b}
\end{equation}
and then
\begin{equation}\label{BorneInfEnWeight}
F(v_\v,B(z_i,\rad))\geq\pi| d_i|\inf_{B(z_i,\rad)}\alpha\left[\ln\left(\dfrac{r}{\v}\right)-c_{\mu,b}\right]\geq\pi\inf_{B(z_i,\rad)}\alpha\,| d_i|[(1-\mu)\ln\v -c_{\mu,b}].
\end{equation}
Moreover there is $0<\tilde\v_\mu\leq\v_\mu$ s.t. for $0<\v<\tilde\v_\mu$ we have
\begin{equation}\label{DegNonNulEpsS}
d_i\neq0\text{ for all }i
\end{equation}
and
\begin{equation}\label{BorneTotaleSommeDeg}
\sum_{i\in J_\mu}|d_i|\leq\mathcal{D}_{K,b}:=\dfrac{3\M_K}{b^2}
\end{equation}
\end{prop}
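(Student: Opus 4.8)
The plan is to derive the five estimates one after another, each being a (by now fairly standard) consequence of the structure produced by Proposition~\ref{Prop.ConstrEpsMauvDisk}; throughout I write $\alpha=U_\v^2$, so that $b^2\le\alpha\le1$, and I set $B_i:=B(z_i,\rad)$. The very first observation, used everywhere below, is that since $\rad\le\v^{\mu_*}\to0$, property~\eqref{Prop.ConstrEpsMauvDisk1} forces $|z_i-z_j|\ge\rad^{3/4}\ge2\rad$ for $\v$ small, so the discs $B_i$, $i\in J_\mu$, are pairwise disjoint.

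For \eqref{BorneDegré}: by property~\eqref{Prop.ConstrEpsMauvDisk3}, $|v_\v|\ge1-|\ln\v|^{-2}\ge1/2$ on $\p B_i$, so $d_i=\deg_{\p B_i}(v_\v)$ is well defined; writing it as $\tfrac1{2\pi}\int_{\p B_i}(v_\v/|v_\v|)\wedge\p_\tau(v_\v/|v_\v|)$, using $|\p_\tau(v_\v/|v_\v|)|\le2|\p_\tau v_\v|$ (valid since $|v_\v|\ge1/2$), and then Cauchy--Schwarz together with the controlled boundary bound $\rad\int_{\p B_i}|\n v_\v|^2\le C_\mu$ of~\eqref{Prop.ConstrEpsMauvDisk3}, one gets $|d_i|\le\pi^{-1}\sqrt{2\pi C_\mu}\le4\sqrt{C_\mu}$. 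For \eqref{BorneInfEn}: on $B_i$ the map $v_\v$ has degree $d_i$ on $\p B_i$, modulus $\ge1-|\ln\v|^{-2}$ there, and by~\eqref{Prop.ConstrEpsMauvDisk3} a boundary trace whose energy is $\mathcal{O}(C_\mu)$; the classical lower bound for a bad disc with controlled boundary datum (\cite{BBH}, or the local ball construction underlying Proposition~\ref{Prop.BorneInfLocaliseeSandSerf}) then yields $\tfrac12\int_{B_i}|\n v_\v|^2+\tfrac1{2\v^2}(1-|v_\v|^2)^2\ge\pi|d_i|\ln(\rad/\v)-c_{\mu,b}$ with $c_{\mu,b}\ge1$ depending only on $C_\mu,b,\O$. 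For \eqref{BorneInfEnWeight}: since $\alpha\ge b^2$ and $\alpha^2\ge b^2\alpha$, one may factor $\inf_{B_i}\alpha$ out of $F(v_\v,B_i)$ and use \eqref{BorneInfEn}; then $\rad\ge\v^\mu$ gives $\ln(\rad/\v)\ge(1-\mu)|\ln\v|\ge(1-\mu)\ln\v$.

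Estimate \eqref{BorneTotaleSommeDeg} then follows at once: since the $B_i$ are disjoint and the integrand of $F$ is nonnegative, \eqref{BorneInfEnWeight} (with $\inf\alpha\ge b^2$) gives $F(v_\v)\ge\pi b^2\big[(1-\mu)|\ln\v|-c_{\mu,b}\big]\sum_{i\in J_\mu}|d_i|$, while \eqref{QuasiMinDef} and Theorem~\ref{ThmBorneDegréMinGlob} give $F(v_\v)\le\M_K|\ln\v|$ for $\v$ small; dividing yields $\sum_{i\in J_\mu}|d_i|\le\M_K\big(\pi b^2(1-\mu)\big)^{-1}(1+o(1))$, which for $\mu<1/2$ (using $\pi>1$) is $<3\M_K/b^2$ once $\v$ is small, and the left-hand side is an integer.

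The genuinely delicate assertion is \eqref{DegNonNulEpsS}. Suppose some $d_{i_0}=0$. Pick, by~\eqref{Prop.ConstrEpsMauvDisk2}, a point $z^\star\in B(z_{i_0},\rad/4)$ with $|v_\v(z^\star)|\le1/2$; since $B(z^\star,\sqrt\v)\subset B_{i_0}\subset\O$ for $\v$ small, Proposition~\ref{Prop.EtaEllpProp} applied at $z^\star$ with $\eta=3/4$ forces $b^2\int_{B(z^\star,\sqrt\v)}|\n v_\v|^2+\tfrac{b^2}{\v^2}(1-|v_\v|^2)^2>C_{3/4}|\ln\v|$, hence $F(v_\v,B_{i_0})\ge c_b|\ln\v|$ for some $c_b>0$: a zero-degree bad disc still carries energy of order $|\ln\v|$. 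To convert this into a contradiction I would bring in the \emph{sharp} energy expansion: by Corollary~\ref{Cor.DegNonNul} all bad discs sit near $\Lambda$, so collecting the Sandier--Serfaty vortex balls cluster by cluster inside the (disjoint) balls $B(p_k,\eta_\O)$ and using Theorem~\ref{ThmBorneDegréMinGlob} gives the essentially sharp lower bound $F(v_\v)\ge\pi b^2D(|\ln\v|-C\ln|\ln\v|)$, where $D:=\sum_k\deg_{\p B(p_k,\eta_\O)}(v_\v)\ge0$ is the total vorticity; adding the $\eta$-ellipticity cost of the zero-degree disc $B_{i_0}$ — which must be counted \emph{in addition} to $\pi b^2D|\ln\v|$, since $B_{i_0}$ is degree-$0$ and so does not enter the ball-construction count — produces $F(v_\v)\ge\pi b^2D|\ln\v|+c_b|\ln\v|-\mathcal{O}(\ln|\ln\v|)$. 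This contradicts the sharp upper bound $F(v_\v)\le\pi b^2D|\ln\v|+\mathcal{O}(\ln|\ln\v|)$ supplied by Corollary~\ref{CorGonzo} (equivalently by the test functions of Proposition~\ref{Prop.BorneSupSimple}) once $\v$ is small; hence $d_i\ne0$ for all $i\in J_\mu$ as soon as $\v<\tilde\v_\mu$. The main obstacle is exactly this last step: one has to match the upper and lower expansions of $F(v_\v)$ up to $o(|\ln\v|)$ and make the various coverings — the Sandier--Serfaty balls, the $\v^s$-bad discs $B_i$, and the $\v^{1/2}$-ball around $z^\star$ — interact without double-counting energy; it is here that the macroscopic localization (Corollary~\ref{Cor.DegNonNul}), the sign information $D\ge0$ of Theorem~\ref{ThmBorneDegréMinGlob}, and the precise leading constant in the upper bound all enter.
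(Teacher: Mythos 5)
Your derivations of \eqref{BorneDegré}, \eqref{BorneInfEn}, \eqref{BorneInfEnWeight} and \eqref{BorneTotaleSommeDeg} are correct and coincide in essence with the paper's proof (Cauchy--Schwarz on the boundary trace; the local degree-weighted lower bound, for which the paper cites Lemma~VI.1 in~\cite{AB1}; extracting $\inf_{B_i}\alpha$; and then summing against the global bound \eqref{CrucialBoundedkjqbsdfbn}).

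For \eqref{DegNonNulEpsS}, however, your argument diverges from the paper's and has a genuine gap, which you yourself flag. You try to match a sharp global lower bound $F(v)\ge\pi b^2 D|\ln\v|+c_b|\ln\v|-\mathcal{O}(\ln|\ln\v|)$ against the sharp upper bound from Corollary~\ref{CorGonzo}, the extra $c_b|\ln\v|$ coming from the $\eta$-ellipticity of the degree-$0$ disc $B_{i_0}$. The claim that this cost ``must be counted in addition'' to $\pi b^2 D|\ln\v|$ is not justified: the Sandier--Serfaty balls produced by Proposition~\ref{Prop.BorneInfLocaliseeSandSerf} are built from $\{|v|<1-|\ln\v|^{-2}\}$ with total radius only bounded by $|\ln\v|^{-10}$, and hence can be much larger than $B_{i_0}$ (whose radius is $\v^{s}$). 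Nothing prevents a Sandier--Serfaty ball of \emph{nonzero} degree from swallowing $B_{i_0}$; the degree-dependent lower bound already accounts for the energy in that region, and you would be double counting. The workaround in the proof of Corollary~\ref{Cor.DegNonNul} avoids this by choosing a radius $\rho$ whose circle misses all balls of both families and by taking $z_0$ \emph{far} from the balls carrying degree — exactly the opposite of the present situation, where $B_{i_0}$ sits near $\Lambda$ among the genuine vortices.

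The paper's argument is local and considerably simpler. Assume $d_{i_0}=0$. Using that on $\p B_{i_0}$ the trace of $v$ has degree $0$, modulus $\ge 1-|\ln\v|^{-2}$, and energy $\mathcal{O}(C_\mu)$ (Proposition~\ref{Prop.ConstrEpsMauvDisk}.\ref{Prop.ConstrEpsMauvDisk3}), one extends the boundary data to a degree-$0$ map $\tilde\rho\e^{\imath\tilde\phi}$ with $F(\tilde v,B_{i_0})=\mathcal{O}(1)$, and sets $\tilde v=v$ outside $B_{i_0}$. Quasi-minimality gives $\F(v,A)\le\F(\tilde v,A)+o(1)$, while a direct computation (the magnetic cross-terms over the small ball $B_{i_0}$ are $o(1)$) gives $\F(v,A)-\F(\tilde v,A)=F(v,B_{i_0})-F(\tilde v,B_{i_0})+o(1)$, so $F(v,B_{i_0})=\mathcal{O}(1)$. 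This contradicts $F(v,B_{i_0})\ge C_{1/2}|\ln\v|$ from Proposition~\ref{Prop.EtaEllpProp} (property~\ref{Prop.ConstrEpsMauvDisk2} supplies a point of $\{|v|\le 1/2\}$ in $B(z_{i_0},\rad/4)$). No global expansion, no ball bookkeeping, no interaction between coverings is needed. I would recommend replacing your global comparison for \eqref{DegNonNulEpsS} by this local surgery.
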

\begin{proof}
It is classical to get \eqref{BorneDegré} from Proposition \ref{Prop.ConstrEpsMauvDisk}.\ref{Prop.ConstrEpsMauvDisk3} and the  Cauchy Schwartz inequality. Estimate \eqref{BorneInfEn} follows from Proposition \ref{Prop.ConstrEpsMauvDisk} $\&$ Lemma VI.1 in \cite{AB1} and \eqref{BorneInfEnWeight} is a consequence of  \eqref{BorneInfEn}.

The proof of \eqref{DegNonNulEpsS} is done arguing by contradiction with the construction of a comparaison function $\tilde v:=\begin{cases}v&\text{in }\O\setminus B(z_{i_0},\rad)\\\tilde\rho\e^{\imath\tilde\phi}&\text{in }B(z_{i_0},\rad)\end{cases}$ s.t. $\tilde v\in H^1(\O,\C)$ and  $F(\tilde v,B(z_{i_0},\rad))=\mathcal{O}(1)$ where we assumed $d_{i_0}=0$.

Since $(v,A)$ is a quasi-minimizer of $\F$ we have $\F(v,A)\leq\F(\tilde v,A)+o(1)$. 

On the other hand, by direct calculations $\F(v,A)-\F(\tilde v,A)=F(v,B(z_{i_0},\rad))-F(\tilde v,B(z_{i_0},\rad))+o(1)$. Consequently $F(v,B(z_{i_0},\rad))=\mathcal{O}(1)$ which is in contradiction with $F(v,B(z_{i_0},\rad))\geq C_{1/2}|\ln\v|$ [given by Proposition \ref{Prop.EtaEllpProp}] for small $\v$.

We now prove \eqref{BorneTotaleSommeDeg}. From \eqref{BorneInfEnWeight} we have $\sum_{J_\mu}|d_i|\left[\pi (1-\mu)|\ln\v|-c_{\mu,b}\right]\leq\dfrac{\M_K|\ln\v|}{b^2}$. Since $\mu\in(0,1/2)$, the last estimate gives the result for $\v>0$ sufficiently small.
\end{proof}
\subsection{Lower bounds in perforated disks}\label{Sec.StrongEffectDilution}
The goal of this section is to get lower bounds for $\frac{1}{2}\int_{\dom}\alpha|\n v|^2$ where $\dom$ is a perforated disk s.t. $\dom\subset\O$ and $|v|\geq1/2$ in $\dom$.

The starting point of  the argument is an estimate on circles. Let $\tilde b\in(0,1)$, $\beta\in L^\infty((0,2\pi),[\tilde b,1])$. With Lemma D.7 in \cite{Publi4}, for  $\varphi\in H^1((0,2\pi),\R)$ s.t. $\varphi(2\pi)-\varphi(0)=2\pi$, we have the following lower bound:
\begin{equation}\label{EstimaBasiCercle}
\dfrac{1}{2}\int_0^{2\pi}\beta|\p_\theta\varphi|^2\geq\dfrac{2\pi^2}{\di\int_0^{2\pi}\dfrac{1}{\beta}}.
\end{equation}
In order to use \eqref{EstimaBasiCercle} we need to do a preliminary analysis.\\

For $\alpha=U_\v^2\in L^\infty(\O,[b^2,1])$, using Lemma E.1 in \cite{Publi4}, we have the existence of $C\geq1$ [independent of $\v$]  s.t. 
\begin{equation}\label{CqFondDilution}
\left\{\begin{array}{l}\text{For almost all $s\geq\delta/3$, letting $\mathscr{C}_s$ be a circle with radius $s$,}\\ \text{we have $\int_{\mathscr{C}_s\cap\O}{(1-\alpha)}\leq C\lambda s$.}\end{array}\right.
\end{equation}
From now on, in all this section, we consider a sequence $\v=\v_n\downarrow0$, $\lambda,\delta,\h$ and $((v_\v,A_\v))_\v\subset\H$ satisfying the hypotheses of Proposition \ref{Prop.ConstrEpsMauvDisk} [namely \eqref{HypGlobalSurQuasiMin}, \eqref{QuasiMinDef} and \eqref{MagneticIntenHyp}]. We drop the subscript $\v$ writing $(v,A)$ instead of $(v_\v,A_\v)$

Recall that $\eta_\O$ is defined in \eqref{DefEtaO} and consider
\begin{equation}\label{HypOxRr}
\text{$x_\v\in\O$ and $0<r=r_\v<R=R_\v<\eta_\O$ s.t. $\dist(x_\v,\p\O)>\eta_\O>0$.}
\end{equation}
We then denote $\Ring:=B(x_\v,R)\setminus\overline{B(x_\v,r)}\subset\O$.\\

 Assume $|v|\geq1/2$ in $\Ring$ and let $d:=\deg_{\Ring}(v)$. From the proof of Proposition \ref{Prop.ConstrEpsMauvDisk} [see \eqref{BorneValeurMesure} in Appendix \ref{SectAppenPreuveConstructionPetitDisque}], there exists $1/2<t_\v<1$, $t_\v=1+o(1)$ s.t. $t_\v\in{\rm Im}(|v|)\cap[1-2/|\ln\v|;1-1/|\ln\v|]$ and
\begin{equation}\label{DefTeps}
\left\{\begin{array}{c}\text{$V(t_\v):=\{|v|=t_\v\}$ is a finite union of Jordan curves included in  $\O$ and } \\
\text{ of simple curves whose endpoints are on $\p\O$ and  $\Haus^1[V(t_\v)]=o(1)$.}
\end{array}\right.
\end{equation}
and since $\Haus^2(\{|v|\leq t_\v\})=o(1)$ we then have
\begin{equation}\label{PropIntDefTeps}
\left\{\begin{array}{c}
\text{if $U$ is a connected component of $\{|v|\leq t_\v\}$ s.t. $\overline U\subset\O$ then there is $\Gamma$,}
\\\text{a connected component of $V(t_\v)$, which is a Jordan curve s.t. $U\subset{\rm int }(\Gamma)$.}
\end{array}\right.
\end{equation}
\begin{remark}\label{RmMontargis}
Since $\Haus^1[V(t_\v)]=o(1)$, for sufficiently small $\v$, if $\Gamma$ [resp. $U$] is a connected component of $V(t_\v)$ [resp. $\{|v|\leq t_\v\}$] which intersects $\Ring$ then $\Gamma$ is a Jordan curve [resp. $\p U$ is a union of connected components of $V(t_\v)$].
\end{remark}
We have the following lemma:
\begin{lem}\label{Lem.BorneLongueEnradian}
Assume $x_\v,r,R$ satisfy \eqref{HypOxRr} and we assume $|v|\geq1/2$ in $\Ring$. Then, for $s\in(r,R)$, letting
\[
K_{s}:=\{\theta\in[0,2\pi)\,|\,|v(x_\v+s\e^{\imath\theta})|\leq t_\v\}
\]
we have
\begin{equation}\nonumber
\Haus^1(K_s)\leq \pi\dfrac{\Haus^1[V(t_\v)]}{s}.
\end{equation}
\end{lem}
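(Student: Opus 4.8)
The plan is to slice the bad set $K_s$ according to the connected components of $\{|v|\le t_\v\}$ and to charge each such component to a distinct Jordan curve sitting inside $V(t_\v)$, bounding the angular length it contributes by an isoperimetric-type lower bound on the length of that curve. First I would record that, by \eqref{BoundGrpaLin}, $|v|$ is Lipschitz, hence continuous, so $\{|v|\le t_\v\}$ is closed and $K_s$ is relatively closed in the circle $\mathscr{C}_s:=\p B(x_\v,s)$; moreover, by \eqref{DefTeps}, $\{|v|\le t_\v\}$ has only finitely many connected components. Since $s\in(r,R)$, the whole circle $\mathscr{C}_s$ lies in $\Ring$, hence every component $U$ of $\{|v|\le t_\v\}$ meeting $\mathscr{C}_s$ meets $\Ring$, and $\Haus^1(K_s)=\sum_U\delta_U$ where $\delta_U:=\Haus^1\{\theta\in[0,2\pi)\,|\,x_\v+s\e^{\imath\theta}\in U\}$ and the sum runs over the (finitely many) components meeting $\mathscr{C}_s$.

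Next I would attach to each such $U$ a Jordan curve $\Gamma(U)$. Using \eqref{DefTeps}--\eqref{PropIntDefTeps}, Remark \ref{RmMontargis} and $\dist(\Ring,\p\O)>0$ (for $\v$ small, $\Haus^1[V(t_\v)]=o(1)$ forbids a level-set component meeting $\Ring$ from reaching $\p\O$), such a $U$ satisfies $\overline U\subset\O$, so by \eqref{PropIntDefTeps} it is enclosed by a Jordan curve which is a connected component of $V(t_\v)$; I take $\Gamma(U)$ to be the component of $V(t_\v)$ containing the boundary of the unbounded component of $\R^2\setminus U$, so that $U\subset\overline{{\rm int}(\Gamma(U))}$. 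The key structural point is that $U\mapsto\Gamma(U)$ is injective: just on the inner side of $\Gamma(U)$ one already lies in $\overline U$, so a single component of $V(t_\v)$ cannot serve two distinct $U$'s; hence the $\Gamma(U)$ are pairwise disjoint subsets of $V(t_\v)$. Now $\mathscr{C}_s\cap U$ is a union of arcs of $\mathscr{C}_s$ of total angular length $\delta_U$, contained in the convex body $\overline{{\rm int}(\Gamma(U))}$; since those arcs lie on the boundary of their own convex hull, that hull has perimeter at least $s\,\delta_U$, and the length of a Jordan curve dominates the perimeter of its convex hull, so $\Haus^1(\Gamma(U))\ge s\,\delta_U$, i.e. $\delta_U\le \tfrac1s\Haus^1(\Gamma(U))\le\tfrac{\pi}{s}\Haus^1(\Gamma(U))$. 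Summing over $U$ and using disjointness of the $\Gamma(U)$ inside $V(t_\v)$,
\[
\Haus^1(K_s)=\sum_U\delta_U\le\frac{\pi}{s}\sum_U\Haus^1(\Gamma(U))\le\frac{\pi}{s}\Haus^1[V(t_\v)],
\]
which is the asserted inequality (in fact with constant $1$ in place of $\pi$, leaving comfortable room).

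I expect the main obstacle to be the purely topological bookkeeping of the second paragraph: verifying that a component $U$ of $\{|v|\le t_\v\}$ meeting $\Ring$ has $\overline U\subset\O$, that $\Gamma(U)$ is unambiguously a single Jordan curve enclosing $U$, and that $U\mapsto\Gamma(U)$ is injective. All three reduce to the smallness $\Haus^1[V(t_\v)]=o(1)$ together with $\dist(\Ring,\p\O)>0$ and the level-set structure already recorded in \eqref{DefTeps}, \eqref{PropIntDefTeps} and Remark \ref{RmMontargis}, so in the write-up they can be invoked with little extra work. The remaining ingredient — the elementary fact that a union of arcs of $\mathscr{C}_s$ of total angular length $\delta$ has convex hull of perimeter $\ge s\delta$ (minimized when the arcs coalesce into one, where the perimeter is $s\delta+2s\sin(\delta/2)$) — is immediate and I would only sketch it.
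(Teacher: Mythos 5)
Your proof is correct and follows the same overall decomposition as the paper's: slice the bad angular set $K_s$ by connected components of $\{|v|\le t_\v\}$, charge each such component $U$ to a distinct Jordan curve sitting inside $V(t_\v)$, and bound the arc that $U$ carves out of $\p B(x_\v,s)$ by the length of that curve. The difference is only in the charging inequality. The paper's proof encloses $\mathcal{V}_i=\overline{{\rm int}(\Gamma_i)}$ in a disk of radius ${\rm diam}(\mathcal{V}_i)$, bounds the arc cut out of $\p B(x_\v,s)$ by $2\pi\,{\rm diam}(\mathcal{V}_i)$, and then uses $2\,{\rm diam}(\mathcal{V}_i)\le\Haus^1(\Gamma_i)$, which is where the factor $\pi$ comes from. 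You instead pass to convex hulls: since the arcs in $U\cap\p B(x_\v,s)$ lie on the circle, they sit on the boundary of their own convex hull, which is nested inside $\text{conv}(\Gamma(U))$, and by Cauchy--Crofton $\text{per}\bigl(\text{conv}(\Gamma(U))\bigr)\le\Haus^1(\Gamma(U))$ because $\Gamma(U)$ is a \emph{closed} curve. This gives the sharper constant $1$ instead of $\pi$, which of course implies the stated bound. Two cosmetic points on the write-up: $\overline{{\rm int}(\Gamma(U))}$ is not generally convex, so you should say ``contained in $\overline{{\rm int}(\Gamma(U))}$, hence in $\text{conv}(\Gamma(U))$'' rather than calling it a convex body; and the closedness of $\Gamma(U)$ must be invoked explicitly when you cite ``length of a Jordan curve dominates the perimeter of its convex hull,'' since for an open arc the correct inequality carries an extra factor $2$. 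The injectivity of $U\mapsto\Gamma(U)$ is most cleanly seen from $\Gamma(U)\subset\p U\subset U$ together with the fact that distinct components of the closed set $\{|v|\le t_\v\}$ are disjoint.
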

\begin{proof}
Let $s\in(r,R)$ be s.t. $\Haus^1(K_s)>0$ and denote $\K_s:=\{x_\v+s\e^{\imath\theta}\,|\,\theta\in K_s\}\subset\p B(x_\v,s)$. Then $\Haus^1(\K_s)=s\Haus^1(K_s)$. 

On the one hand, letting $\mathcal{V}_\Ring(t_\v)$ be the union of the connected components of $\{|v|\leq t_\v\}$ which intersect $\Ring$, we have $\K_s=\mathcal{V}_\Ring(t_\v)\cap\p B(x_\v,s)$.

On the other hand, by Remark \ref{RmMontargis}, $\p\mathcal{V}_\Ring(t_\v)$ is a union of connected components of $V(t_\v)$ which are Jordan curves. Among these Jordan curves, we may select the maximal curves w.r.t. the inclusion of their interior. We denote these maximal curves by  $\Gamma_1,...,\Gamma_N$ and we let for $i\in\{1,...,N\}$, $\mathcal{V}_i:=\overline{{\rm int}(\Gamma_i)}$. We then obtain $\mathcal{V}_\Ring(t_\v)\subset\cup_{i=1}^N\mathcal{V}_i$ and thus  $\K_s\subset\cup_{i=1}^N[\p B(x_\v,s)\cap\mathcal{V}_i]$.
 
For $i\in\{1,...,N\}$, we fix $x_i\in\mathcal{V}_i$ and we define the disk $B_i:=\overline{B(x_i,{\rm diam}(\mathcal{V}_i))}$. It is clear that $\mathcal{V}_i\subset B_i$ . Consequently
\[
\Haus^1[\p B(x_\v,s)\cap\mathcal{V}_i]\leq\Haus^1[\p B(x_\v,s)\cap B_i]\leq2\pi\,{\rm diam}(\mathcal{V}_i).
\]
We claim that $2{\rm diam}(\mathcal{V}_i)\leq\Haus^1(\Gamma_i)$. Since the curves $\Gamma_i$ are  pairwise disjoint, we have $\sum_{i=1}^N\Haus^1(\Gamma_i)\leq\Haus^1[V(t_\v)]$.

We may now conclude: 
\[
s\Haus^1(K_s)=\Haus^1(\K_s)\leq\sum_{i=1}^N\Haus^1[\p B(x_\v,s)\cap\mathcal{V}_i]\leq\pi\sum_{i=1}^N2{\rm diam}(\mathcal{V}_i)\leq\pi\Haus^1[V(t_\v)].
\]

\end{proof}
The next proposition is one of the major use of the dilution [$\lambda\to0$].
\begin{prop}\label{Prop.ComparaisonAnneau}
Let  $x_\v,r,R$ satisfying \eqref{HypOxRr} and assume  $|v|\geq1/2$ in $\Ring$. We write $d:=\deg_{\Ring}(v)$ and, in $\Ring$, we let $w:=v/|v|\,\&\,\rho:=|v|$.
\begin{enumerate}
\item\label{Prop.ComparaisonAnneau1} If $r\geq\delta/3$ and if $\Haus^1[V(t_\v)]/r+(1-t^2_\v)+\lambda=o[\ln(R/r)]$ then
\[
\dfrac{1}{2}\int_\Ring\alpha|\n v|^2\geq\dfrac{1}{2}\int_\Ring\alpha\rho^2|\n w|^2\geq \pi d^2\left[\ln\left(\dfrac{R}{r}\right)-o(1)\right].
\]
\item\label{Prop.ComparaisonAnneau2} If $r=o(1)$ and if $\Haus^1[V(t_\v)]/r+(1-t^2_\v)=o[\ln(R/r)]$ then
 \[
\dfrac{1}{2}\int_\Ring|\n v|^2\geq\dfrac{1}{2}\int_\Ring\rho^2|\n w|^2\geq \pi d^2\left[\ln\left(\dfrac{R}{r}\right)-o(1)\right].
\]
\end{enumerate}
\end{prop}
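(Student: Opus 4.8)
The plan is to fix the singular sphere and argue circle by circle. First, the left inequality in each item is pointwise: on $\Ring$ one has $|v|\geq 1/2>0$, so writing $v=\rho w$ with $w=v/|v|\in H^1(\Ring,\S^1)$ one gets a.e. $|\n v|^2=|\n\rho|^2+\rho^2|\n w|^2\geq\rho^2|\n w|^2$; multiplying by the nonnegative weight ($\alpha$ in item \ref{Prop.ComparaisonAnneau1}, the constant $1$ in item \ref{Prop.ComparaisonAnneau2}) and integrating gives it. If $d=0$ the claimed lower bound is $0$ and nothing more is needed, so from now on $d\neq0$. I would then pass to polar coordinates centred at $x_\v$: for a.e.\ $s\in(r,R)$, $w$ restricts to a map in $H^1(\mathscr{C}_s,\S^1)$ of degree $d$ on the circle $\mathscr{C}_s=\p B(x_\v,s)$ (all these circles are homotopic in $\Ring$, where $|v|\geq 1/2$), and fixing a lift $\varphi=\varphi_s\in H^1((0,2\pi),\R)$ with $\varphi(2\pi)-\varphi(0)=2\pi d$ one may apply \eqref{EstimaBasiCercle} to $\varphi/d$ with weight $\beta=\alpha\rho^2\geq b^2/4$. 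Using $|\partial_\tau w|=s^{-1}|\partial_\theta\varphi|$ and $d\Haus^1=s\,d\theta$ this gives, for a.e.\ $s$,
\[
\frac12\int_{\mathscr{C}_s}\alpha\rho^2|\partial_\tau w|^2\,d\Haus^1\;\geq\;\frac{2\pi^2 d^2}{s\,g(s)},\qquad g(s):=\int_0^{2\pi}\frac{d\theta}{(\alpha\rho^2)|_{\mathscr{C}_s}} .
\]

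The crux is the estimate $g(s)\leq 2\pi+\kappa(s)$ with a controllably small $\kappa(s)$. I would split $[0,2\pi)=K_s\cup\bigl([0,2\pi)\setminus K_s\bigr)$, with $K_s=\{\theta\,|\,|v(x_\v+s{\rm e}^{\imath\theta})|\leq t_\v\}$ the angular ``bad set'' ($t_\v$ and $V(t_\v)$ as in \eqref{DefTeps}). On the complement $\rho^2\geq t_\v^2$ and $\alpha^{-1}\leq 1+(1-\alpha)/b^2$, so that part of $g(s)$ is $\leq t_\v^{-2}\bigl(2\pi+b^{-2}\int_0^{2\pi}(1-\alpha)\,d\theta\bigr)$; since $\mathscr{C}_s\subset\Ring\subset\O$ and (in item \ref{Prop.ComparaisonAnneau1}) $s>r\geq\delta/3$, the dilution estimate \eqref{CqFondDilution} bounds $\int_0^{2\pi}(1-\alpha)\,d\theta=s^{-1}\int_{\mathscr{C}_s}(1-\alpha)\,d\Haus^1$ by $C\lambda$. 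On $K_s$ I would only use $\alpha\rho^2\geq b^2/4$ together with Lemma \ref{Lem.BorneLongueEnradian}, which gives $\Haus^1(K_s)\leq\pi\Haus^1[V(t_\v)]/s$. Summing, $g(s)\leq 2\pi+C\bigl[(1-t_\v^2)+\lambda+\Haus^1[V(t_\v)]/s\bigr]=:2\pi+\kappa(s)$ with $C=C(b)$.

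Finally I would integrate: from $\tfrac{1}{s\,g(s)}\geq\tfrac{1}{2\pi s}\bigl(1-\tfrac{\kappa(s)}{2\pi}\bigr)$,
\[
\frac12\int_\Ring\alpha\rho^2|\n w|^2\;\geq\;\int_r^R\frac{2\pi^2 d^2}{s\,g(s)}\,ds\;\geq\;\pi d^2\Bigl[\ln\tfrac{R}{r}-\tfrac{1}{2\pi}\int_r^R\tfrac{\kappa(s)}{s}\,ds\Bigr],
\]
and $\int_r^R\kappa(s)s^{-1}\,ds\leq C[(1-t_\v^2)+\lambda]\ln\tfrac{R}{r}+C\,\Haus^1[V(t_\v)]\,r^{-1}$. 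Since $t_\v\to1$ and $\lambda\to0$, while $\Haus^1[V(t_\v)]/r=o[\ln(R/r)]$ by hypothesis, the error is $o[\ln(R/r)]$, and one obtains $\frac12\int_\Ring\alpha\rho^2|\n w|^2\geq\pi d^2\bigl[\ln(R/r)-o(1)\bigr]$ with the $o(1)$ understood as $o[\ln(R/r)]$; this is item \ref{Prop.ComparaisonAnneau1}. Item \ref{Prop.ComparaisonAnneau2} is the same computation with $\alpha\equiv1$: the step using \eqref{CqFondDilution} — hence the requirement $r\geq\delta/3$ and the $\lambda$ term in $\kappa$ — simply drops out.

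I expect the only genuine obstacle to be the bound on $g(s)$: one must handle the angular bad set $K_s$ carefully so that its contribution $\Haus^1[V(t_\v)]/s$ costs, after integration against $ds/s$ on $(r,R)$, only $\Haus^1[V(t_\v)]/r$ — which is exactly why the hypotheses are phrased with a division by $r$ — and, in item \ref{Prop.ComparaisonAnneau1}, keep the weight defect $\int_0^{2\pi}(1-\alpha)\,d\theta$ negligible via the dilution estimate \eqref{CqFondDilution}. Everything else (the pointwise splitting of $|\n v|^2$, the lift on circles, and the Cauchy–Schwarz type bound \eqref{EstimaBasiCercle}) is routine.
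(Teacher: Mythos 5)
Your proof is correct and follows essentially the same route as the paper's: both reduce to circles $\p B(x_\v,s)$, apply \eqref{EstimaBasiCercle} with weight $\beta=\alpha\rho^2\geq b^2/4$, control the circle defect $\int_0^{2\pi}(\alpha\rho^2)^{-1}-2\pi$ via Lemma \ref{Lem.BorneLongueEnradian} (through $\Haus^1(K_s)$) together with the dilution estimate \eqref{CqFondDilution}, and then integrate $ds/s$ over $(r,R)$. The only superficial difference is that the paper first writes $\int(1-\alpha\rho^2)/(\alpha\rho^2)\leq\frac{4}{b^2}\bigl(\int(1-\rho^2)+\int(1-\alpha)\bigr)$ and then splits $[0,2\pi)$ into $K_s$ and its complement to bound $\int(1-\rho^2)$, whereas you split the angular integral $\int_0^{2\pi}(\alpha\rho^2)^{-1}$ directly into $K_s$ and its complement; the invoked lemmas and the final accounting are identical.
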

\begin{proof}We prove the first assertion. We claim that, up to replace $v$ with $\underline{v}$, we may assume $|v|\leq1$ in $\O$. Moreover, if $d=0$ then there is nothing to prove. We then assume  $d\neq0$.

We write $v=\rho\e^{\imath d\varphi}$ where $\varphi$  is locally defined and its gradient is globally defined. Letting $x_\v+\R^+:=\{x_\v+s\,|\,s\geq0\}$, we may assume $\varphi\in H^1(\Ring\setminus(x_\v+\R^+),\R)$. For $s\in(r,R)$, we let  $\varphi_s(\theta)=\varphi(x_\v+s\e^{\imath\theta})$, $\rho_s(\theta)=|v(x_\v+s\e^{\imath\theta})|$ and $\alpha_s(\theta)=\alpha(x_\v+s\e^{\imath\theta})$. Then $\varphi_s\in H^1((0,2\pi),\R)$ is s.t. $\varphi_s(2\pi)-\varphi_s(0)=2\pi$ and we immediately get
\[
\dfrac{1}{2}\int_\Ring\alpha\rho^2|\n w|^2\geq \dfrac{d^2}{2}\int_r^R\dfrac{{\rm d}s}{s}\int_0^{2\pi}\alpha_s\rho_s^2|\p_\theta\varphi_s|^2{\rm d}\theta.
\]
From \eqref{EstimaBasiCercle} with $\beta:=\alpha_s\rho_s^2$ we get 
\[
\displaystyle
\dfrac{1}{2}\int_0^{2\pi}\alpha_s\rho_s^2|\p_\theta\varphi_s|^2\geq\dfrac{2\pi^2}{\di\int_0^{2\pi}\dfrac{1}{\alpha_s\rho_s^2}}.
\]
Since $b^2/4\leq \alpha_s\rho_s^2\leq 1$ we have
\[
0\leq\left(\int_0^{2\pi}\dfrac{1}{\alpha_s\rho_s^2}\right)-2\pi=\int_0^{2\pi}\dfrac{1-\alpha_s\rho_s^2}{\alpha_s\rho_s^2}
\leq\dfrac{4}{b^2}\left(\int_0^{2\pi}{1-\rho_s^2}+\int_0^{2\pi}{1-\alpha_s}\right).
\]

On the one hand, from Lemma \ref{Lem.BorneLongueEnradian} we have
\[
\int_0^{2\pi}{1-\rho_s^2}\leq\Haus^1(K_s)+\left[2\pi-\Haus^1(K_s)\right](1-t_\v^2)\leq\dfrac{\pi\Haus^1[V(t_\v)]}{s}+2\pi(1-t^2_\v).
\]
On the other hand, using \eqref{CqFondDilution}, there is $C\geq1$ [independent of $\v$] s.t.$\displaystyle
\int_0^{2\pi}{1-\alpha_s}\leq C\lambda$. Then
\[
\int_0^{2\pi}\dfrac{1}{\alpha_s\rho_s^2}\leq2\pi+\dfrac{4}{b^2}\left[\dfrac{\pi\Haus^1[V(t_\v)]}{s}+2\pi(1-t^2_\v)+C\lambda\right].
\]
We thus get
\begin{eqnarray*}
\dfrac{1}{2}\int_\Ring\alpha\rho^2|\n w|^2&\geq& {d^2}\int_r^R\dfrac{{\rm d}s}{s}\dfrac{2\pi^2}{2\pi-\dfrac{4}{b^2}\left[\pi\Haus^1[V(t_\v)]/s+2\pi(1-t^2_\v)+C\lambda\right]}
\\&=&\pi d^2\left[\ln\left(\dfrac{R}{r}\right)+o(1)\right].
\end{eqnarray*}
The second assertion is obtain exactly in the same way than the first one. Indeed, since $\alpha$ plays no role in the statement, we may use the same argumentation with $\lambda=0$ and $\delta>0$ an arbitrary small number.
\end{proof}
We now state the reformulation of Proposition \ref{Prop.ComparaisonAnneau} by replacing the annular $\Ring$ with a perforated disk.
\begin{cor}\label{Cor.BorneInfProcheIncl}
Let $D_0\in\N^*$ be independent of $\v$, $0<r=r_\v<R=R_\v$ be s.t. $r=o(R)$, $N=N_\v\in\N^*$ be s.t. $ N\leq D_0$ and $z_1=z_1^\v,...,z_N=z_N^\v$ be s.t. $|z_i-z_j|\geq8 r$ for $i\neq j$.

Let $y=y_\v\in\O$ and assume $z_1,...,z_N\in B(y,R)\subset B(y,4R)\subset B(y,\eta_\O)\subset\O$. We let $\dom:=B(y,2R)\setminus\cup_{i=1}^N\overline{B(z_i,r)}$.

Assume $\rho=|v|\geq1/2$ in $\dom$. For $i\in\{1,...,N\}$, we let $d_i:=\deg_{\p B(z_i,r)}(v)$.  We also assume $d_i>0$ for all $i\in\{1,...,N\}$ and $\sum_{i=1}^Nd_i\leq D_0$. Write $v=\rho w$ in $\dom$.

Then there exists $C_0>0$ depending only on $D_0$ s.t. :
\begin{enumerate}
\item\label{Cor.BorneInfProcheIncl1} If $r\geq \delta/3$  and  $\Haus^1[V(t_\v)]/r+(1-t^2_\v)+\lambda=o[\ln(R/r)]$ then,  for sufficiently small $\v$, we have
\[
\dfrac{1}{2}\int_\dom\alpha|\n v|^2\geq \dfrac{1}{2}\int_\dom\alpha\rho^2|\n w|^2\geq\pi\sum_{i=1}^N d_i^2\ln(R/r)-C_0.
\]
\item\label{Cor.BorneInfProcheIncl2}  If  $\Haus^1[V(t_\v)]/r+(1-t^2_\v)=o[\ln(R/r)]$ then,  for sufficiently small $\v$, we have
\[
\dfrac{1}{2}\int_\dom|\n v|^2\geq\dfrac{1}{2}\int_\dom\rho^2|\n w|^2\geq\pi\sum_{i=1}^N d_i^2\ln(R/r)-C_0.
\]
\end{enumerate}
\end{cor}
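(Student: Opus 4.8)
The plan is to deduce the corollary from Proposition \ref{Prop.ComparaisonAnneau} by a standard localization argument: bound the energy on the perforated disk from below by the sum of the energies on well-chosen disjoint annuli, each surrounding a single singularity and each thin enough that the weight $\alpha$ and the modulus $\rho$ are controlled on its circles, then close up with one large annulus that collects the total degree. First I would fix the radius $r_0 := \tfrac14\min_{i\neq j}|z_i-z_j| \geq 2r$, so that the disks $B(z_i,r_0)$ are pairwise disjoint and contained in $B(y,2R)$ (for $\v$ small, since $r=o(R)$ and the $z_i$ lie in $B(y,R)$). On each annulus $\Ring_i := B(z_i,r_0)\setminus\overline{B(z_i,r)}$ we have $\rho\geq 1/2$, so $\deg_{\Ring_i}(v) = \deg_{\p B(z_i,r)}(v) = d_i$, and Proposition \ref{Prop.ComparaisonAnneau} (case \ref{Prop.ComparaisonAnneau1} when $r\geq\delta/3$, case \ref{Prop.ComparaisonAnneau2} otherwise; in both cases the smallness hypothesis on $\Haus^1[V(t_\v)]/r + (1-t_\v^2)$ — and $+\lambda$ in the weighted case — is exactly what is assumed, noting $\ln(r_0/r)\to\infty$) gives
\[
\frac12\int_{\Ring_i}\alpha\rho^2|\n w|^2 \geq \pi d_i^2\left[\ln\frac{r_0}{r}-o(1)\right].
\]

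Next I would treat the far-field. Set $R_1 := \max_i|z_i-y| + r_0 \leq R + r_0 < 2R$ for small $\v$, so that all the disks $\overline{B(z_i,r_0)}$ are contained in $\overline{B(y,R_1)}$, and consider the annulus $\Ring_\infty := B(y,2R)\setminus\overline{B(y,R_1)}$. On $\Ring_\infty$ we have $\rho\geq1/2$ and $\deg_{\Ring_\infty}(v) = \sum_i d_i =: D$ (the degree around $\p B(y,2R)$ equals the sum of the inner degrees since $\rho\geq1/2$ on the whole perforated disk). Here I must be slightly careful: $\Ring_\infty$ is a genuine annulus of inner-to-outer-radius ratio $2R/R_1$, which is bounded but need not tend to infinity, so Proposition \ref{Prop.ComparaisonAnneau} only yields a lower bound of the form $\pi D^2[\ln(2R/R_1) - o(1)] \geq -C$ with $C$ depending only on $D_0$ — that is, a harmless $O(1)$ contribution, which is all that is needed. (Alternatively one simply discards $\Ring_\infty$ and absorbs its contribution into $C_0$; either way it costs only a constant.)

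Finally I would add up: since $\Ring_1,\dots,\Ring_N,\Ring_\infty$ are pairwise disjoint subsets of $\dom$ and the integrand $\alpha\rho^2|\n w|^2$ is nonnegative, and since $\ln(r_0/r) = \ln(R/r) - \ln(R/r_0) = \ln(R/r) + O(1)$ because $R/r_0$ is bounded above and below (it is comparable to $R/\min_{i\neq j}|z_i-z_j|$, which lies between $2$ and, say, $R/(8r)$... actually I should note $r_0 \leq R/2$ and $r_0\geq 2r$, and more to the point $\ln(r_0/r)\geq\ln 2$ while $\ln(R/r_0)\geq 0$ is $\le \ln(R/r)$; the cleaner bookkeeping is $\sum_i\pi d_i^2\ln(r_0/r) = \sum_i\pi d_i^2\ln(R/r) - \sum_i\pi d_i^2\ln(R/r_0)$ and $\sum_i d_i^2 \leq D_0^2$ while $\ln(R/r_0)$ need not be bounded — so instead I keep the annuli at inner radius $r$ and outer radius $\tfrac12\min(r_0, \text{stuff})$ and note only $\ln(r_0/r)\geq \tfrac12\ln(R/r)$ is false in general). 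The robust route, and the one I would actually write, is: replace $r_0$ in the $i$-th annulus by $\tilde r := \min\{r_0,\, \tfrac12 R\}$ which is $\ge 2r$ and satisfies $\ln(\tilde r/r) \ge \ln(R/r) - \ln(R/\tilde r)$ with $R/\tilde r \le \max\{R/r_0, 2\} \le \max\{8r_0/r\cdot(r/r_0)\cdot\ldots\}$ — hmm. Cleanest: bound $\frac12\int_\dom\alpha|\n v|^2 \geq \frac12\int_\dom\alpha\rho^2|\n w|^2 \geq \sum_i \frac12\int_{\Ring_i}\alpha\rho^2|\n w|^2 \geq \sum_i \pi d_i^2\ln(\tilde r/r) - o(1)$ with $\tilde r$ a fixed small multiple of $R$ independent of the configuration (e.g. use that $\min_{i\ne j}|z_i-z_j|\ge 8r$ gives $r_0 \ge 2r$, and separately pick the outer radius of each $\Ring_i$ to be $c R$ for a small universal $c>0$ after first passing to a sub-collection of at most $D_0$ well-separated points via a covering argument if the $z_i$ cluster). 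The upshot is $\frac12\int_\dom\alpha\rho^2|\n w|^2\geq \pi\sum_i d_i^2\ln(R/r) - C_0$ with $C_0 = C_0(D_0)$, and the unweighted statement \ref{Cor.BorneInfProcheIncl2} follows identically using case \ref{Prop.ComparaisonAnneau2} of the proposition with $\alpha\equiv 1$. The main obstacle is the bookkeeping of radii when the points $z_i$ are allowed to be much closer to each other than to $y$ or to $\p B(y,R)$: one must extract enough room for $N$ disjoint sub-annuli of comparable logarithmic size $\ln(R/r) + O(1)$, which is handled by choosing each annulus' outer radius as a small fixed fraction of $R$ and, if necessary, grouping clustered points — but since all degrees are positive and the energy is superadditive over disjoint regions, no cancellation occurs and only the $O(1)$ error needs care.
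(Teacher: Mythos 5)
Your overall skeleton is the right one — use Proposition \ref{Prop.ComparaisonAnneau} on a family of disjoint annuli inside $\dom$ — and you correctly identify the two crucial facts that make the bookkeeping work: the degrees $d_i$ are all positive (so aggregate degrees satisfy $D^2\geq\sum d_j^2$) and the Dirichlet energy is superadditive over disjoint regions. But there is a genuine gap in the execution: your two-scale decomposition (annuli from $r$ to $r_0:=\tfrac14\min_{i\neq j}|z_i-z_j|$ around each $z_i$, plus one far-field annulus around $y$) does not produce the bound. The far-field annulus $B(y,2R)\setminus \overline{B(y,R_1)}$ with $R_1\geq\max_i|z_i-y|+r_0$ can indeed be arbitrarily thin — you rightly dismiss it as $O(1)$ — but then nothing recovers the logarithmic range between $r_0$ and $R$. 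If, say, $z_1$ sits near $y$ while a cluster $\{z_2,z_3\}$ with $|z_2-z_3|=\sqrt{rR}$ sits near $\partial B(y,R)$, your construction yields only $\pi\sum d_i^2\ln(r_0/r)\approx \tfrac{\pi}{2}\sum d_i^2\ln(R/r)$ plus $O(1)$, which misses the target by a factor of $2$ in the leading term. The sub-collection/covering idea you gesture at (``pass to at most $D_0$ well-separated points via a covering argument'') is also insufficient as stated, because a single covering at a single scale discards the energy captured in the intermediate scales between $r_0$ and the covering radius.

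The missing ingredient is a genuinely multi-scale decomposition, which is what the paper constructs. After rescaling $\dom$ by $\Phi(x)=(x-y)/R$, the paper runs an iterative separation process based on Lemma \ref{Lem.Separation}: starting from $\eta_0=\hat r$ and the full index set, at each step $k$ one selects a subset $J_k\subsetneq J_{k-1}$ of representatives that are well-separated at scale $\eta_k$, with $\eta_k$ comparable (up to a factor $9^{D_0}$) to a quarter of the minimal gap among the previous representatives. This produces a family of pairwise-disjoint annuli $\Ring_{i,k}=B(\hat z_i,\eta'_{k+1})\setminus\overline{B(\hat z_i,\eta_k)}$ whose log-widths telescope from $\ln\hat r$ up to $\ln\eta_0$ with $\eta_0\geq 10^{-2D_0}$ bounded below by a constant depending only on $D_0$. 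At each scale the aggregate degree $\deg_{\Ring_{i,k}}(\hat v)$ is the sum of the positive $d_j$ over the cluster, so its square dominates $\sum d_j^2$; summing over scales gives $\pi\sum d_j^2|\ln\hat r|-C(D_0)$, which after undoing the rescaling is $\pi\sum d_j^2\ln(R/r)-C_0$. The number of iterations is at most $D_0-1$, which is what keeps the accumulated error a constant depending only on $D_0$. You would need to write out this iteration explicitly — the heuristic ``group clustered points'' is exactly right, but only the hierarchical version closes the argument.
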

\begin{proof}We claim that, up to replace $v$ with $\underline{v}$, we may assume $|v|\leq1$ in $\O$.

We first proceed to a scaling with the conformal mapping:
\[
\begin{array}{cccc}
\Phi:&B(y,4R)&\to&B(0,4)\\&x&\mapsto&\dfrac{x-y}{R}
\end{array}.
\]
We then let $\hat z_i:=\Phi(z_i)$, $\hat r:= r/R$, $\hat\dom:=\Phi[\dom]=B(0,2)\setminus\cup_{i=1}^N\overline{B(\hat z_i,\hat r)}$, $\hat\alpha:=\alpha\circ\Phi^{-1}$ and $\hat v:=v\circ\Phi^{-1}$.

If $N=1$ or $N\geq2$ and $|\hat z_i-\hat z_j|\geq 4\times10^{-2D_0}$ for $i\neq j$ then, letting $\tilde\O:=B(0,4)$, $\eta_{\tilde\O}=10^{-1}$, we may apply Proposition \ref{Prop.ComparaisonAnneau}.\ref{Prop.ComparaisonAnneau1}
\begin{eqnarray*}
\dfrac{1}{2}\int_\dom\alpha|\n v|^2=\dfrac{1}{2}\int_{\hat\dom}\hat\alpha|\n \hat v|^2&\geq&\sum_{i=1}^N\dfrac{1}{2}\int_{B(\hat z_i,2\times10^{-2D_0})\setminus\overline{B(\hat z_i,\hat r)}}\hat\alpha|\n\hat v|^2
\\&\geq&\pi\sum_{i=1}^N d_i^2\left(|\ln( R/ r)|-|\ln(2\times10^{-2D_0})|\right)+o(1).
\end{eqnarray*}
This estimate is the desired result with  $C_0=\pi D_0^2|\ln(2\times10^{-2D_0})|+1$.

If we are not in the previous case, {\it i.e.} $N\geq 2$ and there exists $i\neq j$ s.t. $|\hat z_i-\hat z_j|< 4\times10^{-2D_0}$, then we apply the separation process presented  Appendix C [Section C.3.1] in \cite{Publi4} to the domain $\hat\dom$ with $\eta_{\rm stop}:=10^{-2D_0}$.

The key ingredient in the separation process is a variant of Theorem IV.1 in \cite{BBH} [stated with $\num=9$, the general case $\num\in\N\setminus\{0,1\}$ is left to the reader]:
\begin{lem}\label{Lem.Separation}
Let $N\geq2$, $\num\in\N\setminus\{0,1\}$, $x_1,...,x_N\in\R^2$ and $\eta>0$. There are $\kappa\in\{\num^0,...,\num^{N-1}\}$ and $\emptyset\neq J\subset\{1,...,N\}$ s.t.
\[
\cup_{i=1}^N B(x_i,\eta)\subset\cup_{i\in J}B(x_i,\kappa\eta)\text{ and }|x_i-x_j|\geq(\num-1)\kappa\eta\text{ for }i,j\in J,\,i\neq j.
\]
\end{lem}
The separation process is an iterative selection of points in $\{\hat z_1,...,\hat z_N\}$ associated to the construction of a good radius.

We initialize the process by letting  $\eta_0:=\hat r$, $M_0:=N$ and $J_0=\{1,...,M_0\}$.

For $k\geq1$ [where $k$ is the index in the iterative process] we construct a set $\emptyset\neq J_k\subsetneq J_{k-1}$, $M_k:={\rm Card}(J_k)$ and 3 numbers
\begin{center}
$\kappa_k\in\{9^1,...,9^{M_{k-1}-1}\}$, $\eta_k':=\dfrac{1}{4}\di\min_{\substack{i,j\in J_{k-1}\\i\neq j}}|\hat z_i-\hat z_j|$ and $\eta_k:=2\kappa_k\eta_k'$.
\end{center}
These objects are obtained with Lemma \ref{Lem.Separation} with $P=9$, $N=M_{k-1}={\rm Card}(J_{k-1})$, $\{x_1,...,x_N\}=\{z_i\,|\,i\in J_{k-1}\}$, $J=J_k$, $\eta=\eta_k$, $\kappa=\kappa_k$
The process stops at the end of Step  $K_0\geq1$ if $M_{K_0}=1$ or $M_{K_0}\geq 2$ and $\di\min_{\substack{i,j\in J_{K_0}\\i\neq j}}|\hat z_i-\hat z_j|>4\eta_{\rm stop}$.

By construction, we have for $1\leq k\leq K_0$, $\emptyset\neq J_k\subsetneq J_{k-1}$ and $\eta_{k-1}\leq\eta_k'<\eta_k$. In particular, since ${\rm Card}( J_0)\leq D_0$, we get $K_0\leq D_0-1$.

By definition, for $k\in\{1,...,K_0\}$ we have $2\cdot 9\eta_k'\leq\eta_k\leq9^{D_0}\eta_k'$. We let
\[
\eta_0:=\begin{cases}9^{D_0}\cdot\eta_{\rm stop}&\text{if }M_{K_0}=1\\\min\{9^{D_0}\cdot\eta_{\rm stop},\dfrac{1}{4}\di\min_{\substack{i,j\in J_{K_0}\\i\neq j}}|\hat z_i-\hat z_j|\}&\text{if }M_{K_0}\geq2\end{cases}
\]
 and then $\eta_0\geq\eta_{\rm stop}=10^{-2D_0}$. For $k\in\{0,...,{K_0}-1\}$ and $i\in J_k$ we denote $\Ring_{i,k}:=B(\hat z_i,\eta_{k+1}')\setminus\overline{B(\hat z_i,\eta_{k})}$, and, for $i\in J_{K_0}$, $\Ring_i:=B(\hat z_i,\eta_0)\setminus\overline{B(\hat z_i,\eta_{K_0})}$. By construction, the previous rings are pairwise disjoint. From Proposition \ref{Prop.ComparaisonAnneau}.\ref{Prop.ComparaisonAnneau1} we have for $k\in\{0,...,{K_0}-1\}$ and $i\in J_k$ :
\begin{eqnarray*}
\dfrac{1}{2}\int_{\Ring_{i,k}}\hat\alpha|\n\hat v|^2&\geq&\pi \deg_{\Ring_{i,k}}(\hat v)^2\left[\ln(\eta_{k+1}/\eta_{k})-\ln(9^{D_0})\right]-o(1)
\\&\geq&\pi \sum_{\hat z_j \in B(\hat z_i,\eta_{k+1}')}d_j^2\ln(\eta_{k+1}/\eta_{k})-\pi D_0^2\ln(9^{D_0})-o(1).
\end{eqnarray*}
And for $i\in J_{K_0}$:
\begin{eqnarray*}
\dfrac{1}{2}\int_{\Ring_{i}}\hat\alpha|\n\hat v|^2&\geq&\pi \deg_{\Ring_{i}}(\hat v)^2\ln(\eta_0/\eta_{K_0})-o(1)
\\&\geq&\pi \sum_{\hat z_j \in B(\hat z_i,\eta_0)}d_j^2\ln(\eta/\eta_{K_0})-o(1).
\end{eqnarray*}
By summing the previous lower bound we get the result. As for Proposition \ref{Prop.ComparaisonAnneau}, the second assertion is obtained in a similar way than the first assertion.
\end{proof}
\subsection{Lower bounds in a perforated domain}
In this section we state a lower bound for a weighted Dirichlet energy in the domain $\O$ perforated by small [but not too small] disks. The philosophy of this lower bound is that in the case which interest us we may ignore the weight if the perforations are not too small ; it is an effect of the dilution $\lambda\to0$.
\begin{prop}\label{VeryNiceCor}
Let $\beta\in(0,1)$, $(\tilde\alpha_n)_n\subset L^\infty(\O,[\beta^2,1])$ be s.t. 
\[
K_n:=\sqrt{\int_\O(1-\tilde\alpha_n)^2}\to0.
\]
Let $N\in\N^*$ and $\zd=\zd^{(n)}\subset\Ostar\times\Z^N$ be s.t. ${\bf d}$ is independant of $n$.  We denote $\tae:=\min_i\dist(z_i,\p\O)$.

Assume the existence of $\Rad>0$  s.t. $\Rad=o(1)$, \eqref{HypRayClass} holds and s.t. there is $C_1>0$ [independent of $n$] satisfying $\dfrac{\Rad|\ln\Rad|}{\tae}\leq C_1$. Write $\O_{\Rad}:=\O\setminus\cup\overline{B(z_i,\Rad)}$.

Let  $(u_n)_n\subset H^1(\O,\C)$  satisfying   $|u_n|\geq\dfrac{1}{2}$ in $\O_\Rad$ and $\deg_{\p B(z_i,\Rad)}(u_n)=d_i$ for all $i$.

Assume also
\[
L_n:=\sqrt{\int_{\O_\Rad}(1-|u_n|^2)^2}\to0.
\]

Then
\[
\int_{\O_\Rad}\tilde\alpha_n|\n u_n|^2\geq\int_{\O_\Rad}|\n \Pstar|^2-(4\beta^{-1}+3)\|\n\Pstar\|_{L^\infty(\O_\Rad)}\|\n\Pstar\|_{L^2(\O_\Rad)}\left(K_n+L_n\right)-\mathcal{O}(X)
\]
with $\Pstar$ is defined in Remark \ref{Remark.DefConjuHarmPhase} and $X$ is defined in \eqref{DefX}.
\end{prop}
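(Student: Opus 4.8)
The plan is to reduce the weighted Dirichlet integral over $\O_\Rad$ to the unweighted one, then compare the unweighted integral to the Dirichlet energy of the canonical phase $\Pstar$ using the harmonic-projection trick from \cite{BBH}. First I would write, in $\O_\Rad$, the polar decomposition $u_n=\rho_n w_n$ with $\rho_n=|u_n|\ge 1/2$ and $w_n=u_n/|u_n|\in H^1(\O_\Rad,\S^1)$, so that $|\n u_n|^2=|\n\rho_n|^2+\rho_n^2|\n w_n|^2\ge\rho_n^2|\n w_n|^2$. Hence
\[
\int_{\O_\Rad}\tilde\alpha_n|\n u_n|^2\ge\int_{\O_\Rad}\tilde\alpha_n\rho_n^2|\n w_n|^2
=\int_{\O_\Rad}|\n w_n|^2-\int_{\O_\Rad}(1-\tilde\alpha_n\rho_n^2)|\n w_n|^2,
\]
and I would control the error term by writing $1-\tilde\alpha_n\rho_n^2\le(1-\tilde\alpha_n)+(1-\rho_n^2)$ and by estimating $\int_{\O_\Rad}|\n w_n|^2$. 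The latter is the point where I invoke the earlier machinery: $w_n$ is an $\S^1$-valued map on $\O_\Rad$ with prescribed degrees $d_i$, so $\tfrac12\int_{\O_\Rad}|\n w_n|^2\ge I^{\rm deg}_\Rad\zd$, which by Proposition \ref{Prop.EnergieRenDef} and Corollary \ref{CorBorneGrossEneStar} is $\mathcal{O}(|\ln\Rad|)$ under the hypothesis $\Rad|\ln\Rad|/\tae\le C_1$; this is not quite enough on its own, so I would instead aim at the sharper comparison below and absorb $\int_{\O_\Rad}|\n w_n|^2$ against $\|\n\Pstar\|_{L^\infty(\O_\Rad)}\|\n\Pstar\|_{L^2(\O_\Rad)}$ via Cauchy--Schwarz after the projection step.

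The heart is the comparison $\int_{\O_\Rad}|\n w_n|^2\ge\int_{\O_\Rad}|\n\Pstar|^2-(\text{error})$. Following \cite{BBH}, I would use that $\n^\bot\Pstar=\wstar\wedge\n\wstar$ and that $w_n$ and $\wstar$ have the same degree $d_i$ around each $B(z_i,\Rad)$, so $w_n\wedge\n w_n-\n^\bot\Pstar$ is a curl-free $L^2$ field on $\O_\Rad$ whose circulation around each hole vanishes; writing it as $\n\psi_n$ for some $\psi_n\in H^1(\O_\Rad,\R)$ and expanding
\[
\int_{\O_\Rad}|\n w_n|^2=\int_{\O_\Rad}|w_n\wedge\n w_n|^2+\int_{\O_\Rad}|w_n\cdot\n w_n|^2
\ge\int_{\O_\Rad}|\n^\bot\Pstar+\n\psi_n|^2,
\]
and using that $\n^\bot\Pstar$ is (up to the boundary term handled by $\p_\nu\wstar=0$ on $\p\O$) $L^2$-orthogonal to gradients, one gets $\int_{\O_\Rad}|\n w_n|^2\ge\int_{\O_\Rad}|\n\Pstar|^2$ up to a controlled defect. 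Alternatively, and perhaps more cleanly, I would test: for any $S^1$-map $w_n$ one has $\int|\n w_n|^2\ge 2\int \n^\bot\Pstar\cdot(w_n\wedge\n w_n)-\int|\n\Pstar|^2=\int|\n\Pstar|^2+2\int\n^\bot\Pstar\cdot(w_n\wedge\n w_n-\n^\bot\Pstar)$, and the last integral vanishes by the matching-degree/orthogonality argument. This yields the main inequality with no loss; the weighted-to-unweighted passage then contributes exactly the term $(4\beta^{-1}+3)\|\n\Pstar\|_{L^\infty(\O_\Rad)}\|\n\Pstar\|_{L^2(\O_\Rad)}(K_n+L_n)$ after bounding $\int_{\O_\Rad}(1-\tilde\alpha_n\rho_n^2)|\n w_n|^2$ by $\|\n w_n\|_{L^2}$ (itself comparable to $\|\n\Pstar\|_{L^2(\O_\Rad)}$ up to the $\mathcal{O}(X)$ term from Proposition \ref{MinimalMapHomo}) times $\|1-\tilde\alpha_n\rho_n^2\|_{L^2}\le K_n+L_n$, with the numerical constant coming from the factor $4/b^2$-type bound (here $4\beta^{-1}$) on $1/(\tilde\alpha_n\rho_n^2)$ and from collecting the three error contributions.

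Finally I would gather the pieces: $\int_{\O_\Rad}\tilde\alpha_n|\n u_n|^2\ge\int_{\O_\Rad}|\n w_n|^2-(K_n+L_n)\cdot(\text{const})\cdot\|\n\Pstar\|_{L^\infty(\O_\Rad)}\|\n\Pstar\|_{L^2(\O_\Rad)}\ge\int_{\O_\Rad}|\n\Pstar|^2-(4\beta^{-1}+3)\|\n\Pstar\|_{L^\infty(\O_\Rad)}\|\n\Pstar\|_{L^2(\O_\Rad)}(K_n+L_n)-\mathcal{O}(X)$, where the $\mathcal{O}(X)$ collects the error between $\int_{\O_\Rad}|\n w_n|^2$ (resp. $\int_{\O_\Rad}|\n\wstar|^2$) and the reference quantities via Proposition \ref{MinimalMapHomo}, and where the hypothesis $\Rad|\ln\Rad|/\tae\le C_1$ is what makes $X$ a legitimate small error and $\|\n\Pstar\|_{L^\infty(\O_\Rad)}$, $\|\n\Pstar\|_{L^2(\O_\Rad)}$ the correct finite normalizing quantities (using \eqref{BorneGradWstar}). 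I expect the main obstacle to be the careful bookkeeping of the orthogonality/degree-matching step on the multiply connected domain $\O_\Rad$ — in particular justifying that $w_n\wedge\n w_n-\n^\bot\Pstar$ is a gradient with the right boundary behavior on $\p\O$ — and making sure every error genuinely fits under either the explicit $(K_n+L_n)$ term or the $\mathcal{O}(X)$ term rather than producing a stray $\mathcal{O}(|\ln\Rad|)$; this is handled by noting $|\n\Pstar|$ and $|\n w_n|$ are comparable pointwise up to lower order and that all degree sums are bounded independently of $n$.
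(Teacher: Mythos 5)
Your overall plan has the right ingredients — the BBH decomposition $w_n\wedge\n w_n=\n^\bot\Pstar+\n H$, the substitution of $\Phi_\Rad$ for $\Pstar$ to exploit orthogonality, a Cauchy--Schwarz control by $K_n+L_n$ — and these are indeed what the paper uses. But the order in which you pull out the weight creates a term you cannot control, and this is a genuine gap.

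Concretely: by writing
\[
\int_{\O_\Rad}\tilde\alpha_n\rho_n^2|\n w_n|^2=\int_{\O_\Rad}|\n w_n|^2-\int_{\O_\Rad}(1-\tilde\alpha_n\rho_n^2)|\n w_n|^2,
\]
you have to bound $\int(1-\tilde\alpha_n\rho_n^2)|\n w_n|^2$ by $\|\n\Pstar\|_{L^\infty(\O_\Rad)}\|\n\Pstar\|_{L^2(\O_\Rad)}(K_n+L_n)$. Cauchy--Schwarz against $\|1-\tilde\alpha_n\rho_n^2\|_{L^2}\leq K_n+L_n$ leaves you with $\||\n w_n|^2\|_{L^2}=\|\n w_n\|_{L^4}^2$ (or requires $\|\n w_n\|_{L^\infty}$), and the assertion that $|\n w_n|$ is ``comparable pointwise'' to $|\n\Pstar|$ is false: $w_n$ is an arbitrary $H^1$ map with the right degrees, and its gradient can be arbitrarily concentrated on small sets where $\tilde\alpha_n$ is also small, so no pointwise control holds. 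The paper sidesteps this precisely by not splitting off the weight first: it expands $\tilde\alpha\rho^2|\n w|^2=\tilde\alpha\rho^2|\n^\bot\Pstar+\n H|^2$ and groups the terms so that $(1-\tilde\alpha\rho^2)$ multiplies only $|\n\Pstar|^2$ and $|\n H||\n\Pstar|$ (both controllable via $\|\n\Pstar\|_{L^\infty}$ and the a priori bound $\|\n H\|_{L^2}\leq(\beta^{-1}+1)\|\n\Pstar\|_{L^2}$), while the $|\n H|^2$ term keeps the nonnegative weight $\tilde\alpha\rho^2$ and is absorbed, together with the cross term $2\int\n H\cdot\n^\bot\Pstar=2\int\n H\cdot(\n^\bot\Pstar-\n^\bot\Phi_\Rad)$, into $-\mathcal{O}(X)$ by completing the square. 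Note also that the cross integral $\int\n^\bot\Pstar\cdot\n H$ does not vanish: $\Pstar$ is not constant on the circles $\p B(z_i,\Rad)$ (only $\Phi_\Rad$ is), so the boundary terms survive; it is $\mathcal{O}(\sqrt X)\|\n H\|_{L^2}$ rather than zero, and recovering $\mathcal{O}(X)$ requires the quadratic absorption against $\int\tilde\alpha\rho^2|\n H|^2$ — which is another reason the weighted $|\n H|^2$ term must stay inside the expansion rather than be discarded at the start.
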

Proposition \ref{VeryNiceCor} is proved Appendix \ref{Sec.PreuveVeryNiceCor}.
\section{Study of the $\v^s$-bad discs}\label{Sect.ShapInfo}

In this section, in addition to the assumption  \eqref{MagneticIntenHyp} on  $\lambda,\delta$ and $\h$, we assume  that \eqref{PutaindHypTech} holds.
This [technical] hypothesis \eqref{PutaindHypTech} is a little bit more restrictive than \eqref{HypLambdaDeltaConstrFoncTest} [$\delta\sqrt{\h}\to0$] used to get a nice upper bound.\\

Let $\v=\v_n\downarrow0$ and let $((v,A))_\v=((v_\v,A_\v))_\v$ be a sequence that agrees \eqref{HypGlobalSurQuasiMin} and \eqref{QuasiMinDef}. Let also $\mu\in(0,1/2)$.

Since \eqref{HypGlobalSurQuasiMin} and \eqref{QuasiMinDef} are gauge invariant we may assume that $(v,A)$ is in the Coulomb gauge.

The goal of this section is to prove that, for  sufficiently small $\v\&\mu$, if $J_\mu\neq\emptyset$ then $d_i=1$ $\&$ $\dist(z_i,\Lambda)\leq\ln(\h)/\sqrt{\h}$ $\&$ $z_i\in\o_\v$ for all $i\in J_\mu$ and for $i\neq j$, $|z_i-z_j|\geq\ln(\h)/\h$ with a "uniform" distribution of the $z_i$'s around $\Lambda$. 

With the notation of Proposition \ref{Prop.ConstrEpsMauvDisk} we let $\O_\rad:=\O\setminus\cup_{i\in J_\mu}\overline{B(z_i,\rad)}$ and $d:=\sum_{i\in J_\mu}|d_i|$.

In view of the goal of this section we may argue on subsequences. First note that from \eqref{DegNonNulEpsS} we have $d_i\neq0$ for all $i$. Up to pass to a subsequence, from \eqref{BorneTotaleSommeDeg}, we may assume that $J_\mu\neq\emptyset$ and independent of $\v$  as well as the $d_i$'s. 

Since we are interested here only on informations related with $|v|$ and the $d_i$'s, we may consider that $(v,A)$ is in the Coulomb gauge and we may also change the potential vector. Namely, we may assume that $A=\n^\bot\xi$ with $\xi=\xi_\v\in H^1_0\cap H^2(\O,\R)$ is the unique solution of \eqref{Eq.MinPb.Pot}. Note that  \eqref{QuasiMinDef} still holds.

Consequently, $\rot(A)\in H^1$ and then with \eqref{FullGLuAEq}$\&$\eqref{EstH3}: $\|\xi\|_{H^3(\O)}\leq C\|\rot(A_\v)\|_{H^1(\O)}\leq C|\ln\v|$.

From Proposition \ref{Docmpen} and letting $\zeta=\zeta_\v:=\xi-\h\xi_0$
 \begin{equation}\nonumber
\F(v,\n^\bot\xi)=\h^2\Jo+F(v)+2\pi\h\sum d_i\xi_0(z_i)+\tilde{V}_\zd(\zeta)+o(1).
\end{equation}

Proposition \ref{Prop.Information.Zeta-a} infers $\tilde{V}_\zd(\zeta)=\mathcal{O}(1)$. Consequently
 \begin{equation}\label{FullDecDiscqVal0-ApplicGrossiere}
\F(v,\n^\bot\xi)=\h^2\Jo+F(v)+2\pi\h\sum d_i\xi_0(z_i)+\mathcal{O}(1).
\end{equation}
In particular we have $\F(v,\n^\bot\xi)\leq\h^2\Jo+o(1)$, thus with \eqref{FullDecDiscqVal0-ApplicGrossiere} we get
\begin{equation}\label{FullDecDiscqVal0-PetitChamp}
F(v)\leq-2\pi\h\sum d_i\xi_0(z_i)+\mathcal{O}(1).
\end{equation}
From Corollary \ref{Cor.DegNonNul} and Propositions \ref{Prop.ConstrEpsMauvDisk}$\&$\ref{Prop.ProprieteEpsMauvDisk} we deduce $-\sum d_i\xi_0(z_i)=\|\xi_0\|_{L^\infty(\O)}\sum d_i+o(1)$ and we immediately obtained 
\begin{equation}\label{SumDegPos}
\sum d_i\geq0.
\end{equation}
On the other hand, from  Proposition \ref{Prop.BorneSupSimple}, we have
 \begin{equation}\label{BrneSup-Applic}
\F(v,\n^\bot\xi)\leq\h^2 \Jo+d\Pic\left[-\h+\HoC \right]+\L_1(d)\ln\h+\mathcal{O}(1).
\end{equation}
By combining \eqref{FullDecDiscqVal0-ApplicGrossiere} and \eqref{BrneSup-Applic} we get
 \begin{equation}\label{BrneSup-ApplicF(v)}
 F(v)\leq d\pi\left[b^2|\ln\v|+(1-b^2)|\ln(\lambda\delta)|\right]+\L_1(d)\ln\h+\mathcal{O}(1).
 \end{equation}
 In conclusion, from  \eqref{BorneInfEn} in conjunction with \eqref{BrneSup-ApplicF(v)} we obtain
  \begin{equation}\label{BrneSup-ApplicDirEn}
\dfrac{1}{2}\int_{\O_\rad}\alpha|\n v|^2\leq d\pi\left[b^2|\ln\rad|+(1-b^2)|\ln(\lambda\delta)|\right]+\L_1(d)\ln\h+\mathcal{O}(1).
 \end{equation}
 We first have the following proposition.
 \begin{prop}\label{PropDegPinning1}
Assume
 \begin{equation}\label{HypSurMu}
 0<\mu<\min\left\{\dfrac{1}{\mathcal{D}_{K,b}+1},\dfrac{1-b^2}{2(\mathcal{D}_{K,b}+1)}\right\}
 \end{equation}
 where $\mathcal{D}_{K,b}=\dfrac{3\M_K}{b^2}$ and $\M_K$ is as in Theorem \ref{ThmBorneDegréMinGlob}.
 
Then there exists $\tilde\v_\mu'>0$ s.t. for $0<\v< \tilde\v_\mu'$ if $J_\mu\neq\emptyset$ then
\begin{enumerate}
\item $d_i>0$ for all $i$,
\item\label{PropDegPinning1.2} $\dist(z_i,\o_\v)<\sqrt\v$.
\end{enumerate}
\end{prop}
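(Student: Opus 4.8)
The plan is to exploit the tension between the upper bound \eqref{BrneSup-ApplicDirEn} on the weighted Dirichlet energy $\tfrac12\int_{\O_\rad}\alpha|\n v|^2$ and the lower bounds available from the vortex structure, taking advantage of the dilution. For claim (1), I would argue by contradiction: suppose that along a subsequence some $d_{i_0}<0$. Since we already know $\sum d_i\geq0$ from \eqref{SumDegPos}, and since each $d_i\neq0$ by \eqref{DegNonNulEpsS}, having one negative degree forces $\sum_i |d_i| \geq \sum_i d_i + 2|d_{i_0}| \geq 2$ strictly more than the ``cancelled'' count; more precisely $\sum_i d_i^2 > \bigl(\sum_i d_i\bigr)$ is improved, and in fact $\sum_i d_i^2 \geq \sum_i |d_i| \geq \sum_i d_i + 2$. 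Now apply Corollary \ref{Cor.BorneInfProcheIncl} (or directly Proposition \ref{Prop.ComparaisonAnneau}) on perforated disks around each $z_i$: since $\rad\in[\v^\mu,\v^{\mu_*}]$ and the separation $|z_i-z_j|\geq\rad^{3/4}$ holds, and since $\Haus^1[V(t_\v)]/\rad + (1-t_\v^2) + \lambda = o[\ln(R/\rad)]$ is guaranteed by the smallness estimates in the construction of the $\v^s$-bad discs together with $\lambda^{1/4}|\ln\v|\to0$, we get
\[
\tfrac12\int_{\O_\rad}\alpha|\n v|^2 \;\geq\; \pi\Bigl(\sum_i d_i^2\Bigr) b^2|\ln\rad| + \pi(1-b^2)\Bigl(\sum_i |d_i|\Bigr)|\ln(\lambda\delta)| - C,
\]
where the weight contribution near each inclusion picks up the factor $b^2$ on the $\ln(1/\rad)$ part and $(1-b^2)|\ln(\lambda\delta)|$ from the annulus between scale $\lambda\delta$ and scale $1$ — this is exactly the strong-dilution mechanism. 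Comparing with \eqref{BrneSup-ApplicDirEn}, where the coefficient of $|\ln\rad|$ is $d\pi b^2 = \pi b^2 \sum|d_i|$ and $\L_1(d)\ln\h = \mathcal{O}(\ln|\ln\v|) = o(|\ln\v|)$, we obtain $\sum d_i^2 \leq \sum|d_i| + o(1)$, which combined with $\sum d_i^2 \geq \sum|d_i| + 2$ gives a contradiction for small $\v$. Hence all $d_i>0$.

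For claim (2), the strategy is again by contradiction combined with the strong effect of the dilution. Suppose along a subsequence $\dist(z_{i_0},\o_\v)\geq\sqrt\v$ for some $i_0$. Then the disk $B(z_{i_0},\sqrt\v)$ lies entirely in the region where $a_\v\equiv1$, hence by \eqref{EstLoinInterfaceU} we have $\alpha = U_\v^2 = 1 + O(\e^{-\abo/\sqrt\v})$ throughout $B(z_{i_0},\sqrt\v)$, so the weight $\alpha$ on this disk is essentially $1$ rather than $b^2$. Redo the lower bound: the disk around $z_{i_0}$ now contributes $\pi d_{i_0}^2 \ln(1/\rad)$ with coefficient $1$ (not $b^2$) on the leading $|\ln\rad|$ term — plus we still lose the $(1-b^2)|\ln(\lambda\delta)|$ coming from $z_{i_0}$ that the upper bound \eqref{BrneSup-ApplicDirEn} accounts for. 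More carefully: from Proposition \ref{Prop.ConstrEpsMauvDisk}, $B(z_{i_0},\rad/4)$ meets $\{|v|\leq 1/2\}$, and one must check $B(z_{i_0},\rad)$ does not touch $\o_\v$ (this uses $\rad\leq\v^{\mu_*}\ll\sqrt\v$ so $\dist(z_{i_0},\o_\v)\geq\sqrt\v \gg \rad$). Then Proposition \ref{Prop.ComparaisonAnneau}.\ref{Prop.ComparaisonAnneau2} applied in $B(z_{i_0},\eta)\setminus\overline{B(z_{i_0},\rad)}$ (with a fixed small $\eta$, no weight) gives $\tfrac12\int \geq \pi d_{i_0}^2[\ln(\eta/\rad) - o(1)] \geq \pi d_{i_0} (1-\mu)|\ln\v| - C$, which is a gain of order $\pi d_{i_0}(1-b^2)|\ln\v|$ over the $b^2$-weighted estimate that the upper bound \eqref{BrneSup-ApplicDirEn} was calibrated to; meanwhile the remaining bad discs still give their $b^2$-weighted lower bounds via Corollary \ref{Cor.BorneInfProcheIncl}.\ref{Cor.BorneInfProcheIncl1}. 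Summing, the total lower bound exceeds $d\pi[b^2|\ln\rad|+(1-b^2)|\ln(\lambda\delta)|] + o(|\ln\v|)$ by a term of order $(1-b^2)|\ln\v| \gg |\ln(\lambda\delta)|$ (using \eqref{CondOnLambdaDelta}: $|\ln(\lambda\delta)| = \mathcal{O}(\ln|\ln\v|)$ once $\delta$ is bounded below appropriately, or more generally $|\ln(\lambda\delta)| = o(|\ln\v|)$), contradicting \eqref{BrneSup-ApplicDirEn}. The precise bookkeeping requires choosing $\mu$ small enough that $(1-\mu)$ beats $b^2 + $ (error margins), which is exactly the role of hypothesis \eqref{HypSurMu}.

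The main obstacle I anticipate is the careful matching of constants in claim (2): one must separate the energy of the ``far'' disk $B(z_{i_0},\rad)$ — where the weight is $\approx 1$ — from the energies of the other bad discs and from the rest of $\O_\rad$, and verify that the combined lower bound strictly exceeds the upper bound \eqref{BrneSup-ApplicDirEn} by a quantity that does not vanish. This is delicate because the gain is of order $(1-b^2)|\ln\v|$ on a disk but one is comparing against an upper bound that also contains a $(1-b^2)|\ln(\lambda\delta)|$ term and an $\L_1(d)\ln\h$ term; one needs both $|\ln(\lambda\delta)| = o(|\ln\v|)$ and $\ln\h = o(|\ln\v|)$, and the constraint \eqref{HypSurMu} on $\mu$ to absorb the slack $\pi d_{i_0}\mu|\ln\v|$ coming from the fact that $\rad$ is only comparable to $\v^\mu$, not $\v$. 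The separation process (Lemma \ref{Lem.Separation}) must also be invoked to handle the case where several bad-disc centers cluster, but this is routine given Corollary \ref{Cor.BorneInfProcheIncl}.
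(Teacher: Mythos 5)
Your high-level strategy — pitting the upper bound \eqref{BrneSup-ApplicF(v)} against lower bounds that exploit the effective weight near a vortex core — is exactly the right one, and it is also the paper's. But both of your technical steps lean on the wrong lower-bound instrument, and this creates genuine gaps.

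For claim (1) you propose a lower bound of the form $\frac12\int_{\O_\rad}\alpha|\n v|^2 \geq \pi(\sum d_i^2)b^2|\ln\rad| + \pi(1-b^2)(\sum|d_i|)|\ln(\lambda\delta)| - C$ via Corollary \ref{Cor.BorneInfProcheIncl} or Proposition \ref{Prop.ComparaisonAnneau}. This is circular: Corollary \ref{Cor.BorneInfProcheIncl} explicitly hypothesises $d_i>0$ for all $i$, which is what you are trying to prove, and the separation-process machinery behind it genuinely needs that hypothesis — if a vortex and an antivortex are close, the degree on the annulus that encloses both of them can be $0$, and the iterated annular bounds then fail to produce the coefficient $\sum d_i^2$. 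In addition, the $(1-b^2)|\ln(\lambda\delta)|$ term in your lower bound presupposes that the cores sit inside $\o_\v$, i.e. claim (2). The paper sidesteps both circularities by never leaving the individual bad discs: it uses the core estimate \eqref{BorneInfEnWeight}, which is a per-disc bound $F(v,B(z_i,\rad))\geq\pi|d_i|\,\inf_{B(z_i,\rad)}\alpha\,[(1-\mu)|\ln\v|-c_{\mu,b}]$ valid for degrees of any sign and at any location, and then compares $\pi b^2(1-\mu)|\ln\v|\sum|d_i|$ against the upper bound $\pi b^2|\ln\v|\sum d_i+o(|\ln\v|)$; a single negative degree gives $\sum|d_i|\geq\sum d_i+2$ and hence $\mu\geq 2/(\sum d_i+2)-o(1)$, incompatible with \eqref{HypSurMu}.

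For claim (2) the idea (the weight is $\approx1$ far from $\o_\v$, producing a gain of order $(1-b^2)|\ln\v|$) is correct, but your quantification is off: you bound the energy on the \emph{annulus} $B(z_{i_0},\eta)\setminus\overline{B(z_{i_0},\rad)}$ and claim it is $\geq\pi d_{i_0}(1-\mu)|\ln\v|-C$. Since $\rad\geq\v^\mu$, that annulus only carries $\ln(\eta/\rad)\leq\mu|\ln\v|+O(1)$ decades, so the annular estimate yields at most $\mathcal{O}(\mu|\ln\v|)$ — far too weak when $\mu$ is small, which is precisely the regime imposed by \eqref{HypSurMu}. The factor $(1-\mu)|\ln\v|$ actually lives \emph{inside} the disc: $\ln(\rad/\v)\geq(1-\mu)|\ln\v|$, and again \eqref{BorneInfEnWeight}, now with $\inf_{B(z_{i_0},\rad)}\alpha\geq 1-o(|\ln\v|^{-2})$ (from \eqref{EstLoinInterfaceU} and $\rad\ll\sqrt\v$), is the right tool. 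That gives $F(v,B(z_{i_0},\rad))\geq\pi d_{i_0}(1-\mu)|\ln\v|-O(1)$, which when summed with the $b^2$-weighted core bounds on the other discs and compared with \eqref{BrneSup-ApplicF(v)} yields $\mu\geq(1-b^2)/(b^2 d+1-b^2)-o(1)$, again contradicting \eqref{HypSurMu}. Your bookkeeping discussion of what \eqref{HypSurMu}, $|\ln(\lambda\delta)|=o(|\ln\v|)$ and $\ln\h=o(|\ln\v|)$ must absorb is accurate; it is only the location of the energy — core versus outer annulus — that needs to be corrected.
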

\begin{proof}{\bf Step 1.  We prove that $d_i>0$ for all $i$}\\

We argue by contradiction and we assume the existence of an extraction still denoted by $\v=\v_n\downarrow0$ s.t. $J_-:=\{i\in J_\mu\,|\,d_{i}<0\}\neq\emptyset$ [from \eqref{DegNonNulEpsS}, for $0<\v<\tilde\v_\mu$, we have $d_i\neq0$ for all $i\in J_\mu$]. 

From \eqref{SumDegPos} we thus obtain: $\sum_{i\in J_\mu\setminus J_-}d_i\geq d+1$. Then, with the help of \eqref{BorneInfEnWeight}, we obtain
\[
F(v)\geq b^2(1-\mu)\pi|\ln\v|\left(\sum_{i\in J_-}|d_i|+\sum_{i\in J_\mu\setminus J_-}d_i\right)\geq (d+2)\pi(1-\mu)b^2|\ln\v|+\mathcal{O}(1).
\]
Consequently \eqref{BrneSup-ApplicF(v)} implies  $d(1+o(1))\geq(d+2)(1-\mu)-o(1)$. This inequality gives $\mu\geq\dfrac{2}{d+2}-o(1)$ which is in contradiction with $0<\mu<(\mathcal{D}_{K,b}+1)^{-1}$ for sufficiently small $\v>0$ [here we used $\mathcal{D}_{K,b}\geq\M_K\geq d$].\\

{\bf Step 2. We prove that $\dist(z_i,\o_\v)<\sqrt\v$ for all $i$}\\

We argue by contradiction and we assume the existence of a subsequence still denoted by $\v=\v_n\downarrow0$ and $i_0\in J_\mu$ s.t. $\dist(z_{i_0},\o_\v)\geq\sqrt\v$. From \eqref{EstLoinInterfaceU} we have $\inf_{B(z_{i_0},\rad)}\alpha\geq1-o(|\ln\v|^{-2})$. Consequently using \eqref{BorneInfEnWeight} we get  $F(v,B(z_{i_0},\rad))\geq d_{i_0}\pi(1-\mu)|\ln\v|-\mathcal{O}(1)$. Then $F(v)\geq \pi b^2(1-\mu)d|\ln\v|+\pi(1-b^2)(1-\mu)d_{i_0}|\ln\v|-\mathcal{O}(1)$.

From \eqref{BrneSup-ApplicF(v)} we obtain
\[
db^2|\ln\v|+\mathcal{O}(\ln|\ln\v|)\geq b^2(1-\mu)d|\ln\v|+(1-b^2)(1-\mu)|\ln\v|-\mathcal{O}(1).
\]
The last estimate implies $\mu\geq\dfrac{1-b^2}{b^2d+1-b^2}+o(1)$ which is in contradiction with $\mu\leq \dfrac{1-b^2}{2(\mathcal{D}_{K,b}+1)}$ for $\v>0$ sufficiently small.
\end{proof}
\begin{defi}\label{DefiSousEnsJ}
\begin{itemize}
\item For $i\in J_\mu$we let $y_i\in\delta\cdot\Z^2$ be the unique point s.t. $z_i\in B(y_i,\delta/2)$. Since $\dist(z_i,\o_\v)<\sqrt\v$ for all $i$,  $y_i$ is well defined. 
\item We denote also $\tilde J\subseteq J_\mu$ a set of indices s.t. $\cup_{i\in J_\mu} B(z_i,\rad)\subset\cup_{k\in\tilde J}B(y_k,2\lambda\delta)$ and for $k,l\in\tilde J$ s.t. $k\neq l$ we have $y_k\neq y_l$. We then let for $k\in\tilde J$, $\tilde{J}_k:=\{i\in J_\mu\,|\,z_i\in B(y_k,2\lambda\delta)\}$.
\item We may also select "good indices" in order to get well separated centers $y_k$'s. Using Lemma \ref{Lem.Separation} with $\num=17,\eta=\delta$, there exists a set $\emptyset\neq J^{(y)}\subset J_\mu$ and a number $\kappa\in\{1,17,...,17^{{\rm Card}(J_\mu)-1}\}$ [dependent on $\v$] s.t.
\[
\cup_{k\in\tilde J} B(y_k,\delta)\subset\cup_{k\in J^{(y)}}B(y_k,\kappa\delta)\text{ and for $k,l\in J^{(y)}$ with $k\neq l$ we have }\,|y_k-y_l|\geq16\kappa\delta.
\]
We denote, for $k\in J^{(y)}$, $\tilde d_k:=\deg_{\p B(y_k,\kappa\delta)}(v)$.
\item There exists also $\{J_k\,|\,k\in J^{(y)}\}$, a partition of $J_\mu$ with non empty sets [dependent on $\v$], s.t. 
\[
B(z_i,\delta/2)\subset B(y_k,\kappa\delta)\Longleftrightarrow i\in J_k\text{ for }k\in J^{(y)}.
\]
\end{itemize}
\end{defi}
We are going to prove that $\tilde J= J_\mu$ and for all $k\in J^{(y)}$ we have ${J}_k=\tilde{J}_k$.
\begin{prop}\label{PropToutLesDegEg1}
Assume \eqref{HypSurMu}, for $\v>0$ sufficiently small, if $J_\mu\neq\emptyset$ then $d_i=1$ for all $i\in J_\mu$.
\end{prop}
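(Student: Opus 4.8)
The plan is to argue by contradiction, assuming that for some subsequence $\v=\v_n\downarrow0$ there exists $i_0\in J_\mu$ with $d_{i_0}\geq 2$ (we already know from Step 1 of Proposition \ref{PropDegPinning1} that all $d_i>0$ under \eqref{HypSurMu}). The key is to extract more energy from a vortex of degree $\geq 2$ than from $d$ vortices each of degree $1$, and to beat the upper bound \eqref{BrneSup-ApplicF(v)}. First I would recall that, by Proposition \ref{PropDegPinning1}.\ref{PropDegPinning1.2}, each $z_i$ lies within $\sqrt\v$ of $\o_\v$, so each bad disc $B(z_i,\rad)$ sits essentially inside one connected component $y_i+\lambda\delta\cdot\o$ of $\o_\v$; hence on an annulus of the form $B(z_i, c\lambda\delta)\setminus \overline{B(z_i,\rad)}$ the weight $\alpha=U_\v^2$ is $\approx b^2$, while on a larger annulus $B(y_i,\eta_\O)\setminus B(y_i, c\lambda\delta)$ (away from $\o_\v$) the weight is $\approx 1$ by \eqref{EstLoinInterfaceU}.

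The main quantitative input is the lower bound on perforated discs: I would apply Corollary \ref{Cor.BorneInfProcheIncl} (or directly Proposition \ref{Prop.ComparaisonAnneau}) twice for the index $i_0$. On the inner annulus, with radii $r=\rad$ (which is $\geq\v^{\mu_*}\gg$ anything relevant but $\ll\lambda\delta$) up to $R\simeq\lambda\delta$, the contribution is at least $\pi b^2 d_{i_0}^2\,[\,|\ln\rad|-|\ln(\lambda\delta)|\,]-C$; on the outer annulus, from $\simeq\lambda\delta$ up to $\eta_\O$, the weight being $\approx1$ gives at least $\pi d_{i_0}^2\,|\ln(\lambda\delta)|-C$ (using $\eta_\O$ is order one). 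For the remaining $d-d_{i_0}$ worth of degree one bad discs one uses \eqref{BorneInfEnWeight} as usual, giving $\pi b^2(d-d_{i_0})\,[(1-\mu)|\ln\v|]-C$ on the $\v$-scale inner annuli plus $\pi(1-b^2)(d-d_{i_0})|\ln(\lambda\delta)|-C$ on the intermediate annuli (as in Step 2 of the previous proof). The crucial point is that $|\ln\rad|\geq(1-\mu)|\ln\v|$ and that $d_{i_0}^2\geq 2d_{i_0}= d_{i_0}+d_{i_0}\geq d_{i_0}+(d_{i_0}-1)+1$, so summing produces a term $\pi b^2\,[\,\sum_i d_i^2\,]|\ln\v| + \pi(1-b^2)[\sum_i d_i^2]|\ln(\lambda\delta)|$ with $\sum_i d_i^2\geq d+1$ strictly, plus the correction $-\mathcal{O}(\mu\,d\,|\ln\v|)$. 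Comparing with \eqref{BrneSup-ApplicF(v)}, which only allows $\pi d\,[b^2|\ln\v|+(1-b^2)|\ln(\lambda\delta)|]+\L_1(d)\ln\h+\mathcal{O}(1)$, the extra $\pi b^2|\ln\v|$ on the left cannot be absorbed: since $\h\leq\mathcal{O}(|\ln\v|)$ gives $\L_1(d)\ln\h=\mathcal{O}(\ln|\ln\v|)\ll |\ln\v|$, and since $\mu$ is chosen small (smaller than $(\mathcal{D}_{K,b}+1)^{-1}$ and $\tfrac{1-b^2}{2(\mathcal{D}_{K,b}+1)}$ by \eqref{HypSurMu}), the inequality forces $1+o(1)\leq C\mu+o(1)$, a contradiction for $\v$ small. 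Hence $d_{i_0}\leq 1$, i.e. $d_{i_0}=1$.

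The step I expect to be the main obstacle is bookkeeping the weights carefully across the three scales $\v\ll\rad\ll\lambda\delta\ll\eta_\O$: one must make sure that the separation process of Corollary \ref{Cor.BorneInfProcheIncl} is applied on a domain where $|v|\geq 1/2$ (guaranteed by Proposition \ref{Prop.ConstrEpsMauvDisk}.\ref{Prop.ConstrEpsMauvDisk2} outside the $B(z_i,\rad)$), that the hypotheses $\Haus^1[V(t_\v)]/r+(1-t_\v^2)+\lambda=o[\ln(R/r)]$ hold at each scale (which uses \eqref{DefTeps}, $\lambda^{1/4}|\ln\v|\to0$ and \eqref{PutaindHypTech} to control $\ln(\lambda\delta)$ against $\ln(\delta\sqrt\h)$), and that the intermediate annuli around distinct $y_k$ are disjoint (using the well-separated centers from Definition \ref{DefiSousEnsJ} and $|z_i-z_j|\geq\rad^{3/4}$ within a cell). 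Once the geometry is set up, the degree-counting identity $\deg_{\p B(y_k,\kappa\delta)}(v)=\sum_{i\in J_k}d_i$ and the convexity inequality $\big(\sum d_i\big)^2\leq (\#J_k)\sum d_i^2$ — or rather the elementary fact that replacing a degree-$m$ vortex ($m\geq2$) by $m$ unit vortices strictly lowers $\sum d_i^2$ — give the strict gain that closes the argument.
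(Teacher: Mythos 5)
Your strategy is aligned with the paper's: argue by contradiction, apply Corollary~\ref{Cor.BorneInfProcheIncl} on cell-scale perforated domains, exploit $\sum d_i^2\geq d+1$ when some $d_{i_0}\geq 2$, and compare with the upper bound \eqref{BrneSup-ApplicF(v)}. But there are two concrete slips in the bookkeeping, and you over-complicate the geometry.

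First, the inequality ``$|\ln\rad|\geq(1-\mu)|\ln\v|$'' is backwards. Since $\rad\in[\v^\mu,\v^{\mu_*}]$, one has $\mu_*|\ln\v|\leq|\ln\rad|\leq\mu|\ln\v|$, so $|\ln\rad|$ is a \emph{small} multiple of $|\ln\v|$; what is $\geq(1-\mu)|\ln\v|$ is $\ln(\rad/\v)$. Second, the claimed lower bound $\pi b^2\bigl[\sum d_i^2\bigr]|\ln\v|+\pi(1-b^2)\bigl[\sum d_i^2\bigr]|\ln(\lambda\delta)|$ misallocates where the square of the degree appears. The inner disc $B(z_i,\rad)$ contributes $\approx\pi b^2 d_i\ln(\rad/\v)$ (linear in $d_i$, via \eqref{BorneInfEnWeight}); only the annulus $B(y_k,2\lambda\delta)\setminus\cup\overline{B(z_i,\rad)}$ detects the square via Corollary~\ref{Cor.BorneInfProcheIncl}. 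So after summing one gets
\[
F(v)\geq \pi b^2 d\,|\ln\v|+\pi b^2\Bigl(\sum_i d_i^2-d\Bigr)\ln\Bigl(\tfrac{\lambda\delta}{\rad}\Bigr)+\mathcal{O}\bigl(|\ln(\lambda\delta)|\bigr),
\]
and the excess over \eqref{BrneSup-ApplicF(v)} is $\pi b^2\bigl(\sum d_i^2-d\bigr)\ln(\lambda\delta/\rad)\gtrsim\mu_*|\ln\v|$, not $\pi b^2|\ln\v|$. This is still $\gg|\ln(\lambda\delta)|+\ln\h=o(|\ln\v|)$, so the contradiction survives, but the claim that the inequality ``forces $1+o(1)\leq C\mu+o(1)$'' is not how it closes; the point is that the excess, though small, dominates the $o(|\ln\v|)$ error regardless of $\mu$.

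Finally, the refined two-weight decomposition (weight $\approx b^2$ inside the inclusion, $\approx1$ outside, with an outer annulus up to $\eta_\O$) is unnecessary here, and would also need a joint decomposition across different cells $y_k$ since those outer annuli overlap. The paper's proof applies only Corollary~\ref{Cor.BorneInfProcheIncl}.\ref{Cor.BorneInfProcheIncl2} on $B(y_k,2\lambda\delta)\setminus\cup_{i\in\tilde J_k}\overline{B(z_i,\rad)}$ with $\alpha\geq b^2$, gets $\pi b^2\sum d_i^2\ln(\lambda\delta/\rad)-\mathcal{O}(1)$, adds the inner-disc estimates, and reads off $F(v)\geq\pi b^2(d|\ln\v|+|\ln\rad|)+\mathcal{O}(|\ln(\lambda\delta)|)$, which already contradicts \eqref{BrneSup-ApplicF(v)} since $|\ln\rad|\gtrsim\mu_*|\ln\v|\gg|\ln(\lambda\delta)|+\ln\h$. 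Drop the outer annulus and fix the two slips, and your argument reduces to the paper's.
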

\begin{proof}

We argue by contradiction and we assume the existence of a subsequence [still denoted by $\v=\v_n\downarrow0$] s.t. for all $\v$ there exits $i_0\in J_\mu$ s.t. $ d_{i_0}\geq2$.

From Corollary \ref{Cor.BorneInfProcheIncl}.\ref{Cor.BorneInfProcheIncl2} applied in $B(y_k,2\lambda\delta)\setminus\cup_{i\in \tilde{J}_k}\overline{B(z_i,\rad)}$ :
\begin{eqnarray*}
\dfrac{1}{2}\int_{\O_\rad}\alpha|\n v|^2
&\geq&\sum_{k\in \tilde J}\dfrac{b^2}{2}\int_{B(y_k,2\lambda\delta)\setminus\cup_{i\in \tilde{J}_k}\overline{B(z_i,\rad)}}|\n v|^2
\\&\geq&\pi b^2\sum_{k\in \tilde J}\sum_{i\in J_k}d_i^2\ln\left(\dfrac{\lambda\delta}{\rad}\right)-\mathcal{O}(1)
\\&\geq&\pi b^2\left(1+\sum_{i\in J_\mu}d_i\right)\ln\left(\dfrac{\lambda\delta}{\rad}\right)-\mathcal{O}(1).
\end{eqnarray*}
We then get $F(v)\geq\pi b^2(d|\ln\v|+|\ln\rad|)+\mathcal{O}(|\ln(\lambda\delta)|)$. Since $|\ln\v|=\mathcal{O}(|\ln\rad|)$ and $|\ln(\lambda\delta)|+\ln\h=o(|\ln\v|)$, this estimate is in contradiction with \eqref{BrneSup-ApplicF(v)} for sufficiently small $\v$.
\end{proof}
\begin{prop}\label{PropVortexProcheLambda}
Assume $\mu$ satisfies \eqref{HypSurMu} and $J_\mu\neq\emptyset$. Then for sufficiently small $\v>0$ we have $\dist({\bf z},\Lambda)\leq\dfrac{\ln\h}{\sqrt{\h}}$.
\end{prop}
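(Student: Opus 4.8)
I would argue by contradiction: assume that along a subsequence $\v=\v_n\downarrow0$ there is $i_0\in J_\mu$ with $\dist(z_{i_0},\Lambda)>\ln(\h)/\sqrt\h$. At this point all degrees equal $1$ (Proposition~\ref{PropToutLesDegEg1}), $\dist(z_i,\o_\v)<\sqrt\v$ (Proposition~\ref{PropDegPinning1}), and by Corollary~\ref{Cor.DegNonNul} all the $z_i$ converge to $\Lambda$, so $\tae=\min_i\dist(z_i,\p\O)$ stays bounded below for small $\v$. Since Hypothesis~\eqref{NonDegHyp} is in force, Lemma~\ref{Lem.DescriptionLambda} applies with $M=2$, hence $\xi_0(z_i)-\min\xi_0\geq\eta\,\dist(z_i,\Lambda)^2\geq0$ for every $i$, and in particular $\xi_0(z_{i_0})-\min\xi_0>\eta(\ln\h)^2/\h$.

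\textbf{Upper bound for $F(v)$.} Estimate~\eqref{FullDecDiscqVal0-PetitChamp} reads $F(v)\leq-2\pi\h\sum_i\xi_0(z_i)+\mathcal{O}(1)$. Writing $\xi_0(z_i)=\min\xi_0+(\xi_0(z_i)-\min\xi_0)$, using $\min\xi_0=-\|\xi_0\|_{L^\infty(\O)}$, $d=\Card(J_\mu)$, and keeping only the $i_0$-th of the nonnegative excess terms, I obtain
\[
F(v)\leq 2\pi\h d\,\|\xi_0\|_{L^\infty(\O)}-2\pi\eta(\ln\h)^2+\mathcal{O}(1).
\]

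\textbf{Lower bound for $F(v)$.} This is the heart of the proof and where the real work lies. I would bound $F(v)$ from below by summing the (weighted) Dirichlet energy over the natural scales, mirroring in reverse the test‑function construction behind Proposition~\ref{Prop.BorneSupSimple}. For the vortices lying in a fixed cell one combines the core estimate \eqref{BorneInfEn} with the perforated‑disk estimate Corollary~\ref{Cor.BorneInfProcheIncl}.\ref{Cor.BorneInfProcheIncl2} inside the inclusion $y_k+\lambda\delta\o$, where $\alpha\equiv b^2$ up to an exponentially small error by \eqref{EstLoinInterfaceU} (see Proposition~\ref{Prop.RegularizationLMSol}); since the two logarithms add, this yields $\pi b^2\bigl(|\ln\v|-|\ln(\lambda\delta)|\bigr)$ per vortex \emph{without} any $\mu$‑loss. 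In the region $\alpha\equiv1$ between scales $\lambda\delta$ and $\delta$ one gains a further $\pi|\ln\lambda|$ per vortex via Proposition~\ref{Prop.ComparaisonAnneau}.\ref{Prop.ComparaisonAnneau2} on the (pairwise disjoint) cell discs; and the macroscopic/mesoscopic range, from scale $\delta$ to scale $1$, is controlled with Proposition~\ref{VeryNiceCor} (applicable because $\sqrt{\int_\O(1-\alpha)^2}=\mathcal{O}(\lambda)\to0$, which is precisely the dilution effect letting the weight be dropped away from the inclusions) together with the macroscopic renormalized energy of Proposition~\ref{Prop.EnergieRenDef} and its cluster form Proposition~\ref{Prop.RenEnergieCluster} (both bounded below since the singularities stay in a fixed compact subset of $\O$) and, for clusters of several vortices near one $p_k$, the mesoscopic lower bound of Proposition~\ref{EnergieRenMeso}, which supplies the $\L_1(d)\ln\h$ term; vortices sharing a cell or a cluster are organized through the sets $\tilde J_k$, $J_k$, $J^{(y)}$ of Definition~\ref{DefiSousEnsJ} and the separation device already used for Corollary~\ref{Cor.BorneInfProcheIncl}. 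Adding the contributions gives
\[
F(v)\geq \pi d\bigl[b^2|\ln\v|+(1-b^2)|\ln(\lambda\delta)|\bigr]+\L_1(d)\ln\h-\mathcal{O}(\ln\h).
\]
By the definitions of $\Pic$ and $\HoC$ in \eqref{DefH0c1}, $\pi d\bigl[b^2|\ln\v|+(1-b^2)|\ln(\lambda\delta)|\bigr]=d\Pic\HoC-\mathcal{O}(1)$, and \eqref{BorneKMagn} gives $\Pic\h\leq\Pic\HoC+\mathcal{O}(\ln\h)$ (harmlessly, since $\h>\HoC$ forces $\h\gtrsim|\ln\v|$, whence $\ln|\ln\v|=\mathcal{O}(\ln\h)$). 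Using $\L_1(d)\geq0$, this yields in all cases
\[
F(v)\geq 2\pi\h d\,\|\xi_0\|_{L^\infty(\O)}-\mathcal{O}(\ln\h).
\]

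\textbf{Conclusion and main difficulty.} Comparing the two displayed bounds gives $2\pi\eta(\ln\h)^2\leq\mathcal{O}(\ln\h)+\mathcal{O}(1)$, impossible for small $\v$ because $\h\to\infty$; hence $\dist(z_i,\Lambda)\leq\ln(\h)/\sqrt\h$ for every $i\in J_\mu$ and small $\v$, that is $\dist({\bf z},\Lambda)\leq\ln(\h)/\sqrt\h$. The delicate point is the lower bound for $F(v)$: one has to carry the weight $\alpha=U_\v^2$ through every scale (using the dilution $\lambda\to0$ and the regularization estimates of Proposition~\ref{Prop.RegularizationLMSol}) and, above all, handle configurations in which several of the $z_i$ coalesce onto a single minimal point of $\xi_0$, where the macroscopic renormalized energy is no longer enough and the mesoscopic expansion of Proposition~\ref{EnergieRenMeso}, together with the cluster decomposition, must be invoked; everything else is bookkeeping of the already‑proved estimates \eqref{FullDecDiscqVal0-PetitChamp}, \eqref{BrneSup-ApplicF(v)} and the tools of Section~\ref{Sec.ToolBox}.
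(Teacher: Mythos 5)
Your overall strategy is the paper's: argue by contradiction assuming some $z_{i_0}$ lies beyond $\ln(\h)/\sqrt{\h}$ of $\Lambda$, extract an excess $\geq\eta(\ln\h)^2$ from \eqref{FullDecDiscqVal0-PetitChamp} via the nondegeneracy Lemma~\ref{Lem.DescriptionLambda}, and contradict this by a lower bound on the weighted Dirichlet energy over nested scales. The upper‑bound half of your argument is essentially correct. The gap is precisely in what you call ``the heart of the proof.''

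Your claimed lower bound $F(v)\geq\pi d\bigl[b^2|\ln\v|+(1-b^2)|\ln(\lambda\delta)|\bigr]+\L_1(d)\ln\h-\mathcal{O}(\ln\h)$ is not justified as written, for two linked reasons. First, you rely on Proposition~\ref{EnergieRenMeso}, but the mesoscopic expansion is stated for configurations lying in $B(p,R)$ with $R\to0$, and at the canonical scale $R\sim\ln(\h)/\sqrt\h$ this is exactly the statement you are trying to prove; it cannot be applied directly in the contradiction scenario. (It is also unnecessary here, since you immediately drop $\L_1(d)\ln\h$ via $\L_1(d)\geq0$.) Second, and more seriously, your ``per vortex'' accounting of the scales $\lambda\delta\to\delta\to 1$ silently assumes each cell/cluster carries exactly one vortex. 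In the contradiction regime some $z_i$'s may pile up (either inside a cell or around a single $p_k$), and the Dirichlet contribution of the annulus at scale $\chi$ is $\pi\sum_p D_p^2|\ln\chi|$, not $\pi d|\ln\chi|$. The paper closes this gap with two devices you do not mention: an \emph{adaptive} outer radius $\chi:=10\max_k\dist(y_k,\Lambda)$, which is $\geq\ln(\h)/\sqrt\h$ precisely because the contradiction hypothesis is in force (and keeps all vortices inside $\cup_p B(p,\chi)$), together with the convexity inequality of Lemma~\ref{LemSommeDegCarréDec}.\ref{LemSommeDegCarréDec1}, namely
\[
d\;\leq\;\sum_{k\in\tilde J}\deg_{\p B(y_k,2\lambda\delta)}(v)^2\;\leq\;\sum_{p\in\Lambda}\sum_{k\in J_p}\tilde d_k^2\;\leq\;\sum_{p\in\Lambda}D_p^2.
\]
With this chain, the scale‑by‑scale lower bounds telescope into
\[
\tfrac{1}{2}\int_{\O_\rad}\alpha|\n v|^2\geq d\pi\bigl[b^2|\ln\rad|+(1-b^2)|\ln(\lambda\delta)|\bigr]+\mathcal{O}(1),
\]
which contradicts \eqref{BorneSupContraDistLambda} directly; no mesoscopic expansion, no $\L_1(d)$, no $\mathcal{O}(\ln\h)$ error is required. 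Without identifying the adaptive $\chi$ and the degree‑squared inequality, your claimed lower bound is not supported by the tools you invoke.
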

The proof of the proposition uses the following obvious lemma whose proof is left to the reader.
\begin{lem}\label{LemSommeDegCarréDec}
\begin{enumerate}
\item\label{LemSommeDegCarréDec1} Let $N\in\N^*$, ${\bf D}\in\N^N$ and for $k\in\{1,...,N\}$ let $N_k\in\N^*$ and ${\bf d}^{(k)}\in\N^{N_k}$ be s.t. $D_k=\sum_id_i^{(k)}$. Then we have 
\[
\sum_{k=1}^ND_k^2\geq\sum_{k=1}^N\sum_{i=1}^{N_k}(d_i^{(k)})^2.
\]
Moreover the equality holds if and only if for all $k\in\{1,...,N\}$ and for all $i\in\{1,...,N_k\}$ we have $d_i^{(k)}\in\{0,D_k\}$. 
\item\label{LemSommeDegCarréDec2} Let $N,d\in\N^*$ and denote $\displaystyle E_d:=\min_{{{\bf D}\in\N^N,\,\sum D_k=d}}\sum_{k=1}^N D_k^2$. Then we have for ${\bf D}\in\N^N$ s.t. $\sum D_k=d$:
\[
\sum_{k=1}^N D_k^2=E_d\Longleftrightarrow{\bf D}\in\{\lfloor d/N\rfloor;\lceil d/N\rceil\}^N.
\]
\end{enumerate}
\end{lem}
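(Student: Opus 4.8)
The plan is to reduce both assertions to the elementary superadditivity of squaring on nonnegative reals: for $a_1,\dots,a_m\geq 0$ one has $\left(\sum_i a_i\right)^2=\sum_i a_i^2+\sum_{i\neq j}a_ia_j\geq\sum_i a_i^2$, with equality precisely when at most one of the $a_i$ is nonzero. Everything in Lemma \ref{LemSommeDegCarréDec} is a combinatorial consequence of this fact together with bookkeeping, so I would not expect any genuine difficulty.

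For assertion \ref{LemSommeDegCarréDec1}, I would apply the inequality above with $a_i=d_i^{(k)}$ for each fixed $k$, giving $D_k^2=\left(\sum_{i=1}^{N_k}d_i^{(k)}\right)^2\geq\sum_{i=1}^{N_k}(d_i^{(k)})^2$, and then sum over $k\in\{1,\dots,N\}$. For the equality case, note that equality in the summed inequality forces equality for every $k$ separately; for a fixed $k$, equality means at most one $d_i^{(k)}$ is nonzero, and since the $d_i^{(k)}$ are nonnegative integers with $\sum_i d_i^{(k)}=D_k$, this is equivalent to $d_i^{(k)}\in\{0,D_k\}$ for all $i$ (the subcase $D_k=0$ being automatic, as then all $d_i^{(k)}$ vanish). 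This gives both directions of the stated equivalence.

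For assertion \ref{LemSommeDegCarréDec2}, the constraint $\sum_k D_k=d$ with $D_k\in\N$ confines ${\bf D}$ to a finite set, so the minimum $E_d$ is attained. I would then run an exchange argument: if ${\bf D}$ has two coordinates with $D_k\geq D_l+2$, replacing $(D_k,D_l)$ by $(D_k-1,D_l+1)$ preserves $\sum_j D_j$ and changes $\sum_j D_j^2$ by $(D_k-1)^2+(D_l+1)^2-D_k^2-D_l^2=-2(D_k-D_l-1)\leq-2<0$; hence such a ${\bf D}$ is not a minimizer. Therefore every minimizer satisfies $|D_k-D_l|\leq1$ for all $k,l$, i.e. all coordinates lie in $\{m,m+1\}$ for a single integer $m$, and $\sum_k D_k=d$ then forces $m=\lfloor d/N\rfloor$ with exactly $d-Nm=\lceil d/N\rceil\cdot 0+\dots$ — more precisely exactly $d-N\lfloor d/N\rfloor$ coordinates equal to $\lceil d/N\rceil$ — so ${\bf D}\in\{\lfloor d/N\rfloor;\lceil d/N\rceil\}^N$. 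Conversely, any ${\bf D}\in\{\lfloor d/N\rfloor;\lceil d/N\rceil\}^N$ with $\sum_k D_k=d$ has its number of coordinates equal to $\lceil d/N\rceil$ forced to be $d-N\lfloor d/N\rfloor$, hence the same multiset of coordinates as any other such vector, hence the same value of $\sum_k D_k^2$; since at least one of them is a minimizer, all of them are, yielding $\sum_k D_k^2=E_d\iff{\bf D}\in\{\lfloor d/N\rfloor;\lceil d/N\rceil\}^N$.

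The only place requiring any care is the handling of equality cases: the $D_k=0$ edge case in part \ref{LemSommeDegCarréDec1}, and in part \ref{LemSommeDegCarréDec2} the verification that all "balanced" vectors in $\{\lfloor d/N\rfloor;\lceil d/N\rceil\}^N$ with prescribed sum are permutations of each other and therefore share the common minimal value. Neither is an obstacle in any serious sense; the lemma is genuinely elementary, which is presumably why the author leaves it to the reader.
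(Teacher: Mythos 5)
The paper states this lemma with the remark that its proof is ``left to the reader,'' so there is no official argument to compare against; your job is simply to supply a correct one, and you do. Part \ref{LemSommeDegCarréDec1} is exactly the expansion $\bigl(\sum_i a_i\bigr)^2=\sum_i a_i^2+\sum_{i\neq j}a_ia_j$ applied blockwise, with the equality analysis correctly reduced to ``at most one $d_i^{(k)}$ nonzero,'' which under $\sum_i d_i^{(k)}=D_k$ is indeed the same as $d_i^{(k)}\in\{0,D_k\}$ for all $i$ (the $D_k=0$ subcase being handled). Part \ref{LemSommeDegCarréDec2} is the standard exchange argument: the computed decrement $-2(D_k-D_l-1)$ is strictly negative whenever $D_k\ge D_l+2$, so minimizers have coordinates spanning at most two consecutive integers; combined with the sum constraint and the observation that all vectors in $\{\lfloor d/N\rfloor;\lceil d/N\rceil\}^N$ summing to $d$ are permutations of one another (hence share the same $\ell^2$ norm, hence are all minimizers once one is), this gives the full equivalence. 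The only wrinkle is the parenthetical identification of $m$ with $\lfloor d/N\rfloor$: strictly one should say the \emph{minimal} coordinate value is $\lfloor d/N\rfloor$, which is what your argument in fact uses, and the conclusion is unaffected. The proof is correct and is the natural elementary argument.
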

\begin{proof}[Proof of Proposition \ref{PropVortexProcheLambda}]
We argue by contradiction and we assume the existence of a subsequence [still denoted by $\v=\v_n\downarrow0$] and $i_0\in J_\mu$ s.t. $\dist(z_{i_0},\Lambda)>\dfrac{\ln\h}{\sqrt{\h}}$. 

Then there exists $\eta>0$ [independent of $\v$] s.t. $\h\xi_0(z_{i_0})\geq-\h\|\xi_0\|_{L^\infty(\O)}+4\eta(\ln\h)^2$. Consequently: $-2\pi\h\sum \xi_0(z_i)\leq2\pi d\h\|\xi_0\|_{L^\infty(\O)}-4\eta(\ln\h)^2.$

From \eqref{FullDecDiscqVal0-PetitChamp} we get [for small $\v$]
\begin{eqnarray*}
F(v)&\leq& 2\pi d\h\|\xi_0\|_{L^\infty(\O)}-3\eta(\ln\h)^2
\\{[\text{Hyp. \eqref{BorneKMagn}}]}&\leq&\pi d|\ln\v|-2\eta(\ln\h)^2.
\end{eqnarray*}
Using \eqref{BorneInfEn} we get 
\begin{equation}\label{BorneSupContraDistLambda}
\dfrac{1}{2}\int_{\O_\rad}\alpha|\n v|^2\leq d\pi\left[b^2|\ln\rad|+(1-b^2)|\ln(\lambda\delta)|\right]-\eta(\ln\h)^2.
\end{equation}

We let $\chi:=10\max_{k\in\tilde J}\dist(y_k,\Lambda)$ and for $p\in\Lambda$, $D_p:=\deg_{\p B(p,\chi)}(v)$, $J_p:=\{k\in J^{(y)}\,|\,y_k\in B(p,\chi)\}$. For a latter use we claim that $\chi\geq\ln(\h)/\sqrt{\h}$ and then
\begin{equation}\label{Hyp.DistLambdaContra}
\lambda|\ln\chi|/\chi\to0.
\end{equation}
We have [see Definition \ref{DefiSousEnsJ} for notation]
\begin{eqnarray}\nonumber
&&\dfrac{1}{2}\int_{\O_\rad}\alpha|\n v|^2
\\\nonumber&\geq&\sum_{k\in \tilde J}\dfrac{1}{2}\int_{B(y_k,2\lambda\delta)\setminus\cup_{i\in \tilde{J}_k}\overline{B(z_i,\rad)}}\alpha|\n v|^2+
\sum_{k\in \tilde J}\dfrac{1}{2}\int_{B(y_k,\delta/3)\setminus\overline{B(y_k,2\lambda\delta)}}\alpha|\n v|^2+
\\\label{BigDecomp}&&+
\sum_{p\in\Lambda}\dfrac{1}{2}\int_{B(p,\chi)\setminus\cup_{k\in J_p}\overline{B(y_k,\kappa\delta)}}\alpha|\n v|^2
+\dfrac{1}{2}\int_{\O\setminus\cup_{p\in\Lambda}\overline{B(p,\chi)}}\alpha|\n v|^2.
\end{eqnarray}
It is clear that, for $k\in \tilde J$, we may use Corollary \ref{Cor.BorneInfProcheIncl}.\ref{Cor.BorneInfProcheIncl2} in $B(y_k,2\lambda\delta)\setminus\cup_{i\in \tilde{J}_k}\overline{B(z_i,\rad)}$ in order to get

\begin{equation}\label{OnCompteRelFin1}
\sum_{k\in \tilde J}\dfrac{1}{2}\int_{B(y_k,2\lambda\delta)\setminus\cup_{i\in \tilde{J}_k}\overline{B(z_i,\rad)}}\alpha|\n v|^2\geq b^2d\pi\ln\left(\dfrac{\lambda\delta}{\rad}\right)+\mathcal{O}(1).
\end{equation}

Let $k\in \tilde J$, from \eqref{EstLoinInterfaceU}  and Proposition \ref{Prop.ComparaisonAnneau}.\ref{Prop.ComparaisonAnneau2} we obtain 
\begin{equation}\label{OnCompteRelFin2}
\dfrac{1}{2}\int_{B(y_k,\delta/3)\setminus\overline{B(y_k,2\lambda\delta)}}\alpha|\n v|^2\geq\pi\deg_{\p B(y_k,2\lambda\delta)}(v)^2|\ln\lambda|+\mathcal{O}(1).
\end{equation}

Let $p\in\Lambda$ be s.t. $D_p\neq0$, Corollary \ref{Cor.BorneInfProcheIncl}.\ref{Cor.BorneInfProcheIncl1} gives 
\[
\dfrac{1}{2}\int_{B(p,\chi)\setminus\cup_{k\in J_p}\overline{B(y_k,\kappa\delta)}}\alpha|\n v|^2\geq\pi\sum_{k\in J_p}\tilde d_k^2\ln \left(\dfrac{\chi}{\delta}\right)+\mathcal{O}(1).
\]
From Propositions \ref{MinimalMapHomo}$\&$\ref{Prop.EnergieRenDef}$\&$\ref{VeryNiceCor} [with \eqref{Hyp.DistLambdaContra}] we deduce
\[
\dfrac{1}{2}\int_{\O\setminus\cup_{p\in\Lambda}\overline{B(p,\chi)}}\alpha|\n v|^2\geq\pi\sum_{p\in\Lambda}D_p^2|\ln\chi|+\mathcal{O}(1).
\]
From Lemma \ref{LemSommeDegCarréDec}.\ref{LemSommeDegCarréDec1} we have $d\leq\sum_{k\in\tilde J}\deg_{\p B(y_k,2\lambda\delta)}(v)^2\leq\sum_{p\in\Lambda}\sum_{k\in J_p}\tilde d_k^2\leq\sum_{p\in\Lambda}D_p^2$. Then we get
\[
\dfrac{1}{2}\int_{\O_\rad}\alpha|\n v|^2\geq d\pi\left[b^2|\ln\rad|+(1-b^2)|\ln(\lambda\delta)|\right]+\mathcal{O}(1).
\]
This estimate is in contradiction with \eqref{BorneSupContraDistLambda} for sufficiently small $\v$.
\end{proof}
\begin{prop}\label{Prop.BonEcartement}
Assume $\mu$ satisfies \eqref{HypSurMu} and let $\v=\v_n\downarrow0$ be a sequence.
\begin{enumerate}
\item If ${\rm Card}(J_\mu)\geq2$ then for $\v>0$ sufficiently small and for $i\neq j$, $|z_i-z_j|\geq\h^{-1}\ln\h$. 
\item For $\v>0$ sufficiently small we have for $p\in\Lambda$, $\deg_{\p B(p,\h^{-1/2}\ln\h)}(v)\in\{\lfloor d/N_0\rfloor;\lceil d/N_0\rceil\}$.
\end{enumerate}
\end{prop}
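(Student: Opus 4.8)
The plan is to prove both assertions by a single contradiction argument, matching a sharp multi-scale lower bound for $F(v)$ against the upper bound \eqref{BrneSup-ApplicF(v)}. We work along a subsequence on which $J_\mu$, the degrees $d_i$ (all equal to $1$ by Proposition \ref{PropToutLesDegEg1}), hence $d=\Card(J_\mu)$, and — using Proposition \ref{PropVortexProcheLambda} together with the disjointness of the disks $B(p_k,\h^{-1/2}\ln\h)$ for small $\v$ — the integers $D_k:=\deg_{\p B(p_k,\h^{-1/2}\ln\h)}(v)$ are all independent of $\v$; for each $i$ let $k(i)$ be the unique index with $z_i\in B(p_{k(i)},\h^{-1/2}\ln\h)$, so that $D_k=\Card\{i:k(i)=k\}\geq 0$ and $\sum_{k}D_k=d$. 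Suppose, for contradiction, that for small $\v$ \emph{either} some pair satisfies $|z_{i_0}-z_{j_0}|<\h^{-1}\ln\h$, \emph{or} $(D_k)_k\notin\{\lfloor d/N_0\rfloor;\lceil d/N_0\rceil\}^{N_0}$.

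First I would build the lower bound. As in the proof of Proposition \ref{PropVortexProcheLambda}, decompose $\O$ (relative to $\O_\rad$) into the cores $B(z_i,\rad)$; the inclusion-scale perforated disks $B(y_{k(i)},2\lambda\delta)\setminus\bigcup\overline{B(z_j,\rad)}$ (Corollary \ref{Cor.BorneInfProcheIncl}.\ref{Cor.BorneInfProcheIncl2}); the cell-scale annuli $B(y_{k(i)},\delta/3)\setminus\overline{B(y_{k(i)},2\lambda\delta)}$, where $\alpha=1+o(|\ln\v|^{-2})$ by \eqref{EstLoinInterfaceU} (Proposition \ref{Prop.ComparaisonAnneau}.\ref{Prop.ComparaisonAnneau2}); the mesoscopic clusters $B(p_k,2\h^{-1/2}\ln\h)$ minus the cell disks they contain; and the macroscopic region $\O\setminus\bigcup_kB(p_k,2\h^{-1/2}\ln\h)$ (Propositions \ref{MinimalMapHomo}, \ref{Prop.EnergieRenDef}, \ref{VeryNiceCor}, whose hypotheses hold since the perforations have size $\simeq\h^{-1/2}\ln\h\to0$). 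Combined with \eqref{BorneInfEn} on the cores, the cores$+$inclusions$+$cells reproduce, per vortex, the cost $\pi[b^2|\ln\v|+(1-b^2)|\ln(\lambda\delta)|]$ up to $\mathcal{O}(1)$, the spurious $\pi|\ln\delta|$-terms cancelling against the degree-$D_k$ part that follows. For each mesoscopic cluster I would incorporate the magnetic term $2\pi\h\sum_i d_i\xi_0(z_i)$ from \eqref{FullDecDiscqVal0-ApplicGrossiere} and apply Proposition \ref{EnergieRenMeso} (using $\xi_0(p_k)=-\|\xi_0\|_{L^\infty(\O)}$): the cluster contribution plus $2\pi\h\sum_{k(i)=k}[\xi_0(z_i)-\xi_0(p_k)]$ is bounded below by $\frac{\pi}{2}(D_k^2-D_k)\ln(\h/D_k)+C_{p_k,D_k}+o(1)$, while the leftover macroscopic renormalized energy $W^{\rm macro}(\pD)+\tilde V[\zeta_\pD]$ is only $\mathcal{O}(1)$. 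This gives
\[
F(v)+2\pi\h\sum_i[\xi_0(z_i)-\xi_0(p_{k(i)})]\;\geq\;\pi d\,[b^2|\ln\v|+(1-b^2)|\ln(\lambda\delta)|]+\frac{\pi}{2}\Big(\sum_{k}D_k^2-d\Big)\ln\h-\mathcal{O}(1).
\]
Meanwhile, combining \eqref{FullDecDiscqVal0-ApplicGrossiere} with Proposition \ref{Prop.BorneSupSimple} and $\xi_0(p_k)=-\|\xi_0\|_{L^\infty(\O)}$ yields the matching upper bound with $\L_1(d)\ln\h+\mathcal{O}(1)$ in place of $\frac{\pi}{2}(\sum_kD_k^2-d)\ln\h-\mathcal{O}(1)$, where $\L_1(d)=\frac{\pi}{2}(E_d-d)$, $E_d$ as in Lemma \ref{LemSommeDegCarréDec}.\ref{LemSommeDegCarréDec2} with $N=N_0$. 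Since $\sum_kD_k^2\geq E_d$, the two bounds force $(\sum_kD_k^2-E_d)\ln\h\leq\mathcal{O}(1)$; as $\h\to\infty$ this gives $\sum_kD_k^2=E_d$ for small $\v$, hence $(D_k)_k\in\{\lfloor d/N_0\rfloor;\lceil d/N_0\rceil\}^{N_0}$ by Lemma \ref{LemSommeDegCarréDec}.\ref{LemSommeDegCarréDec2} — which disposes of the second alternative and proves assertion (2).

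It remains to rule out the first alternative. If $|z_{i_0}-z_{j_0}|<\h^{-1}\ln\h$, then $z_{i_0},z_{j_0}$ lie near the same $p:=p_{k(i_0)}$ and the mesoscopic lower bound there sharpens: writing the relevant quantity as $-2\pi\sum_{i<j}\ln|z_i-z_j|+2\pi\h\sum_i[\xi_0(z_i)-\xi_0(p)]$, the pair $(i_0,j_0)$ contributes $\geq 2\pi\ln\h-\mathcal{O}(\ln(\ln\h))$, every other unordered pair contributes $\geq -2\pi\ln(4\h^{-1/2}\ln\h)=\pi\ln\h-\mathcal{O}(\ln(\ln\h))$ (all vortices near $p$ lying within $\h^{-1/2}\ln\h$ of $p$ by Proposition \ref{PropVortexProcheLambda}), and the magnetic sum is $\geq 0$; hence the quantity is $\geq\pi\binom{D_p}{2}\ln\h+\pi\ln\h-\mathcal{O}(\ln(\ln\h))$, i.e. it exceeds $\frac{\pi}{2}(D_p^2-D_p)\ln(\h/D_p)+C_{p,D_p}=\pi\binom{D_p}{2}\ln\h+\mathcal{O}(1)$ by $\geq\pi\ln\h-\mathcal{O}(\ln(\ln\h))$. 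Carrying this extra $+\pi\ln\h$ through the lower bound and again using $\sum_kD_k^2\geq E_d$ contradicts the upper bound for small $\v$. Hypothesis \eqref{PutaindHypTech} enters precisely here, ensuring the scale ordering $\rad\ll\lambda\delta\ll\delta\ll\h^{-1/2}$ with $|\ln(\delta\sqrt{\h})|$ divergent fast enough that the reduction of a mesoscopic cluster's energy to Proposition \ref{EnergieRenMeso} through the cell structure costs only $\mathcal{O}(1)$.

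The step I expect to be the main obstacle is assembling the lower bound rigorously: gluing the interior Bethuel--Brezis--H\'el\'ein type lower bounds on the mesoscopic clusters to Proposition \ref{EnergieRenMeso} and to the macroscopic renormalized energy without double counting the transition radius $\simeq\h^{-1/2}\ln\h$, keeping track of the pinning weight $\alpha$ (which is $\simeq b^2$ on cores and inclusions but $\simeq 1$, controlled by \eqref{CqFondDilution}, on all larger scales), and checking that each accumulated error term really is $\mathcal{O}(1)$. The refined pair estimate driving assertion (1) is, by contrast, the soft pair-by-pair argument above and is immediate once the decomposition is in place.
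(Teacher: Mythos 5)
Your plan is appealing but has a circularity that the paper's actual proof is designed to avoid. You want to write a lower bound for $F(v)$ that exhibits the pairwise interaction term $-\pi\sum_{i\neq j}\ln|z_i-z_j|$ on each mesoscopic cluster so that Proposition~\ref{EnergieRenMeso} can absorb the magnetic term and produce $\frac{\pi}{2}(D_k^2-D_k)\ln\h+C_{p_k,D_k}$. But every tool in the paper that produces those interaction terms --- Propositions~\ref{Prop.EnergieRenDef} and \ref{VeryNiceCor}, and hence Lemma~\ref{BorneTresFineSansIncl} --- requires a perforation radius $\Rad$ with $\Rad<\tfrac18\min_{i\neq j}|z_i-z_j|$, which is exactly what Proposition~\ref{Prop.BonEcartement} is trying to establish. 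The only a priori separation you have is $|z_i-z_j|\geq\rad^{3/4}$ with $\rad\sim\v^\mu$; if you take $\Rad\sim\rad$ so that the perforations are certainly disjoint, the error term in Proposition~\ref{VeryNiceCor} scales like $\|\n\Pstar\|_{L^\infty}\|\n\Pstar\|_{L^2}(K_n+L_n)\sim \v^{-\mu}|\ln\v|^{3/2}\lambda$, which diverges. And Corollary~\ref{Cor.BorneInfProcheIncl}, via the separation process of Lemma~\ref{Lem.Separation}, only delivers the diagonal sums $\pi\sum d_i^2\ln(R/r)-C_0$, not the cross terms $-\ln|z_i-z_j|$. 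Indeed, in the paper Lemma~\ref{BorneTresFineSansIncl} (which is where $-\pi\sum\ln|z_i-z_j|$ first appears) is proved \emph{after} Proposition~\ref{Prop.BonEcartement} and explicitly quotes it to justify the choice of perforation radius $\h^{-1}$. So the ``assembling the lower bound'' step you flag as a technical obstacle is in fact a logical one: the pair-by-pair refinement you describe cannot be run because the decomposition it relies on is not available at this stage.

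The paper sidesteps this entirely by a pure degree-accounting argument. It never extracts $-\ln|z_i-z_j|$ before the proposition is proved: instead it tracks the degrees $\tilde d_l$, $\underline d_k$, $D_p$ at three nested scales $\kappa\delta\ll\kappa_p\chi_1\sim\h^{-1}\ln\h\ll\chi_2\sim\h^{-1/2}\ln\h$ (using Lemma~\ref{Lem.Separation} to get well-separated merged centers at each scale), and forms $\tilde\Delta=\sum\tilde d_l^2$, $\Delta=\sum\underline d_k^2$, $\sum_pD_p^2$. By Lemma~\ref{LemSommeDegCarréDec}.\ref{LemSommeDegCarréDec1} one has $\sum D_p^2\geq\Delta\geq\tilde\Delta\geq d$, and the matching of lower and upper bounds yields \eqref{topContra}: a nonnegative combination $(\Delta-\tilde\Delta)\ln\sqrt\h+(\tilde\Delta-d)|\ln(\delta\sqrt\h)|$ is bounded above by $\mathcal{O}(\ln\ln\h)$ plus the (nonpositive) quantity $(\L_1(d)/\pi+(d-\sum D_p^2)/2)\ln\h$. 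Hypothesis~\eqref{PutaindHypTech} makes $|\ln(\delta\sqrt\h)|/\ln\ln\h\to\infty$, forcing $\tilde\Delta=\Delta=d$ (every merged degree equals $1$, i.e.\ no two vortices within $\kappa_p\chi_1\geq\h^{-1}\ln\h$) and simultaneously $\sum D_p^2=E_d$, which is assertion (2) via Lemma~\ref{LemSommeDegCarréDec}.\ref{LemSommeDegCarréDec2}. This is the same energetic information your ``extra $\pi\ln\h$'' is after, but obtained at the level of degrees rather than at the level of pairwise logarithmic interactions, so it needs no a priori separation. If you want to repair your proof, the fix is to replace the appeal to Proposition~\ref{EnergieRenMeso} by this degree bookkeeping, which is in fact the entire content of the paper's argument.
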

The proof of Proposition \ref{Prop.BonEcartement} is postponed to Appendix \ref{Proof.Prop.BonEcartement}.

Since $\lambda\delta\h\to0$,  Proposition \ref{Prop.BonEcartement} implies that each cell of period contains at most a disc $B(z_i,\rad)$ with $i\in J_\mu$. 

Following the argument in \cite{Publi4} [proof of the third part in Proposition 3.6, see Appendix D-Section 4.5], we may refined Proposition \ref{PropDegPinning1}.\ref{PropDegPinning1.2}.
\begin{prop}\label{Prop.PinningComplet}
Assume $\mu$ satisfies \eqref{HypSurMu}, then there is $\eta_{\o,b}>0$ depending only on $\o$ and $b$ s.t. for $i\in J_\mu$ we have $B(z_i,2\eta_{\o,b}\lambda\delta)\subset\o_\v$.
\end{prop}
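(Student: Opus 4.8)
The plan is to argue by contradiction, exhibiting a diverging excess of Dirichlet energy carried by a bad disc whose rescaled centre approaches $\p\o$, in conflict with the sharp upper bound \eqref{BrneSup-ApplicDirEn}. I would begin by assuming the conclusion fails. Since $\dist(z_i,\o_\v)<\sqrt\v\ll\lambda\delta$ by Proposition \ref{PropDegPinning1}.\ref{PropDegPinning1.2} and $J_\mu$ is finite and eventually $\v$-independent, after extracting a subsequence one may fix $i_0\in J_\mu$ such that, writing $y_{i_0}\in\delta\Z^2$ for the cell centre of $z_{i_0}$ and $\hat z_{i_0}:=(z_{i_0}-y_{i_0})/(\lambda\delta)$, we have $\hat\rho_\v:=\dist(\hat z_{i_0},\p\o)\to0$; after a further subsequence I assume $\hat z_{i_0}\in\overline\o$ (if $\hat z_{i_0}\notin\overline\o$ the estimate below holds with $\alpha\simeq 1$ and gives an even larger excess). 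Recall $d_{i_0}=1$ by Proposition \ref{PropToutLesDegEg1}; and, for a small constant $\kappa_0=\kappa_0(\o)$ fixed below, by Proposition \ref{Prop.BonEcartement} together with $\lambda\delta\h\to0$ the cell of $z_{i_0}$ carries no other bad disc and $B(z_{i_0},\kappa_0\lambda\delta)\subset\O$ contains no $z_j$ with $j\neq i_0$. Hence $|v|\ge 1/2$ and $\deg(v)=1$ on $B(z_{i_0},\kappa_0\lambda\delta)\setminus\overline{B(z_{i_0},\rad)}$, and for $s<\kappa_0\lambda\delta$ the circle $\p B(z_{i_0},s)$ meets no inclusion except $y_{i_0}+\lambda\delta\o$.

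The key estimate I would establish is a circle-by-circle lower bound on the weighted energy around $z_{i_0}$. Since $\p\o$ is smooth and compact, there are $c_0=c_0(\o)\in(0,1)$ and $\kappa_0=\kappa_0(\o)\in(0,1)$ such that, for every $\hat z$ with $\dist(\hat z,\p\o)<\kappa_0$ and every $\hat s\in(2\dist(\hat z,\p\o),\kappa_0)$, at least a fraction $c_0$ of $\p B(\hat z,\hat s)$ lies outside $\o$. Rescaled to $\O$: for $s\in(2\lambda\delta\hat\rho_\v,\kappa_0\lambda\delta)$ a fraction $\ge c_0$ of $\p B(z_{i_0},s)$ lies in the matrix $\{a_\v=1\}$, where by \eqref{EstLoinInterfaceU} $U_\v^2=1-o(1)$ off an angular set whose weighted contribution $\int\frac{{\rm d}s}{s}(\cdot)$ is $o(1)$ — here one uses $\mu<1/4$ (forced by \eqref{HypSurMu}) to absorb the $O(\v)$-wide transition layer of $U_\v$ across $\p\o_\v$. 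Writing $v=\rho e^{i\varphi}$ and invoking \eqref{EstimaBasiCercle} on the circles $\p B(z_{i_0},s)$ with Lemma \ref{Lem.BorneLongueEnradian}, exactly as in the proof of Proposition \ref{Prop.ComparaisonAnneau}, one obtains for a.e.\ such $s$ that $\int_0^{2\pi}(U_\v^2\rho_s^2)^{-1}\le 2\pi\bigl[1+(1-c_0)(b^{-2}-1)\bigr]+o(1)$, and integrating,
\[
\tfrac12\int_{B(z_{i_0},\kappa_0\lambda\delta)\setminus\overline{B(z_{i_0},2\lambda\delta\hat\rho_\v)}}\alpha|\n v|^2\ \ge\ \pi\beta_0^2\ln\!\Bigl(\tfrac{\kappa_0}{2\hat\rho_\v}\Bigr)-C,\qquad \beta_0^2:=\bigl[1+(1-c_0)(b^{-2}-1)\bigr]^{-1}\in(b^2,1),
\]
with $C$ independent of $\v$; while on $B(z_{i_0},\lambda\delta\hat\rho_\v)\setminus\overline{B(z_{i_0},\rad)}\subset\o_\v$, where $\alpha\ge b^2$, Proposition \ref{Prop.ComparaisonAnneau}.\ref{Prop.ComparaisonAnneau2} yields $\tfrac12\int\alpha|\n v|^2\ge\pi b^2\ln(\lambda\delta\hat\rho_\v/\rad)-C$. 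Adding, the cell of $z_{i_0}$ contributes at least $\pi b^2\ln(\lambda\delta/\rad)+\pi(\beta_0^2-b^2)|\ln\hat\rho_\v|-C$, i.e.\ it exceeds by $\pi(\beta_0^2-b^2)|\ln\hat\rho_\v|-C\to+\infty$ the ``expected'' contribution $\pi b^2\ln(\lambda\delta/\rad)-C$ of a vortex deep inside its inclusion.

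To conclude, I would feed this improved estimate for the cell of $z_{i_0}$ into the cell/annulus decomposition from the proof of Proposition \ref{PropVortexProcheLambda} for the scales $\le\lambda\delta$, and combine it with the mesoscopic separation of Proposition \ref{Prop.BonEcartement} and the macroscopic and mesoscopic renormalized energies (Propositions \ref{Prop.EnergieRenDef} and \ref{EnergieRenMeso}) for the scales between $\lambda\delta$ and $1$, reaching the matching lower bound $\tfrac12\int_{\O_\rad}\alpha|\n v|^2\ge d\pi\bigl[b^2|\ln\rad|+(1-b^2)|\ln(\lambda\delta)|\bigr]+\L_1(d)\ln\h+\pi(\beta_0^2-b^2)|\ln\hat\rho_\v|-C$. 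As $\hat\rho_\v\to0$ this contradicts \eqref{BrneSup-ApplicDirEn} for $\v$ small, so some $\eta_{\o,b}>0$ depending only on $\o,b$ fulfils $B(z_i,2\eta_{\o,b}\lambda\delta)\subset\o_\v$ for all $i\in J_\mu$ and small $\v$. (This is merely the quantitative shadow of the blow-up $W^{\rm micro}(x_0)\to+\infty$ as $\dist(x_0,\p\o)\to0$, which the finite $\min_\o W^{\rm micro}$ appearing in the upper bound cannot absorb.)

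The step I expect to be the main obstacle is the circle-by-circle estimate above: it must hold uniformly over the whole range $s\in(2\lambda\delta\hat\rho_\v,\kappa_0\lambda\delta)$ with every error term genuinely $O(1)$ — not $o$ of a logarithm — so that the lone, only slowly diverging gain $|\ln\hat\rho_\v|$ still outweighs them; this is precisely where one must control the $U_\v$-transition layer sharply and exploit $\mu<1/4$, $\lambda^{1/4}|\ln\v|\to0$ and the smallness of $\Haus^1[V(t_\v)]$ relative to $\rad$. A secondary bookkeeping point is recovering the $\L_1(d)\ln\h$ term in the matching lower bound from the mesoscopic renormalized energy, along the lines of the proof of Proposition \ref{Prop.BonEcartement}.
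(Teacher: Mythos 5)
The paper's own proof is a one-line citation to \cite{Publi4}, so a textual comparison is not possible; but the contradiction argument you outline — a near-boundary vortex incurring a diverging $|\ln\hat\rho_\v|$ energy excess, extracted circle by circle — is the natural and standard one, and the geometric ingredient you single out is correct. In particular the circle estimate goes through with every error $o(1)$: the transition layer contributes $O(\v/s)$, the set $\{|v|<t_\v\}$ contributes $O(\Haus^1[V(t_\v)]/s)$ via Lemma~\ref{Lem.BorneLongueEnradian}, and $\rho^{-2}-1$ on the good set contributes $O(1-t_\v^2)$; integrating against ${\rm d}s/s$ over $(\rad,\kappa_0\lambda\delta)$ and using $\mu<1/4$ (forced by \eqref{HypSurMu}), \eqref{BorneValeurMesure}, $1-t_\v^2=O(|\ln\v|^{-2})$ and $\ln(\lambda\delta/\rad)=O(|\ln\v|)$ makes the total error $o(1)$, so the excess $\pi(\beta_0^2-b^2)|\ln\hat\rho_\v|$ does appear with an honest $O(1)$ remainder at the level of the cell $B(y_{i_0},\delta/3)$.

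There is, however, a genuine gap in the way you close the argument when $d>N_0$. Your matching lower bound is stated for $\tfrac12\int_{\O_\rad}\alpha|\n v|^2$ with an explicit $\L_1(d)\ln\h$ term and $O(1)$ remainder, and you then compare with \eqref{BrneSup-ApplicDirEn}. But at this stage the $\L_1(d)\ln\h$ term cannot be reproduced in a lower bound for $\tfrac12\int_{\O_\rad}\alpha|\n v|^2$ with $O(1)$ error: Lemma~\ref{BorneTresFineSansIncl} gives $-\pi\sum_{i\neq j}\ln|z_i-z_j|$ in place of $\L_1(d)\ln\h$, and Proposition~\ref{Prop.BonEcartement} only yields $|z_i-z_j|\in[\h^{-1}\ln\h,\,2\h^{-1/2}\ln\h]$, so $-\pi\sum\ln|z_i-z_j|=\L_1(d)\ln\h+O(\ln\ln\h)$. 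Comparing with \eqref{BrneSup-ApplicDirEn} then yields only $\hat\rho_\v\gtrsim(\ln\h)^{-C}$, which still permits $\hat z_{i_0}\to\p\o$ and does not prove the proposition. The correct comparison is at the level of $\F(v_\v,A_\v)$, equivalently of $F(v_\v)+2\pi\h\sum_i\xi_0(z_i)$: there the attraction term $2\pi\h\sum[\xi_0(z_i)-\xi_0(p)]$ combines with $-\pi\sum\ln|z_i-z_j|$ through Proposition~\ref{EnergieRenMeso} (as in \eqref{BigBig3}) to produce $\L_1(d)\ln\h$ with an honest $O(1)$ remainder. Concretely, substitute your improved cell bound for the coarse $\alpha\ge b^2$ estimate in the cell of $z_{i_0}$ inside the derivation of \eqref{BigBig1}, combine with \eqref{BigBig2} and \eqref{BigBig3}, and compare with the sharp test-function upper bound \eqref{ExactExpEnergTest}; this gives $\pi(\beta_0^2-b^2)|\ln\hat\rho_\v|\le O(1)$, hence $\hat\rho_\v\ge\eta>0$. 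A secondary caveat: the $\eta$ obtained this way a priori depends on $\O$, $K$ and $\mathcal{D}_{K,b}$ through the $O(1)$ margin, so the claim that it depends only on $\o,b$ is not immediate from the argument as written.
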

\begin{cor}
Assume $\mu$ satisfies \eqref{HypSurMu}. Then we have
\begin{equation}\label{Onfaitcommeonpeut}
\int_{\O\setminus \cup_{i\in J_\mu}B(z_i,\lambda^2\delta^2)}|\n v|^2+\dfrac{1}{\v^2}(1-|v|^2)^2=\mathcal{O}(|\ln(\lambda\delta)|).
\end{equation}
Moreover
\begin{equation}\label{Onfaitcommeonpeut...}
\text{$|v|=1+o(1)$ in $\O\setminus \cup_{i\in J_\mu}B(z_i,2\lambda^2\delta^2)$.}
\end{equation}
\end{cor}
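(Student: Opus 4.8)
The plan is to prove the Corollary as a direct harvest of the lower and upper bounds already established for quasi-minimizers, combined with the fine description of the $\v^s$-bad discs.

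First I would fix $\mu$ satisfying \eqref{HypSurMu} so that Propositions \ref{PropToutLesDegEg1}, \ref{PropVortexProcheLambda}, \ref{Prop.BonEcartement} and \ref{Prop.PinningComplet} all apply. Then I would collect what is now known about the bad discs $B(z_i,\rad)$, $i\in J_\mu$: the degrees are all $1$ (Proposition \ref{PropToutLesDegEg1}), the centers lie within $\ln(\h)/\sqrt\h$ of $\Lambda$ (Proposition \ref{PropVortexProcheLambda}), the mutual distances exceed $\h^{-1}\ln\h$ (Proposition \ref{Prop.BonEcartement}), and $B(z_i,2\eta_{\o,b}\lambda\delta)\subset\o_\v$ (Proposition \ref{Prop.PinningComplet}). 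In particular $\{|v|\le 1/2\}\subset\cup_{i\in J_\mu}B(z_i,\rad)\subset\cup_{i\in J_\mu}B(z_i,\lambda^2\delta^2)$ for small $\v$, since $\rad\le\v^{\mu_*}\ll\lambda^2\delta^2$ under \eqref{PutaindHypTech}.

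For \eqref{Onfaitcommeonpeut} the strategy is to compare the upper bound \eqref{BrneSup-ApplicF(v)} on $F(v)$ against a lower bound obtained by the same perforation argument as in the proof of Proposition \ref{PropVortexProcheLambda}, but now with perforations of radius $\lambda^2\delta^2$. Concretely: use \eqref{BorneInfEnWeight} and Corollary \ref{Cor.BorneInfProcheIncl}.\ref{Cor.BorneInfProcheIncl2} to extract, from each $B(z_i,\lambda^2\delta^2)\setminus\overline{B(z_i,\rad)}$ and the surrounding annuli up to scale $\chi\sim\ln(\h)/\sqrt\h$, a total contribution of $\pi b^2 d|\ln(\lambda^2\delta^2)| + \pi b^2 d|\ln(\lambda^2\delta^2/\rad)| + \dots$ matching the leading terms of \eqref{BrneSup-ApplicF(v)} up to $\mathcal O(|\ln(\lambda\delta)|)$; then
\[
F(v) - \dfrac{1}{2}\int_{\cup_{i\in J_\mu}B(z_i,\lambda^2\delta^2)}|\n v|^2 + \dfrac{1}{2\v^2}(1-|v|^2)^2 = \mathcal O(|\ln(\lambda\delta)|),
\]
which upon feeding back the upper bound gives \eqref{Onfaitcommeonpeut} (after controlling the weight $\alpha$ via \eqref{EstLoinInterfaceU}, since the complement of the discs is at distance $\gg\v$ from $\p\o_\v$ away from the discs, so $\alpha = 1 + o(|\ln\v|^{-2})$ there). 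Then \eqref{Onfaitcommeonpeut...} follows: on $\O\setminus\cup_{i\in J_\mu}B(z_i,2\lambda^2\delta^2)$ one has $|v|\ge 1/2$, and from the $W^{2,1}$-bound on $|v|$ together with $\|\n|v|\|_{L^\infty}=\mathcal O(\v^{-1})$ and the $\eta$-ellipticity Proposition \ref{Prop.EtaEllpProp}, a pointwise estimate $|v(x)| \ge 1 - C\v^{\mu'/2}$ propagates (as in the argument behind \eqref{ContraRER}), giving $|v| = 1 + o(1)$ uniformly on that set.

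The main obstacle I expect is the bookkeeping in the lower bound for \eqref{Onfaitcommeonpeut}: one must carefully nest the perforated annuli (tiny scale $\rad$, pinning scale $\lambda\delta$, cell scale $\delta$, cluster scale $\chi$, macroscopic scale $1$), verify that at each nesting the hypotheses of Corollary \ref{Cor.BorneInfProcheIncl} and Proposition \ref{Prop.ComparaisonAnneau} hold (in particular $\Haus^1[V(t_\v)]/r + (1-t_\v^2) = o[\ln(R/r)]$ at every scale, which is where \eqref{PutaindHypTech} enters to guarantee $|\ln\rad|$, $|\ln(\lambda\delta)|$ and $\ln\h$ are comparable in the right way), and sum the degree-squared contributions using Lemma \ref{LemSommeDegCarréDec} so that no energy above the $\mathcal O(|\ln(\lambda\delta)|)$ error is lost. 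Once the nested lower bound matches \eqref{BrneSup-ApplicF(v)} to within $\mathcal O(|\ln(\lambda\delta)|)$, both displays follow immediately.
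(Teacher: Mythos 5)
Your strategy (compare the sharp upper bound \eqref{BrneSup-ApplicF(v)} with a lower bound concentrated in the bad discs, then invoke $\eta$-ellipticity for the second display) is the right one and essentially matches the paper, but two steps as written do not close.

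First, the weight control you invoke is wrong. You claim that on $\O\setminus\cup_i B(z_i,\lambda^2\delta^2)$ one has $\alpha = 1 + o(|\ln\v|^{-2})$ because that set is ``at distance $\gg\v$ from $\p\o_\v$''. This is false: the complement of the small discs still contains the transition layers around the boundaries of \emph{every} connected component of $\o_\v$ (of which there are $\sim\delta^{-2}$, and only $d$ of them contain a bad disc). On those layers $\alpha=U_\v^2$ passes through the full range $[b^2,1]$. Consequently your displayed identity, which involves the \emph{unweighted} energy $\frac12\int_{\cup B_i}|\n v|^2+\frac1{2\v^2}(1-|v|^2)^2$, cannot be converted into the desired unweighted bound on the complement just by your weight claim; indeed, since $\alpha\le1$, the quantity $F(v)-\frac12\int_{\cup B_i}[\dots]$ is an \emph{underestimate} of $\frac12\int_{\O\setminus\cup B_i}[\dots]$, so bounding it from above gives nothing. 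What actually works, and what the paper does, is the opposite and much cruder observation: $\alpha\ge b^2$ everywhere, hence $F\bigl(v,\O\setminus\cup B_i\bigr)\ge \frac{b^4}{4}\int_{\O\setminus\cup B_i}|\n v|^2+\frac1{\v^2}(1-|v|^2)^2$. Then $F(v)-\sum_i F\bigl(v,B(z_i,\lambda^2\delta^2)\bigr)=F\bigl(v,\O\setminus\cup B_i\bigr)$, and one only needs a lower bound on each $F\bigl(v,B(z_i,\lambda^2\delta^2)\bigr)$, obtained in \emph{one} step by splitting $B(z_i,\lambda^2\delta^2)=B(z_i,\rad)\cup\bigl(B(z_i,\lambda^2\delta^2)\setminus\overline{B(z_i,\rad)}\bigr)$, using \eqref{BorneInfEn} inside $B(z_i,\rad)$ and Proposition~\ref{Prop.ComparaisonAnneau}.\ref{Prop.ComparaisonAnneau2} in the annulus. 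Your full multi-scale nesting up to $\chi$ and beyond is unnecessary; it reproduces the proof of Proposition~\ref{PropVortexProcheLambda} where a single annulus suffices.

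Second, for \eqref{Onfaitcommeonpeut...} your ``pointwise estimate $|v(x)|\ge1-C\v^{\mu'/2}$ propagates'' is not a precise argument and claims more than what is proved or needed. The paper argues by contradiction: if there were $x_n$ in the complement of $\cup B(z_i,2\lambda^2\delta^2)$ with $|v(x_n)|<t<1$, then Proposition~\ref{Prop.EtaEllpProp} gives $\int_{B(x_n,\sqrt{\v_n})\cap\O}|\n v|^2+\frac1{\v_n^2}(1-|v|^2)^2\ge C_t|\ln\v_n|$, and for $\sqrt{\v_n}<\lambda^2\delta^2$ that ball lies in $\O\setminus\cup B(z_i,\lambda^2\delta^2)$, contradicting \eqref{Onfaitcommeonpeut}. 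This is the clean way to pass from the energy bound to $|v|=1+o(1)$; no propagation mechanism is needed.
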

\begin{proof}
We have
\[
\dfrac{b^4}{4}\int_{\O\setminus \cup_{i\in J_\mu}B(z_i,\lambda^2\delta^2)}|\n v|^2+\dfrac{1}{\v^2}(1-|v|^2)^2\leq F(v)-\sum_{i\in J_\mu}F(v,B(z_i,\lambda^2\delta^2)).
\]
For $i\in J_\mu$, from Corollary \ref{Prop.ComparaisonAnneau}.\ref{Prop.ComparaisonAnneau2} :
\begin{eqnarray*}
F(v,B(z_i,\lambda^2\delta^2))&\geq&\dfrac{b^2}{2}\int_{B(z_i,\lambda^2\delta^2)\setminus\overline{B(z_i,\rad)}}|\n v|^2+F(v,B(z_i,\rad))
\\&\geq&2b^2\pi\ln(\lambda\delta)+b^2\pi|\ln\v|+\mathcal{O}(1).
\end{eqnarray*}
Since, by Proposition \ref{Prop.BonEcartement}, the discs $B(z_i,\lambda^2\delta^2)$ are pairwise disjoint, we obtain with \eqref{BrneSup-ApplicF(v)}:
\[
\dfrac{b^4}{4}\int_{\O\setminus \cup_{i\in J_\mu}B(z_i,\lambda^2\delta^2)}|\n v|^2+\dfrac{1}{\v^2}(1-|v|^2)^2\leq \mathcal{O}(|\ln(\lambda\delta)|).
\]This estimate is equivalent to \eqref{Onfaitcommeonpeut}.

We are going to prove \eqref{Onfaitcommeonpeut...}. We argue by contradiction and we assume the existence of an extraction still denoted $\v=\v_n\downarrow0$, $t\in(0,1)$ and $(x_n)_n\subset\O\setminus \cup_{i\in J_\mu}B(z_i,2\lambda^2\delta^2)$ s.t. $|v_{\v_n}(x_n)|<t$. 

By Proposition \ref{Prop.EtaEllpProp}, there exists $C_t>0$ s.t. for sufficiently large $n$:
\begin{equation}\label{BarEqBar}
\int_{B(x_n,\sqrt{\v}_n)\cap\O}|\n v_{\v_n}|^2+\dfrac{1}{\v_n^2}(1-|v_{\v_n}|^2)^2> {C_t}{}|\ln\v_n|.
\end{equation}
Moreover, for $n$ sufficiently large to get  $\sqrt{\v}_n<\lambda^2\delta^2$,  we have $[B(x_n,\sqrt{\v}_n)\cap\O]\subset\O\setminus \cup_{i\in J_\mu}B(z_i,\lambda^2\delta^2)$. This inclusion  is in contradiction with \eqref{Onfaitcommeonpeut} and \eqref{BarEqBar}.
\end{proof}





From Proposition \ref{Prop.PinningComplet}, for $i\in J_\mu$, we have $\hat z_i:=\dfrac{z_i-y_i}{\lambda\delta}\in\o$ where $y_i\in\delta\Z^2$ is s.t. $z_i\in B(y_i,\lambda\delta)$. Moreover, up to consider an extraction, we may assume that, for $i\in J_\mu$, there exits $\hat z^0_i\in\o$ s.t. $\hat z_i\to\hat z_i^0$.

We start with the following proposition.
\begin{prop}\label{Prop.BorneInfTrèsFine}We have the following sharp lower bound:
\begin{eqnarray*}
\F(v,A)&\geq&\h^2 \Jo+d\Pic\left[-\h+\HoC \right]+\L_1(d)\ln\h+\L_2(d)+\phantom{gsgsgsgs}
\\&&\phantom{gsgsgsgs}+\sum_{i\in J_\mu}[W^{\rm micro}(\hat{z}_i^0)-\min_\o W^{\rm micro}]+[\W_{d}({\bf D})-\Wmin_{d}]+o(1)
\end{eqnarray*}
where  $\Wmin_{d}=\min_{\LamN} \W_{d}$ is defined in \eqref{CouplageEnergieRen} and
\begin{equation}\label{DefWdOpD}
\W_{d}({\bf D}):=W_{N_0}^{\rm macro}\pD+\sum_{p\in\Lambda}C_{p,D_p}+\tilde{V}[\zeta_\pD]
\end{equation}
where for $p\in\Lambda$, $D\in\N^*$, $C_{p,D}$ is defined in \eqref{DefCpD}, $C_{p,0}:=0$ and $\tilde{V}[\zeta_\pD]$ is defined in Proposition \ref{Prop.Information.Zeta-a}.
\end{prop}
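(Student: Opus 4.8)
The plan is to assemble the sharp lower bound for $\F(v,A)$ by combining the energetic decomposition from Proposition \ref{Docmpen} with the three renormalized-energy lower bounds at the three scales (macroscopic, mesoscopic, microscopic), and matching them up exactly so that the error terms are $o(1)$. This is essentially the reverse of the test-function construction of Proposition \ref{Prop.BorneSupSimple}; the work is in showing that the lower bounds are tight.

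\textbf{Step 1: Reduce to the $F(v)$ plus magnetic decomposition.} Using the work done earlier in this section, we may assume $(v,A)$ is in the Coulomb gauge with $A=\n^\bot\xi$, $\xi$ solving \eqref{Eq.MinPb.Pot}, so that Proposition \ref{Docmpen} applies with the pseudo-vortices $\{(z_i,d_i)\,|\,i\in J_\mu\}$ (recall $d_i=1$ for all $i$ by Proposition \ref{PropToutLesDegEg1}). After passing to a subsequence (Proposition \ref{PropClusterI}), the $z_i$ cluster on $\Lambda$; write $D_p=\deg_{\p B(p,\chi)}(v)$ for $p\in\Lambda$, so $\sum_p D_p=d$ and, by Proposition \ref{Prop.BonEcartement}, ${\bf D}\in\LamN$. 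Corollary \ref{Cor.DecompPourCluster} then gives
\[
\F(v,A)\geq\h^2\Jo+2\pi\h\sum_{i\in J_\mu}d_i\xi_0(z_i)+F(v)+\tilde V[\zeta_\pD]+o(1).
\]
Since $\dist(z_i,\Lambda)\leq\ln(\h)/\sqrt\h$ (Proposition \ref{PropVortexProcheLambda}) and $\xi_0$ has non-degenerate minima, expand $\xi_0(z_i)=\min\xi_0+\tfrac12 Q_{p}(z_i-p)+o(|z_i-p|^2)$; this will produce the $\Pic[-\h+\HoC]$ leading term together with a mesoscopic contribution $\pi\h\sum_p\sum_{i\in S_p}Q_p(z_i-p)$ to be absorbed below.

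\textbf{Step 2: Lower bound for $F(v)$ at all three scales.} This is the heart. Split $\O_\rad$ (or rather $\O$ minus the small discs $B(z_i,\rad)$) into: (i) the perforated micro-cells $B(y_i,2\lambda\delta)\setminus\overline{B(z_i,\rad)}$; (ii) the annuli $B(y_i,\delta/3)\setminus\overline{B(y_i,2\lambda\delta)}$; (iii) the meso-clusters $B(p,\chi)\setminus\cup_{k}\overline{B(y_k,\kappa\delta)}$ for $p\in\Lambda$; (iv) the exterior $\O\setminus\cup_{p\in\Lambda}\overline{B(p,\chi)}$, exactly as in the decomposition \eqref{BigDecomp} used in the proof of Proposition \ref{PropVortexProcheLambda}. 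On (i), use \eqref{DefRenMicroEn3} (microscopic renormalized energy) to get $\pi b^2|\ln(\lambda\delta/\rad)|+b^2\pi|\ln\rad|+W^{\rm micro}(\hat z_i^0)+o(1)$ per disc, plus Corollary \ref{Cor.BorneInfProcheIncl} for the separation when several vortices sit in one cell (ruled out by Proposition \ref{Prop.BonEcartement} anyway, so $|\tilde J_k|=1$); this yields $\pi b^2 d|\ln(\lambda\delta)|+\pi b^2 d|\ln(\v^{1-\mu})|$-type terms together with $\sum_i W^{\rm micro}(\hat z_i^0)$. On (ii), Proposition \ref{Prop.ComparaisonAnneau}.\ref{Prop.ComparaisonAnneau2} with $\alpha\simeq1$ (by \eqref{EstLoinInterfaceU}) gives $\pi(\deg)^2|\ln\lambda|+O(1)$; combining with (i) converts $b^2|\ln(\lambda\delta)|$ into the correct $(1-b^2)|\ln(\lambda\delta)|+b^2|\ln\delta|$ split and produces $\sum_p D_p^2$ factors. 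On (iii), Corollary \ref{Cor.BorneInfProcheIncl}.\ref{Cor.BorneInfProcheIncl1} gives the cross-terms $\pi\sum D_p^2\ln(\chi/\delta)$; together with Proposition \ref{EnergieRenMeso} (applied with $R\simeq\chi$, using $\chi\sqrt\h\to\infty$) the $\ln(\chi/\delta)$ and the mesoscopic $\h Q_p$ terms combine into $\tfrac\pi2(D_p^2-D_p)\ln(\h/D_p)+C_{p,D_p}$. On (iv), Proposition \ref{VeryNiceCor} (the weight is negligible since $\lambda|\ln\chi|/\chi\to0$ by \eqref{Hyp.DistLambdaContra}-type estimate) gives $\pi\sum_p D_p^2|\ln\chi|$ up to $W^{\rm macro}\pD$ via Proposition \ref{Prop.EnergieRenDef}. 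Summing all four, the $\chi$, $\delta$, $\lambda$ and $\rad$ scales telescope exactly (this is where one must be careful about $\ln\h$, $\ln(\lambda\delta)$, $|\ln\v|$ being of different orders, using \eqref{CondOnLambdaDelta}, \eqref{BorneKMagn}, \eqref{PutaindHypTech}), leaving
\[
F(v)\geq\pi d\bigl[b^2|\ln\v|+(1-b^2)|\ln(\lambda\delta)|\bigr]+\L_1(d)\ln\h+\L_2'+\sum_i W^{\rm micro}(\hat z_i^0)+W^{\rm macro}\pD+\sum_p C_{p,D_p}+o(1)
\]
for the appropriate constant; here one uses Lemma \ref{LemLaisseLectTrucSimple} to identify $\L_1(d)$ and the structure of $\L_2(d)$, and Lemma \ref{LemSommeDegCarréDec} to handle the $\sum D_p^2$ bookkeeping.

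\textbf{Step 3: Reassemble and recognize the renormalized energy.} Plug the Step-2 bound and the $\xi_0$-expansion from Step 1 into the Corollary \ref{Cor.DecompPourCluster} inequality. The term $2\pi\h\sum d_i\xi_0(z_i)$ contributes $-d\Pic\h$ plus the mesoscopic quadratic piece already accounted for; $\tilde V[\zeta_\pD]$ is exactly the last summand in \eqref{DefWdOpD}. Collecting, $W^{\rm macro}\pD+\sum_p C_{p,D_p}+\tilde V[\zeta_\pD]=\W_d({\bf D})$, and writing $\W_d({\bf D})=\Wmin_d+[\W_d({\bf D})-\Wmin_d]$ and $\sum_i W^{\rm micro}(\hat z_i^0)=d\min_\o W^{\rm micro}+\sum_i[W^{\rm micro}(\hat z_i^0)-\min_\o W^{\rm micro}]$, the constant parts combine (via \eqref{DefH0c1}, \eqref{DefGammaBo}, and the $\gamma,\pi\ln b$ from \cite{BBH}) into $d\Pic\HoC+\L_2(d)$, giving precisely the claimed inequality.

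\textbf{Main obstacle.} The delicate point is the simultaneous matching of all scales in Step 2: each of Propositions \ref{EnergieRenMeso}, \ref{Prop.EnergieRenDef}, \ref{VeryNiceCor} and Corollary \ref{Cor.BorneInfProcheIncl} carries its own error term ($X$, $Z$, $K_n$, $L_n$, $o[\ln(R/r)]$), and one must verify under \eqref{CondOnLambdaDelta}-\eqref{PutaindHypTech} that every one of these is genuinely $o(1)$ at the chosen radii $\rad\in[\v^\mu,\v^{\mu_*}]$, $2\lambda\delta$, $\delta/3$, $\kappa\delta$, $\chi\geq\ln\h/\sqrt\h$, $\eta_\O$ — in particular that the weighted/unweighted comparison (the ``dilution effect'') costs nothing, which is exactly why \eqref{PutaindHypTech} is imposed. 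A secondary subtlety is that the upper bound Proposition \ref{Prop.BorneSupSimple} forces $F(v)$ up to $O(1)$, so there is no room to lose even a bounded amount; one must be sure the covering/separation steps (Lemma \ref{Lem.Separation}) introduce only $o(1)$, not $O(1)$, errors after the scales telescope — this is what pins down ${\bf D}$ as a minimizer of $\W_d$ and $\hat z_i^0$ as a minimizer of $W^{\rm micro}$ in the companion ``optimality'' statement.
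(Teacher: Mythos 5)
Your overall plan matches the paper's: combine the magnetic decomposition from Corollary \ref{Cor.DecompPourCluster} with a multi-scale lower bound for $F(v)$, and then use Proposition \ref{EnergieRenMeso} to convert the mesoscopic cross-terms $-\pi\sum\ln|z_i-z_j|$ together with the $\xi_0$-differences into $\tfrac{\pi}{2}(D_p^2-D_p)\ln(\h/D_p)+C_{p,D_p}$. Your Steps 1 and 3, and your ``Main obstacle'' paragraph, are faithful to what the paper actually does (the paper's \eqref{BigBig2}, \eqref{BigBig3}, and the final reassembly).

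The genuine gap is in Step 2 at the innermost scales, and it is not merely a bookkeeping issue because it is precisely where the constants entering $\HoC$ and $\L_2(d)$ come from. You propose to apply \eqref{DefRenMicroEn3} directly to the domain $B(y_i,\delta/3)\setminus\overline{B(z_i,\rad)}$, but \eqref{DefRenMicroEn3} is a lower bound for an $\S^1$-valued infimum, whereas $v$ is $\C$-valued and has a zero inside $B(z_i,\rad)$. The paper's Lemma \ref{Prop.EstFineAtraversIncl} bridges this by replacing $v$ on the annulus $\Ring_i$ with a \emph{minimizer} $v_\star$ of $F_\v(\cdot,\Ring_i)$ with the same trace, invoking an interior $\eta$-ellipticity estimate to show $|v_\star|\geq 1-\mathcal{O}(\sqrt{|\ln(\lambda\delta)|/|\ln\v|})$ away from the boundary, and only then passing to the $\S^1$-valued problem; this needs the hypothesis $|\ln(\lambda\delta)|^3/|\ln\v|=o(1)$ and is not automatic. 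Moreover, the vortex core term $b^2[\pi\ln(\Rad/\v)+\pi\ln b+\gamma]$ requires the paper's unnumbered third lemma: one must first establish the upper bound \eqref{BorneDansAnneaPourConstruBonneTrace}, then select by a mean-value argument a ``good'' radius $\Rad\in[\sqrt\rad,\rad^{1/4}]$ (much larger than $\rad$, so the boundary data is controlled), build an explicit $\S^1$-valued extension with constant phase at the outer radius, and apply Lemma IX.1 of \cite{BBH}. Your formula ``$\pi b^2|\ln(\lambda\delta/\rad)|+b^2\pi|\ln\rad|$'' for region (i) does not produce these constants, and in particular the $b^2(\pi\ln b+\gamma)$ term is nowhere to be found in your expansion; since $\tilde\gamma_{b,\o}$ is built from $\min_\o W^{\rm micro}+b^2(\gamma+\pi\ln b)$, an error here shifts $\HoC$ by a nonzero constant and kills the $o(1)$ matching that is the whole point of the proposition. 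In short: you need to separate the $\S^1$-passage (Lemma \ref{Prop.EstFineAtraversIncl}) from the core-energy estimate (third lemma, via $\Rad$ and Lemma IX.1 of \cite{BBH}) rather than absorbing them both into \eqref{DefRenMicroEn3}.
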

We split the proof of Proposition \ref{Prop.BorneInfTrèsFine} in {several} lemmas.

The first step is the following lemma consisting in a "macroscopic/mesoscopic" version of Proposition \ref{Prop.BorneInfTrèsFine}.
\begin{lem}\label{BorneTresFineSansIncl}
Let  $\rho=|v|$ and  $w=v/\rho$ in $\O\setminus\cup_{i\in J_\mu}\overline{B(y_i,\delta/3)}$ . We then have
\begin{eqnarray*}
\dfrac{1}{2}\int_{\O\setminus\cup_{i\in J_\mu}\overline{B(y_i,\delta/3)}}\alpha\rho^2|\n w|^2&\geq& d\pi|\ln(\delta/3)|-\pi\sum_{\substack{p\in\Lambda\\D_p\geq2}}\sum_{\substack{i,j\in J_p\\i\neq j}}\ln|z_i-z_j|+
\\&&\phantom{jdjdjdjdjdjdjdjd}+W^{\rm macro}_{N_0}\pD+o(1).
\end{eqnarray*}
\end{lem}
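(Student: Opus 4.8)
The plan is to obtain the lower bound on $\frac{1}{2}\int_{\O\setminus\cup_{i\in J_\mu}\overline{B(y_i,\delta/3)}}\alpha\rho^2|\n w|^2$ by splitting the domain into three nested annular regions — the "microscopic to mesoscopic gap" near each inclusion center $y_k$, the mesoscopic clusters around each $p\in\Lambda$, and the macroscopic perforated domain $\O$ with holes of radius $\chi\sim\ln(\h)/\sqrt\h$ around the $p_k$'s — and then adding the contributions. Concretely, introduce $\chi:=10\max_{k\in\tilde J}\dist(y_k,\Lambda)$ (by Proposition \ref{PropVortexProcheLambda} we know $\chi\leq C\ln(\h)/\sqrt\h$, so in particular $\lambda|\ln\chi|/\chi\to0$), and for $p\in\Lambda$ set $D_p:=\deg_{\p B(p,\chi)}(v)$ and $J_p:=\{k\in J^{(y)}\,|\,y_k\in B(p,\chi)\}$. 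Since all $d_i=1$ by Proposition \ref{PropToutLesDegEg1} and $\sum_{i\in J_\mu}d_i=d$ by Proposition \ref{PropDegPinning1}, $\sum_{p\in\Lambda}D_p=d$. The decomposition is then
\begin{eqnarray*}
\dfrac{1}{2}\int_{\O\setminus\cup\overline{B(y_i,\delta/3)}}\alpha\rho^2|\n w|^2
&\geq&\sum_{k\in\tilde J}\dfrac{1}{2}\int_{B(y_k,\delta/3)\setminus\overline{B(y_k,\kappa\delta)}}\alpha\rho^2|\n w|^2
\\&&+\sum_{p\in\Lambda}\dfrac{1}{2}\int_{B(p,\chi)\setminus\cup_{k\in J_p}\overline{B(y_k,\kappa\delta)}}\alpha\rho^2|\n w|^2
\\&&+\dfrac{1}{2}\int_{\O\setminus\cup_{p\in\Lambda}\overline{B(p,\chi)}}\alpha\rho^2|\n w|^2,
\end{eqnarray*}
after noting that $B(y_k,\kappa\delta)\setminus\overline{B(y_k,\delta/3)}$ has been absorbed into the first sum (using $\kappa\delta\ll\delta/3$... wait — I must be careful; more precisely the region between radius $\kappa\delta$ and $\delta/3$ around each $y_k$ is the gap annulus). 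The dilution estimate \eqref{CqFondDilution} together with Proposition \ref{Prop.ComparaisonAnneau}.\ref{Prop.ComparaisonAnneau2} handles the small-scale annuli where $\alpha$ may drop to $b^2$, replacing $\int\alpha\rho^2|\n w|^2$ by $\int|\n w|^2$ up to $o(1)$; this is exactly where $\lambda\to0$ is used.

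For the first sum, applying Proposition \ref{Prop.ComparaisonAnneau}.\ref{Prop.ComparaisonAnneau2} on each annulus $B(y_k,\delta/3)\setminus\overline{B(y_k,\kappa\delta)}$ and using $\deg_{\p B(y_k,\kappa\delta)}(v)=\tilde d_k=\Card(J_k)$ (since each inner degree is $1$ and the discs in a cell are the ones indexed by $J_k$), gives $\sum_k \pi\tilde d_k^2(|\ln(\delta/3)|-|\ln(\kappa\delta)|)+o(1)=\sum_k\pi\tilde d_k^2|\ln(1/(3\kappa))| \ldots$ — more usefully, I will keep the full scale $|\ln(\delta/3)|$ for the degree-one pieces: since $d=\sum\tilde d_k$ and we want $d\pi|\ln(\delta/3)|$ out front, I separate, for each $k$ with $\tilde d_k\ge 2$, the lower-order logarithmic losses. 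The mesoscopic sum, handled by Corollary \ref{Cor.BorneInfProcheIncl}.\ref{Cor.BorneInfProcheIncl1}, contributes $\pi\sum_{p}\sum_{k\in J_p}\tilde d_k^2\ln(\chi/\delta)+\mathcal{O}(1)$. The macroscopic piece, handled by Propositions \ref{MinimalMapHomo}, \ref{Prop.EnergieRenDef} and \ref{VeryNiceCor} (the last to remove the weight, legitimate since $\lambda|\ln\chi|/\chi\to0$ and $\|U_\v^2-1\|_{L^2}=\mathcal{O}(\lambda)$ by \eqref{LpEstmU}), contributes $\pi\sum_p D_p^2|\ln\chi| + W^{\rm macro}_{N_0}(\pD)+o(1)$; here one must invoke Corollary \ref{CorBorneGrossEneStar} to control $R_{\pD}$ and the error term $X$, which is $o(1)$ under our hypotheses. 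Summing, the $\ln\delta$ and $\ln\chi$ terms reorganize: the coefficient of $|\ln\delta|$ is $\pi(d - \sum_p\sum_{k\in J_p}\tilde d_k^2 + \sum_p\sum_{k\in J_p}\tilde d_k^2) = \pi d$ ... and I must carefully track that the telescoping of $|\ln(\delta/3)|$, $\ln(\chi/\delta)$ and $|\ln\chi|$ against the degree-squared weights, together with the identity $\sum_p D_p^2 = \sum_p\sum_k\tilde d_k^2$ forced by degree-one vortices inside each cell and the clustering Lemma \ref{LemSommeDegCarréDec}, produces exactly $d\pi|\ln(\delta/3)| - \pi\sum_{p:D_p\ge2}\sum_{i\ne j\in J_p}\ln|z_i-z_j| + W^{\rm macro}_{N_0}(\pD)$.

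The main obstacle I anticipate is the bookkeeping of the logarithmic scales across the three regions so that the coefficient of $|\ln(\delta/3)|$ collapses to exactly $d$ and the leftover produces the correct $-\pi\sum\ln|z_i-z_j|$ interaction term inside clusters with $D_p\ge2$: this requires that within a fixed cell $B(y_k,\kappa\delta)$ containing $\tilde d_k\ge 2$ vortices one keeps track of the pairwise interaction $-\pi\sum_{i\ne j\in J_k}\ln|z_i-z_j|$, which enters when one refines Corollary \ref{Cor.BorneInfProcheIncl} from the crude $\sum d_i^2\ln(R/r)$ bound (valid because $d_i=1$, so $\sum d_i^2 = \tilde d_k$ while $\tilde d_k^2 = \sum d_id_j$, the difference being the cross terms $\sum_{i\ne j}d_id_j = \tilde d_k^2-\tilde d_k$ pairs, each contributing $-\pi\ln|z_i-z_j|$ after the mesoscopic dilation). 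The other delicate point is ensuring the weight-removal via Proposition \ref{VeryNiceCor} and the $X$-type error terms from Proposition \ref{Prop.EnergieRenDef} are genuinely $o(1)$: this follows because $\chi\gtrsim\ln(\h)/\sqrt\h$, $\tae(\mathbf p)$ is bounded below, and $\lambda^{1/4}|\ln\v|\to0$ makes all the $\lambda$-dependent and $\Rad$-dependent errors vanish. Once these two accounting steps are settled, collecting the three lower bounds and discarding the $\mathcal{O}(1)$ from the inner annuli into the asserted $o(1)$ — no, into the stated equality which has no $\mathcal{O}(1)$; so in fact one must verify those $\mathcal{O}(1)$'s are actually $o(1)$, which they are not in general — hence I would instead prove the inequality with the stated right-hand side plus $o(1)$, matching the displayed statement which already reads ``$+\,o(1)$'' at the end, and this is what is needed for Proposition \ref{Prop.BorneInfTrèsFine}.
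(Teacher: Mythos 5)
Your plan is aimed at the right three-layer decomposition, but as written it has two genuine gaps.

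\textbf{The decomposition does not match the domain.} Your first sum runs over $B(y_k,\delta/3)\setminus\overline{B(y_k,\kappa\delta)}$; that set lies inside the ball $B(y_k,\delta/3)$, which has been \emph{removed} from the domain of integration $\O\setminus\cup_{i\in J_\mu}\overline{B(y_i,\delta/3)}$, and in fact with $\kappa\geq1$ one has $\kappa\delta>\delta/3$ so the annulus is empty — you notice the tension mid-sentence but never fix it. The paper's decomposition goes \emph{outward} from $\delta/3$: the gap annuli are $\Ring_i=B(y_i,\h^{-1})\setminus\overline{B(y_i,\delta/3)}$ handled by Proposition \ref{Prop.ComparaisonAnneau}.\ref{Prop.ComparaisonAnneau1} (giving $\pi|\ln(\delta\h/3)|$ with no $\mathcal{O}(1)$ loss), then the mesoscopic perforated disks $\dom_p=B(p,\chi)\setminus\cup_{i\in J_p}\overline{B(y_i,\h^{-1})}$ with $\chi:=\h^{-1/4}$, then the macroscopic $\O\setminus\cup_{p\in\Lambda}\overline{B(p,\chi)}$. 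The cell scale $\kappa\delta$, the separation process, and your choice $\chi=10\max_k\dist(y_k,\Lambda)$ play no role here: by Proposition \ref{Prop.BonEcartement} the $z_i$ are separated by $\geq\h^{-1}\ln\h$, so each $B(y_i,\h^{-1})$ already contains exactly one vortex of degree one.

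\textbf{The pairwise interaction term is never actually produced.} You propose to ``refine Corollary \ref{Cor.BorneInfProcheIncl} from the crude $\sum d_i^2\ln(R/r)$ bound'' and invoke Lemma \ref{LemSommeDegCarréDec}, but neither tool can generate the term $-\pi\sum_{i\ne j\in J_p}\ln|z_i-z_j|$: Corollary \ref{Cor.BorneInfProcheIncl} only delivers degree-squared logarithms minus an uncontrolled $\mathcal{O}(1)$, and Lemma \ref{LemSommeDegCarréDec} is a purely arithmetic inequality. The paper's actual mechanism is to apply the \emph{sharp} Proposition \ref{VeryNiceCor} again — not just on the macroscopic domain, but also on each rescaled disk $\Phi(B(p,\chi))=\D$ with the perforations $\Phi(B(y_i,\h^{-1}))$ and unit degrees — yielding $\pi D_p\ln(\chi\h)+W^{\rm macro}_{D_p,\D}(\hat{\bf y}_p,{\bf 1})+o(1)$, and then to substitute the \emph{explicit} formula (Proposition 1 in \cite{LR1}) for the renormalized energy in the unit disk,
\[
W^{\rm macro}_{D_p,\D}(\hat{\bf y}_p,{\bf 1})=-\pi\sum_{\substack{i,j\in J_p\\i\neq j}}\left[\ln|\hat y_i-\hat y_j|-\ln|1-\hat y_i\overline{\hat y}_j|\right]+\pi\sum_{i\in J_p}\ln(1-|\hat y_i|^2),
\]
together with $|\hat y_i|=o(1)$, which simplifies to $-\pi\sum_{i\ne j}\ln|y_i-y_j|-\pi(D_p^2-D_p)|\ln\chi|+o(1)$. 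Without this explicit formula the interaction term and the correct telescoping of $|\ln\chi|$ cannot be obtained, and the final $o(1)$ (as opposed to $\mathcal{O}(1)$) is out of reach.
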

\begin{proof}
On the one hand, from Proposition \ref{PropVortexProcheLambda} and letting $\chi:=\h^{-1/4}$ we have $|v|\geq1/2$ in $\O\setminus\cup_{p\in\Lambda}\overline{B(p,\chi)}$. Then, from Proposition \ref{VeryNiceCor}, we have
\begin{equation}\label{EstimationFineLoinLambda}
\dfrac{1}{2}\int_{\O\setminus\cup_{p\in\Lambda}\overline{B(p,\chi)}}\alpha|\n v|^2\geq\pi\sum_{p\in\Lambda}D_p^2|\ln\chi|+W^{\rm macro}_{N_0}\pD+o(1).
\end{equation}
On the other hand, from Proposition \ref{Prop.BonEcartement}, if ${\rm Card}(J_\mu)\geq2$ then, for $i,j\in J_\mu$ with $i\neq j$, we have  {$|y_i-y_j|\geq\h^{-1}\ln(\h)-2\lambda\delta$}. 

Consequently, if $D_p=\deg_{\p B(p,\eta_\O)}(v)\neq0$ [$\eta_\O$ is defined in \eqref{DefEtaO}], letting $J_p:=\{i\in J_\mu\,|\,z_i\in B(p,\eta_\O)\}$, $\dom_p:=B(p,\chi)\setminus\cup_{i\in J_p}\overline{B(y_i,\h^{-1})}$,
\[
\begin{array}{cccc}
\Phi:&B(p,\chi)&\to&\D=B(0,1)\\&x&\mapsto&\dfrac{x-p}{\chi}
\end{array},
\] $\hat v=v\circ\Phi^{-1}$, $\hat\alpha=\alpha\circ\Phi^{-1}$, $\hat\dom_p:=\Phi(\dom_p)$ and $\hat y_i:=\Phi(y_i)$ for $y_i\in B(p,\chi)$, then we may apply Proposition \ref{VeryNiceCor}. Writing $(\hat{\bf y}_p,{\bf 1}):=\{(\hat y_i,1)\,|\,i\in J_p\}$, Proposition \ref{VeryNiceCor} gives:
\begin{equation}\label{EstimationAutourLambda}
\dfrac{1}{2}\int_{\dom_p}\alpha|\n v|^2=\dfrac{1}{2}\int_{\hat \dom_p}\hat\alpha|\n \hat v|^2\geq\pi D_p\ln(\chi\h)+W^{\rm macro}_{D_p,\D}(\hat{\bf y}_p,{\bf 1})+o(1)
\end{equation}
where $W^{\rm macro}_{D_p,\D}$  is the macroscopic renormalized energy in  the unit disc $\D$ with $D_p$ points.

From Proposition 1 in \cite{LR1} we have
\[
W^{\rm macro}_{D_p,\D}(\hat{\bf y}_p,{\bf 1})=-\pi\sum_{\substack{i,j\in J_p\\i\neq j}}\left[\ln|\hat y_i-\hat y_j|-\ln|1-\hat y_i\overline{\hat y}_j|\right]+\pi\sum_{i\in J_p}\ln(1-|\hat y_i|^2).
\]
Using Proposition \ref{PropVortexProcheLambda}, we get for $i\in J_p$, $|\hat y_i|\leq\dfrac{\h^{-1/2}\ln\h}{\chi}=o(1)$ and then
\begin{equation}\label{EstimationAutourLambdaBis}
W^{\rm macro}_{D_p,\D}(\hat{\bf y}_p,{\bf 1})
=-\pi\sum_{\substack{i,j\in J_p\\i\neq j}}\ln|y_i- y_j|-\pi(D_p^2-D_p)|\ln\chi|+o(1).
\end{equation}
For $i\in J_\mu$, we let $\Ring_i:=B(y_i,\h^{-1})\setminus\overline{B(y_i,\delta/3)}$. With Proposition \ref{Prop.ComparaisonAnneau}.\ref{Prop.ComparaisonAnneau1} we obtain
\begin{equation}\label{EstimationAutourInclusion}
\dfrac{1}{2}\int_{\Ring_i}\alpha|\n v|^2\geq\pi|\ln\left(\delta\h/3\right)|.
\end{equation}
By combining \eqref{EstimationFineLoinLambda}, \eqref{EstimationAutourLambda}, \eqref{EstimationAutourLambdaBis} and \eqref{EstimationAutourInclusion} the result is proved.
\end{proof}
The second step is a "microscopic" version of Proposition \ref{Prop.BorneInfTrèsFine}.
\begin{lem}\label{Prop.EstFineAtraversIncl}

If $\rad\leq\Rad\leq\lambda^2\delta^2$, then :
\[
\sum_{i\in J_\mu}F(v,\Ring_i)\geq d\pi\left(|\ln(3\lambda)|+b^2|\ln(\lambda\delta/\Rad)|\right)+ \sum_{i\in J_\mu}W^{\rm micro}(\hat z_i^0)+o(1)
\]
where, for $i\in J_\mu$, $\Ring_i:=B(y_i,\delta/3)\setminus\overline{B(z_i,\Rad)}$.
\end{lem}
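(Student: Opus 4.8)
The statement is a microscopic lower bound on the total energy concentrated in the annuli $\Ring_i=B(y_i,\delta/3)\setminus\overline{B(z_i,\Rad)}$, and the natural strategy is to treat each $i\in J_\mu$ separately, decompose $\Ring_i$ dyadically into the three relevant scales, and patch together the contributions. Fix $i\in J_\mu$. By Proposition \ref{Prop.PinningComplet} we have $B(z_i,2\eta_{\o,b}\lambda\delta)\subset\o_\v$, and by Proposition \ref{Prop.BonEcartement} each cell of period contains at most one disc, so the ball $B(y_i,\delta/3)$ meets $\o_\v$ only in the single inclusion $y_i+\lambda\delta\cdot\o$. I would split $\Ring_i$ as
\[
\Ring_i=\underbrace{[B(y_i,\delta/3)\setminus\overline{B(y_i,2\lambda\delta)}]}_{\text{outside the cell}}\ \cup\ \underbrace{[B(y_i,2\lambda\delta)\setminus\overline{B(z_i,\Rad)}]}_{\text{inside the inclusion}},
\]
the first piece carrying weight $\alpha=U_\v^2\approx1$ (away from $\o_\v$, using \eqref{EstLoinInterfaceU}) and the second carrying the genuinely heterogeneous weight.

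\textbf{Outer annulus.} On $B(y_i,\delta/3)\setminus\overline{B(y_i,2\lambda\delta)}$ we have $|v|\geq1/2$ (this follows from \eqref{Onfaitcommeonpeut...} since $2\lambda\delta\gg2\lambda^2\delta^2$) and $\deg_{\p B(y_i,2\lambda\delta)}(v)=d_i=1$ by Proposition \ref{PropToutLesDegEg1}. Proposition \ref{Prop.ComparaisonAnneau}.\ref{Prop.ComparaisonAnneau2}, together with the estimate $|U_\v-1|=o(|\ln\v|^{-2})$ from \eqref{EstLoinInterfaceU}, gives
\[
\frac12\int_{B(y_i,\delta/3)\setminus\overline{B(y_i,2\lambda\delta)}}\alpha|\n v|^2\ \geq\ \pi|\ln(\delta/(6\lambda\delta))|+\mathcal{O}(1)=\pi|\ln(6\lambda)|+\mathcal{O}(1),
\]
which after summing over the $d$ indices produces the term $d\pi|\ln(3\lambda)|$ up to an $\mathcal{O}(1)$; the sharp constant $|\ln(3\lambda)|$ rather than $|\ln(6\lambda)|$ is a bookkeeping matter absorbed by choosing the intermediate radius appropriately (e.g. $\lambda\delta/3$).

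\textbf{Inner annulus and the microscopic renormalized energy.} This is the heart of the lemma and the main obstacle. On $B(y_i,2\lambda\delta)\setminus\overline{B(z_i,\Rad)}$ we rescale by $\lambda\delta$: set $x=y_i+\lambda\delta\, \tilde x$, so the domain becomes $\hat\dom_\v=B(0,2)\setminus\overline{B(\hat z_i,\hat\Rad)}$ with $\hat\Rad=\Rad/(\lambda\delta)\leq\lambda\delta\to0$ and $\hat z_i\to\hat z_i^0\in\o$, the weight becomes $a$ (the periodic profile $a_\v$ seen at the cell scale, which equals $b$ on $\o$ and $1$ outside), and the degree around $\p B(\hat z_i,\hat\Rad)$ is $1$. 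The plan is to invoke precisely the construction recalled in Section \ref{SecMicrRenEn}: by \eqref{MicroRenoExpressionNonRescalDir}--\eqref{DefRenMicroEn1} we may replace the degree constraint by Dirichlet boundary data up to $o(1)$, then \eqref{DefRenMicroEn2}--\eqref{DefRenMicroEn3} give
\[
\inf_{\substack{\hat w\in H^1(\hat\dom_\v,\S^1)\\\deg(\hat w)=1}}\frac12\int_{\hat\dom_\v}a^2|\n\hat w|^2\ =\ \pi\ln 2+b^2\pi|\ln\hat\Rad|+W^{\rm micro}(\hat z_i^0)+o(1)
\]
(with $f_\o(\hat R)=\pi\ln\hat R+C_\o$ for large $\hat R$ and $\hat R=2$ here; the constant $\pi\ln2$ is $\mathcal{O}(1)$). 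Since $|v|\geq1/2$ on this annulus—again by \eqref{Onfaitcommeonpeut...}, noting $\Rad\geq\rad\gg\v$ so the inner radius sits outside $\cup B(z_j,2\lambda^2\delta^2)$ on its boundary circle, while the part of the annulus inside $B(z_i,2\lambda^2\delta^2)$ must be handled by first peeling off $B(z_i,2\lambda^2\delta^2)\setminus\overline{B(z_i,\Rad)}$ if $\Rad<2\lambda^2\delta^2$, which is harmless since there $\deg=1$ and one applies Proposition \ref{Prop.ComparaisonAnneau} directly—we bound $\frac12\int\alpha|\n v|^2\geq\frac12\int\alpha\rho^2|\n w|^2$ and then compare $\frac12\int a^2|\n w|^2$ to the infimum above, the error being $o(1)$ because the boundary circles $\p B(\hat z_i,\hat\Rad)$ and $\p B(0,2)$ carry $S^1$-valued traces of degree $1$ with controlled energy (a standard trace/extension argument). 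Translating $\hat\Rad=\Rad/(\lambda\delta)$ back, $b^2\pi|\ln\hat\Rad|=b^2\pi|\ln(\Rad/(\lambda\delta))|=b^2\pi|\ln(\lambda\delta/\Rad)|$, and summing over $i\in J_\mu$ yields $d\pi b^2|\ln(\lambda\delta/\Rad)|+\sum_i W^{\rm micro}(\hat z_i^0)+o(1)$.

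\textbf{Conclusion and the delicate point.} Adding the outer and inner contributions over all $i\in J_\mu$, and using that the annuli $\Ring_i$ are pairwise disjoint (Proposition \ref{Prop.BonEcartement}), gives
\[
\sum_{i\in J_\mu}F(v,\Ring_i)\ \geq\ d\pi\bigl(|\ln(3\lambda)|+b^2|\ln(\lambda\delta/\Rad)|\bigr)+\sum_{i\in J_\mu}W^{\rm micro}(\hat z_i^0)+o(1).
\]
The main obstacle I anticipate is the transition at the inclusion boundary $\p B(y_i,2\lambda\delta)$, i.e. patching the $\alpha\approx1$ regime to the $\alpha=a$ regime while keeping only an $o(1)$ error: one must verify that the energy crossing a thin collar of radii comparable to $\lambda\delta$ is negligible, which requires the modulus $|v|$ to be close to $1$ there (available from \eqref{Onfaitcommeonpeut...}) and a Fubini-type selection of a good intermediate radius on which the circle energy $\frac12\int_{\p}|\n_\tau w|^2$ is $\mathcal{O}(1)$ so that the matching-constants contributions are $o(1)$ after rescaling; the hypotheses $\lambda^{1/4}|\ln\v|\to0$ and \eqref{PutaindHypTech} are exactly what make this collar argument and the replacement of $U_\v$ by $1$ versus $a$ affordable. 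A secondary subtlety is that $W^{\rm micro}$ as recalled in \eqref{DefRenMicroEn3} is the renormalized energy of the \emph{limit} configuration $\hat\dom=B(0,\hat R)\setminus\overline{B(x_0,\hat r)}$; upgrading from $\hat z_i^0$ to $\hat z_i$ (and from fixed $\hat R,\hat r$ to $\hat\Rad\to0$) with only $o(1)$ loss is precisely the content of \eqref{MicroRenoExpressionNonRescalDir}--\eqref{DefRenMicroEn2}, so I would simply quote those estimates rather than redo them.
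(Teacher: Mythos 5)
Your plan is a fundamentally different (and unworkable) approach from the paper's, and it has two genuine gaps.

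\textbf{The constant at the cut radius cannot be recovered.} You split $\Ring_i$ at the radius $2\lambda\delta$ and apply the microscopic renormalized energy formulas to the inner piece $B(y_i,2\lambda\delta)\setminus\overline{B(z_i,\Rad)}$. After rescaling by $\lambda\delta$, the outer radius of this piece is the \emph{fixed} number $2$, and \eqref{DefRenMicroEn2} gives the constant $f_\o(2)$, not $\pi\ln 2+C_\o$. The asymptotic $f_\o(\hat R)=\pi\ln\hat R+C_\o$ holds only as $\hat R\to\infty$, and this passage is precisely what converts $\tilde W^{\rm micro}$ into the $W^{\rm micro}$ appearing in the statement. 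By cutting at a fixed rescaled radius you lose the constant $C_\o-[f_\o(2)-\pi\ln2]\neq0$ per point, a total fixed error of order $d$, which is not absorbed by the $o(1)$ and swamps the $\mathcal{O}(1)$-sized term $\sum_iW^{\rm micro}(\hat z_i^0)$ you are trying to extract. This is not a bookkeeping matter: any finite-radius dyadic split runs into the same obstruction, and there is no ``appropriate choice of intermediate radius'' that avoids it. The paper's proof keeps the whole annulus $\Ring_i$ (slightly shrunk to $\tilde\Ring_i$) so that after rescaling the outer radius is $\approx(3\lambda)^{-1}\to\infty$, and \eqref{DefRenMicroEn3} applies directly with the asymptotic constant built in. A second, closely related problem: on your inner annulus the modulus $\rho=|v|$ is small near $z_i$, so you cannot pass to the $\S^1$-valued infimum without peeling off yet another sub-annulus around $z_i$, introducing yet another fixed-constant loss via Proposition~\ref{Prop.ComparaisonAnneau}.

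\textbf{The modulus control is not quantitative enough.} To justify $\frac12\int\alpha\rho^2|\n w|^2\geq\frac12\int\alpha|\n w|^2+o(1)$ you invoke \eqref{Onfaitcommeonpeut...}, which only gives $|v|=1+o(1)$ with an unspecified rate coming from a compactness/contradiction argument in the $\eta$-ellipticity. Since $\int\alpha|\n w|^2=\mathcal{O}(|\ln(\lambda\delta)|)$, an unspecified $o(1)$ in the modulus produces an error of size $o(1)\cdot\mathcal{O}(|\ln(\lambda\delta)|)$ which is not $o(1)$. The paper avoids this by first replacing $v$ in $\Ring_i$ by a \emph{minimizer} $v_\star$ of $F_\v(\cdot,\Ring_i)$ with the same boundary trace (so $F(v,\Ring_i)\geq F(v_\star,\Ring_i)$), then invoking the sharp \emph{interior} $\eta$-ellipticity for minimizers of weighted Ginzburg--Landau energies from \cite{Publi3}, which yields the quantitative bound $|v_\star|\geq1-\mathcal{O}(\sqrt{|\ln(\lambda\delta)|/|\ln\v|})$ on $\tilde\Ring_i$; combined with $|\ln(\lambda\delta)|^3/|\ln\v|=o(1)$ (a consequence of \eqref{CondOnLambdaDelta}) this makes the modulus error genuinely $o(1)$. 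This substitution-by-minimizer step, which you never consider, is the key idea of the paper's proof and is what makes the argument close.
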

\begin{proof}

We first note that in order to prove Lemma \ref{Prop.EstFineAtraversIncl} [up to replace $v$ by $\underline{v}$] we may assume $\rho=|v|\leq1$. We may also assume
\begin{equation}\label{BorneATraversIncl1}
\sum_{i\in J_\mu}F_\v(v,\Ring_i)=\mathcal{O}(|\ln(\lambda\delta)|)
\end{equation}
since in the contrary case there is nothing to prove.

Fix $i\in J_\mu$ and let $v_\star$ be a minimizer of $\displaystyle  F_\v(\cdot,\Ring_i)$ in $H^1(\Ring_i,\C)$ with the Dirichlet boundary condition $\tr_{\p \Ring_i}(\cdot)=\tr_{\p \Ring_i}(v)$. Note that such minimizers exist and we have $F_\v(v_\star,\Ring_i)\leq F_\v(v,\Ring_i)=\mathcal{O}(|\ln(\lambda\delta)|)$.

The key ingredient consists in noting that since $v_\star$ is a minimizer of a weighted Ginzburg-Landau type energy we may thus  use a sharp interior $\eta$-ellipticity result. Namely, following the strategy of \cite{Publi3} to prove Lemma 1 [see  Appendix C in \cite{Publi3}], by using the first part of the proof [the interior argument which does not required any information on $\tr_{\p \Ring_i}(v_\star)$], we  get 
\begin{equation}\label{Putaind'etaTruc}
\rho_\star:=|v_\star|\geq1-\mathcal{O}(\sqrt{|\ln(\lambda\delta)|/|\ln\v|})\text{ in }\tilde\Ring_i:=B(y_i,\delta/3-\v^{1/4})\setminus\overline{B(z_i,\Rad+\v^{1/4})}.
\end{equation}
Write in $\tilde\Ring_i$: $v_\star=\rho_\star w_\star$ where $w_\star\in H^1(\tilde\Ring,\S^1)$.

Note that by \eqref{CondOnLambdaDelta} [namely $|\ln(\lambda\delta)|=\mathcal{O}(\ln|\ln\v|)$] we have $|\ln(\lambda\delta)|^3/|\ln\v|=o(1)$ and then from \eqref{BorneATraversIncl1} $\&$ \eqref{Putaind'etaTruc} [and aslo $\rho_\star\leq1$] we have
\[
\int_{\tilde\Ring_i}\alpha\rho_\star^2|\n w_\star|^2=\int_{\tilde\Ring_i}\alpha|\n w_\star|^2+o(1).
\]
We then immediately get:
\[
F(v,\Ring_i)\geq F(v_\star,\Ring_i)\geq \dfrac{1}{2}\int_{\tilde\Ring_i}\alpha|\n w_\star|^2+o(1)\geq\inf_{\substack{\tilde w\in H^1(\tilde\Ring_i,\S^1)\\\deg(\tilde w)=1}}\frac{1}{2}\int_{\tilde\Ring_i}\alpha|\n\tilde w|^2+o(1).
\]
It suffices now to claim that from \eqref{DefRenMicroEn3} we have 
\[
\inf_{\substack{\tilde w\in H^1(\tilde\Ring_i,\S^1)\\\deg(\tilde w)=1}}\frac{1}{2}\int_{\tilde\Ring_i}\alpha|\n\tilde w|^2=\pi\left(|\ln(3\lambda)|+b^2|\ln(\lambda\delta/\Rad)|\right)+ W^{\rm micro}(\hat z_i^0)+o(1)
\]
in order to get $F(v,\Ring_i)\geq \pi\left(|\ln(3\lambda)|+b^2|\ln(\lambda\delta/\Rad)|\right)+ W^{\rm micro}(\hat z_i^0)+o(1)$. By summing these lower bounds we get the result.
\end{proof}
\begin{lem}
There exits $\rad\leq\Rad=o(\lambda^2\delta^2)$ s.t. for $i\in J_\mu$ we have
\[
F[v,B(z_i,\Rad)]\geq b^2[\pi\ln(\Rad/\v)+\ln b +\gamma]+o(1).
\]
\end{lem}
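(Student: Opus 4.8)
The plan is to produce $\Rad$ by a mean-value argument, to reduce the weighted energy near $z_i$ to a standard Ginzburg--Landau energy with effective parameter $\v/b$, and then to quote the sharp single-vortex estimate of \cite{BBH}. Fix $i\in J_\mu$. On the annulus $A:=B(z_i,\lambda^2\delta^2)\setminus\overline{B(z_i,\rad)}$ one has $|v|\geq1/2$ and $\deg_{\p B(z_i,s)}(v)=d_i=1$ for every $s\in(\rad,\lambda^2\delta^2)$: indeed $d_i=1$ by Proposition \ref{PropToutLesDegEg1}, all zeros of $v$ near $z_i$ sit inside $B(z_i,\rad)$, and the other discs $B(z_j,\rad)$ lie at distance $\gg\lambda^2\delta^2$ from $z_i$ by Proposition \ref{Prop.BonEcartement}. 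The unavoidable lower bound $F(v,A)\geq b^2\pi(1-o(1))\ln(\lambda^2\delta^2/\rad)$ (degree one plus the annular dilution estimates of Section \ref{Sec.StrongEffectDilution}, in the spirit of Proposition \ref{Prop.ComparaisonAnneau}), together with the crude two-sided estimates of Section \ref{Sect.ShapInfo} — in particular \eqref{BorneInfEn} on $B(z_i,\rad)$ and the upper bound \eqref{BrneSup-ApplicF(v)} — show that $F(v,A)-b^2\pi\ln(\lambda^2\delta^2/\rad)=o(|\ln\v|)$. A Fubini--Chebyshev argument applied to this excess then yields a radius $\Rad$ with $\rad\ll\Rad=o(\lambda^2\delta^2)$ and
\[
\Rad\int_{\p B(z_i,\Rad)}|\n v|^2+\tfrac1{\v^2}(1-|v|^2)^2\ \leq\ C;
\]
since $\Rad\gg\v$, this forces $|v|=1+o(1)$ on $\p B(z_i,\Rad)$ (one-dimensional Gagliardo--Nirenberg) together with $\int_0^{2\pi}|\p_\theta v(z_i+\Rad e^{\imath\theta})|^2\,{\rm d}\theta\leq C$. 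In particular $\rad\leq\Rad=o(\lambda^2\delta^2)$ is the asserted range.

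Next, since $\Rad=o(\lambda^2\delta^2)\ll\lambda\delta$ and $B(z_i,2\eta_{\o,b}\lambda\delta)\subset\o_\v$ by Proposition \ref{Prop.PinningComplet}, one has $\dist(B(z_i,\Rad),\p\o_\v)\geq\eta_{\o,b}\lambda\delta$; plugging this into \eqref{EstLoinInterfaceU} (with $a_\v\equiv b$ on $\o_\v$) gives $\sup_{B(z_i,\Rad)}|U_\v-b|\leq C\exp(-c\lambda\delta/\v)$, which is smaller than every power of $\v$ because $\lambda\delta/\v\to\infty$ by \eqref{CondOnLambdaDelta}. Hence $\alpha=U_\v^2=b^2+o(\v^k)$ uniformly on $B(z_i,\Rad)$ for all $k$, and, setting $\v_b:=\tfrac{\v}{b}\,(1+o(1))$,
\[
F\big(v,B(z_i,\Rad)\big)\ \geq\ b^2\big(1-o(\v^k)\big)\Big[\tfrac12\!\int_{B(z_i,\Rad)}\!|\n v|^2+\tfrac1{2\v_b^2}(1-|v|^2)^2\Big].
\]

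Finally, I would apply the sharp lower bound of \cite{BBH} (Lemma IX.1) to the bracketed standard Ginzburg--Landau energy. On $B(z_i,\Rad)$ the map $v$ has degree one, its zero set lies in $B(z_i,\rad)=B(z_i,o(\Rad))$ — within an $o(1)$-fraction of the centre — its boundary trace is $o(1)$-close in $L^\infty$ to an $\S^1$-valued degree-one map of bounded energy, and $\Rad/\v_b\to\infty$; for such a configuration the associated renormalized energy is $\geq-o(1)$, whence
\[
\tfrac12\!\int_{B(z_i,\Rad)}\!|\n v|^2+\tfrac1{2\v_b^2}(1-|v|^2)^2\ \geq\ \pi\ln(\Rad/\v_b)+\gamma+o(1)\ =\ \pi\ln(\Rad/\v)+\pi\ln b+\gamma+o(1).
\]
Multiplying by $b^2(1-o(\v^k))$ and absorbing $o(\v^k)\,\mathcal{O}(|\ln\v|)=o(1)$ gives $F(v,B(z_i,\Rad))\geq b^2[\pi\ln(\Rad/\v)+\pi\ln b+\gamma]+o(1)$, which is the assertion (its additive constant being $b^2(\pi\ln b+\gamma)$, in agreement with \eqref{DefGammaBo}). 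The real obstacle is this last step — producing the \emph{sharp} constant $\gamma$ rather than merely $\pi\ln(\Rad/\v)+\mathcal{O}(1)$ — since it genuinely requires both the near-centring of the vortex (this is why $\rad\ll\Rad$ is needed, not $\Rad=\rad$: a vortex off-centre by a fixed fraction of $\Rad$ would lower the renormalized energy by a negative constant) and the $L^\infty$/$H^1$ control of the boundary trace obtained in the first step. The cleanest implementation replaces $v$ on $B(z_i,\Rad)$ by the minimizer of $F(\cdot,B(z_i,\Rad))$ with the same Dirichlet data and then invokes \cite{BBH}, exactly as in the proof of Lemma \ref{Prop.EstFineAtraversIncl} and as carried out in \cite{Publi4}.
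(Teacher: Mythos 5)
Your overall strategy matches the paper's: show the excess Dirichlet energy in the annulus $B(z_i,\lambda^2\delta^2)\setminus B(z_i,\rad)$ is small, use a Fubini--Chebyshev argument to find a good radius $\Rad$, observe that $U_\v^2=b^2+o(\v^k)$ on $B(z_i,\Rad)$ because the ball sits deep inside $\o_\v$, rescale to the effective parameter $\v/b$, and invoke Lemma~IX.1 of \cite{BBH}. The numerical conclusion is also the one the paper derives. However, there are two concrete gaps.

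First, the mean-value step as you state it is not sharp enough. You extract a radius with $\Rad\int_{\p B(z_i,\Rad)}|\n v|^2+\v^{-2}(1-|v|^2)^2\leq C$ for some (possibly large) constant $C$. That gives $|v|\to1$ on $\p B(z_i,\Rad)$, but it only bounds the tangential phase derivative in $L^2$ and does \emph{not} force the phase to converge to a rotation. With merely bounded phase energy, the degree-one boundary trace $g$ can be any degree-one $H^1$ map, and the associated single-vortex renormalized energy in the disk (as a function of $g$ and the vortex location) can be bounded away from $0$ by a fixed negative constant; you then lose the constant $\gamma$ and are left with $\pi\ln(\Rad/\v)+\mathcal{O}(1)$, which is exactly the weaker estimate you were trying to improve. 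To get $\gamma$ you must use the fact that the excess over the whole annulus is $\mathcal{O}(\ln\h)=\mathcal{O}(\ln|\ln\v|)$, while the annulus spans a $\ln$-range of order $\mu_*|\ln\v|$; Chebyshev then produces a circle where $\tfrac{\Rad}{2}\int_{\p B(z_i,\Rad)}|\n v|^2+\tfrac{b^2}{\v^2}(1-|v|^2)^2\leq\pi+o(1)$. Subtracting the unavoidable $\pi$ from the degree shows the phase derivative is $o(1)$ in $L^2$, hence the boundary phase converges in $H^1(0,2\pi)$ to a constant $\theta_i$. This is precisely what the paper tracks.

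Second, even with the sharp boundary bound, Lemma~IX.1 of \cite{BBH} requires the boundary data to be exactly $e^{\imath\theta}$ (up to a global rotation). The paper therefore \emph{constructs} an interpolation in the annulus $B(z_i,2\Rad)\setminus\overline{B(z_i,\Rad)}$, modifying both the modulus and the phase, so that the resulting comparison map $u_i$ equals $e^{\imath\theta_i}e^{\imath\theta}$ on $\p B(z_i,2\Rad)$ at an extra energy cost of $o(1)$; only then is the BBH lemma applied. Your proposal skips this gluing and appeals instead to ``the associated renormalized energy is $\geq -o(1)$'' for boundary data merely ``$o(1)$-close to some degree-one map of bounded energy.'' As explained above, that claim is false for a generic bounded-energy degree-one trace. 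Your alternative suggestion --- replace $v$ by the minimizer with the same Dirichlet data on $B(z_i,\Rad)$ --- also does not fix the issue: that device reduces to a lower bound for the minimal GL energy with \emph{that} Dirichlet data, and the constant in such a lower bound again depends on the trace (and the vortex position) and is not $\gamma$ in general. The explicit annular interpolation to canonical boundary data is the step that cannot be avoided here.
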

\begin{proof}
We first note that we have
\begin{equation}\label{BorneDansAnneaPourConstruBonneTrace}
\sum_{i\in J_\mu}F[v,B(z_i,\lambda^2\delta^2)\setminus\overline{B(z_i,\rad)}]\leq db^2 \pi\ln(\lambda^2\delta^2/\rad)+\L_1(d)\ln\h+\mathcal{O}(1).
\end{equation}
The above estimate is proved by contradiction and assuming the existences of an extraction [still denoted by  $\v=\v_n\downarrow0$] and of a sequence  $R_n\uparrow\infty$ s.t. 
\[
\sum_{i\in J_\mu}F[v,B(z_i,\lambda^2\delta^2)\setminus\overline{B(z_i,\rad)}]\geq db^2\pi\ln(\lambda^2\delta^2/\rad)+\L_1(d)\ln\h+R_n.
\]
From  \eqref{BorneInfEnWeight} we get
\[
\sum_{i\in J_\mu}F[v,B(z_i,\lambda^2\delta^2)]\geq db^2\pi\ln(\lambda^2\delta^2/\v)+\L_1(d)\ln\h+R_n+\mathcal{O}(1).
\]
Using Lemmas \ref{BorneTresFineSansIncl} and \ref{Prop.EstFineAtraversIncl} we get an estimate which contradicts \eqref{BrneSup-ApplicF(v)}.

By a classical argument, for sufficiently small $\v$, there exists $\sqrt\rad\leq\Rad\leq\rad^{1/4}$ s.t. for $i\in J_\mu$
\[
\dfrac{\Rad}{2}\int_{\p B(z_i,\Rad)}|\n v|^2+\dfrac{b^2}{2\v^2}(1-|v|^2)^2\leq \pi+\dfrac{4\L_1(d)\ln\h+\mathcal{O}(1)}{|\ln\rad|}
\]
Arguing as in the proof of Proposition \ref{Prop.ConstrEpsMauvDisk} [Step 3 in Appendix \ref{SectAppenPreuveConstructionPetitDisque}] it is clear that we may assume $|v|\geq 1-|\ln\v|^{-2}$ on $ \p B(z_i,\Rad)$ for $i\in J_\mu$.

We now define for $i\in J_\mu$, $\rho_i:=\tr_{\p B(z_i,\Rad)}(|v|)$, $w_i:=\tr_{\p B(z_i,\Rad)}(v/|v|)$. We immediately get
\[
\dfrac{\Rad}{2}\int_{\p B(z_i,\Rad)}|\n w_i|^2=\pi+o(1),\,\dfrac{\Rad}{2}\int_{\p B(z_i,\Rad)}|\n \rho_i|^2+\dfrac{b^2}{2\v^2}(1-\rho_i^2)^2=o(1).
\]
On the other hand, since $\deg(w_i)=1$, there exists $\phi_i=\phi_{i,\v}\in H^1((0,2\pi),\R)$ s.t. $\phi_i(0)=\phi_i(2\pi)\in[0,2\pi)$ and $w_i\left(z_i+\Rad \e^{\imath\theta}\right)=\e^{-\imath(\theta+\phi_i(\theta))}$. A direct calculation gives:
\[
2\pi+o(1)={\Rad}{}\int_{\p B(z_i,\Rad)}|\p_\tau w_i|^2=\int_0^{2\pi}\left|(\phi_i+\theta)'\right|^2=2\pi+\int_0^{2\pi}\left|\phi'_i\right|^2.
\]
The last equalities imply  $\phi_i'\to0$ in $L^2(0,2\pi)$ and then $\phi_i-\phi_i(0)\to0$ in $L^2(0,2\pi)$. Hence, up to pass to a subsequence, we get the existence of $\theta_i\in[0,2\pi]$ s.t. $\phi_i\to\theta_i$ in $H^1(0,2\pi)$.

We now define $\tilde w_i\in H^1(B(z_i,2\Rad)\setminus\overline{B(z_i,\Rad)},\S^1)$ by
\[
\tilde w_i(z_i+s\e^{\imath\theta})=\e^{\imath[\theta+\tilde\phi_i(z_i+s\e^{\imath\theta})]}
\text{ with }
\tilde\phi_i(z_i+s\e^{\imath\theta})=\left[\phi_i(\theta)-\theta_i\right]\dfrac{2\Rad-s}{\Rad}+\theta_i.
\]
A direct calculation gives $\int_{B(z_i,2\Rad)\setminus\overline{B(z_i,\Rad)}}|\n\tilde\phi_i|^2=o(1)$ and then
\[
\dfrac{1}{2}\int_{B(z_i,2\Rad)\setminus\overline{B(z_i,\Rad)}}|\n\tilde w_i|^2=\dfrac{1}{2}\int_{B(z_i,2\Rad)\setminus\overline{B(z_i,\Rad)}}|\n[\theta+\tilde\phi_i(z_i+s\e^{\imath\theta})]|^2+o(1)=\pi\ln(2)+o(1).
\]
Let $\tilde\rho_i\in H^1[B(z_i,2\Rad)\setminus\overline{B(z_i,\Rad)},\R^+]$ be s.t.
 $\tilde\rho_i(z_i+s\e^{\imath\theta}):=\tilde\rho_i(z_i+\Rad\e^{\imath\theta})\dfrac{2\Rad-s}{\Rad}+\dfrac{s-\Rad}{\Rad}$.
 
We then have $F[\tilde\rho_i,B(z_i,2\Rad)\setminus\overline{B(z_i,\Rad)}]=o(1)$. Consequently, letting $v_i:=\tilde\rho_i\tilde w_i\in H^1[B(z_i,2\Rad)\setminus\overline{B(z_i,\Rad)},\C]$ we have 
\[
F[v_i,B(z_i,2\Rad)\setminus\overline{B(z_i,\Rad)}]=\dfrac{b^2}{2}\int_{B(z_i,2\Rad)\setminus\overline{B(z_i,\Rad)}}|\n \tilde w_i|^2+o(1).
\]

In order to conclude we let $u_i:=\begin{cases}v_i&\text{in }B(z_i,2\Rad)\setminus\overline{B(z_i,\Rad)}\\v&\text{in }B(z_i,\Rad)\end{cases}$.

It is clear that $u_i(z_i+2\Rad\e^{\imath\theta})=\e^{\imath\theta_i}\e^{\imath\theta}$ and then, using Lemma  IX.1 in \cite{BBH}, we get 
\[
F[u_i,B(z_i,2\Rad)]\geq b^2[\pi\ln(2\Rad/\v)+\gamma+\pi\ln b ]+o(1).
\]
The last estimate ends the proof of the lemma.
\end{proof}
\begin{proof}[Proof of Proposition \ref{Prop.BorneInfTrèsFine}]
From the three previous lemmas we have
\begin{eqnarray}\nonumber
F(v)&\geq& d\pi\left[b^2|\ln\v|+(1-b^2)|\ln(\lambda\delta)|\right]-\pi\sum_{\substack{p\in\Lambda\\D_p\geq2}}\sum_{\substack{i,j\in J_p\\i\neq j}}\ln|z_i-z_j|+
\\\label{BigBig1}&&+W^{\rm macro}_{N_0}\pD+\sum_{i\in J_\mu}W^{\rm micro}(\hat z_i^0)+db^2[\pi\ln b +\gamma]+o(1).
\end{eqnarray}
On the other hand, with Corollary \ref{Cor.DecompPourCluster} [estimate \eqref{NiceDecSharpSplitTildeV}] we get
\begin{equation}\label{BigBig2}
\F(v,A)\geq \h^2\Jo+2\pi\h\sum_{i\in J_\mu}\xi_0(z_i)+F(v)+\tilde{V}[\zeta_\pD]+o(1)
\end{equation}
where $\zeta_\pD$ is defined in Proposition \ref{PropPartieMinimalSandH0}.

From Proposition \ref{EnergieRenMeso} [estimate \eqref{DevMesoscopicDef}], for $p\in\Lambda$ s.t. $D_p\geq2$, we have:
\begin{equation}\label{BigBig3}
-\pi\sum_{\substack{i,j\in J_p\\i\neq j}}\ln|z_i-z_j|+2\pi\h\sum_i[\xi_0(z_i)-\xi_0(p)]\geq\dfrac{\pi}{2}(D_p^2-D_p)\ln\left(\dfrac{\h}{D_p}\right)+C_{p,D_p}+o(1).
\end{equation}
By combining \eqref{BigBig1}, \eqref{BigBig2} and \eqref{BigBig3} [and also $\xi_0\leq0$] we obtain
\begin{eqnarray}\nonumber
\F(v,A)&\geq&\h^2\Jo+d\pi\left[b^2|\ln\v|+(1-b^2)|\ln(\lambda\delta)|\right]-2\pi d\h\|\xi_0\|_{L^\infty(\O)}+
\\\nonumber&&+\dfrac{\pi}{2}\sum_{\substack{p\in\Lambda\\D_p\geq2}}\left[(D_p^2-D_p)\ln\left(\dfrac{\h}{D_p}\right)+C_{p,D_p}\right]+W^{\rm macro}_{N_0}\pD+
\\\label{BigBig4}&&+\sum_{i\in J_\mu}W^{\rm micro}(\hat z_i^0)+\tilde{V}[\zeta_\pD]+db^2[\pi\ln b +\gamma]+o(1).
\end{eqnarray}
It suffices to see that, since ${\bf D}\in\LamN$, from the definition of $\L_1(d)$ we have
\[
\dfrac{\pi}{2}\sum_{\substack{p\in\Lambda\\D_p\geq2}}(D_p^2-D_p)\ln\left(\dfrac{\h}{D_p}\right)=\L_1(d)\ln{\h}+\dfrac{\pi}{2}\sum_{{p\in\Lambda}}(D_p-D_p^2)\ln\left({D_p}\right)
\] in order to deduce from \eqref{BigBig4} that 
\begin{eqnarray*}
\F(v,A)&\geq&\h^2\Jo+d\pi\left[-2 \h\|\xi_0\|_{L^\infty(\O)}+b^2|\ln\v|+(1-b^2)|\ln(\lambda\delta)|\right]+
\\&&+\L_1(d)\ln{\h}+\sum_{i\in J_\mu}W^{\rm micro}(\hat z_i^0)+\W_{d}({\bf D})+
\\&&+\dfrac{\pi}{2}\sum_{{p\in\Lambda}}(D_p-D_p^2)\ln\left({D_p}\right)+db^2[\pi\ln b +\gamma]+o(1)
\end{eqnarray*}
 where $\W_{d}({\bf D})$ is defined in \eqref{DefWdOpD}. This estimate with the definition of $\HoC$ and $\Wmin_d$ [see \eqref{CouplageEnergieRen}$\&$\eqref{DefH0c1}$\&$\eqref{DefGammaBo}] ends the proof of the proposition.
\end{proof}
\section{The first critical field and the  location of the vorticity defects}\label{Sec.LocationVorticity}
We assume that $\lambda,\delta,\h$ satisfy \eqref{CondOnLambdaDelta} and \eqref{BorneKMagn} for some $K\geq0$  independent of $\v$. We assume also \eqref{PutaindHypTech}. We consider a sequence $\v=\v_n\downarrow0$.

As in the previous section we focus on sequences of quasi-minimizers of $\F$. For simplicity we write $(v,A)$ instead of $(v_\v,A_\v)$. We assume that \eqref{HypGlobalSurQuasiMin}$\&$\eqref{QuasiMinDef} holds and since \eqref{HypGlobalSurQuasiMin}$\&$\eqref{QuasiMinDef} are gauge invariant we may also assume that $(v,A)$ is in the Coulomb gauge.

From above results, for a fixed $\mu>0$ sufficiently small [satisfying \eqref{HypSurMu}] and for $\v>0$ sufficiently small, there exists a [finite] set $\Zz\subset\O$, depending on $\v$ and possibly empty s.t. letting $d:={\rm Card}(\Zz)$ [we write $\Zz=\{z_1,...,z_2\}$]:
\begin{itemize}
\item If $d=0$, then  $|v|\geq1/2$ in $\O$. 
\item If $d>0$, then  $|z_i-z_j|\gtrsim\h^{-1}\ln \h$ if $i\neq j$,  $|v|\geq1/2$ in $\O\setminus\cup_{i=1}^d\overline{B(z_i,\v^\mu)}$ and $\deg_{\p B(z,\v^\mu)}(v)=1$ for $z\in\Zz$.
\end{itemize}
Moreover $d=\mathcal{O}(1)$. Then if needed, up to pass to a subsequence, we may assume that $d$ is independent of $\v$.

By combining Corollary \ref{CorEtudeSansVortex}, Propositions \ref{EnergieRenMeso}, \ref{Prop.BorneSupSimple}, \ref{Prop.BonEcartement} and \ref{Prop.BorneInfTrèsFine} we get the following corollary.
\begin{cor}\label{Cor.ExactEnergyExp} 
Assume $\lambda,\delta,\h$ satisfy \eqref{CondOnLambdaDelta} and \eqref{BorneKMagn} for some $K\geq0$  independent of $\v$. Let $\v=\v_n\downarrow0$ and and let $((v_\v,A_\v))_\v\subset\H$  be a sequence  satisfying \eqref{HypGlobalSurQuasiMin}$\&$\eqref{QuasiMinDef}.   Assume that $d$ is independent of $\v$. Without loss of generality we may assume that $(v_\v,A_\v)$ is in the Coulomb gauge. We have
\begin{equation}\label{Exact...Expending}
\F(v_\v,A_\v)=\h^2 \Jo+d\Pic\left[-\h+\HoC \right]+\L_1(d)\ln\h+\L_2(d)+o(1).
\end{equation}


Moreover, if $d\neq0$ then:
\begin{itemize}
\item We have ${\bf D}\in\LamN$ [see \eqref{DefEnsCouplageEnergieRen}] and ${\bf D}$ minimises $\W_{d}$ in $\LamN$  where $\W_{d}$ is defined in \eqref{DefWdOpD}.
\item For $p\in\Lambda$ s.t. $D_p>0$ and $i\in J_p$, we denote $\breve z_i:=(z_i-p)\sqrt{D_p/\h}$ and $\breve{\bf z}_p:=\{\breve z_i\,|\,i\in J_p\}$. Then, up to pass to a subsequence, $\breve{\bf z}_p$ converges to a minimizer of $W^{\rm meso}_{p,D_p}$ defined in \eqref{DefEnergyRenMeso}.
\item For $i\in \{1,...,d\}$, we write $\hat z_i:=(z_i-y_i)/(\lambda\delta)\in\o$ where $y_i\in\delta\Z^2$ is s.t. $z_i\in B(y_i,\lambda\delta)$. Then, up to pass to a subsequence, $\hat z_i$ converges to a minimizer of $W^{\rm micro}$.
\end{itemize}
\end{cor}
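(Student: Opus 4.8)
The statement of Corollary \ref{Cor.ExactEnergyExp} collects in one place the sharp energy expansion and the three‑scale description of the vortices; all the ingredients have been prepared, so the proof is essentially a matter of assembling upper and lower bounds and extracting the optimality statements.

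\textbf{Plan of the proof.} The plan is to prove the energy identity \eqref{Exact...Expending} by sandwiching $\F(v_\v,A_\v)$ between the upper bound of Proposition \ref{Prop.BorneSupSimple} and the lower bound of Proposition \ref{Prop.BorneInfTrèsFine}, and then to read off the optimality of the renormalized energies from the fact that the two bounds coincide. First I would treat the case $d=0$ separately: here $|v_\v|\geq1/2$ in $\O$, and since $(v_\v,A_\v)$ is a quasi-minimizer with $\F(v_\v,A_\v)\leq\inf_\H\F+o(1)\leq\h^2\Jo+o(1)$, Corollary \ref{CorEtudeSansVortex}\eqref{CorEtudeSansVortex1} applies [after replacing $A_\v$ by the potential of Lemma \ref{LemAuxConstructMagnPot}, which only lowers the energy and satisfies the required $L^\infty$ bound on $\n\xi_\v$ by Remark \ref{RemCOntrolTrainRER}], giving $\F(v_\v,A_\v)=\h^2\Jo+o(1)$, which is exactly \eqref{Exact...Expending} with $d=0$ since $\L_1(0)=\L_2(0)=0$ [Notation \ref{NotL0}] and the bracket term vanishes. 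For $d\geq1$, the lower bound of Proposition \ref{Prop.BorneInfTrèsFine} reads
\[
\F(v_\v,A_\v)\geq\h^2\Jo+d\Pic[-\h+\HoC]+\L_1(d)\ln\h+\L_2(d)+\sum_{i\in J_\mu}[W^{\rm micro}(\hat z_i^0)-\textstyle\min_\o W^{\rm micro}]+[\W_d({\bf D})-\Wmin_d]+o(1),
\]
and since $W^{\rm micro}(\hat z_i^0)\geq\min_\o W^{\rm micro}$ and $\W_d({\bf D})\geq\Wmin_d=\min_{\LamN}\W_d$ [noting ${\bf D}\in\LamN$ by Proposition \ref{Prop.BonEcartement}, second item, together with Lemma \ref{LemSommeDegCarréDec}], the two nonnegative correction terms can only be dropped, yielding $\F(v_\v,A_\v)\geq\h^2\Jo+d\Pic[-\h+\HoC]+\L_1(d)\ln\h+\L_2(d)+o(1)$. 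The matching upper bound $\F(v_\v,A_\v)\leq\inf_\H\F+o(1)\leq$ [right-hand side of \eqref{ExactExpEnerg} with the optimal ${\bf D}$] $+o(1)$ comes from quasi-minimality and Proposition \ref{Prop.BorneSupSimple}. Combining the two gives \eqref{Exact...Expending}.

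\textbf{Optimality of the renormalized energies.} Once \eqref{Exact...Expending} is established, I would subtract it from the lower bound of Proposition \ref{Prop.BorneInfTrèsFine} to conclude
\[
\sum_{i\in J_\mu}[W^{\rm micro}(\hat z_i^0)-\textstyle\min_\o W^{\rm micro}]+[\W_d({\bf D})-\Wmin_d]\leq o(1).
\]
Since every summand is nonnegative and $d={\rm Card}(J_\mu)$ is fixed along the subsequence, each term must be $o(1)$; but $\W_d({\bf D})-\Wmin_d$ is a nonnegative number taking finitely many values [${\bf D}\in\LamN$, a finite set], hence it is exactly $0$, i.e.\ ${\bf D}$ minimizes $\W_d$ on $\LamN$. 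Likewise $W^{\rm micro}(\hat z_i^0)=\min_\o W^{\rm micro}$ for every $i$, so each limit $\hat z_i^0$ minimizes $W^{\rm micro}$, which is the microscopic statement [recall $\hat z_i\to\hat z_i^0$ up to a subsequence, extracted in the discussion preceding Proposition \ref{Prop.BorneInfTrèsFine}]. For the mesoscopic statement, fix $p\in\Lambda$ with $D_p>0$; the chain of inequalities in the proof of Proposition \ref{Prop.BorneInfTrèsFine} [estimates \eqref{BigBig1}--\eqref{BigBig4}] is now saturated up to $o(1)$, which forces the mesoscopic term
\[
-\pi\sum_{\substack{i,j\in J_p\\i\neq j}}\ln|z_i-z_j|+2\pi\h\sum_{i\in J_p}[\xi_0(z_i)-\xi_0(p)]=\tfrac{\pi}{2}(D_p^2-D_p)\ln(\h/D_p)+C_{p,D_p}+o(1),
\]
i.e.\ the infimum in \eqref{DevMesoscopicDef} is achieved up to $o(1)$ by the configuration $(z_i)_{i\in J_p}$. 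Proposition \ref{EnergieRenMeso} then yields directly that $\breve{\bf z}_p=((z_i-p)\sqrt{D_p/\h})_{i\in J_p}$, up to a further subsequence, converges to a minimizer of $W^{\rm meso}_{p,D_p}$.

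\textbf{Main obstacle.} The genuine work is entirely in the propositions already proved — especially the sharp lower bound Proposition \ref{Prop.BorneInfTrèsFine} and the good‑separation estimate Proposition \ref{Prop.BonEcartement} — so the corollary itself is a bookkeeping argument; the only point requiring mild care is the juggling of subsequences and the substitution $A_\v\rightsquigarrow\n^\bot\xi_{v_\v}$ needed to put the configuration into the framework [Coulomb gauge, $\rot A\in H^1$, the $L^\infty$ bound on $\n\xi_\v$] where Corollary \ref{Cor.DecompPourCluster}, Proposition \ref{Prop.BorneInfTrèsFine} and Corollary \ref{CorEtudeSansVortex} all apply simultaneously. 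One must also note that the various extractions [to fix $d$, then ${\bf D}$, then $J_\mu$ and the $d_i$'s, then the $\hat z_i^0$'s, then the mesoscopic limits] are compatible because each quantity being stabilized is either integer‑valued and bounded or lies in a fixed compact set, so a diagonal choice works; the energy identity \eqref{Exact...Expending}, once proved along one such subsequence with $d$ fixed, holds along any subsequence with the same $d$, and hence the full statement follows.
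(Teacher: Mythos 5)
Your proof is correct and follows exactly the route the paper indicates: the corollary is obtained by sandwiching $\F(v_\v,A_\v)$ between the upper bound of Proposition~\ref{Prop.BorneSupSimple} and the lower bound of Proposition~\ref{Prop.BorneInfTrèsFine} (with the $d=0$ case handled separately via Corollary~\ref{CorEtudeSansVortex}), and then reading off the optimality of ${\bf D}$, the microscopic limits, and the mesoscopic limits from the saturation of the nonnegative correction terms. Your remark that the mesoscopic statement requires going inside the proof of Proposition~\ref{Prop.BorneInfTrèsFine} to isolate the term matched against Proposition~\ref{EnergieRenMeso} is a correct and useful clarification of a point the paper leaves implicit.
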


For a further use, we claim that for  $\dtest\geq0$, from Proposition \ref{Prop.BorneSupSimple}, there exits a configuration $(\vtest,\Atest)\in\H$ which is in the  Coulomb gauge s.t.
\begin{equation}\label{ExactExpEnergTest}
\F(\vtest,\Atest)-\h^2 \Jo=\dtest\Pic\left[-\h+\HoC \right]+\L_1(\dtest)\ln\h+\L_2(\dtest)+o(1).
\end{equation}

Recall that, from Lemma \ref{LemLaisseLectTrucSimple}, for $d\neq0$, we have  $d\in\{1,...,N_0\}$ if and only if $\L_1(d)=0$ and $\L_2(d)=\Wmin_d$.  For further use we state another lemma whose proof is left to the reader:
\begin{lem}\label{TechLemmaDefDelta}For  $0\leq d<d'$ we let :
\begin{enumerate}
\item  $\di\DLU_d:=\dfrac{\L_1(d+1)-\L_1(d)}{\Pic}=\dfrac{\pi}{\Pic}\left\lfloor \dfrac{d}{N_0}\right\rfloor$.
\item  $\di\DLU_{d',d}:=\dfrac{\L_1(d')-\L_1(d)}{\Pic(d'-d)}=\dfrac{\pi}{\Pic(d'-d)}\sum_{k=d}^{d'-1}\left\lfloor \dfrac{k}{N_0}\right\rfloor$.
\item  $\di\DLD_d:=\dfrac{\L_2(d+1)-\L_2(d)}{\Pic}$ and $\di\DLD_d-\dfrac{\Wmin_{d+1}-\Wmin_d}{\Pic}=$
\[
=\left|\begin{array}{l}0\text{ if }d\leq N_0-1
\\
-\dfrac{\pi}{2\Pic}\left\lfloor \dfrac{d}{N_0}\right\rfloor\left[\left(1+\left\lfloor \dfrac{d}{N_0}\right\rfloor\right)\ln\left(1+\left\lfloor \dfrac{d}{N_0}\right\rfloor\right)+\left(1-\left\lfloor \dfrac{d}{N_0}\right\rfloor\right)\ln\left\lfloor \dfrac{d}{N_0}\right\rfloor\right]\text{ if }d\geq N_0
\end{array}\right..
\]
\item {$\di\DLD_{d',d}:=
\dfrac{\L_2(d')-\L_2(d)}{\Pic(d'-d)}$ thus, if $d'\leq N_0$, then $\DLD_{d',d}=\dfrac{\Wmin_{d'}-\Wmin_d}{\Pic(d'-d)}$}.
\end{enumerate}
\end{lem}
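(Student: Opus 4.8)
The plan is to reduce the four identities to the explicit shape of the elements of $\LamN$ together with one short finite computation, so there is no genuine obstacle here; the only point to watch is the ``carry'' case described below. Write $d=qN_0+r$ with $q:=\lfloor d/N_0\rfloor$ and $0\leq r\leq N_0-1$. Then every ${\bf D}\in\LamN$ has exactly $r$ entries equal to $q+1=\lceil d/N_0\rceil$ and $N_0-r$ entries equal to $q$, so that
\[
\sum_{k=1}^{N_0}D_k^2=r(q+1)^2+(N_0-r)q^2,\qquad\sum_{\substack{k=1\\D_k\geq1}}^{N_0}(D_k-D_k^2)\ln D_k=r\,g(q+1)+(N_0-r)\,g(q),
\]
where I set $g(m):=(m-m^2)\ln m$ for $m\in\N^*$; note $g(1)=0$ and entries equal to $0$ do not appear in the second sum.

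First I would treat $\DLU_d$. Passing from $d$ to $d+1$ either leaves $q$ unchanged and replaces one entry $q$ by $q+1$ (when $r\leq N_0-2$), or (the ``carry'' case $r=N_0-1$) turns every entry into $q+1$; in both situations exactly one entry equal to $q$ disappears and one equal to $q+1$ appears, so $\sum_kD_k^2$ increases by $(q+1)^2-q^2=2q+1$ while $d$ increases by $1$. By the definition of $\L_1$ this gives $\L_1(d+1)-\L_1(d)=\tfrac{\pi}{2}\bigl((2q+1)-1\bigr)=\pi q=\pi\lfloor d/N_0\rfloor$ (for $d=0$ use $\L_1(0)=0$), which is the claimed value of $\DLU_d$. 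The formula for $\DLU_{d',d}$ then follows at once by telescoping, $\L_1(d')-\L_1(d)=\sum_{k=d}^{d'-1}\bigl(\L_1(k+1)-\L_1(k)\bigr)=\pi\sum_{k=d}^{d'-1}\lfloor k/N_0\rfloor$, and dividing by $\Pic(d'-d)$.

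For $\DLD_d$ I would use $\L_2(d)=\Wmin_d+\tfrac{\pi}{2}\bigl(r\,g(q+1)+(N_0-r)\,g(q)\bigr)$. If $d\leq N_0-1$ then $q=0$, so the bracket vanishes at $d$ and, since $d+1\leq N_0$, also at $d+1$ (either $q=0$ there, or $d+1=N_0$ with all entries equal to $1$), whence $\DLD_d-\Pic^{-1}(\Wmin_{d+1}-\Wmin_d)=0$. If $d\geq N_0$ then $q\geq1$, and the same ``remove one $q$, add one $q+1$'' count shows the bracket increases by $g(q+1)-g(q)$; using $m-m^2=-m(m-1)$ one finds
\[
g(q+1)-g(q)=-q(q+1)\ln(q+1)+q(q-1)\ln q=-q\bigl[(1+q)\ln(1+q)+(1-q)\ln q\bigr],
\]
and multiplying by $\tfrac{\pi}{2\Pic}$ yields the stated formula. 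Finally, the identity for $\DLD_{d',d}$ is immediate: if $d'\leq N_0$ then $d<d'\leq N_0$, so by the last assertion of Lemma~\ref{LemLaisseLectTrucSimple} we have $\L_2(d)=\Wmin_d$ and $\L_2(d')=\Wmin_{d'}$, hence $\DLD_{d',d}=(\Wmin_{d'}-\Wmin_d)/(\Pic(d'-d))$.
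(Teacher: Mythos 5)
Your proof is correct. The paper left this lemma ``to the reader,'' and the direct computation you give — writing $d=qN_0+r$, noting that each ${\bf D}\in\LamN$ has exactly $r$ entries $q+1$ and $N_0-r$ entries $q$, observing that the increment $d\mapsto d+1$ always replaces one entry $q$ by $q+1$ (even in the carry case $r=N_0-1$), and then working out $\sum D_k^2$ and the sum $\sum_{D_k\ge1}(D_k-D_k^2)\ln D_k$ term by term — is exactly the intended argument, and the telescoping for $\DLU_{d',d}$ and the invocation of the last assertion of Lemma \ref{LemLaisseLectTrucSimple} for $\DLD_{d',d}$ are both sound. One cosmetic point: your displayed identity $\sum_{D_k\ge1}(D_k-D_k^2)\ln D_k=r\,g(q+1)+(N_0-r)\,g(q)$ should be read with the convention that the second term is dropped when $q=0$ (since $g$ is only defined on $\N^*$); you effectively do this by treating the case $d\le N_0-1$ separately, so no harm results.
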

By using \eqref{Exact...Expending} and \eqref{ExactExpEnergTest} we easily get the following corollary.
\begin{cor}\label{Cor.ExactEnergyExpPreCritField} 
Let $\v=\v_n\downarrow0$, $\lambda$, $\delta$, $\h$ and  $((v_\v,A_\v))_\v\subset\H$ be as in Corollary \ref{Cor.ExactEnergyExp}.

  Assume that $d$ is independent of $\v$. Then we have for $ d'>d$
\begin{eqnarray}\nonumber
 \h\leq \HoC +\DLU_{d',d}\times\ln\h+\DLD_{d',d}+o(1).
\end{eqnarray}
Then, letting $\chi$ be s.t. $\h =\HoC (1+\chi)$ [$\chi=o(1)$ from \eqref{BorneKMagn}], we have thus 
\begin{eqnarray}\label{BorneSUPUNDEGENnplusPrecise}
\h\leq \HoC +\DLU_{d',d}\times\ln\HoC +\DLD_{d',d}+o(1).
\end{eqnarray}
If $d>d'\geq0$ then 
\begin{eqnarray}\label{BorneINFUNDEGENnplusPrecise}
\h\geq \HoC +\DLU_{d,d'}\times\ln\HoC +\DLD_{d,d'}+o(1).
\end{eqnarray}
\end{cor}

We are now in position to give an asymptotic value for the first critical field. Indeed with Corollary \ref{Cor.ExactEnergyExpPreCritField}  [\eqref{BorneSUPUNDEGENnplusPrecise} with $d=0\&d'\in\{1,...,N_0\}$ and \eqref{BorneINFUNDEGENnplusPrecise} with $d\geq1\&d'=0$]. 

Recall that we write, for $x\in\R$,  $[x]^+=\max(x,0)$ and $[x]^-=\min(x,0)$
\begin{cor}\label{CorDefPremierChampsCrit}
Denote $H_{c_1}:=\HoC +\min_{d\in\{1,...,N_0\}}{\dfrac{\Wmin_{d}}{d\Pic}}$. Let $\{(v_\v,A_\v)\,|\,0<\v<1\}\subset\H$ be a family of quasi-minimizers  satisfying \eqref{HypGlobalSurQuasiMin}.
\begin{enumerate}
\item\label{CorDefPremierChampsCrit1} If   for sufficiently small $\v$ we have $d=0$ then $[\h-H_{c_1}]^+\to0$.
\item\label{CorDefPremierChampsCrit2} If  for sufficiently small $\v$ we have $d>0$ then $[\h-H_{c_1}]^-\to0$.
\end{enumerate}
\end{cor}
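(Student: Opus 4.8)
The plan is to derive Corollary \ref{CorDefPremierChampsCrit} as an almost immediate consequence of Corollary \ref{Cor.ExactEnergyExpPreCritField}, using the precise energy expansion \eqref{Exact...Expending} as the underlying engine. First I would reduce to the case of a sequence $\v=\v_n\downarrow0$: both assertions are statements about all sequences extracted from the family, so it suffices to fix such a sequence and, passing to a subsequence if needed, assume that $d=d_\v$ is independent of $\v$ (this is legitimate by the bound $d\leq\mathcal{D}_{K,b}$ from Proposition \ref{Prop.ProprieteEpsMauvDisk}, cf.\ \eqref{BorneTotaleSommeDeg}). I would also record at the outset that by \eqref{BorneKMagn} we have $\h=\HoC(1+o(1))$, in particular $\ln\h=\ln\HoC+o(1)$, so the $\DLU_{d',d}\ln\h$ terms are harmless and one may freely replace $\ln\h$ by $\ln\HoC$.

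For assertion \ref{CorDefPremierChampsCrit1}, suppose $d=0$ for small $\v$. The point is that competitors with $d'\in\{1,\dots,N_0\}$ vortices are available (Proposition \ref{Prop.BorneSupSimple}, recorded in \eqref{ExactExpEnergTest}), and quasi-minimality forces the $d=0$ energy not to exceed any of these by more than $o(1)$. Concretely, \eqref{BorneSUPUNDEGENnplusPrecise} with $d=0$ and $d'\in\{1,\dots,N_0\}$ gives, using $\L_1(d')=0$ and $\L_2(d')=\Wmin_{d'}$ for such $d'$ (Lemma \ref{LemLaisseLectTrucSimple}), that $\h\leq\HoC+\dfrac{\Wmin_{d'}}{d'\Pic}+o(1)$; minimizing over $d'\in\{1,\dots,N_0\}$ yields $\h\leq H_{c_1}+o(1)$, i.e.\ $[\h-H_{c_1}]^+\to0$. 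For assertion \ref{CorDefPremierChampsCrit2}, suppose $d>0$ for small $\v$; now apply \eqref{BorneINFUNDEGENnplusPrecise} with this $d$ and $d'=0$. Since $\DLU_{d,0}\geq0$ and $\ln\HoC\to+\infty$ while $\DLD_{d,0}=\L_2(d)/(d\Pic)$, one gets $\h\geq\HoC+\L_2(d)/(d\Pic)+o(1)$; and because $\L_2(d)\geq\Wmin_d+\pi(\text{nonnegative correction})\geq\Wmin_d$ for $d\geq N_0$ while $\L_2(d)=\Wmin_d$ for $d\leq N_0$, and $\Wmin_d/(d\Pic)$ on $\{1,\dots,N_0\}$ only enters the minimum defining $H_{c_1}$, a short case check (treating $d\leq N_0$ and $d>N_0$ separately, and in the latter case using $\DLU_{d,0}\ln\HoC\to\infty$ to dominate any bounded deficit) gives $\h\geq H_{c_1}+o(1)$, hence $[\h-H_{c_1}]^-\to0$.

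The main obstacle — really the only subtlety — is bookkeeping in the case $d>N_0$ for assertion \ref{CorDefPremierChampsCrit2}: one must make sure that the divergent term $\DLU_{d,0}\ln\HoC$ (which is strictly positive once $d>N_0$, since $\lfloor k/N_0\rfloor\geq1$ for some $k<d$) genuinely forces $\h$ above $H_{c_1}$, so that the definition of $H_{c_1}$ as a minimum over $\{1,\dots,N_0\}$ only is consistent. I would handle this by invoking Lemma \ref{TechLemmaDefDelta} to write $\DLD_{d,0}$ explicitly and observing that the logarithmically divergent positive contribution swamps the bounded part; alternatively, one notes that by Theorem \ref{ThmBorneDegréMinGlob}-type bounds $d$ is bounded, so there are only finitely many cases and each is a direct inequality. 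All other steps are routine substitutions into the inequalities of Corollary \ref{Cor.ExactEnergyExpPreCritField} together with $\ln\h=\ln\HoC+o(1)$, so I would present the proof compactly: fix the sequence, reduce to constant $d$, split into the two cases, and in each case quote the relevant line of \eqref{BorneSUPUNDEGENnplusPrecise} or \eqref{BorneINFUNDEGENnplusPrecise} and optimize over the auxiliary index.
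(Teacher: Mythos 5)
Your proof is essentially the paper's argument written out in full: apply Corollary \ref{Cor.ExactEnergyExpPreCritField} with $d=0$ and $d'\in\{1,\dots,N_0\}$, optimizing $\DLD_{d',0}=\Wmin_{d'}/(d'\Pic)$, for the upper bound $[\h-H_{c_1}]^+\to0$; and with $d'=0$ for the lower bound $[\h-H_{c_1}]^-\to0$. The reduction to a subsequence with constant $d$ via \eqref{BorneTotaleSommeDeg}, the observation that $\DLU_{d',0}=0$ for $d'\le N_0$, the case split $d\le N_0$ versus $d>N_0$, and the identification of $\DLU_{d,0}\ln\HoC\to\infty$ as the mechanism in the second case are all correct and match the paper's one-line proof.

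One auxiliary claim in your case analysis is stated backwards. By Lemma \ref{LemLaisseLectTrucSimple},
\[
\L_2(d)=\Wmin_d+\dfrac{\pi}{2}\sum_{\substack{k=1\\D_k\ge1}}^{N_0}(D_k-D_k^2)\ln D_k,
\]
and for $D_k\ge1$ we have $D_k-D_k^2\le0$ while $\ln D_k\ge0$, so the correction term is \emph{nonpositive}; hence $\L_2(d)\le\Wmin_d$ for $d>N_0$ (with equality iff $d\le N_0$), not $\ge$ as you wrote. Consequently $\DLD_{d,0}=\L_2(d)/(\Pic d)$ may fall \emph{below} $\Wmin_d/(\Pic d)$ when $d>N_0$. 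This does not break your argument, because (as you also observe) the divergent $\DLU_{d,0}\ln\HoC$ dominates any bounded deficit in $\DLD_{d,0}$ and forces $\h-H_{c_1}\to+\infty$ when $d>N_0$; but the sentence asserting $\L_2(d)\ge\Wmin_d$ should be corrected or removed, since the conclusion does not rest on it and it is false.
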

\begin{proof}
The corollary is a direct consequence of Corollary \ref{Cor.ExactEnergyExpPreCritField} taking $d'\in\{1,...,N_0\}$ which minimizes $\DLD_{d',0}=W_{d'}/(\Pic d')$ in \eqref{BorneSUPUNDEGENnplusPrecise} for the first assertion and $d'=0$ in \eqref{BorneINFUNDEGENnplusPrecise} for the second.
\end{proof}
\subsection{Secondary critical fields for $d\in\{1,...,N_0\}$}\label{Sec.SecondaryCriticFields}
If $N_0=1$,  if $\h$ is near $H_{c_1}$ and if $d>0$, then it is standard to prove that $d=1$. If  $N_0\geq2$ and $d\in\{1,...,N_0\}$, then the situation is more involved: we have no  {\it a priori} sharp informations about the number of vorticity defects and their [macroscopic] location. The goal of this section is to get such informations.

\subsubsection{Preliminaries}
Note that for $0\leq d<d'\leq N_0$ we have $\DLU_{d',d}=0$ and $\DLD_{d',d}=\dfrac{\Wmin_{d'}-\Wmin_{d}}{\Pic (d'-d)}$. 

Rephrasing Corollary \ref{Cor.ExactEnergyExpPreCritField} for $d,d'\in\{0,...,N_0\}$ we have the following key lemma.
\begin{lem}\label{Lem.PremEtapChamSec}
Let $\v=\v_n\downarrow0$, $\lambda$, $\delta$, $\h$ and  $((v_\v,A_\v))_\v\subset\H$ be as in Corollary \ref{Cor.ExactEnergyExp}.

Assume ${\rm Card}(\Zz)=d$ is independent of $\v$ then the following properties hold:
\begin{enumerate}
\item\label{Lem.PremEtapChamSec2} If $0\leq d'<d$ then, letting $\Wmin_0:=0$, we have $\h\geq \HoC +\dfrac{\Wmin_{d}-\Wmin_{d'}}{\Pic (d-d')}+o(1)$.

In particular taking $d'=0$ we get $\h\geq  \HoC +\dfrac{\Wmin_{d}}{\Pic d}+o(1)$.
\item\label{Lem.PremEtapChamSec3} If $d<N_0$ and $ d<d'\leq N_0$ then $\h\leq \HoC +\dfrac{\Wmin_{d'}-\Wmin_{d}}{\Pic (d'-d)}+o(1)$.

\item\label{lemQqePropQuot1} If $N_0\geq 2$, $N_0\geq d'>d\geq1$  then
\[
\dfrac{\Wmin_{d'}}{d'}<\dfrac{\Wmin_{d'}-\Wmin_{d}}{d'-d}\Longleftrightarrow\dfrac{\Wmin_{d}}{d}<\dfrac{\Wmin_{d'}}{d'}\text{ and }\dfrac{\Wmin_{d'}}{d'}>\dfrac{\Wmin_{d'}-\Wmin_{d}}{d'-d}\Longleftrightarrow\dfrac{\Wmin_{d}}{d}>\dfrac{\Wmin_{d'}}{d'}.
\]
\item\label{lemQqePropQuot2} If $N_0\geq 2$ and $N_0\geq d'>d\geq1$ then
\[
\dfrac{\Wmin_{d'}}{d'}=\dfrac{\Wmin_{d'}-\Wmin_{d}}{d'-d}\Longleftrightarrow\dfrac{\Wmin_{d}}{d}=\dfrac{\Wmin_{d'}}{d'}.
\]

\item\label{lemQqePropQuot3} If $N_0\geq 2$ and $0\leq d<d'<d''\leq N_0$ then  we have the following convex combination
\[
\dfrac{\Wmin_{d''}-\Wmin_{d}}{d''-d}=\dfrac{d''-d'}{d''-d}\dfrac{\Wmin_{d''}-\Wmin_{d'}}{d''-d'}+\dfrac{d'-d}{d''-d}\dfrac{\Wmin_{d'}-\Wmin_{d}}{d'-d}.
\]
Consequenlty $\dfrac{\Wmin_{d''}-\Wmin_{d}}{d''-d}$ is between $\dfrac{\Wmin_{d''}-\Wmin_{d'}}{d''-d'}$ and $\dfrac{\Wmin_{d'}-\Wmin_{d}}{d'-d}$.
\end{enumerate}
\end{lem}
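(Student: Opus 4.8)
The plan is to separate the five assertions into two blocks. Items \ref{Lem.PremEtapChamSec2} and \ref{Lem.PremEtapChamSec3} will be read off directly from Corollary \ref{Cor.ExactEnergyExpPreCritField}, using that in the regime $d,d'\in\{0,\dots,N_0\}$ (the standing assumption of this lemma) the coefficients of Lemma \ref{TechLemmaDefDelta} take the simple values recorded just before the statement, namely $\DLU_{d',d}=0$ and $\DLD_{d',d}=(\Wmin_{d'}-\Wmin_{d})/[\Pic(d'-d)]$; these follow from Lemma \ref{LemLaisseLectTrucSimple} (which gives $\L_1(d)=0$ and $\L_2(d)=\Wmin_{d}$ for $d\in\{0,\dots,N_0\}$) together with Lemma \ref{TechLemmaDefDelta}. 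Items \ref{lemQqePropQuot1}, \ref{lemQqePropQuot2} and \ref{lemQqePropQuot3} are elementary arithmetic statements about the finite family $\Wmin_0:=0,\Wmin_1,\dots,\Wmin_{N_0}$ and use no analysis.

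For \ref{Lem.PremEtapChamSec2}: given $0\le d'<d$, I would apply the lower bound \eqref{BorneINFUNDEGENnplusPrecise} of Corollary \ref{Cor.ExactEnergyExpPreCritField} with this pair of indices; since $\DLU_{d,d'}=0$ the $\ln\HoC$ term drops out, leaving $\h\ge\HoC+(\Wmin_{d}-\Wmin_{d'})/[\Pic(d-d')]+o(1)$, which is the claim; taking $d'=0$ and using $\Wmin_0=0$ gives the displayed special case. For \ref{Lem.PremEtapChamSec3}: given $d<d'\le N_0$, I would instead apply the upper bound \eqref{BorneSUPUNDEGENnplusPrecise} of the same corollary with this pair, again using $\DLU_{d',d}=0$, to obtain $\h\le\HoC+(\Wmin_{d'}-\Wmin_{d})/[\Pic(d'-d)]+o(1)$.

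For \ref{lemQqePropQuot3} I would simply expand the right-hand side: the factors $d''-d'$ and $d'-d$ cancel against the two denominators and the numerators telescope, $(\Wmin_{d''}-\Wmin_{d'})+(\Wmin_{d'}-\Wmin_{d})=\Wmin_{d''}-\Wmin_{d}$, so the sum equals $(\Wmin_{d''}-\Wmin_{d})/(d''-d)$. Since $d<d'<d''$, the weights $(d''-d')/(d''-d)$ and $(d'-d)/(d''-d)$ are strictly positive and sum to $1$, so $(\Wmin_{d''}-\Wmin_{d})/(d''-d)$ is a strict convex combination of $(\Wmin_{d''}-\Wmin_{d'})/(d''-d')$ and $(\Wmin_{d'}-\Wmin_{d})/(d'-d)$, hence lies between them (strictly unless the two coincide). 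To deduce \ref{lemQqePropQuot1} and \ref{lemQqePropQuot2} I would apply \ref{lemQqePropQuot3} to the triple $0<d<d'$ (admissible since $\Wmin_0=0$), which states that $\Wmin_{d'}/d'$ is a strict convex combination of $\Wmin_{d}/d$ and $(\Wmin_{d'}-\Wmin_{d})/(d'-d)$. For a strict convex combination $\gamma=t\alpha+(1-t)\beta$ with $t\in(0,1)$ one has the chain $\gamma<\beta\iff\alpha<\beta\iff\alpha<\gamma$, and the same with $<$ replaced by $>$ or by $=$; reading this off with $\gamma=\Wmin_{d'}/d'$, $\alpha=\Wmin_{d}/d$ and $\beta=(\Wmin_{d'}-\Wmin_{d})/(d'-d)$ yields precisely the three equivalences.

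The whole argument is bookkeeping; the only points calling for a little care are checking that $\DLU$ vanishes throughout the range $d,d'\le N_0$ (so that no $\ln\h$ term survives in items \ref{Lem.PremEtapChamSec2} and \ref{Lem.PremEtapChamSec3}) and making the convex-combination equivalences in items \ref{lemQqePropQuot1} and \ref{lemQqePropQuot2} precise. I do not anticipate a genuine obstacle.
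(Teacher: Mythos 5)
Your proof is correct and takes the same route the paper intends: items (1) and (2) are read directly off Corollary \ref{Cor.ExactEnergyExpPreCritField} after observing that $\DLU$ vanishes and $\DLD$ reduces to a difference quotient of $\Wmin$ when both indices lie in $\{0,\dots,N_0\}$, and items (3)--(5) are elementary algebra built on the telescoping convex-combination identity. The paper compresses all this into two sentences ("obtained with Corollary \ref{Cor.ExactEnergyExpPreCritField}\dots basic calculations"); your write-up simply supplies the details, and the derivation of (3) and (4) from (5) applied to the triple $(0,d,d')$ together with the strict-convex-combination equivalences is exactly the "basic calculation" the author has in mind.
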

\begin{proof}
The two first assertions are obtained with Corollary \ref{Cor.ExactEnergyExpPreCritField}. The remaining part of the lemma consists in basic calculations.
\end{proof}
\subsubsection{First step in the definition of the critical fields}\label{DefFirstCriticalFields}
Assume $N_0\geq2$. We are going to define some energetic levels [in term of $\Wmin_{d}$] related with the number of vorticity defects and their [macroscopic] location. 

 We denote $d^\star_0:=0$, $\mathscr{S}_1:=\{1,...,N_0\}$, $\Krit_1^\star:=\min_{d\in\mathscr{S}_1}\dfrac{\Wmin_{d}}{d}=\min_{d\in\mathscr{S}_1}\dfrac{\Wmin_{d}-\Wmin_{d_0^\star}}{d-d_0^\star}$,  $\mathscr{S}^\star_1:=\{d\in\mathscr{S}_1\,|\,{\Wmin_{d}}/{d}=\Krit_1^\star\}$ and  $\mathscr D_1:=\{\,{\bf D}\in \Lambda_d\,|\,d\in \mathscr{S}^\star_1\text{ and }{\bf D}\text{ minimizes } \W_{d}\}$. We let also $d_1^{\star}:=\max\mathscr{S}^\star_1$ and ${\mathscr{D}}_1^\star:={\mathscr{D}}_1\cap\tilde \Lambda_{d_1^\star}$.

If $d_1^\star=N_0$ we are going to prove that for $\h\geq H_{c_1}+o(1)$ [but $\h$ not too large], then there is exactly one vorticity defect close to each point of $\Lambda$. In the contrary case [$1\leq d_1^\star<N_0$], then there are other critical fields which govern the number of vorticity defects.

If $d_1^{\star}<N_0$, then $\mathscr{S}_2:=\{d_1^{\star}+1,...,N_0\}\neq\emptyset$. For $d\in\mathscr{S}_2$ we let $\Krit_2(d):=\dfrac{\Wmin_d-\Wmin_{d^\star_1}}{d-d^\star_1}$, $\mathscr{S}_2^\star:=\left\{d\in\mathscr{S}_2\,|\,\Krit_2(d)=\min_{\mathscr{S}_2}\Krit_2\right\}$, $d_2^\star:=\max\mathscr{S}_2^\star$ and $\Krit_2^\star:=\Krit_2(d_2^\star)$.

We denote $\mathscr D_2:=\{{\bf D}\in \Lambda_d\,|\,d\in \mathscr{S}^\star_2\text{ and }{\bf D}\text{ minimizes } \W_{d}\}$ and ${\mathscr{D}}_2^\star:=\mathscr D_2\cap \Lambda_{d_2^\star}$.

We claim that for  $d\in\mathscr{S}_2$ we have $\Wmin_d/d>\Wmin_{d_1^\star}/d_1^\star$. Then, with Lemma \ref{Lem.PremEtapChamSec}.\ref{lemQqePropQuot1}, we get   $\Krit_2(d)>\Wmin_{d_1^\star}/d_1^\star$. In particular
\begin{equation}\label{K_1<K_2}
\Krit_2^\star=\dfrac{\Wmin_{d_2^\star}-\Wmin_{d^\star_1}}{d_2^\star-d^\star_1}>\dfrac{\Wmin_{d_1^\star}}{d_1^\star}=\Krit_1^\star.
\end{equation}
If $d_2^\star=N_0$ then we stop the construction. In the contrary case, for $d\in\mathscr{S}_3:=\{d_2^\star+1,...,N_0\}\neq\emptyset$ we have $\Krit_2(d)>\Krit_2(d_2^\star)$.

We continue the iterative construction. For $k\geq2$, assume that we have $1<d_{k-1}^\star<d_k^\star<N_0$, we let   $\mathscr{S}_{k+1}:=\{d_k^\star+1,...,N_0\}\neq\emptyset$ and we assume that for  $d\in\mathscr{S}_{k+1}$:
\begin{equation}\label{RecurrencePropertyGrangemeon}
\Krit_{k}(d):=\dfrac{\Wmin_{d}-\Wmin_{d^\star_{k-1}}}{d-d^\star_{k-1}}>\dfrac{\Wmin_{d_k^\star}-\Wmin_{d^\star_{k-1}}}{d_k^\star-d^\star_{k-1}}=\Krit_{k}^\star.
\end{equation}
For $d\in\mathscr{S}_{k+1}$ we let $\Krit_{k+1}(d):=\dfrac{\Wmin_d-\Wmin_{d^\star_k}}{d-d^\star_k}$, 
\[
\mathscr{S}_{k+1}^\star:=\left\{d\in\mathscr{S}_{k+1}\,|\,\Krit_{k+1}(d)=\min_{\mathscr{S}_{k+1}}\Krit_{k+1}\right\},
\] $d_{k+1}^\star:=\max\mathscr{S}_{k+1}^\star$ and $\Krit_{k+1}^\star:=\Krit_{k+1}(d_{k+1}^\star)$.

We define also
\[
\mathscr D_{k+1}:=\{{\bf D}\,|\,{\bf D}\in \Lambda_d,\,d\in \mathscr{S}^\star_{k+1}\text{ and }{\bf D}\text{ minimizes } \W_{d}\}
\text{ and }
{\mathscr{D}}_{k+1}^\star
:=\mathscr D_{k+1}\cap\Lambda_{d_{k+1}^\star}.
\]

From \eqref{RecurrencePropertyGrangemeon} we have
\begin{equation}\label{RecurrencePropertyGrangemeonbis}
\Krit_{k}(d_{k+1}^\star)=\dfrac{\Wmin_{d_{k+1}^\star}-\Wmin_{d^\star_{k-1}}}{d_{k+1}^\star-d^\star_{k-1}}>\dfrac{\Wmin_{d_{k}^\star}-\Wmin_{d^\star_{k-1}}}{d_{k}^\star-d^\star_{k-1}}=\Krit_{k}^\star.
\end{equation}
Then, from Lemma \ref{Lem.PremEtapChamSec}.\ref{lemQqePropQuot3} with $d=d_{k-1}^\star$, $d'=d_k^\star$ and $d''=d_{k+1}^\star$, we get that $\Krit_{k}(d_{k+1}^\star)$  is between $\Krit_{k}^\star$ and $\Krit_{k+1}^\star$. Consequently, with  \eqref{RecurrencePropertyGrangemeonbis} we get
\begin{eqnarray}\label{VeryVeryGoodCondGrnag}
&&\Krit_{k+1}^\star>\Krit_{k}^\star.
\end{eqnarray}
We stop the construction at Step $L$ s.t.  $d_L^\star=N_0$. Since $1\leq d_k^\star<d_{k+1}^\star\leq N_0$, it is clear that a such  $L$ exists and $1\leq L\leq N_0$.

We then have two possibilities: $L=1$ or $L\in\{2,...,N_0\}$. If $L\geq2$ then, for $k\in\{1,...,L-1\}$, \eqref{VeryVeryGoodCondGrnag} holds. 
We also claim that  $(1,...,1)\in\mathscr D_{L}$.
\begin{lem}\label{Lem.ComparaisonEntreD_kd_k+1}
Let $k\in\{1,...,L\}$, assume that $d_{k}^\star-d_{k-1}^\star\geq2$ and fix $d_{k-1}^\star<d<d_{k}^\star$. We have
\[
\dfrac{\Wmin_{d_{k}^\star}-\Wmin_{d}}{d_{k}^\star-d}\leq \Krit_k^\star\leq \dfrac{\Wmin_{d}-\Wmin_{d^\star_{k-1}}}{d-d^\star_{k-1}}.
\]
Moreover, if $d\notin\mathscr S_k^\star$, then
\[
\dfrac{\Wmin_{d_{k}^\star}-\Wmin_{d}}{d_{k}^\star-d}\leq \Krit_k^\star< \dfrac{\Wmin_{d}-\Wmin_{d^\star_{k-1}}}{d-d^\star_{k-1}}.
\]
\end{lem}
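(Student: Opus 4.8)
Both inequalities will follow, with essentially no computation, from two facts already recorded: (i) the \emph{minimality} built into the definition $\Krit_k^\star=\min_{\mathscr{S}_k}\Krit_k$, where I write $\Krit_k(e):=\dfrac{\Wmin_e-\Wmin_{d_{k-1}^\star}}{e-d_{k-1}^\star}$ for $e\in\mathscr{S}_k=\{d_{k-1}^\star+1,\dots,N_0\}$ (with the convention $\Wmin_0=0$ when $k=1$), so that $\Krit_k^\star=\Krit_k(d_k^\star)=\dfrac{\Wmin_{d_k^\star}-\Wmin_{d_{k-1}^\star}}{d_k^\star-d_{k-1}^\star}$; and (ii) the convex-combination identity of Lemma \ref{Lem.PremEtapChamSec}.\ref{lemQqePropQuot3}. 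Note first that since $d_{k-1}^\star<d<d_k^\star\le N_0$ we have $d\in\mathscr{S}_k$, so the minimality gives at once
\[
\Krit_k(d)=\frac{\Wmin_d-\Wmin_{d_{k-1}^\star}}{d-d_{k-1}^\star}\ \ge\ \Krit_k^\star ,
\]
which is the right-hand inequality. If moreover $d\notin\mathscr{S}_k^\star$, then by definition of $\mathscr{S}_k^\star$ (the set of minimizers of $\Krit_k$ on $\mathscr{S}_k$) we get $\Krit_k(d)\neq\Krit_k^\star$, hence $\Krit_k(d)>\Krit_k^\star$, i.e.\ the strict version.

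\textbf{The left-hand inequality.} Here I would apply Lemma \ref{Lem.PremEtapChamSec}.\ref{lemQqePropQuot3} to the triple $d_{k-1}^\star<d<d_k^\star$; this is legitimate because $d_k^\star-d_{k-1}^\star\ge 2$ forces $N_0\ge d_k^\star\ge 2$. The identity reads
\[
\Krit_k^\star=\frac{d_k^\star-d}{d_k^\star-d_{k-1}^\star}\cdot\frac{\Wmin_{d_k^\star}-\Wmin_d}{d_k^\star-d}+\frac{d-d_{k-1}^\star}{d_k^\star-d_{k-1}^\star}\cdot\Krit_k(d).
\]
Using $\Krit_k(d)\ge\Krit_k^\star$ from the previous step and the fact that the weight $\frac{d-d_{k-1}^\star}{d_k^\star-d_{k-1}^\star}$ is positive, the right-hand side only decreases when $\Krit_k(d)$ is replaced by $\Krit_k^\star$; after moving the term $\frac{d-d_{k-1}^\star}{d_k^\star-d_{k-1}^\star}\Krit_k^\star$ to the left, using $1-\frac{d-d_{k-1}^\star}{d_k^\star-d_{k-1}^\star}=\frac{d_k^\star-d}{d_k^\star-d_{k-1}^\star}$, and dividing by the positive number $\frac{d_k^\star-d}{d_k^\star-d_{k-1}^\star}$, one obtains
\[
\Krit_k^\star\ \ge\ \frac{\Wmin_{d_k^\star}-\Wmin_d}{d_k^\star-d},
\]
which is the left-hand inequality. (In fact the same computation shows that when $\Krit_k(d)>\Krit_k^\star$ this left-hand inequality is also strict, so the ``moreover'' part could be stated with both inequalities strict, but the weaker form in the lemma suffices.)

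\textbf{Main obstacle.} There is essentially none: the statement is a one-line consequence of the definition of $\Krit_k^\star$ as a minimum over $\mathscr{S}_k$ together with the telescoping/convexity identity of Lemma \ref{Lem.PremEtapChamSec}. The only points requiring a moment's care are bookkeeping: checking that $d$ genuinely lies in $\mathscr{S}_k$ (so the minimality applies), that the hypothesis $N_0\ge 2$ of Lemma \ref{Lem.PremEtapChamSec}.\ref{lemQqePropQuot3} holds, and — for $k=1$ — that the convention $\Wmin_0=0$, $d_0^\star=0$ makes $\Krit_1$ coincide with $e\mapsto\Wmin_e/e$ so that the argument is uniform in $k$. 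All of these are immediate from $d_{k-1}^\star<d<d_k^\star$ and $d_k^\star-d_{k-1}^\star\ge 2$.
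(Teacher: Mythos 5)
Your proof is correct and uses exactly the same two ingredients as the paper's: the minimality in the definition of $\Krit_k^\star$ (giving the right-hand inequality, strict when $d\notin\mathscr{S}_k^\star$) and the convex-combination identity of Lemma \ref{Lem.PremEtapChamSec}.\ref{lemQqePropQuot3} (giving the left-hand inequality). The only difference is presentational: the paper invokes the lemma's ``$\Krit_k^\star$ is between the two quotients'' conclusion and then uses minimality to decide which side, while you unwind the identity algebraically — same proof, different packaging.
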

\begin{proof}
From Lemma \ref{Lem.PremEtapChamSec}.\ref{lemQqePropQuot3}, $ \Krit_k^\star$ is between $\dfrac{\Wmin_{d}-\Wmin_{d^\star_{k-1}}}{d-d^\star_{k-1}}$ and $\dfrac{\Wmin_{d_{k}^\star}-\Wmin_{d}}{d_{k}^\star-d}$. On the other hand, from the definition of $d_k^\star$,  $\Krit_k^\star\leq \dfrac{\Wmin_{d}-\Wmin_{d^\star_{k-1}}}{d-d^\star_{k-1}}$. Clearly the first part of the lemma holds. If $d\notin\mathscr S_k^\star$ then, by  definition,  $\Krit_k^\star< \dfrac{\Wmin_{d}-\Wmin_{d^\star_{k-1}}}{d-d^\star_{k-1}}$.
\end{proof}
\subsubsection{Main result}
For $k\in\{1,...,L\}$ we let 
\begin{equation}\label{TheExprionnI}
\Critical_k:=H_{c_1}^0+\dfrac{\Krit_k^\star}{\Pic}
\end{equation}
and we let also 
\begin{equation}\label{TheExprionnII}
\Criticalbis_1:=H_{c_1}^0+\DLU_{N_0}\times\ln\HoC +\DLD_{N_0}.
\end{equation}
Recall that the $\Krit_k^\star$'s are defined in Section \ref{DefFirstCriticalFields} and $\DLU_{N_0}\&\DLD_{N_0}$ in Lemma \ref{TechLemmaDefDelta}. Note that $H_{c_1}=\Critical_1$.
\begin{prop}\label{Prop.SHarperdescriptionNonSatured}

Assume that \eqref{NonDegHyp} holds and $\lambda,\delta,\h,K$ satisfy \eqref{CondOnLambdaDelta}, \eqref{BorneKMagn} and \eqref{PutaindHypTech}.

Let $\{(v_\v,A_\v)\,|\,0<\v<1\}\subset\H$ be a family  satisfying \eqref{HypGlobalSurQuasiMin}$\&$\eqref{QuasiMinDef} which is in the Coulomb gauge. Assume $d_\v={\rm Card}(\Zz_\v)\in\{1,...,N_0\}$. 

We denote  ${\bf D}=(D_1,...,D_{N_0})$ with $D_l=\deg_{\p B(p_l,\eta_\O)}(v)$ [$\eta_\O$ is defined in \eqref{DefEtaO}].

\begin{enumerate}
\item Assume $L=1$. For sufficiently small $\v>0$ we have ${\bf D}\in\mathscr{D}_1$.

Moreover, if  $\v=\v_n\downarrow0$ is a sequence s.t.  $d_\v$ is independent of $\v$ and $ d_\v\neq N_0$ [{\it i.e.} ${\bf D}\neq(1,...,1)$]  then $\left[\h- \Critical_1\right]^+\to0$.
\item Assume $L\geq2$. For $k\in\{1,...,L-1\}$, if $d^\star_{k-1}<d_\v\leq d_k^\star$ for small $\v$ or for a sequence indexed by $\v=\v_n\downarrow0$, then
\begin{equation}\label{ChapsCritSec3}
\left[\h-\Critical_k\right]^-\to0\text{ and }\left[\h-\Critical_{k+1}\right]^+\to0.
\end{equation}
  Moreover, for sufficiently small  $\v$,  ${\bf D}\in\mathscr{D}_{k}$. And if  ${\bf D}\in\mathscr{D}_{k}\setminus\mathscr{D}^\star_{k}$ [{\it i.e.} $d^\star_{k-1}<d_\v<d_k^\star$] then
\begin{equation}\label{ChapsCritSec4}
\left[\h-\Critical_k\right]^+\to0.
\end{equation}
 \item If $d^\star_{L-1}< d_\v\leq d_L^\star=N_0$ for small $\v$ or for a sequence indexed by $\v=\v_n\downarrow0$, then
\begin{equation}\label{ChapsCritSec5}
\left[\h-\Critical_L\right]^-\to0\text{ and }\left[\h-\Criticalbis_1\right]^+\to0.
\end{equation}
Moreover, for sufficiently small $\v$, ${\bf D}\in\mathscr{D}_{L}$. And if $d_\v<N_0$ [{\it i.e} ${\bf D}\neq(1,...,1)$] then 
\begin{equation}\label{ChapsCritSec6}
\left[\h-\Critical_L\right]^+\to0.
\end{equation}
\end{enumerate}
In particular, for sufficiently small $\v$, we have ${\bf D}\in \cup_{l=1}^{L}\mathscr{D}_l$.
\end{prop}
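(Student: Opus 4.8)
The plan is to combine the two-sided energy estimates already available---the sharp upper bound of Proposition \ref{Prop.BorneSupSimple} (equivalently the test-function energy \eqref{ExactExpEnergTest}) and the sharp lower bound of Proposition \ref{Prop.BorneInfTrèsFine} (equivalently \eqref{Exact...Expending} with the two excess terms)---to extract, for each prescribed vorticity $d_\v={\rm Card}(\Zz_\v)\in\{1,\dots,N_0\}$, the precise window of $\h$ for which a quasi-minimizer can have that vorticity, and simultaneously to pin down which $\mathbf D$ is realized. Throughout we use that for $d\in\{1,\dots,N_0\}$ one has $\L_1(d)=0$ and $\L_2(d)=\Wmin_d$ (Lemma \ref{LemLaisseLectTrucSimple}), so the energy of a $d$-vortex configuration is $\h^2\Jo+d\Pic(-\h+\HoC)+\Wmin_d+o(1)$.

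First I would establish the two master inequalities. Comparing the quasi-minimizer $(v_\v,A_\v)$ (lower bound \eqref{Exact...Expending}, which also forces $\mathbf D$ to minimize $\W_d$ over $\LamN$ by the excess term $[\W_d(\mathbf D)-\Wmin_d]\ge 0$) against the test configuration with $d'$ vortices \eqref{ExactExpEnergTest}: if $d'>d$ we get $\h\le\HoC+\DLD_{d',d}+o(1)$ and if $d'<d$ we get $\h\ge\HoC+\DLD_{d,d'}+o(1)$; these are exactly Lemma \ref{Lem.PremEtapChamSec}.\ref{Lem.PremEtapChamSec2}--\ref{Lem.PremEtapChamSec3}, rewritten via $\DLD_{d',d}=(\Wmin_{d'}-\Wmin_d)/[\Pic(d'-d)]$ when $d,d'\le N_0$. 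Since $\Zz_\v$ is finite with $d_\v\le\mathcal D_{K,b}$, one may pass to a subsequence on which $d_\v\equiv d$ and $\mathbf D$ is constant, prove the claimed convergences $[\h-\Critical_\bullet]^\pm\to0$ there, and conclude on the full family by the usual argument that every subsequence has a sub-subsequence with the property (the limits being $0$ from one side are unconditional). The membership $\mathbf D\in\Lambda_{d}$ and $\mathbf D$ minimizes $\W_d$ is already in Corollary \ref{Cor.ExactEnergyExp}; refining it to $\mathbf D\in\mathscr D_k$ uses that $d\in\mathscr S_k^\star$ or $d\notin\mathscr S_k^\star$, which is forced by the windows below.

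Next I would run the bookkeeping of Section \ref{DefFirstCriticalFields}. Fix $k$ and suppose $d_{k-1}^\star<d\le d_k^\star$. For the lower bound on $\h$: apply Lemma \ref{Lem.PremEtapChamSec}.\ref{Lem.PremEtapChamSec2} with $d'=d_{k-1}^\star$, giving $\h\ge\HoC+(\Wmin_d-\Wmin_{d_{k-1}^\star})/[\Pic(d-d_{k-1}^\star)]+o(1)$; by Lemma \ref{Lem.ComparaisonEntreD_kd_k+1} this ratio is $\ge\Krit_k^\star$, hence $\h\ge\HoC+\Krit_k^\star/\Pic+o(1)=\Critical_k+o(1)$, i.e. $[\h-\Critical_k]^-\to0$; moreover if $d<d_k^\star$, i.e. $d\notin\mathscr S_k^\star$, the inequality is strict in the limit which (combined with the upper bound) forces $\mathbf D\in\mathscr D_k\setminus\mathscr D_k^\star$ and $[\h-\Critical_k]^+\to0$ as well. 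For the upper bound on $\h$ (when $k\le L-1$): apply Lemma \ref{Lem.PremEtapChamSec}.\ref{Lem.PremEtapChamSec3} with $d'=d_{k+1}^\star$, giving $\h\le\HoC+(\Wmin_{d_{k+1}^\star}-\Wmin_d)/[\Pic(d_{k+1}^\star-d)]+o(1)$; using the convex-combination identity Lemma \ref{Lem.PremEtapChamSec}.\ref{lemQqePropQuot3} with $(d_{k-1}^\star,d,d_{k+1}^\star)$ together with $d\notin\mathscr S_{k+1}$-side estimates, this ratio is $\le\Krit_{k+1}^\star$, so $\h\le\Critical_{k+1}+o(1)$. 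When $k=L$ (so $d_L^\star=N_0$, $d$ may equal $N_0$), the upper side instead uses a test configuration with $d'=N_0+1>N_0$, where now $\L_1(N_0+1)\ne0$; here \eqref{BorneSUPUNDEGENnplusPrecise} with the logarithmic term gives precisely $\h\le\HoC+\DLU_{N_0}\ln\HoC+\DLD_{N_0}+o(1)=\Criticalbis_1+o(1)$, so $[\h-\Criticalbis_1]^+\to0$. Finally, in case $L=1$: here $d_1^\star=\max\mathscr S_1^\star$; for $\mathbf D\ne(1,\dots,1)$, i.e. $d<N_0=d_1^\star$ is impossible only if $d_1^\star=N_0$—wait, when $L=1$ we have $d_1^\star=N_0$, so $d<N_0$ means $d\notin\mathscr S_1^\star$ is possible; then strictness in the $d'=0$ comparison of Lemma \ref{Lem.PremEtapChamSec}.\ref{Lem.PremEtapChamSec2} gives $\h\ge\HoC+\Wmin_d/(\Pic d)+o(1)$, and combined with an upper bound from $d'=d_1^\star=N_0$ via Lemma \ref{Lem.ComparaisonEntreD_kd_k+1}, one gets $[\h-\Critical_1]^+\to0$.

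The main obstacle I anticipate is not any single estimate but the combinatorial verification that the windows glue correctly: one must check that the sequence $\Critical_1<\dots<\Critical_L<\Criticalbis_1$ is genuinely increasing (this is \eqref{K_1<K_2} and \eqref{VeryVeryGoodCondGrnag}, proved inductively using Lemma \ref{Lem.PremEtapChamSec}.\ref{lemQqePropQuot1} and \ref{lemQqePropQuot3}), and that for each admissible $d$ exactly one index $k$ with $d_{k-1}^\star<d\le d_k^\star$ governs it, so that the upper and lower bounds obtained from neighboring test configurations are compatible rather than contradictory. In particular the delicate point is that when $d\in\mathscr S_k^\star\setminus\{d_k^\star\}$ (so $d\ne\max\mathscr S_k^\star$ but still $\Wmin_d/d$ is minimal on $\mathscr S_k$), one must argue that this does not actually occur for a quasi-minimizer because $\mathbf D$ must minimize $\W_d$ among all $d'\in\mathscr S_k^\star$ jointly, which pins $d=d_k^\star$; this uses the full force of the excess term in \eqref{Exact...Expending} plus the definition of $\mathscr D_k$ and ${\mathscr D}_k^\star$. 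Once this scaffolding is in place, each individual $[\h-\bullet]^\pm\to0$ is a one-line consequence of one application of Lemma \ref{Lem.PremEtapChamSec} plus Lemma \ref{Lem.ComparaisonEntreD_kd_k+1}, and $\mathbf D\in\cup_{l=1}^L\mathscr D_l$ follows since the cases exhaust $\{1,\dots,N_0\}$.
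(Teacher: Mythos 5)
Your overall architecture is the same as the paper's: combine the sharp two-sided energy comparison (Corollary \ref{Cor.ExactEnergyExpPreCritField}) with the combinatorial lemmas of Section \ref{DefFirstCriticalFields}, namely Lemmas \ref{Lem.PremEtapChamSec} and \ref{Lem.ComparaisonEntreD_kd_k+1}, the monotonicity $\Krit_1^\star<\cdots<\Krit_L^\star$ from \eqref{VeryVeryGoodCondGrnag}, and the logarithmic correction at $d'=N_0+1$ for the $\Criticalbis_1$ boundary. The passage to subsequences and the use of the excess term in Proposition~\ref{Prop.BorneInfTrèsFine} to force ${\bf D}$ to minimize $\W_d$ are also exactly as in the paper.

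There are, however, two bookkeeping slips that would need to be fixed. First, the step ``if $d<d_k^\star$, i.e.\ $d\notin\mathscr S_k^\star$'' is a false equivalence: $\mathscr S_k^\star$ may contain several elements, and $d\in\mathscr S_k^\star$ with $d<d_k^\star=\max\mathscr S_k^\star$ is exactly the case the proposition describes as ${\bf D}\in\mathscr D_k\setminus\mathscr D_k^\star$. Because of this conflation you never actually prove that ${\bf D}\in\mathscr D_k$ for small $\v$; this requires a separate contradiction argument, which the paper runs by supposing $d\notin\mathscr S_k^\star$ and then producing a contradictory pair of inequalities from Lemma~\ref{Lem.PremEtapChamSec}.\ref{Lem.PremEtapChamSec2} (with $d'=d_{k-1}^\star$) and Lemma~\ref{Lem.PremEtapChamSec}.\ref{Lem.PremEtapChamSec3} (with $d'=d_k^\star$), together with the \emph{strict} inequality in the second part of Lemma~\ref{Lem.ComparaisonEntreD_kd_k+1}; for $L=1$ the contradiction is instead against Corollary~\ref{CorDefPremierChampsCrit}.\ref{CorDefPremierChampsCrit2}. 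Second, the ``delicate point'' you flag at the end --- that one must somehow pin $d=d_k^\star$ when $d\in\mathscr S_k^\star\setminus\{d_k^\star\}$ --- is not actually an obstacle, because the proposition never asserts $d=d_k^\star$; it only asserts ${\bf D}\in\mathscr D_k$, and the case $d\in\mathscr S_k^\star\setminus\{d_k^\star\}$ is handled by the extra conclusion \eqref{ChapsCritSec4}. Once these two points are corrected, the derivation of each window bound from a single application of Lemma~\ref{Lem.PremEtapChamSec} plus Lemma~\ref{Lem.ComparaisonEntreD_kd_k+1} is exactly the paper's proof.
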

\begin{proof}
We prove the first item arguing by contradiction. First note that if $N_0=1$ then there is nothing to prove. Assume thus $N_0\geq2\,\&\,L=1$ and let $\{(v_\v,A_\v)\,|\,0<\v<1\}$ be as in the proposition. Assume there exists  $\v=\v_n\downarrow0$ s.t. ${\bf D}\notin\mathscr{D}_1$. Up to pass to a subsequence we may assume that ${\bf D}$ is independent of $\v$.

From Corollary \ref{Cor.ExactEnergyExp}, for sufficiently small $\v$, ${\bf D}$ minimizes $\W_d$ and then, from the definition of $\mathscr{D}_1$, we get $d\notin\mathscr{S}^\star_1$. Consequently $\Wmin_{N_0}/N_0<\Wmin_{d}/d$ and thus, from Lemma \ref{Lem.PremEtapChamSec}.\ref{Lem.PremEtapChamSec3}$\&$\ref{Lem.PremEtapChamSec}.\ref{lemQqePropQuot1} [with $d'=N_0$], we get the existence of $t>0$ s.t. $\h\leq H_{c_1}-t$. This last estimate is in contradiction with Corollary \ref{CorDefPremierChampsCrit}.\ref{CorDefPremierChampsCrit2}. Thus ${\bf D}\in\mathscr{D}_1$ for sufficiently small $\v$. The rest of the first assertion is a direct consequence of $d\in\mathscr{S}^\star_1\setminus\{N_0\}$ and Lemma \ref{Lem.PremEtapChamSec}.\ref{Lem.PremEtapChamSec3}$\&$\ref{Lem.PremEtapChamSec}.\ref{lemQqePropQuot2} [with $d'=N_0$].\\

We now prove the second assertion. Assume $L\geq2$. For $k\in\{1,...,L-1\}$, if $d^\star_{k-1}<d\leq d_k^\star$, then, from Lemma \ref{Lem.PremEtapChamSec}.\ref{Lem.PremEtapChamSec2} [with $d'=d_{k-1}^\star$] and Lemma \ref{Lem.PremEtapChamSec}.\ref{Lem.PremEtapChamSec3} [with $d'=d_{k+1}^\star$], we get
\begin{equation}\label{RERCONtrzBis}
\dfrac{\Wmin_{d}-\Wmin_{d_{k-1}^\star}}{\Pic(d-d_{k-1}^\star)}+o(1)\leq\h-H_{c_1}^0\leq\dfrac{\Wmin_{d_{k+1}^\star}-\Wmin_{d}}{\Pic(d_{k+1}^\star-d)}+o(1).
\end{equation}
From the definition of $d_{k}^\star$ we have ${\Krit_k^\star}{}\leq\dfrac{\Wmin_{d}-\Wmin_{d_{k-1}^\star}}{d-d_{k-1}^\star}$ and then the lower bound in \eqref{RERCONtrzBis} gives the first convergence in \eqref{ChapsCritSec3}.

On the other hand, if $d=d_k^\star$ then, from the definition of $\Krit_{k+1}^\star$,  the upper bound in \eqref{RERCONtrzBis} gives the second convergence in \eqref{ChapsCritSec3}.

If $d\neq d^\star_k$, using Lemma \ref{Lem.PremEtapChamSec}.\ref{lemQqePropQuot3} [with $d<d_{k}^\star<d_{k+1}^\star$] we obtain that $\dfrac{\Wmin_{d_{k+1}^\star}-\Wmin_{d}}{d_{k+1}^\star-d}$  is between $\dfrac{\Wmin_{d_k^\star}-\Wmin_{d}}{d_{k}^\star-d}$ and $\Krit_{k+1}^\star$. But, from Lemma \ref{Lem.ComparaisonEntreD_kd_k+1}, we get $\dfrac{\Wmin_{d_k^\star}-\Wmin_{d}}{d_{k}^\star-d}\leq \Krit_k^\star$. Since from \eqref{VeryVeryGoodCondGrnag} we have $\Krit_{k+1}^\star>\Krit_{k}^\star$, we obtain $\dfrac{\Wmin_{d_{k+1}^\star}-\Wmin_{d}}{d_{k+1}^\star-d}\leq \Krit^\star_{k+1}$. Therefore the upper bound of \eqref{RERCONtrzBis} gives the second convergence in \eqref{ChapsCritSec3}.

We now demonstrate that, for sufficiently small  $\v$,  ${\bf D}\in\mathscr{D}_{k}$ arguing by contradiction. We assume the existence of sequence $\v=\v_n\downarrow0$ s.t. $d_{k-1}^\star<d\leq d_{k}^\star$ with $k\in\{1,...,L-1\}$, ${\bf D}$ is independent of $\v$ and ${\bf D}\notin\mathscr{D}_{k}$. From Corollary \ref{Cor.ExactEnergyExp}, ${\bf D}$ minimizes $\W_d$ and then, from the definition of $\mathscr{D}_k$, we get $d\notin\mathscr{S}^\star_k$ [then $d<d_k^\star$]. 

On the one hand, with Lemma \ref{Lem.PremEtapChamSec}.\ref{Lem.PremEtapChamSec2} [with $d'=d_{k-1}^\star$]  and  Lemma \ref{Lem.PremEtapChamSec}.\ref{Lem.PremEtapChamSec3} [with $d'=d_{k}^\star$] we have  
\[
\dfrac{\Wmin_{d}-\Wmin_{d_{k-1}^\star}}{\Pic(d-d_{k-1}^\star)}+o(1)\leq \h-\HoC\leq \dfrac{\Wmin_{d}-\Wmin_{d^\star_{k}}}{\Pic(d-d^\star_{k})}+o(1).
\] 
On the other hand, with Lemma \ref{Lem.ComparaisonEntreD_kd_k+1}, we have $\dfrac{\Wmin_{d}-\Wmin_{d^\star_{k}}}{d-d^\star_{k}}<\dfrac{\Wmin_{d}-\Wmin_{d_{k-1}^\star}}{d-d_{k-1}^\star}$. This inequality gives a contradiction.

Lemma \ref{Lem.PremEtapChamSec}.\ref{Lem.PremEtapChamSec3} [with $d'=d_k^\star$] and Lemma \ref{Lem.ComparaisonEntreD_kd_k+1} give immediately \eqref{ChapsCritSec4}.\\

We now treat the last item of the proposition and we assume $d_{L-1}^\star<d\leq d_L^\star=N_0$. From \eqref{BorneINFUNDEGENnplusPrecise} [with $d'=d_{L-1}^\star$] we get $\h-H_{c_1}^0\geq\DLD_{d,d_{L-1}^\star}+o(1)$. On the other hand, from the definition of $\Krit_L^\star$, we  get 
\begin{equation}\label{PremierEtapePergola}
\h-H_{c_1}^0\geq\dfrac{\Krit_L^\star}{\Pic}+o(1).
\end{equation}
Before ending the proof of \eqref{ChapsCritSec5} we prove that \eqref{ChapsCritSec6} holds and, for sufficiently small $\v$,  ${\bf D}\in\mathscr{D}_{L}$. Assume that there exists $\v=\v_n\downarrow0$ s.t. ${\bf D}$ is independent of $\v$ and $d_{L-1}^\star<d<N_0$.

From Lemma \ref{Lem.PremEtapChamSec}.\ref{Lem.PremEtapChamSec3} [with $d'=N_0$] we have
\begin{equation}\label{PremierEtapePergolabis}
\h- \HoC\leq\dfrac{\Wmin_{N_0}-\Wmin_{d}}{\Pic (N_0-d)}+o(1).
\end{equation}
Using \eqref{PremierEtapePergola} with \eqref{PremierEtapePergolabis} we get ${\Krit_L^\star}{}\leq{(\Wmin_{N_0}-\Wmin_{d})}/( N_0-d)$.  Lemma \ref{Lem.ComparaisonEntreD_kd_k+1} [with $d_{L-1}^\star<d<N_0$] gives $(\Wmin_{N_0}-\Wmin_{d})/(N_0-d)\leq \Krit_L^\star$. Therefore, $(\Wmin_{N_0}-\Wmin_{d})/(N_0-d)= \Krit_L^\star$ and then by combining \eqref{PremierEtapePergola} and \eqref{PremierEtapePergolabis} we deduce that, if for some sequence $\v=\v_n\downarrow0$ we have $d_{L-1}^\star< d<N_0$, then \eqref{ChapsCritSec6} holds. 

Arguing as above, [using \eqref{ExactExpEnergTest} with $\dtest=N_0$], 
one may prove that for sufficiently small $\v$ we have $d\in \mathscr{S}_L^\star$ and thus ${\bf D}\in\mathscr{D}_L$.

We  complete the proof  of \eqref{ChapsCritSec5}. Assume that $\h$ is sufficiently large in order to have $d=N_0$ [here we used \eqref{ChapsCritSec6}]. It suffices to use \eqref{BorneSUPUNDEGENnplusPrecise} [with $d=N_0$ and $d'=N_0+1$] in order to get the remaining part of \eqref{ChapsCritSec5}.

\end{proof}
\subsection{Secondary critical fields for $d\geq N_0+1$}\label{Sec.SecondaryCriticFieldsBis}
The case $d\geq N_0+1$ is easier to handle than the case $1\leq d\leq N_0$.

For $k\in\N^*$, we let
\[
\Criticalbis_k:=\HoC+\DLU_{N_0+k}\times\ln\HoC +\DLD_{N_0+k}
\]
where  $\DLU_{N_0+k}\&\DLD_{N_0+k}$ are defined in Lemma \ref{TechLemmaDefDelta}.
We have the following proposition.
\begin{prop}\label{Prop.SHarperdescriptionNonSaturedII}
Assume that \eqref{NonDegHyp} holds and $\lambda,\delta,\h,K$ satisfy \eqref{CondOnLambdaDelta}, \eqref{BorneKMagn} and \eqref{PutaindHypTech}.

Let $\{(v_\v,A_\v)\,|\,0<\v<1\}\subset\H$ be a family  satisfying \eqref{HypGlobalSurQuasiMin}$\&$\eqref{QuasiMinDef} which is  in the Coulomb gauge. 

Let $k\in\N^*$. If for a sequence $\v=\v_n\downarrow0$ we have  $d_\v=N_0+k$ then
\[
\left[\h-\Criticalbis_k\right]^-\to0\text{ and }\left[\h-\Criticalbis_{k+1}\right]^+\to0.
\] 
\end{prop}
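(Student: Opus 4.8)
The plan is to mirror exactly the argument already used for the critical fields $\Critical_k$ and $\Criticalbis_1$, since for $d\geq N_0+1$ the macroscopic location is forced ($\mathbf{D}$ can only be of the form $\lfloor d/N_0\rfloor$ or $\lceil d/N_0\rceil$ in each slot, so there is no combinatorial subtlety left) and the entire information sits in the energy expansion \eqref{Exact...Expending}. Concretely, I would first invoke Corollary \ref{Cor.ExactEnergyExp} applied to the given quasi-minimizing sequence with $d_\v=N_0+k$ (after passing to the subsequence $\v=\v_n\downarrow0$ so that $d$ is constant): this yields
\[
\F(v_\v,A_\v)=\h^2\Jo+d\Pic\left[-\h+\HoC\right]+\L_1(d)\ln\h+\L_2(d)+o(1)
\]
with $d=N_0+k$. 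Then I would compare against the test configurations built in Proposition \ref{Prop.BorneSupSimple}, using \eqref{ExactExpEnergTest} with $\dtest=N_0+k-1$ and $\dtest=N_0+k+1$ respectively, exactly as in Corollary \ref{Cor.ExactEnergyExpPreCritField}.

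For the lower inequality $[\h-\Criticalbis_k]^-\to0$: since $\{(v_\v,A_\v)\}$ is a family of quasi-minimizers, $\F(v_\v,A_\v)\leq\F(\vtest,\Atest)+o(1)$ for the competitor $(\vtest,\Atest)$ associated to $\dtest=N_0+k-1=d-1$. Plugging the two expansions and cancelling $\h^2\Jo$ gives, after dividing by $\Pic$, the inequality $\h\geq\HoC+\DLU_{d,d-1}\ln\h+\DLD_{d,d-1}+o(1)$, which by \eqref{BorneINFUNDEGENnplusPrecise} (the $\chi=o(1)$ substitution coming from \eqref{BorneKMagn}, already justified in Corollary \ref{Cor.ExactEnergyExpPreCritField}) becomes $\h\geq\HoC+\DLU_{N_0+k}\ln\HoC+\DLD_{N_0+k}+o(1)=\Criticalbis_k+o(1)$; here I use that $\DLU_{d,d-1}=\DLU_{d-1}$ and $\DLD_{d,d-1}=\DLD_{d-1}$ by the telescoping definitions in Lemma \ref{TechLemmaDefDelta}, and that $\DLU_{N_0+k}=\DLU_{(N_0+k-1)+1,\,(N_0+k-1)}$ — I should double-check the index shift in the definition of $\Criticalbis_k$ so the subscript matches $d=N_0+k$ exactly, taking $\DLU_{N_0+k}:=\DLU_{d}$ with $d=N_0+k$ in Lemma \ref{TechLemmaDefDelta}(1) and similarly for $\DLD$. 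Symmetrically, for $[\h-\Criticalbis_{k+1}]^+\to0$ I compare with the competitor for $\dtest=d+1=N_0+k+1$: quasi-minimality gives $\F(v_\v,A_\v)\leq\F(\vtest,\Atest)+o(1)$ with this $\dtest$, and the same cancellation yields $\h\leq\HoC+\DLU_{d+1,d}\ln\h+\DLD_{d+1,d}+o(1)$, i.e. via \eqref{BorneSUPUNDEGENnplusPrecise} (now with the roles of $d,d'$ swapped: $d'=d+1>d$) $\h\leq\Criticalbis_{k+1}+o(1)$.

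There is essentially no hard step here — the only thing requiring care is bookkeeping: making sure the $\DLU,\DLD$ subscripts in $\Criticalbis_k$ line up with $d=N_0+k$ in Lemma \ref{TechLemmaDefDelta}, and noting that monotonicity of $\DLU_d$ in $d$ (since $\lfloor k/N_0\rfloor$ is nondecreasing) guarantees $\Criticalbis_1<\Criticalbis_2<\cdots$, consistent with the ordering asserted in Theorem \ref{THM-C}. I would also remark that the hypotheses of Corollary \ref{Cor.ExactEnergyExp} and Proposition \ref{Prop.BorneSupSimple} are met: \eqref{CondOnLambdaDelta}, \eqref{BorneKMagn}, \eqref{PutaindHypTech} are assumed, \eqref{PutaindHypTech} implies $\delta\sqrt\h\to0$, and $\h\to\infty$ is automatic when $d_\v=N_0+k\geq N_0+1$ for small $\v$ (otherwise Theorem \ref{THM-A}(2) would force $d_\v=0$). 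This last observation is worth stating explicitly so that the "$\h\to\infty$" hypothesis of Proposition \ref{Prop.BorneSupSimple} is available. With these ingredients assembled the proof is just the two-line comparison above, so I expect the write-up to be short; the main (minor) obstacle is purely notational consistency with Lemma \ref{TechLemmaDefDelta}.
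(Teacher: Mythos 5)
Your proof is correct and is essentially identical to the paper's own argument: the paper simply cites \eqref{BorneSUPUNDEGENnplusPrecise} with $d=N_0+k$, $d'=N_0+k+1$ and \eqref{BorneINFUNDEGENnplusPrecise} with $d=N_0+k$, $d'=N_0+k-1$, which is exactly the two-line comparison you carry out, and your notational worry about the index shift in $\Criticalbis_k$ versus Lemma \ref{TechLemmaDefDelta} is well-founded (one should read the subscript so that $\Criticalbis_k$ agrees with \eqref{TheExprionnII} at $k=1$, i.e. $\Criticalbis_k=\HoC+\DLU_{N_0+k-1}\ln\HoC+\DLD_{N_0+k-1}$). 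Your extra remark that $\h\to\infty$ follows from Theorem \ref{THM-A}(2) once $d_\v>0$ is correct and a useful explicit check of the hypotheses of Proposition \ref{Prop.BorneSupSimple}.
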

\begin{proof}
The proposition is a direct consequence of \eqref{BorneSUPUNDEGENnplusPrecise} [with $d=N_0+k$ and $d'=N_0+k+1$] and \eqref{BorneINFUNDEGENnplusPrecise} [with $d=N_0+k$ and $d'=N_0+k-1$].

\end{proof}

\appendix
\section{Proof of Estimate \eqref{EstLoinInterfaceGradU}}\label{ProofVotation}
Consider a conformal mapping $\Phi:\D\to\O$. From a result of  Painlevé  [see Footnote \ref{NumFootNoteConformal} page \pageref{NumFootNoteConformal}], the maps $\Phi$ and $\Phi^{-1}$ may be extended  in $\overline\O$ and $\overline{\D}$ by smooth maps. Then there exists $C_\star\geq1$ s.t. 
\begin{equation}\label{BorneGradConfMapPainleve}
\|\n \Phi\|_{L^\infty(\D)},\|\n \Phi^{-1}\|_{L^\infty(\O)}\leq C_\star.
\end{equation}
Write $\tilde a_\v:=a_\v\circ \Phi$ and $\tilde U_\v:=U_\v\circ \Phi$. Since the function $\tilde U_\v$ is a minimizers of $\tilde E_\v$, the analog of $E_\v$ in $\D$, $\tilde U_\v$ is a solution of 
\[
\begin{cases}
-\Delta \tilde U=w\dfrac{\tilde U}{\v^2}(\tilde a_\v^2-|\tilde U|^2)&\text{in }\D\\\p_\nu\tilde U=0&\text{on }\S^1
\end{cases}
\] 
with $w={\rm Jac}\, \Phi$ is the Jacobian of $\Phi$.

Define $V_\v:B(0,2)\to[b^2,1]$ by
\[
V_\v(x)=\begin{cases}\tilde U_\v(x)&\text{ if }x\in\D\\\tilde U_\v({x}/{|x|^2})&\text{ if }x\in B(0,2)\setminus\D\end{cases}.
\]
Then $-\Delta V_\v=-\Delta \tilde U_\v$ in $\D$ and $-\Delta V_\v(x)=-|x|^{-4}\Delta \tilde U_\v({x}/{|x|^2})$ in $B(0,2)\setminus\D$. Thus $V_\v\in H^2(B(0,2),\C)$.

First note that if  $r\leq\v$, then \eqref{EstLoinInterfaceGradU} is given by \eqref{EstGlobGradU}. 

Let $r>\v$ and $x_0\in\O$ be s.t. $\dist(x_0,\p\o_\v)>r$.  Let $\eta:=a_\v(x_0)-V_\v$ in $B(x_0,r/2)$. From Lemma A.1 in \cite{BBH1} and \eqref{EstLoinInterfaceU} we get for $x\in B(x_0,r/4)$ :
\begin{eqnarray*}
|\n V_\v(x)|^2=|\n\eta(x)|^2&\leq& C\left(\|\Delta \eta\|_{L^\infty(B(x_0,r/2))}+\dfrac{4}{r^2}\| \eta\|_{L^\infty(B(x_0,r/2))}\right)\| \eta\|_{L^\infty(B(x_0,r/2))}
\\&\leq&\frac{C\e^{-\frac{\ab r}{2\v}}}{\v^2}.
\end{eqnarray*}
In the previous estimate the constants are independent of $\v,r$ and $x_0$. From \eqref{BorneGradConfMapPainleve} we then get  \eqref{EstLoinInterfaceGradU}.

\section{Proof of Theorem \ref{ThmBorneDegréMinGlob}}\label{SectionProofAppSSBound}
Assume that \eqref{NonDegHyp} holds and $\lambda,\delta,\h,K$ satisfy \eqref{CondOnLambdaDelta}, \eqref{BorneKMagn}  and  $\delta^2|\ln\v|\leq1$.

Consider a family of configurations $\{(v_\v,A_\v)\,|\,0<\v<1\}\subset\H$  which is in the Coulomb gauge and s.t.
\[
\F(v_\v,A_\v)\leq\inf_\H\F+\mathcal{O}(\ln|\ln\v|).
\]
We drop the subscript $\v$. From Lemma \ref{LemAuxConstructMagnPot}, we may consider $ A_v\in H^1(\O,\R^2)$ s.t. $(v,A_v)$ is in the Coulomb gauge and \eqref{MagnetiqueEq} holds.

We then have
\begin{equation}\label{Borne RERDAEI}
\F(v, A_v)\leq\F(v,A)\leq\inf_\H\F+\mathcal{O}(\ln|\ln\v|)=\mathcal{O}(\h^2).
\end{equation}
Proposition \ref{Prop.BorneInfLocaliseeSandSerf} gives the existence of $C,\v_0>0$ [independent of $\v$] s.t., for $\v<\v_0$, there exists a family of disjoint disks $\{B_i\,|\,i\in \J\}$ with $B_i=B(a_i,r_i)$ satisfying :
\begin{enumerate}
\item $\{|v|<1-|\ln\v|^{-2}\}\subset\cup B_i$
\item $\sum r_i<|\ln\v|^{-10}$,
\item writing $\rho=|v|$ and $v=\rho\e^{\imath\varphi}$ we have
\begin{equation}\tag{\ref{EstimateSS3ball}}
\dfrac{1}{2}\int_{B_i}\rho^2|\n \varphi-A|^2+| \rot(A)-\h|^2\geq\pi|d_i|(|\ln\v|-C\ln|\ln\v|),
\end{equation}
where $d_i=\deg_{\p B_i}(v)$ if $B_i\subset\O$ and $0$ otherwise.
\end{enumerate}
From now on, the notation $C$ stands for a positive constant independent of $\v$ whose value may change from one line to another.
\subsection{A substitution lemma}
As in \cite{SS2}, we first state a substitution lemma.

\begin{lem}\label{Lem.SubsSandSerf}

There exists $(\tilde v,\tilde A)\in \H$ which is in the Coulomb gauge and s.t., writing, $\rho=|v|,\, v=\rho\e^{\imath\varphi}$ and $\tilde\rho=|\tilde v|,\, \tilde v=\tilde \rho\e^{\imath\tilde \varphi}$ we have
\begin{enumerate}
\item $(\tilde v,\tilde A)$ satisfies \eqref{MagnetiqueEq} and $\tilde\rho\leq1$,
\item $\tilde\rho=1$  and $\varphi=\tilde{\varphi}$ in $\O\setminus\cup B_i$,
\item\label{Assertkjh2} $\|\rho(\n\varphi-A_v)-\tilde \rho(\n\tilde\varphi-\tilde A)\|_{L^2(\O)}^2=o(1)$,
\item\label{Assertkjh3} $\|\rot(A_v)-\rot(\tilde A)\|_{L^2(\O)}^2\leq C|\ln\v|^{-2}$,
\item $\F(\tilde v,\tilde A)\leq \F( v, A_v)+o(1)$.
\end{enumerate}
\end{lem}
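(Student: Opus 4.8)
\textbf{Plan of proof of Lemma \ref{Lem.SubsSandSerf}.} The construction of $(\tilde v,\tilde A)$ follows Sandier--Serfaty \cite{SS2}: the idea is to keep $(v,A_v)$ untouched away from the (tiny) vortex balls $B_i$ and to replace it inside $\cup B_i$ by a configuration with $\tilde\rho\equiv1$ and a phase which matches $\varphi$ on $\partial B_i$. Concretely, for $i\in\J$ with $B_i\subset\O$ write $v=\rho\e^{\imath\varphi}$ on $B_i$ (the phase being well defined up to a constant since $\deg_{\partial B_i}(v)=d_i$ forces us to instead work with the globally defined $\n\varphi$ and a local lifting on the annulus $B_i\setminus\{a_i\}$); we then set $\tilde v:=v$ outside $\cup B_i$ and define $\tilde v$ inside each $B_i$ so that $|\tilde v|\equiv1$, $\tr_{\partial B_i}(\tilde v)=\tr_{\partial B_i}(v/|v|)$ and $\tilde v(x)=\e^{\imath d_i\theta}\cdot(\text{smooth})$ near $a_i$ with the energy of $\tilde v$ on $B_i$ controlled by $\pi|d_i|\ln(r_i/\v)+\mathcal{O}(1)$ — in fact by the Dirichlet energy of the harmonic extension of the boundary phase plus a bounded term, which is smaller than the left-hand side of \eqref{EstimateSS3ball}. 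For the $B_i$ meeting $\partial\O$ one argues similarly after reflection, or simply absorbs them since $\sum r_i<|\ln\v|^{-10}$ makes their contribution negligible. This gives (2) and, by construction, $\tilde\rho\le 1$; then one lets $\tilde A$ be the unique potential associated to $\tilde v$ by Lemma \ref{LemAuxConstructMagnPot}, which yields (1): $(\tilde v,\tilde A)$ satisfies \eqref{MagnetiqueEq} and is in the Coulomb gauge.

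\smallskip

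For assertion \ref{Assertkjh2}, note that $\rho(\n\varphi-A_v)$ and $\tilde\rho(\n\tilde\varphi-\tilde A)$ agree outside $\cup B_i$ by (2) — more precisely, after using that $\tilde A$ solves the same equation \eqref{MagnetiqueEq} as $A_v$ with the current density supported where $v,\tilde v$ differ. So the $L^2$ difference is controlled by the $L^2$ norms of $\rho(\n\varphi-A_v)$ and $\tilde\rho(\n\tilde\varphi-\tilde A)$ over $\cup B_i$ separately, each of which is $o(1)$: the first because $\frac12\int_{\cup B_i}\rho^2|\n\varphi-A_v|^2\le \F(v,A_v)=\mathcal{O}(\h^2)$ and the construction of $B_i$ concentrates no more than a bounded multiple of $|\ln\v|$ of energy per ball while $\{|v|<1-|\ln\v|^{-2}\}\subset\cup B_i$ has measure $o(1)$ (use Cauchy--Schwarz together with $\Haus^2(\cup B_i)=\mathcal{O}((\sum r_i)^2)=\mathcal{O}(|\ln\v|^{-20})$ and the pointwise bound $|\n v|=\mathcal{O}(\v^{-1})$ from \eqref{CrucialLipEst} — wait, that last bound is only $\mathcal{O}(\v^{-1})$, so one rather uses that the energy in $\cup B_i$ is $\mathcal{O}(|\ln\v|)$ and that the Dirichlet integral of $\tilde v$ there is of the same order, hence after subtraction $o(1)$ by the precise energy bookkeeping). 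For \ref{Assertkjh3}, $\rot(A_v)-\rot(\tilde A)$ solves an elliptic problem with right-hand side $v\wedge(\n-\imath A_v)v-\tilde v\wedge(\n-\imath\tilde A)\tilde v$ supported in $\cup B_i$ and of $L^2$ norm $\mathcal{O}(|\ln\v|^{-1})$ by the same measure estimate, and then \eqref{CoulombH2} (or the representation \eqref{RepresentCoulomGauge}) converts this into the stated $L^2$ bound on the curls.

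\smallskip

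Finally, assertion (5) follows by adding up: $\F(\tilde v,\tilde A)-\F(v,A_v)$ splits into the Dirichlet/potential part, which on $\cup B_i$ decreases since $\tilde\rho\equiv1$ kills the potential term $(a_\v^2-|\cdot|^2)^2$ and the Dirichlet energy of $\tilde v$ was built to be no larger than \eqref{EstimateSS3ball}, plus the magnetic part, controlled by \ref{Assertkjh2} and \ref{Assertkjh3} together with the elementary inequality $|\,\|f\|_{L^2}^2-\|g\|_{L^2}^2\,|\le \|f-g\|_{L^2}(\|f\|_{L^2}+\|g\|_{L^2})$ and the {\it a priori} bounds $\|\rot(A_v)-\h\|_{L^2},\|\rot(\tilde A)-\h\|_{L^2}=\mathcal{O}(\h)$. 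Collecting all the error terms, each is $o(1)$, giving $\F(\tilde v,\tilde A)\le\F(v,A_v)+o(1)$. The main obstacle is the careful bookkeeping in step \ref{Assertkjh2}: one must check that replacing $v$ by $\tilde v$ on the balls genuinely produces an $o(1)$ — not merely $\mathcal{O}(1)$ — discrepancy in the $L^2$ norm of the (gauge-invariant) current, which requires using both the smallness of $\sum r_i$ and the fact that outside $\cup B_i$ nothing changes, exactly as in \cite{SS2}.
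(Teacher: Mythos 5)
Your construction of $\tilde v$ runs into a topological obstruction that the paper's proof avoids by design. You propose to set $\tilde v := v$ outside $\cup B_i$ and, inside each $B_i$, to build a map with $|\tilde v|\equiv 1$ and $\tr_{\p B_i}(\tilde v)=\tr_{\p B_i}(v/|v|)$; but if $d_i=\deg_{\p B_i}(v)\neq 0$ there is no $H^1(B_i,\S^1)$ map with that boundary trace, and the local Ansatz ``$\e^{\imath d_i\theta}\cdot(\text{smooth})$'' near $a_i$ has infinite Dirichlet energy if its modulus is identically $1$. The paper sidesteps this completely: it truncates the modulus globally, setting $\tilde v:=\tfrac{\chi(\rho)}{\rho}v$ with $\chi$ the affine cutoff that is the identity below $1/2$, equals $1$ above $1-|\ln\v|^{-2}$, and is affine in between. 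Since $\chi(0)=0$, $\tilde v$ vanishes wherever $v$ does, so there is no $\S^1$-valued extension problem, $\tilde\rho=\chi(\rho)\le 1$ automatically, and because $\{|v|<1-|\ln\v|^{-2}\}\subset\cup B_i$ one gets $\tilde\rho=1$ and $\tilde\varphi=\varphi$ on $\O\setminus\cup B_i$ for free. The potential $\tilde A$ is then taken to be $A_{\tilde v}$ from Lemma~\ref{LemAuxConstructMagnPot}, as you also do.

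Your derivation of assertion~\ref{Assertkjh2} also contains a wrong step: you assert that $\rho(\n\varphi-A_v)$ and $\tilde\rho(\n\tilde\varphi-\tilde A)$ ``agree outside $\cup B_i$.'' They do not, for two reasons: $\rho\ne\tilde\rho=1$ there (only $\rho\ge 1-|\ln\v|^{-2}$), and $A_v\ne\tilde A$ globally since the two potentials solve \eqref{MagnetiqueEq} for different order parameters. The paper's route is to first prove the current estimate $\|v\wedge\n v-\tilde v\wedge\n\tilde v\|_{L^2(\O)}^2\le C|\ln\v|^{-2}$ (which follows from the explicit form of $\chi$ and the energy concentration in the $B_i$'s), then subtract the second-order PDEs for $A_v$ and $\tilde A$ to get
\[
-\Delta(\tilde A-A_v)+\alpha(\tilde A-A_v)=\alpha(\tilde v\wedge\n\tilde v-v\wedge\n v)+\alpha(1-\rho^2)A_v+\alpha(1-\tilde\rho^2)\tilde A,
\]
whose right-hand side is $\mathcal{O}(|\ln\v|^{-1})$ in $L^2$; elliptic regularity then yields both~\ref{Assertkjh2} and~\ref{Assertkjh3}. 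I would revisit your argument along these lines: the decisive technical device is the modulus cutoff $\chi$, not a patch-and-replace of $v$ on the vortex balls.
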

Lemma \ref{Lem.SubsSandSerf} is proved in \cite{SS2} [Lemma 1] for $\alpha\equiv1$. The adaptation to our case is presented below.

\begin{proof}[Proof of Lemma \ref{Lem.SubsSandSerf}]
The proof of the lemma follows the same lines than in  \cite{SS2}. 

We define a continuous function $\chi_\v=\chi:[0,1]\to[0,1]$ by letting
\[
\begin{cases}\chi(x)=x&\text{if }0\leq x\leq1/2\\\chi(x)=1&\text{if }x\geq1-|\ln\v|^{-2}\\\chi\text{ is affine}&\text{if }1/2\leq x\leq 1-|\ln\v|^{-2}\end{cases}.
\]
We then let $\tilde v:=\dfrac{\chi(\rho)}{\rho} v\in H^1(\O,\C)$ and we let 
$\tilde A=A_{\tilde v}$ given by Lemma \ref{LemAuxConstructMagnPot}. 
Letting $\tilde h=\rot(\tilde A)$ we then get
\begin{equation}\label{MagneticGLEquationTilde}
-\n^\bot \tilde h=\alpha(\imath \tilde v)\cdot(\n \tilde v-\imath\tilde A \tilde v).
\end{equation}
Exactly as in \cite{SS2} we have
\begin{eqnarray}
\label{DiferenceCourantsubstitution}
\|v\wedge\n v-\tilde v\wedge\n \tilde v\|^2_{L^2(\O)}\leq C|\ln\v|^{-2}.
\end{eqnarray}
As in \cite{SS2}, from \eqref{JaugeCoulomb}, \eqref{MagnetiqueEq} and \eqref{MagneticGLEquationTilde} we obtain PDE of the second order satisfied by $A$ and $\tilde A$.   

By considering the difference of these PDE we get
\begin{equation}\label{EquationDiffMagnPotSubst}
-\Delta(\tilde A-A)+\alpha(\tilde A-A)=\alpha(\tilde v\wedge\n \tilde v-  v\wedge\n  v)+\alpha(1-\rho^2)A+\alpha(1-\tilde\rho^2)\tilde A.
\end{equation}
From \eqref{EstLpA}, \eqref{Borne RERDAEI} and \eqref{DiferenceCourantsubstitution}, the RHS of  \eqref{EquationDiffMagnPotSubst} is bounded in $L^2(\O)$ by $\dfrac{C}{|\ln\v|}$.

Since $(\tilde A-A)\cdot \nu=0$ on $\p\O$, by elliptic regularity, we deduce 
Assertions \ref{Assertkjh2}$\&$\ref{Assertkjh3} of the lemma.

The end of the proof is exactly as in \cite{SS2}
\end{proof}
From now on we replace $(v,A_v)$ with $(\tilde v,\tilde A)$ and we claim that the valued disks given by Proposition \ref{Prop.BorneInfLocaliseeSandSerf} is valid for $(v,A_v)$ and $(\tilde v,\tilde A)$ and getting the conclusions of Theorem \ref{ThmBorneDegréMinGlob} for $(\tilde v,\tilde A)$ implies the same for $(v,A)$.

 In order to simplify the presentation we write $(v,A)$ instead of $(\tilde v,\tilde A)$.
\subsection{Energetic Decomposition}
We have the following lower bound:
\begin{prop}\label{PropBorneInfSS3}

 
 Let $h:=\rot(A)$, $h_0:=\Delta\xi_0=1+\xi_0$, $f:=h-\h h_0$ and let $\{B_i=B(a_i,r_i)\,|\,i\in\J\}$ be the disks given by Proposition \ref{Prop.BorneInfLocaliseeSandSerf}.  We have:
\begin{equation}
\F(v,A)\geq\h^2 \Jo+\sum \F[(v,A),B_i]+2\pi\h\sum d_i\xi_0(a_i)+\dfrac{1}{2}\int_{\O\setminus\cup B_i}|\n f|^2+\dfrac{1}{2}\int_\O f^2-o(1)
\end{equation}
where 
\begin{equation}\label{Est.EnConcentrationlocal}
\F[(v,A),B_i]\geq\pi b^2|d_i|(|\ln\v|-C\ln|\ln\v|).
\end{equation}
\end{prop}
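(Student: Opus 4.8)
\textbf{Plan of proof of Proposition \ref{PropBorneInfSS3}.} The strategy is the one of Sandier--Serfaty \cite{SS2}, adapted to the presence of the weight $\alpha=U_\v^2$. Write $h=\rot(A)$, $h_0=\Delta\xi_0=1+\xi_0$ and $f=h-\h h_0$, so that $f=0$ on $\p\O$. Recall from \eqref{RepresentCoulomGauge} that $A=\n^\bot\xi$ for a unique $\xi\in H^1_0\cap H^2(\O,\R)$, and that after the substitution lemma we may assume $(v,A)$ satisfies \eqref{MagnetiqueEq} and $\rho=|v|\leq1$. The first step is to expand $\F(v,A)$ around the London configuration $(1,\h\n^\bot\xi_0)$. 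Using $\frac12\int_\O|\rot(A)-\h|^2=\frac12\int_\O|h-\h|^2$ and the identity $h-\h=(h-\h h_0)+\h(h_0-1)=f+\h\xi_0$, together with the fact that $-\Delta\xi_0+\xi_0$-type manipulations relate $\int_\O|\n\xi_0|^2+|\Delta\xi_0-1|^2$ to $\Jo$, one isolates the cross terms. The clean way is to exploit that the current equation $-\n^\bot h=\alpha\, v\wedge(\n v-\imath A v)=\alpha\,\rho^2(\n\varphi-A)$ holds (writing $v=\rho\e^{\imath\varphi}$ locally), so that $\frac12\int_\O|\n f|^2+\frac12\int_\O f^2$ appears after integrating by parts against $f$, exactly as in the homogeneous case but with the measure $\alpha\,\dd x$ replaced by $\dd x$ only in the leading terms — the discrepancy being absorbed by the cleaning estimates \eqref{Lem.Nettoyage1}, \eqref{Lem.Nettoyage2} and $\|U_\v^2-1\|_{L^p}=\mathcal O(\lambda^{2/p})$, all of which are $o(1)$ under \eqref{CondOnLambdaDelta}.

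The heart of the argument is the localization: split $\O=(\O\setminus\cup B_i)\cup(\cup B_i)$. Outside $\cup B_i$ we have $|v|\geq1-|\ln\v|^{-2}$, so $v/|v|$ is well defined there and the standard ``ball-construction lower bound'' machinery of Sandier--Serfaty applies to the magnetic energy, producing the term $2\pi\h\sum_i d_i\xi_0(a_i)$ (this is where $\xi_0$ and the degrees $d_i$ enter — it comes from $\int_{\p B_i}(\text{something})\cdot(\n\varphi-A)$ and Green's representation via $\xi_0$, as in \cite{SS2} and Serfaty \cite{S1}) plus the nonnegative remainder $\frac12\int_{\O\setminus\cup B_i}|\n f|^2+\frac12\int_\O f^2$. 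The weight $\alpha$ is harmless here because $\alpha\geq b^2$ and, more importantly, because on $\O\setminus\cup B_i$ away from $\p\o_\v$ one has $\alpha=1+o(1)$ pointwise and globally $\|\alpha-1\|_{L^p}=o(1)$, so replacing $\int\alpha|\cdot|^2$ by $\int|\cdot|^2$ costs only $o(1)$ after Cauchy--Schwarz combined with the $\mathcal O(\h^2)$ energy bound \eqref{Borne RERDAEI}.

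Inside the balls we invoke Proposition \ref{Prop.BorneInfLocaliseeSandSerf} directly: estimate \eqref{EstimateSS3ball} gives $\frac12\int_{B_i}\rho^2|\n\varphi-A|^2+|h-\h|^2\geq\pi|d_i|(|\ln\v|-C\ln|\ln\v|)$, and since $\alpha\geq b^2$ on $B_i$ we get $\F[(v,A),B_i]\geq\frac{b^2}{2}\int_{B_i}\rho^2|\n\varphi-A|^2+\frac12\int_{B_i}|h-\h|^2\geq\pi b^2|d_i|(|\ln\v|-C\ln|\ln\v|)$, which is \eqref{Est.EnConcentrationlocal}. (Here one uses that the $B_i$ have total radius $<|\ln\v|^{-10}$, so the ``potential'' part $\frac{\alpha\rho^4}{2\v^2}(1-\rho^2)^2\geq0$ and the magnetic part restricted to $B_i$ do not interfere with the outside terms, and the $h$-energy on $B_i$ has been counted consistently.) Assembling: $\F(v,A)=\h^2\Jo+[\text{outside Dirichlet + }f\text{-energy}]+[\text{inside}]+o(1)$, and feeding in the ball lower bounds yields the claimed inequality.

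\textbf{Main obstacle.} The delicate point is bookkeeping the magnetic field energy $\frac12\int_\O|h-\h|^2$ so that the remainder $\frac12\int_{\O\setminus\cup B_i}|\n f|^2+\frac12\int_\O f^2$ comes out with the correct (nonnegative) sign while simultaneously the $\pi|d_i|$ contributions on $B_i$ survive without double-counting — the same subtlety as in \cite{SS2}, namely that $f$ solves an elliptic problem with source concentrated near the $a_i$ and one must carefully separate the ``smooth'' part of $h-\h h_0$ from the vortex singularities. The presence of $\alpha\neq1$ adds a second, more technical but ultimately $o(1)$, layer: one must check that every place where a factor $\alpha$ should appear but has been dropped (or vice versa) is controlled by \eqref{Lem.Nettoyage1}--\eqref{Lem.Nettoyage2}, \eqref{LpEstmU} and \eqref{EstLpA}; I expect this to be routine given the cleaning step already established in Proposition \ref{Prop.Nettoyage}, so the real work is the vortex/magnetic decomposition, which I would import essentially verbatim from \cite{SS2} and \cite{S1}.
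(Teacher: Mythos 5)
Your proposal is correct and follows essentially the same route as the paper: split $\O$ into $\cup B_i$ and $\tilde\O:=\O\setminus\cup\overline{B_i}$, use the current equation from \eqref{MagnetiqueEq} together with $\alpha\rho^2\leq1$ to pass from $\frac12\int_{\tilde\O}\alpha|\n v-\imath Av|^2+|h-\h|^2$ to the purely magnetic quantity $\frac12\int_{\tilde\O}|\n h|^2+|h-\h|^2$, expand with $f=h-\h h_0$ to isolate $\h^2\Jo+\frac12\|f\|^2_{H^1(\tilde\O)}$ plus a cross term, integrate the cross term by parts to extract $2\pi\h\sum d_i\xi_0(a_i)$, and use \eqref{EstimateSS3ball} with $\alpha\geq b^2$ for \eqref{Est.EnConcentrationlocal}. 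One small misattribution: the $\alpha$-corrections in this proof are not controlled by the cleaning estimates \eqref{Lem.Nettoyage1}--\eqref{Lem.Nettoyage2} for $\tilde\F$ (those serve Proposition \ref{Prop.Nettoyage}), but by analogous Cauchy--Schwarz bounds — e.g.\ $\int_\O|\alpha^{-1}-1|\,|\n h\cdot\n(h_0-1)|\leq C\h\|\alpha^{-1}-1\|_{L^2}=o(\h^{-1})$ — appearing when integrating by parts the current $\alpha^{-1}\n^\bot h+A$ against $h_0-1$; this also clarifies that the paper never needs to replace $\int\alpha|\n v-\imath Av|^2$ by $\int|\n v-\imath Av|^2$ (which Cauchy--Schwarz alone would not justify), the lower bound $\alpha|\n v-\imath Av|^2\geq|\n h|^2$ making that step unnecessary.
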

This estimate is the starting point of the main argument of  \cite{SS2}. 
\begin{proof}[Proof of Proposition \ref{PropBorneInfSS3}]
Let  $\tilde\O:=\O\setminus\cup \overline{B_i}$. With \eqref{Est.EnConcentrationlocal} we get
\begin{equation}\nonumber
\F_{}[(v,A),\cup B_i]
\geq\pi b^2\sum_i|d_i|[|\ln\v|-C\ln|\ln\v|].
\end{equation}

On the other hand, letting $f:=h-\h h_0$ and since $\alpha|\n v-\imath Av|^2\geq |\n h|^2$, we get
\begin{eqnarray*}
&&\dfrac{1}{2}\int_{\tilde\O}\alpha|\n v-\imath Av|^2+| h-\h|^2
\\&\geq&\h^2{\Jo}+\dfrac{1}{2}\|f\|^2_{H^1(\tilde\O)}+\h\int_{\tilde\O}\n f\cdot\n(h_0-1)+f(h_0-1)+{o(1)}.
\end{eqnarray*}
Before refining the above lower bound we make some preliminary claims. We first note that from  \eqref{MagneticGLEquationTilde} we have $\|h-\h\|_{H^1(\O)}^2\leq C\|\n v-\imath Av\|_{L^2(\O)}^2=\mathcal{O}(\h^2)$. Then $
\|f\|_{H^1(\O)}^2=\mathcal{O}(\h^2)$. Consequently for $g\in\{f,h\}$ we have
\begin{equation}\label{GrosseGalereEst1}
\h\int_{\cup B_i\cap\O}|\n g\cdot\n(h_0-1)|+|g(h_0-1)|\leq C\|g\|_{H^1(\O)}\h\sum r_i=o(1).
\end{equation}
We 	also observe that
\begin{equation}\label{GrosseGalereEst3}
\int_\O -A^\bot\cdot\n(h_0-1)+h(h_0-1)=0.
\end{equation}
With \eqref{EstH4} we get $\|A\|_{L^\infty(\O)}\leq C\h$ and then [with \eqref{MagneticGLEquationTilde}]
\begin{eqnarray*}
\sum_{B_i\subset\O}\left|\int_{\p B_i}\p_\tau \varphi(h_0-h_0(a_i))\right|&=&\sum_{B_i\subset\O}\left|\int_{\p B_i}(h_0-h_0(a_i))(\alpha^{-1}\n^\bot h+ A)\cdot\tau\right|
\\&\leq&\sum_{B_i\subset\O}\left[\left|\int_{\p B_i}\alpha^{-1}(h_0-h_0(a_i)) \p_\nu h\right|+C\h r_i\right].
\end{eqnarray*}
If $B_i\subset\O$ we have
\begin{eqnarray*}
&&\left|\int_{\p B_i}\alpha^{-1}(h_0-h_0(a_i)) \p_\nu h\right|
\\&=&\left|\int_{B_i}\alpha^{-1}\n h_0\cdot\n h+(h_0-h_0(a_i))\Div(\alpha^{-1}\n h)\right|
\\&\leq&\left|\int_{B_i}(h_0-h_0(a_i))\Div[v\wedge(\n^\bot v-\imath A^\bot v)]\right|+\mathcal{O}(\h r_i)
\\&\leq&\int_{B_i}|h_0-h_0(a_i)|[2|\p_1 v\wedge\p_2v|+4|\n(|v|)|| A|+|v|^2|h|]+\mathcal{O}(\h r_i)
\\&\leq& Cr_i\h^2.
\end{eqnarray*}
And then
\begin{equation}\label{GrosseGalereEst4-bis}
\sum_{B_i\subset\O}\left|\int_{\p B_i}\p_\tau \varphi(h_0-h_0(a_i))\right|\leq C\sum_{B_i\subset\O} r_i\h^2.
\end{equation}
If $B_i\not\subset\O$, then $\|h_0-1\|_{L^\infty(B_i\cap\O)}\leq C r_i$ and 
\begin{eqnarray}\nonumber
&&\left|\int_{\p( B_i\cap\O)}(h_0-1)\p_\tau \varphi \right|
\\\nonumber&\leq&\int_{B_i\cap\O}\left|\n h_0\cdot \n h\right|+|h_0-1|\left[2|\p_1 v\wedge\p_2v|+4|\n(|v|)|| A|+|v|^2|h|\right]
\\\label{GrosseGalereEst4-bisbis}&\leq&C r_i\h^2.
\end{eqnarray}
By combining \eqref{GrosseGalereEst4-bis} with \eqref{GrosseGalereEst4-bisbis} we deduce:
\begin{equation}\label{GrosseGalereEst4}
\sum\int_{\p B_i\cap\O}(h_0-1)\p_\tau \varphi=2\pi\sum d_i (h_0(a_i)-1)+o(1).
\end{equation}
We used that if $B_i\not\subset\O$ then $d_i=0$.

We end the preliminary claims by noting that
\begin{equation}\label{GrosseGalereEst5}
\int_{\O}|\alpha^{-1}-1||\n h\cdot\n(h_0-1)|\leq C\h\|\alpha^{-1}-1\|_{L^2(\O)}=o(\h^{-1}).
\end{equation}
On the one hand, since $-\Delta f+f=-\Delta h+h$, we have with \eqref{GrosseGalereEst1}, \eqref{GrosseGalereEst3}, \eqref{GrosseGalereEst4}, \eqref{GrosseGalereEst5} and integrations by parts:
\begin{eqnarray*}
\int_{\tilde\O}\n f\cdot\n(h_0-1)+f(h_0-1)
&=&\int_{\O}\alpha^{-1}\n h\cdot\n(h_0-1)+h(h_0-1)+o(\h^{-1})
\\
&=&o(\h^{-1})+\sum_i\int_{\p B_i}\p_\tau \varphi(h_0-1)
\\
&=&o(\h^{-1})+2\pi\sum_{B_i\subset\O}d_i [h_0(a_i)-1]
\\&=&o(\h^{-1})+2\pi\sum_{B_i\subset\O}  d_i \xi_0(a_i).
\end{eqnarray*}
On the other hand, since $\|f\|_{L^4(\O)}\leq C\h$, we get $\displaystyle\int_{\cup B_i} f^2=o(\h^{-4})$, and this estimate ends the proof.

\end{proof}

\subsection{Estimate related with the signs of the $d_i$'s}

By Proposition \ref{PropBorneInfSS3} we have:
\begin{eqnarray}\nonumber
0&\geq&\pi b^2\sum_i|d_i|(|\ln\v|-C\ln|\ln\v|)+2\pi\h\sum_i d_i\xi_0(a_i)+
\\\label{FirstEstSSBo}&&\phantom{kqjsdffnkdkdkdkd}+\dfrac{1}{2}\int_{\O\setminus\cup B_i}|\n f|^2+\dfrac{1}{2}\int_\O f^2-o(1).
\end{eqnarray}
Denote $I_+:=\{i\in \J\,|\,d_i>0\}$, $I_-:=\{i\in \J\,|\,d_i<0\}$, $D:=\sum_\J |d_i|$, $D_+:=\sum_{i\in I_+}d_i$ and  $D_-:=\sum_{i\in I_-}|d_i|$. 

With \eqref{FirstEstSSBo}  we obtain $2\h D_+\|\xi_0\|_{L^\infty(\O)}\geq b^2 D|\ln\v|\left(1-\frac{C\ln|\ln\v|}{|\ln\v|}\right)+o(1)$ 
and then:
\begin{equation}\label{SecEstSSBo}
D_-\leq D_+\times\frac{C\ln|\ln\v|}{|\ln\v|}+o(1).
\end{equation}
\subsection{Estimate related with $\dist(a_i,\Lambda)$} 

From Lemma \ref{Lem.DescriptionLambda}, there exist $\eta>0$ and $M\geq 1$ s.t., for $a\in\O$, $\xi_0(a)\geq \min\xi_0+\eta\dist(a,\Lambda)^M$.

We let $I_0:=\{i\in I\,|\,\dist(a_i,\Lambda)<|\ln\v|^{-\frac{1}{2M}}\}$  and $D_0:=\sum_{i\in I_0}|d_i|$. 

If $i\notin I_0$, then $|\xi_0(a_i)|\leq\|\xi_0\|_{L^\infty(\O)}-\dfrac{\eta}{\sqrt{|\ln\v|}}$. We thus have
\begin{eqnarray*}
\left|\sum_i d_i\xi_0(a_i)\right|&\leq&\left|\sum_{i\in I_0} d_i\xi_0(a_i)\right|+\left|\sum_{i\notin I_0} d_i\xi_0(a_i)\right|
\\&\leq&D_0\|\xi_0\|_{L^\infty(\O)}+(D-D_0)\left(\|\xi_0\|_{L^\infty(\O)}-\dfrac{\eta}{\sqrt{|\ln\v|}}\right)
\\&\leq&D\|\xi_0\|_{L^\infty(\O)}-(D-D_0)\dfrac{\eta}{\sqrt{|\ln\v|}}.
\end{eqnarray*}
From  \eqref{FirstEstSSBo} we may deduce
\begin{equation}\nonumber
2\h\left(D\|\xi_0\|_{L^\infty(\O)}-(D-D_0)\dfrac{\eta}{\sqrt{|\ln\v|}}\right)\geq b^2D(|\ln\v|-C\ln|\ln\v|)-o(1)
\end{equation}
and consequently
\begin{equation}\label{3EstSSBo}
D-D_0\leq CD\dfrac{\ln|\ln\v|}{\sqrt{|\ln\v|}}+o(1).
\end{equation}
\subsection{Estimate of the two last terms in \eqref{FirstEstSSBo}}


We let $t\geq |\ln\v|^{-\frac{1}{2M}}\geq{|\ln\v|}^{-1/2}$ and then $t\geq\delta$ since $\delta|\ln\v|^{1/2}\leq 1$.

On the one hand, from Lemma E.1 in \cite{Publi4}, by denoting $\Cr_t$ a circle with radius $t$ we get:
\begin{equation}\label{EstmSur"grand"cercle}
\int_{\Cr_t\cap\O}(1-\alpha^{-1})=\int_{\Cr_t\cap\O}|1-\alpha^{-1}|\leq C_b\lambda t.
\end{equation}
We assume now that the center of  $\Cr_t$ is in $\Lambda$ and $t$ is s.t. $\Cr_t\subset\tilde\O=\O\setminus\cup \overline{B_i}$. We denote also $B_t$ the disk bounded by  $\Cr_t$. On $\Cr_t$ we have $|v|=1$ and then $v=\e^{\imath\varphi}$ with $\varphi$ locally defined.

By direct calculations, we have [with $f=h-\h h_0$, $\nu$ the outward normal unit vector to  $\Cr_t$ and $\tau=\nu^\bot$ ]:
\[
\int_{\Cr_t}\alpha^{-1}\p_\nu h=-\int_{\Cr_t}[\p_\tau\varphi-A\cdot\tau]=-2\pi d_t+\int_{B_t}h\text{ with $d_t:=\deg_{\Cr_t}(v)$.}
\]

On the other hand $\displaystyle\int_{\Cr_t}\alpha^{-1}\p_\nu h_0=\int_{B_t} h_0+\int_{\Cr_t}(\alpha^{-1}-1)\p_\nu h_0$. Note that
\[
\left|\int_{\Cr_t}(\alpha^{-1}-1)\p_\nu h_0\right|\leq\|\n h_0\|_{L^\infty(\O)}\int_{\Cr_t}|1-\alpha^{-1}|\leq C_b\lambda t\|\n h_0\|_{L^\infty(\O)}.
\]
Then for  $\v>0$ sufficiently small:
$\displaystyle-\int_{\Cr_t}\alpha^{-1}\p_\nu f+\int_{B_t}f\geq 2\pi d_t-C\lambda\h t$. 
Consequently we obtain
\begin{equation}\nonumber
\displaystyle2\int_{\Cr_t}\alpha^{-2}\int_{\Cr_t}|\p_\nu f|^2+2\pi t^2\int_{B_t}f^2\geq4\pi^2 d_t^2-Ct\lambda\h |d_t|
\end{equation}
 and thus, by denoting $m_t:=\di\int_{\Cr_t}\alpha^{-2}$, we get
 \begin{equation}\nonumber
\dfrac{1}{2}\int_{\Cr_t}|\p_\nu f|^2+\dfrac{\pi t^2}{2m_t}\int_{B_t}f^2\geq\dfrac{\pi^2 d_t^2}{m_t}-\dfrac{Ct\lambda\h |d_t|}{m_t}.
\end{equation}
Since $2\pi t\leq m_t\leq b^{-4}2\pi t$, for sufficiently $\v>0$ small we obtain
 \begin{equation}\label{MajEstBorneVorticite}
\dfrac{1}{2}\int_{\Cr_t}|\p_\nu f|^2+\dfrac{t}{4}\int_{B_t}f^2\geq b^4 \dfrac{\pi d_t^2}{2t}-C\lambda\h |d_t|\geq b^4 \dfrac{\pi d_t^2}{4t}.
\end{equation}
Following exactly the argument in \cite{SS2} we get
\[
\dfrac{1}{2}\int_{\O\setminus\cup B_i}|\n f|^2+\dfrac{1}{2}\int_\O f^2\geq C'D^2\ln|\ln\v|+o(1).
\]
With \eqref{FirstEstSSBo} and $\xi_0(a_i)\leq-\|\xi_0\|_{L^\infty(\O)}$ there are $C_1,C_2>0$ [ independent of $\v$] s.t.
\[
(C_1 D^2-C_2D)\ln|\ln\v|\leq g(\v)\text{ with }g(\v)\to0\text{ for }\v\to0.
\]
This estimate implies $D\leq \dfrac{C_1}{C_2}$. Therefore with \eqref{SecEstSSBo} and \eqref{3EstSSBo} we get the three first assertion of the theorem.

It remains to get \eqref{CrucialBoundedkjqbsdfbn} whose  proof  follows the same lines as in \cite{SS2} [Section 4].
\section{Proof of Proposition \ref{Docmpen}}\label{Sec.PreuveDocmpen}
Let $\borneh>1$, $(v_\v)_{0<\v<1}\subset H^1(\O,\C)$, $(\h)_{0<\v<1}\subset(0,\infty)$ and $(\xi_\v)_{0<\v<1}\subset H^1_0\cap H^2\cap W^{1,\infty}(\O,\R)$ be s.t. \eqref{AbsNatBorneuh} and \eqref{BorneXiPourLaDec} hold. For simplicity of the presentation we omit the index $\v$. 

Let $\{(B(a_i,r_i),d_i)\,|\,i\in \J\}$ be as in the proposition and write $B_i:=B(a_i,r_i)$.

In this proof the letter "$C$" stands for a quantity bounded by a power of  $\borneh$ whose value may differ from one line to another.

We let $A=\n^{\bot }\xi$ and $\tilde\O:=\begin{cases}\O\setminus\cup_{}\overline{B_i}&\text{if }|v|\not>1/2\text{ in }\O\\\O&\text{if }|v|>1/2\text{ in }\O\end{cases}$. 
The heart of the proof consists in estimating the quantity $\int_\O(v\wedge\n v)\cdot A$ in \eqref{EGALITEdenettoyga}.


We first  get with the help of \eqref{AbsNatBorneuh} and \eqref{BorneXiPourLaDec} that if $|v|\not>1/2$ in $\O$ then $\int_{\cup B_i} v\wedge\n v\cdot A=o(1)$.

We also claim that, letting $w:=v/|v|$ in $\tilde\O$: $\int_{\tilde\O}(v\wedge\n v-w\wedge\n w)\cdot A=o(1)$.

In particular, if $|v|>1/2$ in $\O$ then we have $\int_\O(v\wedge\n v)\cdot A=o(1)$. We thus assume that $|v|\not>1/2$ in $\O$. 

Then, with an integration by part we get
\begin{eqnarray}\nonumber
&&-\int_\O v\wedge\n v\cdot A
\\\nonumber&
=&-\sum_{B_i\subset\O}\left\{\xi(a_i)\int_{\p B_i} (w\wedge\n^\bot w)\cdot \nu+\int_{\p B_i} {(\xi-\xi(a_i))}(w\wedge\n^\bot w)\cdot \nu\right\}+
\\\label{DevEnergyChaton1}&&\phantom{ughfhffffhflfllflflfgjgdvjdjdn}+\sum_{B_i\not\subset\O}\int_{\p (B_i\cap\O)}\xi(w\wedge\n^\bot w)\cdot \nu.
\end{eqnarray}
For $B_i\subset\O$ we immediately have :
\begin{equation}\label{DevEnergyChaton2}
\int_{\p B_i} (w\wedge\n^\bot w)\cdot \nu=-2\pi d_i.
\end{equation}

We now define $u:=\begin{cases}v&\text{in }\tilde\O\\u_i&\text{in }B_i\cap\O\end{cases}$ where $u_i$ is the harmonic extension of $\tr_{\p (B_i\cap\O)}(v)$ in $B_i\cap\O$. By the Dirichlet principle we have for all $i$:
\begin{equation}\label{DirPrincui}
\|\n u\|_{L^2(B_i\cap\O)}\leq\|\n v\|_{L^2(B_i\cap\O)}=\mathcal{O}(|\ln \v |).
\end{equation}

It is easy to check that  $(w\wedge\n^\bot w)\cdot \nu=|u|^{-2}(u\wedge\n^\bot u)\cdot \nu$ on $\cup_i\p B_i$. For $i\in\J$, we let 
\[
f_i=
\begin{cases}
\xi-\xi(a_i)&\text{if }B_i\subset\O\\\xi&\text{if }B_i\not\subset\O
\end{cases}\in  H^2\cap W^{1,\infty}(B_i\cap\O).
\]
From \eqref{BorneXiPourLaDec} we get
\begin{equation}\label{Grad-LinftyBiEstXi}
\|\n f_i\|_{L^\infty(B_i\cap\O)}\leq C|\ln\v|.
\end{equation}


Our goal is now to estimate $\int_{\p (B_i\cap\O)} f_i(w\wedge\n^\bot w)\cdot \nu$. We first consider the case where $i\in\J$ is s.t. $|u|\geq1/2$ in $B_i\cap\O$. In this case we may write in  $B_i$: $u=|u|\e^{\imath\phi}$ with $\phi\in H^1(B_i,\R)$, $\|\phi\|_{H^1(B_i)}\leq C|\ln\v|$. We then have with \eqref{Grad-LinftyBiEstXi} and an integration by parts
\begin{equation}\label{DevEnergyChaton3}
\left|\int_{\p (B_i\cap\O)} f_i(w\wedge\n^\bot w)\cdot \nu\right|
\leq\|\n f_i\|_{L^2(B_i\cap\O)}\|\n\phi\|_{L^2(B_i\cap\O)}\leq C|\ln\v|^2 r_i.
\end{equation}
We now assume $i\in\J$ is s.t. $|u|\not\geq1/2$ in $B_i\cap\O$. By smoothness of  $|u_i|^2\in C^\infty(B_i\cap\O,\R)$, there exists $t_i\in]1/5,1/4[$, a regular value of $|u_i|^2$, s.t. $\o_i:=\{|u_i|^2<t_i\}\neq\emptyset$. We denote $D_i:=\O\cap[B_i\setminus\overline\o_i$]. Since $|u|^2\geq1/4$ on $\p B_i\cap\O$ we have $\p D_i=(\p B_i\cap\O)\cup\p\o_i\cup(\p\O\cap \overline{D_i})$.

Letting $W:=\dfrac{u}{|u|}\wedge\n^\bot\left(\dfrac{u}{|u|}\right)$ we then get
\begin{eqnarray}\label{DevEnergyChaton4}
\int_{\p D_i} f_iW\cdot \nu
=\int_{D_i} f_i\Div (W)+\n f_i\cdot W.
\end{eqnarray}
It is standard to check that $\Div\left(W\right)=0$ in $D_i$. Moreover:
\begin{equation}\nonumber
\left|\int_{D_i}\n f_i\cdot W\right|\leq2\|\n\xi\|_{L^2(B_i\cap\O)}\|\n u\|_{L^2(B_i\cap\O)}\leq C|\ln\v|^2 r_i.
\end{equation}
Consequently using \eqref{DevEnergyChaton4} we may deduce
\begin{equation}\label{DevEnergyChaton5}
\left|\int_{\p D_i} f_i\,W\cdot \nu\right|\leq C|\ln\v|^2 r_i.
\end{equation}
On the other hand, from \eqref{Grad-LinftyBiEstXi}, $\xi=0$ on $\p\O$ and $\Div\left(u\wedge\n^\bot u\right)=-2\p_1 u\wedge\p_2 u$ in $B_i\cap\O$, we get
\begin{eqnarray}\nonumber
&&\left|\int_{\p D_i} f_i\,W\cdot \nu-\int_{\p B_i\cap\O} f_i(w\wedge\n^\bot w)\cdot \nu\right|
\\\nonumber&=&\left|\int_{\p \o_i} f_i\,W\cdot \nu\right|
\\\label{DevEnergyChaton6}&=&\dfrac{1}{t_i}\left|\int_{\o_i} -2f_i\p_1 u\wedge\p_2 u+\n f_i\cdot\left(u\wedge\n^\bot u\right)\right|\leq C|\ln\v|^3 r_i.
\end{eqnarray}
We may conclude by using \eqref{DevEnergyChaton1}, \eqref{DevEnergyChaton2}, \eqref{DevEnergyChaton3}, \eqref{DevEnergyChaton5} and \eqref{DevEnergyChaton6}:
\begin{equation}\nonumber
-\int_\O v\wedge\n v\cdot A=2\pi\sum_{B_i\subset\O}d_i\xi(a_i)+o(1).
\end{equation}
The rest of the proof is exactly the same than in  \cite{S1}.

\section{Proof of some results of Section \ref{UnimSection}}
\subsection{Proof of Proposition \ref{MinimalMapHomo}}\label{PreuvePropUniModComp}
We use the same notation than in Proposition \ref{MinimalMapHomo}. In this proof, the letter $C$ is a quantity which depends only on  $\O$, $N$ and $\sum_i|d_i|$, its value may change from one line to another.

We argue as in \cite{LR1}. We let $\Pstar\in \cap_{0<p<2}W^{1,p}(\O,\R)\cap H_{\rm loc}^1(\O\setminus\{z_1,...,z_N\},\R)$ be the unique solution of 
\[
\begin{cases}
\Delta\Pstar=2\pi\sum_{i=1}^Nd_i\delta_{z_i}&\text{in }\O
\\
\Pstar=0&\text{on }\p\O
\end{cases}.
\]
and let $\Phi_\Rad\in H^1(\O_\Rad,\R)$  be the unique solution of
\begin{equation}\label{ConjaHarmmojqsdhfhfh}
\begin{cases}
\Delta\Phi_\Rad=0&\text{in }\O_\Rad
\\
\Phi_\Rad=0&\text{on }\p\O
\\
\Phi_\Rad={\rm Cte}_i&\text{on }\p B(z_i,\Rad)
\\
\di\int_{\p B(z_i,\Rad)}\p_\nu\Phi_\Rad=2\pi d_i&\text{for all }i\in\{1,...,N\}
\end{cases}.
\end{equation}

We then have $\n^\bot\Pstar=\wstar\wedge\n \wstar$ and $\n^\bot\Phi^\zd_\Rad=w^\zd_\Rad\wedge\n w^\zd_\Rad$. It is important to note that if  $w\in H^1(\O_\Rad,\S^1)$, then $|\n w|=|w\wedge\n w|$.

We may decompose $\Pstar$ as $\Pstar=\sum_i d_i\Phi_{z_i}$ where, for $z\in\O$, $\Phi_{z}$ is the unique solution of
\[
\begin{cases}
\Delta\Phi_{z}=2\pi\delta_{z}&\text{in }\O
\\
\Phi_{z}=0&\text{on }\p\O
\end{cases}.
\]
With a standard pointwise bound for the gradient of an harmonic function [see (2.31) in \cite{GT}] we have 
$\|\n\Phi_{z_i}\|_{L^\infty(\O\setminus\overline{B(z_i,\Rad)})}\leq C\dfrac{\|\Phi_{z_i}\|_{L^\infty(\O\setminus\overline{B(z_i,\Rad/4)})}}{\Rad}$. 
Thus
\begin{equation}\label{PremereBornePutaindeGrad}
\|\n\Pstar\|_{L^\infty(\O_{\Rad})}\leq C\dfrac{\sum_i|d_i|\|\Phi_{z_i}\|_{L^\infty(\O_{\Rad/4})}}{\Rad}.
\end{equation}
Moreover, it is easy to check that $\Phi_{z_i}=\ln|x-z_i|+R_{z_i}$ where $R_{z_i}$ is the harmonic extension of $-\ln|x-z_i|_{|\p\O}$. From \eqref{PremereBornePutaindeGrad} and by the maximum principle we get 
for $\Rad<\min\left\{[{\rm diam}(\O)]^{-1};1/4\right\}$
 \begin{equation}\label{AjoutNUMA}
 |\n\Pstar|\leq \dfrac{C(1+|\ln\Rad|)}{\Rad}\text{ in }{\O_\Rad}
 \end{equation}  which proves \eqref{BorneGradWstar}.

If there is $\eta>0$ s.t. $\tae>\eta$, then  $\|R_{z_i}\|_{C^1(\O)}\leq C_\eta$ where $C_\eta$ which depends only on  $\eta$ and $\O$. We thus get $\|\n \Pstar\|_{L^\infty(\O_\Rad)}\leq\dfrac{\tilde{C}_\eta}{\Rad}$ [where $\tilde{C}_\eta$ depends only on $\eta$, $N$, $\sum|d_i|$ and $\O$] and this estimates implies \eqref{BorneGradWstarSpeciale}.

We now define 
$R_\zd:=\sum_i d_iR_{z_i}$ in order to have $\Pstar=\sum_id_i\ln|x-z_i|+R_\zd$.
 
From Lemma I.4 in \cite{BBH} we have
\begin{eqnarray}\nonumber
\|\Phi_\Rad-\Pstar\|_{L^\infty(\O_\Rad)}
\leq\sum_i\left[\sup_{\p B(z_i,\Rad)}\sum_j\ln|x-z_j|-\inf_{\p B(z_i,\Rad)}\sum_j\ln|x-z_j|\right]+\\\label{lkjbblkjn}\phantom{luljhsdgslkjdfghndfg}+\sum_i\left[\sup_{\p B(z_i,\Rad)}R_\zd-\inf_{\p B(z_i,\Rad)}R_\zd\right].
\end{eqnarray} 
If $N=1$, then the first term of the RHS in \eqref{lkjbblkjn} is $0$. Otherwise, as in \cite{S1} [Proposition 5.1], we have
\begin{equation}\label{lkjbblkjn1}
\sum_i\left[\sup_{\p B(z_i,\Rad)}\sum_j\ln|x-z_j|-\inf_{\p B(z_i,\Rad)}\sum_j\ln|x-z_j|\right]\leq\dfrac{C\Rad}{\min_{i\neq j}|z_i-z_j|}.
\end{equation}
And for $i\in\{1,...,N\}$, by harmonicity of  $R_\zd$, for $0<\rho<\dfrac{\tae}{2}$ we get
\begin{equation}\label{lkjbblkjn0}
\|\n R_\zd\|_{L^\infty(B(z_i,\rho))}\leq\dfrac{C\|R_\zd\|_{L^\infty(\O)}}{\dist(z_i,\p\O)-\rho}\leq C\dfrac{1+|\ln(\tae)|}{\tae}.
\end{equation}
Then
\begin{equation}\label{lkjbblkjn2}
\sum_i\left[\sup_{\p B(z_i,\Rad)}R_\zd-\inf_{\p B(z_i,\Rad)}R_\zd\right]\leq C\dfrac{\Rad(1+|\ln(\tae)|)}{\tae}.
\end{equation}
We let
\begin{equation}\label{DefY}
Y:=\begin{cases}
\dfrac{\Rad(1+|\ln(\tae)|)}{\tae}&\text{if $N=1$}\\\dfrac{\Rad}{\min_{i\neq j}|z_i-z_j|}+\dfrac{\Rad(1+|\ln(\tae)|)}{\tae}&\text{if }N\geq2
\end{cases}.
\end{equation}
By combining \eqref{lkjbblkjn}, \eqref{lkjbblkjn1} and \eqref{lkjbblkjn2} we get
\begin{equation}\label{lkjbblkjn3}
\|\Phi_\Rad-\Pstar\|_{L^\infty(\O_\Rad)}\leq CY.
\end{equation}
From \eqref{AjoutNUMA} and \eqref{lkjbblkjn3} we immediately get
\begin{eqnarray}\nonumber
0&\leq&\int_{\O_\Rad}|\n\Pstar|^2-|\n\Phi_\Rad|^2+|\n(\Pstar-\Phi_\Rad)|^2
\\\label{Borjhpresquefinimlh}&\leq& C\,Y\Rad \max_i\|\p_\nu\Pstar\|_{L^\infty(\p B(z_i,\Rad))}.
\end{eqnarray}
On the other hand, for  $i\in\{1,...,N\}$, we have [with \eqref{lkjbblkjn0}]
\begin{equation}\label{BorneDerNormPstarCercle}
\|\p_\nu\Pstar\|_{L^\infty(B(z_i,\Rad))}
\leq C\left(\dfrac{1}{\Rad}+ \dfrac{1+|\ln(\tae)|}{\tae}\right).
\end{equation}
Using $X$ defined in  \eqref{DefX}, from  \eqref{Borjhpresquefinimlh} and \eqref{BorneDerNormPstarCercle}, we get
\begin{equation}\label{ConjaHarmmojqsdhfhfh222}
0\leq\int_{\O_\Rad}|\n\Pstar|^2-|\n\Phi_\Rad|^2+|\n(\Pstar-\Phi_\Rad)|^2\leq CX.
\end{equation}
From \eqref{ConjaHarmmojqsdhfhfh222} we deduce \eqref{ConvergenceShrHolklgBorne} and since $\int_{\p\O}(\varphi_\star-\varphi_\Rad)=0$, with a Poincaré inequality we obtain \eqref{ConvergenceH1ShrHolklgBorne}.
\subsection{Proof of Proposition \ref{Prop.EnergieRenDef}}\label{PreuvelammeShrinkSerfaty}

Let $\zd=\zd^{(n)}\in\Ostar\times\Z^N$ and denote $\tae:=\min_i\dist(z_i,\p\O)>0$. Assume that $d_1,...,d_N$ are independent of $n$. Let $\Rad=\Rad_n\to0$ be s.t 
\eqref{HypRayClass} holds.

In this proof the letter $C$ stands for a quantity which depends only on  $\O$, $N$, $C_1$ and $\sum_i|d_i|$, its value may change from one line to another.

By Remark \ref{Remark.DefConjuHarmPhase} and an integration by parts we have
\begin{equation}\label{PremiereEstPourlkjhkluklu1}
\dfrac{1}{2}\int_{\O_\Rad}|\n \wstar|^2=\dfrac{1}{2}\int_{\O_\Rad}|\n \Pstar|^2=-\dfrac{1}{2}\sum_i\int_{\p B(z_i,\Rad)}\Pstar\p_\nu\Pstar.
\end{equation}
For $i_0\in\{1,...,N\}$, we fix $x_{i_0}\in\p B(z_{i_0},\Rad)$. Then [with $\n^\bot\Pstar=\wstar\wedge\n\wstar$]
\begin{eqnarray}\nonumber
&&\int_{\p B(z_{i_0},\Rad)}\Pstar\p_\nu\Pstar
\\\label{DecompParInciohj}&=&\int_{\p B(z_{i_0},\Rad)}\left[\Pstar-\Pstar(x_{i_0})\right]\p_\nu\Pstar+2\pi d_{i_0}\Pstar(x_{i_0}).
\end{eqnarray}
On the one hand, arguing as in the proof of \eqref{lkjbblkjn3}, we get for $z\in \p B(z_{i_0},\Rad)$ :
\[
|\Pstar(z)-\Pstar(x_{i_0})|\leq\sup_{\p B(z_{i_0},\Rad)}\Pstar-\inf_{\p B(z_{i_0},\Rad)}\Pstar\leq CY.
\]
Then, using \eqref{BorneDerNormPstarCercle}, we obtain
\begin{equation}\label{Premirelkjhqsdjdjdjd1}
\sum_i\left|\int_{\p B(z_i,\Rad)}\left[\Pstar-\Pstar(x_i)\right]\p_\nu\Pstar\right|\leq CX.
\end{equation}
On the other hand, for  $i_0\in\{1,...,N\}$
\begin{eqnarray*}
\Pstar(x_{i_0})-R_\zd(z_{i_0})
=-d_{i_0}|\ln\Rad|+\sum_{j\neq i_0} d_j\ln|x_{i_0}-z_j|+\left[R_\zd(x_{i_0})-R_\zd(z_{i_0})\right],
\end{eqnarray*}
and with \eqref{lkjbblkjn0} we get $\di\left|R_\zd(x_{i_0})-R_\zd(z_{i_0})\right|\leq\dfrac{C(1+|\ln\tae|)\Rad}{\tae}$. We then immediately get:
\begin{equation}\label{NUMBBBBGBGBGB}
\Pstar(x_{i_0})=R_\zd(z_{i_0})-d_{i_0}|\ln\Rad|+\sum_{j\neq i_0} d_j\ln|z_{i_0}-z_j|+\mathcal{O}(X).
\end{equation}

With \eqref{DecompParInciohj}, \eqref{Premirelkjhqsdjdjdjd1} and \eqref{NUMBBBBGBGBGB} we may prove that \eqref{PremiereEstPourlkjhkluklu1} may be rewritten into
\begin{equation*}
\dfrac{1}{2}\int_{\O_\Rad}|\n \wstar|^2
=\pi\sum_i\left[d_{i}^2|\ln\Rad|-d_iR_\zd(z_{i})\right]-\pi\sum_{j\neq i}d_jd_j\ln|z_{i}-z_j|+\mathcal{O}(X)
\end{equation*}
where  "$\mathcal{O}(X)$" is quantity bounded by $CX$ with $C$ depending only on $N,\O$ and $\sum|d_i|$.


\subsection{Proof of Proposition \ref{Prop.ConditionDirEnergieRen}}\label{Sec.PreuvelammeShrinkSerfatyDir}

Let $\zd=\zd^{(n)}\in\Ostar\times\Z^N$, $\Rad\downarrow0$ and $\eta>0$ be as in the proposition.

In this proof the letter $C$ stands for a quantity which depends only on  $\O$, $N$ and $\sum_i|d_i|$, its value may change from one line to another.

We first claim that, for  $i\neq j$, $B(z_i,\eta)\cap B(z_j,\eta)\neq\emptyset$, $B(z_i,\eta)\subset\O$ and $\eta=\chi\Rad$ with $\chi\to\infty$. In particular we assume $n$ sufficiently large to have $\eta>\Rad$.

Since $\n^\bot\Pstar=\wstar\wedge\n\wstar$, for $i_0\in\{1,...,N\}$ and $z\in\O\setminus\{z_1,...,z_N\}$, we have
\[
\wstar\wedge\n\wstar(z)=d_{i_0}\n^\bot(\ln|z-z_{i_0}|)+\n^\bot\left[R_\zd(z)+\sum_{j\neq i_0} d_j\ln|z-z_j|\right].
\]
For $j\in\{1,...,N\}$, let $\theta_j$ be the main determination of the argument of  $\dfrac{z-z_j}{|z-z_j|}$ and let $\mathcal{R}$ be an harmonic conjugate of  $R_\zd$. In $\O\setminus\{z_1,...,z_N\}$ we have
\[
\wstar\wedge\n\wstar-d_{i_0}\n \theta_{i_0}=\n\left[\sum_{j\neq i_0} d_j\theta_j+\mathcal{R}\right].
\]
Then for $z\in B(z_{i_0},\eta)\setminus\{z_{i_0}\}$ we have $\wstar(z)=\left(\dfrac{z-z_{i_0}}{|z-z_{i_0}|}\right)^{d_{i_0}}\e^{\imath \varphi_{i_0}(z)}$ with $\varphi_{i_0}=\sum_{j\neq i_0} d_j\tilde\theta_j+\mathcal{R}+{\rm Cte}_{i_0}$ where, for $j\neq i_0$, $\tilde{\theta}_j$ is a determination of the argument of $\dfrac{z-z_i}{|z-z_i|}$ which is globally defined in  $B(z_{i_0},\eta)$. Note that $\varphi_{i_0}\in H^1(B(z_{i_0},\eta),\R)$.

On the other hand, by direct calculations, we have
$\left\|\sum_{j\neq i_0} d_j\n \tilde\theta_j\right\|_{L^\infty(B(z_{i_0},\eta))}\leq \dfrac{C}{\eta}$
and, since $R_\zd$ is harmonic, we also have from the definition of $\mathcal{R}$
\[
\|\n\mathcal{R}\|_{L^\infty(B(z_{i_0},\eta))}=\|\n R_\zd\|_{L^\infty(B(z_{i_0},\eta))}\leq C\dfrac{\| R_{\zd}\|_{L^\infty(\O)}}{\dist(B(z_{i_0},\eta),\p\O)}\leq C\dfrac{|\ln(\tae)|+1}{\tae}.
\]
We thus deduce
\begin{equation}\label{BorneDephWstarBouleEta}
\|\n\varphi_{i_0}\|_{L^\infty(B(z_{i_0},\eta))}\leq C\left(\dfrac{1+|\ln(\tae)|}{\tae}+\dfrac{1}{\eta}\right).
\end{equation}
We switch to polar coordinates by letting  for  $i\in\{1,...,N\}$ and $\rho\in]\Rad,\eta[$, $\tilde{\varphi}_{i}(\rho,\theta):=\varphi_{i}(z_i+\rho\e^{\imath\theta})$. We then get, by \eqref{BorneDephWstarBouleEta} and a mean value argument,
 the existence of  $\rho_n\in]\sqrt{\chi}\Rad,\eta[$ s.t.
\[
\sum_i\int_0^{2\pi}|\p_\theta\tilde\varphi_i(\rho_n,\theta)|^2\,{\rm d}\theta\leq\dfrac{C}{\ln\chi}\left[\dfrac{\eta(|\ln(\tae)|+1)}{\tae}+1\right]^2.
\]
We let $Z:=\dfrac{1}{\ln\chi}\left[\dfrac{\eta(|\ln(\tae)|+1)}{\tae}+1\right]^2$ and by assumption we have $Z\to0$.

We denote, for $i\in\{1,...,N\}$, $m_i=\di\dfrac{1}{2\pi}\int_0^{2\pi}\tilde\varphi_i(\rho_n,\theta)\,{\rm d}\theta$ in order to have
\[
\int_0^{2\pi}|\tilde\varphi_i(\rho_n,\theta)-m_i|^2\,{\rm d}\theta\leq CZ.
\]
We then define $\phi_i\in H^1(B(z_i,\rho_n)\setminus\overline{B(z_i,\Rad)},\R)$ using polar coordinates:
\[
\tilde\phi_i(s,\theta)=\dfrac{s-\rho_n}{\Rad-\rho_n}m_i+\dfrac{s-\Rad}{\rho_n-\Rad}\tilde\varphi(\rho_n,\theta)\text{ with }s\in(\Rad,\rho_n).
\]
For  $z_i+s\e^{\imath\theta}\in B(z_i,\rho_n)\setminus\overline{B(z_i,\Rad)}$, we let $\phi_i(z_i+s\e^{\imath\theta}):=\tilde\phi_i(s,\theta)$. By standard calculations we get $\displaystyle\int_{B(z_i,\rho_n)\setminus\overline{B(z_i,\Rad)}}|\n \phi_i|^2\leq CZ$. 

We conclude by defining $v=\begin{cases}
\wstar&\text{in }\O\setminus\cup\overline{B(z_i,\rho_n)}\\u_i\e^{\imath\phi_i}&\text{in }B(z_i,\rho_n)\setminus\overline{B(z_i,\Rad)}
\end{cases}$ with $u_i(z)=\left(\dfrac{z-z_i}{|z-z_i|}\right)^{d_i}$. It is clear that $v\in H^1(\O_\Rad,\S^1)$ and that  for $i\in\{1,...,N\}$ we have $v(z_i+\Rad\e^{\imath\theta})={\rm Cte}_iu_i$ [with ${\rm Cte}_i=\e^{\imath m_i}$]. Note that since $\deg_{\p B(z_i,\Rad)}(\wstar)=d_i$ we have 
\[
 \dfrac{1}{2} \int_{B(z_i,\rho_n)\setminus B(z_i,\Rad)}|\n u_i|^2\leq  \dfrac{1}{2}\int_{B(z_i,\rho_n)\setminus B(z_i,\Rad)}|\n \wstar|^2
\]
and 
\[
 \dfrac{1}{2} \int_{B(z_i,\rho_n)\setminus B(z_i,\Rad)}|\n (u_i\e^{\imath\phi_i})|^2=  \dfrac{1}{2}\int_{B(z_i,\rho_n)\setminus B(z_i,\Rad)}|\n u_i|^2+\dfrac{1}{2}\int_{B(z_i,\rho_n)\setminus B(z_i,\Rad)}|\n \phi_i|^2.
\]
Consequently using \eqref{BorneDephWstarBouleEta} and $\rho_n<\eta$ we obtain
\begin{eqnarray*}
\sum_i \dfrac{1}{2}\int_{B(z_i,\rho_n)\setminus B(z_i,\Rad)}|\n v|^2
\leq\sum_i \dfrac{1}{2}\int_{B(z_i,\rho_n)\setminus B(z_i,\Rad)}|\n \wstar|^2+CZ.
\end{eqnarray*}
Thus $\displaystyle\dfrac{1}{2}\int_{\O_\Rad}|\n v|^2\leq\dfrac{1}{2}\int_{\O_\Rad}|\n \wstar|^2+CZ$. The last estimate and  \eqref{ConvergenceShrHolklgBorne} end the proof.
\section{Proof of Proposition \ref{Prop.BorneSupSimple} }\label{AppProofUpBound}
\begin{proof}
{\bf Step 1. Selection of "good" points}

Let $d\in\N^*$ and  consider ${\bf D}\in\LamN$ which minimizes \eqref{CouplageEnergieRen}.

For $k\in\{1,...,N_0\}$, if $D_k\geq1$ we let $(\tilde{z}^{(k)}_1,...,\tilde{z}^{(k)}_{D_k})\in [B(p_k,\h^{-1/4})^{D_k}]^*$ which minimizes the infimum in the left hand side of \eqref{DevMesoscopicDef} with $R=\h^{-1/4}$, $p=p_k$ and $D=D_k$.

We then have the existence of $C$ [depending only on $\O$ and $d$] s.t. $|p_k-\tilde z^{(k)}_i|\leq C \h^{-1/2}$ and if $D_k\geq2$ then $|\tilde z^{(k)}_i-\tilde z^{(k)}_j|\geq \h^{-1/2}/C$ for $i\neq j$.

We may choose [in an arbitrary way] $z_i^{(k)}\in B(\tilde z_i^{(k)},\delta)\cap[\delta(\Z\times\Z)]$. Since $\delta\sqrt \h\to0$, we still have [up to change the value $C$]  $|p_k- z^{(k)}_i|\leq C \h^{-1/2}$ and if $D_k\geq2$ then $| z^{(k)}_i-\tilde z^{(k)}_j|\geq \h^{-1/2}/C$ for $i\neq j$.

For $i\in\{1,...,D_k\}$ we let $x_i^{(k)}:=z_i^{(k)}+\lambda\delta x_0$ where $x_0\in\o$ is an arbitrary point of minimum of $W^{\rm micro}$ [defined in \eqref{DefRenMicroEn3}].\\
{\bf Step 2. Construction of the test function}

We construct test functions in subdomains of $\O$ and then we glue the test functions.
\begin{itemize}
\item We let $w^{\rm macro}_{\h}\in H^1(\O_{\h^{-1}}({\bf z}),\S^1)$ be a minimizer of $I_{\h^{-1}}^{\rm Dir}\zd$ [defined in \eqref{MinPropDir}] with ${\bf d}=(1,...,1)\in\Z^d$ and ${\bf z}\in(\O^d)^*$ is a $d$-tuple s.t. $\{z_1,...,z_d\}=\{z_i^{(k)}\,|\,k\in\{1,...,N_0\} \text{ s.t. }D_k\geq1\text{ and }i\in\{1,...,D_k\}\}$. 
\item For $k\in\{1,...,N_0\}$ s.t. $D_k\geq1$ and $i\in\{1,...,D_k\}$, we let  $w_{k,i}^{\rm micro}\in H^1[B(z_i^{(k)},\h^{-1})\setminus\overline{B(x_i^{(k)},\lambda\delta^2)},\S^1]$ be a minimizer of the right hand side of \eqref{MicroRenoExpressionNonRescalDir} with $z_\v=z_i^{(k)}$, $x_\v=x_i^{(k)}$, $R=\h^{-1}$ and $r=\lambda\delta^2$ [from \eqref{HypLambdaDeltaConstrFoncTest} we have $R/r\to\infty$].

We let also $u_{k,i}\in H^1[B(x_i^{(k)},\lambda\delta^2),\C]$ be a minimizer of
 \[
u\mapsto\dfrac{1}{2}\int_{B(x_i^{(k)},\lambda\delta^2)}|\n u|^2+\dfrac{1}{2\v^2}(1-|u|^2)^2
\] with the Dirichlet boundary condition $u(x_i^{(k)}+\lambda\delta^2\e^{\imath\theta})=\e^{\imath\theta}$.
\end{itemize}
By considering well chosen constants ${\rm Cte}_{k,i}^{(1)}$, ${\rm Cte}_{k,i}^{(2)}$ and ${\rm Cte}_{k}$, we may glue the above test functions and we  define  $v\in H^1(\O,\C)$ :
\[
v=
\begin{cases}
w^{\rm macro}_{\h}&\text{in }\O_{\h^{-1}}(\bf z)
\\
{\rm Cte}_{k}&\text{in } B(z_i^{(k)},\h^{-1})\text{ if }D_k=0
\\
{\rm Cte}_{k,i}^{(1)}w_{k,i}^{\rm micro}&\text{in }B(z_i^{(k)},\h^{-1})\setminus\overline{B(x_i^{(k)},\lambda\delta^2)}\,\left|\begin{array}{c}k\in\{1,...,N_0\}\text{ s.t. }D_k\geq1\\\text{and }i\in\{1,...,D_k\}\end{array}\right.
\\
{\rm Cte}_{k,i}^{(2)}u_{k,i}&\text{in }B(x_i^{(k)},\lambda\delta^2)\left|\begin{array}{c}k\in\{1,...,N_0\}\text{ s.t. }D_k\geq1\\\text{and }i\in\{1,...,D_k\}\end{array}\right.
\end{cases}.
\]
{\bf Step 3. The energy of the test function}

We first note that the configuration $\zd$ is s.t. $\tae({\bf z})>\dfrac{1}{2}\dist(\Lambda,\p\O)$ and for $i\neq j$ we have $\dfrac{\h^{-1}}{|z_i-z_j|}\to0$, then we may apply Propositions \ref{MinimalMapHomo}, \ref{Prop.EnergieRenDef} and \ref{Prop.ConditionDirEnergieRen}. We may also use Proposition \ref{Prop.RenEnergieCluster}. From these propositions we get
\begin{eqnarray}\nonumber
&&\dfrac{1}{2}\int_{\O_{\h^{-1}}(\bf z)}|\n v|^2
\\\label{BorneSupFoncTestShaarp1}&=&\pi d\ln\h+W_{N_0}^{\rm macro}\pD-\pi\sum_{\substack{k=1\\\text{s.t. }D_k\geq2}}^{N_0}\sum_{i\neq j}\ln|z^{(k)}_i-z^{(k)}_j|+o(1).
\end{eqnarray}
For $k\in\{1,...,N_0\}$ s.t. $D_k\geq1$ and $i\in\{1,...,D_k\}$ with \eqref{MicroRenoExpressionNonRescalDir}, \eqref{DefRenMicroEn1} and \eqref{DefRenMicroEn2} we get:
\begin{eqnarray}\nonumber
&&\dfrac{1}{2}\int_{B(z_i^{(k)},\h^{-1})\setminus\overline{B(x_i^{(k)},\lambda\delta^2)}}\alpha|\n v|^2
\\\label{BorneSupFoncTestShaarp2}&=&\pi|\ln(\lambda\delta\h)|+b^2\pi|\ln( \delta)|+ W^{\rm micro}(x_0)+o(1).
\end{eqnarray}
From Lemma IX.1 in \cite{BBH} and \eqref{EstLoinInterfaceU} [with $|\n v|\leq C\v^{-1}$], for $k\in\{1,...,N_0\}$ s.t. $D_k\geq1$ we have
\begin{equation}\label{BorneSupFoncTestShaarp3}
\dfrac{1}{2}\int_{{B(x_i^{(k)},\lambda\delta^2)}}\alpha|\n v|^2+\dfrac{\alpha^2}{2\v^2}(1-|v|^2)^2=b^2\pi\ln(b\lambda\delta^2/\v)+b^2\gamma+o(1)
\end{equation}
where $\gamma\in\R$ is a universal constant.

In conclusion, by combining \eqref{BorneSupFoncTestShaarp1}, \eqref{BorneSupFoncTestShaarp2} and \eqref{BorneSupFoncTestShaarp3} [note $\lambda\delta\h\to0$]:
\begin{eqnarray}\nonumber
F(v)&\leq&
d\pi\left[b^2|\ln\v|+(1-b^2)|\ln(\lambda\delta)|\right]+d\left[W^{\rm micro}(x_0)+b^2\gamma+b^2\pi\ln b\right]+
\\\label{BorneSupFoncTestShaarpGlobal}&&\phantom{lglkhgklhjjkh}+W_{N_0}^{\rm macro}\pD-\pi\sum_{\substack{k=1\\\text{s.t. }D_k\geq2}}^{N_0}\sum_{i\neq j}\ln|z^{(k)}_i-z^{(k)}_j|+o(1)
.
\end{eqnarray}
{\bf Step 4. Definition of the magnetic potential and conclusion}

Let  $A_{({\bf z},{\bf 1})}$ be given by Definition \ref{DefA_ad} with $\ad=({\bf z},{\bf 1})$. It is clear that we have
\[
-\pi\sum_{\substack{k=1\\\text{s.t. }D_k\geq2}}^{N_0}\sum_{i\neq j}\ln|z^{(k)}_i-z^{(k)}_j|\leq C|\ln\delta|
\]
where $C$ depends only on $d$ and $\O$. 

Consequently, for $\v>0$ sufficiently small and $\borneh>\pi d$ we have $F(v)\leq\borneh|\ln\v|$. Therefore, with Remark \ref{Prop.BornéPourPotenteifjfjf}, the configuration $(v,A_{({\bf z},{\bf 1})})\in\H$ is s.t. $\F(v,A_{({\bf z},{\bf 1})})\leq\F(v,0)+o(1)\leq\borneh|\ln\v|^2+\Haus^2(\O)\h^2$.


Using Proposition\ref{Docmpen} and Lemma \ref{Rk.RegularityLondonModified}  we get \begin{eqnarray*}
\F(v,A_{({\bf z},{\bf 1})})&=&\h^2\Jo+2\pi\h\sum_{i=1}^d\xi_0(z_i)+F(v)+\tilde{V}[\zeta_{({\bf z},{\bf 1})}]+o(1)
\end{eqnarray*}
where $\zeta_{({\bf z},{\bf 1})}$ is the unique solution of \eqref{LondonEqModifie} with $\ad={({\bf z},{\bf 1})}$.

We now use Assertion \ref{PropClusterI3} of Proposition \ref{PropClusterI} in order to get $\tilde{V}[\zeta_{({\bf z},{\bf 1})}]=\tilde{V}[\zeta_\pD]+o(1)$ and then
\begin{equation}\label{EstEnConstructSplitBorneSup}
\F(v,A_{({\bf z},{\bf 1})})=\h^2\Jo+2\pi\h\sum_{i=1}^d\xi_0(z_i)+F(v)+\tilde{V}_{({\bf z},{\bf 1})}[\zeta_\pD]+o(1).
\end{equation}

We claim that, from the choice of the points $z_i^{(k)},\tilde z_i^{(k)}$ we have $\xi_0(z_i^{(k)})-\xi_0(\tilde z_i^{(k)})=\mathcal{O}(\delta/\sqrt{\h})$. Thus with Proposition \ref{EnergieRenMeso} we have 
\begin{eqnarray*}
&&-\pi\sum_{\substack{k=1\\\text{s.t. }D_k\geq2}}^{N_0}\sum_{i\neq j}\ln|z^{(k)}_i-z^{(k)}_j|+2\pi\h\sum_{\substack{k=1}}^{N_0}\sum_{i}\xi_0(z_i^{(k)})-2\pi d\h\min_\O\xi_0
\\&=&\sum_{\substack{k=1\\\text{s.t. }D_k\geq1}}^{N_0}\left[-\pi\sum_{\substack{i,j\in\{1,...,D_k\}\\i\neq j}}\ln| \tilde z^{(k)}_i-\tilde z^{(k)}_j|+2\pi\h\sum_{i=1}^{D_k}\left[\xi_0(\tilde z^{(k)}_i)-\min_\O\xi_0\right]\right]+o(1)
\\&=&\sum_{\substack{k=1\\\text{s.t. }D_k\geq1}}^{N_0}\left[\dfrac{\pi}{2}(D_k^2-D_k)\ln\left(\dfrac{\h}{D_k}\right)+C_{p_k,D_k}\right]+o(1).
\end{eqnarray*}
We may now conclude:
\begin{eqnarray*}
\F(v,B)&=&\h^2\Jo+d\Pic\left[-\h+\HoC\right]+\dfrac{\pi}{2}\ln\h\sum_{\substack{k=1\\\text{s.t. }D_k\geq1}}^{N_0}(D_k^2-D_k)+
\\&&\phantom{jdjdjdjdjdkdkdkdkd}+\Wmin_{d}+\dfrac{\pi}{2}\sum_{\substack{k=1\\\text{s.t. }D_k\geq1}}^{N_0}(D_k-D_k^2)\ln D_k+o(1).
\end{eqnarray*}
This estimate ends the proof of the proposition.
\end{proof}
\section{Proof of Proposition \ref{Prop.EtaEllpProp}}\label{AppendixPruveEtaEllipt}

Let $\h$ and $(v_\v,A_\v)$ be as in Proposition \ref{Prop.EtaEllpProp}. Note that we may assume that $A_\v=A_{v_\v}$ given by Lemma \ref{LemAuxConstructMagnPot} and then $\|A_\v\|_{L^\infty(\O)}=\mathcal{O}(\h)$. We drop the subscript $\v$. We first note that, by smoothness of $\O$, there is $t_0>0$, s.t. letting $\O_{t_0}:=\{x\in\R^2\,|\,\dist(x,\O)<t_0\}$, we may extend by reflexion $v\in H^1(\O,\C)$ into $u\in H^1(\O_{t_0},\C)$ letting $u=v$ in $\O$ and $u=v\circ\SO$ in $\O_{t_0}\setminus\overline{\O}$ where
\[
\begin{array}{cccc}
\SO:&\O_{t_0}\setminus\overline{\O}&\to&\O\\&x&\mapsto&\Pi(x)-\dist(x,\p\O)\nu_{\Pi(x)}
\end{array}.
\]
Here $\Pi:\O_{t_0}\setminus\overline{\O}\to\p\O$ is the orthogonal projection on $\p\O$ and, for $\sigma\in\p\O$, $\nu_\sigma$ is the normal outward at $\sigma$.
\begin{lem}\label{EtaEllpProp}
Let $\borneh\geq1$ and let $\{(v_\v,A_\v)\,|\,0<\v<1\}$ be a family in the Coulomb gauge of quasi-minimizers of $\F$ in $\H$ for an intensity of the applied field $\h=\h(\v)\geq0$  s.t. $\|\n |v|\|_{L^\infty(\O)}\leq\borneh\v^{-1}$. 

Under these hypotheses, for $\eta\in(0,1)$ there exists $\v_\eta,C_\eta>0$ [depending on $\borneh$] s.t. for $0<\v<\v_\eta$, if $z\in\O$ is s.t.
\[
b^2\int_{B(z,\sqrt{\v}/2)}|\n u|^2+\dfrac{b^2}{\v^2}(1-|u|^2)^2\leq \dfrac{C_\eta}{3}|\ln\v|
\]
with $u=\begin{cases}v&\text{in }\O\\v\circ\SO&\text{in }\O_{t_0}\setminus\overline{\O}\end{cases}$, then $|v(z)|>\eta$.
\end{lem}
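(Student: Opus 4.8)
\textbf{Plan of proof of Lemma \ref{EtaEllpProp}.} The statement is the usual $\eta$-ellipticity / clearing-out lemma adapted to a heterogeneous weight bounded below by $b^2$, so the plan is to reduce to the homogeneous interior version available in the literature (e.g. Theorem III.3 in \cite{BBH}, or the clearing-out argument of Brezis--Merle / Rivière, or Lemma D.1 type statements) after replacing $v$ by a comparison map that is a genuine minimizer of a Ginzburg--Landau functional with constant coefficients. First I would record the following structural facts, which are already available: by \eqref{CrucialLipEst} and the construction of $A_{v_\v}$ through Lemma \ref{LemAuxConstructMagnPot}, $\|A_\v\|_{L^\infty(\O)}=\mathcal{O}(\h)=\mathcal{O}(|\ln\v|)$, and by \eqref{BoundGrpaLin} we have $\|\n|v_\v|\|_{L^\infty(\O)}=\mathcal{O}(\v^{-1})$; the reflection extension $u$ across $\p\O$ inherits $\|\n|u|\|_{L^\infty(\O_{t_0})}=\mathcal{O}(\v^{-1})$ because $\SO$ is bi-Lipschitz with constants depending only on $\O$. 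Thus it suffices to prove the interior statement of Lemma \ref{EtaEllpProp} for $u$ on balls $B(z,\sqrt\v/2)$ with $z\in\O$ away from $\p\O$ (the boundary case being absorbed by the reflection, exactly as in Footnote \ref{NumFootNoteConformal}).

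\textbf{Main argument.} Fix $\eta\in(0,1)$ and argue by contradiction: suppose there are $\v=\v_n\downarrow0$ and points $z_n\in\O$ with $|v_{\v_n}(z_n)|\le\eta$ but $b^2\int_{B(z_n,\sqrt{\v_n}/2)}|\n u|^2+\v_n^{-2}(1-|u|^2)^2\le \tfrac{C_\eta}{3}|\ln\v_n|$ for a constant $C_\eta$ to be chosen small. The first step is a Lipschitz-in-modulus propagation: since $\|\n|u|\|_{L^\infty}\le \borneh\v^{-1}$, the bound $|v_{\v_n}(z_n)|\le\eta$ forces $|u|\le(1+\eta)/2<1$ on a disk $B(z_n,c\v_n)$ for some $c=c(\eta,\borneh)>0$, hence $\v_n^{-2}\int_{B(z_n,c\v_n)}(1-|u|^2)^2\ge c'$ for some $c'=c'(\eta,\borneh)>0$ — this is the standard ``a vortex costs at least a fixed amount of potential energy'' estimate and it uses only the lower bound $a_\v^2\ge b^2$ through the fact that we kept the factor $b^2$ in the hypothesis. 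The second step compares $v$ with the minimizer $v_\star$ of the \emph{constant-coefficient} functional $\frac12\int_{B(z_n,\sqrt{\v_n}/2)}|\n w|^2+\frac{b^2}{2\v_n^2}(b^2-|w|^2)^2$ (or, after rescaling $x\mapsto (x-z_n)/\v_n$ and $w\mapsto w/b$, the standard $GL$ functional on $B(0,1/(2\sqrt{\v_n}))$) with Dirichlet data $\tr_{\p B(z_n,\sqrt{\v_n}/2)}(u)$; using \eqref{FullGLuEq} to control the difference $\F$-energy of $v$ and $v_\star$ up to a term controlled by $\|A_\v\|_{L^\infty}^2\,\Haus^2(B(z_n,\sqrt\v/2))=\mathcal{O}(|\ln\v|^2\v)=o(1)$, one sees that $v_\star$ also has small energy and $|v_\star(z_n)|$ close to $|v(z_n)|$ (again by the uniform Lipschitz bound, now for $|v_\star|$, which holds since $v_\star$ solves an elliptic equation with the same structure). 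Finally, for the genuine minimizer $v_\star$ one applies the classical interior $\eta$-ellipticity estimate (Theorem III.3 / Lemma IX.1 regime in \cite{BBH}, or the clearing-out lemma): there is a universal $C_\eta$ such that small energy on $B(z_n,\sqrt{\v_n}/2)$ in the GL scale forces $|v_\star|>\eta$ everywhere on the concentric half-ball; choosing $C_\eta$ equal to that universal constant (divided by $3$ to leave room for the $o(1)$ error terms and the factor $b^2$ bookkeeping) contradicts $|v_\star(z_n)|\approx|v(z_n)|\le\eta$.

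\textbf{Where the difficulty lies.} The routine parts are the reflection extension, the Lipschitz propagation of the modulus, and quoting the homogeneous clearing-out lemma. The genuinely delicate point is the comparison step: $v$ is only a \emph{quasi}-minimizer (satisfying \eqref{QuasiMinDef}, not the Euler--Lagrange equation with a good error) and the weight $\alpha_\v=U_\v^2$ is discontinuous and may be as low as $b^2$ right where the vortex sits, so one must be careful that replacing $\alpha_\v$ by the constant $b^2$ on the small ball $B(z_n,\sqrt\v/2)$ costs only $o(|\ln\v|)$; this is where \eqref{EstLoinInterfaceU}, \eqref{LpEstmU} and the fact that $\Haus^2(\o_\v\cap B(z_n,\sqrt\v/2))\le\Haus^2(\o_\v)=\mathcal O(\lambda^2)$ together with $\lambda^{1/4}|\ln\v|\to0$ enter, exactly as in the energetic cleaning of Section \ref{Sec.Clean}. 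Once that uniform control of the weight on the small ball is in hand, the quasi-minimality is used only to say that cutting and pasting the comparison map changes the total energy by $o(1)$, which is harmless against the $C_\eta|\ln\v|$ threshold. I would therefore organize the write-up as: (i) reduce to the interior case by reflection; (ii) prove the potential-energy lower bound near a point of small modulus; (iii) perform the weight-replacement / comparison-map estimate; (iv) invoke the homogeneous $\eta$-ellipticity lemma and close the contradiction.
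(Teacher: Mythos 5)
Your plan diverges from the paper's actual argument in a way that leaves two genuine gaps, and it also introduces a difficulty (weight replacement) that the paper's proof never encounters.

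The paper's proof (Appendix G) does not compare $v$ with a minimizer $v_\star$ of a constant-coefficient functional. After the contradiction hypothesis it uses a mean-value argument on the annulus $\v^{3/4}<\rho<\sqrt\v/2$ to find a single good radius $\rho_n\in(\v^{3/4},\sqrt\v/2)$ on which $\rho_n\int_{\p B(z,\rho_n)}|\n u|^2+\frac{b^2}{\v^2}(1-|u|^2)^2=\mathcal O(1/n)$. That boundary smallness forces $|u|\approx1$ and zero degree on $\p B(z,\rho_n)$, so one can \emph{explicitly} interpolate the boundary modulus and phase to a map $V$ with $\frac12\int_{B(z,\rho_n)\cap\O}|\n V|^2+\frac{1}{2\v^2}(1-|V|^2)^2=\mathcal O(1/n)$. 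Because $b^2\le\alpha_\v\le1$ pointwise, this bound on $V$ immediately controls the \emph{weighted} energy of the cut-and-paste competitor regardless of what $\alpha_\v$ is; quasi-minimality then yields $\int_{B(z,\v^{3/4})\cap\O}|\n v|^2+\frac{1}{\v^2}(1-|v|^2)^2=o(1)$, which contradicts the constant potential-energy lower bound near a point where $|v|\le\eta$ and $\|\n|v|\|_{L^\infty}\le\borneh\v^{-1}$ (the BBH Theorem~III.3 mechanism). Your own step (ii) gives exactly this constant lower bound, but without the mean-value selection of $\rho_n$ you have no control of the boundary trace, and hence no way to show the replacement map (whether abstract minimizer or explicit interpolant) has $o(1)$ energy; the hypothesis only gives $C_\eta|\ln\v|$ on the full disk, which by itself is useless against a constant.

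The second gap is the assertion $|v_\star(z_n)|\approx|v(z_n)|$. Matching Dirichlet data on $\p B(z_n,\sqrt\v/2)$ carries no pointwise information in the interior, and the Lipschitz bound on $|v_\star|$ does not help because you are comparing two distinct functions, not propagating one of them. Moreover $v_\star$ minimizes a constant-weight functional while $v$ quasi-minimizes the $\alpha_\v$-weighted one, so even the energy comparison $\F(v,\cdot)\lesssim\F(v_\star,\cdot)$ requires an argument you have not supplied. Finally, the ``weight replacement'' difficulty you identify as the delicate point is not actually present: the lemma is stated with no hypothesis on $\lambda,\delta$ (so $\alpha_\v$ can equal $b^2$ on most of $B(z,\sqrt\v/2)$), and the averaged estimates \eqref{LpEstmU}, \eqref{EstLoinInterfaceU} give no control at the scale $\sqrt\v$; the paper avoids the issue entirely by only ever using the pointwise bounds $b^2\le\alpha_\v\le1$, which is why the factor $b^2$ appears in the hypothesis of the lemma.
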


In order to prove  Proposition \ref{Prop.EtaEllpProp} we need the following lemma.

\begin{lem}\label{lem.Doublerecouvrement}
There exists $\v_\O>0$ depending only on  $\O$ s.t. for $0<\v<\v_\O$, $z\in\O$ and $v\in H^1(\O,\C)$, by defining $u$ as in Lemma \ref{EtaEllpProp}, the following inequality holds:
\[
\int_{B(z,\sqrt{\v}/2)}|\n u|^2+\dfrac{b^2}{\v^2}(1-|u|^2)^2\leq 3\int_{B(z,\sqrt\v)\cap\O}|\n v|^2+\dfrac{b^2}{\v^2}(1-|v|^2)^2.
\]
\end{lem}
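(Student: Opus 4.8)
\textbf{Proof plan for Lemma \ref{lem.Doublerecouvrement}.}

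The plan is to compare the energy of the reflected extension $u$ on the half-size ball $B(z,\sqrt\v/2)$ with the energy of $v$ on the full ball $B(z,\sqrt\v)\cap\O$, separating the contribution coming from inside $\O$ from the contribution coming from the reflected collar $\O_{t_0}\setminus\overline\O$. First I would split
\[
\int_{B(z,\sqrt\v/2)}|\n u|^2+\frac{b^2}{\v^2}(1-|u|^2)^2=\int_{B(z,\sqrt\v/2)\cap\O}(\cdots)+\int_{B(z,\sqrt\v/2)\setminus\overline\O}(\cdots).
\]
The first term is trivially bounded by $\int_{B(z,\sqrt\v)\cap\O}|\n v|^2+\frac{b^2}{\v^2}(1-|v|^2)^2$ since $B(z,\sqrt\v/2)\cap\O\subset B(z,\sqrt\v)\cap\O$ and $u=v$ there. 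The work is entirely in the second term, where one must change variables through $\SO$ and control the Jacobian and the derivative $D\SO$.

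The key geometric input is that $\SO$ is a diffeomorphism from $\O_{t_0}\setminus\overline\O$ onto a one-sided collar neighbourhood of $\p\O$ inside $\O$, and for $\O$ smooth and $t_0$ small enough (depending only on $\O$) one has $\|D\SO\|_{L^\infty}\le 1+C t_0$ and $|\det D\SO|\ge 1-Ct_0$, with $\SO$ moving points a distance at most $2\operatorname{dist}(x,\p\O)\le 2t_0$. Hence by choosing $t_0$ once and for all small, $\|D\SO\|_{L^\infty}^2\le\tfrac32$ say, and $|\det D\SO|^{-1}\le\tfrac32$. Then for the potential term, the change of variables $y=\SO(x)$ gives
\[
\int_{B(z,\sqrt\v/2)\setminus\overline\O}\frac{b^2}{\v^2}(1-|u|^2)^2\,dx=\int_{\SO(B(z,\sqrt\v/2)\setminus\overline\O)}\frac{b^2}{\v^2}(1-|v|^2)^2\,|\det D\SO|^{-1}\,dy,
\]
and one checks that $\SO(B(z,\sqrt\v/2)\setminus\overline\O)\subset B(z,\sqrt\v)\cap\O$ for $\v$ small, since a point of $B(z,\sqrt\v/2)\setminus\overline\O$ lies within $\sqrt\v/2$ of $z$ and is moved by $\SO$ a further distance $\le 2\operatorname{dist}(x,\p\O)\le 2\cdot\tfrac{\sqrt\v}2=\sqrt\v$ — one needs $z$ within $O(\sqrt\v)$ of $\p\O$ for this term to be nonempty, in which case the displaced point stays in $B(z,\sqrt\v)\cap\O$. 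This contributes at most $\tfrac32\int_{B(z,\sqrt\v)\cap\O}\frac{b^2}{\v^2}(1-|v|^2)^2$. For the gradient term, $\n u(x)=(D\SO(x))^T(\n v)(\SO(x))$, so $|\n u(x)|^2\le\|D\SO\|^2_{L^\infty}|(\n v)(\SO(x))|^2$, and the same change of variables plus the Jacobian bound yields a factor $\le\tfrac32\cdot\tfrac32=\tfrac94$; shrinking $t_0$ further I can make this factor $\le 2$. Adding the inside-$\O$ contribution ($\le 1$) to the collar contribution ($\le 2$) gives the constant $3$.

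The main (minor) obstacle is the bookkeeping on $\SO$: one must verify that for $\O$ smooth, $\SO$ extends smoothly, that $B(z,\sqrt\v)\cap\O$ genuinely contains both $B(z,\sqrt\v/2)\cap\O$ and the $\SO$-image of $B(z,\sqrt\v/2)\setminus\overline\O$ (which requires the elementary estimate $\operatorname{dist}(x,\p\O)\le|x-z|$ when $z\in\O$ and $x\notin\overline\O$, so that the total displacement is $\le|x-z|+2\operatorname{dist}(x,\p\O)\le 3|x-z|$ — actually this suggests taking the inner radius $\sqrt\v/2$ precisely so that $3\cdot\tfrac{\sqrt\v}{2}$ is not automatically $\le\sqrt\v$, so one should instead note $\operatorname{dist}(x,\p\O)\le\operatorname{dist}(z,\p\O)+|x-z|$ and use that the collar part is only nonempty when $z$ is $O(\sqrt\v)$-close to $\p\O$; a slightly more careful radius juggling, or simply shrinking $\sqrt\v/2$ to $\sqrt\v/c$ for a suitable absolute $c$, closes this), and that all implied constants depend only on $\O$ so that a single threshold $\v_\O$ works uniformly in $z$ and $v$. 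None of this is deep; it is the standard "reflection across a smooth boundary is bi-Lipschitz with constant near $1$" fact, and the factor $3$ is generous enough to absorb the $\v$-dependent error terms. $\hfill\qed$
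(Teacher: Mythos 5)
Your decomposition of $B(z,\sqrt\v/2)$ into the piece inside $\O$ (trivial, since $u=v$ there and $B(z,\sqrt\v/2)\cap\O\subset B(z,\sqrt\v)\cap\O$) and the reflected collar $B(z,\sqrt\v/2)\setminus\overline\O$ (change variables through $\SO$, use that $D\SO$ and its Jacobian are $1+o(1)$ on the thin strip of width $O(\sqrt\v)$) is exactly what the paper does, and the near-isometry bookkeeping is fine. The gap is the step you label a "minor obstacle" and then dismiss with "radius juggling": that is the geometric heart of the lemma, and your proposed fix (shrink $\sqrt\v/2$ to $\sqrt\v/c$) alters the statement rather than proving it.

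Concretely, you need $\SO\bigl[B(z,\sqrt\v/2)\setminus\overline\O\bigr]\subset B(z,\sqrt\v)\cap\O$, and both of your attempted bounds are too crude: the triangle estimate $|\SO(x)-z|\le|\SO(x)-x|+|x-z|\le 2\,\dist(x,\p\O)+|x-z|\le 3|x-z|$ only gives $|\SO(x)-z|<\tfrac{3}{2}\sqrt\v$, not $<\sqrt\v$. The triangle inequality is wasteful precisely because $z\in\O$ and $x\notin\overline\O$ lie on \emph{opposite sides} of $\p\O$, so the reflection brings $x$ closer to $z$, not farther. In Fermi coordinates $(t,s)$ along $\p\O$, with $t$ the signed distance (positive inside $\O$), write $z=(t_z,s_z)$ with $t_z>0$ and $x=(t_x,s_x)$ with $t_x<0$; to leading order $\SO(x)=(-t_x,s_x)$, hence
\[
|\SO(x)-z|^2\approx(t_z+t_x)^2+(s_z-s_x)^2=(t_z-|t_x|)^2+(s_z-s_x)^2\le(t_z+|t_x|)^2+(s_z-s_x)^2\approx|x-z|^2,
\]
the inequality holding because $t_z>0$ and $|t_x|>0$. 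The curvature corrections to this flat model are $O(|x-z|)$ relative errors, so $|\SO(x)-z|\le[1+o(1)]\,|x-z|<[1+o(1)]\,\sqrt\v/2\le\sqrt\v$ for $\v$ small depending only on $\O$; this is exactly the inclusion the paper's proof invokes. With it in hand, the inside piece contributes $\le\int_{B(z,\sqrt\v)\cap\O}(\cdots)$, the collar contributes $\le[1+o(1)]\int_{B(z,\sqrt\v)\cap\O}(\cdots)$, and $2+o(1)\le 3$ closes the lemma with the stated radii.
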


\begin{proof}[Proof of Lemma \ref{lem.Doublerecouvrement}]
In order to prove the lemma it suffices to check that  by smoothness of $\O$ we have $\|\n(\SO^{-1})\|_{L^\infty(\O)},\|{\rm jac}\,(\SO^{-1})\|_{L^\infty(\O)}=1+o(1)$. We then immediately obtain
\[
\int_{B(z,\sqrt{\v}/2)\setminus\O}|\n u|^2+\dfrac{b^2}{\v^2}(1-|u|^2)^2\leq[1+o(1)]\int_{\SO[B(z,\sqrt{\v}/2)\setminus\O]}|\n v|^2+\dfrac{b^2}{\v^2}(1-|v|^2)^2.
\]
On the other hand, if $x\in B(z,\sqrt{\v}/2)\setminus\O$ then $|\SO(x)-z|\leq [1+o(1)]\sqrt\v/2\leq\sqrt\v$ for sufficiently small $\v>0$ [depending only on $\O$]. Then $\SO[B(z,\sqrt{\v}/2)\setminus\O]\subset B(z,\sqrt\v)\cap\O$. The lemma follows from the monotonicity of the integral.
\end{proof}

By combining both lemmas we get Proposition \ref{Prop.EtaEllpProp}.

\begin{proof}[Proof of Lemma \ref{EtaEllpProp}]

We argue by contradiction and we assume the existence of  $\eta\in(0,1)$, $\v=\v_n\downarrow0$ s.t.  for all $n\geq1$ there are $(v,A)=(v_n,A_n)\in\H$, $z=z_n\in\O$ and $\h=\h^{(n)}\geq0$ s.t. $(v,A)$ is a quasi-minimizers of $\F$ in $\H$ satisfying:
\begin{equation}\label{ContraTardRERTard}
\int_{B(z,\sqrt{\v}/2)}|\n u|^2+\dfrac{b^2}{\v^2}(1-|u|^2)^2\leq \dfrac{|\ln\v|}{n}
\end{equation}
with $u=u_n=\begin{cases}v&\text{in }\O\\v\circ\SO&\text{in }\O_{t_0}\setminus\overline{\O}\end{cases}$ and $|v(z)|\leq\eta$. Up to replace $v$ by $\underline{v}$ we may assume $|v|\leq1$ in $\O$.

We are going to prove that \eqref{ContraTardRERTard} implies  
\begin{equation}\label{ContraIpadKlldl}
\displaystyle\dfrac{1}{\v^2} \int_{B(z,\v^{3/4})\cap\O}(1-|v|^2)^2=o(1).
\end{equation}
On the other hand, $\|\n |v|\|_{L^\infty(\O)}=\mathcal{O}(\v^{-1})$ and then, from an argument in  \cite{BBH} [Theorem III.3], we will get, for sufficiently large $n$, $|v(z)|>\eta$. Clearly this contradiction will end the proof.



Since for $n\geq1$ we have
$\displaystyle\int_{\v^{3/4}/2}^{\sqrt{\v}/2}\dfrac{{\rm d}\rho}{\rho}\,\rho\int_{\p B(z,\rho)}|\n u|^2+\dfrac{b^2}{\v^2}(1-|u|^2)^2
\leq \dfrac{|\ln\v|}{n}$, 
there exists $\displaystyle\rho_n\in(\v^{3/4},\sqrt{\v}/2)$ s.t. $\displaystyle\rho_n\int_{\p B(z,\rho_n)}|\n u|^2+\dfrac{b^2}{\v^2}(1-|u|^2)^2\leq\dfrac{4}{n}$. Then we get :
\begin{equation}\label{Eq.BorneEnCoordPolbis}
\rho_n\int_{\p B(z,\rho_n)}|\p_\tau {u}|^2+\dfrac{b^2}{\v^2}(1-|u|^2)^2\leq\dfrac{4}{n}.
\end{equation}
We switch in polar coordinate and we denote $\tilde u(\theta):=u(z+\rho_n\e^{\imath\theta})$. Estimate \eqref{Eq.BorneEnCoordPolbis} becomes
\begin{equation}\label{Eq.BorneEnCoordPol}
\int_0^{2\pi}|\p_\theta \tilde{u}|^2+\dfrac{b^2\rho_n^2}{\v^2}(1-|\tilde u|^2)^2\leq\dfrac{4}{n}.
\end{equation}
On the one hand,  $|\p_\theta |\tilde {u}||^2\leq|\p_\theta \tilde {u}|^2$ and then $\di\int_0^{2\pi}|\p_\theta|\tilde {u}||\leq\dfrac{2\sqrt{2\pi}}{\sqrt n}$. Consequently in $[0,2\pi]$ we get $(1-|\tilde {u}|^2)^2\geq\max_{[0,2\pi]}(1-|\tilde {u}|^2)^2-\dfrac{2\sqrt{2\pi}}{\sqrt n}$. From \eqref{Eq.BorneEnCoordPol} we deduce
\[
\dfrac{4\v^2}{nb^2\rho_n^2}\geq\int_0^{2\pi}(1-|\tilde  u|^2)^2\geq 2\pi\left[\max_{[0,2\pi]}(1-|\tilde {u}|^2)^2-\dfrac{2\sqrt{2\pi}}{\sqrt n}\right]
\]
and thus for sufficiently large $n$ we get $0\leq\max_{[0,2\pi]}(1- |\tilde {u}|^2)^2\leq\dfrac{100}{\sqrt n}$.

For a further use we define 
\[
\begin{array}{cccc}\mod_n:&B(z,\rho_n)&\to&[0,1]
\\& z+\rho\e^{\imath\theta}&\mapsto&\left(|\tilde u(\theta)|-1\right)\dfrac{\rho}{\rho_n}+1
\end{array}.
\]
By direct calculations we have
\begin{equation}\label{EstModulAlADemande}
\int_{B(z,\rho_n)}|\n\mod_n|^2+\dfrac{1}{2\v^2}(1-\mod_n^2)^2
= \mathcal{O}\left(\dfrac{1}{ n}\right).
\end{equation}
On the other hand, for $n$ sufficiently large,  $|{u}|^2\geq\dfrac{1}{2}$ in $\p B(z,\rho_n)$. We thus may compute the degree of  $u$ on $\p B(z,\rho_n)$ and we find
$\left|\deg_{\p B(z,\rho_n)}(u)\right|
= \mathcal{O}\left(\dfrac{1}{ n}\right)$ which implies, for sufficiently large $n$, $\deg_{\p B(z,\rho_n)}(u)=0$. Consequently, we may write  $u=|u|\e^{\imath\varphi}$ with $\varphi=\varphi_n\in H^1(\p B(z,\rho_n),\R)$. Moreover, up to multiply $u$ by a constant in $\S^1$, we may assume $\int_{\p B(z,\rho_n)}\varphi=0$. 

We then consider $\tilde\varphi:[0,2\pi]\to\R$ defined by $\tilde\varphi(\theta)=\varphi(z+\rho_n\e^{\imath\theta})$, and thus
\[
\mathcal{O}\left(\dfrac{1}{ n}\right)=\rho_n\int_{\p B(z,\rho_n)}|\n \varphi|^2\geq\int_0^{2\pi}|\p_\theta\tilde\varphi|^2.
\]
Since $\displaystyle\int_0^{2\pi}\tilde\varphi=0$, this estimate implies $\displaystyle\int_0^{2\pi}\tilde\varphi^2=\mathcal{O}\left(\dfrac{1}{ n}\right)$.

Letting $\psi=\psi_n:B(z,{\rho_n})\to\R$, $z+\rho\e^{\imath\theta}\mapsto\dfrac{\rho}{\rho_n}\tilde\varphi(\theta)$, it is direct to check $\displaystyle\int_{B(z,\rho_n)}|\n \psi|^2=\mathcal{O}\left(\dfrac{1}{ n}\right)$.

We are now in position to end the proof by considering $V=V_n=\mod_n\e^{\imath\psi}\in H^1(B(z,\rho_n),\C)$ in order to have $V=v$ on $\p B(z,\rho_n)\cap\O$, 
 \[
 \dfrac{1}{2}\int_{\O\cap B(z,\rho_n)}|\n V|^2+\dfrac{1}{2\v^2}(1-|V|^2)^2=\mathcal{O}\left(\dfrac{1}{ n}\right).
 \]
and [with $\|A\|_{L^\infty(\O)}=\mathcal{O}(\h)$]
 \[
\left| \int_{\O\cap B(z,\rho_n)}\alpha(V\wedge\n V)\cdot A\right|\leq C\dfrac{\h^{}\rho_n}{\sqrt n}=o(1).
 \]
 Since $V=v$ on $\p B(z,\rho_n)\cap\O$ we have $w:=\begin{cases}v&\text{in }\O\setminus B(z,\rho_n)\\V&\text{in }B(z,\rho_n)\cap\O\end{cases}\in H^1(\O,\C)$. Considering the comparison configuration $(\tilde w,A)$, from the quasi-minimality of $(v,A)$ and the above estimates we get 
  \[
\int_{\O\cap B(z,\rho_n)}|\n v|^2+\dfrac{1}{2\v^2}(1-|v|^2)^2\leq b^{-4}\int_{\O\cap B(z,\rho_n)}|\n V|^2+\dfrac{1}{2\v^2}(1-|V|^2)^2+o(1)=o(1).
 \]
Since $\rho_n>\v^{3/4}$  we get \eqref{ContraIpadKlldl} and thus this estimate ends the proof.
\end{proof}
\section{Proof of Proposition \ref{Prop.ConstrEpsMauvDisk}}\label{SectAppenPreuveConstructionPetitDisque}
The proof of the proposition is an adaptation of the arguments presented in \cite{AB1} [Section V] and also used in \cite{S1} [Proposition 3.2]. It is also inspired of the bad disk construction in \cite{BBH}. Let $\mu$, $\lambda$, $\delta$,  $(v,A)$ and $\h$ be as in the proposition. 

{\bf Step 1. A first covering of $ \{|v|\leq1/2\}$}\\

For $0<\v<\v_{1/2}$ [$\v_{1/2}>0$ is given by Proposition \ref{Prop.EtaEllpProp} with $\eta=1/2$] we consider a covering of  $\O$ by disks $\{B(x^\v_1,4\sqrt\v),...,B(x^\v_{N_\v},4\sqrt\v)\}$ s.t., for $i\neq j$, $B(x^\v_i,\sqrt\v)\cap B(x^\v_j,\sqrt\v)=\emptyset$ and $x^\v_i\in\O$.

For the simplicity of the presentation we omit the dependance in $\v$.\\

We say that $B(x_i,4\sqrt\v)$ is a {\it bad disk} if  $\tilde E_\v[v,B(x_i,8\sqrt\v)\cap\O]> C_{1/2}|\ln\v|$ where for a disk $B$ we denoted
\[
\tilde E_\v(v,B\cap\O):=\int_{B\cap\O}|\n v|^2+\dfrac{1}{\v^2}(1-|v|^2)^2
\]
and $C_{1/2}>0$ is given by Proposition \ref{Prop.EtaEllpProp} with $\eta=1/2$.  Let 
\[
J'=J'_\v:=\{i\in\{1,...,N_\v\}\,|\, B(x_i,4\sqrt\v)\text{ is a bad disk}\}.
\]
We make two fundamental claims:
\begin{enumerate}
\item There exists $M_0\geq1$ [independent of  $\v$] s.t. ${\rm Card}(J')\leq M_0$.
\item If $B(x_i,4\sqrt\v)$ is not a bad disk then $|v|\geq1/2$ in $B(x_i,4\sqrt\v)$.
\end{enumerate}
The first claim is a direct consequence of \eqref{CrucialBoundedkjqbsdfbn} and $B(x^\v_i,\sqrt\v)\cap B(x^\v_j,\sqrt\v)=\emptyset$ for $i\neq j$.

The second claim is given by Proposition \ref{Prop.EtaEllpProp}. 
Then $\cup_{i\in J'} B(x_i,4\sqrt\v)$ is covering of $\{|v|\leq1/2\}$ and ${\rm Card}(J')\leq M_0$.

Up to drop some disks, we may always assume that for $i\in J'$ we have $B(x_i,4\sqrt\v)\cap\{|v|\leq1/2\}\neq\emptyset$. Consequently using Corollary \ref{Cor.DegNonNul}, for $i\in J'$ and $0<\v<\min\{\v_0,\v_{1/2}\}$ [$\v_0$ given by Corollary \ref{Cor.DegNonNul}] we have
$\dist(x_i,\Lambda)= \mathcal{O}(|\ln\v|^{-s_0})$.\\

If $|v|>1/2$ in $\O$ then there is nothing to prove. We then assume  $J'\neq \emptyset$.\\

{\bf Step 2. Separation process}\\

We replace the above bad disks with disks having same centers and with a radius  $\v^\mu$. Let $\v_\mu^{(1)}>0$ be s.t. $\min\{\v_0,\v_{1/2}\}\geq \v_\mu^{(1)}$, for $0<\v<\v_\mu^{(1)}$ we have $4\sqrt\v<\v^\mu$ and
\begin{equation}\nonumber
\max_{i\in J'}\,\dist(B(x_i,\v^\mu),\Lambda)\leq \dfrac{1}{\ln|\ln\v|}.
\end{equation} In particular $\cup_{i\in J'} B(x_i,\v^\mu)$ is a covering of $\{|v|\leq1/2\}$.\\

The goal of this step is to get a covering of  $\{|v|\leq1/2\}$ with disks $B(x_i,\v^s)$ where $i\in\tilde {J}=\tilde J_\v\subset J'$, $s=s_\v=2^{-K}\mu$, $K=K_\v\in\{0,...,M_0-1\}$ and s.t. for  $i,j\in\tilde J$, $i\neq j$, we have
\begin{equation}\label{ConditiondeSepartionMauvDisk}
|x_i-x_j|\geq\v^{s/2}.
\end{equation}
If ${\rm Card}(J')=1$ or \eqref{ConditiondeSepartionMauvDisk} is satisfied with $s=\mu$ [i.e. $K=0$] then we let $\tilde{J}=J'$ and we obtained the desired result of this step. Otherwise, there are $i_0,j_0\in J'$ [with $i_0<j_0$] s.t.  $|x_{i_0}-x_{j_0}|<\v^{\mu/2}$. In this case we let  $J^{(1)}:=J'\setminus\{i_0\}$ and we claim that ${\rm Card}(J^{(1)})={\rm Card}(J')-1$.

If ${\rm Card}(J^{(1)})=1$ or ${\rm Card}(J^{(1)})>1$ with \eqref{ConditiondeSepartionMauvDisk} holds with $s=2^{-1}\mu$ [i.e. $K=1$]  for all $i,j\in J^{(1)}$ [$i\neq j$] then  the goal of this step is done with $\tilde J= J^{(1)}$ and $s=2^{-1}\mu$. 

Otherwise, there exits  $i_0,j_0\in J^{(1)}$ [with $i_0<j_0$] s.t.  $|x_{i_0}-x_{j_0}|<\v^{s/2}$. We then let $J^{(2)}:=J^{(1)}\setminus\{i_0\}$ and thus ${\rm Card}(J^{(2)})={\rm Card}(J^{(1)})-1$.

By noting that ${\rm Card}(J')\leq M_0$, the above process stops after at most  $M_0-1$ iteration. We thus get the existence of $K=K_\v\in\{0,...,M_0-1\}$  and $\emptyset\neq J^{(K)}=J^{(K)}_\v\subset J'$ s.t. ${\rm Card} (J^{(K)})=1$ or \eqref{ConditiondeSepartionMauvDisk} is satisfied with $s=s_\v=2^{-K}\mu$ and $i,j\in J^{(K)}$ [$i\neq j$].

We then denote  $\tilde J:=J^{(K)}$, $s=2^{-K}\mu$ and we fix $0<\v_\mu^{(2)}\leq\v_\mu^{(1)}$ s.t. for $0<\v<\v_\mu^{(2)}$ we have
\begin{equation}\nonumber
\max_{i\in \tilde J}\,\dist(B(x_i,\v^{s/4}),\Lambda)\leq \dfrac{1}{\ln|\ln\v|}<10^{-1}\dist(\Lambda,\p\O).
\end{equation}
In particular $B(x_i,\v^{s/4})\subset\O$ for $i\in\tilde J$.
\\

{\bf  Step 3. Definition of $\rad$}

With Corollary 5.2 in \cite{bourgain2010morse}, for a.e. $t\in{\rm Image}(|v|)$ the set  $V(t):=\{x\in\O\,|\,|v(x)|=t\}$ is a finite union of curve. Moreover if a such curve is included in  $\O$ then it is a Jordan curve.

Following the same strategy as in \cite{AB1} [Lemma V.1], we have the existence of $t_\v\in[1-2|\ln\v|^{-2},1-|\ln\v|^{-2}]$ s.t. $V(t_\v)$ is a finite union of Jordan curves s.t.
\begin{equation}\label{BorneValeurMesure}
\Haus^1[V(t_\v)]\leq C\v|\ln\v|^5\text{ with $C$ is independent of $\v$.}
\end{equation}
We fix $0<\v_\mu^{(3)}\leq\v_\mu^{(2)}$ s.t. for $0<\v<\v_\mu^{(3)}$ we have $C\v|\ln\v|^5\leq10^{-2}\v^{s}$.

We denote for $i\in\tilde{J}$
\begin{equation}\label{DefAiEps}
\A_i=\A_i^\v:=\{\rho\in[\v^s,\v^{2s/3}]\,|\,|v|\geq t_\v\text{ on }\p B(x_i,\rho)\}.
\end{equation}
From the continuity of $|v|$, it is clear that $[\v^s,\v^{2s/3}]=\A_i\cup\B_i\cup\mathcal{C}_i$ where
\[
\B_i=\B_i^\v:=\{\rho\in[\v^s,\v^{2s/3}]\,|\,\exists\,x\in\p B(x_i,\rho)\text{ s.t. }|v(x)|=t_\v\}
\]
and
\[
\mathcal{C}_i=\mathcal{C}_i^\v:=\{\rho\in[\v^s,\v^{2s/3}]\,|\,|v|< t_\v\text{ on }\p B(x_i,\rho)\}.
\]
We first claim that, since the function  $\rho\mapsto\rho$ is increasing, we have
\begin{eqnarray*}
\mathcal{O}(\v^2|\ln\v|)&=&\int_{\mathcal{C}_i}{\rm d}\rho\int_{\p B(x_i,\rho)}(1-|v|^2)^2
\\&\geq&2\pi(1-t_\v^2)^2\int_{\mathcal{C}_i}\rho{\rm d}\rho
\\&\geq&2\pi(1-t_\v^2)^2\int_0^{\Haus^1(\mathcal{C}_i)}\rho{\rm d}\rho=\pi(1-t_\v^2)^2\Haus^1(\mathcal{C}_i)^2.
\end{eqnarray*}
Then $\Haus^1(\mathcal{C}_i)=\mathcal{O}(\v|\ln\v|^{5/2})$.

On the other hand one may prove that if  $I$ is a connected components of  $\B_i$, then there is $\rho_1,\rho_2$ s.t. $ I=[\rho_1,\rho_2]$. Since straight lines are geodesics, we obviously get 
\[
\Haus^1(I)=\rho_2-\rho_1\leq\Haus^1[V(t_\v)\cap\overline{B(x_i,\rho_2)}\setminus B(x_i,\rho_1)].
\]
Moreover one may prove
that if $[\rho_1,\rho_2]$ and $[\rho_1',\rho_2']$ are distinct connected component of $\B_i$ and if $\Gamma$ is a connected component of $V(t_\v)$ s.t. $ \Gamma\cap\overline{B(x_i,\rho_2)}\setminus B(x_i,\rho_1)\neq \emptyset$ then $ \Gamma\cap\overline{B(x_i,\rho_2')}\setminus B(x_i,\rho_1')= \emptyset$ [here we used \eqref{BorneValeurMesure}]. One may conclude: $\Haus^1(\B_i)\leq\Haus^1(V(t_\v))\leq C\v|\ln\v|^5$.

Consequently
\[
\Haus^1(\A_i)\geq\Haus^1([\v^s,\v^{2s/3}])-\Haus^1(\B_i)-\Haus^1(\mathcal{C}_i)\geq\v^{2s/3}-\v^s-\Haus^1(V(t_\v))-\mathcal{O}(\v|\ln\v|^{5/2}).
\]
Fix $0<\v_\mu^{(4)}\leq\v_\mu^{(3)}$ s.t. for $0<\v<\v_\mu^{(4)}$ we have $\Haus^1(\A_i)\geq\v^{2s/3}-\v^s-\sqrt\v$.

Define
\begin{equation}\label{DefAEps}
\A=\A_{\mu,\v}:=\cap_{i\in \tilde J}\A_i.
\end{equation}
It is clear that $\Haus^1(\A)\geq\v^{2s/3}-\v^s-M_0\sqrt\v$

Since $\rho\mapsto1/\rho$ is decreasing we have
\begin{eqnarray*}
\mathcal{O}(|\ln\v|)
&\geq&\int_\A\dfrac{{\rm d}\rho}{\rho}\,\sum_{i\in\tilde J}\rho\int_{\p B(x_i,\rho)}|\n v|^2+\dfrac{1}{\v^2}(1-|v|^2)^2
\\&\geq&\int_{\v^{2s/3}-\Haus^1(\A)}^{\v^{2s/3}}\dfrac{{\rm d}\rho}{\rho}\,\times\,\inf_{\rho\in\A}\sum_{i\in\tilde J}\rho\int_{\p B(x_i,\rho)}|\n v|^2+\dfrac{1}{\v^2}(1-|v|^2)^2.
\end{eqnarray*}
Consequently, there exist $\rad=\rad_{\mu,\v}\in\A$, $C_\mu\geq1$ [$C_\mu$ is independent of $\v$] and $0<\v_\mu^{(5)}\leq\v_\mu^{(4)}$ s.t. for $0<\v<\v_\mu^{(5)}$ we have

\begin{equation}\label{BorneBordConditiondeSepartionMauvDisk}
\sum_{i\in\tilde J}\rad\int_{\p B(x_i,\rad)}|\n v|^2+\dfrac{1}{\v^2}(1-|v|^2)^2\leq C_\mu.
\end{equation}
We finally let $J_\mu:=\tilde J $, with \eqref{ConditiondeSepartionMauvDisk} and \eqref{BorneBordConditiondeSepartionMauvDisk} the result is proved.

\section{Proof of Proposition \ref{VeryNiceCor}}\label{Sec.PreuveVeryNiceCor}
The proof is an adaptation of the proof of (VI.21) in \cite{AB1}.

Let $\tilde\alpha=\tilde\alpha_n\in L^\infty(\O,[\beta^2;1])$,  $\zd=\zd^{(n)}\in\Ostar\times\Z^N$ and $u=u_n\in H^1(\O,\C)$ be as in the proposition.

We first claim that up to consider $\underline{u}$ instead of $u$ we may assume $|u|\leq1$ in $\O$. Note also that if $\int_{\O_\Rad}|\n u|^2\geq \beta^{-2}\int_{\O_\Rad}|\n \wstar|^2$, then there is nothing to prove. We thus may assume
\begin{equation}\nonumber
\int_{\O_\Rad}|\n u|^2< \beta^{-2}\int_{\O_\Rad}|\n \wstar|^2.
\end{equation}

Let $w:={u}/{|u|}\in H^1(\O_\Rad,\S^1)$. From Lemma I.1 in \cite{BBH} we have $w\wedge\n w=\n^{\bot}\Pstar+\n H$ with $H=H_\v\in H^1(\O_\Rad,\R)$ and 
\begin{equation}\label{NaturalHyp}
\int_{\O_\Rad}|\n H|^2\leq (\beta^{-1}+1)^2\int_{\O_\Rad}|\n \Pstar|^2.
\end{equation}
Let $\Phi_\Rad$ be the unique solution of \eqref{ConjaHarmmojqsdhfhfh}. 
We have $\di\int_{\O_\Rad}\n H\cdot\n^\bot\Phi_\Rad=0$. Then letting $\rho=|u|$:
\begin{eqnarray*}
\int_{\O_\Rad}\tilde \alpha\rho^2\n H\cdot\n^\bot\Pstar=\int_{\O_\Rad}(\tilde \alpha\rho^2-1)\n H\cdot\n^\bot\Pstar+\int_{\O_\Rad}\n H\cdot(\n^\bot\Pstar-\n^\bot\Phi_\Rad).
\end{eqnarray*}
But, from \eqref{ConjaHarmmojqsdhfhfh222}, there exists $C\geq1$ s.t. $\displaystyle
\left|\int_{\O_\Rad}\n H\cdot(\n^\bot\Pstar-\n^\bot\Phi_\Rad)\right|\leq C\|\n H\|_{L^2(\O_\Rad)}\sqrt X$ where $X$ is defined in \eqref{DefX}.

Consequently, letting $\tilde C:= 4C^2/\beta^2$ we get
\begin{eqnarray*}
2\int_{\O_\Rad}\n H\cdot\n^\bot\Pstar+\int_{\O_\Rad}\tilde \alpha\rho^2|\n H|^2&=&2\int_{\O_\Rad}\n H\cdot(\n^\bot\Pstar-\n^\bot\Phi_\Rad)+\int_{\O_\Rad}\tilde \alpha\rho^2|\n H|^2
\\&\geq& \|\n H\|_{L^2(\O_\Rad)}\left(\dfrac{\beta^2}{4}\|\n H\|_{L^2(\O_\Rad)}-2C\sqrt X\right)\\&\geq&- \tilde CX.
\end{eqnarray*}
Therefore
\begin{equation}\nonumber
\int_{\O_\Rad}\tilde \alpha\rho^2|\n w|^2
\geq\int_{\O_\Rad}|\n \Pstar|^2-\int_{\O_\Rad}(1-\tilde \alpha\rho^2)|\n\Pstar|^2-2\int_{\O_\Rad}(1-\tilde \alpha\rho^2)|\n H||\n\Pstar|-\mathcal{O}(X).
\end{equation}
On the other hand, using  \eqref{BorneGradWstar} and Corollary \ref{CorBorneGrossEneStar}, we get
\begin{eqnarray*}
\left|\int_{\O_\Rad}(1-\tilde \alpha\rho^2)|\n\Pstar|^2\right|&\leq&\left|\int_{\O_\Rad}(1-\rho^2)|\n\Pstar|^2\right|+\left|\int_{\O_\Rad}(1-\tilde \alpha)|\n\Pstar|^2\right|
\\&\leq&\|\n\Pstar\|_{L^\infty(\O_\Rad)}\|\n\Pstar\|_{L^2(\O_\Rad)}\left(K+L\right)
\end{eqnarray*}
and with \eqref{NaturalHyp}:
\begin{eqnarray*}
\left|\int_{\O_\Rad}(1-\tilde \alpha\rho^2)|\n H||\n\Pstar|\right|&\leq&\left|\int_{\O_\Rad}(1-\rho^2)|\n H||\n\Pstar|\right|+\left|\int_{\O_\Rad}(1-\tilde \alpha)|\n H||\n\Pstar|\right|
\\&\leq&\|\n\Pstar\|_{L^\infty(\O_\Rad)}\|\n\Pstar\|_{L^2(\O_\Rad)}\left(K+L\right)(2\beta^{-1}+1).
\end{eqnarray*}
The proposition is thus proved.

\section{Proof of Proposition \ref{Prop.BonEcartement}}\label{Proof.Prop.BonEcartement}

We prove the first assertion and we assume ${\rm Card}(J_\mu)\geq2$. We let $\chi_1:=2\h^{-1}\ln\h$, $\chi_2:=2\h^{-1/2}\ln\h$ and $\O_{\chi_2}=\O\setminus\cup_{p\in\Lambda}\overline{B(p,\chi_2)}$.
 
In order to get sufficiently sharp estimates to prove the proposition, we decompose  $\O_\rad$  in several subdomains.  To this aim, we distinguish two cases for $p\in\Lambda$ : either ${\rm Card}(J_p^{(y)})\geq 2$ or ${\rm Card}(J_p^{(y)})\in\{0,1\}$  where $J_p^{(y)}:=\{k\in J^{(y)}\,|\,y_k\in B(p,\chi_2)\}$ [the $y_k$'s are introduced in Definition \ref{DefiSousEnsJ}].

If $p\in\Lambda$ is s.t. ${\rm Card}(J_p^{(y)})\geq2$, then with Lemma \ref{Lem.Separation} [with $\num=17$ and $\eta=\chi_1/2$], there are $\kappa_p=\kappa_{p,\v}\in\{17^0,...,17^{N_0-1}\}$ and $\tilde J_p^{(y)}\subset J_p^{(y)}$ s.t.
\[
\bigcup_{k\in J_p^{(y)}} B( y_k,\chi_1/2)\subset\bigcup_{k\in \tilde J_p^{(y)}} B( y_k,\kappa_p\chi_1/2)\text{ and }| y_k- y_l|\geq8\kappa_p\chi_1\text{ for }k,l\in\tilde J_p^{(y)},\,k\neq l.
\]
We then let $\dom_p:=B(p,\chi_2)\setminus\cup_{k\in  \tilde J_p^{(y)}}\overline{B(y_k,\kappa_p\chi_1)}$ and, for $k\in \tilde J_p^{(y)}$,  we write $\underline d_k:=\deg_{\p B(y_k,\kappa_p\chi_1)}( v)$. We denote also $ D_p:=\sum_{k\in \tilde J_p^{(y)}}\underline d_k$


If $p\in\Lambda$ is s.t. $J_p^{(y)}=\{k\}$, then we let $\dom_p=B(p,\chi_2)\setminus\overline{B(y_k,\kappa\delta)}$ with $\kappa$ given by Definition \ref{DefiSousEnsJ}. We let also $ D_p:=\underline d_k:=\deg_{\p B(y_k,\kappa\delta)}(v)$. 

Recall that we denoted (see Definition \ref{DefiSousEnsJ}), for $k\in J^{(y)}$, $\tilde d_k:=\deg_{\p B(y_k,\kappa\delta)}(v)$. Consequently, if $J_p^{(y)}=\{k\}$, then $ D_p=\underline d_k=\tilde d_k$.

If $J_p^{(y)}=\emptyset$ then we denote $D_p=0$ and $\dom_p=B(p,\chi_2)$.

The heart of the proof consists in proving that  $\underline d_k=1$ for all $k$. Indeed, we know that if  $i\in J_\mu$ then $\deg_{\p B(z_i,\rad)}(v)=1$. Consequently  $\underline d_k$ is the number of points $z_i$ contained in a disk of radius  at least  $\chi_1$.

We let:
\begin{itemize}
\item  $\mathcal{R}:=\bigcup_{k\in  J^{(y)}}B(y_k,\kappa\delta)\setminus\bigcup_{i\in  J_\mu}\overline{B(z_i,\rad)}$, $\kappa$ given in Definition \ref{DefiSousEnsJ}.
\item For $p\in\Lambda$ s.t. ${\rm Card}(J_p^{(y)})\geq2$ and for $k\in \tilde J_p^{(y)}$ we denote 
\[
\mathcal{Q}_{k,p}:=B(y_k,\kappa_p\chi_1)\setminus\bigcup_{\substack{l\in J^{(y)}\\y_l\in B(y_k,\kappa_p\chi_1)}}\overline{B(y_l,\kappa\delta)}.
\]
Moreover, by construction, we have  [for sufficiently small  $\v$]
\begin{equation}\label{VoilaPourquoiYaDesSur2}
\bigcup_{\substack{l\in J^{(y)}\\y_l\in B(y_k,\kappa_p\chi_1)}}{B(y_l,\kappa\delta)}\subset\bigcup_{\substack{l\in J^{(y)}\\y_l\in B(y_k,\kappa_p\chi_1)}}{B(y_l,\chi_1/2)}\subset  B(y_k,\kappa_p\chi_1/2).
\end{equation}
\end{itemize}
Thus
\begin{eqnarray}\nonumber
\dfrac{1}{2}\int_{\O_\rad}\alpha|\n v|^2&\geq&\dfrac{1}{2}\int_{\mathcal{R}}\alpha|\n v|^2+
\sum_{\substack{p\in\Lambda}}\dfrac{1}{2}\int_{\dom_{p}}\alpha|\n v|^2+
\\\label{EstVendre0}&&+\sum_{\substack{p\in\Lambda\\{\rm Card}(J_p^{(y)})\geq2}}\sum_{k\in\tilde J_p^{(y)}}\dfrac{1}{2}\int_{\mathcal{Q}_{k,p}}\alpha|\n v|^2+\dfrac{1}{2}\int_{\O_{\chi_2}}\alpha|\n v|^2.
\end{eqnarray}
From \eqref{OnCompteRelFin1} and \eqref{OnCompteRelFin2} we have
\begin{equation}\label{EstVendre1}
\dfrac{1}{2}\int_{\mathcal{R}}\alpha|\n v|^2\geq d\pi\left[b^2|\ln\rad|+(1-b^2)|\ln\lambda|-b^2|\ln\delta|\right]+\mathcal{O}(1).
\end{equation}
If $J_p^{(y)}=\{k\}$, then with Corollary \ref{Cor.BorneInfProcheIncl}.\ref{Cor.BorneInfProcheIncl1} we get
\begin{equation}\label{EstVendre2}
\dfrac{1}{2}\int_{\dom_{p}}\alpha|\n v|^2\geq\pi\underline{d}_k^2\ln\left(\dfrac{\chi_2}{\delta}\right)+\mathcal{O}(1).
\end{equation}
And if ${\rm Card}(J_p^{(y)})\geq2$, still with Corollary \ref{Cor.BorneInfProcheIncl}.\ref{Cor.BorneInfProcheIncl1}:
\begin{equation}\label{EstVendre4}
\dfrac{1}{2}\int_{\dom_p}\alpha|\n v|^2\geq
\pi\sum_{k\in \tilde J_p^{(y)}}\underline d_k^2\ln \left(\dfrac{\chi_2}{\chi_1}\right)+\mathcal{O}(1).
\end{equation}

We continue by dealing with the case ${\rm Card}(J_p^{(y)})\geq2$. From Corollary \ref{Cor.BorneInfProcheIncl}.\ref{Cor.BorneInfProcheIncl1} applied in $\mathcal{Q}_{k,p}$ for $k\in \tilde J_p^{(y)}$ [with \eqref{VoilaPourquoiYaDesSur2}] we get
\begin{equation}\label{EstVendre3}
\sum_{k\in\tilde J_p^{(y)}}\dfrac{1}{2}\int_{\mathcal{Q}_{k,p}}\alpha|\n v|^2
\geq\pi\sum_{k\in \tilde J_p^{(y)}}\sum_{\substack{l\in J^{(y)}\\y_l\in B(y_k,\kappa_p\chi_1)}}\tilde d_l^2\ln\left(\dfrac{\chi_1}{\delta}\right)+\mathcal{O}(1)
\end{equation}
In order to end the proof, using  Propositions \ref{MinimalMapHomo} $\&$ \ref{Prop.EnergieRenDef} $\&$ \ref{VeryNiceCor}, we get
\begin{equation}\label{EstVendre5}
\dfrac{1}{2}\int_{\O_{\chi_2}}\alpha|\n v|^2\geq\pi\sum_{p\in\Lambda} D_p^2|\ln\chi_2|+\mathcal{O}(1).
\end{equation}
We let
\[
\Delta:=\sum_{\substack{p\in\Lambda\text{ s.t.}\\{\rm Card} (J_p^{(y)})\geq2}}\sum_{k\in \tilde J_p^{(y)}}\underline d_k^2+\sum_{\substack{p\in\Lambda\text{ s.t.}\\J_p^{(y)}=\{k\}}}\underline{d}_k^2\text{ and }\tilde\Delta:=\sum_{k\in J^{(y)}}\tilde d_k^2.
\]

From \eqref{EstVendre0}, \eqref{EstVendre1}, \eqref{EstVendre2}, \eqref{EstVendre4}, \eqref{EstVendre3} and \eqref{EstVendre5} we get
\begin{eqnarray}\nonumber
&&\dfrac{1}{2}\int_{\O_\rad}\alpha|\n v|^2
\\\nonumber&&\geq \mathcal{O}(1)+d\pi\left[b^2|\ln\rad|+(1-b^2)|\ln\lambda|-b^2|\ln\delta|\right]+\pi\sum_{\substack{p\in\Lambda\text{ s.t.}\\J_p^{(y)}=\{k\}}}\underline{d}_k^2\ln\left(\dfrac{\chi_2}{\delta}\right)+
\\\nonumber&&+\pi\sum_{\substack{p\in\Lambda\\{\rm Card} (J_p^{(y)})\geq2}}\left[\sum_{k\in \tilde J_p^{(y)}}\underline d_k^2\ln \left(\dfrac{\chi_2}{\chi_1}\right)+\sum_{\substack{l\in J^{(y)}\\y_l\in B(p,\chi_2+\lambda\delta)}}\tilde d_l^2\ln\left(\dfrac{\chi_1}{\delta}\right)\right] +\pi\sum_{p\in\Lambda} D_p^2|\ln\chi_2|
\\\nonumber&&\geq d\pi\left[b^2|\ln\rad|+(1-b^2)|\ln(\lambda\delta)|\right]+\pi|\ln\chi_2|\left(\sum_{p\in\Lambda} D_p^2-\Delta\right)+\pi|\ln\delta|(\tilde\Delta-d)+
\\\nonumber&&+\pi|\ln\chi_1|\sum_{\substack{p\in\Lambda\\{\rm Card} (J_p^{(y)})\geq2}}\left[\sum_{k\in \tilde J_p^{(y)}}\underline d_k^2-\sum_{\substack{l\in J^{(y)}\\y_l\in B(p,\chi_2+\lambda\delta)}}\tilde d_l^2\right]+\mathcal{O}(1).
\end{eqnarray}

Since  $\underline d_k,\tilde d_l\geq1$ for all $k,l$, from Lemma \ref{LemSommeDegCarréDec}.\ref{LemSommeDegCarréDec1} we have $\sum_{p\in\Lambda} D_p^2\geq \Delta\geq\tilde\Delta\geq d$ and moreover
\[
\Delta=d\Leftrightarrow (\text{$\underline d_k=1$ for all $k$})
\]
and
\[
\tilde\Delta=d\Leftrightarrow (\text{$\tilde d_l=1$ for all $l$}).
\]
On the other hand since for $p\in\Lambda$ s.t. $J_p^{(y)}=\{k\}$ we have $\underline d_k=\tilde d_k$, we get
\[
\Delta-\tilde\Delta=\sum_{\substack{p\in\Lambda\\{\rm Card} (J_p^{(y)})\geq2}}\left[\sum_{k\in \tilde J_p^{(y)}}\underline d_k^2-\sum_{\substack{l\in J^{(y)}\\y_l\in B(p,\chi_2+\lambda\delta)}}\tilde d_l^2\right].
\]

Then  \eqref{BrneSup-ApplicDirEn} gives
\[
\dfrac{\L_1(d)}{\pi}\ln\h\geq \left(\sum_{p\in\Lambda} D_p^2-\Delta\right)|\ln\chi_2|+(\tilde\Delta-d)|\ln\delta|+(\Delta-\tilde\Delta)|\ln\chi_1|+\mathcal{O}(1).
\]
Since $|\ln\chi_1|=\ln({\h})+\mathcal{O}[\ln(\ln\h)]$ and $|\ln\chi_2|=\ln\sqrt{\h}+\mathcal{O}[\ln(\ln\h)]$ we obtain
\begin{eqnarray}\nonumber
&&\left(\dfrac{\L_1(d)}{\pi}+\dfrac{d-\sum_{p\in\Lambda} D_p^2}{2}\right)\ln{\h}
\\\label{topContra}&\geq&(\Delta-\tilde\Delta)\ln\sqrt{\h}+(\tilde\Delta-d)|\ln(\delta\sqrt{\h})|+\mathcal{O}[\ln(\ln\h)].
\end{eqnarray}

From Lemma \ref{LemSommeDegCarréDec}.\ref{LemSommeDegCarréDec2} and the definition of $\L_1(d)$ [see Lemma \ref{LemLaisseLectTrucSimple}], we have 
\begin{equation}\label{topBissosososo}
\dfrac{\L_1(d)}{\pi}+\dfrac{d-\sum_{p\in\Lambda} D_p^2}{2}\leq0.
\end{equation}

Using \eqref{topBissosososo} in \eqref{topContra}, \eqref{PutaindHypTech} and $\tilde\Delta-d\geq0\&\Delta-\tilde\Delta\geq0$ we get $\tilde\Delta-d=\Delta-\tilde\Delta=0$ and then $\Delta=d$, {\it i.e.} $\underline d_k=1$ for all $k$.

On the other hand, with the help of  \eqref{topContra} we may write
\[
0\geq\left(\dfrac{\L_1(d)}{\pi}+\dfrac{d-\sum_{p\in\Lambda} D_p^2}{2}\right)\ln{\h}\geq\mathcal{O}[\ln(\ln\h)].
\]

We may thus deduce $\dfrac{\L_1(d)}{\pi}+\dfrac{d-\sum_{p\in\Lambda} D_p^2}{2}=0$ and then, with Lemma \ref{LemSommeDegCarréDec}.\ref{LemSommeDegCarréDec2}, for $p\in\Lambda$ we have $D_p\in\{\lfloor d/N_0\rfloor;\lceil d/N_0\rceil\}$.

\bibliographystyle{abbrv}
\bibliography{MagneticBib}

\end{document}